\newtheorem{thm}{Theorem}[section]
\newtheorem{lem}[thm]{Lemma}
\newtheorem{claim}{Claim}
\newtheorem*{cla}{Claim}
\newtheorem{cor}[thm]{Corollary}
\newtheorem{prop}[thm]{Proposition}
\newtheorem{prob}[thm]{Problem}
\newtheorem{conj}[thm]{Conjecture}
\newtheorem{mthm}{Theorem}
\theoremstyle{definition}
\newtheorem{ass}[thm]{Assumption}
\newtheorem{rem}[thm]{Remark}
\newtheorem{defn}[thm]{Definition}
\newtheorem{ex}[thm]{Example}
\theoremstyle{remark}
\numberwithin{equation}{section}
\renewcommand{\labelenumi}{$(\arabic{enumi})$}
\newcommand{\C}{{\mathbb{C}}}
\newcommand{\R}{{\mathbb{R}}}
\newcommand{\Z}{{\mathbb{Z}}}
\newcommand{\N}{{\mathbb{N}}}
\newcommand{\T}{{\mathbb{T}}}
\renewcommand{\ker}{\mathrm{Ker}}
\DeclareMathOperator{\Ad}{Ad}
\DeclareMathOperator{\Tr}{Tr}
\DeclareMathOperator{\tr}{tr}
\DeclareMathOperator{\id}{id}
\DeclareMathOperator{\sgn}{sgn}
\DeclareMathOperator{\spa}{span}
\DeclareMathOperator{\ONB}{ONB}
\DeclareMathOperator{\Irr}{Irr}
\DeclareMathOperator{\Hom}{Hom}              
\DeclareMathOperator{\Mor}{Mor}
\DeclareMathOperator{\End}{End}
\DeclareMathOperator{\Sect}{Sect}
\DeclareMathOperator{\Aut}{Aut}
\DeclareMathOperator{\Out}{Out}
\DeclareMathOperator{\Int}{Int}
\def\oInt{\ovl{\Int}}
\DeclareMathOperator{\Rep}{Rep}
\DeclareMathOperator{\Hilb}{Hilb}
\DeclareMathOperator{\mo}{mod}
\DeclareMathOperator{\Ind}{Ind}
\DeclareMathOperator{\Inv}{Inv}
\DeclareMathOperator{\Obm}{Ob_m}
\def\mA{\mathcal{A}}
\def\mB{\mathcal{B}}
\def\cC{\mathcal{C}}
\def\mD{\mathcal{D}}
\def\cE{{\mathcal{E}}}
\def\mF{\mathcal{F}}
\def\mG{\mathcal{G}}
\def\cI{\mathcal{I}}
\def\mK{\mathcal{K}}
\def\mL{\mathcal{L}}
\def\cM{\mathcal{M}}
\def\mP{\mathcal{P}}
\def\mS{\mathcal{S}}
\def\mT{\mathcal{T}}
\def\mX{\mathcal{X}}
\def\mZ{\mathcal{Z}}
\def\meC{\mathscr{C}}
\def\meH{\mathscr{H}}
\def\meJ{\mathscr{J}}
\def\meR{\mathscr{R}}
\def\meS{\mathscr{S}}
\def\al{\alpha}
\def\be{\beta}
\def\ga{\gamma}
\def\de{\delta}
\def\ka{\kappa}
\def\la{\lambda}
\def\vep{\varepsilon}
\def\ph{{\phi}}
\def\ps{{\psi}}
\def\tps{\widetilde{\ps}}
\def\vph{\varphi}
\def\om{\omega}
\def\si{\sigma}
\def\ta{\tau}
\def\th{\theta}
\def\Si{\Sigma}
\def\Ph{\Phi}
\def\Ps{\Psi}
\def\Th{\Theta}
\def\Ga{\Gamma}
\def\De{\Delta}
\def\La{\Lambda}
\def\Om{\Omega}
\def\el{\ell}
\def\ovl{\overline}                 
\def\wdh{\widehat}
\def\wdt{\widetilde}
\def\opp{{\mathrm{opp}}}
\def\op{{\mathrm{op}}}
\def\cu{\check{u}}
\def\bu{\bar{u}}
\def\tM{\widetilde{M}}
\def\tP{\widetilde{\mathcal{P}}}
\def\hal{{\widehat{\al}}}
\def\hGa{{\widehat{\Ga}}}
\def\tal{{\wdt{\al}}}
\def\tbe{{\wdt{\be}}}
\def\tga{{\wdt{\ga}}}
\def\tde{{\wdt{\de}}}
\def\trho{\wdt{\rho}}
\def\tsi{\wdt{\si}}
\def\tN{\wdt{N}}
\def\tM{{\wdt{M}}}
\def\tP{\wdt{P}}
\def\opi{{\ovl{\pi}}}                
\def\orho{{\ovl{\rho}}}
\def\osi{{\ovl{\si}}}
\def\ops{{\ovl{\ps}}}
\def\subs{\subset}                   
\def\setm{\setminus}
\def\nin{\notin}
\def\dsp{\displaystyle}
\def\per{\perp}
\def\oti{\otimes}                    
\def\rti{\rtimes}                    
\def\col{\colon}
\def\ra{\rightarrow}
\def\btr{{\boldsymbol{1}}}                   
\def\bG{\mathbb{G}}
\def\bH{\mathbb{H}}
\def\bK{\mathbb{K}}
\def\bhG{{\wdh{\mathbb{G}}}}
\def\bhH{{\wdh{\mathbb{H}}}}
\def\bhK{{\wdh{\mathbb{K}}}}
\def\lhG{L^\infty(\bhG)}                    
\def\lhH{L^\infty(\bhH)}
\def\ltG{L^2(\bG)}
\def\CG{C(\bG)}
\def\lG{L^\infty(\bG)}                      
\def\lH{L^\infty(\bH)}
\def\CH{C(\bH)}
\def\CK{C(\bK)}
\def\IG{\Irr(\bG)}
\def\IH{\Irr(\bH)}
\author[T. Masuda]{Toshihiko Masuda$^1$}
\address{$^1$ 
Graduate School of Mathematics, Kyushu University,
Fukuoka\\ \indent \mbox{819-0395},
JAPAN}
\email{masuda@math.kyushu-u.ac.jp}
\author[R. Tomatsu]{Reiji Tomatsu$^2$}
\address{$^2$Department of Mathematics,
\hspace{-0.6mm}Hokkaido university,
\hspace{-0.6mm}Sapporo, Hokkaido\\ \indent \mbox{060-0810},
JAPAN}
\email{tomatsu@math.sci.hokudai.ac.jp}
\subjclass[2000]{Primary 46L65; Secondary 46L55}
\begin{document}
\title{Classification of actions of discrete Kac algebras
on injective factors}

\begin{abstract}
We will study two kinds of actions of a discrete amenable Kac algebra.
The first one is an action whose modular part is normal.
We will construct a new invariant which generalizes a characteristic
invariant for a discrete group action,
and we will present a complete classification.
The second is a centrally free action.
By constructing a Rohlin tower
in an asymptotic centralizer,
we will show that the Connes--Takesaki module is a complete invariant.
\end{abstract}

\maketitle

\section*{Introduction}

Since Connes' classification of automorphisms on the injective factor of
type II$_1$ \cite{Con-auto,Co-peri},
a classification of discrete amenable group actions on
injective factors has been developed by many researchers
\cite{JT,KtST,KST,Oc}, 
and now we have the complete classification \cite{KtST,M}.
On the contrary,
study of actions by a continuous group is more difficult.
Among them,
it is relatively easy to handle
a compact group
because its dual is a discrete object.
The point is that we still have the Takesaki-type duality
by the aid of Kac algebras,
and study of actions of compact groups
(or more generally, compact Kac algebras)
is essentially reduced
to those of discrete Kac algebras.

As in the case of discrete amenable groups,
it is crucial to construct a Rohlin tower.
However,
this argument always involves a technical difficulty
on treating an asymptotic centralizer.
Namely,
an action of a discrete Kac algebra means, roughly speaking,
a system of endomorphisms.
As well known, an endomorphism
does not preserve central sequences in general,
and
it is not so trivial to construct a Rohlin tower
for a given action.

Let us recall the classification results
obtained in our previous works \cite{MT1,MT2,MT3}.
In \cite{MT1},
we classified actions of
discrete amenable Kac algebras on the injective factor of type II$_1$.  
More precisely,
we showed the uniqueness of a free action of
a discrete amenable Kac algebra up to cocycle conjugacy.
By the duality argument as mentioned above,
the uniqueness of a minimal action of a compact group
on the injective type II$_1$ factor follows from this result.

The next problem is the classification of actions on
infinite injective factors.
By combining \cite{MT1}
and an analytic characterization of endomorphisms \cite{MT2},
we extended the result of \cite{MT1},
and showed the uniqueness of approximately inner
and centrally free actions
of a discrete amenable Kac algebra on injective factors of type II$_\infty$
and type III \cite{MT3}.
However for infinite factors,
there exist many free actions of
discrete Kac algebras, which are not approximately inner,
nor centrally free.
(In the type II$_1$ case, free actions are
automatically approximately inner and centrally free.) 
Thus, to classify actions,
we will introduce invariants of actions.
As in the case of discrete group actions,
we should use  
Connes--Takesaki modules,
and modular endomorphisms introduced in \cite{Iz}
in order to formulate invariants.
For approximately inner and centrally trivial actions,
invariants become trivial,
and thus the results in \cite{MT3} cover this case.

We will divide our problem into the following two cases
and treat them separately:

\begin{enumerate}
\item 
An action with non-trivial normal modular part.

\item
A centrally free action with Connes--Takesaki module.
\end{enumerate}

In the first case,
we will formulate cohomological invariants,
which can
be regarded as a generalization of a characteristic invariant
of group actions.
Then we introduce the notion of a modular kernel,
and our classification problem is reduced to that of modular kernels.
This enables us to construct the set $X(\ps,F^M)$ which can be regarded
as a generalization of the set of characteristic invariants.
Our main result concerning (1) is the following
(Theorem \ref{thm:non-triv-modular-main}):
\begin{mthm}
Let $\al$ and $\be$ be actions of a discrete amenable Kac algebra
$\bhG$ on a McDuff factor $M$.
Suppose that the following conditions hold:
\begin{itemize}
\item
$\al$ and $\be$ have
a common normal modular part;
\item
$\mo(\al_\pi)=\mo(\be_\pi)$ for all $\pi\in\IG$;
\item
$[(a^\al,c^\al)]=[(a^\be,c^\be)]$
in $X(\ps,F^M)$.
\end{itemize}
Then $\al$ and $\be$ are strongly cocycle conjugate.
\end{mthm}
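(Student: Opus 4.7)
The plan is to argue by successive cocycle perturbations: first to synchronise the modular parts, then to absorb the characteristic invariant, and finally to apply a Rohlin-type intertwining argument in the asymptotic centralizer to produce a strong cocycle conjugacy. To begin, I would exploit the hypothesis that $\al$ and $\be$ share a common normal modular part: after a preliminary cocycle perturbation of $\be$, one may assume that the modular subsystems of $\al$ and $\be$ coincide on the nose as a genuine action of the corresponding quotient. The quotient discrete amenable Kac algebra $\bhH$ then acts only through its \emph{modular kernel} on $M$, and the classification problem for $\al$ and $\be$ is reduced to that of two modular kernels which, by the last two hypotheses, share the same module and the same class in $X(\ps,F^M)$.

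Next, I would unpack the equality $[(a^\al,c^\al)]=[(a^\be,c^\be)]$: by construction of $X(\ps,F^M)$, it yields a $1$--cochain $v$ whose coboundary converts the $2$--cocycle data of $\be$ into that of $\al$, in a way compatible with $\mo(\al_\pi)=\mo(\be_\pi)$. Replacing $\be$ by an appropriate twist by $v$, one arranges that $\al$ and $\be$ now have identical modular part, identical Connes--Takesaki modules, and identical characteristic data, so that the remaining discrepancy between them lies in the centrally trivial / centrally free regime already handled by the work of \cite{MT3}.

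The heart of the argument is then a Rohlin tower construction inside the ultrapower $M_\omega$. Because $M$ is McDuff, $M_\omega$ is large enough to accommodate projections tiling the spectral decomposition of the residual action, provided the arising cohomological obstruction vanishes. The matching of the characteristic invariants is exactly what forces that obstructing $2$--cocycle to be a coboundary in $M_\omega$. A unitary trivialising this coboundary, combined with the previous perturbations and an intertwining-by-bimodules argument modelled on \cite{MT3}, produces a sequence of unitaries in $M$ whose limit gives the cocycle realising the strong cocycle conjugacy.

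The main obstacle I expect is precisely the Rohlin tower construction in the presence of a non-trivial normal modular part. As an action of $\bhG$ is a system of endomorphisms which do not, in general, preserve central sequences, the tower must be built relative to the modular subsystem, and one has to track how the modular part interacts with the centrally free quotient throughout the construction. Translating the abstract hypothesis on $X(\ps,F^M)$ into concrete cohomological vanishing in $M_\omega$, and deploying it to close the Rohlin argument, is the technically delicate step on which the entire proof hinges.
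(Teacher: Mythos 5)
Your high-level scheme is in the right neighbourhood: decompose an action with normal modular part into a modular kernel plus implementing unitaries, match the characteristic data, and then finish by an intertwining argument. However, a crucial step is misidentified. After the reduction to modular $\Ga$-kernels (which the paper carries out via Corollary \ref{cor:Vps} and Theorem \ref{thm:al-be-a-c}), the residual classification problem is \emph{not} one that falls within the scope of \cite{MT3}. The obstruction automorphisms $\si_{p,q}^\al=\al_p\al_q\al_{pq}^{-1}$ are extended modular automorphisms, hence \emph{centrally trivial}, so the modular $\Ga$-kernel is nowhere near centrally free. The centrally-free technology of \cite{MT3} (or of Section 7 of this paper) therefore does not apply; pointing to that work leaves the heart of the argument unproved.

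What is actually required is the new classification of modular $\Ga$-kernels proved here as Theorem \ref{thm:mod-ker}. That theorem is established by a Bratteli--Elliott--Evans--Kishimoto intertwining argument in the spirit of \cite{M}, whose key technical inputs are: (i) Lemma \ref{lem:perturb}, which uses McDuffness and amenability of $\Ga$ to perturb the cocycle action $(\ga,V)$ of $\Ga$ on $M_\om$ into a genuine action (this is your ``2-cocycle becomes a coboundary in $M_\om$'' step, and it is correct); (ii) Lemma \ref{lem:nearvanish}, a near-vanishing estimate for approximate $1$-cocycles with respect to a group of automorphisms that act freely on $M_\om$ only \emph{modulo} the centrally trivial $\si^{(n)}_{p,q}$; and (iii) the cohomological bookkeeping of the invariants $d_1$, $d_2$ (and the associated $3$-cocycle $c^\al$ on $\Ga\times\R$), which must be kept aligned through each intertwining step. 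Once Theorem \ref{thm:mod-ker} is available, the passage from the equality $[(a^\al,c^\al)]=[(a^\be,c^\be)]$ in $X(\ps,F^M)$ to the conjugacy of the full actions is a finite unitary-algebra computation (Theorems \ref{thm:al-be-a-c} and \ref{thm:non-triv-modular-main}): one chooses $(c^0,\la,\mu)$ realising the equivalence, uses stability of $\{\tM,\th\}$ to absorb $c^0$, and then checks directly that $v_\pi:=V_\pi^\al(V_\pi^\ga)^*$ is an $\al$-cocycle intertwining. So the gap in your sketch is precisely the missing proof of the modular kernel classification theorem; everything else you outline is either correct or reduces to it.
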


In the second case,
if a given action has the trivial Connes--Takesaki module,
then our result corresponds to the one obtained in \cite{MT3}.
But even in this case, our proof presented
in this paper is completely different from that of \cite{MT3}.
Namely, in \cite{MT3}, we reduce
the problem to a classification of actions on the injective type
II$_\infty$ factor by using the continuous decomposition.
On the other hand,
our new proof is based on the intertwining argument.
This strategy
is the natural generalization of proof presented in \cite{MT1}
though
the absence of
a trace makes our argument technically difficult, and more subtle
estimations are required to obtain a classification.
Our second main result is the following
(Corollary \ref{cor:cent-free-inj}):

\begin{mthm}
Let $\al$ and $\be$ be centrally free actions of $\bhG$
on an injective factor $M$.
Then they have the Connes--Takesaki modules,
and if $\mo(\al_\pi)=\mo(\be_\pi)$ for all $\pi\in\IG$,
then $\al$ and $\be$ are cocycle conjugate.
\end{mthm}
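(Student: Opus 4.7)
The plan is to prove the cocycle conjugacy by an Evans--Kishimoto-type intertwining argument performed directly in $M$, in the spirit of the trace-case treatment in \cite{MT1} but avoiding the reduction to the type $\mathrm{II}_\infty$ factor via the continuous decomposition that was the engine of \cite{MT3}. The first point, that a centrally free action $\al$ on an injective factor admits a Connes--Takesaki module, should follow by extending $\al$ to the core $M\rti_{\si^{\vph}}\R$ and observing that central freeness forces the induced action on the flow of weights to be well-defined; together with injectivity of $M$ this produces $\mo(\al_\pi)$ for every $\pi\in\IG$.

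Granting this, the main body of the proof would split into three steps. First, a Rohlin tower theorem in the asymptotic centralizer: for any finite family of $\pi\in\IG$ and any tolerance $\vep>0$, construct a partition of unity in $(M^\om)_\ps$, for a fixed dominant weight $\ps$, on which the endomorphisms $\al_\pi$ act in the regular-representation pattern modulo $\vep$ in $\|\cdot\|_\ps^\sharp$. Because endomorphisms do not in general preserve central sequences, the tower has to be built inside the centralizer of the chosen state, invoking the analytic characterization of endomorphisms from \cite{MT2} to reduce the construction to the finite case already handled in \cite{MT1}. Second, a $2$-cohomology vanishing lemma: any $\al$-cocycle that is asymptotically a coboundary in the appropriate sense is a coboundary in $M$, obtained by averaging across the Rohlin tower produced in the first step.

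Third, the intertwining proper. Using the hypothesis $\mo(\al_\pi)=\mo(\be_\pi)$, one inductively constructs $\al$-cocycles $u^{(n)}$ and unitaries $v^{(n)}\in M$ such that $\Ad(v^{(n)})\circ\al^{u^{(n)}}$ approximates $\be$ ever more accurately on a prescribed exhaustion of $\IG$, while both sequences are Cauchy with respect to $\|\cdot\|_\ps^\sharp$. At each stage the Rohlin theorem is used to prepare matching model pieces inside the centralizer, the cohomology lemma is used to absorb the error into a perturbation by a cocycle, and the module-matching hypothesis ensures that the modular endomorphism data on the two sides agree, so that the perturbation lives in the correct sector. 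Passing to the limit yields an isomorphism implementing the cocycle conjugacy $\al\simeq\be$.

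The principal obstacle, and where the present proof genuinely differs from \cite{MT1}, is the first step in the absence of a trace. Every averaging argument must be run relative to the fixed state $\ps$, and one must keep quantitative control of the modular behaviour of $\al_\pi$ throughout; the estimates from \cite{MT2} on the growth of modular operators under an endomorphism are exactly what make the classical Ocneanu-type averaging go through in this setting. Once that technical platform is in place, the $2$-cohomology vanishing and the Evans--Kishimoto intertwining are fairly standard assembly, and the module-matching hypothesis enters only to certify the compatibility of the modular kernels that are perturbed away at each stage.
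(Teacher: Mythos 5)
Your overall architecture matches the paper's: existence of the Connes--Takesaki module via the canonical extension and central freeness, then a direct Evans--Kishimoto-type intertwining in $M$ built from a Rohlin tower and a $2$-cohomology vanishing lemma, bypassing the continuous-decomposition reduction of \cite{MT3}. However, there are two concrete inaccuracies worth flagging.

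First, the Rohlin tower is \emph{not} built in the centralizer $(M^\om)_\ps$ of a dominant weight. The paper's tower lives in $M_\om\otimes\ell^\infty(\bhG)$, where $M_\om$ is the asymptotic centralizer. This distinction is essential: $M_\om$ is a type $\mathrm{II}_1$ factor with its own trace $\ta^\om$, and the McDuff hypothesis is used exactly to guarantee this; the state appearing in the estimates is the composite $\ps=\vph\circ\ta^\om$, not a dominant weight. If you tried to build the tower in the much larger algebra $(M^\om)_\ps$ for a weight $\ps$, you would lose the type $\mathrm{II}_1$ structure that makes the build-and-destroy averaging work.

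Second, the proposal is silent on why the endomorphisms $\al_\pi$ act at all on $M_\om$. An endomorphism need not preserve central sequences, so $\al^\om$ does \emph{not} restrict to $M_\om$ in general; this is the technical obstruction mentioned in the introduction. The paper's remedy is to first pick an approximating unitary $v\in M^\om\otimes\lhG$ (using approximate innerness modulo the automorphism $\th$ coming from the Sutherland--Takesaki homomorphic section), show that $\gamma:=\Ad v^*\circ\al^\om$ restricts to an honest action on $M_\om$ (after a first $2$-cohomology vanishing on $M_\om$), and only then build the Rohlin tower for $\ga$. Your sketch conflates this preparatory step with the tower construction itself. Also, the mechanism by which $\mo(\al_\pi)=\mo(\be_\pi)$ enters is concretely via the Sutherland--Takesaki lift $\th_\pi:=s(\mo(\al_\pi))$ making both $\al_\pi\th_\pi^{-1}$ and $\be_\pi\th_\pi^{-1}$ approximately inner, not merely ``to certify compatibility of modular kernels.'' Finally, \cite{MT2} is not used to reduce to the finite case of \cite{MT1}; the estimates in the Rohlin and almost-vanishing theorems are reworked from scratch for the state $\vph\circ\ta^\om$, with extra terms (the diagonal operator $a$, the conjugating unitaries $g$) that have no analogue in the tracial argument.
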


Our results can be applied to simple compact,
connected Lie groups like $SU(n)$.
On the other hand,
readers should note that
there are many actions of finite
dimensional Kac algebras, to which we can not apply our results.

This paper is organized as follows.

In Section 1,
we will recall the notion of a modular endomorphism and a canonical extension
of an endomorphism due to Izumi \cite{Iz}.
Next, we will introduce an ultraproduct von Neumann algebra
and, in particular,
we will derive some useful inclusions of ultraproduct von Neumann algebras
arising from
a crossed product 
by using Ando--Haagerup theory \cite{AH}.

In Section 2,
We will give a certain decomposition of the canonical extension
of an irreducible endomorphism.
Next we will discuss how an endomorphism $\rho$ is decomposed
when $\rho\orho$ and $\orho\rho$ are modular
(Theorem \ref{thm:rho-orho}).

In Section 3,
we start to study an action of a compact or discrete Kac algebra.
In particular,
a dual 2-cocycle twisting and an application of Theorem \ref{thm:rho-orho}
are discussed.

In Section 4,
we will classify modular kernels.
The main ingredient is the strategy provided in \cite{M}
and the Bratteli--Elliott--Evans--Kishimoto intertwining argument.

In Section 5,
we will discuss a classification of actions with non-trivial modular parts.
On the basis of the classification results obtained
in Section 4,
a complete invariant of actions is introduced,
and we construct a model action which realize a given invariant.
As a special
case, we discuss the relationship between
modular actions and dual 2-cocycle twistings.

In Section 6,
we will classify centrally free actions on injective factors.
Our main tool is a refined Rohlin tower in the asymptotic centralizer.

In Section 7,
related unsolved problems and a plausible conjecture are paused.

In Section 8,
we will briefly summarize some basic results
about index theory of inclusion of von Neumann algebras
and a cocycle crossed product and a dual cocycle twisting of a Kac algebra.

Readers who want to follow the main results quickly,
they can skip some parts of this paper. 
For the proof of Theorem A (Theorem \ref{thm:non-triv-modular-main}),
Corollary \ref{cor:Vps} and Theorem \ref{thm:mod-ker} are used.
One can read only 
Section \ref{subsect:modularkernelclass} and skip other parts of Section 4
to follow the proof of Theorem \ref{thm:mod-ker}.
Thus one may directly access Section \ref{subsect:modularkernelclass}
and
Section 5 once he admits Corollary \ref{cor:Vps}.
For the proof of Theorem \ref{thm:free-class}
and
Theorem B (Corollary \ref{cor:cent-free-inj}),
one may directly
access Section 6 after reading Section 1.

\vspace{10pt}
\noindent
{\bf Acknowledgements.}
The first and second authors are supported in part by
JSPS KAKENHI Grant Number 23540246 and 24740095, respectively.
\tableofcontents

\section{Preliminary}

\subsection{Notation and terminology}
Our reference is \cite{Ta}.
Throughout this paper,
all von Neumann algebras have the separable preduals
except for ultraproduct von Neumann algebras.
Let $M$ be a von Neumann algebra.
For $\vph\in M_*$ and $a\in M$,
the functionals $\vph a$ and $a\vph$ in $M_*$ are defined
by $\vph a(x):=\vph(ax)$ and $a\vph(x):=\vph(xa)$ for all $x\in M$.
We denote by $U(M)$, $Z(M)$ and $W(M)$
the set of all unitary elements in $M$,
the center of $M$
and the set of faithful normal semifinite weights on $M$, respectively.
For a positive $\ph\in M_*$, we set $|x|_\ph:=\ph(|x|)$,
$\|x\|_\ph:=\ph(x^* x)^{1/2}$
and $\|x\|_\ph^\sharp:=2^{-1/2}(\vph(x^*x)+\vph(xx^*))^{1/2}$.
We denote by $M_\ph$ the centralizer of $\ph$, that is,
$x\in M_\ph$ if and only if $\ph(xy)=\ph(yx)$ for any $y\in M$.

Denote by $\Aut(M)$ the set of automorphisms on $M$.
For $\al\in \Aut(M)$ and $\vph\in M_*$,
we let $\al(\vph):=\vph\circ\al^{-1}$.
Note that $\Ad u(\vph)=u\vph u^*$ for a unitary $u$,
where $\Ad u(x):=uxu^*$ for $x\in M$.

Let $\meH\subs M$ be a subspace.
We say that $\meH$ is a \emph{Hilbert space} in $M$ if
$\meH\subs M$ is $\si$-weakly closed 
and $\eta^* \xi\in\C$ for all $\xi,\eta\in\meH$
\cite{Ro}.
Then the coupling $\langle \xi,\eta\rangle:=\eta^*\xi$
gives a complete inner product on $\meH$.
The smallest projection $e\in M$ such that $e\meH=\meH$ is
called the support of $\meH$.
If the support of $\meH$ equals 1,
then we have the endomorphism $\rho_\meH$ defined by
$\rho_\meH(x)=\sum_i v_i xv_i^*$,
where $\{v_i\}_i$ is an orthonormal basis.
We say that an endomorphism $\si$ is \emph{inner}
\index{inner endomorphism}
when $\si=\rho_\meH$ for some Hilbert space $\meH$
with support 1.

Next we recall theory of endomorphisms on a factor.
Our standard references are \cite{Iz-f,Iz,L1,L2}.
We denote by $\End(M)$ and $\Sect(M)$ the set of
normal endomorphisms and sectors \index{sector} on $M$,
that is, $\Sect(M)$ 
is the set of unitary equivalence classes of endomorphisms on $M$.
For two endomorphisms $\rho,\si\in \End(M)$,
we let
$(\rho,\si)=\{v\in M\mid v\rho(x)=\si(x)v\ \mbox{for all}\ x\in M\}$.
If $\rho$ is irreducible, then $(\rho,\si)$ is a Hilbert space 
with the inner product $(V,W)=W^* V$ for $V,W\in (\rho,\si)$.

For a factor $M$, we denote by $\End(M)_0$ the set of
endomorphisms with finite Jones--Kosaki index.
For $\rho\in \End(M)_0$, $E_\rho$
denotes the minimal expectation
from $M$ onto $\rho(M)$ in the sense of \cite{Hi}.
We define the \emph{standard left inverse}
\index{standard left inverse}
of $\rho$
by $\ph_\rho=\rho^{-1}\circ E_\rho$.
The statistical dimension $(\Ind E_\rho)^{1/2}$
is denoted by $d(\rho)$.

\subsection{Canonical extensions}
In \cite{Iz}, Izumi introduced
the canonical extension of an endomorphism,
which plays an important role in this paper.

Let $M$ be a factor and $\tM$ the core \index{core} of $M$
as defined in \cite[Definition 2.5]{FT}.
The core $\tM$
is the von Neumann algebra generated 
by $M$ and 
a one-parameter unitary group $\{\la^\vph(t)\}_{t\in\R}$, $\vph\in W(M)$, 
satisfying the following relations:
\[
\si_t^\vph(x)=\la^\vph(t)x\la^\vph(t)^*,
\quad
\la^\ps(t)=[D\ps:D\vph]_t \la^\vph(t)
\]
for all $x\in M$, $t\in\R$ and $\vph,\ps\in W(M)$. 
Then $\tM$ is naturally isomorphic to
the crossed product $M\rti_{\si^\vph}\R$.
Let $\th$ be the $\R$-action on $\tM$ satisfying
\[
\th_s(x)=x,\quad \th_s(\la^\vph(t))=e^{-ist}\la^\vph(t)
\quad
\mbox{for all}\ x\in M,\ s,t\in\R.
\]
The action $\th$ is called the \emph{dual action},
\index{dual action}
and the \emph{flow of weights}
\index{flow of weights}
of $M$
means the restriction of $\th$ on the center $Z(\tM)$.

\begin{defn}[Izumi]
Let $\rho$ be an endomorphism on a factor $M$ with finite index. 
Then the \emph{canonical extension}
\index{canonical extension}
$\trho$ of $\rho$ is
the endomorphism on $\tM$ defined by
\[
\trho(x)=\rho(x),
\quad
\trho(\la^\vph(t))=d(\rho)^{it}[D\vph\circ\ph_\rho:D\vph]_t\la^\vph(t)
\]
for all $x\in M$, $t\in \R$ and $\vph\in W(M)$. 
\end{defn}
Note that $\trho$ commutes with the dual action $\th$.
Conversely, if $\be\in\End(\tM)$ is commuting with $\th$,
then the restriction $\be|_M$ gives an endomorphism on $M$
because of $M=\tM^\th$, the fixed point algebra by $\th$.
In the following lemma,
we study a relationship between $\be$ and $\be|_M$.
On a probability index
\index{index!probability--}
for an inclusion of von Neumann algebras,
readers are referred to Section \ref{sect:index}.

\begin{lem}\label{lem:beta-can}
Let $\be\in \End(\tM)$.
Assume that there exists a faithful normal
conditional expectation from
$\tM$ onto $\be(\tM)$ such that
it has finite probability index and commutes with $\th$.
If $(\be(\tM)'\cap\tM)^\th=\C$,
then $\be|_M\in\End(M)$ is irreducible
and
there exists $s\in\R$ such that $\be=\th_s\widetilde{\be|_M}$.
\end{lem}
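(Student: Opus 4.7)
The plan is to show that $\be$ commutes with the dual action $\th$, which will allow me to extract $\rho:=\be|_M\in\End(M)_0$; then to establish irreducibility of $\rho$ by a centralizer argument using Hiai's theorem \cite{Hi}, and finally to compare $\be$ with the canonical extension $\trho$ on the implementing unitaries $\la^\vph(t)$.

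First, I observe that the $\th$-equivariance of $E$ makes $\be(\tM)=E(\tM)$ globally $\th$-invariant, so $\ga_s:=\be^{-1}\th_s\be$ defines a continuous $\R$-action on $\tM$ satisfying $\th_s\be=\be\ga_s$. It suffices to prove $\ga=\th$: then $\be\th_s=\th_s\be$, whence $\be(M)\subseteq M$ and $\rho:=\be|_M\in\End(M)$. To prove $\ga=\th$, I would analyze $N:=\be(\tM)'\cap\tM$. Finite probability index of $E$ confines $N$ to be essentially finite dimensional over $Z(\tM)$, and combining the continuity of $\th|_N$ with the hypothesis $N^\th=\C$ collapses $N$ to $Z(\tM)$. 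This relative irreducibility makes $E$ the unique minimal expectation, and a cocycle argument yields a unitary $u_s\in N$ implementing the difference automorphism $\al_s:=\ga_s\th_{-s}$; the $\th$-cocycle identity together with $N^\th=\C$ then forces $u_s$ to be scalar, giving $\al_s=\id$ and hence $\ga=\th$.

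With $\rho\in\End(M)_0$ in hand, both $\be$ and $\trho$ send $\la^\vph(t)$ to $c(t)\la^\vph(t)$ for a unitary cocycle $c(t)\in M$ satisfying $\Ad c(t)\circ\si_t^\vph\circ\rho=\rho\circ\si_t^\vph$. By the Connes cocycle theorem, the flow $\eta_t:=\Ad c_\be(t)\circ\si_t^\vph$ coincides with the modular group $\si_t^\psi$ of a unique faithful normal weight $\psi\in W(M)$ (characterized by $[D\psi:D\vph]_t=c_\be(t)$); a direct computation then identifies $\be(\tM)'\cap M=(\rho(M)'\cap M)_\psi$. Applying Hiai's theorem \cite{Hi}, the relative commutant $\rho(M)'\cap M$ is finite dimensional and lies in $M_{\vph\circ\phi_\rho}$; since the centralizer of a faithful state on $\oplus_i M_{n_i}(\C)$ has dimension at least $\sum_i n_i$, the hypothesis $N^\th=\be(\tM)'\cap M=\C$ collapses $\rho(M)'\cap M$ to $\C$, i.e., $\rho$ is irreducible. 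Then $r(t):=c_\be(t)c_\rho(t)^*\in\rho(M)'\cap M=\C$, and the cocycle identity on $r$ reduces it to a continuous scalar homomorphism $r(t)=e^{-ist}$ for some $s\in\R$. Substituting yields $\be(\la^\vph(t))=\th_s(\trho(\la^\vph(t)))$ and $\be(x)=\th_s(\trho(x))$ for $x\in M$, so $\be=\th_s\trho$, as required.

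The main obstacle, I expect, will be in the second paragraph: producing the cocycle $u_s$ rigorously and using $N^\th=\C$ to trivialize it will demand a careful treatment of the possible nontriviality of $Z(\tM)$ (when $M$ is not of type III$_1$), together with delicate uniqueness arguments for the minimal conditional expectation in the ``relatively irreducible modulo center'' setting.
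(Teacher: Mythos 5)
Your second paragraph captures the essential content of the paper's argument, but your first paragraph attempts to prove something that the paper treats as a standing hypothesis, and the attempt has a genuine gap.

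On the first paragraph: the discussion immediately preceding the lemma in the paper explains that "if $\be\in\End(\tM)$ is commuting with $\th$, then the restriction $\be|_M$ gives an endomorphism on $M$," and the lemma is then stated in that context. The paper's proof simply invokes "Since $\be$ and $\th$ are commuting" — this is part of the setup, not a consequence of the stated hypotheses, and indeed one cannot derive it from the hypotheses alone. (Perturbing a $\th$-commuting $\be'$ by $\Ad u$ with $u\in U(\be'(\tM)'\cap\tM)$ a unitary whose $\th$-cocycle $u^*\th_t(u)$ is nonscalar leaves $\be(\tM)$, the expectation $E$, and the relative commutant unchanged but destroys the commutation with $\th$.) Your attempted derivation is also flawed on its own terms: the claim that "the continuity of $\th|_N$ with the hypothesis $N^\th=\C$ collapses $N$ to $Z(\tM)$" is false in general — ergodicity of $\th$ on $N$ does not force $N$ to be central; the correct conclusion, reached only later, is that $\si(M)'\cap M=\C$, which is a weaker statement about the $\th$-fixed part, not all of $N$. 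You also never justify why $\al_s:=\ga_s\th_{-s}$ would be inner, which is what the claimed $u_s$ requires. (And notice a circularity: if $u_s\in N=Z(\tM)$ then $\Ad u_s=\id$ automatically, so the appeal to $N^\th=\C$ to make $u_s$ scalar would be superfluous.)

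Once $\be\th=\th\be$ is granted, your second paragraph is essentially the paper's proof with a slightly different technical scaffold. The paper invokes Izumi's invariant pair $(\vph,\ph_\si)$ so that $\si^\vph_t$ is trivial on $\si(M)'\cap M$, reducing the condition $(\be(\tM)'\cap\tM)^\th=\C$ to $\{[D\chi:D\vph]_t\}'\cap(\si(M)'\cap M)=\C$, and then uses the factoriality and finite-dimensionality of $\si(M)'\cap M$ to conclude it is $\C$. You instead fix an arbitrary $\vph$, identify $\be(\tM)'\cap M$ with the $\si^\psi$-fixed part of $\rho(M)'\cap M$, and conclude from the finite-dimensionality (from finite index, which is what the reference to Hiai is really doing for you) and the fact that a centralizer of a faithful state on a finite-dimensional algebra $\oplus_i M_{n_i}(\C)$ has dimension at least $\sum_i n_i$. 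Both routes arrive at $\rho(M)'\cap M=\C$, after which the cocycle comparison $c_\be(t)c_\rho(t)^*=e^{-ist}$ is the same in both accounts. Your version avoids the invariant-pair machinery at the cost of a slightly more delicate bookkeeping of the modular group on the relative commutant, which is a fair trade and shows the lemma does not really depend on the invariant pair per se.
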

\begin{proof}
Let $E\col \tM\ra\be(\tM)$ be such an expectation
and $\si$ the restriction of $\be$ to $M$.
Then $E|_M\col M\ra \si(M)$ is a conditional expectation
with finite probability index.
Thus the statistical dimension $d(\si)$ is finite.
Take a dominant weight $\vph$ on $M$ so that
$(\vph,\ph_\si)$ is an invariant pair
\index{invariant pair}
\cite[Definition 2.2]{Iz},
that is, $d(\si)\vph\circ\ph_\si=\vph$.
Since $\be$ and $\th$ are commuting,
$\be(\la^\vph(t))\la^\vph(t)^*\in \tM^\th=M$ is
a $\si^\vph$-cocycle.
By Connes' theorem,
we have a faithful normal semifinite weight $\chi$ on $M$
such that
$\be(\la^\vph(t))=[D\chi:D\vph]_t\la^\vph(t)$.
For  $x\in M$, we have
\begin{align*}
\si_t^\vph(\si(x))
&=
\si_t^{\vph\circ\ph_\si}(\si(x))
=
\si(\si_t^\vph(x))
\\
&=
\be(\la^\vph(t)x\la^\vph(t)^*)
\\
&=
[D\chi:D\vph]_t
\la^\vph(t)\si(x)\la^\vph(t)^*
[D\chi:D\vph]_t^*
=
\si_t^\chi(\si(x)).
\end{align*}
This implies $\si_t^\chi(\si(M))=\si(M)$
and $[D\vph:D\chi]_t\in \si(M)'\cap M$.
Since $(\vph,\ph_\si)$ is an invariant pair,
the modular automorphism $\si_t^{\vph}$ is trivial
on $\si(M)'\cap M$.
Therefore,
\[
(\be(\tM)'\cap\tM)^\th
=
\be(\la^\vph(\R))'\cap (\si(M)'\cap M)
=
\{[D\chi:D\vph]_t\}_{t\in\R}'\cap  (\si(M)'\cap M).
\]
By our assumption, $(\be(\tM)'\cap\tM)^\th=\C$.
In particular, $\si(M)'\cap M$
is a finite dimensional type I factor.
However, the map $\R\ni t\mapsto [D\chi:D\vph]_t\in \si(M)'\cap M$
is a one-parameter unitary group,
and we must have $\si(M)'\cap M=\C$,
that is, $\si=\be|_M$ is irreducible.
Hence
$[D\chi:D\vph]_t=e^{-ist}$ for some $s\in\R$,
and we obtain
\begin{align*}
\be(\la^\vph(t))
&=
[D\chi:D\vph]_t\la^\vph(t)
=
e^{-ist}\la^\vph(t)
\\
&=
e^{-ist}\tsi(\la^\vph(t))
=
\th_s(\tsi(\la^\vph(t))).
\end{align*}
This implies $\be=\th_s\tsi$.
\end{proof}

\subsection{Modular endomorphisms}
\label{subsection:modular-endo}
Let $\al$ be an automorphism on a factor $M$.
If the canonical extension $\tal$ is an inner automorphism,
then we will say that
$\al$ is an \emph{extended modular automorphism},
\index{extended modular automorphism}
which is slightly different from the original definition of \cite{CT}.
The set of all extended modular automorphisms is denoted by
$\Aut(M)_{\rm m}$, which is a normal subgroup of $\Aut(M)$.
A generalization of this notion to an endomorphism is
presented in \cite[Definition 3.1]{Iz} as follows:
\begin{defn}[Izumi]
Let $\rho$ be an endomorphism on a factor $M$ with finite index.
Then $\rho$ is called a \emph{modular endomorphism}
\index{modular endomorphism}
\index{endomorphism!modular--}
when $\trho$ is inner.
\end{defn}

The set of modular endomorphisms is denoted by $\End(M)_{\rm m}$.
Let $\rho$ be a modular endomorphism.
Then we can find a family of isometries $\{V_i\}_{i=1}^n$ in $\tM$
such that $\trho=\sum_i V_i\cdot V_i^*$.
The commutativity of $\trho$ and $\th$
implies $c(t)_{ij}:=V_i^*\th_t(V_j)$ is central.
Then $c(t):=(c(t)_{ij})_{ij}\in Z(\tM)\oti B(\C^n)$
is a $\th\oti\id$-cocycle, that is, it satisfies
$c(s+t)=c(s)(\th_s\oti\id)(c(t))$.
This correspondence gives a bijection
between $\Sect(M)_{\rm m}$,
the sectors of modular endomorphisms,
and
the disjoint union of $H^1(Z(\tM)\oti B(\C^n))$
(see \cite[Theorem 3.3]{Iz})
for $n\in\N$.

\begin{lem}\label{lem:modular-can}
Let $\{V_i\}_{i=1}^n$ be isometries in $\tM$
such that $\sum_{i=1}^nV_iV_i^*=1$ and $V_i^*\th_t(V_j)\in Z(\tM)$
for all $i,j$.
Then there uniquely exists a modular endomorphism $\si$ on $M$
such that $\tsi=\sum_{i=1}^nV_i\cdot V_i^*$.
\end{lem}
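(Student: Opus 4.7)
My plan is to define $\si$ as $\be|_M$ where $\be := \sum_{i=1}^n V_i\cdot V_i^*$ on $\tM$, and to verify $\tsi = \be$ by matching Connes cocycles on the canonical generators of $\tM$.

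I would first observe that $\{V_i\}_{i=1}^n$ is a Cuntz family: since the $V_iV_i^*$ are projections summing to $1$, they are pairwise orthogonal, which forces $V_i^*V_j = \de_{ij}$. Hence $\be$ is a unital normal $*$-endomorphism of $\tM$, with natural left inverse $\ph'(y) := n^{-1}\sum_i V_i^*yV_i$. Setting $c_{ij}(t) := V_i^*\th_t(V_j) \in Z(\tM)$, the identity $\th_t(V_j) = \sum_i V_i c_{ij}(t)$, together with the centrality of the $c_{ij}(t)$ and the unitarity $\sum_i c_{ji}(t)c_{ki}(t)^* = \de_{jk}$ (equivalent to the Cuntz relations for $\{V_i\}$), yields $\be\circ\th_t = \th_t\circ\be$. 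Thus $\be$ preserves $M = \tM^\th$, and $\si := \be|_M$ is a normal endomorphism of $M$; since $\ph'$ also commutes with $\th$, it restricts to a left inverse of $\si$ of finite probability index, placing $\si$ in $\End(M)_0$ and making $\tsi$ well-defined.

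The core step is the equality $\tsi = \be$. Both are endomorphisms of $\tM$ commuting with $\th$ and restricting to $\si$ on $M$, so it suffices to match them on the generators $\la^\vph(t)$ for a fixed $\vph \in W(M)$. I would compute $\be(\la^\vph(t)) = w_t\la^\vph(t)$ with $w_t := \sum_i V_i\si_t^{\tvph}(V_i^*)$, verify $w_t \in U(M)$ by a $\th$-invariance argument mirroring the one for $\be\th = \th\be$, and confirm that $w_t$ is a $\si^\vph$-cocycle. Connes' theorem then gives $w_t = [D\chi:D\vph]_t$ for some $\chi \in W(M)$, and the identity $V_i^*\si(x)V_j = \de_{ij}x$ yields $\si_t^\chi\circ\si = \si\circ\si_t^\vph$. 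To conclude, I would identify $\chi$ with $d(\si)\cdot\vph\circ\ph_\si$: the previous relation pins down $\chi|_{\si(M)}$, and tracking the statistical dimension via the minimal expectation $E_\si$, together with the relation between $E_\si$ and the Cuntz-family expectation $\si\circ\ph'|_M$, forces $\chi = d(\si)\cdot\vph\circ\ph_\si$. This yields $w_t = d(\si)^{it}[D\vph\circ\ph_\si:D\vph]_t = \tsi(\la^\vph(t))\la^\vph(t)^{-1}$, so $\be = \tsi$, and $\si$ is modular since $\be$ is inner.

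Uniqueness is immediate: any modular $\si'$ with $\widetilde{\si'} = \be$ must satisfy $\si' = \be|_M = \si$. The main obstacle will be the identification $\chi = d(\si)\cdot\vph\circ\ph_\si$: when $\si$ is reducible, the Cuntz-family left inverse $\ph'|_M$ generally differs from the standard $\ph_\si$, and the scalar $d(\si)$ must be correctly extracted via the minimal expectation.
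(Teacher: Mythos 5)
Your route is genuinely different from the paper's. You try to prove $\tsi=\be$ by direct computation of Connes cocycles on the generators $\la^\vph(t)$. The paper instead invokes Izumi's Theorem 3.3~(1): given the $\th$-cocycle $c(t)_{ij}:=V_i^*\th_t(V_j)\in Z(\tM)\oti B(\C^n)$, that theorem produces a \emph{reference} Cuntz family $\{W_i\}$ with the same cocycle such that $\rho_W(\cdot)=\sum_iW_i\cdot W_i^*$ is already the canonical extension of a modular endomorphism $\si_1:=\rho_W|_M$; then $w:=\sum_iV_iW_i^*$ is a $\th$-fixed unitary (hence in $M$), $\rho_V=\Ad w\circ\rho_W$, so $\rho_V$ is the canonical extension of $\Ad w\circ\si_1$. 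Nothing about minimal expectations or statistical dimensions needs to be computed from scratch; it is all packaged in Izumi's theorem.

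The problem is that the one step you explicitly defer, the identification $\chi=d(\si)\,\vph\circ\ph_\si$, is exactly the content that Izumi's theorem supplies, and it is not routine. Your cocycle $w_t=\sum_iV_i\si_t^{\tvph}(V_i^*)$ does give a weight $\chi$ via Connes' theorem, and the relation $\si_t^\chi\circ\si=\si\circ\si_t^\vph$ does pin down $\chi$ on $\si(M)$, but when $\si$ is reducible this leaves $\chi$ undetermined up to a contribution from $\si(M)'\cap M$: indeed, if $\be_1,\be_2$ are two endomorphisms of $\tM$ commuting with $\th$ and both restricting to $\si$, then $\be_1(\la^\vph(t))^*\be_2(\la^\vph(t))\in(\si(M)'\cap\tM)^\th=\si(M)'\cap M$, which is non-trivial for reducible $\si$. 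So ``both commute with $\th$ and restrict to $\si$'' does \emph{not} force $\be=\tsi$; you must actually show the cocycles coincide. To do that you would need to prove that the Cuntz-family expectation $\si\circ\ph'|_M$ is the minimal expectation $E_\si$, or else work out the precise discrepancy; for this you also need to take $\vph$ a dominant weight with $(\vph,\ph_\si)$ an invariant pair, which you do not mention. You flag all of this as ``the main obstacle'' but offer no argument, so the proof does not close. Filling the gap would amount to re-deriving the relevant part of Izumi's Theorem 3.3~(1), which is precisely what the paper's proof avoids by citing it.
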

\begin{proof}
Let $c(t)_{ij}:=V_i^*\th_t(V_j)$
and $c(t):=(c(t)_{ij})_{ij}\in Z(\tM)\oti B(\C^n)$,
which is a $\th\oti\id$-cocycle.
We set the endomorphism $\rho_V(\cdot):=\sum_{i=1}^nV_i\cdot V_i^*$
on $\tM$,
which is commuting with $\th$.
Thanks to \cite[Theorem 3.3 (1)]{Iz},
we have a family of isometries $\{W_i\}_{i=1}^n$ in $\tM$
such that $W_i^*\th_t(W_j)=c(t)_{ij}$
and
the endomorphism $\rho_W(\cdot):=\sum_{i=1}^n W_i\cdot W_i^*$
is the canonical extension of the modular endomorphism
$\si_1:=\rho_W|_M$.
Then the unitary $w:=\sum_{i=1}^nV_iW_i^*$ is fixed by $\th$,
and it is contained in $M$.
Since $\rho_V|_M=\Ad w\circ\si_1$,
$\rho_V$ is the canonical extension
of $\Ad w\circ \si_1$.
\end{proof}

\subsection{Ultraproduct von Neumann algebras}

Let us briefly recall
the notion of an ultraproduct von Neumann algebra.
Our references are \cite{AH,MT2,MT-Roh,Oc}.
Let $M$ be a von Neumann algebra.
Denote by $\ell^\infty(M)$ the C$^*$-algebra
which consists of norm-bounded sequences in $M$.
Let $\om$ be a free ultrafilter on $\N$.
Let $\cI_\om(M)$ be the set of $\om$-trivial sequences
\index{sequence!$\om$-trivial--}
in $M$,
that is,
$(x^\nu)_\nu\in \cI_\om(M)$
if and only if
$x^\nu\to0$ in the strong$*$ topology as $\nu\to\om$.

Then we let $\cM^\om(M)$ the normalizer of $\cI_\om(M)$ in $\ell^\infty(M)$,
and set the quotient C$^*$-algebra $M^\om:=\cM^\om(M)/\cI_\om(M)$,
which we will call the \emph{ultraproduct von Neumann algebra}.
\index{ultraproduct von Neumann algebra}
The equivalence class of $(x^\nu)_\nu$
is denoted by $(x^\nu)^\om$ for simplicity.

By mapping $M\ni x\mapsto (x)^\om\in M^\om$,
we can regard $M$ as a von Neumann subalgebra of $M^\om$.
We denote $(x)^\om$ by $x^\om$ for short.

An element $(x^\nu)_\nu\in \ell^\infty(M)$ is said to be
$\om$-central
\index{sequence!$\om$-central--}
if $\|x^\nu\vph-\vph x^\nu\|_{M_*}\to0$ as $\nu\to\om$
for all $\vph\in M_*$.
Denote by $\cC_\om(M)$ the set of $\om$-central sequences.
Then it is known that $\cI_\om(M)\subs\cC_\om(M)\subs\cM^\om(M)$.
The quotient C$^*$-algebra $M_\om:=\cC_\om(M)/\cI_\om(M)$
is called the \emph{asymptotic centralizer}.
\index{asymptotic centralizer}

We shortly write $\cI_\om,\cC_\om$ and $\cM^\om$
for $\cI_\om(M),\cC_\om(M)$ and $\cM^\om(M)$,
respectively
if there is no confusion.

In this subsection,
we would like to show the natural inclusion
$M^\omega\subs\tM^\om$.
To prove this,
we need Ando--Haagerup's result
which characterizing $\cM^\om$
in terms of a modular group \cite{AH}.
Let us restate their result in terms of $\om$-equicontinuity.

Let $\al$ be a flow on $M$,
that is, $\al$ is an $\R$-action on $M$.
Let us fix a faithful state $\vph\in M_*$.
An element $(x^\nu)_\nu\in\ell^\infty(M)$ is said to be
\emph{$(\al,\om)$-equicontinuous}
\index{equicontinuous!$(\al,\om)$--}
\cite[Definition 3.4]{MT-Roh}
if
for any $\vep>0$,
there exists $\de>0$ and $W\in\om$
such that
if $|t|<\de$ and $\nu\in W$,
then we have
$\|\al_t(x^\nu)-x^\nu\|_\vph^\sharp<\vep$.

\begin{thm}
\label{thm:AH-equicont}
Let $(x^\nu)_\nu\in\ell^\infty(M)$.
Let $\vph$ be a faithful normal state on $M$.
Then the following statements are equivalent:
\begin{enumerate}
\item
$(x^\nu)_\nu\in\cM^\om$;

\item
For any $\vep>0$,
there exists $a>0$ and $(y^\nu)_\nu\in\ell^\infty(M)$
such that
\begin{itemize}
\item
$\lim_{\nu\to\om}\|x^\nu-y^\nu\|_\vph^\sharp<\vep$;

\item
$y^\nu\in M(\si^\vph,[-a,a])$ for all $\nu\in\N$;
\end{itemize}

\item
$(x^\nu)_\nu$ is $(\si^\ps,\om)$-equicontinuous
for some faithful state $\ps\in M_*$;

\item
$(x^\nu)_\nu$ is $(\si^\ps,\om)$-equicontinuous
for all faithful normal weight $\ps$.
\end{enumerate}
\end{thm}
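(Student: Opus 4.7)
My plan is to set up the cycle of implications
$(1)\Leftrightarrow(2)\Rightarrow(4)\Rightarrow(3)\Rightarrow(2)$.
I would quote the equivalence $(1)\Leftrightarrow(2)$ directly from the Ando--Haagerup characterization of $\cM^\om$, since this is the hard spectral-theoretic input that I have no intention of redoing. The implication $(4)\Rightarrow(3)$ is immediate, so the real content lies in $(2)\Rightarrow(4)$ and $(3)\Rightarrow(2)$.

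For $(2)\Rightarrow(4)$, the idea is that spectrally bounded elements are automatically equicontinuous. If $y\in M(\si^\vph,[-a,a])$, then $t\mapsto \si^\vph_t(y)$ extends to an entire function of exponential type $a$, and a standard Paley--Wiener-type estimate gives
\[
\|\si^\vph_t(y)-y\|_\vph^\sharp\leq C(a)|t|\cdot\|y\|
\]
with $C(a)$ depending only on $a$. Applied uniformly to the approximating sequence $\{y^\nu\}$ in condition (2), this yields $(\si^\vph,\om)$-equicontinuity of $\{y^\nu\}$, and the $\vep$-closeness of $(x^\nu)$ to $(y^\nu)$ in $\|\cdot\|_\vph^\sharp$ transfers equicontinuity to $(x^\nu)$. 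To upgrade from $\si^\vph$ to an arbitrary faithful normal weight $\ps$, I would write $\si^\ps_t=\Ad[D\ps:D\vph]_t\circ\si^\vph_t$ and use the strong continuity of the Connes cocycle $t\mapsto [D\ps:D\vph]_t$ at $t=0$; on a norm-bounded sequence this pointwise statement is enough to absorb the change of modular flow into the estimate.

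For $(3)\Rightarrow(2)$, I would smooth the sequence by spectral convolution. Given $\vep>0$, pick a nonnegative $f\in L^1(\R)$ with $\int f=1$ and Fourier support in $[-a,a]$, and define
\[
y^\nu:=\int_\R f(t)\,\si^\ps_t(x^\nu)\,dt.
\]
Then $y^\nu\in M(\si^\ps,[-a,a])$ with $\|y^\nu\|\leq\|x^\nu\|$, and
\[
\|y^\nu-x^\nu\|_\ps^\sharp\leq\int f(t)\,\|\si^\ps_t(x^\nu)-x^\nu\|_\ps^\sharp\,dt,
\]
which $(\si^\ps,\om)$-equicontinuity makes uniformly small along $\om$ once $f$ is concentrated near $0$. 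It remains to convert a $\si^\ps$-spectral bound in $\|\cdot\|_\ps^\sharp$ into a $\si^\vph$-spectral bound in $\|\cdot\|_\vph^\sharp$; this is done by another convolution against the Connes cocycle, enlarging $a$ by a controlled amount.

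The main obstacle I expect is managing the change of reference weight, which is unavoidable because (2) fixes $\vph$ while (3) and (4) range over arbitrary $\ps$. The Connes cocycle provides the formal dictionary between $\si^\vph$ and $\si^\ps$, but keeping $\|\cdot\|_\vph^\sharp$ estimates uniform while performing convolutions adapted to the \emph{wrong} modular flow, and simultaneously preserving spectral boundedness when translating between the two flows, is where the most delicate bookkeeping occurs. The two convolution/Paley--Wiener estimates themselves are routine; the real work is reconciling the different weights on norm-bounded sequences.
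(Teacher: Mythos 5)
Your cycle $(1)\Leftrightarrow(2)\Rightarrow(4)\Rightarrow(3)\Rightarrow(2)$ is logically sound in shape, but it front-loads the hardest part of the theorem in a way that your sketch does not actually resolve. The paper's strategy is carefully designed to \emph{avoid} changing the modular flow: it proves $(2)\Rightarrow(3)$ by choosing $\ps=\vph$ (which is allowed, since $(3)$ only asks for \emph{some} faithful state), then proves $(3)\Rightarrow(1)$ by a direct Gaussian-smoothing argument that never leaves the $\si^\vph$-world, and finally gets $(1)\Rightarrow(4)$ by citing \cite[Lemma 3.8(2)]{MT-Roh}. The only place a change of weight occurs is that last citation. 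Your plan places the change of weight in \emph{both} $(2)\Rightarrow(4)$ and $(3)\Rightarrow(2)$, and in each case the argument has a genuine gap.

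In $(2)\Rightarrow(4)$, you write $\si^\ps_t=\Ad u_t\circ\si^\vph_t$ with $u_t=[D\ps:D\vph]_t$ and claim that ``strong continuity of the cocycle at $t=0$'' plus norm-boundedness of $(x^\nu)$ suffices. It does not. Decomposing $\si_t^\ps(y^\nu)-y^\nu = u_t(\si_t^\vph(y^\nu)-y^\nu)u_t^* + (u_t y^\nu u_t^*-y^\nu)$, the second summand produces a term of the form $\|(u_t-1)y^\nu\xi_\vph\|$, and strong convergence $u_t\to1$ only gives pointwise (not uniform over a bounded family) decay. Moreover, $\Ad u_t$ is not $\|\cdot\|^\sharp_\vph$-isometric, so the first summand cannot be dominated by $\|\si^\vph_t(y^\nu)-y^\nu\|^\sharp_\vph$ either. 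This is exactly the content that the paper delegates to an external lemma rather than calling it bookkeeping; some additional structural input (spectral boundedness plus properties of the Connes cocycle on such elements) is required, and your sketch supplies none.

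In $(3)\Rightarrow(2)$, the convolution $y^\nu=\int f(t)\si^\ps_t(x^\nu)\,dt$ produces elements in $M(\si^\ps,[-a,a])$, not in $M(\si^\vph,[-a,a])$ as $(2)$ requires. The final step, ``convert a $\si^\ps$-spectral bound into a $\si^\vph$-spectral bound by another convolution against the Connes cocycle, enlarging $a$,'' does not describe an actual operation: the spectral subspaces for $\si^\vph$ and $\si^\ps$ are in general incomparable (e.g.\ if $\vph$ is tracial then $M(\si^\vph,[-b,b])=M$ for all $b$, while $M(\si^\ps,[-a,a])$ is a proper subspace), and $[D\ps:D\vph]_t$ typically has unbounded Arveson spectrum. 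A direct translation of spectral support is not available; the way one in fact gets from $(3)$ back to $(2)$ is precisely through $(1)$, which the paper proves from $(3)$ by a weight-free argument. I'd recommend adopting the paper's route: keep your $(1)\Leftrightarrow(2)$ citation and the Paley--Wiener-type estimate for $(2)\Rightarrow(3)$ with $\ps=\vph$, add the direct absorption argument for $(3)\Rightarrow(1)$ (which uses a Gaussian $g_r$ and the modular conjugation to control $\|y^\nu x^\nu\|_\vph$ for $\om$-trivial $y^\nu$), and cite or separately prove the $(1)\Rightarrow(4)$ lemma rather than treating the change of weight as an afterthought.
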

\begin{proof}
The equivalence (1)$\Leftrightarrow$(2)
is due to \cite[Proposition 4.11]{AH}.

(2)$\Rightarrow$(3).
Let $\vep>0$.
Take $a$ and $y^\nu$ as in (2).
Take $W\in \om$
so that
if $\nu\in W$,
then $\|x^\nu-y^\nu\|_\vph^\sharp<\vep$.
For all $t\in\R$ and $\nu\in W$,
we have
\begin{align*}
\|\si_t^\vph(x^\nu)-x^\nu\|_\vph^\sharp
&\leq
\|\si_t^\vph(x^\nu)-\si_t^\vph(y^\nu)\|_\vph^\sharp
+
\|\si_t^\vph(y^\nu)-y^\nu\|_\vph^\sharp
+
\|y^\nu-x^\nu\|_\vph^\sharp
\\
&=
2\|x^\nu-y^\nu\|^\sharp
+
\|\si_t^\vph(y^\nu)-y^\nu\|_\vph^\sharp
\\
&<
2\vep
+
\|\si_t^\vph(y^\nu)-y^\nu\|_\vph^\sharp.
\end{align*}

Take $f\in L^1(\R)$ such that $\hat{f}(x)=1$
if $|x|\leq a$.
Then $\si_f^\vph(y^\nu)=y^\nu$,
and
\[
\|\si_t^\vph(y^\nu)-y^\nu\|_\vph^\sharp
=
\|\si_{\la_t f-f}^\vph(y^\nu)\|_\vph^\sharp
\leq
\|\la_t f-f\|_1\|y^\nu\|_\vph^\sharp,
\]
where $\la_t$ denotes the left regular representation on $\R$.
Thus we have
\[
\|\si_t^\vph(x^\nu)-x^\nu\|_\vph^\sharp
<2\vep+\|\la_t f-f\|_1\|y^\nu\|_\vph^\sharp
\quad
\mbox{for all }
t\in\R,\ \nu\in W.
\]
It is obvious that we can take $\de>0$
as in the definition of
$(\si^\vph,\om)$-equicontinuity.

(3)$\Rightarrow$(1).
Let $(x^\nu)_\nu\in\ell^\infty(M)$ be a $(\si^\vph,\om)$-equicontinuous
sequence.
Let $\vep>0$ and take $\de>0$ and $W\in\om$
so that
if $|t|<\de$ and $\nu\in W$,
then we have
$\|\si_t^\vph(x^\nu)-x^\nu\|_\vph<\vep$.

For $r>0$,
we let $g_r(t):=\sqrt{1/\pi r}\,e^{-t^2/r}$ for $t\in\C$.
Then there exists an enough small $r$
such that
\[
\|\si_{g_r}^\vph(x^\nu)-x^\nu\|_\vph
\leq
\int_\R g_r(t)\|\si_t^\vph(x^\nu)-x^\nu\|_\vph\,dt
<2\vep
\quad
\mbox{for all }
\nu\in W.
\]

Now let $(y^\nu)_\nu\in\cI_\om$ with $\|y^\nu\|\leq1$
for all $\nu\in\N$.
Then for $\nu\in W$, we have
\begin{align*}
\|y^\nu x^\nu\|_\vph
&=
\|y^\nu x^\nu \xi_\vph\|
\leq
\|y^\nu (x^\nu-\si_{g_r}^\vph(x^\nu))\xi_\vph\|
+
\|y^\nu \si_{g_r}^\vph(x^\nu)\xi_\vph\|
\\
&=
\|y^\nu\|\|x^\nu-\si_{g_r}^\vph(x^\nu)\|_\vph
+
\|y^\nu \si_{g_r}^\vph(x^\nu)\xi_\vph\|
\\
&<
\vep
+
\|J_\vph \si_{i/2}^\vph(\si_{g_r}^\vph(x^\nu))^* J_\vph y^\nu\xi_\vph\|
\\
&\leq
\vep
+
\|g_r(\cdot-i/2)\|_1\|x^\nu\|\|y^\nu\|_\vph,
\end{align*}
where $\xi_\vph$ and $J_\vph$ denote the GNS vector of $\vph$
and the modular conjugation.
Then it follows that
$\lim_{\nu\to\om}\|y^\nu x^\nu\|_\vph\leq\vep$,
and we are done.

(3)$\Rightarrow$(4).
Since (1) and (3) are equivalent,
the sequence $(x^\nu)_\nu$ belongs to $\cM^\om$.
Then by \cite[Lemma 3.8 (2)]{MT-Roh},
it is $(\si^\ps,\om)$-equicontinuous
for any faithful normal semifinite weight $\ps$ on $M$.

(4)$\Rightarrow$(3).
This is a trivial implication.
\end{proof}

\begin{rem}
It is not possible to replace the phrase
in (4) of the previous theorem
with ``for some faithful normal semifinite weight''.
Otherwise,
normalizing sequences in $M=B(\ell^2)$ would be all of
norm bounded sequences.
However, it contradicts the fact that
for a type III von Neumann subalgebra $N\subs B(\ell^2)$,
$\cM^\om(N)$ is strictly smaller than $\ell^\infty(N)$.
\end{rem}

Now let $\al$ be a flow on a von Neumann algebra $M$.
Denote by $\cE_\al^\om(M)$ the set of $(\al,\om)$-equicontinuous
sequences.
The canonical embedding $\pi_\al\col M\to M\rti_\al\R$
induces
$\pi_\al\col \cE_\al^\om(M)\cap\cM^\om(M)\to \ell^\infty(M\rti_\al\R)$
by putting $\pi_\al((x^\nu)_\nu):=(\pi_\al(x^\nu))_\nu$.

\begin{lem}
If $(x^\nu)_\nu\in \cE_\al^\om(M)\cap\cM^\om(M)$,
then $\pi_\al((x^\nu)_\nu)\in \cM^\om(M\rti_\al\R)$.
\end{lem}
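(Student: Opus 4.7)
My plan is to apply Theorem~\ref{thm:AH-equicont} inside $N := M\rti_\al\R$: it suffices to exhibit a faithful normal state on $N$ (or, by the same Gaussian mollification argument, a faithful normal semifinite weight) whose modular flow makes $(\pi_\al(x^\nu))_\nu$ $\om$-equicontinuous. The natural candidate is the dual weight $\tvph$ on $N$ of a fixed faithful normal state $\vph\in M_*$, whose modular group satisfies the standard identity
\[
\si_t^{\tvph}(\pi_\al(x)) = \pi_\al(\si_t^\vph(x)),\qquad x\in M.
\]
Fixing any auxiliary faithful normal state $\ps_0$ on $N$ and setting $\vph' := \ps_0\circ\pi_\al\in M_*$, this identity combined with the $*$-homomorphism property of $\pi_\al$ gives
\[
\|\si_t^{\tvph}(\pi_\al(x^\nu)) - \pi_\al(x^\nu)\|_{\ps_0}^\sharp
 = \|\si_t^\vph(x^\nu) - x^\nu\|_{\vph'}^\sharp,
\]
which is uniformly small as $t\to0$ along $\om$ by Theorem~\ref{thm:AH-equicont} applied inside $M$: the hypothesis $(x^\nu)_\nu\in\cM^\om(M)$ forces $(\si^\vph,\om)$-equicontinuity for any faithful reference state, in particular $\vph'$.

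To align this with Theorem~\ref{thm:AH-equicont}(3) as literally stated (for states rather than weights), one upgrades $\tvph$ to a faithful normal state $\ps$ on $N$ via the Connes cocycle $u_t := [D\ps:D\tvph]_t$, giving $\si_t^\ps = \Ad u_t\circ\si_t^{\tvph}$. Then the decomposition
\[
\si_t^\ps(\pi_\al(x^\nu)) - \pi_\al(x^\nu)
 = u_t\pi_\al\bigl(\si_t^\vph(x^\nu) - x^\nu\bigr)u_t^* + \bigl[u_t,\pi_\al(x^\nu)\bigr]u_t^*
\]
splits the problem in two. The first summand is handled as in the previous paragraph, using the predual continuity $u_t^*\ps u_t\to\ps$ to compare $\|u_t\,\cdot\,u_t^*\|_\ps^\sharp$ with $\|\cdot\|_\ps^\sharp$. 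The commutator term is exactly where the hypothesis $(x^\nu)_\nu\in\cE_\al^\om(M)$ is used: since $u_t$ is built out of the implementing unitaries $\la^\al(s)$ of $N$ whose conjugation action on $\pi_\al(M)$ is $\pi_\al\circ\al_s$, and since $\pi_\al(x^\nu)$ automatically commutes with $\pi_\al(M)$-valued factors, the commutator $[u_t,\pi_\al(x^\nu)]$ is controlled uniformly in $\nu$ by $\sup_{|s|\le\de}\|\al_s(x^\nu)-x^\nu\|^\sharp$, which is small by $(\al,\om)$-equicontinuity.

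The main obstacle is making this commutator estimate quantitative: one must approximate $u_t$, with explicit dependence as $t\to0$, by finite sums $\sum_i\pi_\al(m_i)\la^\al(f_i)$ with $f_i$ localized near zero, and then extract uniform smallness in $\nu$ from the uniform $\al$-continuity of $(x^\nu)_\nu$. An alternative route that bypasses this analysis is to invoke Theorem~\ref{thm:AH-equicont}(2) directly: apply spectral-subspace approximation successively in the $\si^\vph$-flow (using $\cM^\om(M)$) and in the $\al$-flow (using $\cE_\al^\om(M)$) to produce approximants $y^\nu\approx x^\nu$ with simultaneously bounded $\si^\vph$- and $\al$-spectra; then $\pi_\al(y^\nu)$ lies in a bounded spectral subspace of $\si^{\tvph}$ inside $N$, and (2)$\Rightarrow$(1) in $N$ concludes the proof.
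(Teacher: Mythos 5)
Your route differs fundamentally from the paper's, and it contains genuine gaps.

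You try to verify membership in $\cM^\om(M\rti_\al\R)$ by checking $(\si^\ps,\om)$-equicontinuity for a suitable $\ps$ via Theorem~\ref{thm:AH-equicont}. The natural object $\tvph$ on which your first display is based is a faithful normal \emph{semifinite weight}, not a state, and the Remark immediately following Theorem~\ref{thm:AH-equicont} is an explicit warning that $(\si^\ps,\om)$-equicontinuity for a weight $\ps$ does \emph{not} imply $\cM^\om$ membership (the counterexample being $B(\ell^2)$ with the trace, where the modular flow is trivial). So establishing $(\si^{\tvph},\om)$-equicontinuity of $\pi_\al(x^\nu)$ proves nothing, and the upgrade to a state is not a cosmetic adjustment but the entire content. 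Your proposed upgrade via the Connes cocycle $u_t = [D\ps:D\tvph]_t$ then stalls on the commutator term: the assertion that ``$\pi_\al(x^\nu)$ automatically commutes with $\pi_\al(M)$-valued factors'' is simply false ($\pi_\al$ is a $*$-homomorphism, not a map onto the center), and there is no reason $[u_t,\pi_\al(x^\nu)]u_t^*$ should be small in $\|\cdot\|_\ps^\sharp$ uniformly in $\nu$ — that would essentially require $\pi_\al(x^\nu)$ to already be a normalizing sequence in $N$, which is what you are trying to prove. The suggested quantitative localization of $u_t$ in terms of $\la^\al(f_i)$ with $f_i$ supported near $0$ as $t\to0$ also has no basis: the cocycle tends to $1$ strong$*$, but its crossed-product Fourier expansion is not localized. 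Your alternative route (joint spectral-subspace approximation for both $\si^\vph$ and $\al$) is only sketched, and the step is nontrivial: the two flows need not commute, so truncating the $\si^\vph$-spectrum can destroy $\al$-spectral boundedness and vice versa.

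The paper's proof avoids equicontinuity in $N$ altogether. It directly verifies the normalizer condition: given an $\om$-trivial $(y^\nu)_\nu$ in $M\rti_\al\R$ and a vector $\xi\oti f$ with $f$ compactly supported, the $(\al,\om)$-equicontinuity of $(x^\nu)_\nu$ lets one approximate $\pi_\al(x^\nu)(\xi\oti f)$ by a finite Riemann sum $\sum_{j}(\al_{-t_j}(x^\nu)\oti 1_{K_j})(\xi\oti f)$, uniformly in $\nu$ along $\om$. Each summand $(\al_{-t_j}(x^\nu)\oti 1_{K_j})_\nu$ lies in $\cM^\om(M\oti B(L^2(\R)))$ because $\al_{-t_j}$ is a fixed automorphism and $1_{K_j}$ a fixed projection applied to $(x^\nu)_\nu\in\cM^\om(M)$, and since $\cI_\om(M\rti_\al\R)\subseteq\cI_\om(M\oti B(L^2(\R)))$ this kills each term in the limit. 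This is both more elementary and structurally cleaner than anything going through the dual weight, because it never needs to identify a faithful normal \emph{state} on the crossed product with a useful modular flow.
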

\begin{proof}
Let $(y^\nu)_\nu$ be an $\om$-trivial sequence in $M\rti_\al\R$
with $\|y^\nu\|\leq1$ for all $\nu$.
We will show that $(y^\nu \pi_\al(x^\nu))_\nu$ is $\om$-trivial.
It suffices to show that
$\|y^\nu \pi_\al(x^\nu)(\xi\oti f)\|\to0$
as $\nu\to\om$
for a vector $\xi\in H$ and $f\in L^2(\R)$
with ${\rm supp}(f)\subs[-R,R]$ with $R>0$.

Let $\vep>0$.
Since $(x^\nu)_\nu$ is $(\al,\om)$-equicontinuous,
there exists an enough large $N\in\N$ and $W\in \om$
such that if $|s-t|<2R/N$ and $\nu\in W$,
then$\|\al_s(x^\nu)\xi-\al_t(x^\nu)\xi\|<\vep$.
We let $t_j:=(2j-N)R/N$
and $K_j:=[t_j,t_{j+1}]$ for $j=0,\dots,N-1$.
Then for $\nu\in W$,
we obtain the following:
\begin{align*}
&\left
\|\Big{(}
\pi_\al(x^\nu)-\sum_{j=0}^{N-1}(\al_{-t_j}(x^\nu)\oti 1_{K_j})
\Big{)}(\xi\oti f)\right\|^2
\\
&=
\sum_{j=0}^{N-1}
\int_{K_j}
\|(\al_{-s}(x^\nu)-\al_{-t_j}(x^\nu))\xi\|^2
|f(s)|^2\,ds
\\
&<
\sum_{j=0}^{N-1}
\vep^2
\int_{K_j}
|f(s)|^2\,ds
=\vep^2\|f\|_2^2.
\end{align*}

Thus for all $\nu\in W$,
\[
\|y^\nu \pi_\al(x^\nu)(\xi\oti f)\|
\leq
\vep\|f\|_2
+
\left\|y^\nu \sum_{j=0}^{N-1}(\al_{-t_j}(x^\nu)\oti 1_{K_j})(\xi\oti f)
\right\|.
\]
In the last term,
we know that $(\al_{-t_j}(x^\nu)\oti 1_{K_j})_\nu$
belongs to $\cM^\om(M\oti B(L^2(\R)))$
by the proof of \cite[Lemma 2.8]{MT-Roh}.
In particular,
$y^\nu(\al_{-t_j}(x^\nu)\oti 1_{K_j})$ converges to 0
in the strong topology
as $\nu\to\om$.
Hence the above inequality implies that
\[
\lim_{\nu\to\om}\|y^\nu \pi_\al(x^\nu)(\xi\oti f)\|
\leq
\vep\|f\|_2.
\]
Thus we are done.
\end{proof}

The following result is immediately implied
by the previous lemma.

\begin{thm}
Let $\al$ be a flow on a von Neumann algebra $M$.
Then one has a normal embedding
$\pi_\al^\om$ of $M_\al^\om$ into $(M\rti_\al\R)^\om$,
where $\pi_\al^\om((x^\nu)^\om):=(\pi_\al(x^\nu))^\om$.
\end{thm}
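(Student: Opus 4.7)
The plan is to verify that the componentwise map $(x^\nu)_\nu\mapsto(\pi_\al(x^\nu))_\nu$ descends to a well-defined injective normal $*$-homomorphism $(\pi_\al)^\om\colon M_\al^\om\to(M\rti_\al\R)^\om$. The preceding lemma already places the images of $\cE_\al^\om(M)\cap\cM^\om(M)$ inside $\cM^\om(M\rti_\al\R)$, so the outstanding tasks are descent to the quotient, injectivity, and normality.

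For the descent, one must show that the componentwise map sends $\cI_\om(M)$ into $\cI_\om(M\rti_\al\R)$. This is immediate from the normality of $\pi_\al\colon M\to M\rti_\al\R$, which is $\sigma$-strong$*$ continuous on norm-bounded subsets; hence $x^\nu\to 0$ strong$*$ along $\om$ forces $\pi_\al(x^\nu)\to 0$ strong$*$ along $\om$. Thus $(\pi_\al)^\om$ is a well-defined unital $*$-homomorphism.

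Injectivity is the heart of the argument. Suppose $(\pi_\al(x^\nu))^\om=0$, so $\pi_\al(x^\nu)\to 0$ strong$*$ along $\om$. Fix a faithful normal state $\vph$ on $M$ with GNS vector $\xi_\vph$, and pick $f\in L^2(\R)$ with $\mathrm{supp}(f)\subs[-\de,\de]$ and $\|f\|_2=1$. A direct computation in the standard representation of $M\rti_\al\R$ on $H\oti L^2(\R)$ gives
\[
\|\pi_\al(x^\nu)(\xi_\vph\oti f)\|^2+\|\pi_\al(x^\nu)^*(\xi_\vph\oti f)\|^2=2\int_{-\de}^{\de}\|\al_{-t}(x^\nu)\|_\vph^{\sharp 2}|f(t)|^2\,dt,
\]
so the hypothesis forces the right-hand side to vanish as $\nu\to\om$. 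For any $\vep>0$, the $(\al,\om)$-equicontinuity of $(x^\nu)_\nu$ supplies $\de>0$ and $W\in\om$ with $\|\al_{-t}(x^\nu)-x^\nu\|_\vph^\sharp<\vep$ for $|t|<\de$ and $\nu\in W$; the reverse triangle inequality then yields $\|\al_{-t}(x^\nu)\|_\vph^{\sharp 2}\ge(\|x^\nu\|_\vph^\sharp-\vep)_+^2$ on $[-\de,\de]$, so the integral is bounded below by $(\|x^\nu\|_\vph^\sharp-\vep)_+^2$. Taking the limit along $W\in\om$ gives $\limsup_{\nu\to\om}\|x^\nu\|_\vph^\sharp\le\vep$, and since $\vep>0$ is arbitrary, $(x^\nu)_\nu\in\cI_\om(M)$.

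For normality, the Ando--Haagerup description of the ultraproduct realizes every normal state on $(M\rti_\al\R)^\om$ as an $\om$-limit of a bounded sequence of normal states on $M\rti_\al\R$; pulling such a state back along $(\pi_\al)^\om$ produces the $\om$-limit of its pull-backs along $\pi_\al$, each of which is normal on $M$, giving a normal state on $M_\al^\om$. Since preservation of normal states under pullback characterizes normal $*$-homomorphisms of von Neumann algebras, $(\pi_\al)^\om$ is normal. The principal obstacle is the injectivity argument: $\pi_\al(x^\nu)\to 0$ strong$*$ yields only integrated control of $\|\al_{-t}(x^\nu)\|_\vph^\sharp$ over a neighborhood of $t=0$, whereas the conclusion requires pointwise control at $t=0$, and this gap is bridged precisely by the $(\al,\om)$-equicontinuity hypothesis.
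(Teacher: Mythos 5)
Your proof is correct and gives a careful account of what the paper dismisses with ``The following result is immediately implied by the previous lemma.'' The only substantive input the paper really draws from the lemma is that $\pi_\al$ carries $\cE_\al^\om(M)\cap\cM^\om(M)$ into $\cM^\om(M\rti_\al\R)$; descent to the quotient, injectivity, and normality are all treated as routine there.

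One remark on the injectivity step, which you single out as the ``principal obstacle'': the integral computation you give is correct, but it makes the injectivity look harder than it is, and in fact the $(\al,\om)$-equicontinuity plays no essential role there. The point is that $\pi_\al(M)$ is a unital von Neumann subalgebra of $M\rti_\al\R$ and $\pi_\al\col M\to\pi_\al(M)$ is a normal $*$-isomorphism. Since every normal functional on a von Neumann subalgebra extends to a normal functional on the ambient algebra, the $\sigma$-strong$*$ topology on a norm-bounded subset of $\pi_\al(M)$ is the same whether computed in $\pi_\al(M)$ or in $M\rti_\al\R$: for a faithful normal state $\ps$ on $M\rti_\al\R$, the $\sharp$-seminorm of $y\in\pi_\al(M)$ depends only on $\ps|_{\pi_\al(M)}$, which is a faithful normal state on $\pi_\al(M)$. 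Thus $\pi_\al(x^\nu)\to 0$ $\sigma$-strong$*$ in $M\rti_\al\R$ already forces $\pi_\al(x^\nu)\to 0$ $\sigma$-strong$*$ in $\pi_\al(M)$, hence $x^\nu\to 0$ $\sigma$-strong$*$ in $M$. The equicontinuity hypothesis is what you need to land in $\cM^\om(M\rti_\al\R)$ in the lemma (that is where the statement would fail without it), not to recover $\cI_\om(M)$ from $\cI_\om(M\rti_\al\R)$. Your integral estimate is a legitimate alternative verification, but it is not where the real content of the theorem lies.
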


It turns out from Theorem \ref{thm:AH-equicont}
that
$\cM^\om(M)=\cE_{\si^\vph}^\om(M)$.
Thus we have the following result.

\begin{cor}
\label{cor:core-ultra}
Let $M$ be a von Neumann algebra
and $\tM$ the core of $M$.
Then $M^\om\subs \tM^\om$ and $M_\om\subs\tM_\om$.
\end{cor}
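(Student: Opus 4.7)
The plan is to apply the preceding embedding theorem with the modular flow $\al = \si^\vph$ of a fixed faithful normal state $\vph \in M_*$, using the identification $\tM = M \rti_{\si^\vph} \R$. For the inclusion $M^\om \subs \tM^\om$, Theorem \ref{thm:AH-equicont} gives $\cM^\om(M) = \cE^\om_{\si^\vph}(M)$, hence $M^\om = M^\om_{\si^\vph}$, and the preceding theorem yields the desired normal embedding $M^\om \hookrightarrow (M \rti_{\si^\vph} \R)^\om = \tM^\om$ at once.

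For $M_\om \subs \tM_\om$, I would take $(x^\nu)_\nu \in \cC_\om(M) \subs \cM^\om(M)$, so that $(\pi_{\si^\vph}(x^\nu))^\om$ already sits inside $\tM^\om$ by the first part, and the remaining task is to verify its $\om$-centrality in $\tM$. Commutation with $M \subs \tM$ is inherited directly from the $\om$-centrality of $(x^\nu)_\nu$ in $M$. For the unitary generators $\la^\vph(t)$ one computes
\[
[\pi_{\si^\vph}(x^\nu), \la^\vph(t)] = \pi_{\si^\vph}\bigl(x^\nu - \si_t^\vph(x^\nu)\bigr)\la^\vph(t),
\]
so it suffices to show $\si_t^\vph(x^\nu) - x^\nu \to 0$ strong$*$ in $M$ along $\om$ for each fixed $t$. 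For this I would invoke the fact that any $\om$-central sequence lies in the centralizer of the ultraproduct state $\vph^\om$ on $M^\om$, which is immediate from $\|x^\nu\vph - \vph x^\nu\|_{M_*}\to 0$ and boundedness of representing sequences; hence it is fixed by the modular automorphism $\si_t^{\vph^\om}$, which under the Ando--Haagerup identification equals $(\si_t^\vph(x^\nu))^\om$. Thus $\si_t^\vph(x^\nu) - x^\nu$ is $\om$-trivial in $M$, and therefore in $\tM$ as well.

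To upgrade from commutation with the generating set $M \cup \{\la^\vph(t)\}_{t \in \R}$ to commutation with all of $\tM$, I would examine $\tilde N := \{z \in \tM \mid [(\pi_{\si^\vph}(x^\nu))^\om, z] = 0 \text{ in } \tM^\om\}$. A routine check shows $\tilde N$ is a $*$-subalgebra of $\tM$, and normality of the embedding $\tM \hookrightarrow \tM^\om$ ensures it is closed under bounded strong$*$ limits; since it contains a generating set of $\tM$, we conclude $\tilde N = \tM$. The main obstacle is the modular-centralizer step above, where genuine $\om$-centrality (and not merely membership in $\cM^\om(M)$) is essential, since an equicontinuous sequence need not satisfy $\|\si_t^\vph(x^\nu) - x^\nu\|_\vph^\sharp \to 0$ for fixed $t$ in the absence of the centrality hypothesis.
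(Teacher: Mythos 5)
Your argument for $M^\om\subs\tM^\om$ is exactly the paper's: Theorem \ref{thm:AH-equicont} identifies $\cM^\om(M)$ with $\cE_{\si^\vph}^\om(M)$, and the preceding embedding theorem for $M^\om_\al\rti_{\al^\om}\R\hookrightarrow(M\rti_\al\R)^\om$ with $\al=\si^\vph$ does the rest. For the second inclusion, the paper merely cites \cite{MT2}, so you are supplying your own argument. The intermediate moves you make are all sound: the reduction of $[\pi_{\si^\vph}(x^\nu),\la^\vph(t)]$ to $\om$-triviality of $\si_t^\vph(x^\nu)-x^\nu$, the observation that an $\om$-central sequence lies in the centralizer of $\vph^\om$, the invocation of $\si^{\vph^\om}=(\si^\vph)^\om$, and the closing remark that mere equicontinuity would not give the needed $\om$-triviality.

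The problem is at the very end. What you actually establish is that $(\pi_{\si^\vph}(x^\nu))^\om$ lies in $\tM'\cap\tM^\om$, i.e.\ it commutes with all of $\tM$ inside the ultraproduct. The assertion to be proved, however, is $(\pi_{\si^\vph}(x^\nu))_\nu\in\cC_\om(\tM)$, i.e.\ $\|[\pi_{\si^\vph}(x^\nu),\chi]\|_{\tM_*}\to 0$ for every $\chi\in\tM_*$. The inclusion $\tM_\om\subs\tM'\cap\tM^\om$ is elementary; the reverse inclusion is a theorem, and you are using it silently. Concretely, writing $\chi=a\psi b$ for a faithful normal state $\psi$ on $\tM$, your commutation result handles the terms $[\pi_{\si^\vph}(x^\nu),a]\psi b$ and $a\psi[\pi_{\si^\vph}(x^\nu),b]$, but the middle term $a[\pi_{\si^\vph}(x^\nu),\psi]b$ requires $\|[\pi_{\si^\vph}(x^\nu),\psi]\|_{\tM_*}\to 0$ — precisely the $\om$-centrality estimate at $\psi$ — and this does not follow from membership in the ultraproduct relative commutant without extra work. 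To close the gap you must either invoke the Ando--Haagerup identity $N_\om=N'\cap N^\om$ for $\sigma$-finite von Neumann algebras $N$ (and note that $\tM$ is generically a non-factor, so a factor-only version of this fact would not suffice), or prove the predual estimate $\|[\pi_{\si^\vph}(x^\nu),\psi]\|_{\tM_*}\to 0$ directly. As written, the proof reaches only the strictly weaker commutant statement.
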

\begin{proof}
The first inclusion is an immediate consequence
of the previous result
since $\tM\cong M\rti_{\si^\vph}\R$.
The second is proved in the proof of \cite[Lemma 4.11]{MT2}.
\end{proof}

We can strengthen the theorem above.
Let us denote by $\la^\al(t)$
the implementing one-parameter unitary group in $M\rti_\al\R$.

\begin{thm}
One has the canonical embedding
of $M_\al^\om\rti_{\al^\om}\R$
into $(M\rti_\al\R)^\om$
such that
it maps $(x^\nu)^\om$
and $\la^{\al^\om}(t)$
to $(\pi_\al(x^\nu))^\om$
and $\la^\al(t)^\om$,
respectively,
for all $(x^\nu)_\nu\in\cE_\al^\om\cap \cM^\om$
and $t\in\R$.
Moreover, there exists a faithful normal conditional expectation
from $(M\rti_\al\R)^\om$ onto $M_\al^\om\rti_{\al^\om}\R$.
\end{thm}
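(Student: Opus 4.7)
I would construct the embedding via the universal property of the crossed product applied to a covariance identity lifted from $N:=M\rti_\al\R$, then use the dual flow to prove injectivity, and finally assemble the conditional expectation from Ando--Haagerup averaging combined with the dual-weight structure.

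\textbf{Embedding.} By the previous theorem, $(\pi_\al)^\om : M_\al^\om \hookrightarrow N^\om$ is a normal embedding. The constant sequences $(\la^\al(t))_\nu$ produce unitaries $\la^\al(t)^\om \in N^\om$ forming a one-parameter unitary group. Applying the identity $\la^\al(t)\pi_\al(x)\la^\al(t)^* = \pi_\al(\al_t(x))$ in $N$ to representatives $(x^\nu)_\nu \in \cE_\al^\om \cap \cM^\om$ (and using that $\al_t$ preserves $\cE_\al^\om \cap \cM^\om$, as is clear from Theorem \ref{thm:AH-equicont}) yields the covariance
\[
\la^\al(t)^\om \cdot (\pi_\al)^\om((x^\nu)^\om) \cdot \la^\al(t)^{\om *} = (\pi_\al)^\om(\al_t^\om((x^\nu)^\om))
\]
at the level of ultraproducts. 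Since $\al^\om$ is a strongly continuous flow on $M_\al^\om$ by equicontinuity, the universal property of the crossed product von Neumann algebra furnishes a normal $*$-homomorphism $\Pi : M_\al^\om \rti_{\al^\om} \R \to N^\om$ acting as stated on generators.

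\textbf{Injectivity and conditional expectation.} For injectivity, exploit the dual flow $\hal$ on $N$: it commutes with $\si^{\tvph}$ (the modular flow of the dual weight), so by Theorem \ref{thm:AH-equicont} it induces a flow $\hal^\om$ on $N^\om$. The image $B := \Pi(M_\al^\om \rti_{\al^\om} \R)$ is globally $\hal^\om$-invariant, with $\hal^\om_s$ fixing $(\pi_\al)^\om(M_\al^\om)$ and scaling $\la^\al(t)^\om$ by $e^{-ist}$; hence $\hal^\om|_B$ reproduces the dual action on $M_\al^\om \rti_{\al^\om} \R$, and its fixed-point algebra inside $B$ equals $(\pi_\al)^\om(M_\al^\om)$. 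Landstad's theorem (or Takesaki biduality applied to this covariant pair) then canonically identifies $B$ with $M_\al^\om \rti_{\al^\om} \R$ via $\Pi$, yielding injectivity. The conditional expectation $E : N^\om \to B$ is assembled from a faithful normal conditional expectation $F : M^\om \to M_\al^\om$ (obtained by Ando--Haagerup averaging onto the $(\al,\om)$-equicontinuous part) combined with the dual-weight operator-valued weight linking $N$ and $M$, arranged so that the composite has range in $B$.

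\textbf{Main obstacle.} The hardest part will be producing the conditional expectation. Because $\R$ is noncompact, no naive averaging over $\al$ gives a bounded conditional expectation onto the equicontinuous part; the Ando--Haagerup characterization (Theorem \ref{thm:AH-equicont}) is essential to realize the projection as a genuine conditional expectation. Combining this with the dual-weight structure so that the composite lands precisely in $B$ rather than in a larger subalgebra of $N^\om$, and verifying faithfulness and normality, is where most of the technical work will concentrate.
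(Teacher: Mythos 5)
Your proposal does not go through in the form stated, and it diverges substantially from the paper's argument.

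The central gap is the appeal to a ``universal property of the crossed product von Neumann algebra'' to obtain a normal $*$-homomorphism $\Pi$. Von Neumann crossed products do \emph{not} enjoy such a universal property: a covariant pair $(\pi,u)$ on a Hilbert space with $\pi$ normal does not, in general, produce a normal map from $M\rti_\al G$ onto $\pi(M)\vee u(G)''$. (Take $M=\C$, $G=\Z$, $\al$ trivial, and $u$ a scalar $e^{i\theta}$ with $\theta/\pi$ irrational; then $\C\rti\Z\cong L^\infty(\T)$ but the generated algebra is $\C$, and no normal surjection exists.) What actually yields a normal map is a representation at the level of GNS Hilbert spaces compatible with the dual weights, and this is precisely what the paper constructs: it defines the isometry $V$ sending $\La_\chi(\la^{\al^\om}(f)\pi_{\al^\om}(x))$ to $\La_{\ps^\om}(\la^\al(f)^\om\pi_\al^\om(x))$ and then proves surjectivity of $V$ by exhibiting the range as an $N'$-invariant subspace. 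The identification $M_\al^\om\rti_{\al^\om}\R\cong N$ and the embedding come out together as $\Ad V$, with no need for (and no possibility of) a universal-property shortcut.

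A second gap is the injectivity argument via the dual flow. To conclude that $B^{\hal^\om}=(\pi_\al)^\om(M_\al^\om)$ you already need to know that $\Pi$ is injective (otherwise a fixed element $b=\Pi(p)$ only tells you that \emph{some} preimage $p'\in\Pi^{-1}(b)$ is fixed modulo the kernel, not that $p$ lies in $M_\al^\om$). Moreover, even granting an isomorphism $\hal^\om|_B\cong\widehat{\al^\om}$, Landstad's conditions for a noncompact group like $\R$ are technical (integrability, etc.) and you have not verified them in the ultraproduct setting; and Takesaki biduality by itself does not immediately kill a possible kernel of $\Pi$. The paper sidesteps all of this: it constructs the conditional expectation $E\colon(M\rti_\al\R)^\om\to N$ first, directly by Takesaki's theorem (after arranging a dominant $\al$-invariant weight $\vph$ so that $\si^{\ps^\om}=(\si^\ps)^\om$ preserves $N$ and $\ps^\om$ is semifinite on $N$), and only afterward identifies $N$ with the crossed product via the GNS intertwiner. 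Your proposed route of assembling the expectation from a putative $F\colon M^\om\to M_\al^\om$ together with the dual-weight operator-valued weight is not carried out, and, as you note, it is unclear that $F$ exists; the paper's argument never requires it. In short, you would need a genuinely different mechanism for producing the normal map and the expectation, and the GNS comparison the paper uses is both cleaner and actually proves the claims.
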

\begin{proof}
Put $N:=\pi_\al^\om(M_\al^\om)\vee \{\la^\al(t)^\om\mid t\in\R\}''$.
We will show that there exists a canonical isomorphism
from $M_\al^\om\rti_{\al^\om}\R$ onto $N$.
By considering a tensor product with $B(\ell^2)$
and a suitable perturbation of $\al$,
we may and do assume that $M$ is properly infinite
and a dominant weight $\vph$ on $M$ is $\al$-invariant.

Let $\ps$ be the dual weight of $\vph$ on $M\rti_\al\R$
and $\ps^\om$ the ultraproduct weight on $(M\rti_\al\R)^\om$.
Thanks to Ando--Haagerup theory \cite[Theorem 4.1]{AH},
we obtain $\si^{\ps^\om}=(\si^\ps)^\om$.
Thus $N$ is globally invariant under $\si^{\ps^\om}$.
It is obvious that $\ps^\om$ is semifinite on $N$
since
$N$ contains $M\rtimes_\alpha\R$.
By Takesaki's theorem,
there exists a faithful normal conditional expectation
from $(M\rti_\al\R)^\om$ onto $N$.

Let $\vph^\om$ be the ultraproduct weight on $M_\al^\om$,
which is semifinite since $M$ is contained in $M_\al^\om$.
Let $\chi$ be the dual weight of $\vph^\om$ on $P:=M_\al^\om\rti_{\al^\om}\R$.

We will compare the GNS Hilbert space $L^2(P,\chi)$
with the GNS space $L^2(N,\ps^\om)$.
The definition left ideals are denoted by $n_\chi$ and $n_{\ps^\om}$,
respectively.
Denote by $\La_\chi\col n_\chi\to L^2(P,\chi)$
and $\La_{\ps^\om}\col n_{\ps^\om}\to L^2(N,\ps^\om)$
the canonical embeddings.

Let us introduce a map $V$ which maps
$\La_\chi(\la^{\al^\om}(f)\pi_{\al^\om}(x))$
to
$\La_{\ps^\om}(\la^\al(f)^\om\pi_\al^\om(x))$
for $f\in C_c(\R)$ and $x\in M_\al^\om$.
Then by the naturality of the definition of the ultraproduct weights,
$V$ extends to an isometry from $L^2(P,\chi)$
into $L^2(N,\ps^\om)$.
We will check that $V$ is surjective.

Let us denote by $K$ the image of $V$.
Then $K$ is $N'$-invariant.
Indeed, for a $\si^{\vph^\om}$-analytic $y\in M_\al^\om$
and $t\in\R$,
we obtain
\[
J_{\ps^\om}\si_{i/2}^{\ps^\om}(\pi_\al^\om(y))^*J_{\ps^\om}
\La_{\ps^\om}(\la^\al(f)^\om\pi_\al^\om(x))
=
\La_{\ps^\om}(\la^\al(f)^\om\pi_\al^\om(xy))\in K,
\]
and
\[
J_{\ps^\om}\si_{i/2}^{\ps^\om}(\la^\al(t)^\om)^*J_{\ps^\om}
\La_{\ps^\om}(\la^\al(f)^\om\pi_\al^\om(x))
=
\La_{\ps^\om}(\la^\al(\la_t f)^\om\pi_\al^\om(\al_{-t}^\om(x)))
\in K
\]
for all $f\in C_c(\R)$ and $x\in M_\al^\om$.

Now let us take a $\si^{\ps^\om}$-analytic $y\in n_{\ps^\om}$.
Then
\[
J_{\ps^\om}\si_{i/2}^{\ps^\om}(y)^*J_{\ps^\om}
\La_{\ps^\om}(\la^\al(f)^\om\pi_\al^\om(x))
=
\la^\al(f)^\om\pi_\al^\om(x)\La_{\ps^\om}(y).
\]
This implies that
$\La_{\ps^\om}(y)$ is contained in the closure of $N'K$.
Since $N'K\subs K$,
$\La_{\ps^\om}(y)$ belongs to $K$.
Thus $K=L^2(N,\ps^\om)$.
It is trivial that $\Ad V$ provides us with a desired isomorphism.
\end{proof}

\section{Canonical extension of irreducible endomorphisms}
In this section,
we discuss the canonical extension of an endomorphism.

\subsection{Decomposition of the canonical extension of an endomorphism}
In general,
it is known that
even if a given endomorphism $\rho$ is irreducible,
its canonical extension $\trho$ is not.
In what follows,
we will describe how $\trho$ is decomposed.
We will say $\rho$ has the \emph{Connes--Takesaki module}
\index{endomorphism!with Connes--Takesaki module}
when $\trho(Z(\tM))=Z(\tM)$
(\cite[Definition 4.1]{Iz}).

\begin{thm}\label{thm:decomp}
Let $\rho$ be an irreducible endomorphism with finite index
on an infinite factor $M$.
\begin{enumerate}
\item
There exists a finite family of isometries $\{V_i\}_{i=1}^n$
in $\tM$ and $\be\in \End(\tM)_0$ such that
\begin{itemize}
\item
$\dsp \trho(x)=\sum_{i=1}^n V_i\be(x)V_i^*$ for all $x\in\tM$,

\item
$(\trho,\trho)=Z((\trho,\trho))\vee\{V_iV_j^*\}_{i,j}''$,

\item
$(\be,\be)=V_1^*Z((\trho,\trho))V_1$,
\end{itemize}
where the number $n$ is unique
and so is the endomorphism $\be$ as a sector on $\tM$.

\item
The endomorphism $\be$ can be taken
so that $\be$ is commuting with $\th$.
In this case,
$\be|_M$ is irreducible, and
there exists $s\in\R$ such that
$\be=\th_s\wdt{\be_M}$.

\item
If $Z((\trho,\trho))=Z(\tM)$,
then there exists a modular endomorphism
$\si\in \End(M)_{0}$
and $\ps\in\End(M)_{0}$ such that
\begin{itemize}
\item
$\si$ and $\ps$ are irreducible;
\item
$\rho=\si\ps$;
\item
$(\trho,\trho)=Z(\tM)\vee (\tsi,\tsi)$;
\item
$(\tps,\tps)=Z(\tM)$.
\end{itemize}
This decomposition is unique in the following sense:
If $\rho=\si_1\ps_1$ is another such decomposition,
then there exists $\ka\in\Aut(M)_{\rm m}$
such that $\ps_1=\ka\circ\ps$.
In addition to this, if
$\rho$ has the Connes--Takesaki module,
then so does $\ps$ and $\mo(\rho)=\mo(\ps)$.
\end{enumerate}
\end{thm}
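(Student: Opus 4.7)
For part (1), I would start from the observation that the inclusion $\trho(\tM)\subs\tM$ inherits finite probability index from $\rho$ (with the canonical extension of $E_\rho$ as expectation), so $(\trho,\trho)$ is a finite type $I$ von Neumann algebra over its center $Z_0:=Z((\trho,\trho))$ and admits a system of matrix units $\{e_{ij}\}_{i,j=1}^n$. Since $\tM$ is properly infinite, I lift these to isometries $V_i\in\tM$ with $V_iV_j^*=e_{ij}$ and $\sum_iV_iV_i^*=1$. Setting $\be(x):=V_1^*\trho(x)V_1$ produces an endomorphism of $\tM$ (use $e_{11}\in(\trho,\trho)$ to check multiplicativity), and $\sum_iV_i\be(x)V_i^*=\trho(x)$ is immediate from $\sum_ie_{ii}=1$. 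The two commutant identities follow from the structural formula $(\trho,\trho)=Z_0\vee\{e_{ij}\}''$ together with the reduction isomorphism $e_{11}(\trho,\trho)e_{11}\cong Z_0\cdot e_{11}$, which on compression by $V_1$ yields $(\be,\be)=V_1^*Z_0V_1$; the uniqueness of $n$ and of $[\be]$ then follows from the uniqueness of the central type-$I_n$ decomposition of $(\trho,\trho)$.

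For part (2), since $\th_s\trho=\trho\th_s$, the dual flow leaves $(\trho,\trho)$ globally invariant and permutes the matrix units: $\th_s(e_{ij})$ is another system of matrix units, so one has a unitary $u_s\in(\trho,\trho)$ with $\th_s(e_{ij})=u_se_{ij}u_s^*$, and $s\mapsto u_s$ is a $\th$-cocycle modulo $Z_0$. A straightening argument, exploiting $\th$-invariance of $Z_0$ and proper infiniteness of $\tM$, allows one to perturb the $V_i$'s inside $(\trho,\trho)$ so that the resulting $\be$ commutes with $\th$. Once this is arranged, $\be$ preserves $M=\tM^\th$, and Lemma \ref{lem:beta-can} applied to $\be$ yields $s\in\R$ with $\be=\th_s\widetilde{\be|_M}$ together with the irreducibility of $\be|_M$; the hypothesis $(\be(\tM)'\cap\tM)^\th=\C$ in that lemma follows from the identification $(\be,\be)\cong Z_0$ after isolating the component on which $\th$ acts ergodically. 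The main technical obstacle is this simultaneous cocycle-straightening and isolation of the right spectral piece, because $(\trho,\trho)$ need not be homogeneous of a single type $I_n$ nor $Z_0$ ergodic under $\th$, so one has to work coherently across a direct-sum decomposition and argue on each $\th$-invariant summand separately.

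For part (3), the hypothesis $Z_0=Z(\tM)$ immediately gives $V_i^*\th_t(V_j)\in Z(\tM)$ for all $i,j,t$, so Lemma \ref{lem:modular-can} produces a modular endomorphism $\si\in\End(M)_0$ with $\tsi=\sum_iV_i\cdot V_i^*$, and the minimality of the matrix-unit decomposition forces $\si$ to be irreducible. Combining with (2) yields $\trho=\tsi\circ\th_s\widetilde{\be|_M}$, which on restriction to $M=\tM^\th$ (where $\th_s$ is trivial) specializes to $\rho=\si\ps$ with $\ps:=\be|_M$; the commutant identities $(\trho,\trho)=Z(\tM)\vee(\tsi,\tsi)$ and $(\tps,\tps)=Z(\tM)$ are rewritings of the formulas from (1) in the case $Z_0=Z(\tM)$. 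For uniqueness, comparing the canonical extensions of two factorizations $\tsi\tps=\tsi_1\tps_1$ gives $\tps_1\tps^{-1}=\tsi_1^{-1}\tsi$, which is inner since both $\tsi$ and $\tsi_1$ are, so it defines an extended modular automorphism $\ka\in\Aut(M)_{\rm m}$ with $\ps_1=\ka\ps$. Finally, $\tsi$ being inner acts trivially on $Z(\tM)$, so $\mo(\si)=\id$ and multiplicativity of the Connes--Takesaki module gives $\mo(\rho)=\mo(\ps)$ whenever $\rho$ has a module.
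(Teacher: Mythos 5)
The proposal follows the paper's broad strategy—decomposing $(\trho,\trho)$ as a matrix algebra over its center, straightening by a cocycle so that the reduced endomorphism commutes with $\th$, and then invoking Lemmas \ref{lem:beta-can} and \ref{lem:modular-can}—but it has three genuine gaps.

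In part (1) you assert that the finite type I algebra $(\trho,\trho)=\tN'\cap\tM$ ($N=\rho(M)$) "admits a system of matrix units $\{e_{ij}\}_{i,j=1}^n$." A finite type I algebra over its center need not be homogeneous; it can be a direct sum of type $I_{n_1}\oplus I_{n_2}\oplus\cdots$ pieces, in which case there is no global system of $n\times n$ matrix units with $\sum_i e_{ii}=1$ and the integer $n$ in the theorem would not even be well-defined. The paper gets homogeneity precisely from the irreducibility hypothesis $\rho(M)'\cap M=\C$, which forces $(\tN'\cap\tM)^\th=\C$ and hence the dual flow $\th$ is \emph{ergodic} on $Z(\tN'\cap\tM)$. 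The homogeneity central projections are $\th$-invariant, so ergodicity forces a single type $I_n$. Strikingly, in your discussion of part (2) you explicitly worry that "$(\trho,\trho)$ need not be homogeneous of a single type $I_n$ nor $Z_0$ ergodic under $\th$"—but both of these facts do hold and are exactly what rescues the argument; recognizing that they follow from irreducibility of $\rho$ is the missing step. For part (2) you also omit what Lemma \ref{lem:beta-can} actually requires: a faithful normal conditional expectation onto $\be(\tM)$ of finite probability index that \emph{commutes with $\th$}. The paper constructs $E_\be(x)=\sum_i\be(\ph_{\trho}(V_i x V_i^*))$, verifies the Pimsner--Popa bound, and checks $\th$-commutativity via a computation with the central cocycle $c(t)_{ij}=V_i^*\th_t(V_j)$; your sketch addresses only the second hypothesis $(\be(\tM)'\cap\tM)^\th=\C$ of that lemma.

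The uniqueness argument in part (3) is wrong as written. You manipulate $\tsi\tps=\tsi_1\tps_1$ into "$\tps_1\tps^{-1}=\tsi_1^{-1}\tsi$," but $\ps$ and $\ps_1$ are in general proper (non-surjective) irreducible endomorphisms, so $\tps^{-1}$ does not exist; one cannot cancel endomorphisms on the left or right. The correct route, as in the paper, is categorical: since both $\tps$ and $\tps_1$ are irreducible and co-irreducible with $\trho\prec\tsi\tps$ and $\trho\prec\tsi_1\tps_1$, Frobenius reciprocity gives $(\tps,\tps_1)\neq 0$, and irreducibility plus co-irreducibility upgrades an intertwiner to a unitary $u\in(\tps,\tps_1)$. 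Then $u^*\th_t(u)$ is a $Z(\tM)$-valued $\th$-cocycle (being in $(\tps,\tps)=Z(\tM)$), so $\ka:=\Ad u|_M$ is an extended modular automorphism and $\ps_1=\ka\ps$. Your reasoning for $\mo(\rho)=\mo(\ps)$ is salvageable once $\rho=\si\ps$ is established—via $\trho=\tsi\tps$ and $\tsi|_{Z(\tM)}=\id$—but note the paper in fact goes further and proves $s=0$ (so $\be=\tps$ exactly, not merely $\th_s\tps$), which is used to identify the sector $[\be]$ with $[\tps]$ unambiguously.
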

\begin{proof}
(1).
Let $N:=\rho(M)$.
Since the inclusion $Z(\tN)\subs \tN'\cap \tM$ has finite index,
the relative commutant $\tN'\cap\tM$ is finite and of type I
by Proposition \ref{prop:typeI}.
The ergodicity of $\th$ on $\tN'\cap \tM$ or $Z(\tN'\cap\tM)$
implies that the algebra is of type I$_n$ for some $n\in\N$.
Let $p_1,\dots,p_n$ be
a partition of unity in $\tN'\cap \tM$
which are orthogonal abelian projections
with central support 1.
Then they are mutually equivalent in $\tN'\cap\tM$,
and we can take partial isometries $v_{ij}\in\tN'\cap\tM$
such that $v_{ij}v_{ij}^*=p_i=v_{ii}$ and $v_{ij}^*=v_{ji}$.
Since $Z(\tM)\subs Z(\tN'\cap\tM)$,
the central supports of $p_i$'s in $\tM$ are also equal to 1.
Moreover, $p_i\tM p_i$ contains $\tN p_i$,
and $p_i$ is properly infinite.
Thus each $p_i$ is equivalent to $1$ in $\tM$.
We take an isometry $V_1\in\tM$ with $V_1V_1^*=p_1$,
and set $V_i=v_{i1}V_1$ for $i\neq1$.
Then we obtain a decomposition like (1)
by putting $\be(x):=V_1^*\trho(x)V_1$.

Let $\trho(\cdot)=\sum_{j=1}^m W_i \be'(\cdot)W_i^*$
be another such decomposition.
Note that $W_jW_k^*\in \tN'\cap \tM$.
Then by our assumption,
$\tN'\cap\tM=Z(\tN'\cap\tM)\vee\{W_jW_k^*\}_{j,k}$.
In particular,
$(\tN'\cap\tM)_{W_1W_1^*}=Z(\tN'\cap\tM)_{W_1W_1^*}$,
and $W_1W_1^*$ is an abelian projection in $\tN'\cap \tM$.
Since $W_1W_1^*$ is equivalent to $W_jW_j^*$
in $\tN'\cap\tM$ for all $j$,
we have $m=n$, and 
the central support of $W_1W_1^*$ in $\tN'\cap\tM$
is equal to 1.
Hence there exists $v\in \tN'\cap \tM$
such that $V_1V_1^*=v^*v$ and $W_1W_1^*=vv^*$.
Then the unitary $u:=W_1^*vV_1$ satisfies
\[
u\be(x)=W_1^*vV_1\cdot V_1^*\trho(x)V_1
=
W_1^*v\trho(x)V_1
=
W_1^*\trho(x)W_1W_1^*vV_1
=\be'(x)u
\]
for $x\in\tM$.

(2).
We show there exists a Borel map $\R\ni t\mapsto w_t\in \tN'\cap \tM$
such that $\th_t(V_1V_1^*)=w_t^*w_t$ and $V_1V_1^*=w_tw_t^*$.
Set $p:=V_1V_1^*$.
Let
\[
S_1:=\{(w, \theta_t(p))\mid ww^*=p,w^*w=\theta_t(p),t\in \R\},
\quad
S_2=\{\theta_t(p)\mid t\in \R\}.
\]
The set $S_2$ is an $\mF_\si$-set
of the unit ball of $\tM$
because it is written as a union of continuous
images of closed intervals.
Similarly, we can show $S_1$ is an $\mF_\si$-set
of the unit ball of $\tM\times \tM$.
Hence they are standard Borel spaces.
Let $f$ be the projection from $S_1$ onto $S_2$ defined by 
$f(w,\theta_t(p))=\theta_t(p)$.
The map $f$ is indeed surjective
because the abelian projection
$\th_t(p)$ is equivalent to $p$ in $\tN'\cap\tM$.
In particular, $S_1\neq\emptyset$.
Obviously, $f$ is continuous.
Thanks to the measurable cross section theorem,
we obtain a Borel map
$g\colon S_2\ra S_1$ such that $f\circ g=\id_{S_2}$.
Hence $g(\theta_t(p))=(w_t,\theta_t(p))$ for some $w_t$. 
Then the composed map
\[
t\mapsto \theta_t(p)\stackrel{g}{\mapsto}
(w_t,\theta_t(p))\mapsto w_t
\]
is a desired Borel map.
Then we set a unitary
$u_t:=V_1^*w_t\th_t(V_1)$,
which satisfies
\begin{align*}
u_t^*\be(x)u_t
&=
\th_t(V_1^*)w_t^*\trho(x)w_t\th_t(V_1)
=
\th_t(V_1^*)\trho(x)w_t^*w_t\th_t(V_1)
\\
&=
\th_t(V_1^*)\trho(x)\th_t(V_1)
=
\th_t\be\th_{-t}(x)
\quad
\mbox{for all }x\in\tM.
\end{align*}
This implies the unitary $\mu_{s,t}:=u_s\th_s(u_t)u_{s+t}^*$
belongs to $\be(\tM)'\cap\tM$.
Trivially, we have
$\mu_{r,s}\mu_{r+s,t}=u_r\th_r(\mu_{s,t})u_r^* \mu_{r,s+t}$
and
\[
\mu_{r,s}=V_1^*w_r\th_r(w_s)w_{r+s}^*V_1.
\]
Since $\be(\tM)'\cap \tM=V_1^*Z(\tN'\cap\tM)V_1$,
we have $w_r\th_r(w_s)w_{r+s}^*\in Z(\tN'\cap\tM)V_1V_1^*$.
Hence there uniquely exists a unitary $z_{r,s}\in Z(\tN'\cap\tM)$
such that
$w_r\th_r(w_s)w_{r+s}^*=z_{r,s}V_1V_1^*$.
Then $z$ is a 2-cocycle for $\th$.
Indeed, we have
\begin{align*}
z_{r,s}z_{r+s,t}V_1V_1^*
&=
z_{r,s}V_1V_1^*\cdot
z_{r+s,t}V_1V_1^*
\\
&=
w_r\th_r(w_s)w_{r+s}^*
\cdot
w_{r+s}\th_{r+s}(w_t)w_{r+s+t}^*
\\
&=
w_r\th_r(w_s)\th_{r+s}(V_1V_1^*)
\th_{r+s}(w_t)w_{r+s+t}^*
\\
&=
w_r\th_r(w_s)\th_{r+s}(w_t)w_{r+s+t}^*,
\end{align*}
and
\begin{align*}
\th_r(z_{s,t})z_{r,s+t}V_1V_1^*
&=
\th_r(z_{s,t})w_rw_r^*\cdot
z_{r,s+t}V_1V_1^*
\\
&=
w_r\th_r(z_{s,t})w_r^*\cdot
z_{r,s+t}V_1V_1^*
\\
&=
w_r\th_r(z_{s,t})w_r^*w_rw_r^*\cdot
z_{r,s+t}V_1V_1^*
\\
&=
w_r\th_r(z_{s,t}V_1V_1^*)w_r^*\cdot
z_{r,s+t}V_1V_1^*
\\
&=
w_r\th_r(w_s\th_s(w_t)w_{s+t}^*)w_r^*\cdot
w_r\th_r(w_{s+t})w_{r+s+t}^*
\\
&=
w_r\th_r(w_s\th_s(w_t)w_{s+t}^*V_1V_1^*w_{s+t})w_{r+s+t}^*
\\
&=
w_r\th_r(w_s\th_s(w_t)\th_{s+t}(V_1V_1^*))w_{r+s+t}^*
\\
&=
w_r\th_r(w_s\th_s(w_t\th_{t}(V_1V_1^*)))w_{r+s+t}^*
\\
&=
w_r\th_r(w_s)\th_{r+s}(w_t)w_{r+s+t}^*.
\end{align*}
Since the flow $\{\th,Z(\tN'\cap\tM)\}$ is ergodic,
$z$ is a coboundary from \cite[Proposition A.2]{CT}
(when $Z(\tN'\cap\tM)=\C$,
then it is well-known that $H^2(\R,\T)$ is trivial).
Thus there exists a Borel map $b\col\R\ra Z(\tN'\cap\tM)$
with $b_s^*b_s=1$
and $b_r\th_r(b_s)z_{r,s}b_{r+s}^*=1$.
We let $b_s':=V_1^*b_sV_1$,
which is a unitary in $\be(\tM)'\cap\tM$.
Then we have $\Ad (b_s' u_s)^*\circ\be=\th_s\be\th_{-s}$,
and
\begin{align*}
&b_s'u_s\th_s(b_t'u_t)(b_{s+t}' u_{s+t})^*
\\
&=
V_1^*b_sV_1\cdot
V_1^*w_s\th_s(V_1)
\cdot
\th_s(V_1^*b_tV_1\cdot
V_1^*w_s\th_t(V_1))
\cdot
\th_{s+t}(V_1^*)w_{s+t}^*V_1
\cdot
V_1^*b_{s+t}^*V_1
\\
&=
V_1^*b_s
w_s\th_s(V_1V_1^*b_tw_s\th_t(V_1))
\cdot
\th_{s+t}(V_1^*)w_{s+t}^*b_{s+t}^*V_1
\\
&=
V_1^*b_s
w_s\th_s(b_tV_1V_1^*w_s)
\cdot
\th_{s+t}(V_1V_1^*)w_{s+t}^*b_{s+t}^*V_1
\\
&=
V_1^*b_s
w_s\th_s(b_tw_s)w_{s+t}^*b_{s+t}^*V_1
\\
&=
V_1^*b_s\th_s(b_t)z_{s,t}b_{s+t}^* V_1V_1^*V_1
=1.
\end{align*}
This shows $b_s'u_s$ is a $\th$-cocycle.
By stability of $\{\tM,\th\}$ \cite[Theorem 5.1 (ii)]{CT},
there exists $w\in U(\tM)$ such that $b_s'u_s=w\th_s(w^*)$.
Then
\[
\th_s(w^*\be(x)w)
=
w^* b_s'u_s\th_s(\be(x))u_s^*(b_s')^*w
=
w^*\be(\th_s(x))w.
\]
Thus we may and do assume that $\be$ is commuting with $\th$.
By the equality $\Ad u_t\circ \be=\th_t\be\th_{-t}=\be$,
we see $u_t\in \be(\tM)'\cap \tM$.
Hence there uniquely exists a unitary $\nu_t\in Z(\tN'\cap\tM)$
such that $u_t=V_1^*\nu_t V_1$.
Then we have $\th_t(V_1)=w_t^*\nu_tV_1$.
We show the isomorphism
$Z(\tN'\cap\tM)\ni z\mapsto V_1^*zV_1\in
\be(\tM)'\cap\tM$
is intertwining the flow $\th$.
Indeed, we have
\[
\th_t(V_1^*zV_1)
=
V_1^*\nu_t^*w_t \cdot \th_t(z)\cdot w_t^*\nu_t V_1
=
V_1^*\th_t(z)\nu_t^*w_tw_t^*\nu_tV_1
=V_1^*\th_t(z)V_1.
\]
In particular, $\th$ is ergodic on $\be(\tM)'\cap \tM$.

Let $\ph_{\trho}$ be the canonical extension
of the standard left inverse $\ph_\rho$
as defined in \cite[Lemma 3.5]{MT2}.
Then the conditional expectation $E_{\trho}:=\trho\circ\ph_{\trho}$
satisfies
the Pimsner--Popa inequality $E_{\trho}(x)\geq d(\rho)^{-2}x$
for all $x\in\tM_+$.
Denote by $E_\be$ the map
$\tM\ni x\mapsto\sum_{i=1}^n
\be(\ph_{\trho}(V_ixV_i^*))\in\be(\tM)$,
which is a conditional expectation onto $\be(\tM)$.
Since $\be(\cdot)=V_1^*\trho(\cdot)V_1$,
we have for $x\in\tM_+$,
\begin{align*}
E_\be(x)
&=
\sum_{i=1}^n
\be(\ph_{\trho}(V_ixV_i^*))
=
\sum_{i=1}^nV_1^* E_{\trho}(V_ixV_i^*)V_1
\\
&\geq
d(\rho)^{-2}
\sum_{i=1}^nV_1^* V_ixV_i^*V_1
\\
&=
d(\rho)^{-2}x.
\end{align*}
Hence $E_\be$ has finite probability index.
The map $\th_tE_\be\th_{-t}$
is also a conditional expectation
onto $\be(\tM)$ for each $t\in\R$
since $\be$ and $\th$ are commuting.
To show $\th_tE_\be\th_{-t}=E_\be$,
it suffices to prove they coincide
on $\be(\tM)'\cap\tM$.
Since $\trho$ and $\be$ are commuting with $\th$,
we have
\[
\sum_{i=1}^n V_i\be(x)V_i^*
=
\sum_{i=1}^n \th_t(V_i)\be(x)\th_t(V_i^*).
\]
Hence $c(t)_{ij}:=V_i^*\th_t(V_j)\in \be(\tM)'\cap \tM$.
Note that $(c(t)_{ij})_{ij}\in (\be(\tM)'\cap \tM)\oti M_n(\C)$
is a unitary matrix.
Take any $x\in \be(\tM)'\cap\tM$,
which is commuting with $c(t)_{ij}$
because $\be(\tM)'\cap\tM$ is commutative.
Then we have
\begin{align*}
\th_t(E_\be(\th_{-t}(x)))
&=
\sum_{i=1}^n
\be(\ph_{\trho}(\th_t(V_i)x\th_t(V_i^*)))
\\
&=
\sum_{i,j,k=1}^n
\be(\ph_{\trho}(V_j c(t)_{ji}xc(t)_{ki}^*V_k^*)))
\\
&=
\sum_{i,j,k=1}^n
\be(\ph_{\trho}(V_j x c(t)_{ji}c(t)_{ki}^*V_k^*)))
\\
&=
\sum_{j=1}^n
\be(\ph_{\trho}(V_j x V_j^*)))
\\
&=
E_\be(x).
\end{align*}
Hence $E_\be$ commutes with $\th$.
By Lemma \ref{lem:beta-can},
we obtain $\be=\th_s\wdt{\be|_M}$ for some $s\in\R$.

(3).
Let $c(t)_{ij}$ be as in the proof of (2).
Then $c(t)_{ij}\in \be(\tM)'\cap \tM
=V_1^*Z(\tN'\cap\tM)V_1=Z(\tM)$.
Employing Lemma \ref{lem:modular-can},
we see $\rho_V$ is the canonical extension of
the modular endomorphism $\rho_V|_M=:\si$.
We let $\ps:=\be|_M$.
Since $\rho=\si\ps$ and $\rho$ is irreducible,
so is $\si$.
By (2),
there exists $s\in\R$ such that $\be=\th_s\tps$.
Then we have $\trho=\th_s\tsi\tps=\th_s\wdt{\si\ps}$.
Then by definition of canonical extension,
we have
\[
d(\rho)^{it}[D\vph\circ\ph_\rho:D\vph]_t
=
e^{-ist}d(\si\ps)^{it}[D\vph\circ\ph_{\si\ps}:D\vph]_t.
\]
This implies $\ph_\rho=e^{-s}\ph_{\si\ps}$, and $s=0$
since $\ph_\rho(1)=1=\ph_{\si\ps}(1)$.

If $\rho=\si_1\ps_1$ is another decomposition as stated in (3),
then $(\tps,\tps_1)\neq0$.
Since $\tps$ and $\tps_1$ are both irreducible and co-irreducible
(see Definition \ref{defn:irr-co}),
there exists a unitary $u\in (\tps,\tps_1)$.
Then $u^*\th_t(u)$ is a $\th$-cocycle evaluated in $Z(\tM)$.
Thus $\ka:=\Ad u|_M$ is an extended modular automorphism
satisfying $\ps_1=\ka\circ\ps$.

From the formula $\tps=\be=V_1^*\trho(\cdot)V_1$,
it is clear that  $\rho$ has the Connes--Takesaki module
if and only if  so does $\ps$,
and those modules are equal.
\end{proof}

An endomorphism $\rho$ on an infinite factor $M$ with finite index
is decomposed to the direct sum of irreducibles
as $\oplus_{i=1}^n m_i \rho_i$,
where $m_i=\dim(\rho_i,\rho)$ and $(\rho_i,\rho_j)=0$
if $i\neq j$.
The collection of modular endomorphisms among them
is
called the \emph{modular part} of $\rho$,
and denoted by $\rho_{\rm m}$.

Note that the conjugate endomorphism
of $\tps\in\End(\tM)_0$ equals the canonical extension
of $\ops$ \cite[Proposition 2.5 (3)]{Iz}.
Thus $\tps$ is irreducible if and only
if $\wdt{\ops}$ is co-irreducible
(see Section \ref{subsect:index}).
Recall the following result proved in \cite[Theorem 3.7]{Iz}.

\begin{lem}\label{lem:ps-ops}
Let $\ps$ be an irreducible endomorphism on $M$ with finite index.
Then the following statements are equivalent:
\begin{enumerate}
\item
$\tps$ is co-irreducible, that is, $Z(\tM)=\tps(\tM)'\cap\tM$;
\item
$(\ps\ops)_{\rm m}=\id$ in $\Sect(M)$,
that is,
the modular part of $\ps\ops$ is trivial.
\end{enumerate}
\end{lem}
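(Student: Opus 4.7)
The plan is to reduce the stated equivalence to a Frobenius-reciprocity count for canonical extensions and then to compute $Z(\tM)$-module ranks term by term. Because $\wdt{\ops}$ is the conjugate of $\tps$ by \cite[Proposition~2.5~(3)]{Iz} and the canonical extension is a functor, $\wdt{\ps\ops}=\tps\wdt{\ops}$; Frobenius reciprocity for endomorphisms on $\tM$ (using the standard left inverse $\ph_{\tps}$, which has finite probability index) provides a natural isomorphism of $Z(\tM)$-modules
\[
(\tps,\tps)=\tps(\tM)'\cap\tM\ \cong\ (\id_{\tM},\wdt{\ps\ops}).
\]
Since $\ps$ is irreducible, the usual Frobenius on $M$ also shows that $\id_M$ appears in $\ps\ops$ with multiplicity exactly one, so one may write $\ps\ops=\id_M\oplus\bigoplus_{i}m_i\rho_i$ as a sum of irreducibles in $\End(M)_0$.

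The key computation is the $Z(\tM)$-rank of $(\id_{\tM},\wdt{\rho})$ for each irreducible $\rho\in\End(M)_0$. If $\rho$ is modular, then $\wdt{\rho}=\sum_{j=1}^{d(\rho)}V_j(\cdot)V_j^*$ is inner and a direct check shows $(\id_{\tM},\wdt{\rho})=\spa\{V_j\}\cdot Z(\tM)$, a free $Z(\tM)$-module of rank $d(\rho)$. If $\rho$ is non-modular, Theorem~\ref{thm:decomp}~(1)--(2) furnishes $\wdt{\rho}=\sum_kW_k\be W_k^*$ with $\be=\th_s\wdt{\be|_M}$ and $\be|_M$ irreducible; the same manipulation reduces the problem to verifying $(\id_{\tM},\be)=0$. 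For the latter I would decompose any $v\in(\id_{\tM},\be)$ into spectral components under the dual action $\th$: each weight-$t$ component has the form $u_t\la^\vph(-t)$ with $u_t\in(\si_{-t}^\vph,\be|_M)$, and the irreducibility of $\be|_M$ together with the non-modularity of $\rho$ (which forbids $\be$ from being inner, equivalently $\be|_M\si_t^\vph$ from being inner for any $t\in\R$) forces every $u_t$ to vanish.

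Putting these ingredients together with the additive decomposition
\[
(\id_{\tM},\wdt{\ps\ops})\cong(\id_{\tM},\id_{\tM})\oplus\bigoplus_im_i(\id_{\tM},\wdt{\rho_i}),
\]
one obtains
\[
\dim_{Z(\tM)}(\tps,\tps)=1+\sum_{i\colon\rho_i\text{ modular}}m_id(\rho_i)=d((\ps\ops)_{\rm m}).
\]
Since $Z(\tM)\subs(\tps,\tps)$ always holds, the equality $(\tps,\tps)=Z(\tM)$ is equivalent to $\dim_{Z(\tM)}(\tps,\tps)=1$, hence to $d((\ps\ops)_{\rm m})=1$, and therefore (as $\id$ is always a summand of the modular part with multiplicity one) to $(\ps\ops)_{\rm m}=\id$ in $\Sect(M)$.

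The main obstacle I anticipate is twofold: justifying the additive splitting of $(\id_{\tM},\wdt{\ps\ops})$ along the irreducible summands of $\ps\ops$ in spite of the $d(\rho)^{it}$ twist in the canonical-extension formula for reducible endomorphisms, and the vanishing $(\id_{\tM},\be)=0$ in the non-modular case, which requires a careful Fourier analysis in the crossed product $\tM=M\rti_{\si^\vph}\R$ under $\th$ together with the description $\be=\th_s\wdt{\be|_M}$ from Theorem~\ref{thm:decomp}~(2).
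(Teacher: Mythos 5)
The paper does not supply its own proof here: it simply cites Izumi's \cite[Theorem 3.7]{Iz}. So your proposal is an attempt at a self-contained argument, and it has the right overall shape: identify $(\tps,\tps)$ with $(\id_{\tM},\wdt{\ps\ops})$ by Frobenius reciprocity, split $\ps\ops$ into irreducibles, and show that only the modular irreducibles contribute to $(\id_{\tM},\wdt{\ps\ops})$. The modular case is handled correctly (an inner endomorphism implemented by an orthonormal family $\{V_j\}_{j=1}^{d(\rho)}$ of isometries has $(\id_{\tM},\trho)=\bigoplus_jV_jZ(\tM)$), and the reduction $(\id_{\tM},\trho)=\sum_kW_k(\id_{\tM},\be)$ coming from Theorem \ref{thm:decomp} is valid.

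The genuine gap is exactly at the step you flag: the vanishing $(\id_{\tM},\be)=0$ in the non-modular case. The proposed ``decompose $v$ into weight-$t$ components under $\th$'' does not work as stated, because $\th$ is an $\R$-flow on the crossed product $\tM=M\rtimes_{\si^\vph}\R$ and has continuous spectrum: a nonzero $v\in(\id_{\tM},\be)$ need not have any nonzero weight-$t$ component for an individual $t\in\R$, so the vanishing of each $(\si^\vph_{-t},\be|_M)$ does not by itself force $v=0$. To make this step honest you would need either (a) an Arveson-spectrum argument together with a rigidity step that promotes ``approximate'' intertwiners concentrated near a weight $t_0$ into an exact nonzero element of $(\si^\vph_{-t_0},\be|_M)$, or (b) an argument not based on pointwise Fourier components at all --- for instance, using that $\th_s$ scales the canonical trace $\tau$ on $\tM$ (which kills $(\id_{\tM},\th_s)$ for $s\neq0$ and can be pushed to handle $\be=\th_s\wdt{\be|_M}$) together with a direct computation for $s=0$. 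There is also a smaller point you should spell out: the additivity $\wdt{\ps\ops}=\bigoplus_im_i\trho_i$ across an irreducible decomposition is not obvious from the defining formula (because of the $d(\cdot)^{it}$ factor) and deserves a citation to \cite[Proposition 2.5]{Iz} or a short verification. As written, the proof is a plausible plan rather than a complete argument; filling the vanishing step with a rigorous argument (or simply citing \cite[Theorem 3.7]{Iz}, as the paper does) is essential.
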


\begin{lem}\label{lem:rho123}
Let $\rho_1,\rho_2,\rho_3$ be irreducible endomorphisms on $M$
with finite index.
If two of them and $\rho_1\rho_2\rho_3$ are modular,
then so is the other one.
\end{lem}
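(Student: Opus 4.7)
The plan is to realize the remaining $\rho_i$ as a subsector of an endomorphism manifestly built from modular sectors, then invoke the closure of modular sectors under subsectors. By the symmetry of the hypothesis it suffices to treat the representative case in which $\rho_1$ and $\rho_3$ are the two given modular endomorphisms and the goal is to show $\rho_2$ modular; the cases in which the modular pair is $\{\rho_1,\rho_2\}$ or $\{\rho_2,\rho_3\}$ are handled by the analogous products $\orho_2\orho_1(\rho_1\rho_2\rho_3)$ and $(\rho_1\rho_2\rho_3)\orho_3\orho_2$ respectively.

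First I would record that $\Sect(M)_{\rm m}$ is closed under tensor products and conjugation. For tensor products: if $\trho=\sum_i V_i\cdot V_i^*$ and $\tsi=\sum_j W_j\cdot W_j^*$ are inner with $V_i^*\th_t(V_j),\,W_i^*\th_t(W_j)\in Z(\tM)$, then $\widetilde{\rho\si}=\trho\tsi$ is implemented by the isometries $V_iW_j$, and the cocycle factorises as
\[
(V_iW_j)^*\th_t(V_kW_l)=V_i^*\th_t(V_k)\cdot W_j^*\th_t(W_l)\in Z(\tM),
\]
because $V_i^*\th_t(V_k)$ is central and commutes past the second factor. Closure under conjugation is a consequence of $\wdt{\orho}\cong\ovl{\trho}$ from \cite[Proposition 2.5 (3)]{Iz} together with the fact that the conjugate of an inner endomorphism of the properly infinite factor $\tM$ is again inner with a cocycle in $Z(\tM)$. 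Now form
\[
\eta:=\orho_1(\rho_1\rho_2\rho_3)\orho_3,
\]
which is a product of three modular sectors, hence modular. Since $\rho_1$ and $\rho_3$ are irreducible, $\orho_1\rho_1\succ\id$ and $\rho_3\orho_3\succ\id$ each with multiplicity one, so Frobenius reciprocity yields $\rho_2\prec\eta$.

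The main obstacle is closure of modular sectors under subsectors: every irreducible subsector of a modular endomorphism is itself modular. I would justify this via the cohomological classification of modular sectors in \cite[Theorem 3.3]{Iz}: the irreducible decomposition of a modular sector corresponds to a direct-sum decomposition of the representing cocycle in $Z(\tM)\otimes B(\C^n)$ for the $\th\otimes\id$-action, and each summand is again a cocycle, determining a modular subsector via Lemma \ref{lem:modular-can}. Applied to the embedding $\rho_2\prec\eta$, this forces $\rho_2$ to be modular and completes the proof. (As an alternative avoiding direct appeal to the classification, one may decompose $\rho_2=\si_2\ps_2$ by Theorem \ref{thm:decomp}~(3), with $\si_2$ modular and $\tps_2$ co-irreducible, and then use the modularity of $\eta$ together with the uniqueness of the decomposition up to $\Aut(M)_{\rm m}$ to force $\ps_2\in\Aut(M)_{\rm m}$, whence $\rho_2$ is modular.)
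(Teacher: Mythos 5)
Your proof is correct and in substance coincides with the paper's: the paper disposes of this lemma by a single citation to \cite[Proposition 3.4 (1)]{Iz}, which is precisely the package of closure properties of $\Sect(M)_{\rm m}$ (under composition, conjugation, and passage to subsectors) that you reconstruct and then apply via Frobenius reciprocity to $\eta=\orho_1(\rho_1\rho_2\rho_3)\orho_3$. Your write-up is more explicit, but it is the same route — the argument the paper's citation compresses into one line.
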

\begin{proof}
This is obvious from \cite[Proposition 3.4 (1)]{Iz}.
\end{proof}

Then we can deduce the modular part of $\rho\orho$
is equal to a canonical endomorphism as below.

\begin{lem}\label{lem:rho-mod-part}
Let $\rho$ be an irreducible endomorphism satisfying
the conditions stated in Theorem \ref{thm:decomp} (3),
and
$\rho=\si\ps$ the decomposition given there. 
Then $(\ps\ops)_{\rm m}=\id$ in $\Sect(M)$,
and $(\rho\orho)_{\rm m}=\si\osi$.
In particular, $(\rho\orho)_{\rm m}$ is a canonical endomorphism.
\end{lem}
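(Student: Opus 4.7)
The plan is to handle the two assertions separately. The first is an essentially immediate consequence of Lemma \ref{lem:ps-ops}: by the hypotheses of Theorem \ref{thm:decomp}(3), the endomorphism $\ps$ is irreducible with $(\tps,\tps)=Z(\tM)$, which is exactly the co-irreducibility condition appearing as (1) in Lemma \ref{lem:ps-ops}. Applying that lemma yields $(\ps\ops)_{\rm m}=\id$ in $\Sect(M)$ with no further work.

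For the second assertion, I would start from $\rho=\si\ps$, rewrite $\rho\orho=\si(\ps\ops)\osi$, and use the first assertion to decompose $\ps\ops=\id\oplus\eta$ into irreducibles with no modular summand in $\eta$. Hence $\rho\orho=\si\osi\oplus\si\eta\osi$, and it suffices to prove (a) every irreducible summand of $\si\osi$ is modular, and (b) no irreducible summand of $\si\eta\osi$ is modular. For (a): $\si$ is modular by hypothesis, and $\osi$ is modular by \cite[Proposition 2.5 (3)]{Iz}, so the canonical extension $\wdt{\si\osi}=\tsi\,\wdt{\osi}$ is inner on $\tM$; decomposing into irreducibles, every irreducible summand of $\si\osi$ has inner canonical extension and therefore is modular. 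For (b), suppose for contradiction that an irreducible modular $\mu$ is a subsector of $\si\ka\osi$ for some irreducible summand $\ka$ of $\eta$. Frobenius reciprocity in the rigid $2$-C$^*$-category of finite-index endomorphisms of $M$ gives $\dim(\si\ka\osi,\mu)=\dim(\ka,\osi\mu\si)$, so $\ka$ is an irreducible summand of $\osi\mu\si$. But $\osi\mu\si$ is a composition of three modular endomorphisms, so by the same inner-canonical-extension argument as in (a) every irreducible summand of $\osi\mu\si$ is modular, contradicting the choice of $\ka\subs\eta$.

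The last sentence of the lemma is then immediate: for any finite-index endomorphism $\si$, the composition $\si\osi$ is by definition the canonical endomorphism for the inclusion $\si(M)\subs M$ in the sense of Longo, so the identification $(\rho\orho)_{\rm m}=\si\osi$ displays the modular part as such a canonical endomorphism. I expect the main obstacle to lie in step (b): one must invoke Frobenius reciprocity cleanly and verify that the class of modular endomorphisms is stable under composition, conjugation, and passage to irreducible summands. Once these stability properties are isolated, the remainder is a direct computation with the decomposition of $\ps\ops$.
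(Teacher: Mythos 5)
Your proof is correct and follows the same underlying logic as the paper's. The paper's proof is a two-line invocation of Lemma \ref{lem:ps-ops} together with Lemma \ref{lem:rho123}, and your argument correctly unpacks what the citation of Lemma \ref{lem:rho123} is actually encoding: if an irreducible summand $\ka$ of $\ps\ops$ produced a modular irreducible $\mu\prec\si\ka\osi$, then Frobenius reciprocity would force $\ka\prec\osi\mu\si$, which is modular since it is a composite of modular sectors, contradicting $(\ps\ops)_{\rm m}=\id$. The only thing your write-up adds over the paper is that the argument is made self-contained by explicitly invoking the stability of modular endomorphisms under composition, conjugation (via Izumi's result that $\wdt{\orho}$ is the conjugate of $\trho$), and passage to irreducible summands, rather than delegating this to [Iz, Prop.\ 3.4(1)] through Lemma \ref{lem:rho123}. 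This is not a different route so much as a fully expanded version of the paper's compressed one; the trade-off is a longer proof in exchange for not relying on the precise form of the cited proposition.
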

\begin{proof}
By Theorem \ref{thm:decomp},
we can find a modular endomorphism $\si$
and an endomorphism $\ps$ such that $\rho=\si\ps$
and $\tps(\tM)'\cap\tM=Z(\tM)$.
Hence we get $\rho\orho=\si\ps\ops\osi$.
Employing Lemma \ref{lem:ps-ops} and Lemma \ref{lem:rho123},
we obtain $(\ps\ops)_{\rm m}=\id$ and
$(\rho\orho)_{\rm m}=\si(\ps\ops)_{\rm m}\osi=\si\osi$.
\end{proof}

Now we will prove the following main result of this section.

\begin{thm}\label{thm:rho-orho}
Let $M$ be an infinite factor
and $\rho\in\End(M)_{0}$.
Then the following statements are equivalent:
\begin{enumerate}
\item
$\rho\orho$ and $\orho\rho$ are modular;
\item
$\rho\orho$ is modular,
and $\rho$ has the Connes--Takesaki module;

\item
There exist a modular endomorphism $\si$
and an automorphism $\ps$ on $M$
satisfying $\rho=\si\ps$;
\item
$\rho$ has the Connes--Takesaki module
and $\tM=\trho(\tM)\vee(\trho(\tM)'\cap\tM)$.
\end{enumerate}
In the above, $\ps$ is uniquely determined up to an extended modular automorphism.
\end{thm}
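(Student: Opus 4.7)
The strategy is to establish all four conditions equivalent via the cycle $(3)\Rightarrow(1)\Rightarrow(2)\Rightarrow(3)$ together with $(3)\Leftrightarrow(4)$, with the uniqueness of $\ps$ up to an extended modular automorphism following directly from the uniqueness clause in Theorem~\ref{thm:decomp}(3).

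The implications $(3)\Rightarrow(1),(2),(4)$ are routine. Writing $\rho=\si\ps$ with $\si$ modular and $\ps\in\Aut(M)$, one computes $\wdt{\rho\orho}=\tsi\wdt{\osi}$ and $\wdt{\orho\rho}=\tps^{-1}\wdt{\osi\si}\tps$: the first is a composition of inner endomorphisms with $Z(\tM)$-valued cocycles and the second is the conjugate of an inner endomorphism by the automorphism $\tps$, so both are modular, yielding (1). The CT module of $\rho$ equals $\tps|_{Z(\tM)}$ because $\tsi$ acts trivially on $Z(\tM)$, giving (2). For (4), $\trho(\tM)=\tsi(\tM)$ since $\ps$ is surjective, and the modular decomposition $\tsi=\sum V_i(\cdot)V_i^*$ yields $\tM=\tsi(\tM)\vee(\tsi(\tM)'\cap\tM)$ via the identity $z=\sum_{i,j}\tsi(V_i^*zV_j)\,V_iV_j^*$ with $V_iV_j^*\in\tsi(\tM)'\cap\tM$. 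The implication $(1)\Rightarrow(2)$ is immediate: an inner endomorphism acts as the identity on $Z(\tM)$, so $\trho\wdt{\orho}|_{Z(\tM)}=\wdt{\orho}\trho|_{Z(\tM)}=\id$; combined with injectivity of $\trho,\wdt{\orho}$ (finite-index endomorphisms admit left inverses), this forces $\trho(Z(\tM))\subset Z(\tM)$ with $\trho|_{Z(\tM)}\in\Aut(Z(\tM))$.

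The principal work is $(2)\Rightarrow(3)$. I first reduce to irreducible $\rho$: decomposing $\rho=\oplus m_i\rho_i$, each subsector $\rho_i\orho_j$ of the modular $\rho\orho$ is itself modular, and each $\rho_i$ inherits the CT module. Solving the irreducible case yields $\rho_i=\si_i\ps_i$ with $\si_i$ modular and $\ps_i\in\Aut(M)$; modularity of $\rho_i\orho_j = \si_i(\ps_i\ps_j^{-1})\osi_j$ for $i\ne j$ forces each $\ps_i\ps_j^{-1}$ to lie in $\Aut(M)_{\rm m}$, and absorbing these correction factors into the $\si_i$'s assembles a global factorization $\rho=\si\ps$. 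For irreducible $\rho$, Theorem~\ref{thm:decomp}(1),(2) provides $\trho=\sum V_i\be(\cdot)V_i^*$ with $\be=\th_s\wdt{\be|_M}$ and $\ps_0:=\be|_M$ irreducible. The core step is to verify the hypothesis $Z((\trho,\trho))=Z(\tM)$ of Theorem~\ref{thm:decomp}(3). Since $V_1^*(\cdot)V_1$ provides an isomorphism $Z((\trho,\trho))\to(\be,\be)=\th_s((\tps_0,\tps_0))$ that is the identity on $Z(\tM)$, this reduces to $(\tps_0,\tps_0)=Z(\tM)$, i.e.\ co-irreducibility of $\tps_0$, which by Lemma~\ref{lem:ps-ops} is equivalent to $(\ps_0\ops_0)_{\rm m}=\id$. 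To extract this from modularity of $\rho\orho$, observe that $\trho\wdt{\orho}=\sum V_i(\be\wdt{\orho})(\cdot)V_i^*$ is inner; conjugating by $V_1^*(\cdot)V_1$ shows $\be\wdt{\orho}$, and hence $\wdt{\ps_0\orho}=\th_{-s}(\be\wdt{\orho})$, is inner on $\tM$. A Frobenius-type comparison over $Z(\tM)$---matching the sector content of $\be\obe$ with that of $\ps_0\ops_0$ using that the conjugate sector of $\trho$ is $\wdt{\orho}$---then forces $(\ps_0\ops_0)_{\rm m}=\id$. Once $Z((\trho,\trho))=Z(\tM)$ holds, Theorem~\ref{thm:decomp}(3) decomposes $\rho=\si\ps$ with $\ps$ irreducible; Lemma~\ref{lem:rho-mod-part} gives $(\rho\orho)_{\rm m}=\si\osi$, and since $\rho\orho$ is fully modular, $d(\rho)^2=d(\si)^2$, forcing $d(\ps)=1$ and thus $\ps\in\Aut(M)$.

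For $(4)\Rightarrow(3)$, the reduction to irreducibles proceeds analogously, using that (4) passes to components via the direct-sum structure of $\trho$. The matrix structure from Theorem~\ref{thm:decomp}(1) identifies $\tM$ with $M_n(\tM)$ and $\trho(\tM)\vee(\trho,\trho)$ with $M_n(\be(\tM)\vee(\be,\be))$, so (4) becomes $\be(\tM)\vee(\be,\be)=\tM$, which after stripping the $\th_s$-twist reads $\tps_0(\tM)\vee(\tps_0,\tps_0)=\tM$. Combined with $\ps_0$ irreducible on $M$ (so $(\tps_0,\tps_0)\cap M=\C$) and the $\th$-ergodicity of the finite type~I algebra $(\tps_0,\tps_0)$, one deduces $(\tps_0,\tps_0)=Z(\tM)$ and hence that $\tps_0$ is surjective, so $\ps_0\in\Aut(M)$; Theorem~\ref{thm:decomp}(3) then yields (3). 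The main obstacle in both $(2)\Rightarrow(3)$ and $(4)\Rightarrow(3)$ is precisely the extraction of co-irreducibility of $\tps_0$---equivalently, $Z((\trho,\trho))=Z(\tM)$---from the modular or structural hypotheses; Lemma~\ref{lem:ps-ops} is the key bridge once the modularity of $\rho\orho$ or the normality condition has been pushed through the $V_i$-decomposition of $\trho$ furnished by Theorem~\ref{thm:decomp}.
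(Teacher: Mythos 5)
Your overall architecture tracks the paper's: reduce $(2)\Rightarrow(3)$ to the irreducible case, establish $Z((\trho,\trho))=Z(\tM)$, invoke Theorem~\ref{thm:decomp}(3) to write $\rho=\si\ps$, and then force $d(\ps)=1$. The dimension count at the end (comparing $d(\rho)^2$ with $d(\si)^2$ via $(\rho\orho)_{\rm m}=\si\osi$) is a clean way to finish, arguably slicker than the paper's appeal to Lemma~\ref{lem:ps-ops}. However, the central step of $(2)\Rightarrow(3)$ --- proving $Z((\trho,\trho))=Z(\tM)$, equivalently co-irreducibility of the $\be$ supplied by Theorem~\ref{thm:decomp}(1)(2) --- has a genuine gap, and the route you propose would not close it.

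You claim that $\trho\wdt{\orho}$ being inner, together with compression by $V_1^*(\cdot)V_1$, shows $\be\wdt{\orho}$ is inner. This does not follow. If $\trho\wdt{\orho}(x)=\sum_r U_rxU_r^*$ with $\{U_r\}$ an orthonormal basis of a Hilbert space in $\tM$, then compressing gives $\be\wdt{\orho}(x)=\sum_r(V_1^*U_r)x(U_r^*V_1)$, but the elements $W_r:=V_1^*U_r$ satisfy $W_r^*W_s=U_r^*V_1V_1^*U_s$. Since $V_1V_1^*$ lies in $\wdt{\rho\orho}(\tM)'\cap\tM\cong Z(\tM)\oti M_k(\C)$, these inner products are $Z(\tM)$-valued, not scalar, so $\{W_r\}$ need not generate a Hilbert space in $\tM$ and $\be\wdt{\orho}$ need not be inner. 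Moreover, even if it were inner, what that would show (after stripping the $\th_s$-twist) is that $\wdt{\ps_0\orho}$ is inner, which is not the statement $(\ps_0\ops_0)_{\rm m}=\id$ that Lemma~\ref{lem:ps-ops} requires --- $\orho$ is the conjugate of $\rho$, not of $\ps_0$. The subsequent ``Frobenius-type comparison over $Z(\tM)$'' is not made precise enough to repair either issue.

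The paper's actual argument for this step is quite different and substantive: it disintegrates $\trho(\tM)'\cap\tM$ and $\wdt{\orho}(\tM)'\cap\tM$ over $Z(\tM)$ into finite sums of type~I factors $A_1,\dots,A_m$, defines $\be^{(i)}$ and $\de^{(k)}$ from the resulting abelian projections, uses the isometries $U_r$ implementing $\wdt{\rho\orho}$ to show $(\id,\be^{(i)}\de^{(k)})z\ne 0$ for every nonzero central projection $z$, and then applies Theorem~\ref{thm:isom-conj} to conclude each $\de^{(k)}$ is the conjugate of each $\be^{(i)}$. This forces all $\be^{(i)}$ to be mutually equivalent, so $m=1$ and $Z(\trho(\tM)'\cap\tM)=Z(\tM)$. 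Theorem~\ref{thm:isom-conj} is the key tool here, and your proposal does not invoke it or any substitute. Your $(4)\Rightarrow(3)$ sketch has similar soft spots (``$\th$-ergodicity of the finite type~I algebra $(\tps_0,\tps_0)$ forces $(\tps_0,\tps_0)=Z(\tM)$'' is not valid as stated, since $Z(\tM)\oti M_n(\C)$ satisfies the same hypotheses), but there the paper's algebraic derivation of $Z((\trho,\trho))=Z(\tM)$ from $\tM=\trho(\tM)\vee(\trho,\trho)$ is elementary, so that part is easily repaired.
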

\begin{proof}
(1)$\Rightarrow$(2).
Since the canonical extensions of
$\rho\orho$ and $\orho\rho$ are inner,
they trivially act on $Z(\tM)$.
Then it turns out that $\rho$ has the Connes--Takesaki module.

(2)$\Rightarrow$(3).
We first show the statement for $\rho$ being irreducible.
Since $\rho\orho$ is modular from our assumption,
we have isometries $U_r\in\tM$ such that $\sum_{r=1}^s U_rU_r^*=1$
and $\wdt{\rho\orho}(x)=\sum_{r=1}^s U_rxU_r^*$. 

Let $N:=\rho(M)$.
Recall that the inclusion $Z(\tM)\subs\tN'\cap\tM$
is with finite index since it is anti-isomorphic
to $Z(\widetilde{\orho}(\tM))\subs \widetilde{\orho}(\tM)'\cap\tM$.
Using a disintegration $\tN'\cap\tM$ over $Z(\tM)$
and the ergodicity of $\th$ on $Z(\tM)$,
we see there exist (non-unital)
type I$_n$ factors $A_1,\dots,A_m$ in $\tN'\cap\tM$
such that $\tN'\cap \tM=Z(\tM)\vee \bigoplus_{i=1}^m A_i$,
which gives a tensor product decomposition.
Let $\{e_{jk}^{(i)}\}_{j,k=1}^n$ be a matrix unit of $A_i$.
Then $e_{11}^{(i)}$ is a properly infinite projection
with central support 1 in $\tM$,
and we can take an isometry $V_1^{(i)}\in \tM$
such that $e_{11}^{(i)}=V_1^{(i)}(V_1^{(i)})^*$.
We let $V_j^{(i)}:=e_{j1}^{(i)}V_1^{(i)}$
and $\be^{(i)}(x)=(V_1^{(i)})^* \trho(x)V_1^{(i)}$,
which is an irreducible and co-irreducible endomorphism
because $\trho$ has the Connes--Takesaki module
(see Definition \ref{defn:irr-co}).
Then for all $x\in\tM$ we have
\[
\trho(x)=\sum_{i=1}^m \sum_{j=1}^n V_j^{(i)}\be^{(i)}(x)(V_j^{(i)})^*.
\]
We show $\cE(\tM,\be^{(i)}(\tM))\neq\emptyset$ for all $i$.
For each $i$,
let us introduce the map $F^{(i)}\col \tM\ra \be^{(i)}(\tM)$ as follows
\[
F^{(i)}(x):=(V_1^{(i)})^* E_{\trho}(V_1^{(i)}x(V_1^{(i)})^*)V_1^{(i)},
\quad x\in\tM.
\]
Then $F$ is a faithful normal completely positive map satisfying
$F^{(i)}(\be^{(i)}(x))=\be^{(i)}(x)F(1)$, where $F^{(i)}(1)\in Z(\tM)$.
Since $E_\rho$ satisfies the Pimsner--Popa inequality,
so does $E_{\trho}$.
Thus
\[
F^{(i)}(1)
=
(V_1^{(i)})^* E_{\trho}(V_1^{(i)}(V_1^{(i)})^*)V_1^{(i)}
\geq
d(\rho)^{-1}
(V_1^{(i)})^* \cdot V_1^{(i)}(V_1^{(i)})^*\cdot V_1^{(i)}
=d(\rho)^{-1}.
\]
This shows $F^{(i)}(1)$ is invertible,
and the normalized map $F^{(i)}(1)^{-1}F$
gives a faithful normal conditional expectation
from $\tM$ onto $\be^{(i)}(\tM)$.

If we disintegrate $\wdt{\orho}(\tM)'\cap\tM$ over $Z(\tM)$ as before,
then we have a similar decomposition as follows:
\[
\wdt{\orho}(x)=\sum_{k=1}^p
\sum_{\el=1}^q W_\el^{(k)}\de^{(k)}(x)(W_\el^{(k)})^*,
\]
where $W_{\el}^k$ is an isometry in $\tM$, and $\de^{(k)}$ is
an irreducible and co-irreducible endomorphism on $\tM$.

Let $z\in Z(\tM)$ be a non-zero projection.
we claim that $(\id,\be^{(i)}\de^{(k)})z\neq 0$
for all $i,k$.
If not, then for some $i,k$,
$\be^{(i)}((W_\el^{(k)})^*)(V_j^{(i)})^*U_rz=0$ for all $j,\el,r$.
However the centrality of $z$ immediately implies $z=0$,
and this is a contradiction.
Thus we see that $(\id,\be^{(i)}\de^{(k)})$ contains
an isometry by Zorn's lemma.
Employing Theorem \ref{thm:isom-conj},
we see $\de^{(k)}$ coincides with $\ovl{\be^{(i)}}$
as a sector of $\tM$.
In particular, $\be^{(i)}$'s are mutually equivalent,
but this is possible only when $m=1$.
Hence $\trho(\tM)'\cap\tM=Z(\tM)\vee A_1$,
and $Z(\trho(\tM)'\cap\tM)=Z(\tM)$.

Let $\rho=\si\ps$ be a decomposition
as given in Theorem \ref{thm:decomp} (3).
Since $\rho\orho=(\rho\orho)_{\rm m}$,
it turns out that
$\ps\ops=(\ps\ops)_{\rm m}$ from Lemma \ref{lem:rho-mod-part}.
By Lemma \ref{lem:ps-ops},
we see $\ps\ops$ is an inner automorphism,
and $\ps$ is an automorphism.

Next we let $\rho$ be a not necessarily irreducible endomorphism
which satisfies the condition of (2).
Then it turns out that each irreducible
contained in $\rho$
satisfies the condition in (2).
Hence $\rho$ is of the form $\oplus_{i=1}^n\si_i\ps_i$,
where $\si_i$'s are modular endomorphisms
and $\ps_i$'s are automorphisms.
However, $\rho\orho$ contains $\si_1\ps_1\ps_i^{-1}\osi_i$,
which is modular by our assumption.
Thus by \cite[Proposition 3.4 (1)]{Iz}, each $\ps_i$ is equal to $\ps_1$
up to an extended modular automorphism.
Hence $\rho=\si\ps_1$ for some modular endomorphism $\si$.

(3)$\Rightarrow$(4).
Since $\ps$ is an automorphism,
we have $\trho(\tM)=\tsi(\tM)$.
The statement (4) follows from the equality
$\tM=\tsi(\tM)\vee (\tsi(\tM)'\cap\tM)$.
It is trivial that $\mo(\rho)=\mo(\ps)$.

(4)$\Rightarrow$(3).
We first show the statement (3) when $\rho$ is irreducible.
From the condition of (4), we have $Z((\trho,\trho))=Z(\tM)$.
By Theorem \ref{thm:decomp},
we have a decomposition $\rho=\si\ps$,
where $\si$ is modular and $\ps$ satisfies
$\tps(\tM)'\cap \tM=Z(\tM)$.
Take a finite family of isometries $\{V_i\}_{i=1}^n$ in $\tM$
implementing $\tsi$.
Then we have
\[
\trho(\tM)'\cap\tM=\tsi(\tps(\tM)'\cap\tM)\vee\{V_iV_j^*\}_{i,j}''
=
Z(\tM)\vee\{V_iV_j^*\}_{i,j}''.
\]
This implies
$\tM=\trho(\tM)\vee(\trho(\tM)'\cap\tM)
=\trho(M)\vee\{V_iV_j^*\}_{i,j}''$.
Hence for any $x\in \tM$,
$\tsi(x)$ belongs to $\trho(\tM)\vee\{V_iV_j^*\}_{i,j}''$,
that is, $\tsi(x)\in\trho(\tM)=\tsi\tps(\tM)$, and $x\in \tps(\tM)$.
Therefore, $\tps$ is an automorphism.

Next we treat a general $\rho$.
Let $\rho_1$ be an irreducible summand contained in $\rho$.
Then $\rho_1$ has the Connes--Takesaki module
and satisfies
$\tM=\trho_1(\tM)\vee (\trho_1(\tM)'\cap\tM)$.
Thus each irreducible in $\rho$
is a composition of a modular endomorphism
and an automorphism.

Let $\rho=\oplus_i m_i\si_i\ps_i$
be an irreducible decomposition,
where the integer $m_i$ is the multiplicity,
and $\si_i$ and $\ps_i$ are a modular endomorphism
and an automorphism, respectively,
such that $\si_i\ps_i$ and $\si_j\ps_j$
are not equivalent if $i\neq j$.
Assume that
$\ps_i\neq\ps_j\bmod\Aut(M)_{\rm m}$
for some $i\neq j$.
Let $z_i$ be the central projection in $(\rho,\rho)$
corresponding to $m_i\si_i\ps_i$.
Then $z_i\in (\trho,\trho)$.
Since $(\tsi_i\tps_i,\tsi_j\tps_j)=0$ if $i\neq j$
by \cite[Proposition 3.4 (1)]{Iz},
it turns out that $z_i\in Z((\trho,\trho))$.
The assumption of (4)
implies $z_i\in Z(\tM)$.
However, $z_i$ is contained in $M$, and $z_i$
must be equal to 1.
This is a contradiction.
Hence all $\ps_i$'s are equivalent,
and $\rho$ is a composition of a modular endomorphism
and an automorphism.

(3)$\Rightarrow$(1).
Since $\rho\orho=\si\ps\ps^{-1}\osi=\si\osi$
and $\orho\rho=\ps^{-1}\osi\si\ps$ in $\Sect(M)$,
it turns out that $\rho\orho$ and $\orho\rho$ are modular.
\end{proof}


\begin{rem}
We can show this theorem
for a type III$_\la$ ($0<\la\leq1$) factor
and an irreducible endomorphism
by another direct approach. 
When $M$ is of type III$_1$,
we know that $\Ga:=\{t\in\R\col\si_t^\vph\prec\rho\orho\}$
is a subgroup because of Frobenius reciprocity.
In particular, if $0\neq t$ would belong to $\Ga$,
the group $\Z t$ would be also contained in $\Ga$.
However, $\Z t\in s\mapsto [\si_s^\vph]\in\Out(M)$
is injective because Connes' $T$-set $T(M)$ equals $0$.
Hence we would have to obtain $d(\rho\orho)=\infty$,
but this is a contradiction.
Hence $\rho\orho=\id$, and $\rho$ is an automorphism.

When $M$ is of type III$_\la$ with $0<\la<1$,
the canonical endomorphism $\rho\orho$ is decomposed
to the direct sum of
$\id_M, \si_{t_1}^\vph,\dots,\si_{t_n}^\vph$,
where $\vph$ is a periodic state
with period $T:=-2\pi/\log\la$.
By \cite[Theorem 4.1]{Iz-f}, 
$\rho(M)=M^\alpha$ for some free action $\alpha$ of $\Z/n\Z$.
%
%
However, it follows that $M^\al$ is of type III$_{\la^n}$.
Thus we must get $n=1$ since $\rho(M)$ is of type III$_\la$.
This yields $\rho\orho=\id$, and $\rho$ is an automorphism.
\end{rem}

\subsection{Extended modular automorphisms}\label{subsect:extend}
Let $\vph$ be a dominant weight on an infinite factor $M$
and $\theta^0_t$ a trace scaling
one-parameter automorphism group on $M_\varphi$,
and $M=M_\varphi\rtimes_{\theta^0}\mathbb{R}$
the continuous decomposition.
Let $K:=L^2(M_\varphi)$ be the standard Hilbert space, and set
$\wdt{K}:=K\otimes L^2(\R)$.
Define an action $\pi^0$ of $M_\varphi$, and  one-parameter unitary groups
$\lambda^l_t$, $\mu_p$, $\lambda^r_t$ on $\wdt{K}$
by the following formulae:
\[\left(\pi^0(x)\xi\right)(s)=\theta_{-s}^0(x)\xi(s),
\quad
\left(\lambda_t^l\xi\right)(s)=\xi(s-t),
\]
\[
\left(\mu_p\xi\right)(s)=e^{-ips}\xi(s),
\quad
\left(\lambda^r_t\xi\right)(s)=\xi(s+t)
\quad
\mbox{for }x\in M_\vph,\ \xi\in \wdt{K}.
\]
By Takesaki duality,
$M_\varphi\rtimes_{\theta^0}\R$
and
$\left(M_\varphi\rtimes_{\theta^0}\R\right)\rtimes_{\sigma^\varphi}\R$
are isomorphic to
$\pi^0(M_\varphi)\vee \{\lambda^l_t\}_{t\in \R}$
and
$\pi^0(M_\varphi)\vee \{\lambda^l_t\}_{t\in \R}\vee\{\mu_p\}_{p\in \R}$,
respectively, and the second dual action $\hat{\hat{\theta^0}}_t$ is given
by $\theta_t^0\otimes \Ad \lambda^r_t$.
With this identification,
the core $\tM$ is equal to
$\pi^0(M_\varphi)\vee \{\lambda^l_t\}_{t\in \R}\vee\{\mu_p\}_{p\in \R}$,
and the dual flow $\th_t$ is $\theta_t^0\otimes \Ad \lambda^r_t$.
%
%

For a $\th^0$-cocycle $c_t \in U(Z(M_\varphi))$,
we define the unitary $V_c$ by 
\[
(V_c\xi)(s):=c_{-s}^*\xi(s)
\quad
\mbox{for }\xi\in\wdt{K}.
\]
Then clearly, $V_c$
belongs to $Z(M_\vph)\vee L^\infty(\R)$.
Hence $V_c\in\tM$.
Let us define $\si_c^\vph\in\Aut(M)$ by
\[
\pi^0(\si_c^\vph(x))=V_c \pi^0(x) V_c^*
\quad
\mbox{for all }x\in M.
\]
This is the original definition of the extended modular automorphism
(see Section \ref{subsection:modular-endo} and \cite[XII, Theorem 1.10]{Ta}).

\begin{lem}\label{lem:Vc-th}
Let $c,c'\in Z_{\th^0}^1(\R,U(Z(M_\vph)))$.
Then the following hold:
\begin{enumerate}
\item
$\wdt{\sigma^\varphi_c}=\Ad V_c$ on $\tM$;
\item
$V_c^*\theta_t(V_c)=c_t\otimes 1$ for all $t\in\R$;
\item
$V_{c}V_{c'}=V_{cc'}=V_{c'}V_{c}$, $V_c^*=V_{c^*}$;
\item
If $c_t=u\theta_t^0(u^*)$ for some $u\in U(Z(M_\varphi))$,
then $V_{c}=\pi^0(u)(u^*\otimes 1)$.
\end{enumerate}
\end{lem}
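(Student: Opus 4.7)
The plan is to verify each of the five statements by direct computations on $\wdt{K}$, working from the explicit formula $(V_c\xi)(s)=c_{-s}^*\xi(s)$. The adjoint then acts as $(V_c^*\xi)(s)=c_{-s}\xi(s)$, and $V_c$ lies in $Z(M_\vph)\vee L^\infty(\R)\subs\tM$ as a measurable $Z(M_\vph)$-valued multiplication operator.

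Parts (3), (4), and (5) reduce to pointwise identities. For (3), since each $c_{-s}$ and $c'_{-s}$ lies in the commutative algebra $Z(M_\vph)$, I compute $(V_cV_{c'}\xi)(s)=c_{-s}^*(c')_{-s}^*\xi(s)=(cc')_{-s}^*\xi(s)$, yielding both equalities at once (the product $cc'$ is again a $\th^0$-cocycle). Part (4) is immediate from the explicit formula for $V_c^*$. For (5), substituting $c_t=u\th^0_t(u^*)$ gives $c_{-s}^*=\th^0_{-s}(u)u^*$, which factorizes as the action of $\pi^0(u)$ (multiplication by $\th^0_{-s}(u)$) followed by constant multiplication by $u^*$, that is, $V_c=\pi^0(u)(u^*\oti 1)$.

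For (2), I use $\th_t=\th^0_t\oti\Ad\la^r_t$. The translation $\Ad\la^r_t$ converts the symbol function $s\mapsto c_{-s}^*$ into $s\mapsto c_{-s-t}^*$, after which $\th^0_t$ acts pointwise, giving $(\th_t(V_c)\xi)(s)=\th^0_t(c_{-s-t}^*)\xi(s)$. Applying the $\th^0$-cocycle relation $c_{u+v}=c_u\th^0_u(c_v)$ (with $c_0=1$) in two ways yields $c_{-s-t}^*=\th^0_{-s-t}(c_{s+t})$ and $c_{s+t}=c_s\th^0_s(c_t)$, from which $\th^0_t(c_{-s-t}^*)=\th^0_{-s}(c_s)c_t$. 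A further application ($c_{-s}\th^0_{-s}(c_s)=1$) then yields $c_{-s}\th^0_t(c_{-s-t}^*)=c_t$. Thus $V_c^*\th_t(V_c)$ is the constant $c_t$-valued function, identified with $c_t\oti 1\in Z(M_\vph)\oti L^\infty(\R)$.

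Finally, for (1), it suffices to check that $\Ad V_c$ and $\wdt{\sigma^\varphi_c}$ coincide on the generators $M$ and $\{\la^\vph(t)\}_{t\in\R}$ of $\tM$. The equality on $M$ is exactly the defining relation for $\sigma^\vph_c$. For $\la^\vph(t)=\la^l_t$, a computation parallel to that in (2)---namely $(V_c\la^l_tV_c^*\xi)(s)=c_{-s}^*c_{t-s}\xi(s-t)=\th^0_{-s}(c_t)\xi(s-t)$ via $c_{t-s}=c_{-s}\th^0_{-s}(c_t)$---gives $V_c\la^l_tV_c^*=\pi^0(c_t)\la^l_t$, corresponding to $c_t\la^\vph(t)$ in $\tM$. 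On the other hand, by the defining formula for the canonical extension of an automorphism and the characterizing property $[D\vph\circ(\sigma^\vph_c)^{-1}:D\vph]_t=c_t$ of the extended modular automorphism, $\wdt{\sigma^\varphi_c}(\la^\vph(t))=c_t\la^\vph(t)$ as well. The two formulas coincide, establishing (1). The main obstacle is the careful bookkeeping of $\th^0$-twists in the cocycle manipulations of (2); once that is handled, the remaining parts follow routinely.
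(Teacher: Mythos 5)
Your parts (2)--(5) agree with the paper's proof both in method and in the details of the cocycle manipulations; there is no issue there. Part (1), however, contains a genuine error.

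You write ``For $\la^\vph(t)=\la^l_t$'' and then argue that $\Ad V_c$ and $\widetilde{\si_c^\vph}$ both send $\la^\vph(t)$ to $c_t\la^\vph(t)$, citing the claim that $[D\vph\circ(\si_c^\vph)^{-1}:D\vph]_t=c_t$. Both steps are wrong. The unitary $\la^l_t$ is the left-translation unitary inside $M=\pi^0(M_\vph)\vee\{\la^l_t\}$; it is an element of $M$, not the implementing unitary $\la^\vph(t)$ for $\si^\vph$ that generates the core together with $M$. In the concrete realization used here $\la^\vph(t)$ is identified with $\mu_t$, since $\mu$ is the implementing unitary group for $\si^\vph$ (this is stated explicitly in the paper). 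Because $\la^l_t\in M$ and $\si_c^\vph$ is defined as $\Ad V_c|_M$, your computation $\Ad V_c(\la^l_t)=\pi^0(c_t)\la^l_t$ is just a verification of the definition of $\si_c^\vph$ on $M$; it says nothing about the behaviour of the canonical extension on the auxiliary generator of $\tM$. Moreover, the claimed Connes cocycle formula is incorrect: an extended modular automorphism of a dominant weight preserves that weight, so $[D\vph\circ(\si_c^\vph)^{-1}:D\vph]_t=1$, not $c_t$, and hence $\widetilde{\si_c^\vph}(\mu_p)=\mu_p$.

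Because of this, the step that actually needs to be checked is that $\Ad V_c(\mu_p)=\mu_p$, which then matches the (trivial) action of the canonical extension on $\mu_p$. The paper handles this by noting that $\vph$ is $\si_c^\vph$-invariant and that $V_c$ lies in $Z(M_\vph)\vee L^\infty(\R)$, hence commutes with $\mu_p\in 1\oti L^\infty(\R)$. Your argument skips this entirely; you should replace the paragraph treating $\la^l_t$ by (a) the observation that $\Ad V_c=\si_c^\vph$ on $M$ holds by the very definition of $\si_c^\vph$, and (b) the check that $\Ad V_c$ fixes $\mu_p$ together with the invariance $\vph\circ\si_c^\vph=\vph$, which forces $\widetilde{\si_c^\vph}(\mu_p)=\mu_p$.
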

\begin{proof}
(1).
We verify that
$\Ad V_c=\id$ on $\pi^0(M_\varphi)$,
and $\Ad V_c(\lambda^0_t)=\pi^0(c_t)\lambda^l_t $
for all $t\in\R$.
For $x\in M$, we have the following.
\begin{align*}
\left(V_c\pi^0(x)V_c^*\xi\right)(s)
&=
c_{-s}^* \left(\pi^0(x)V_c^*\xi\right)(s)
=
(c_{-s})\theta_{-s}(x) \left(V_c^*\xi\right)(s)
\\
&=
c_{-s}\theta_{-s}(x) c_{-s}^*\xi(s)
=
\theta_{-s}^0(x) \xi(s)
\\
&=
\left(\pi^0(x)\xi\right)(s).
\end{align*}
The second equality is verified as follows.
\begin{align*}
\left(V_c\lambda^0_tV_c^*\xi\right)(s)
&=
c_{-s}^* \left(\lambda^0_tV_c^*\xi\right)(s)
=
c_{-s}^* \left(V_c^*\xi\right)(s-t)
\\ 
&=
c_{-s}^*c_{t-s} \xi(s-t)
=
c_{-s}^*c_{-s}\theta_{-s}(c_t) \xi(s-t)
\\
&=
\theta_{-s}(c_t) \xi(s-t)
=
\left(\pi^0(c_t)\lambda^l_t\right)(s). 
\end{align*}

Since $\mu$ is an
implementing unitary for $\sigma^\varphi$, and $\varphi$ is
$\sigma^\varphi_c$-invariant, we have to show $\Ad V_c(\mu_p)=\mu_p$.
This is trivial
since $V_c$ is belonging to $B(K)\oti L^\infty(\R)$.

(2).
We obtain $V_c^*\theta_t(V_c)=c_t$ as follows:
\begin{align*}
(V_c^*\th_t(V_c)\xi)(s)
&=
c_{-s}\left(\th_{t}(V_c)\xi\right)(s)
=
c_{-s}\th_{t}^0(c_{-(s+t)}^*)\xi(s)
\\ 
&=
c_{-s}\th_{t}^0(\th_{-t}(c_{-s})c_{-t}^*)\xi(s)
=
\th_{t}^0(c_{-t}^*)\xi(s)
=
c_t\xi(s).
\end{align*}

(3). This is trivial.

(4). By the definition of $V_c$, 
\begin{align*}
\left(V_c\xi\right)(s)
&=c_{-s}^*\xi(s,t)
=\theta_{-s}^0(u)u^*\xi(s)
\\
&=\th_{-s}^0(u)
\left((u^*\otimes 1)\xi\right)(s)
=
\left(\pi^0(u)(u\otimes 1)\xi\right)(s).
\end{align*}

\end{proof}

Note that
$V_c$'s are generating a commutative von Neumann algebra
in $\tM$.

\section{Kac algebras}

We briefly summarize the terminology and the notation
about Kac algebras which are used in \cite{MT1}.
Our standard references are \cite{BaSk,ES,MT1,Wor}.

\subsection{Compact or discrete Kac algebras}

Let $\bG$ be a \emph{compact quantum group}.
\index{quantum group!compact--}
Namely,
$\bG$ is a pair of a unital C$^*$-algebra $C(\bG)$
and a coproduct $\de$ that is a unital faithful $*$-homomorphism
from $C(\bG)$ into $C(\bG)\oti C(\bG)$
such that $(\de\oti\id)\circ\de=(\id\oti\de)\circ\de$.
We moreover assume the cancellation property,
that is,
the subspaces $\de(C(\bG))(C(\bG)\oti\C)$
and $\de(C(\bG))(\C\oti C(\bG))$ are norm dense
in $C(\bG)\oti C(\bG)$.
Thanks to Woronowicz's result,
there exists a state $h$,
which is called the \emph{Haar state}
\index{Haar state}
such that
\[
(\id\oti h)(\de(x))=h(x)1=(h\oti\id)(\de(x))
\quad
\mbox{for all }x\in C(\bG).
\]
We will assume that $h$ is faithful and tracial in this paper.
Then $\bG$ is said to be of \emph{Kac type}
\index{quantum group!of Kac type}
or a \emph{compact Kac algebra}.
\index{Kac algebra!compact--}

Let $L^2(\bG)$ be the GNS Hilbert space of $\CG$ associated with $h$
and $\Om_h$ the GNS cyclic vector.
The weak closure of $\CG$ is denoted by $\lG$.
Then the multiplicative unitary $V$ is defined by
\[
V^*(x\Om_h\oti y\Om_h)=\de(y)(x\Om_h\oti\Om_h)
\quad
\mbox{for }x,y\in\CG.
\]
Thanks to the double commutant theorem,
$V$ belongs to $\lG\oti B(\ltG)$.
The coproduct $\de$ extends to $\lG$
by setting $\de(x)=V^*(1\oti x)V$ for $x\in\lG$.
Moreover, $V$ satisfies the following pentagon equation:
\[
V_{12}V_{13}V_{23}=V_{23}V_{12}.
\]
From this, the set
$\lhG:=\ovl{\spa}^{\rm w}\{(\om\oti\id)(V)\mid \om\in \lG_*\}$
is a von Neumann algebra with a coproduct $\De$
defined by
$\De(x):=V(x\oti 1)V^*$ for $x\in\lhG$.
If we restrict the canonical trace $\Tr$ of $B(\ltG)$ on $\lhG$,
we obtain the semifinite weight $\vph$,
which satisfies the following bi-invariance:
\[
(\id\oti\vph)(\De(x))=\vph(x)=(\vph\oti\id)(\De(x))
\quad\mbox{for all }x\in \lhG_+.
\]
Then we call $\bhG:=(\lhG,\De,\vph)$ the dual discrete Kac algebra
or the dual discrete quantum group of Kac type.
\index{quantum group!discrete--}
Note that any discrete Kac algebra
\index{Kac algebra!discrete--}
arises like this
due to the duality.

Next we review the representation theory of $\bG$.
A \emph{unitary representation}
\index{unitary representation!of a compact Kac algebra}
of $\bG$ on a Hilbert space $H$ means
a unitary $v\in B(H)\oti\lG$ such that $(\id\oti\de)(v)=v_{12}v_{13}$.
Let $v$ and $w$ be unitary representations on Hilbert spaces $H$ and $K$,
respectively.
We associate with them the intertwiner subspace $(v,w)$ of $B(H,K)$,
that is, $S\in (v,w)$ if and only if $(S\oti1)v=w(S\oti1)$.
If $(v,v)=\C$,
then $v$ is said to be irreducible.
Then Peter--Weyl theory still holds as in the case of a compact group.
Let $\IG$ be a complete set of unitary equivalence classes
of irreducible representations of $\bG$.
For each $\pi\in\IG$,
we choose the corresponding representation $v(\pi)$ on $H_\pi$.
We fix an orthonormal base $\{\vep_{\pi_i}\}_{i\in I_\pi}$ of $H_\pi$.
The $(i,j)$-matrix element of $v(\pi)$ is denoted by $v(\pi)_{ij}$.
The \emph{conjugate unitary representation}
\index{unitary representation!conjugate--}
of $\pi$
is defined by
$v(\opi)_{i,j}=v(\pi)_{i,j}^*$.

For $\pi,\rho\in\IG$, we define the \emph{tensor product representation}
\index{unitary representation!tensor product--}
$\pi\rho$ by the unitary $v(\pi)_{13}v(\rho)_{23}$.
Then if $\rho$ equals $\opi$, that is, $\rho$ is conjugate to $\opi$,
then $\pi\opi$ contains the trivial representation $\btr$
with multiplicity 1.
Indeed, the following isometry
$T_{\pi,\opi}$ is intertwining $\btr$ and $\pi\opi$:
\[
T_{\pi,\opi}
=\sum_{i\in I_\pi}\frac{1}{\sqrt{d(\pi)}}\vep_{\pi_i}\oti\vep_{\opi_i},
\]
where $d(\pi)$ denotes the dimension of $H_\pi$,
and the coupling of $\vep_{\pi_i}$ and $\vep_{\opi_j}$ equals $\de_{i,j}$.

Let $\pi,\rho,\si\in\IG$
and $S\col H_\si\ra H_\pi\oti H_\rho$ be an intertwiner.
If we want to specify the representation spaces,
we use the notation $S_{\pi,\rho}^\si$.
The matrix element of $S_{\pi,\rho}^\si$
with respect to the vectors $\vep_{\pi_i}\oti\vep_{\rho_j}$ and $\vep_{\si_k}$
is denoted by $S_{\pi_i,\rho_j}^{\si_k}$.
Note that
$(\si,\pi\rho)$ is a Hilbert space with the inner product $(S,T):=T^*S$.
We will fix an orthonormal base from now on and denote the family
by $\ONB(\si,\pi\rho)$.

\subsection{Generalized central quantum subgroups}
\label{subsect:GCQS}




Most of the results obtained in this and the next subsections
are probably well-known for experts.
We, however, present their proofs for reader's convenience.
Let us begin with the notion of a quantum subgroup.

\begin{defn}
Let $\bG=(\CG,\de_\bG)$ and $\bK=(\CK,\de_\bK)$
be compact Kac algebras.
Then $\bK$ is called a \emph{quantum subgroup}
\index{quantum subgroup!of a compact Kac algebra}
of $\bG$
when there exists a surjective
$*$-homomorphism $r\col \CG\ra\CK$
such that $(r\oti r)\circ\de_\bG=\de_\bK\circ r$.
The map $r$ is called a \emph{restriction map}.
\index{restriction map}
\end{defn}

Readers should note that the restriction map is not uniquely
determined from $\bK$ and $\bG$ in general.
However, we often abbreviate it for simplicity,
and the situation is denoted by $\bK\subs\bG$.

For a quantum subgroup $\bK$ as above,
we define the restriction of a representation $v_\pi$, $\pi\in\IG$
as follows:
\[
v_{\pi|_\bK}:=(\id\oti r)(v_\pi).
\]
Then the surjectivity of $r$ implies that
the matrix entries of $v(\pi|_\bK)$, $\pi\in\IG$,
are densely spanning $\CK$.

The quantum subgroup
$\bK$ acts on $\CG$ from the left and right as follows:
\[
\ga^\el:=(r\oti \id)\circ\de_\bG,
\quad
\ga^r:=(\id\oti r)\circ\de_\bG.
\]
The fixed point algebras of $\ga^\el$ and $\ga^r$
are denoted by $C(\bK\backslash\bG)$ and $C(\bG/\bK)$,
respectively.
Then the normality of quantum subgroup is introduced
as follows.

\begin{defn}
Let $\bK$ be a quantum subgroup of $\bG$.
We will say that $\bK$ is \emph{normal}
\index{quantum subgroup!normal--}
if $C(\bG/\bK)=C(\bK\backslash\bG)$.
\end{defn}

The following result may be well-known to experts.

\begin{lem}\label{lem:K-normal}
A quantum subgroup $\bK$ of $\bG$
is normal
if and only if
$C(\bG/\bK)$ is a Kac subalgebra.
\end{lem}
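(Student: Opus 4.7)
The plan is to prove both implications via direct computation with the coproduct, exploiting coassociativity and the definitions of $\ga^\el,\ga^r$.

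\smallskip

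\emph{Forward direction (normality implies Kac subalgebra).} Assume $C(\bG/\bK)=C(\bK\backslash\bG)$, and take $x$ in this common algebra, so $\ga^r(x)=x\oti 1$ and $\ga^\el(x)=1\oti x$. The goal is $\de_\bG(x)\in C(\bG/\bK)\oti C(\bG/\bK)$. For the second leg, compute
\[
(\id\oti\ga^r)\de_\bG(x)=(\id\oti\id\oti r)(\id\oti\de_\bG)\de_\bG(x)=(\id\oti\id\oti r)(\de_\bG\oti\id)\de_\bG(x)=(\de_\bG\oti\id)\ga^r(x)=\de_\bG(x)\oti 1,
\]
so $\de_\bG(x)\in C(\bG)\oti C(\bG/\bK)$. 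Symmetrically, applying coassociativity from the other side with $\ga^\el(x)=1\oti x$ yields $(\ga^\el\oti\id)\de_\bG(x)=1\oti\de_\bG(x)$, i.e.\ $\de_\bG(x)\in C(\bK\backslash\bG)\oti C(\bG)=C(\bG/\bK)\oti C(\bG)$. Intersecting the two inclusions gives $\de_\bG(x)\in C(\bG/\bK)\oti C(\bG/\bK)$ (the intersection identity $(A\oti C)\cap(C\oti A)=A\oti A$ is justified by the Haar-state conditional expectation $E_{\bG/\bK}=(\id\oti h_\bK)\ga^r$ from $\CG$ onto $C(\bG/\bK)$).

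\smallskip

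\emph{Reverse direction (Kac subalgebra implies normality).} I will use the Peter--Weyl description: $C(\bG/\bK)$ is the norm closure of the span of matrix entries $v(\pi)_{ij}$ with $\vep_{\pi_j}$ fixed by $\pi|_\bK$, and $C(\bK\backslash\bG)$ of those with $\vep_{\pi_i}$ fixed. Fix such a matrix entry $v(\pi)_{ij}\in C(\bG/\bK)$ and expand
\[
\de_\bG(v(\pi)_{ij})=\sum_k v(\pi)_{ik}\oti v(\pi)_{kj}.
\]
By hypothesis this sum lies in $C(\bG/\bK)\oti C(\bG/\bK)$. Since the matrix coefficients $\{v(\pi)_{kj}\}_k$ of an irreducible $\pi$ are linearly independent (for fixed $\pi,j$), one can separate them by dualizing against a conditional expectation, forcing each first leg $v(\pi)_{ik}$ to lie in $C(\bG/\bK)$. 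Thus every basis vector $\vep_{\pi_k}$ is fixed by $\pi|_\bK$, meaning $\pi|_\bK$ is the trivial representation on all of $H_\pi$.

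Consequently every representation $\pi$ contributing to $C(\bG/\bK)$ has $\pi|_\bK=\btr$, so \emph{every} matrix entry of such a $\pi$ belongs simultaneously to $C(\bG/\bK)$ and to $C(\bK\backslash\bG)$. The converse inclusion $C(\bK\backslash\bG)\subs C(\bG/\bK)$ is handled symmetrically: by the identical linear-independence argument applied to $(\id\oti\de_\bG)\de_\bG(v(\pi)_{ij})$, the Kac-subalgebra assumption on $C(\bG/\bK)$ forces the two characterizations (fixed column vs.\ fixed row) to coincide. Taking norm closures gives $C(\bG/\bK)=C(\bK\backslash\bG)$.

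\smallskip

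\emph{Expected difficulty.} The forward direction is clean, essentially a coassociativity calculation plus the standard intersection-of-tensor-products fact. The reverse direction is where care is needed: the representation-theoretic extraction of ``each $v(\pi)_{ik}\in C(\bG/\bK)$'' from the sum $\sum_k v(\pi)_{ik}\oti v(\pi)_{kj}\in C(\bG/\bK)\oti C(\bG/\bK)$ relies on a separation argument (slice maps or a dual basis obtained from $E_{\bG/\bK}$), and one must verify that passing from the dense Hopf $*$-algebra of matrix coefficients to the C$^*$-closures is harmless. An alternative that avoids this subtlety is to use the counit $\ep$ on the algebraic core to show directly that $r$ is scalar-valued on $C(\bG/\bK)\cap\CG_{\mathrm{alg}}$, whence $\ga^\el(x)=(r\oti\id)\de_\bG(x)=1\oti x$ by a slice computation on $\de_\bG(x)\in C(\bG/\bK)\oti C(\bG/\bK)$, and one concludes by density.
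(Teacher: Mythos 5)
Your proof is correct and takes essentially the same route as the paper: the non-trivial (reverse) direction extracts, via slice maps applied to $\de(v(\pi)_{ij})\in C(\bG/\bK)\oti C(\bG/\bK)$ for a column $j$ in the $\bK$-fixed subspace, that every first-leg entry $v(\pi)_{ik}$ lies in $C(\bG/\bK)$, whence $\pi|_{\bK}$ is trivial on all of $H_\pi$ --- precisely the paper's argument. You additionally spell out the forward direction (which the paper dismisses as trivial) and the closing remark about the converse inclusion could be shortened, since unitarity of $\pi|_{\bK}$ already gives that a trivial row exists iff the corresponding trivial column does, so $C(\bG/\bK)$ and $C(\bK\backslash\bG)$ are supported on the same set of $\pi\in\IG$ and both become the full span of matrix entries once $\pi|_{\bK}=\btr$.
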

\begin{proof}
The ``only if'' part is trivial.
We show the ``if'' part.
Take $\pi\in \IG$ such that $C(\bG/\bK)_\pi\neq0$.
Then $\pi|_\bK$ contains the trivial representation.
Thus we may and do assume $I_\pi$ has the decomposition
$I_\pi=I_\pi^\btr\sqcup J_\pi$
such that the vectors $\{\vep_{\pi_i}\}_{i\in I_\pi^\btr}$
span the trivial representation space of $\bK$.
Let $i\in\IG$ and $j\in I_\pi^\btr$.
Then we have
$\de(v_{\pi_{ij}})=\sum_{k}v_{\pi_{ik}}\oti v_{\pi_{kj}}$.
Since
$\de(C(\bG/\bK))\subs C(\bG/\bK)\oti C(\bG/\bK)$,
we have
$v_{\pi_{ik}}\in C(\bG/\bK)$ for all $k\in I_\pi$.
In the end,
$I_\pi$ coincides with $I_\pi^\btr$,
that is, $\pi|_\bK$ is trivial.
This implies the normality of $\bK$.
\end{proof}

We consider a special kind of a normal quantum subgroup.

\begin{defn}
Let $\bK\subs \bG$ be a normal quantum subgroup.
If the restriction $\pi|_\bK$ decomposes
to only one one-dimensional representation of $\bK$
for any $\pi\in\IG$,
then we will say
that $\bK$ is a \emph{generalized central quantum subgroup}.
\index{quantum subgroup!generalized central--}
\end{defn}

Indeed,
if $\bG$ is a compact group,
then such normal subgroup must be contained in $Z(\bG)$.
Let $\bK$ be a generalized central quantum subgroup of $\bG$
with restriction map $r_\bK\col\CG\ra\CK$.
By definition, there exists a unitary $g_\pi\in\CK$
such that
\[
(\id\oti r_\bK)(v_\pi)=1\oti g_\pi.
\]
The $g_\pi$ is a one-dimensional representation of $\bK$
which satisfies $g_\opi=g_\pi^*$.
Since all irreducible representations come from restrictions
of $\pi$'s,
it turns out that $\bK$ is co-commutative.
Thus the dual of $\bK$ is a discrete group.

\subsection{Normal subcategory}

\begin{defn}\label{defn:normal-subcat}
Let $\bG$ be a compact Kac algebra.
\begin{enumerate}
\item
Let $\La$ be a subset of $\IG$.
Suppose that $\La$ is closed under conjugation and product,
that is,
if $\rho_1,\rho_2\in\La$,
then $\orho_1$ and each irreducible in $\rho_1\rho_2$
still belongs to $\La$.
In this case, we will say
that $\La$ is a \emph{subcategory}
\index{subcategory!of $\IG$}of $\IG$;

\item
A subset $\La\subs \IG$ is said to be \emph{normal}
\index{subcategory!normal--}
if $\btr\in\La$ and
each irreducible in $\pi\rho\opi$ is contained in $\La$
for all $\pi\in\IG$ and $\rho\in\La$.
\end{enumerate}
\end{defn}

Let $\bK\subs\bG$ be a generalized central quantum subgroup,
and $\La_\bK\subs \IG$ the collection of $\rho\in\IG$
such that $\rho|_\bK$ is a trivial representation.

\begin{thm}\label{thm:corr-gc-nc}
The map $\bK\mapsto \La_\bK$
is a bijective correspondence between
the set of generalized central quantum subgroups of $\bG$,
and the set of normal subcategories of $\IG$.
\end{thm}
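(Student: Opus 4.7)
The plan is to exhibit the inverse map $\La\mapsto\bK_\La$ explicitly and verify that both compositions yield the identity.

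First I would check that $\La_\bK$ is a normal subcategory whenever $\bK$ is generalized central. Writing $(\id\oti r)(v_\pi)=1_{H_\pi}\oti g_\pi$ for each $\pi\in\IG$, where $g_\pi\in\CK$ is a group-like unitary (a one-dimensional representation of $\bK$), the identity $v_{\pi\rho}=(v_\pi)_{13}(v_\rho)_{23}$ together with the multiplicativity of $r$ forces $g_\si=g_\pi g_\rho$ for every irreducible $\si\subs\pi\rho$, as well as $g_\btr=1$ and $g_\opi=g_\pi^*$. Hence $\La_\bK=\{\pi:g_\pi=1\}$ is closed under conjugation and products, and for $\rho\in\La_\bK$ any irreducible $\si\subs\pi\rho\opi$ satisfies $g_\si=g_\pi g_\rho g_\pi^*=1$, giving normality.

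For the inverse direction, given a normal subcategory $\La\subs\IG$, I would define
\[
A_\La:=\ovl{\spa}\{v_{\pi_{ij}}:\pi\in\La,\ i,j\in I_\pi\}\subs\CG
\]
and verify that it is a unital Hopf $*$-subalgebra of $\CG$ (closure under $*$, products, coproduct, and antipode are immediate from $\La$ being closed under conjugation and products and containing $\btr$). By the standard duality between normal quantum subgroups of $\bG$ and Hopf $*$-subalgebras of $\CG$ (together with Lemma \ref{lem:K-normal}), there exists a unique normal quantum subgroup $\bK_\La$ with $A_\La=C(\bG/\bK_\La)$. Concretely, $\CK_\La$ may be realized as $\CG/I_\La$ where $I_\La$ is the closed two-sided ideal generated by $\{v_{\pi_{ij}}-\de_{ij}:\pi\in\La\}$; the Hopf-ideal property follows from
\[
\de(v_{\pi_{ij}}-\de_{ij})=\sum_k(v_{\pi_{ik}}-\de_{ik})\oti v_{\pi_{kj}}+1\oti(v_{\pi_{ij}}-\de_{ij}).
\]

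The technical heart is to show $\bK_\La$ is generalized central. I would apply the normality of $\La$ with $\rho=\btr\in\La$ in Definition \ref{defn:normal-subcat} to deduce that every irreducible subrepresentation of $\pi\opi$ lies in $\La$ for every $\pi\in\IG$. Consequently $r(v_{\pi\opi})=1_{H_\pi\oti H_\opi}\oti 1_\CK$ in $B(H_\pi)\oti B(H_\opi)\oti\CK_\La$. Writing this as $r(v_\pi)_{13}\,r(v_\opi)_{23}$ and equating matrix entries, one obtains $(r(v_\pi))_{ij}\cdot(r(v_\opi))_{k\ell}=\de_{ij}\de_{k\ell}\cdot 1$, which forces $r(v_\pi)=1_{H_\pi}\oti g_\pi$ for some group-like unitary $g_\pi\in\CK_\La$. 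Extracting a one-dimensional character from the categorical normality hypothesis through this matrix-coefficient calculation is the main obstacle. The compositions are now identities: $\La\subs\La_{\bK_\La}$ holds by construction, and the identity $(\id\oti r)\de(v_{\pi_{ij}})=v_{\pi_{ij}}\oti g_\pi$ gives $C(\bG/\bK_\La)=\bigoplus_{\pi\in\La_{\bK_\La}}\CG_\pi$, which must equal $A_\La=\bigoplus_{\pi\in\La}\CG_\pi$; Peter--Weyl's linear independence of matrix entries of distinct irreducibles then forces $\La=\La_{\bK_\La}$. Conversely, for $\bK$ generalized central, $\bK$ and $\bK_{\La_\bK}$ correspond under the Hopf subalgebra duality to the same Hopf $*$-subalgebra $A_{\La_\bK}=C(\bG/\bK)\subs\CG$, so they coincide.
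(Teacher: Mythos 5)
Your overall strategy mirrors the paper's: show $\La_\bK$ is a normal subcategory, build the inverse by forming the Hopf $*$-subalgebra $A_\La$ and passing to a quantum subgroup $\bK_\La$, verify $\bK_\La$ is generalized central from normality, and conclude. The forward direction and the generalized-centrality step are correct, and the matrix-coefficient argument that forces $r(v_\pi)=1_{H_\pi}\oti g_\pi$ (using normality with $\rho=\btr$ to get $r(v_{\pi\opi})=1$, then deducing that off-diagonal entries of $r(v_\pi)$ vanish and diagonal entries agree) is spelled out more explicitly than in the paper, which disposes of it with ``it turns out from this equality.''

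The genuine gap is in the existence of $\bK_\La$. You invoke a ``standard duality between normal quantum subgroups of $\bG$ and Hopf $*$-subalgebras of $\CG$, together with Lemma \ref{lem:K-normal}.'' But Lemma \ref{lem:K-normal} only runs one way: \emph{given} a quantum subgroup $\bK$, it is normal iff $C(\bG/\bK)$ is a Kac subalgebra. It does not produce a quantum subgroup from a given Kac subalgebra. In the noncommutative setting that production is not free: the paper gets it from \cite[Theorem 3.17]{T2}, which in addition to the expectation property (automatic here since $\bG$ is Kac) requires the \emph{coaction symmetry} condition $\sum_{k}v_{\pi_{ki}}^*\,a\,v_{\pi_{kj}}\in A_\La$ for all $\pi\in\IG$, $a\in A_\La$. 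For commutative $\CG$ (a genuine compact group) this is vacuous because the sum collapses to $\de_{ij}a$; for noncommutative $\CG$ it is a real constraint, and the paper verifies it precisely by using normality of $\La$ (matrix entries of $v_\opi v_\rho v_\pi$ lie in $A_\La$). This is the technical core of the inverse direction, and your proposal omits it, whereas you single out the generalized-centrality step as the ``main obstacle'' — in fact that step is quick once $\bK_\La$ exists. The alternative ideal-quotient realization $\CG/I_\La$ does not close the gap either: even granting $I_\La$ is a Hopf $*$-ideal, you still need $\La_{\bK_\La}=\La$ (that $I_\La$ does not inadvertently kill any $\pi\nin\La$), and your final paragraph derives this from $C(\bG/\bK_\La)=A_\La$, which is exactly what the omitted step supplies.
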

\begin{proof}
Let $\bK\subs\bG$ be a generalized central quantum subgroup
with the restriction map $r_\bK\col\CG\ra\CK$.
First observe the map given above is well-defined.
Indeed, let $\pi\in\IG$ and $\rho\in\La_\bK$.
Then we have
\[
(\id_\pi\oti\id_\rho\oti\id_\opi\oti r_\bK)
(v_\pi v_\rho v_\opi)
=g_\pi\cdot 1\cdot g_\opi=1.
\]
which shows $\La_\bK$ is normal.
By similar computation, it is easy to see that
$\La_\bK$ is a subcategory of $\IG$.

Next we will construct the inverse map.
Let $\La$ be a normal subcategory of $\IG$.
Then
the linear span of $C(\bG)_\rho$, $\rho\in\La$
is a $*$-algebra.
We denote it by $C(\bH_\La)$, that is,
\[
C(\bH_\La)=\ovl{\spa}^{\|\cdot\|}
\{v(\rho)_{ij}\mid \rho\in\La, i,j\in I_\rho\}.
\]
Then clearly $C(\bH_\La)$ is a compact Kac subalgebra.
We show there exists a quantum subgroup $\bK_\La$
of $\bG$ such that $C(\bH_\La)=C(\bG/\bK_\La)$.
Thanks to \cite[Theorem 3.17]{T2},
it suffices to check that $L^\infty(\bH_\La)$
has the expectation property and the coaction symmetry.
Since $\lG$ is finite,
the expectation property automatically holds
(this is still true for a compact quantum group
because of the global invariance of the modular group $\si^h$).
The coaction symmetry means
for any $\pi\in\IG$, $i,j\in I_\pi$,
$\rho\in\La$ and $p,q\in I_\rho$,
the element $\sum_{k\in I_\pi}v_{\pi_{ki}}^*v_{\rho_{pq}}v_{\pi_{kj}}$
belongs to $C(\bH_\La)$.
Indeed, this is trivial
because the normality of $\La$ implies
the matrix entries of $v_\opi v_\rho v_\pi$
are contained in $C(\bH_\La)$.

By Lemma \ref{lem:K-normal},
$\bK_\La$ is normal, and $\rho|_{\bK_\La}$ is trivial for all $\rho\in\La$.
Let $r_{\bK_\La}\col \CG\ra C(\bK_\La)$ be the restriction map.
The normality of $\La$ implies that for $\pi\in\IG$ and $\rho\in\La$,
\[
(\id_\pi\oti\id_\rho\oti\id_\opi\oti r_{\bK_\La})(v_\pi v_\rho v_\opi)=1
.\]
It turns out from this equality
that there exists a unitary $g_\pi\in C(\bK_\La)$
such that $(\id\oti r_{\bK_\La})(v_\pi)=1_\pi\oti g_\pi$,
which means $\bK_\La$ is a generalized central quantum subgroup of $\bG$. 
It immediately follows that the map $\bK\mapsto\La_\bK$
is a bijection.
\end{proof}

\begin{ex}\label{ex:conn-simple-normal}
If $\bG$ is a connected simple Lie group,
then any subcategory of $\IG$ is normal.
Indeed, let $\La\subs \IG$ be a subcategory.
Then it corresponds to the quotient
by a normal closed subgroup $\bK_\La$ of $\bG$
such that
\[
C(\bG/\bK_\La)
=\ovl{\spa}^{\|\cdot\|}\{v_{\pi_{i,j}}\mid\pi\in\La,i,j\in I_\pi\},
\]
that is, $\bK_\La$ is nothing but the intersection
of $\ker(\pi)$ for all $\pi\in\La$.
Since the Lie group $\bG$ is connected and simple,
any normal subgroup is central.
This implies that $\La$ is normal.

Next let us consider the $q$-deformation $\bG_q$
of such $\bG$ with $q\neq1$.
Then the representation category is unchanged,
and any subcategory of $\bG_q$ is still normal.
\end{ex}

\subsection{2-cocycles of a discrete Kac algebra}\label{sec:2-cocycle}
In Section \ref{sec:modular},
we will study an action $\al$ such that
all $\al_\pi$'s are modular.
Such actions are related with
2-cocycles of $\bhG$.
In this subsection,
we summarize some results on 2-cocycles.
Our standard references are \cite{IK,Wass-cptII}.

\begin{defn}\label{df:2-cocycle}
Let $M$ be an abelian von Neumann algebra.
\begin{enumerate}
\item
A unitary $\om\in M\otimes \lhG\otimes \lhG$ is called an
$M$-\emph{valued  2-cocycle}
\index{2-cocycle!$M$-valued--}
if
\[
\om_{123}(\id_M \otimes \Delta\otimes \id)(\om)=
\om_{134}(\id_M\otimes\id\otimes \Delta)(\om),
\]
and $\om_{\cdot,\btr}=\om_{\btr,\cdot}=1$.
We will denote by $Z^2(\bhG,M)$ the set of all $M$-valued 2-cocycles.

\item
Let $\om^1$, $\om^2 \in Z^2(\bhG,M)$.
We will say that $\om^1$ and $\om^2$ are \emph{equivalent}
\index{2-cocycle!equivalent--}
if $\om^2=u_{12}u_{13}\om^1(\id_M\otimes \Delta)(u^*)$
for some unitary $u\in M\otimes \lhG$,
and then we write $\om^1\sim \om^2$.
By $H^2(\bhG,M)$, we denote
the set of all equivalence classes.

\item
We will say $\om\in Z^2(\bhG,M)$ is \emph{normalized}
\index{2-cocycle!normalized--}
if $\om(1\oti\delta(e_\btr))=1\oti\delta(e_\btr)$.

\item
We will say $\om\in Z^2(\bhG,\mathbb{C})$ is \emph{co-commutative}
\index{2-cocycle!co-commutative--}
if $\Delta^\om:=\Ad \om\circ\Delta$ is co-commutative,
that is, $\Delta^\om=F\circ \Delta^\om$,
where $F$ denotes the flip automorphism
on $\lhG\oti\lhG$.
\end{enumerate}
\end{defn}

In (3), we note that $(x\oti1)\De(e_\btr)=(1\oti \ka(x))\De(e_\btr)$
for $x\in\lhG$,
where $\ka$ denotes the antipode of $\lhG$.
Thus our definition is the same as \cite[Definition 13.1]{IK}.

When
$M=\mathbb{C}$ in Definition \ref{df:2-cocycle},
we will call $\om$ a 2-cocycle shortly.
The set of 2-cocycles is denoted by $Z^2(\bhG)$.
Let $Z^2(\bhG)_c$ and $Z^2(\bhG)_n$ be
the sets of co-commutative 2-cocycles
and normalized 2-cocycles, respectively.
We set $Z^2(\bhG)_{c,n}:=Z^2(\bhG)_c\cap Z^2(\bhG)_n$.

\begin{rem}
Let $\om\in Z^2(\bhG)$.
Then $\Delta^\om=\Ad \om\circ\Delta$ gives
a new coproduct on $\lhG$.
Moreover if $\om$ is normalized,
then we can show the weight $\vph$ is still invariant for $\De^\om$.
Thus $\bhG^\om:=(\lhG, \Delta^\om, \varphi)$ becomes a Kac algebra
(see Section \ref{subsect:cocycle-Kac} for the proof).
Note that if $\om\in Z^2(\bhG)_n$, then $\om^*\in Z^2(\bhG^\om)_n$.
\end{rem}

\begin{defn}
The \emph{diagonal}
\index{diagonal of 2-cocycle}
$v(\om)$ of $\om\in Z^2(\bhG,M)$
is the element of $M\oti\lhG$
such that
\[
(v(\om)\oti1)(1\oti\De(e_\btr))=\om(1\oti\De(e_\btr)).
\]
By definition,
we have $v(\om)=(\id\oti m\circ(\id\oti \ka))(\om))$,
where $m$ denotes the linear map $\lhG\oti\lhG\ni a\oti b\to ab\in \lhG$.
\end{defn}

\begin{lem}
Let $\om\in Z^2(\bhG,M)$
and $u\in M\oti\lhG$ a unitary.
Let $\om^u:=(u\oti u)\om \De(u^*)$.
Then $v(\om^u)=uv(\om)\ka(u)$.
\end{lem}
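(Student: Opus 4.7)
The plan is to prove the identity by direct computation using the alternate formula
\[
v(\om)=(\id_M\oti T_0)(\om),\quad T_0:=m\circ(\id\oti\ka)\col\lhG\oti\lhG\to\lhG,
\]
and exploiting three elementary properties of the map $T:=\id_M\oti T_0$.

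First I would unwind $\om^u=u_{12}u_{13}\om\,(\id_M\oti\De)(u^*)$ and observe that $T$ behaves well with respect to each factor. Writing $u=\sum n_j\oti x_j$ and $\om=\sum m_i\oti a_i\oti b_i$ (formal Sweedler-style sums, justified by weak density and continuity), the factor $u_{12}$ has trivial third leg, so $T(u_{12}A)=u\,T(A)$ for any $A\in M\oti\lhG\oti\lhG$; this just says that left multiplication by something supported in the first two tensor slots commutes with ``apply $\ka$ on the third slot and multiply.'' Next I would compute
\[
T(u_{13}\om)=\sum_{i,j}n_jm_i\oti a_i\ka(b_i)\ka(x_j)=v(\om)\,(\id_M\oti\ka)(u),
\]
where the antimultiplicativity of $\ka$ produces the reversed order $\ka(b_i)\ka(x_j)$ and abelianness of $M$ (which is part of the setup in Definition~\ref{df:2-cocycle}) is used to reorder $n_jm_i=m_in_j$.

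The third and most delicate step is to absorb the trailing factor $(\id_M\oti\De)(u^*)$. Here I would invoke the antipode identity $m\circ(\id\oti\ka)\circ\De=\eta\circ\vep$: applying $T$ to $A\cdot(\id_M\oti\De)(u^*)$, the piece $y_\el^{(1)}\ka(y_\el^{(2)})$ coming from $\De(x_\el^*)=\sum y_\el^{(1)}\oti y_\el^{(2)}$ collapses to $\vep(x_\el^*)\,1$, so that
\[
T\bigl(A\cdot(\id_M\oti\De)(u^*)\bigr)=T(A)\cdot(\id_M\oti\vep)(u^*).
\]
Combining the three steps,
\[
v(\om^u)=T\bigl(u_{12}\,u_{13}\om\,(\id_M\oti\De)(u^*)\bigr)
=u\cdot v(\om)\,\ka(u)\cdot(\id_M\oti\vep)(u^*),
\]
and the claim follows under the normalization $(\id_M\oti\vep)(u)=1$, which is the standard condition ensuring that coboundary unitaries preserve the cocycle normalization $\om_{\btr,\cdot}=\om_{\cdot,\btr}=1$ built into Definition~\ref{df:2-cocycle}(1).

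The main obstacle I anticipate is not algebraic but notational/foundational: making precise the Sweedler-type manipulations with $\De(x_\el^*)$ in the von Neumann algebraic setting, and making sure the antipode identity $m(\id\oti\ka)\De=\eta\vep$ applies to $u^*$. Since $\ka$ and $\vep$ are only defined on the Hopf $*$-algebra of matrix coefficients, I would first verify the identity block-wise under the decomposition $\lhG=\prod_\pi B(H_\pi)$ (where $\vep(x)=xe_\btr$ and $\ka$ acts coordinate-wise), and then extend by normality; alternatively, one can bypass this by recasting the computation via the characterizing equation $(v(\om)\oti1)(1\oti\De(e_\btr))=\om(1\oti\De(e_\btr))$ and multiplying on both sides by $u_{12}u_{13}$ and $(\id\oti\De)(u^*)$, using $\De(u^*)(1\oti\De(e_\btr))=(1\oti\De(e_\btr))\cdot((\id\oti\ka)(u^*)\oti 1)\De(e_\btr)$-type relations derived from $(x\oti1)\De(e_\btr)=(1\oti\ka(x))\De(e_\btr)$ noted just after Definition~\ref{df:2-cocycle}.
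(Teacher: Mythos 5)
The paper gives no proof of this lemma (it is stated as an immediate computation, sandwiched between the definition of the diagonal $v(\om)=(\id_M\oti m\circ(\id\oti\ka))(\om)$ and the bicharacter lemma), so there is no "paper's proof" to compare against. Your three-step computation is correct: factoring $u_{12}$ out of $T$, absorbing $u_{13}$ with a factor $(\id\oti\ka)(u)$ by antimultiplicativity of $\ka$ (using commutativity of $M$ to reorder in the first slot), and collapsing $(\id_M\oti\De)(u^*)$ via $m(\id\oti\ka)\De=\eta\vep$. The foundational concern you raise about Sweedler-style manipulations is legitimate but handled adequately: for a discrete Kac algebra $\ka$ and $\vep$ are normal maps on all of $\lhG$, and the antipode identity can be checked block-wise in $\prod_\pi B(H_\pi)$ and extended by normality; your proposed bypass via the characterizing equation $(v(\om)\oti1)(1\oti\De(e_\btr))=\om(1\oti\De(e_\btr))$ and the relation $(1\oti x)\De(e_\btr)=(\ka(x)\oti1)\De(e_\btr)$ also works and gives the same answer.

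The most valuable part of your proposal is the observation that the stated identity $v(\om^u)=uv(\om)\ka(u)$ holds only after cancelling a residual factor $(\id_M\oti\vep)(u^*)\oti1$, which requires the normalization $(\id_M\oti\vep)(u)=1$. The lemma as printed imposes no such hypothesis on the unitary $u$, but without it $\om^u$ fails to satisfy $\om^u_{\btr,\cdot}=\om^u_{\cdot,\btr}=1$ and hence does not lie in $Z^2(\bhG,M)$ at all; the normalization is therefore implicit in the standing assumption that $\om^u$ is a cocycle. Catching this is worthwhile: it is exactly the hypothesis one must verify when the lemma is invoked in the proof of the normalizability lemma (where $u=1\oti\nu$ with $\nu^2=v(\om)^*$ and $\ka(\nu)=\nu$, so $\vep(\nu)^2=\vep(v(\om)^*)=1$ and one may choose $\nu$ with $\vep(\nu)=1$).
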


The next result has been proved in the co-commutative case in
\cite[Lemma 13]{Wass-cptII},
and in the finite dimensional case in \cite[Lemma 13.2]{IK},
but we present a proof for readers' convenience.

\begin{lem}
Let $M$ be a commutative von Neumann algebra.
Let $\om$ be an $M$-valued 2-cocycle of $\bhG$.
Then the following hold:
\begin{enumerate}
\item
The diagonal $v(\om)$ of $\om$ is a unitary satisfying $\ka(v(\om))=v(\om)$;
\item
Denote $v(\om)$ by $v$.
Then one has $\wdt{\om}=(\ka(v)^*\oti\ka(v)^*)\om\De(\ka(v))$,
where $\wdt{\om}:=F((\ka\oti\ka)(\om^*))$.
\end{enumerate}
\end{lem}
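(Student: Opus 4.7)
The plan is to deduce both parts from systematic manipulation of the 2-cocycle identity
\[
\om_{123}(\id_M\oti\De\oti\id)(\om)=\om_{134}(\id_M\oti\id\oti\De)(\om)
\]
in $M\oti\lhG^{\oti 3}$, using the antipode axioms $m(\id\oti\ka)\De=\ep\cdot 1=m(\ka\oti\id)\De$, the normalization $(\id\oti\ep\oti\id)(\om)=1=(\id\oti\id\oti\ep)(\om)$, and the commutativity of $M$, which is essential in permitting the reordering of $M$-valued factors coming from different summands.

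For the unitarity in (1), writing $\om=\sum_{i}a_{i}\oti x_{i}\oti y_{i}$ formally, I apply the ``fold'' $\id_M\oti m(\id\oti\ka)\oti\id$ (antipode on the third leg, then multiply legs two and three) to the LHS of the cocycle identity. Expanding
\[
\om_{123}(\id\oti\De\oti\id)(\om)=\sum_{i,j}a_{i}a_{j}\oti x_{i}x_{j}^{(1)}\oti y_{i}x_{j}^{(2)}\oti y_{j}
\]
in Sweedler notation $\De x_j=\sum x_j^{(1)}\oti x_j^{(2)}$ and then using $\sum x_{j}^{(1)}\ka(x_{j}^{(2)})=\ep(x_{j})\cdot 1$ together with $(\id\oti\ep\oti\id)(\om)=1$, the LHS collapses to $v(\om)\oti 1$. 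Applying the complementary fold $\id_M\oti\id\oti m(\id\oti\ka)$ (antipode on the fourth leg, multiply legs three and four) to the RHS produces, by the analogous collapse, the element $v(\om)$ placed at legs $1,3$. Reducing the single cocycle identity by these two folds in turn, and comparing, gives $v(\om)v(\om)^{*}=1=v(\om)^{*}v(\om)$. For $\ka$-invariance, antimultiplicativity of $\ka$ gives $(\id\oti\ka)(v(\om))=\sum_{i}a_{i}\oti y_{i}\ka(x_{i})=(\id\oti m(\ka\oti\id))(\om)$; the equality with $v(\om)=(\id\oti m(\id\oti\ka))(\om)$ is then extracted from the symmetry between the two reduction formulas above, once commutativity of $M$ is invoked.

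For (2), since $\ka(v)=v$ by (1), the asserted identity reads $\wdt{\om}=(v^{*}\oti v^{*})\om\De(v)$. I would apply the involutive anti-multiplicative operation $*\circ F\circ(\id\oti\ka\oti\ka)$ to both sides; using $\De\ka=F(\ka\oti\ka)\De$ together with $\ka(v)=v$ and commutativity of $M$, one checks that both $\De(v)$ and $v\oti v$ are fixed by $F\circ(\id\oti\ka\oti\ka)$, so the claim is equivalent to $\om=(v\oti v)\wdt{\om}\De(v^{*})$. This equivalent form is then established by substituting $v=(\id\oti m(\id\oti\ka))(\om)$, expanding the product, and reducing via the 2-cocycle identity.

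The main obstacle throughout is keeping the Sweedler-type bookkeeping under control: the terms $x_{j}^{(1)},x_{j}^{(2)}$ coming from $\De(x_{j})$ must be contracted with $y_{i}$'s coming from independent summands, and it is only via the commutativity of $M$ that the $M$-valued factors can be freely rearranged to make the antipode axiom applicable. This is precisely the feature that distinguishes the $M$-valued case here from a genuinely noncommutative coefficient algebra.
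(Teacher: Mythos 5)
Your approach is genuinely different from the paper's, but as written it has two serious gaps.

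First, the ``fold'' argument for unitarity is not a valid manipulation as described. You apply the fold $\id_M\oti m(\id\oti\ka)\oti\id$ to the left side of the cocycle identity, and the \emph{complementary} fold $\id_M\oti\id\oti m(\id\oti\ka)$ to the right side, and then ``compare.'' But you cannot apply two different linear maps to the two sides of a single equation and equate the results. Applying the same fold to \emph{both} sides is of course legitimate, and the side on which the antipode axiom fires does collapse: for instance $\id_M\oti m(\id\oti\ka)\oti\id$ applied to $\om_{123}(\id\oti\De\oti\id)(\om)$ gives $v(\om)\oti 1$ (here commutativity of $M$ is used to separate the $M$-indices, as you say). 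However, the \emph{other} side does \emph{not} collapse under the same fold---one gets a quadratic expression in the Sweedler components that does not factor, because the antipode axiom requires the two legs of a coproduct to be adjacent after antipoding, and in $\om_{134}(\id\oti\id\oti\De)(\om)$ they are separated by components of the outer $\om$. To actually extract $v(\om)^*v(\om)=1$ algebraically one must first multiply the cocycle identity on an appropriate side by $\om^{\pm 1}$ placed on suitable legs, then fold, and iterate; this is the standard Drinfeld-twist computation, and it is substantially longer than what you sketch. In particular your ``two folds in turn'' produce two separate identities $v(\om)_{12}=\cdots$ and $\cdots=v(\om)_{13}$ which do not, by themselves, imply unitarity.

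Second, the $\ka$-invariance $\ka(v(\om))=v(\om)$ is not proved but only asserted to be ``extracted from the symmetry between the two reduction formulas above.'' This is where the real content lies, and it is not a consequence of the antipode being anti-multiplicative; indeed nothing in your sketch establishes the equality $\sum_i a_i\oti x_i\ka(y_i)=\sum_i a_i\oti y_i\ka(x_i)$. Moreover you then invoke $\ka(v)=v$ in your proof of part (2). This reverses the actual logical dependency: the paper first establishes unitarity of $v$ and the identity $\wdt{\om}=(\ka(v)^*\oti\ka(v)^*)\om\De(\ka(v))$ \emph{without yet knowing} $\ka(v)=v$; then applies the preceding lemma (on how $v(\om^u)$ transforms under a perturbation $\om\mapsto(u\oti u)\om\De(u^*)$) to compute $v(\wdt\om)=\ka(v)^*$; then applies $F\circ(\ka\oti\ka)$ to the defining identity $(v(\wdt\om)\oti 1)\De(e_\btr)=\wdt\om\De(e_\btr)$ to independently compute $v(\wdt\om)=v^*$; and only from the equality of these two expressions deduces $\ka(v)=v$. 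Your proposal omits this entirely, so (1)(b) is unproved and (2) is circular.

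It is worth noting that the paper's route is genuinely analytic, not algebraic: after disintegrating over $M$ to reduce to $M=\C$, it introduces an $\om$-representation $V$, rewrites the representation identity using $1\oti\ka(v)=V(\id\oti\ka)(V)$, reads off $\|\ka(v)\|=1$ from taking norms, and combines with the trace identity $(\id\oti\tr_\pi)(v_\pi^*v_\pi)=1$ from an earlier paper to conclude $v$ is unitary. A correct, purely algebraic proof would be a real alternative (and would automatically control the a priori block-by-block bounded $v$), but the one you sketched is not yet such a proof.
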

\begin{proof}
(1), (2).
Since $M$ is commutative,
we may and do assume that $M=\C$ by considering the disintegration.
Let us denote $v(\om)$ by $v$.
We shall show $v$ is a unitary.

Let $V$ be an $\om$-representation on some Hilbert space $H$.
Then $\om_{23}(\id\oti\De)(V)=V_{13}V_{12}$.
Applying the linear map
$(\id \oti F)\circ (\id\oti\ka\oti\ka)$,
we have
\[
(\id\oti\De)((\id\oti\ka)(V))\wdt{\om}_{23}^*
=(\id\oti\ka)(V)_{12}(\id\oti\ka)(V)_{13},
\]
where
$\wdt{\om}$ denotes $F((\ka\oti\ka)(\om^*))$.
By definition of $v$ and $V$,
we have $1\oti \ka(v)= V (\id\oti\ka)(V)$.
Then by these equalities, we obtain
\[
V_{13}V_{12}(\id\oti\De)(V^*(1\oti \ka(v)))\wdt{\om}_{23}^*
=1\oti \ka(v)\oti\ka(v).
\]
Taking the norm of the both sides,
we have $\|\ka(v)\|=1=\|v\|$.
Together with $(\id\oti\tr_\pi)(v_\pi^*v_\pi)=1$,
which is proved in \cite[Lemma 5.6]{MT1},
it turns out that $v$ is a unitary satisfying (1) and (2).

Thus we have shown $\wdt{\om}=(\ka(v^*)\oti\ka(v^*))\om\De(\ka(v))$,
and $\wdt{\om}$ is also a 2-cocycle.
The previous lemma implies
$v(\wdt{\om})=\ka(v^*)vv^*=\ka(v^*)$.
By definition of $v(\wdt{\om})$,
we have
$(v(\wdt{\om})\oti1)\De(e_\btr)=\wdt{\om}\De(e_{\btr})$.
Applying $F\circ(\ka\oti\ka)$ to the both sides,
we have
$v(\wdt{\om})=v^*$.
Thus we obtain $\ka(v)=v$.
\end{proof}

\begin{lem}\label{lem:normalize}
Let $M$ be a commutative von Neumann algebra.
Then any $M$-valued 2-cocycle of a discrete Kac algebra
is normalizable.  
\end{lem}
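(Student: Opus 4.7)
The plan is to show that every $\om\in Z^2(\bhG,M)$ is equivalent to a 2-cocycle $\om'$ with diagonal $v(\om')=1$, since for such an $\om'$ the defining identity $(v(\om')\oti 1)(1\oti\De(e_\btr))=\om'(1\oti\De(e_\btr))$ immediately forces $\om'(1\oti\De(e_\btr))=1\oti\De(e_\btr)$, i.e., $\om'$ is normalized. By the preceding lemma giving the transformation law $v(\om^u)=u\,v(\om)\,\ka(u)$ for any unitary $u\in M\oti\lhG$ (with $\ka$ understood as $\id_M\oti\ka$), the problem reduces to producing a unitary $u\in M\oti\lhG$ satisfying $u\,\ka(u)=v^*$, where $v:=v(\om)$.

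By the preceding lemma establishing that $v$ is a unitary in $M\oti\lhG$ with $\ka(v)=v$, and using commutativity of $M$, the map $\id_M\oti\ka$ is a $*$-antiautomorphism of $M\oti\lhG$ that restricts to a $*$-automorphism on the abelian von Neumann subalgebra $A\subs M\oti\lhG$ generated by $v$, and fixes $v$. Choosing any Borel branch of the square root on the unit circle, I define $u:=v^{-1/2}\in A$ by Borel functional calculus on the unitary $v$. Then $u$ is a unitary in $A$, commutes with $v$, and satisfies $u^2=v^{-1}=v^*$, while $\ka(u)=u$ by functional calculus together with $\ka(v)=v$. Consequently $u\,v\,\ka(u)=uvu=vu^2=vv^*=1$, so $v(\om^u)=1$ and $\om^u$ is the desired equivalent normalized 2-cocycle.

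The conceptual content is already supplied by the two preceding lemmas (unitarity and $\ka$-symmetry of the diagonal, and the transformation law under equivalence). What remains is the routine construction, via Borel functional calculus on a single unitary, of a $\ka$-invariant square root of $v^*$ inside $M\oti\lhG$; the $\ka$-invariance of $u$ follows automatically from the $\ka$-invariance of $v$ once one observes that $\ka$ restricts to an automorphism on the abelian algebra $A$. I do not foresee any genuine obstacle in carrying this out.
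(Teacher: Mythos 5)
Your argument is correct and is essentially the paper's own proof: the paper also perturbs $\om$ by a $\ka$-invariant square root $\nu$ of $v^*$ (with $\nu^2=v^*$, $\ka(\nu)=\nu$), which is exactly your $u=v^{-1/2}$; you simply spell out how to produce it via Borel functional calculus on the unitary $v$ inside the abelian algebra it generates, using that $\ka$ fixes that algebra pointwise. The only cosmetic difference is that the paper's preceding lemma has already reduced to $M=\C$ by disintegration, so it writes $\nu\in\lhG$, whereas you keep $u\in M\oti\lhG$; the substance is identical.
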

\begin{proof}
Let $\om$ and $v$ be as above.
Since $v$ is fixed by the antipode $\ka$,
there exists $\nu\in\lhG$ such that $v^*=\nu^2$ and $\ka(\nu)=\nu$.
Then $\om^\nu:=(\nu\oti\nu)\om\De(\nu^*)$
is a normalized 2-cocycle.
\end{proof}

Let $U_0(\bhG):=\{u\in U(A)\mid u_{\mathbf{1}}=1,\ \ka(u)=u^*\}$.
If $\om_1, \om_2\in Z^2(\bhG)_n$ are equivalent,
any unitary perturbing $\om_1$ to $\om_2$
must sit in $U_0(\bhG)$.

Next we recall the following result \cite[Theorem 2]{Wass-cptII}.

\begin{thm}[Wassermann]\label{thm:erg-coc}
Let $\bG$ be a compact group.
Then there exists one to one correspondence
between $H^2(\bhG)$ and
the set of conjugacy classes of ergodic $\bG$-actions
with full multiplicity.
\end{thm}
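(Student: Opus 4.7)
The plan is to establish the correspondence by constructing maps in both directions and verifying that they descend to mutually inverse bijections on equivalence and conjugacy classes. By Lemma \ref{lem:normalize}, I may and will restrict throughout to normalized representatives $\om\in Z^2(\bhG)_n$.

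For the forward direction, given $\om\in Z^2(\bhG)_n$, I would build an ergodic $\bG$-action of full multiplicity on a ``twisted function algebra'' as follows. Consider the Hilbert space $K_\om:=\bigoplus_{\pi\in\IG}H_\pi\otimes H_\pi^*$ with its natural $\bG$-coaction induced from the one on $\lG\cong\bigoplus_\pi B(H_\pi)$. On the algebraic direct sum I would define a new associative product by twisting convolution with $\om$: on matrix-coefficient-like components indexed by $\pi,\rho\in\IG$ the multiplication uses the Clebsch--Gordan decomposition of $\pi\rho$ followed by insertion of the component $(\id\oti\id)(\om)_{\pi,\rho}\in B(H_\pi\oti H_\rho)$. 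The 2-cocycle identity in Definition \ref{df:2-cocycle}(1) is exactly the associativity constraint for this product when one evaluates both sides on a triple tensor $H_\pi\oti H_\rho\oti H_\si$ and decomposes via the intertwiners $T^\tau_{\pi\rho},T^\tau_{\rho\si}$. Normalization $\om_{\cdot,\btr}=\om_{\btr,\cdot}=1$ guarantees that the $\btr$-component serves as a unit, hence ergodicity. By construction each $\pi$ contributes $H_\pi\otimes H_\pi^*$, giving multiplicity equal to $d(\pi)$, i.e.\ full multiplicity. The $\bG$-coaction is by design intertwined with the twisted product.

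For the reverse direction, given an ergodic action $\alpha\col B\to B\otimes\lG$ of full multiplicity, I would decompose $B=\bigoplus_{\pi\in\IG}B_\pi$ into spectral subspaces. Full multiplicity together with ergodicity forces $B_\pi\cong H_\pi\otimes H_\pi^*$ as $\bG$-modules, and one can choose $\bG$-equivariant unitary maps $w_\pi\col H_\pi\otimes H_\pi^*\to B_\pi$ using the isotypic projection from $\alpha$. The algebra multiplication sends $B_\pi\cdot B_\rho$ into $\bigoplus_{\tau\subs\pi\rho}B_\tau$; unraveling through the $w_\pi$'s and the $\ONB(\tau,\pi\rho)$ yields a unitary $\om(\pi,\rho)\in B(H_\pi\otimes H_\rho)$, and these assemble into $\om\in Z^2(\bhG)$. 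Associativity of $B$ translates verbatim into the 2-cocycle relation, and a change of the equivariant bases $w_\pi$ by unitaries $u_\pi\in U(H_\pi\otimes H_\pi^*)$ modifies $\om$ by the coboundary $u_{12}u_{13}\De(u^*)$ appearing in Definition \ref{df:2-cocycle}(2). Hence the cohomology class of $\om$ is an invariant of the conjugacy class of the action.

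The final step is to verify that the two constructions are mutually inverse, which at the level of algebraic cores is essentially bookkeeping: applying the reverse construction to $B_\om$ recovers $\om$ up to a coboundary coming from the choice of bases $w_\pi$, and applying the forward construction to the cocycle extracted from $(B,\alpha)$ reproduces $(B,\alpha)$ up to a $\bG$-equivariant isomorphism. The main obstacle, and the reason this is substantially harder for compact groups than for finite ones, is analytic: one must show that the twisted algebraic product on $K_\om$ extends to a bona fide von Neumann algebra, i.e.\ that the left multiplication operators are bounded on $K_\om$ and that one has a well-defined faithful tracial state. Wassermann handles this by exploiting the fact that $\om$ produces $\om$-twisted representations on genuine Hilbert spaces together with a compatible trace coming from the Plancherel weight on $\bhG$; a Tomita--Takesaki argument (using that $\om$ is fixed by the antipode) then yields the required $*$-algebra structure and the C$^*$-completion on which $\bG$ acts.
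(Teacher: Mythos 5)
Your outline is headed in the right direction, and the structure of the argument (normalize, spectral decomposition, extract $\om$ from the multiplication, quotient by coboundaries) is the correct skeleton. However, your forward construction is genuinely different from the one the paper recalls from Wassermann, and the difference is not merely cosmetic: it changes where the analytic work lies.

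The paper builds $A_\om$ \emph{concretely inside} $B(L^2(\bG))$. It sets $V_\om:=\om V$ (with $V$ the multiplicative unitary) and takes $A_\om:=\{(\id\oti\ph)(V_\om)\mid\ph\in\lhG_*\}''$, acted on by $\Ad\rho(g)$. The point of this route is that $V_\om$ is an $\om$-representation on a genuine Hilbert space from the start, so the slices $(\id\oti\ph)(V_\om)$ are automatically bounded operators and $A_\om$ is automatically a von Neumann algebra. The identity $(\al_g\oti\id)(V_\om)=V_\om(1\oti\la(g))$ then immediately makes the $\bG$-action ergodic of full multiplicity, with no separate boundedness argument needed. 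Your construction instead tries to define a twisted product on the abstract direct sum $\bigoplus_\pi H_\pi\oti H_\pi^*$ and then verify that the resulting $*$-algebra completes to a von Neumann algebra. You correctly flag the boundedness of left multiplication as the key analytic obstacle, but you do not resolve it; you only remark that ``Wassermann handles this.'' That is precisely the step that the $V_\om$-construction renders trivial, which is why Wassermann (and the paper) phrase the forward direction as they do. Your reverse direction is essentially the same as the paper's once you package the family of equivariant unitaries $w_\pi$ into a single unitary $V\in A\oti\lhG$ satisfying $(\al_g\oti\id)(V)=V(1\oti\la(g))$; full multiplicity gives $V$, ergodicity makes the obstruction $\om$ scalar, and changing $V$ by a unitary multiplier changes $\om$ by a coboundary. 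So: same skeleton, genuinely different realization of the forward map, and the gap you left open is exactly the one the standard realization is designed to close.
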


The correspondence is obtained as follows.
Let $\om\in Z^2(\bhG)$,
and we consider a unitary
$V_\om\in B(L^2(\bG))\oti \lhG$ defined by
$V_\om:=\om V$.
Then $V_\om$ is an $\om$-representation,
that is,
\[
(V_\om)_{12}(V_\om)_{13}=\om_{23}(\id\oti\De)(V_\om).
\]
Let $A_\om:=\{(\id\oti\ph)(V_\om)\mid \ph\in \lhG_*\}''$.
We introduce an ergodic action $\al_g$ of $\bG$ on $A_\om$
defined by $\Ad \rho(g)$, where $\rho$ denotes
the right regular representation.
Then we have
\[
(\al_g\oti\id)(V_\om)=V_\om(1\oti \la(g)).
\]

Conversely, let $\al$ be an ergodic action of $\bG$
on a von Neumann algebra $A$ which has full multiplicity.
Then
there exists a unitary $V\in A\oti\lhG$ such that
$(\al_g\oti\id)(V)=V(1\oti \la(g))$.
By the ergodicity, there exists $\om\in Z^2(\bhG)$
such that
$V_{12}V_{13}=\om_{23}(\id\oti\De)(V)$.

\subsection{Skew symmetric bicharacters}

By \cite[Lemma 26]{Wass-cptII}
or Lemma \ref{lem:skew-symm},
$\beta_\om:=\om^*F(\om)=\om_{12}^*\om_{21}$
is a \emph{skew symmetric bicharacter}
\index{skew symmetric bicharacter}
of $\bhG$ if $\om\in Z^2(\bhG)_c$.
Namely,
$\be_\om$ satisfies the following identities
putting $\be=\be_\om$:
\begin{equation}\label{eq:beom1}
(\Delta\otimes \id)(\beta)=\be_{23}\be_{13},
\quad(\id\otimes\Delta)(\beta)=\be_{12}\be_{13},
\end{equation}
\begin{equation}\label{eq:beom2}
F(\beta)=\beta^*,
\quad
(\id\otimes \ka)(\beta)=\be^*=(\ka\otimes \id)(\beta).
\end{equation}
We set the following space
\[
(\De^\op,\De):=\{x\in \lhG\oti\lhG\mid x\De(y)_{21}=\De(y)x
\ \mbox{for all }y\in\lhG\}.
\]
Note the following useful fact,
whose proof is a straightforward computation. 
\begin{lem}\label{lem:remark}
Let $\om\in Z^2(\bhG)$.
Then $\om\in Z^2(\bhG)_c$ if and only if
$\beta_\om\in (\De^\op,\De)$.
\end{lem}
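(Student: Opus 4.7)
The plan is to verify the equivalence by a direct algebraic manipulation, recognizing that both conditions are repackagings of the same relation once the definitions are unwound. First I would write out the co-commutativity condition $\De^\om = F\circ \De^\om$ explicitly: evaluated on an arbitrary $y\in\lhG$, it becomes
\[
\om\De(y)\om^* \;=\; F\bigl(\om\De(y)\om^*\bigr) \;=\; \om_{21}\De(y)_{21}\om_{21}^*,
\]
using $F(\om)=\om_{21}$ and the fact that $F$ is a $*$-homomorphism.

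Next I would isolate the product $\om^*\om_{21}$, which, in view of $F(\om)=\om_{21}$, is exactly the skew symmetric expression $\beta_\om=\om^*F(\om)$ appearing in the statement. Multiplying the displayed identity on the left by $\om^*$ and on the right by $\om_{21}$ converts it into
\[
\beta_\om\De(y)_{21} \;=\; \De(y)\beta_\om \qquad\text{for all } y\in\lhG,
\]
which is precisely the defining relation for $\beta_\om\in(\De^\op,\De)$. This gives the forward implication.

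For the converse I would simply reverse this chain of operations: starting from $\beta_\om\De(y)_{21}=\De(y)\beta_\om$, multiplying on the left by $\om$ and on the right by $\om_{21}^*$ restores the equality $\om\De(y)\om^*=\om_{21}\De(y)_{21}\om_{21}^*$, i.e.\ $\De^\om=F\circ\De^\om$. All intermediate multiplications are by unitaries, so each step is reversible and nothing is lost.

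No real obstacle is expected; the content is entirely notational. The only care needed is tracking leg numbering and applying the flip identity $F(\om)=\om_{21}$ consistently in the definition of $\beta_\om$.
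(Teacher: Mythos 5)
Your computation is correct and is precisely the ``straightforward computation'' the paper alludes to without spelling out: unwind $\De^\om=F\circ\De^\om$ using $F(\om)=\om_{21}$, then conjugate by the unitaries $\om^*$ and $\om_{21}$ to pass back and forth between $\om\De(y)\om^*=\om_{21}\De(y)_{21}\om_{21}^*$ and $\De(y)\be_\om=\be_\om\De(y)_{21}$. Nothing to add.
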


We say that a skew symmetric bicharacter
$\be$ is \emph{normal}
\index{skew symmetric bicharacter!normal--}
if $\be\De(e_\btr)=\De(e_\btr)$.
Let $X^2(\bhG)_n$ be the set of normal skew symmetric bicharacters
on $\bhG$.

When $\bhG$ is a discrete abelian group $\Ga$,
the image of the map $\om\mapsto\be_\om$
is precisely equal to the set
of all skew symmetric bicharacters which take the value 1
on the diagonal set $\{(g,g)\in\Ga\times\Ga\mid g\in\Ga\}$
as shown in \cite[Proposition 3.2]{OPT}
(see also \cite[Theorem 7.1]{Kl}).
Readers should notice that
in the statement of \cite[Proposition 3.2]{OPT},
the condition on the value is missing,
but that is implicitly used
there to show that the group $\mathscr{C}$
is indeed a subgroup of $\mathscr{G}$.
Thus we have to impose the normality of $\be$
to prove the bijective correspondence
in general Kac algebra case.

Let $\om\in Z^2(\bhG)_c$.
We show the normality of $\be_\om$.
Take a unitary $u\in \lhG$ such that $\om':=(u\oti u)\om\De(u^*)$
is a normalized cocycle.
Then
\begin{align*}
\be_{\om'}
&=
\De(u)\om^*(u^*\oti u^*)\cdot (u\oti u)\om_{21}\De(u^*)_{21}
\\
&=
\De(u)\om^* \cdot\De^\om(u^*)_{21}\cdot\om_{21}
=
\De(u)\om^* \De^\om(u^*)\cdot \om_{21}
=
\be_\om.
\end{align*}
Thus we may and do assume that $\om$ is normalized.
Then we have
\[
\be_\om\De(e_\btr)=\om^*\om_{21}\De(e_\btr)
=\om^*\De(e_\btr)=\De(e_\btr).
\]
Hence the map
$\be\col H^2(\bhG)_c\ni[\om]\mapsto \be_\om\in X^2(\bhG)_n$
is well-defined.
In the following theorem,
the surjectivity
has been already proved in \cite{NeTu}
by using the Doplicher--Roberts duality.
We will present a proof in our setting for readers' convenience.

\begin{thm}\label{thm:bichara}
The map $\be\col H^2(\bhG)_c\ra X^2(\bhG)_n\cap(\De^\op,\De)$
is bijective.
\end{thm}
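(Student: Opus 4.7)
The map has already been shown to be well-defined in the paragraph preceding the theorem (invariance under equivalent normalized cocycles, together with $\be_\om\in X^2(\bhG)_n\cap(\De^\op,\De)$ via Lemma \ref{lem:remark}), so the plan reduces to proving injectivity and surjectivity. I expect to handle both via Wassermann's correspondence between $H^2(\bhG)$ and conjugacy classes of ergodic $\bG$-actions of full multiplicity (Theorem \ref{thm:erg-coc}).

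For injectivity, suppose $\om_1,\om_2\in Z^2(\bhG)_{c,n}$ satisfy $\be_{\om_1}=\be_{\om_2}=:\be$. I would pass through Theorem \ref{thm:erg-coc} to obtain ergodic $\bG$-actions $(A_i,\al^i)$ of full multiplicity with unitaries $V_i\in A_i\oti\lhG$ satisfying $V_{i,12}V_{i,13}=\om_{i,23}(\id\oti\De)(V_i)$ and $(\al^i_g\oti\id)(V_i)=V_i(1\oti\la(g))$. Applying the flip $F_{23}$ to the $\om_i$-representation identity yields $V_{i,13}V_{i,12}=\om_{i,32}(\id\oti\De^\op)(V_i)$, and combining this with the original identity produces a commutation relation of the schematic form
\begin{equation*}
V_{i,13}V_{i,12}\,(\id\oti\De^\op)(V_i)^{-1}(\id\oti\De)(V_i)\,V_{i,13}^{-1}V_{i,12}^{-1}=1\oti\be_{\om_i}
\end{equation*}
governed entirely by $\be_{\om_i}$. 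Since $(A_i,\al^i)$ is ergodic and of full multiplicity, $V_i$ is determined, up to $\bG$-equivariant isomorphism of $A_i$, by this commutation law together with its covariance. The equality $\be_{\om_1}=\be_{\om_2}$ thus yields a $\bG$-equivariant $*$-isomorphism $A_1\to A_2$, whence the bijection of Theorem \ref{thm:erg-coc} forces $[\om_1]=[\om_2]$.

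For surjectivity, given $\be\in X^2(\bhG)_n\cap(\De^\op,\De)$, the plan is to build an ergodic $\bG$-system of full multiplicity directly from $\be$ and extract the cocycle via Theorem \ref{thm:erg-coc}. Concretely, consider the unital $*$-algebra $\meA_\be$ generated by symbols $\{a(\pi)_{ij}\mid\pi\in\IG,\,i,j\in I_\pi\}$ with multiplication and adjoint relations dictated by the matrix coefficients of $\be$, and form its von Neumann closure $A_\be$ with respect to the natural invariant state coming from the Haar weight. The bicharacter identities \eqref{eq:beom1} yield associativity of this multiplication; the skew-symmetry and normality conditions \eqref{eq:beom2} make the $*$-operation compatible and produce a unitary $V\in A_\be\oti\lhG$ from the generators; and $\be\in(\De^\op,\De)$ guarantees that the prescription $\al^\be(a(\pi)_{ij}):=\sum_k a(\pi)_{ik}\oti v(\pi)_{kj}$ extends to a well-defined $\bG$-coaction on $A_\be$ satisfying $(\al^\be\oti\id)(V)=V(1\oti\la)$. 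Computing $V_{12}V_{13}$ against $(\id\oti\De)(V)$ then yields $\om\in Z^2(\bhG)$; the cocommutativity of $\De^\om$ is forced by the bicharacter nature of $\be$, and $\be_\om=\be$ is immediate from the defining relations. Appealing to Lemma \ref{lem:normalize} if necessary, we may replace $\om$ by a normalized representative without changing $\be_\om$.

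The principal obstacle is verifying that the constructed $\bG$-action on $A_\be$ is ergodic and of full multiplicity, i.e.\ that the spectral subspace $A_\be(\pi)$ has dimension exactly $d(\pi)^2$ for every $\pi\in\IG$. The $(\De^\op,\De)$ condition on $\be$ is the pivotal ingredient here, ensuring that the commutation law among the generators is compatible with $\al^\be$ in a manner that preserves full spectral density; without this compatibility the spectral subspaces could collapse. Once full multiplicity is established, the remaining verifications—cocycle identity for $\om$, co-commutativity of $\De^\om$, and the equality $\be_\om=\be$—follow from direct manipulation of the defining relations of $A_\be$.
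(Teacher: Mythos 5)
Your injectivity argument buries the essential step. You assert that, for an ergodic $\bG$-system $(A_i,\al^i)$ of full multiplicity with covariant unitary $V_i$, the pair $(A_i,V_i)$ is determined up to $\bG$-equivariant isomorphism by the commutation law $1\oti\be_{\om_i}$ together with covariance. That claim is essentially equivalent to the injectivity you are trying to prove: the $\om_i$-representation identity $V_{i,12}V_{i,13}=\om_{i,23}(\id\oti\De)(V_i)$ carries strictly more information than its antisymmetrization, and to conclude that it is nonetheless determined by $\be_{\om_i}$ you would have to know that the "symmetric part'' $\om_1\om_2^*$ is cohomologically trivial. The paper proves exactly this directly: from $\be_{\om_1}=\be_{\om_2}$ one gets $F(\om_1\om_2^*)=\om_1\om_2^*$, so $\om_1\om_2^*$ is a symmetric $2$-cocycle for the co-commutative coproduct $\De^{\om_2}$, and Wassermann's vanishing result for symmetric cocycles of co-commutative Kac algebras \cite[Lemma 21]{Wass-cptII} forces it to be a coboundary. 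You should appeal to that lemma rather than route through the ergodic-system dictionary, which only restates the question.

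The surjectivity argument has a more serious gap: the $*$-algebra $\meA_\be$ is not well-defined from $\be$ alone. The bicharacter specifies only commutation relations among the generators $a(\pi)_{ij}$; it does not tell you what the product $a(\pi)_{ij}a(\rho)_{k\ell}$ actually is, only how to reorder it. To make the "multiplication dictated by the matrix coefficients of $\be$'' into an associative unital $*$-algebra with the correct spectral dimensions, you must choose a $2$-cocycle $\om$ lifting $\be$ --- and that is precisely what surjectivity asks you to produce. So your construction presupposes its own conclusion. The paper's route does not: it reads $\be\in X^2(\bhG)_n\cap(\De^\op,\De)$ as a permutation symmetry $R_{X,Y}=\Si_{X,Y}(\pi_X\oti\pi_Y)(\be^*)$ on $\Rep_f(\bG)$, checks via the normality of $\be$ that the resulting symmetric C$^*$-tensor category is even, applies Doplicher--Roberts duality to obtain a $*$-preserving symmetric fiber functor $E$, uses amenability of $\Rep_f(\bG)$ to see that $E$ preserves dimensions and is therefore equivalent to the forgetful functor as a C$^*$-category, and finally extracts $\om$ from the tensorator $d_{X,Y}$ of $E$ relative to the trivialization $u_X$. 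That Tannakian step is the content of surjectivity and cannot be bypassed by writing down generators and relations.
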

\begin{proof}
We show the injectivity of the map.
Let $\beta_{\om_1}=\beta_{\om_2}$.
Then $F(\om_1\om_2^*)=\om_1\om_2^*$.
Thus $\om_1\om_2^*$ is a symmetric 2-cocycle
of the co-commutative Kac algebra $(\lhG,\De^{\om_2})$.
By \cite[Lemma 21]{Wass-cptII},
$\om_1\om_2^*=(u\otimes u)\Delta^{\om_2}(u^*)$
for some $u\in U_0(\bhG)$,
and hence $\om_1=(u\otimes u)\om_2\Delta(u^*)$.

The surjectivity is checked as follows.
Let $\Rep_f(\bG)$ be the representation category of $\bG$,
that is, its objects and morphisms consist
of finite dimensional unitary representations
and intertwines, respectively.
In the following $X,Y,Z$ denote objects of $\Rep_f(\bG)$.
The flip symmetry of $\Rep_f(\bG)$ is written
by $\Si$, which is a collection
of a map $\Si_{X,Y}\col X\oti Y \ra Y\oti X$
given by $\Si_{X,Y}(\xi\oti \eta)=\eta\oti\xi$.
For each $X\in \Rep_f(\bG)$,
we have a $*$-homomorphism $\pi_X\col \lhG\ra B(X)$,
the differential representation.
Now we let $\be\in X^2(\bhG)_n\cap (\De^\op,\De)$,
and
$R_{X,Y}:=\Si_{X,Y}(\pi_X\oti\pi_Y)(\be^*)=(\pi_Y\oti\pi_X)(\be)\Si_{X,Y}$.
Then for all $a\in\lhG$,
we have
\begin{align*}
R_{X,Y}\pi_{X\oti Y}(a)
&=
\Si_{X,Y}(\pi_X\oti\pi_Y)(\be^*\De(a))
=
\Si_{X,Y}(\pi_X\oti\pi_Y)(\De(a)_{21}\be^*)
\\
&=
\pi_{Y\oti X}(a)R_{X,Y}.
\end{align*}
Thus $R_{X,Y}\in \Hom(X\oti Y,Y\oti X)$.
In fact, $R$ gives a permutation symmetry on $\Rep_f(\bG)$.
Indeed, the naturality of $R_{X,Y}$ follows
from the identity $S\pi_X(a)=\pi_Y(a)S$ for any $S\in \Hom(X,Y)$
and $a\in\lhG$.
The braiding relation
$R_{X,Y\oti Z}=(1_Y\oti R_{X,Z})(R_{X,Z}\oti1_Z)$
is observed as
\begin{align*}
R_{X,Y\oti Z}
&=
\Si_{X,Y\oti Z} (\pi_X\oti\pi_{Y\oti Z})(\be^*)
=
\Si_{X,Y\oti Z} (\pi_X\oti\pi_{Y}\oti\pi_{Z})((\id\oti\De)(\be^*))
\\
&=
\Si_{X,Y\oti Z} (\pi_X\oti\pi_{Y}\oti\pi_{Z})(\be_{13}^*\be_{12}^*)
\\
&=
\Si_{X,Y\oti Z} (\pi_X\oti\pi_{Y}\oti\pi_{Z})(\be_{13}^*)
(\Si_{Y,X}\oti1_Z)
(R_{X,Y}\oti1_Z)
\\
&=
\Si_{X,Y\oti Z}(\Si_{Y,X}\oti1_Z)
(\pi_Y\oti\pi_{X}\oti\pi_{Z})(\be_{23}^*)
(R_{X,Y}\oti1_Z)
\\
&=
(1_Y\oti \Si_{X,Z})(\pi_Y\oti\pi_{X}\oti\pi_{Z})(\be_{23}^*)
(R_{X,Y}\oti1_Z)
\\
&
=
(1_Y\oti R_{X,Z})(R_{X,Z}\oti1_Z).
\end{align*}
Similarly, we obtain $R_{X\oti Y,Z}=(R_{X,Z}\oti1_Y)(1_X\oti R_{X,Z})$.
Since $F(\be)=\be^*$,
we have $R_{Y,X}R_{X,Y}=1$.
Thus $(\Rep_f(\bG),R)$ is a symmetric C$^*$-tensor category.

Next we show that the category is even
in the sense of \cite[Definition B.8]{Mu},
that is, it has the trivial twist $\Th(X)=1_X$ for all $X$.
To prove that, it suffices to prove $\Th(X)=1_X$
for each irreducible module $X$.
In this case,
the twist $\Th(X)$ is given by
$\Th(X)=(\Tr_X\oti\id_X)(R_{X,X})$, where $\Tr_X$ denotes
the non-normalized trace on $B(X)$ (see \cite[Definition A.40]{Mu}).
Setting $T_{\ovl{X},X}:=\sum_i\ovl{\xi_i}\oti\xi_i$
as before,
we have
\begin{align*}
\Th(X)
&=
(T_{\ovl{X},X}^*\oti1_X)(1_{\ovl{X}}\oti R_{X,X})
(T_{\ovl{X},X}\oti1_X)
\\
&=
(T_{\ovl{X},X}^*\oti1_X)
(1_{\ovl{X}}\oti \Si_{X,X})
(\pi_{\ovl{X}}\oti \pi_{X}\oti \pi_{X})(\be_{23}^*)
(T_{\ovl{X},X}\oti1_X)
\\
&=
(T_{\ovl{X},X}^*\oti1_X)
(1_{\ovl{X}}\oti \Si_{X,X})
(\pi_{\ovl{X}}\oti \pi_{X}\oti \pi_{X})(\be_{13})
(T_{\ovl{X},X}\oti1_X)
\quad
\mbox{by }(\ref{eq:beom1})
\\
&=
(T_{\ovl{X},X}^*\oti1_X)
(\pi_{\ovl{X}}\oti \pi_{X}\oti \pi_{X})(\be_{12})
(1_{\ovl{X}}\oti \Si_{X,X})
(T_{\ovl{X},X}\oti1_X)
\\
&=
(T_{\ovl{X},X}^*\oti1_X)
(1_{\ovl{X}}\oti \Si_{X,X})
(T_{\ovl{X},X}\oti1_X)
\quad
\mbox{by normality}
\\
&=
1_X.
\end{align*}

Thus by the Doplicher--Roberts duality \cite{DR}
\index{Doplicher--Roberts duality}
(see also \cite[Theorem B.11]{Mu}),
there exists a $*$-preserving symmetric fiber functor
$E\col (\Rep_f(\bG),R)\ra (\Hilb_f,\Si)$,
where $\Hilb_f$ denotes the (strict) C$^*$-tensor category
consisting of finite dimensional Hilbert spaces.
We let $d_{X,Y}\col E(X)\oti E(Y)\ra E(X\oti Y)$
be the associated unitary.

Since the $\Rep_f(\bG)$ is amenable,
the functor $E$ preserves the dimension of each Hilbert space
(see \cite[Proposition A.4]{NeTu}).
Moreover, the semisimplicity of $\Rep_f(\bG)$
implies $E$ is equivalent to the forgetful functor
$F\col \Rep_f(\bG)\ra \Hilb_f$
as a C$^*$-category (not as a C$^*$-tensor category).
Take an associated unitary $u_X\col X\ra E(X)$ for each $X$.
Then we obtain
\[
\Si_{E(X),E(Y)}=d_{Y,X}^*E(R_{X,Y})d_{X,Y}
=d_{Y,X}^*u_{Y\oti X} R_{X,Y} u_{X\oti Y}^*d_{X,Y}.
\]
Since
$\Si_{E(X),E(Y)}=\Si_{u_X X,u_Y Y}
=(u_Y\oti u_X)\Si_{X,Y}(u_X^*\oti u_Y^*)$,
we have
\begin{equation}
\label{eq:RSi}
R_{X,Y}
=
u_{Y\oti X}^*d_{Y,X}(u_Y\oti u_X)
\Si_{X,Y}
(u_X^*\oti u_Y^*)
d_{X,Y}^* u_{X\oti Y}.
\end{equation}
We show that there exists $\om\in\lhG\oti\lhG$
such that
$(\pi_X\oti\pi_Y)(\om)
=(u_X^*\oti u_Y^*)d_{X,Y}^* u_{X\oti Y}$.
For any $\bG$-modules $X,X',Y, Y'$
and any intertwiners $S\in \Hom(X,X')$, $T\in\Hom(Y,Y')$,
we only have to check
\[
(S\oti T)(u_X^*\oti u_Y^*)d_{X,Y}^* u_{X\oti Y}
=
(u_{X'}^*\oti u_{Y'}^*)d_{X',Y'}^* u_{X'\oti Y'}
(S\oti T).
\]
Indeed, we have
\begin{align*}
(S\oti T)(u_X^*\oti u_Y^*)d_{X,Y}^* u_{X\oti Y}
&=
(u_{X'}^*\oti u_{Y'}^*)
(E(S)\oti E(T))d_{X,Y}^* u_{X\oti Y}
\\
&=
(u_{X'}^*\oti u_{Y'}^*)
d_{X',Y'}^*(E(S\oti T)) u_{X\oti Y}
\\
&=
(u_{X'}^*\oti u_{Y'}^*)d_{X',Y'}^* u_{X'\oti Y'}
(S\oti T).
\end{align*}
By definition, $\om$ is a unitary.
The 2-cocycle relation of $\om$ is shown as follows:
\begin{align*}
&(\pi_X\oti\pi_Y\oti\pi_Z)
(\om_{12}(\De\oti\id)(\om))
\\
&=
((u_X^*\oti u_Y^*)d_{X,Y}^* u_{X\oti Y}\oti 1_Z)
\cdot
(u_{X\oti Y}^*\oti u_Z^*)d_{X\oti Y,Z}^* u_{X\oti Y\oti Z}
\\
&=
(u_X^*\oti u_Y^*\oti u_Z^*)
d_{X,Y}^*d_{X\oti Y,Z}^*u_{X\oti Y\oti Z}
\\
&=
(u_X^*\oti u_Y^*\oti u_Z^*)
d_{Y,Z}^*d_{X,Y\oti Z}^*u_{X\oti Y\oti Z}
\\
&=
(\pi_X\oti\pi_Y\oti\pi_Z)
(\om_{23}(\id\oti\De)(\om)).
\end{align*}
Thus by (\ref{eq:RSi}),
$\be=\om^*\om_{21}=\be_\om$,
and $\om$ is co-commutative from Lemma \ref{lem:remark}.
\end{proof}

\subsection{A Kac algebra associated with $\be_\om$}
By the bicharacter formulae (\ref{eq:beom1}) and (\ref{eq:beom2}),
the first tensor component of $\be$ spans a Hopf $*$-subalgebra
of $\lhG$, which we denote by $\meR_\be$,
that is,
\[
\meR_\be:=\ovl{\spa}^{\rm w}\{(\id\oti\ph)(\be)\mid \ph\in \lhG_*\}.
\]
The same argument as \cite[Lemma 34]{Wass-cptII}
implies
that $\be\in \meR_\be\oti \meR_\be$ and $\ka(\meR_\be)=\meR_\be$.

Thanks to \cite[Lemma 2.6, 3.14, Theorem 3.18]{T2},
$\meR_\be$ is the group algebra of a quantum subgroup
$\bK_\be$ of $\bG$,
which is of Kac type,
that is, $\meR_\be\cong L^\infty(\bhK_\be)$ as Kac algebras.

We let $p_\be$ be the projection in $\lhG$
which
corresponds to the trivial representation
of $\bK_\be$.

\begin{lem}
The projection $p_\be$ satisfies the following:
\begin{enumerate}
\item
$\be(p_\be\oti1)=p_\be\oti1=(p_\be\oti1)\be$;
\item
$\be(1\oti p_\be)=1\oti p_\be=(1\oti p_\be)\be$;
\item
$p_\be \lhG=\{x\in \lhG\mid \be(x\oti1)=x\oti1\}$.
\end{enumerate}
\end{lem}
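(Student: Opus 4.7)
The plan is to first establish the two auxiliary identities $(\vep\oti\id)(\be)=1=(\id\oti\vep)(\be)$, where $\vep$ denotes the counit of $\lhG=\oplus_{\pi\in\IG}B(H_\pi)$, which is a bounded normal character on $\lhG$ (and hence restricts to one on any Hopf subalgebra, in particular on $\meR_\be=L^\infty(\bhK_\be)$) since $\bhG$ is of discrete Kac type. To prove the first identity I would apply $\vep\oti\id\oti\id$ to both sides of $(\De\oti\id)(\be)=\be_{23}\be_{13}$: on the left, $(\vep\oti\id)\De=\id$ collapses the expression to $\be$, while on the right one obtains $\be\cdot(1\oti q)$ with $q:=(\vep\oti\id)(\be)$; unitarity of $\be$ then forces $q=1$. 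The second identity follows by the symmetric slice of $(\id\oti\De)(\be)=\be_{12}\be_{13}$ through its middle leg.

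Parts (1) and (2) will then reduce to a direct calculation. Writing $\be=\sum_i a_i\oti b_i$ with $a_i,b_i\in\meR_\be$, and using that $p_\be$ is the central projection of $\meR_\be$ corresponding to the trivial representation of $\bK_\be$, so that $xp_\be=p_\be x=\vep(x)p_\be$ for every $x\in\meR_\be$, I would compute
\[
\be(p_\be\oti 1)=\sum_i a_ip_\be\oti b_i=\sum_i\vep(a_i)p_\be\oti b_i
=p_\be\oti(\vep\oti\id)(\be)=p_\be\oti 1.
\]
The remaining three equalities in (1) and (2) follow by exactly the same pattern, using $(\id\oti\vep)(\be)=1$ for the second-leg versions.

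For (3), the inclusion $\supseteq$ is immediate from (1): $\be(p_\be y\oti 1)=\be(p_\be\oti 1)(y\oti 1)=p_\be y\oti 1$ for all $y\in\lhG$. For the harder converse, I would assume $\be(x\oti 1)=x\oti 1$ and slice by $\id\oti\om$ for an arbitrary $\om\in\lhG_*$, obtaining $y_\om\, x=\om(1)\, x$ where $y_\om:=(\id\oti\om)(\be)\in\meR_\be$. A short calculation gives $\vep(y_\om)=\om((\vep\oti\id)(\be))=\om(1)$, so the relation reads $y_\om x=\vep(y_\om)x$. By the very definition of $\meR_\be$ the family $\{y_\om\}_{\om\in\lhG_*}$ is weak-operator dense in $\meR_\be$; since left multiplication on $\lhG$ by elements of $\meR_\be$ and the normal character $\vep|_{\meR_\be}$ are both weak-operator continuous, the identity $yx=\vep(y)x$ extends to every $y\in\meR_\be$. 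Specializing to $y=1-p_\be$ and using $\vep(1-p_\be)=0$ yields $(1-p_\be)x=0$, i.e.\ $x\in p_\be\lhG$.

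The analytically nontrivial step is the weak-operator extension at the end of (3); it relies crucially on $\bhK_\be$ being discrete, so that $\meR_\be$ is of type I and its counit is a bounded normal $*$-character. Everything else is a mechanical unwinding of the bicharacter identities \eqref{eq:beom1}--\eqref{eq:beom2} together with the defining property of $p_\be$.
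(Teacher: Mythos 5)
Your proof is correct and follows essentially the same approach as the paper: derive $(\vep\oti\id)(\be)=1=(\id\oti\vep)(\be)$ from the bicharacter identities and the normal counit of the discrete Kac algebra $\meR_\be$, use the trivial-representation projection property $xp_\be=\vep(x)p_\be$ for (1) and (2), and for (3) slice $\be(x\oti1)=x\oti1$ through the second leg and extend over $\meR_\be$ by weak density and normality of $\vep$, which is exactly what the paper's terse phrase ``by linearity, $xp=\vep(x)p$'' encodes. The only slip is notational: the inclusion you derive from (1) is $p_\be\lhG\subseteq\{x\in\lhG\mid\be(x\oti1)=x\oti1\}$, i.e.\ the $\subseteq$ direction, not $\supseteq$.
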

\begin{proof}
Since $p_\be$ satisfies $\De(x)(p_\be\oti1)=p_\be\oti x$
and $\De(x)(1\oti p_\be)=x\oti p_\be$ for $x\in \meR_\be$,
we obtain the equalities of (1) and (2)
using (\ref{eq:beom1}).
On (3), we let $p$ be the projection such that
$p\lhG$ is equal to the right hand side.
It is trivial that $p_\be\leq p$.
By linearity,
we have $xp=\vep(x)p$ for every $x\in \meR_\be$.
Since $\vep(p_\be)=1$, we have $p_\be p=p$, and $p\leq p_\be$.
Thus $p=p_\be$.
\end{proof}

In the case that $p_\be=e_\btr$,
$\om$ is said to be \emph{totally skew}
\index{2-cocycle!totally skew}
(cf. \cite{LK} for abelian groups).
In this paper,
following \cite[p.220]{Wass-coact},
we will say that $\be_\om$ is \emph{non-degenerate}
\index{skew symmetric bicharacter!non-degenerate--}
in that case.
Recall the following result due to A. Wassermann \cite[Theorem 12]{Wass-cptII}.

\begin{thm}[A. Wassermann]\label{thm:non-deg-factor}
The following statements are equivalent:
\begin{enumerate}
\item
$\be_\om$ is non-degenerate;
\item
$\meR_{\be_\om}=\lhG$;
\item
The 2-cocycle $\om$
corresponds to an ergodic action of $\bG$
on a factor.
\end{enumerate}
\end{thm}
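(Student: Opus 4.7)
The plan is to establish (1)$\Leftrightarrow$(2) by a purely Hopf-algebraic analysis of $p_\be$ inside $\lhG$, and then (2)$\Leftrightarrow$(3) through Wassermann's correspondence by identifying the center $Z(A_\om)$ with the function algebra of a coset quantum space.

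For (1)$\Leftrightarrow$(2), I would exploit that $\meR_\be=L^\infty(\bhK_\be)$ sits as a unital Hopf $*$-subalgebra of $\lhG$, and that $p_\be$ is the central projection in $\meR_\be$ onto the trivial $\bK_\be$-block. Viewed in the block decomposition $\lhG=\prod_{\pi\in\IG}B(H_\pi)$, the piece $p_\be e_\pi$ is the orthogonal projection onto the $\bK_\be$-fixed subspace $H_\pi^{\bK_\be}$. Hence $p_\be\geq e_\btr$ always, and $p_\be=e_\btr$ precisely when $H_\pi^{\bK_\be}=0$ for every non-trivial $\pi\in\IG$. By Frobenius reciprocity this is equivalent to the triviality of $L^\infty(\bG/\bK_\be)$, i.e., $\bK_\be=\bG$, i.e., $\meR_\be=\lhG$.

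For (2)$\Rightarrow$(3), I would invoke Theorem \ref{thm:erg-coc} to realize $\om$ through the ergodic $\bG$-action $\al$ on $A_\om$. Since automorphisms preserve the center, $\al$ restricts to an action on $Z(A_\om)$, and the chain $Z(A_\om)^\al\subs A_\om^\al=\C$ shows the restricted action is again ergodic. Consequently $Z(A_\om)$ is isomorphic, as a $\bG$-module, to $L^\infty(\bG/\bH)$ for a unique quantum subgroup $\bH\subs\bG$, and the crucial step is to identify $\bH$ with $\bK_\be$. Using the $\om$-representation identity
\[
(V_\om)_{12}(V_\om)_{13}=\om_{23}(\id\oti\De)(V_\om)
\]
together with $\be_\om\in(\De^\op,\De)$ from Lemma \ref{lem:remark}, one checks that $(\id\oti\ph)(V_\om)\in Z(A_\om)$ iff $\ph$ factors through the reduction by $p_\be$; consequently $Z(A_\om)$ equals the image of $p_\be\lhG$ under $\ph\mapsto(\id\oti\ph)(V_\om)$. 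Under hypothesis (2) this image reduces to $\C e_\btr$, so $A_\om$ is a factor. The reverse implication (3)$\Rightarrow$(2) follows from the same identification: a trivial center forces $\bH=\bG$, hence $\bK_\be=\bG$ and $\meR_\be=\lhG$.

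The principal technical hurdle will be the explicit description of $Z(A_\om)$ as the image of $p_\be\lhG$. One must convert the centrality condition ``$(\id\oti\ph)(V_\om)$ commutes with every $(\id\oti\ps)(V_\om)$'' into a condition on $\ph$, by manipulating the legs of $V_\om=\om V$ with the pentagon equation for $V$ and the commutation relation $\be_\om\De(y)_{21}=\De(y)\be_\om$. The conceptual point is that the failure of centrality is governed precisely by the non-trivial part of $\be_\om$, which by the previous lemma is concentrated in $(1-p_\be)\lhG$, so the degenerate part of the bicharacter is exactly what survives in the center.
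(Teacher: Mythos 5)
The paper does not prove this theorem --- it is stated and cited directly as \cite[Theorem 12]{Wass-cptII} --- so there is no proof in the paper for me to compare against; your attempt is a stand-alone reconstruction of Wassermann's argument. Your (1)$\Leftrightarrow$(2) is correct and self-contained: $p_\be$ projects in each $\pi$-block onto $H_\pi^{\bK_\be}$, so $p_\be=e_\btr$ iff no non-trivial $\pi\in\IG$ admits $\bK_\be$-fixed vectors, which by Frobenius reciprocity and the Galois correspondence is equivalent to $C(\bG/\bK_\be)=\C$, i.e.\ $\bK_\be=\bG$, i.e.\ $\meR_{\be_\om}=\lhG$.

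The gap is in (2)$\Leftrightarrow$(3). The identification that $(\id\oti\ph)(V_\om)$ is central in $A_\om$ precisely when $\ph$ is supported on $p_\be\lhG$ --- what you flag as the ``principal technical hurdle'' --- is the whole content of Wassermann's theorem; your outline asserts it (``one checks''), offers a conceptual gloss on why it should hold, but does not establish it, and both implications (2)$\Rightarrow$(3) and (3)$\Rightarrow$(2) rest entirely on that claim. As written, the proposal therefore does not prove (2)$\Leftrightarrow$(3). Two secondary remarks: (i) ``the image of $p_\be\lhG$ under $\ph\mapsto(\id\oti\ph)(V_\om)$'' conflates elements of $\lhG$ with functionals on it; the intended object is the image of $\{\ph\in\lhG_*:\ph=\ph(p_\be\,\cdot\,)\}$. (ii) The intermediate step $Z(A_\om)\cong L^\infty(\bG/\bH)$ invokes transitivity of ergodic actions of a compact group on abelian von Neumann algebras, which is available when $\bG$ is an honest compact group (Wassermann's setting) but would need separate justification for a general compact Kac algebra; it is in any case unnecessary once the direct description of $Z(A_\om)$ via $p_\be$ is in hand, since $p_\be=e_\btr$ then immediately forces $Z(A_\om)=\C$.
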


Now we will understand what is $\bK_\be$.
Take $\om\in Z^2(\bhG)_c$ with $\be=\om^*\om_{21}$
from Theorem \ref{thm:bichara}.
In fact, we may and do assume that $\om\in\meR_\be\oti\meR_\be$
because $\be$ is a skew symmetric bicharacter on $\meR_\be$.
Let us shed a light on the compact group
$\bG_\om$, the dual cocycle twisting of $\bG$.
Then $\om^*$ is a 2-cocycle of $\wdh{\bG}_\om$,
and that corresponds to an ergodic action of $\bG_\om$
with full multiplicity by Theorem \ref{thm:erg-coc}.
Then the action is induced from an ergodic action
of a closed subgroup $\bG_\om^0$ on a finite factor
\cite[Theorem 7]{Wass-cptI}.
Then again by the correspondence,
we obtain a 2-cocycle $\om^0$ of the dual of $\bG_\om^0$.
Since the cohomology class of $\om^*$ and $\om^0$ are equal,
there exists a unitary $u\in \lhG$
such that
$\om^0=(u\oti u)\om^*\De^\om(u^*)$,
that is,
\[
\be^0:=\om_{21}^0(\om^0)^*=(u\oti u)\be_\om^*(u^*\oti u^*),
\]
where $\be$ is the one introduced in \cite[p.1513]{Wass-cptII}.
Thus $u \meR_\om u^*$ is the weak closure of
the linear span of $(\id\oti\ph)(\be^0)$, $\ph\in\lhG_*$.
Since
$\be^0:=\om_{21}^0(\om^0)^*$ is non-degenerate
from Theorem \ref{thm:non-deg-factor},
$u\meR_\om u^*=L^\infty(\widehat{\bG}_\om^0)$,
the dual of $\bG_\om^0$.

%


\begin{ex}
Let us consider a finite dimensional Kac algebra $\bG$
and a 2-cocycle $\om\in Z^2(\bhG)_{c}$
such that $\be_\om$ is non-degenerate.
Then there exists an ergodic action
$\al$ of $\bG$ on $B(\C^n)$, where $n^2=\dim C(\bG)$.
This equality is a useful obstruction
so that $\wdh{\Ga}$ has a non-trivial 2-cocycle.
Readers are referred to a further investigation
of dual 2-cocycles of finite Kac algebras
due to Izumi--Kosaki \cite{IKf,IK,IzKo-coh}.
\end{ex}

\begin{ex}
When $\bG=SU(2)$ or $SU_{-1}(2)$,
\index{$SU(2)$}
\index{$SU_{-1}(2)$}
any 2-cocycle of $\bhG$ is trivial.
Thus the dual cocycle twisting of them
are nothing but $\bG$.
\end{ex}

\begin{ex}
We consider when $\bG=SO(3)$.
\index{$SO(3)$}
It is known that the cohomology set
$H^2(\bhG)$
consists of two elements \cite[Theorem 6]{Wass-cptIII}.
The non-trivial cocycle $\om$
comes from the Krein four group
$D_2:=\Z/2\Z\times \Z/2\Z$.
\index{Krein four group $D_2$}
Let $\bG_\om=SO(3)_\om$ be the twisting of $SO(3)$ by $\om$.
Then the non-normality of $D_2$ in $SO(3)$ implies
that $SO(3)_\om$ is not a compact group
(see also Remark \ref{rem:Net-Tus}).
In fact, it is isomorphic to $SO_{-1}(3)$
\index{$SO_{-1}(3)$}
in the sense of \cite[Definition 3.1]{BaBi}.
Readers should note that the definition of
$SO_{-1}(3)$ there differs from that of \cite{KS},
where $SO_{-1}(3)$ is nothing but $SO(3)$.
Then \cite[Theorem 3.1]{BaBi} implies
that $SO(3)_\om$ is isomorphic to $\mathscr{Q}_4$,
the quantum permutation group on 4 points
\cite{Wan-sym}.
\index{quantum permutation group on 4 points $\mathscr{Q}_4$}
\end{ex}

\subsection{Actions and Cocycle actions}
\label{subsect:endo-action}
Let $M$ be a von Neumann algebra.
Let $\al\col M\ra M\oti\lhG$ be
a unital faithful normal $*$-homomorphism
and $u\in M\oti\lhG\oti\lhG$ be a unitary.
The pair $(\al,u)$
is called a \emph{cocycle action}
\index{action!cocycle--}
when it satisfies
\[
(\al\oti\id)\circ\al=(\id\oti \De)\circ\al,
\quad
(u\oti1)(\id\oti\De\oti\id)(u)=\al(u)(\id\oti\id\oti\De)(u),
\]
where $\al(u)$ means $(\al\oti\id\oti\id)(u)$.
We will often use such a convention to abbreviate tensor notation.
The unitary $u$ is also called a \emph{2-cocycle}.
\index{2-cocycle!of a cocycle action}
When $u=1$, $\al$ is called an \emph{action}.
\index{action!of a discrete Kac algebra}
If $v\in M\oti\lhG$ is a unitary,
then the perturbed cocycle action
\index{action!perturbed cocycle--}
$(\al^v,u^v)$ is defined by
\[
\al^v:=\Ad v\circ\al,
\quad
u^v:=(v\oti1)\al(v)u(\id\oti\De)(v^*).
\]
If $u=1=u^v$, $v$ is called an $\al$-\emph{cocycle}.
\index{cocycle of an action}

By the analogue of Peter--Weyl theory,
we have the following decomposition:
\[
\lhG=\bigoplus_{\pi\in\IG}B(H_\pi).
\]
Thus $\al$ consists of the family of $\al_\pi$
which are the maps from $M$ into $M\oti B(H_\pi)$.

We will recall the notion of the freeness for cocycle actions.

\begin{defn}
Let $(\al,u)$ be a cocycle action of $\bhG$
on a von Neumann algebra $M$.
Then we will say that $(\al,u)$ is \emph{free}
\index{action!free cocycle--}
if for every $\pi\in\IG\setminus\{\btr\}$,
there exists no non-zero $a\in M\oti B(H_\pi)$
such that $a(x\oti1_\pi)=\al_\pi(x)a$ for all $x\in M$.
\end{defn}

If $M$ is properly infinite,
then $M\oti B(H_\pi)$ is isomorphic to $M$.
So, an action of $\bhG$ means a certain system
of endomorphisms.

Let us clarify this point.
Let $M$ be a properly infinite von Neumann algebra
and $\al\col M\ra M\oti\lhG$ an action.
We associate a family of endomorphisms $\{\be_\pi\}_{\pi\in\IG}$
on $M$ with $\al$ as follows.
For each $\pi\in\IG$, take a Hilbert space $K_\pi\subs M$
whose dimension equals $d(\pi)$.
Let $\{V_{\pi_i}\}_{i\in I_\pi}$ be an orthogonal base of $K_\pi$.
Then we set
\[
\be_\pi(x):=\sum_{i,j\in I_\pi}V_{\pi_i}\al_{\pi_{ij}}(x)V_{\pi_j}^*
\quad\mbox{for }x\in M.
\]
Using $\al_\pi\circ\al_\rho=(\id\oti_\pi\De_\rho)\circ\al$,
we can compute the composition $\be_\pi\be_\rho$ as follows:
\begin{align*}
\be_\pi(\be_\rho(x))
&=
\sum_{i,j}
\be_\pi(V_{\rho_i})\be_\pi(\al_{\rho_{ij}}(x))\be_\pi(V_{\rho_j})^*
\\
&=
\sum_{i,j,r,s}
\be_\pi(V_{\rho_i})V_{\pi_r}\al_{\pi_{r,s}}(\al_{\rho_{ij}}(x))
V_{\pi_s}^*\be_\pi(V_{\rho_j})^*
\\
&=
\sum_{i,j,r,s,a,b,\si,S}
\be_\pi(V_{\rho_i})V_{\pi_r}
S_{\pi_r,\rho_i}^{\si_a}
\al_{\si_{ab}}(x)
\ovl{S_{\pi_s,\rho_j}^{\si_b}}
V_{\pi_s}^*\be_\pi(V_{\rho_j})^*
\\
&=
\sum_{i,j,r,s,a,b,\si,S}
\be_\pi(V_{\rho_i})V_{\pi_r}
S_{\pi_r,\rho_i}^{\si_a}
V_{\si_a}^*
\be_{\si}(x)
V_{\si_b}
\ovl{S_{\pi_s,\rho_j}^{\si_b}}
V_{\pi_s}^*\be_\pi(V_{\rho_j})^*,
\end{align*}
where the summation is taken for
$i,j\in I_\pi$, $r,s\in I_\rho$, $a,b\in I_\si$,
$\si\in\IG$ and $S\in\ONB(\si,\pi\rho)$.
We let
$v(S):=
\sum_{i,r,a}\be_\pi(V_{\rho_i})V_{\pi_r}S_{\pi_r,\rho_i}^{\si_a}V_{\si_a}^*$.
Then it is easy to see that $\{v(S)\}_{S\in\ONB(\si,\pi\rho)}$
is an orthonormal base of a Hilbert space in $M$
with support 1,
and we obtain
\begin{equation}
\label{eq:be-sect}
\be_\pi(\be_\rho(x))
=\sum_{\si\in\IG}\sum_{S\in\ONB(\si,\pi\rho)}
v(S)\be_\si(x)v(S)^*.  
\end{equation}
Thus we have the following result.
\begin{lem}
In $\Sect(M)$, one has
\[
[\be_\pi][\be_\rho]=\sum_{\sigma\prec\pi\rho}N_{\pi,\rho}^\si[\be_\si],
\]
where $N_{\pi,\rho}^\si$ denotes $\dim(\si,\pi\rho)$.
\end{lem}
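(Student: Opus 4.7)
The statement is essentially a direct corollary of the decomposition formula (\ref{eq:be-sect}) that was established in the computation leading up to it; note that the indexing in the displayed formula should read $\si\prec\pi\rho$ rather than $\rho\prec\pi\rho$. My plan is to verify that, for each irreducible $\si\in\IG$ with $N_{\pi,\rho}^\si>0$, the family $\{v(S)\}_{S\in\ONB(\si,\pi\rho)}$ spans a Hilbert space $\meH_\si\subs M$, that these Hilbert spaces are mutually orthogonal as $\si$ varies, and that $\bigoplus_{\si}\meH_\si$ has total support equal to $1_M$. Once this is in place, (\ref{eq:be-sect}) reads
\[
\be_\pi\be_\rho(x)=\sum_{\si\in\IG}\rho_{\meH_\si}(\be_\si(x))\quad\mbox{for all }x\in M,
\]
so that as a sector $[\be_\pi][\be_\rho]=\sum_\si \dim\meH_\si\cdot[\be_\si]=\sum_\si N_{\pi,\rho}^\si[\be_\si]$, which is the desired identity.

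The main computation I would carry out is the inner product $v(T)^*v(S)$ for $S\in\ONB(\si,\pi\rho)$ and $T\in\ONB(\si',\pi\rho)$. Expanding with the definition
\[
v(S)=\sum_{i,r,a}\be_\pi(V_{\rho_i})V_{\pi_r}S_{\pi_r,\rho_i}^{\si_a}V_{\si_a}^*,
\]
one uses $V_{\pi_r}^*\be_\pi(V_{\rho_i})=\al_{\pi_{r,r'}}(V_{\rho_i})\cdot(\mbox{matrix entries})$ together with the orthogonality relations $V_{\si_a}^*V_{\si'_b}=\de_{\si,\si'}\de_{ab}$ and $V_{\pi_r}^*V_{\pi_{r'}}=\de_{r,r'}$ defining orthonormal bases of the chosen Hilbert spaces $K_\si,K_\pi\subs M$. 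After collecting terms, the expression reduces to $\langle S,T\rangle\cdot 1_M=T^*S$ applied within the intertwiner inner product of $(\si,\pi\rho)$, so $v(T)^*v(S)=\de_{\si,\si'}(S,T)_{(\si,\pi\rho)}\cdot 1$. This proves that each $\meH_\si$ is a Hilbert space of dimension $N_{\pi,\rho}^\si$ and that distinct $\meH_\si$'s are mutually orthogonal.

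To verify completeness, $\sum_\si\sum_{S\in\ONB(\si,\pi\rho)}v(S)v(S)^*=1$, I would compute the left-hand side by inserting the resolution of the identity $\sum_{\si,S}S\, S^*=1_{H_\pi\oti H_\rho}$ in $(\pi\rho,\pi\rho)$, reducing the sum to $\sum_{i,r}\be_\pi(V_{\rho_i}V_{\rho_i}^*)V_{\pi_r}V_{\pi_r}^*=\be_\pi(1)\cdot 1=1$, since $\{V_{\rho_i}\}_i$ and $\{V_{\pi_r}\}_r$ are orthonormal bases of Hilbert spaces of support $1$. The main technical nuisance is simply bookkeeping of the intertwiner indices in these two computations; conceptually the lemma is just the categorical statement that $\be\col\Rep_f(\bG)\ra\Sect(M)$ is a tensor functor, but the verification must be done in coordinates because $\be_\pi\be_\rho$ was only defined via (\ref{eq:be-sect}) after the choice of the Hilbert spaces $K_\pi\subs M$.
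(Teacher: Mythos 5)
Your approach is exactly what the paper does: equation (\ref{eq:be-sect}) gives the decomposition $\be_\pi\be_\rho=\sum_{\si,S}v(S)\,\be_\si(\cdot)\,v(S)^*$, and the lemma follows once the families $\{v(S)\}_{S\in\ONB(\si,\pi\rho)}$ are shown to form mutually orthogonal Hilbert spaces of total support $1$; the paper simply asserts this as ``easy to see,'' and you propose to verify it.

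There is, however, one incorrect premise in your sketch. You invoke the orthogonality relation $V_{\si_a}^*V_{\si'_b}=\de_{\si,\si'}\de_{a,b}$ to deduce that $v(T)^*v(S)=0$ when $T\in\ONB(\si',\pi\rho)$, $S\in\ONB(\si,\pi\rho)$ with $\si\neq\si'$. This relation does not hold: the Hilbert spaces $K_\si\subs M$ are chosen independently for each $\si\in\IG$, and distinct $K_\si$'s have no reason to be mutually orthogonal in $M$. What saves the argument is that no such inter-$\si$ orthogonality is ever needed. Collapsing the middle factors of $v(T)^*v(S)$ using $\be_\pi(V_{\rho_j})^*\be_\pi(V_{\rho_i})=\be_\pi(V_{\rho_j}^*V_{\rho_i})=\de_{j,i}$ and $V_{\pi_s}^*V_{\pi_r}=\de_{s,r}$ (orthogonality \emph{within} $K_\rho$ and $K_\pi$ only), one is left with
\[
v(T)^*v(S)=\sum_{a,b}(T^*S)_{b,a}\,V_{\si'_b}V_{\si_a}^*,
\]
where $T^*S\in\Hom(H_\si,H_{\si'})$. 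This vanishes for $\si\neq\si'$ by Schur's lemma, not by orthogonality of the $V_{\si}$'s, and for $\si=\si'$ one gets $(S,T)\sum_a V_{\si_a}V_{\si_a}^*=(S,T)\cdot 1$. Once this point is corrected the rest of your plan — completeness via $\sum_{\si,S}SS^*=1_{H_\pi\oti H_\rho}$ and the support conditions $\sum_i V_{\rho_i}V_{\rho_i}^*=1$, $\sum_r V_{\pi_r}V_{\pi_r}^*=1$ — goes through as stated and proves the lemma, with $\si\prec\pi\rho$ in place of the obvious typo $\rho\prec\pi\rho$.
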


\begin{lem}
For each $\pi\in\IG$,
$\be_\opi$ is the conjugate endomorphism of $\be_\pi$.
\end{lem}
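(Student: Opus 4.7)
The plan is to exhibit explicit conjugate $R$-matrices in $(\id,\be_\pi\be_\opi)$ and $(\id,\be_\opi\be_\pi)$ and verify the standard conjugate equations, using the fact that the assignment $\pi\mapsto\be_\pi$ together with the intertwiners $v(S)$ from equation (\ref{eq:be-sect}) defines a unitary tensor functor from $\Rep(\bG)$ to $\End(M)$.

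First, I would apply formula (\ref{eq:be-sect}) in the special case $\rho=\opi$, $\si=\btr$, $S=T_{\pi,\opi}$. Since $\be_\btr=\id_M$, $V_\btr=1$, and $(T_{\pi,\opi})_{\pi_r,\opi_i}^{\btr}=\de_{ri}/\sqrt{d(\pi)}$, this yields the element
\[
R_\pi:=v(T_{\pi,\opi})=\frac{1}{\sqrt{d(\pi)}}\sum_{i\in I_\pi}\be_\pi(V_{\opi_i})V_{\pi_i}.
\]
By construction $R_\pi\in(\id_M,\be_\pi\be_\opi)$, and it is an isometry because $\{v(S)\}_{S\in\ONB(\si,\pi\opi)}$ was an ONB of a Hilbert space in $M$ of support $1$. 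Symmetrically, using $T_{\opi,\pi}\in(\btr,\opi\pi)$ we obtain an isometry $\bar R_\pi:=v(T_{\opi,\pi})\in(\id_M,\be_\opi\be_\pi)$.

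Next, I would verify the conjugate equations
\[
R_\pi^*\,\be_\pi(\bar R_\pi)=\frac{1}{d(\pi)}\cdot 1_M,
\qquad
\bar R_\pi^*\,\be_\opi(R_\pi)=\frac{1}{d(\pi)}\cdot 1_M.
\]
These are the images, under the tensor functor $\pi\mapsto\be_\pi$, of the Frobenius identities
\[
(T_{\pi,\opi}^*\oti 1_\pi)(1_\pi\oti T_{\opi,\pi})=\frac{1}{d(\pi)}1_\pi,
\qquad
(T_{\opi,\pi}^*\oti 1_\opi)(1_\opi\oti T_{\pi,\opi})=\frac{1}{d(\pi)}1_\opi
\]
that hold in the representation category of the compact Kac algebra $\bG$. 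Concretely, one expands $\be_\pi(\bar R_\pi)$ and $\be_\opi(R_\pi)$ using the definition of $\be_\pi$ and the coaction identity $(\al\oti\id)\al=(\id\oti\de)\al$, then applies (\ref{eq:be-sect}) to identify the result with the scalar $d(\pi)^{-1}$ times the identity.

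The main obstacle is purely bookkeeping in Step 3: one must keep careful track of the three auxiliary Hilbert spaces $K_\pi,K_\opi$ and the intertwiner coefficients, and check that the normalization produced by the functor coincides with the factor $d(\pi)^{-1}$ dictated by the Frobenius identity in $\Rep(\bG)$. Once these equations are verified, the pair $(R_\pi,\bar R_\pi)$ is a standard solution of the conjugate equations, so $\be_\opi$ is the conjugate endomorphism of $\be_\pi$ in the sense of Longo--Roberts and Izumi.
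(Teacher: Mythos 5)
Your proof is correct and takes essentially the same route as the paper's: both set $R_\pi=v(T_{\pi,\opi})$ and $\bar R_\pi=v(T_{\opi,\pi})$, observe they lie in $(\id,\be_\pi\be_\opi)$ and $(\id,\be_\opi\be_\pi)$, and verify the conjugate equations $R_\pi^*\be_\pi(\bar R_\pi)=1/d(\pi)=\bar R_\pi^*\be_\opi(R_\pi)$, concluding by the standard characterization of conjugate sectors (the paper cites Isola, you cite Longo--Roberts/Izumi, but these are the same criterion). Your explicit formula $R_\pi=d(\pi)^{-1/2}\sum_i\be_\pi(V_{\opi_i})V_{\pi_i}$ simply spells out what the paper leaves as ``by direct computation.''
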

\begin{proof}
We let $v:=v(T_{\pi,\opi})$ and $w:=v(T_{\opi,\pi})$.
Then by direct computation,
we see $v\in (\id,\be_\pi\be_\opi)$ and $w\in(\id,\be_\opi\be_\pi)$,
and they are satisfying
$v^*\be_\pi(w)=1/d(\pi)=w^*\be_\opi(v)$.
Then it turns out that $\be_\opi$ is the conjugate endomorphism
of $\be_\pi$ from \cite[Theorem 5.3]{Is}.
\end{proof}

Now we introduce the following key-notion ``modular part'' in our work.
Recall that $\al_\pi$ is said to be \emph{modular}
if the canonical extension $\tal_\pi$ is implemented by a unitary
contained in $\widetilde{M}\oti B(H_\pi)$.
(See \cite[Appendix]{MT3} for the definition of $\tal$.)

\begin{defn}\label{defn:modular-part}
Let $\bG$ be a compact Kac algebra
and $(\al,u)$ a cocycle action of $\bhG$ on a factor $M$.
Then the set $\La(\al):=\{\pi\in\IG\mid \al_\pi\mbox{ is modular}\}$
is called the \emph{modular part}
\index{modular part!of a cocycle action}
of $\al$.
If $\La(\al)=\IG$, $\al$ is said to be \emph{modular}.
When $\La(\al)=\{\btr\}$,
we will say $\al$ has \emph{trivial modular part}.
\index{modular part!trivial--}
\end{defn}

\begin{thm}
Let $\al\col M\ra M\oti\lhG$ be an action on an infinite factor $M$.
Then the following statements hold:
\begin{enumerate}
\item
Let $\be\col\IG\ra\End(M)_0$
be the map associated with $\al$ as before.
Then for $\pi\in\IG$,
$\al_\pi$ is modular if and only if $\be_\pi$ is modular;

\item
The action
$\al$ has the normal modular part if and only if $\al_\pi\al_\opi$ is
modular
for all $\pi\in\IG$.
In this case, $\al$ has the Connes--Takesaki module;

\item
If $\al$ has the normal modular part,
then $\be_\pi$ does not contain a modular endomorphism for all
$\pi\nin\La(\al)$.
\end{enumerate}
\end{thm}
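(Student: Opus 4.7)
For (1), I would check that the Hilbert-space packaging $\be_\pi(x)=\sum_{i,j}V_{\pi_i}\al_{\pi_{ij}}(x)V_{\pi_j}^*$ commutes with the canonical extension. Since $V_{\pi_i}\in M\subs\tM$, the packaged formula $\tbe_\pi(x)=\sum_{i,j}V_{\pi_i}\wdt{\al}_{\pi_{ij}}(x)V_{\pi_j}^*$ defines an endomorphism of $\tM$ that commutes with the dual flow $\th$, and the statistical dimensions, left inverses and Radon--Nikodym cocycles entering the definition of the canonical extension transform equivariantly under this packaging (so the formula really yields $\tbe_\pi$). It then follows that $\tbe_\pi$ is inner on $\tM$ if and only if $\wdt{\al}_\pi$ is implemented by a unitary in $\tM\oti B(H_\pi)$, which is exactly the modularity of $\al_\pi$.

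For the forward direction of (2), if $\La(\al)$ is a normal subcategory then $\btr\in\La(\al)$, and applying the normality condition of Definition \ref{defn:normal-subcat}(2) with $\rho=\btr$ forces every irreducible in $\pi\btr\opi=\pi\opi$ to belong to $\La(\al)$. The fusion rule $\be_\pi\be_\opi=\bigoplus_{\si\prec\pi\opi}N_{\pi,\opi}^\si\be_\si$ together with closure of modular endomorphisms under finite direct sums shows $\be_\pi\be_\opi$ is modular, so by (1) so is $\al_\pi\al_\opi$. Applying Theorem \ref{thm:rho-orho}, (1)$\Rightarrow$(3), to $\rho=\be_\pi$ yields a decomposition $\be_\pi=\si_\pi\ps_\pi$ sector-wise with $\si_\pi$ modular and $\ps_\pi$ an automorphism, and then $\mo(\be_\pi)=\mo(\ps_\pi)$ furnishes the Connes--Takesaki module of $\al$.

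For the converse direction of (2), once $\al_\pi\al_\opi$ is modular for every $\pi$, Theorem \ref{thm:rho-orho} again supplies $\be_\pi=\si_\pi\ps_\pi$ for every $\pi$. Then $\La(\al)$ contains $\btr$ and is closed under conjugation and product by standard properties of modular sectors, so only normality remains. Given $\rho\in\La(\al)$ and $\pi\in\IG$, the identity $\be_\opi=\ovl{\be_\pi}=\ps_\pi^{-1}\osi_\pi$ in $\Sect(M)$ yields
\[
\be_\pi\be_\rho\be_\opi=\si_\pi\cdot(\ps_\pi\be_\rho\ps_\pi^{-1})\cdot\osi_\pi,
\]
where the middle factor is modular because conjugation of a modular endomorphism by an automorphism preserves modularity. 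Hence the product is modular, and by the fusion rules every $\be_\si$ with $\si\prec\pi\rho\opi$ is modular, i.e.\ $\si\in\La(\al)$.

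For (3), assume $\pi\nin\La(\al)$ and suppose for contradiction that $\be_\pi$ contains a modular irreducible summand. By (2) we write $\be_\pi=\si_\pi\ps_\pi$ sector-wise; decomposing $\si_\pi=\bigoplus_j\si_\pi^{(j)}$ into irreducibles, every irreducible summand of $\be_\pi$ has the form $\si_\pi^{(j)}\ps_\pi$. A short argument shows $\wdt{\si_\pi^{(j)}}\wdt{\ps_\pi}$ is inner if and only if the automorphism $\wdt{\ps_\pi}$ is inner: writing $\wdt{\si_\pi^{(j)}}=\sum_k V_k\cdot V_k^*$, the equation $\sum_k V_k\wdt{\ps_\pi}(x)V_k^*=\sum_\el W_\el xW_\el^*$ combined with the fact that the left-hand side is an automorphism in disguise forces a single $V_1^*W_\el$ to be unitary, hence $\wdt{\ps_\pi}$ to be inner. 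Thus all irreducible summands of $\be_\pi$ share a common modularity status, and the existence of one modular summand forces $\ps_\pi$ to be extended modular and $\be_\pi$ itself to be modular, contradicting $\pi\nin\La(\al)$. The main obstacle I anticipate is the bookkeeping in (1)---verifying that the Radon--Nikodym and dimension factors in the definition of the canonical extension really match on both sides of the packaging---after which (2) and (3) reduce to sector manipulations built on Theorem \ref{thm:rho-orho}.
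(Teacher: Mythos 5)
Your overall plan tracks the paper closely. For part (1), your strategy of checking that the packaging $\be_\pi(x)=\sum_{i,j}V_{\pi_i}\al_{\pi_{ij}}(x)V_{\pi_j}^*$ intertwines correctly with the canonical extension is exactly the content of what the paper cites from \cite[Lemma A.4]{MT3}; you have correctly identified the Radon--Nikodym/dimension bookkeeping as the only real work. The forward direction of (2) is identical to the paper's, and your converse direction --- invoking Theorem~\ref{thm:rho-orho} to get $\be_\pi=\si_\pi\ps_\pi$, writing $\be_\opi=\ps_\pi^{-1}\osi_\pi$, and passing $\be_\rho$ through the conjugation $\ps_\pi(\cdot)\ps_\pi^{-1}$ --- is a legitimate and somewhat more explicit version of the paper's terser ``then $\be_\pi\be_\rho\be_\opi$ is modular.''

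For (3), you take a genuinely different route from the paper, and the key step as you have written it has a gap. The paper does not re-use the $\si_\pi\ps_\pi$ decomposition at all: it argues directly that if $\be_\pi$ contained a modular irreducible $\si$ together with a (necessarily present) non-modular irreducible $\de$, then $\de\osi\prec\be_\pi\ovl{\be_\pi}=\be_\pi\be_\opi$ would be modular, hence $\de\prec\de\osi\,\si$ would be modular by Frobenius reciprocity together with Lemma~\ref{lem:rho123} --- a contradiction. Your approach instead decomposes $\be_\pi=\si_\pi\ps_\pi$ and tries to show that if one summand $\si_\pi^{(j)}\ps_\pi$ is modular then $\ps_\pi$ must be extended modular. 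That conclusion is correct, but the ``short argument'' supporting it does not hold up: $\wdt{\si_\pi^{(j)}}\wdt{\ps_\pi}$ is a finite-index endomorphism of $\tM$ whose index is $d(\si_\pi^{(j)})^2$, which is typically $>1$, so it is \emph{not} ``an automorphism in disguise'', and nothing in the identity $\sum_k V_k\wdt{\ps_\pi}(x)V_k^*=\sum_\el W_\el xW_\el^*$ by itself forces any single $V_1^*W_\el$ to be unitary. The correct repair is immediate from Lemma~\ref{lem:rho123} applied to the triple $(\si_\pi^{(j)},\ps_\pi,\id)$: two of the three irreducibles ($\si_\pi^{(j)}$ and $\id$) and their product $\si_\pi^{(j)}\ps_\pi$ are modular, hence so is $\ps_\pi$, hence every summand $\si_\pi^{(k)}\ps_\pi$ of $\be_\pi$ is modular, contradicting $\pi\nin\La(\al)$. (Equivalently, $\ps_\pi\prec\ovl{\si_\pi^{(j)}}\cdot\si_\pi^{(j)}\ps_\pi$, and the right-hand side is a product of modular endomorphisms.) With that substitution your proof of (3) goes through; the paper's Frobenius-only version has the modest advantage of not needing the decomposition established in part (2).
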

\begin{proof}
(1) has been shown in \cite[Lemma A.4]{MT3}.
We will prove (2).
When the modular part $\La(\al)$ is normal,
each irreducible $\rho\prec\pi\opi$ is contained in $\La(\al)$
because $\btr\in\La(\al)$.
Thus $\be_\rho$ is modular, and $\be_\pi\be_\opi$ is modular
from (\ref{eq:be-sect}).

Conversely, suppose that $\be_\pi\be_\opi$ is modular for each $\pi$.
The previous result shows $\be_\opi=\ovl{\be_\pi}$.
By Theorem \ref{thm:rho-orho}, $\be_\pi$ has the Connes--Takesaki module.
Then $\be_\pi\be_\rho\be_\opi$ is modular for all $\rho\in\La(\al)$.
Thus each $\be_\si$ for $\si\prec\pi\rho\opi$ is modular,
and $\La(\al)$ is normal.

(3).
Let $\pi\nin\La(\al)$.
Assume $\be_\pi$ would contain a modular endomorphism $\si$.
Since $\be_\pi$ is not modular,
it also contains an endomorphism $\de$ that is not modular.
Then $\de\osi$ is contained in $\be_\pi\be_{\opi}$,
and that is modular.
The Frobenius reciprocity implies $\de$ is modular,
and this is a contradiction.
\end{proof}

As a corollary, we have the following result.

\begin{cor}\label{cor:Vps}
Let $\bG$ be a compact Kac algebra
and $(\al,u)$ a cocycle action of $\bhG$ on a factor $M$.
If $\al$ has the normal modular part,
then $\al$ has the Connes--Takesaki module,
and there exist a unitary $V_\pi\in\tM\oti B(H_\pi)$
and $\psi_\pi\in\Aut(M)$ for each $\pi\in\IG$
such that
\[
\widetilde{\alpha}_\pi=\Ad V_\pi\circ (\widetilde{\psi}_\pi\otimes1),
\quad
V_\pi^*(\th_t\oti\id)(V_\pi)\in Z(\tM)\oti B(H_\pi)
\mbox{ for all }t\in\R.
\]
Moreover, $\ps_\pi$ is uniquely determined up to $\Aut(M)_{\rm m}$.
\end{cor}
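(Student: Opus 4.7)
The plan is to reduce the corollary to Theorem~\ref{thm:rho-orho} at the level of the associated endomorphism $\be_\pi$, and then reconstruct $V_\pi$ explicitly from a modular implementation of $\tsi_\pi$. First I would invoke the preceding theorem: normality of the modular part yields that $\al$ has the Connes--Takesaki module and that $\al_\pi\al_\opi$ (and $\al_\opi\al_\pi$) is modular for every $\pi\in\IG$. By part~(1) of that theorem the corresponding endomorphism satisfies that $\be_\pi\obe_\pi$ and $\obe_\pi\be_\pi$ are modular, so Theorem~\ref{thm:rho-orho}, (1)$\Rightarrow$(3), produces a decomposition $\be_\pi=\si_\pi\ps_\pi$ with $\si_\pi\in\End(M)_{\rm m}$ and $\ps_\pi\in\Aut(M)$, the automorphism being unique modulo $\Aut(M)_{\rm m}$. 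Passing to the core, $\tbe_\pi=\tsi_\pi\tps_\pi$; since $d(\si_\pi)=d(\be_\pi)=d(\pi)$ and $\si_\pi$ is modular, I can choose isometries $\{W_k\}_{k\in I_\pi}$ in $\tM$ with $W_k^*W_l=\de_{kl}$, $\sum_kW_kW_k^*=1$, $\tsi_\pi(\cdot)=\sum_kW_k\cdot W_k^*$, and $W_k^*\th_t(W_l)\in Z(\tM)$ for all $t,k,l$.

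Next I would set
\[
V_\pi:=\sum_{i,k\in I_\pi}V_{\pi_i}^*W_k\oti e_{ik}\in\tM\oti B(H_\pi),
\]
where $\{V_{\pi_i}\}_{i\in I_\pi}$ is the Hilbert space basis in $M$ used to pass from $\al_\pi$ to $\be_\pi$. A direct matrix computation using $V_{\pi_i}^*V_{\pi_j}=\de_{ij}$, $\sum_iV_{\pi_i}V_{\pi_i}^*=1$ and the analogous relations for $\{W_k\}$ gives $V_\pi^*V_\pi=V_\pi V_\pi^*=1$, so $V_\pi$ is unitary. Then
\[
V_\pi(\tps_\pi(x)\oti 1)V_\pi^*=\sum_{i,j}V_{\pi_i}^*\tsi_\pi\tps_\pi(x)V_{\pi_j}\oti e_{ij}=\sum_{i,j}V_{\pi_i}^*\tbe_\pi(x)V_{\pi_j}\oti e_{ij},
\]
and comparing with $\tal_\pi(x)=\sum_{i,j}\tal_{\pi_{ij}}(x)\oti e_{ij}$, the desired identity $\tal_\pi=\Ad V_\pi\circ(\tps_\pi\oti 1)$ reduces to showing $V_{\pi_i}^*\tbe_\pi(x)V_{\pi_j}=\tal_{\pi_{ij}}(x)$ for all $x\in\tM$. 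Moreover, since each $V_{\pi_i}\in M$ is fixed by $\th$,
\[
V_\pi^*(\th_t\oti\id)(V_\pi)=\sum_{k,l}W_k^*\th_t(W_l)\oti e_{kl}\in Z(\tM)\oti B(H_\pi)
\]
by the choice of $\{W_k\}$, and uniqueness of $\ps_\pi$ modulo $\Aut(M)_{\rm m}$ is inherited from Theorem~\ref{thm:rho-orho}.

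The main technical obstacle is establishing the identity $V_{\pi_i}^*\tbe_\pi(x)V_{\pi_j}=\tal_{\pi_{ij}}(x)$ on the whole core $\tM$, not merely on $M$. On $M$ this is just algebraic inversion of $\be_\pi(x)=\sum_{k,l}V_{\pi_k}\al_{\pi_{kl}}(x)V_{\pi_l}^*$ via orthonormality of $\{V_{\pi_i}\}$. Extending to $\tM$ requires either invoking naturality of canonical extension under the Hilbert-space amplification $\al_\pi\mapsto\be_\pi$, or a direct verification on the generators $\la^\vph(t)$, comparing the scaling factors $d(\be_\pi)^{it}=d(\pi)^{it}$ with the component-wise scaling arising from $\tal_\pi$ and identifying $\ph_{\be_\pi}$ with the expectation built from $\al_\pi$ through the Hilbert space $K_\pi$. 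Once this identity is in place, all remaining assertions of the corollary follow from the computations above.
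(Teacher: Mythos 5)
Your construction of $V_\pi$ from the modular implementation $\{W_k\}$ of $\tsi_\pi$ and the Hilbert-space basis $\{V_{\pi_i}\}$ is sound, and the matrix computations giving $V_\pi^*V_\pi=V_\pi V_\pi^*=1$, $V_\pi(\tps_\pi\oti1)V_\pi^*=\sum_{i,j}V_{\pi_i}^*\tbe_\pi(\cdot)V_{\pi_j}\oti e_{ij}$, and $V_\pi^*(\th_t\oti\id)(V_\pi)=\sum_{k,l}W_k^*\th_t(W_l)\oti e_{kl}\in Z(\tM)\oti B(H_\pi)$ are correct. This is essentially the route the paper takes for the properly infinite case: the paper also invokes the previous theorem (the $\al_\pi\leftrightarrow\be_\pi$ dictionary) and Theorem~\ref{thm:rho-orho}, and then refers to the proof of Lemma~A.4 of \cite{MT3} for exactly the mechanics you spell out, including the identity $V_{\pi_i}^*\tbe_\pi(x)V_{\pi_j}=\tal_{\pi_{ij}}(x)$ on the core that you correctly flag as the technical pivot.

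There are two genuine gaps. First, the corollary is stated for an arbitrary factor $M$, but your whole argument presupposes $M$ properly infinite: the passage from $\al_\pi$ to $\be_\pi$ requires a Hilbert space $K_\pi\subs M$ of dimension $d(\pi)$ with support $1$, Theorem~\ref{thm:rho-orho} is stated for infinite $M$, and the ``previous theorem'' you invoke is also stated for an infinite factor. The paper handles the remaining (type II$_1$) case by passing to $N:=B(\el_2)\oti M$ and $\de:=\id\oti\al$, applying the infinite case there, and then using the type~I subfactor $\ps_\pi(B(\el_2))$ to perturb $\ps_\pi$ to act trivially on $B(\el_2)$ and to force $V_\pi\in\tM\oti B(H_\pi)$ from the identity $x\oti1_\pi=V_\pi(x\oti1)V_\pi^*$, $x\in B(\el_2)$. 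That reduction is missing from your argument. Second, and more minor, you silently treat $\al$ as an action, whereas the hypothesis allows a cocycle action $(\al,u)$; for $M$ infinite the $2$-cocycle $u$ is a coboundary (\cite[Lemma 3.2]{MT3}) so one may perturb to an action, but this step should be stated. Once these two reductions are inserted, your argument for the infinite case matches the paper's.
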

\begin{proof}
When $M$ is infinite, the 2-cocycle $u$ is a coboundary
from \cite[Lemma 3.2]{MT3}.
Thus we may and do assume that $\al$ is an action.
Then the statement follows from
Theorem \ref{thm:rho-orho}
and the previous theorem
(see the proof of \cite[Lemma A.4]{MT3}).
Next we consider the case that $M$ is finite.
We set the cocycle action $\be:=\id\oti \al$
on the infinite factor $N:=B(\el_2)\oti M$.
Then $\tN=B(\el_2)\oti \tM$ and $\tde=\id\oti\tal$,
and $\de$ has the normal modular part.
Thus there exist a unitary $V_\pi\in\tN\oti B(H_\pi)$,
and $\psi_\pi\in\Aut(N)$ such that
\[
\tde_\pi=\Ad V_\pi\circ (\widetilde{\psi}_\pi\otimes1),
\quad
V_\pi^*(\th_t\oti\id)(V_\pi)\in Z(\tM)\oti B(H_\pi)
\mbox{ for all }t\in\R.
\]
Since $\ps_\pi(B(\el_2))$ is a type I subfactor in $N$,
there exists a unitary $v\in N$ such that
$\Ad v\circ\ps_\pi=\id$ on $B(\el_2)$.
Hence we may and do assume that $\ps_\pi=\id$ on $B(\el_2)$.
Then $\ps_\pi$ is regarded as an automorphism on $M$.
From the equality $x\oti1_\pi=\tde_\pi(x)=V_\pi(x\oti1)V_\pi$
for all $x\in B(\el_2)$,
it turns out that $V_\pi\in \tM\oti B(H_\pi)$.
\end{proof}

Therefore, when the modular part is normal,
the canonical extension of a cocycle action
consists of two parts,
that is, a unitary implementation
and the canonical extensions of automorphisms.
We will prove the map $\ps\col \IG\ra\Aut(M)$
is possessing the property that
$\ps_\pi=\ps_\rho \bmod \Aut(M)_{\rm m}$
if $\pi=\rho$ in some ``quotient group'' $\Ga$ of $\IG$
by $\La(\al)$ in the next section,
and
then we will classify such maps,
which we will call modular $\Ga$-kernels.

The normality of a modular part does not hold in general,
but so does for $\bG$ being a connected simple compact Lie group
as follows.

\begin{thm}\label{thm:connected}
If $\bG$ is a connected simple compact Lie group
or its $q$-deformation with $q=-1$,
then the modular part of any cocycle action of $\bhG$
on a factor $M$ is trivial or normal.
\end{thm}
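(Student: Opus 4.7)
The plan is to prove that the modular part $\La(\al)$ is always a subcategory of $\IG$ (in the sense of Definition \ref{defn:normal-subcat}), and then to invoke Example \ref{ex:conn-simple-normal} to conclude normality whenever $\La(\al)\neq\{\btr\}$. As a first reduction, I will pass to a genuine action on an infinite factor. When $M$ is finite, I tensor with $B(\el^2)$ and extend $\al$ trivially on the new factor; this stabilization preserves modularity of each component. When $M$ is infinite, \cite[Lemma 3.2]{MT3} trivializes the 2-cocycle $u$, so $(\al,u)$ is cocycle equivalent to an action having the same modular part. I then pass to the associated endomorphisms $\{\be_\pi\}_{\pi\in\IG}\subs\End(M)_0$; by the equivalence $\al_\pi$ modular $\Longleftrightarrow\be_\pi$ modular established in the theorem preceding Corollary \ref{cor:Vps}, it suffices to prove that $\La:=\{\pi\in\IG\mid \be_\pi\text{ is modular}\}$ is a subcategory of $\IG$.

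Next I will verify the two closure properties. For conjugation closure, $\be_\opi$ is the conjugate endomorphism of $\be_\pi$, and by \cite[Proposition 2.5(3)]{Iz} its canonical extension equals the conjugate of $\wdt{\be_\pi}$; since the conjugate of an inner endomorphism on $\tM$ remains inner, $\opi\in\La$ whenever $\pi\in\La$. For product closure, if $\pi,\rho\in\La$, then $\wdt{\be_\pi\be_\rho}=\wdt{\be_\pi}\wdt{\be_\rho}$ is inner as a composition of inner endomorphisms, so $\be_\pi\be_\rho$ is modular. Since modular sectors are closed under taking subobjects via Izumi's $H^1$-classification \cite[Theorem 3.3]{Iz}, every irreducible $\si\prec\pi\rho$ has $\be_\si$ modular, hence $\si\in\La$. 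Together with $\btr\in\La$ (since $\be_\btr=\id$), this shows $\La$ is a subcategory of $\IG$.

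If $\La(\al)=\{\btr\}$, the modular part is trivial and there is nothing left to prove. Otherwise $\La(\al)$ is a subcategory of $\IG$ strictly larger than $\{\btr\}$. By Example \ref{ex:conn-simple-normal}, the associated closed subgroup $\bK_{\La(\al)}=\bigcap_{\pi\in\La(\al)}\ker(\pi)$ is a proper normal closed subgroup of the connected simple Lie group $\bG$; every such subgroup is contained in the (finite) center of $\bG$, hence is central, and is therefore a generalized central quantum subgroup. Theorem \ref{thm:corr-gc-nc} then yields the normality of $\La(\al)$. The $q=-1$ deformation is handled identically since $\Rep(\bG_{-1})$ coincides with $\Rep(\bG)$ as a $C^*$-tensor category.

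The main delicate point I anticipate is the closure of modular sectors under subobjects, which is what makes the product-closure step work; this rests on Izumi's $H^1$-description of $\Sect(M)_{\rm m}$ and must be invoked at the level of sectors on $\tM$. Everything else reduces to functorial properties of the canonical extension under conjugation and composition, together with the structural facts about subcategories of $\IG$ for connected simple compact Lie groups recalled in Example \ref{ex:conn-simple-normal}.
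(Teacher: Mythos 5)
Your proposal is correct and matches the paper's approach: the paper also reduces the theorem to the observation (stated as ``trivial'' at the start of Section~6) that $\La(\al)$ is a subcategory of $\IG$, and then invokes Example~\ref{ex:conn-simple-normal} to conclude that every subcategory of $\IG$ is normal for $\bG$ connected simple compact (or its $q=-1$ deformation). You simply spell out the subcategory verification---closure of modularity under conjugation, composition, and subobjects via the canonical extension and Izumi's results---which the paper leaves implicit.
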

\begin{proof}
Let $\bG$ be such a compact Kac algebra,
and
$\al\col M\ra M\oti\lhG$ an action on a factor $M$,
with the non-trivial modular part $\La(\al)$.
From the observation in Example \ref{ex:conn-simple-normal},
it turns out that $\La(\al)$ is normal.
\end{proof}

\subsection{Dual actions of finite groups}
In this subsection,
we discuss when a dual action of a finite group
has the Connes--Takesaki module, and when that is modular.
Before that,
we will recall the notion of the minimality of a group action.

\begin{defn}
Let $G$ be a locally compact group
and $\al$ an action on a factor $M$.
Then $\al$ is said to be \emph{minimal}
\index{action!minimal--}
if the following conditions hold:
\begin{itemize}
\item
$(M^\al)'\cap M=\C$;

\item
The group homomorphism
$\al\col G\to\Aut(M)$ is injective.
\end{itemize}
\end{defn}

Let $\Ga$ be a finite group
and $\al$ a minimal action of $\Ga$ on a factor $N$.
We denote by $\La$ the modular part of $\al$,
that is, $\La$ consists of $h\in\Ga$ such that
$\tal_h=\Ad u_h$ for some unitary $u_h\in\tN$.
Assume that the characteristic invariant of $\al$ is trivial.
Namely, we have
\[
\tal_g(u_h)=u_{ghg^{-1}},
\quad
u_hu_k=u_{hk},
\quad g\in\Ga,\ h,k\in\La.
\]
Note that
the normal subgroup $\La$ must be abelian.
Indeed, $u_hu_ku_h^*u_k^*$ implements $\al_{hkh^{-1}k^{-1}}$,
but the dual action $\th$ fixes the unitary, and that is contained in $M$.
Thus $hkh^{-1}k^{-1}=e$ because of the minimality.

Let $M:=N\rti_\al\Ga$.
The core $\tM$ is naturally identified with $\tN\rti_\tal\Ga$.
By $\la^\tal(g)$, we denote the implementing unitary in $\tM$.
Then the following holds (see \cite{Kw-Tak,Se-flow}):
\[
\tN'\cap(\tN\rti_\tal\Ga)=\spa\{Z(\tN)W_h\mid h\in\La\},
\]
where $W_h:=u_h^*\la^\tal(h)$.
Note that $W$ is a representation of $\La$, that is, $W_hW_k=W_{hk}$.
Then an element $x=\sum_{h\in\La}z_h W_h$, $z_h\in Z(\tN)$
is contained in $Z(\tM)$
if and only if
\begin{equation}\label{eq:al-z}
\tal_g(z_h)=z_{ghg^{-1}}
\quad
\mbox{for all }g\in\Ga,h\in\La.
\end{equation}
Thus
the following element $x(h)$ is central:
\[
x(h):=\sum_{g\in \Ga}W_{ghg^{-1}}.
\]

Now we consider the dual action $\hal\col M\ra M\oti L^\infty(\hGa)$.
Then the canonical extension of $\be:=\hal$ on $\tM$
is identified
with the dual action of $\tal$ on $\tM=\tN\rti_\tal\Ga$.
Then $\be$ fixes $\tN$, and
\[
\tbe_\pi(W_h)=W_h\oti \pi(h),
\quad \pi\in\IG,\ h\in\La.
\]

When $\be_\pi$ has the Connes--Takesaki module,
$\tbe_\pi(x(h))$ belongs to $Z(\tM)\oti \C1_\pi$.
This means
\[
\sum_{g\in\Ga}\pi(ghg^{-1})_{ij}W_{ghg^{-1}}\in Z(\tM)
\quad
\mbox{for all }i,j\in I_\pi.
\]
By (\ref{eq:al-z}), we have
$\pi(h)=\pi(ghg^{-1})$ for all $g\in \Ga$,
and $\pi(h)$ is scalar for all $h\in\La$.

Conversely, we suppose that $\pi$ maps $\La$ into $\C$.
Let $x=\sum_{h\in\La}z_hW_h\in Z(\tM)$.
Then we have $\be_\pi(x)=\sum_{h\in\La}\pi(h)z_hW_h$.
It is immediately checked that the coefficients $\pi(h)z_h$
satisfies (\ref{eq:al-z}).
Hence we have proved the following result.

\begin{prop}
\label{prop:CT-cent}
Let $\pi\in\Irr(\Ga)$.
Then $\be_\pi$ has the Connes--Takesaki module
if and only if $\pi(\La)$ consists of scalars.
In particular,
the action $\be$ has the Connes--Takesaki module
exactly when $\La$ is contained in the center $Z(\Ga)$.
\end{prop}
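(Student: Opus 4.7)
The plan is to exploit the explicit description of $Z(\tM)$ and the action of $\tbe_\pi$ on the crossed product generators that was set up in the paragraphs preceding the statement. Namely, $Z(\tM)$ consists of the elements $\sum_{h\in\La}z_hW_h$ with $z_h\in Z(\tN)$ subject to the equivariance relation $\tal_g(z_h)=z_{ghg^{-1}}$, while $\tbe_\pi$ fixes $\tN$ and satisfies $\tbe_\pi(W_h)=W_h\oti\pi(h)$. The property that $\be_\pi$ admit a Connes--Takesaki module is characterized by the inclusion $\tbe_\pi(Z(\tM))\subs Z(\tM)\oti\C 1_\pi$, so the task is to check exactly when this inclusion holds.

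For the necessity, I would feed the class sum $x(h):=\sum_{g\in\Ga}W_{ghg^{-1}}$ into $\tbe_\pi$; this element is central because its scalar coefficients on each conjugate of $h$ are constant and hence trivially $\tal$-equivariant. The image $\tbe_\pi(x(h))=\sum_{g\in\Ga}W_{ghg^{-1}}\oti\pi(ghg^{-1})$ is a $Z(\tN)$-combination of the distinct generators $W_{h'}$ indexed by the conjugacy class of $h$, each tensored with the matrix $\pi(h')$. Using that $\tN'\cap\tM=\bigoplus_{h\in\La}Z(\tN)W_h$ is a direct sum, this image can lie in $Z(\tM)\oti\C 1_\pi$ only when each $\pi(h')$ is a scalar matrix; in particular $\pi(h)$ is scalar.

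For the sufficiency, if $\pi(h)=\la_\pi(h)1_\pi$ is scalar for every $h\in\La$, then for any $x=\sum z_hW_h\in Z(\tM)$ one has $\tbe_\pi(x)=\bigl(\sum\la_\pi(h)z_hW_h\bigr)\oti 1_\pi$, and the coefficients $\la_\pi(h)z_h$ continue to satisfy the equivariance relation, because scalar values of $\pi$ are automatically invariant under conjugation. The concluding assertion then combines this per-$\pi$ criterion with the standard representation-theoretic fact that $h$ lies in $Z(\Ga)$ if and only if $\pi(h)$ is scalar for every $\pi\in\Irr(\Ga)$. I do not anticipate any serious obstacle here, since all the computational work has already been carried out in the preceding paragraphs; the only delicate point is confirming that $\tN'\cap\tM=\bigoplus_h Z(\tN)W_h$ is \emph{genuinely} a direct sum, which follows from the minimality of $\al$ together with the triviality of the characteristic invariant.
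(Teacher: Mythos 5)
Your proof is correct and follows the same route as the paper: apply $\tbe_\pi$ to the class sums $x(h)$, use the description of $\tN'\cap\tM$ as a $Z(\tN)$-module with basis $\{W_h\}_{h\in\La}$, and unwind the equivariance relation on the coefficients. The only (harmless) variations are that in the necessity direction the paper deduces from the equivariance condition that $\pi(h)$ commutes with all of $\pi(\Ga)$ and then invokes Schur's lemma, whereas you read off scalarity directly from the coefficient comparison in $(\tN'\cap\tM)\oti B(H_\pi)$; and your closing parenthetical is slightly misattributed --- linear independence of the $W_h$ over $Z(\tN)$ is automatic from the crossed-product relations, while minimality (outerness of $\tal$) is what identifies $\tN'\cap\tM$ with exactly the span $\bigoplus_{h\in\La} Z(\tN)W_h$ and nothing more.
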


Next we will determine when $\be_\rho$ is modular.
Suppose that a unitary $U_\rho\in \tM\oti B(H_\rho)$ satisfies
$\be_\rho=\Ad U_\rho$.
Since $\be$ fixes $\tN$, $U$ sits in $(\tN'\cap\tM)\oti B(H_\rho)$.
Thus there exists $z_h\in Z(\tN)\oti B(H_\rho)$ such that
\[
U_\rho=\sum_{h\in\La}z_h(W_h\oti1).
\]
Since $\be_\rho(\la^\tal(g))=\la^\tal(g)\oti\rho(g)$,
we have $U_\rho=(1\oti\rho(g))\cdot(\Ad \la^\tal(g)\oti\id)(U_\rho)$.
Using $\Ad \la^\tal(g)(W_h)=W_{ghg^{-1}}$,
we obtain
\begin{equation}\label{eq:z-rho-al}
z_{ghg^{-1}}=(1\oti\rho(g))(\tal_g\oti\id)(z_h).
\end{equation}
We equip a $\Ga$-module structure on $Z(\tN)\oti \el^\infty(\La)$
defined by $\tal_g\oti \Ad \la^\el(g)\la^r(g)$,
where $\la^\el(g)$ and $\la^r(g)$ denote
the left and right regular representation of $\Ga$, respectively.
Then (\ref{eq:z-rho-al}) means that
$Z(\tN)\oti \el^\infty(\La)$
contains $H_\orho$ as a $\Ga$-module if $z_h\neq0$.
We set $K_\al:=\ker(\mo\al)$.
Then the quotient group $\Ga/K_\al$ faithfully acts on $Z(\tN)$,
and the $\Ga$-module
$Z(\tN)$ is nothing but $\el^\infty(\Ga/K_\al)$ up to multiplicity
(see \cite[Lemma A.1]{Iz}).
Applying the above to $\be_\orho$ that is also modular,
we get the following characterization.

\begin{prop}\label{prop:be-modular}
Let $\rho\in\Irr(\Ga)$.
Then $\be_\rho$ is modular if and only if
the $\Ga$-module $\el^\infty(\Ga/K_\al\times\La)$
contains the irreducible representation $\rho$.
\end{prop}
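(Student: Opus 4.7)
The starting point is the analysis already carried out just before the statement: any unitary $U_\rho\in\tM\oti B(H_\rho)$ implementing $\tbe_\rho$ is forced to lie in $(\tN'\cap\tM)\oti B(H_\rho)$, hence to be of the form $U_\rho=\sum_{h\in\La}z_h(W_h\oti 1)$ with $z_h\in Z(\tN)\oti B(H_\rho)$ satisfying the equivariance equation (\ref{eq:z-rho-al}). The plan is to translate the existence of such a unitary into a representation-theoretic statement about the $\Ga$-module $Z(\tN)\oti\el^\infty(\La)\cong\el^\infty(\Ga/K_\al\times\La)$.

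For the necessary direction, I would regard $(z_h)_{h\in\La}$ as an element of $V:=Z(\tN)\oti\el^\infty(\La)\oti H_\rho\oti H_\rho^*$ via the identification $B(H_\rho)=H_\rho\oti H_\rho^*$; the relation (\ref{eq:z-rho-al}) is then precisely the statement that $z$ is $\Ga$-fixed under the action $\tal\oti T\oti\rho\oti\id$, where $T$ denotes the conjugation action on $\el^\infty(\La)$. Since the last tensor factor is $\Ga$-trivial, a non-zero fixed vector is equivalent to a non-zero $\Ga$-equivariant map $H_\orho\to Z(\tN)\oti\el^\infty(\La)$, and since permutation representations are self-conjugate this is the same as $\rho\prec\el^\infty(\Ga/K_\al\times\La)$. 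Extracting any non-zero column of the hypothetical $U_\rho$ produces such a non-zero $z$, so modularity of $\be_\rho$ forces the containment.

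For the sufficient direction, the plan is to promote an embedding of $\rho$ into $\el^\infty(\Ga/K_\al\times\La)$ to an explicit unitary. The $\Ga$-equivariance supplies $d(\rho)$ linearly independent solutions of (\ref{eq:z-rho-al}) that form an orthonormal frame with respect to the natural $Z(\tN)^\Ga$-valued inner product, and these assemble into a candidate $U_\rho\in(\tN'\cap\tM)\oti B(H_\rho)$. Because $\tN'\cap\tM$ is commutative, unitarity of $U_\rho$ is pointwise on its spectrum and can be checked orbit-by-orbit via the equivariance, together with the observation invoked in the paragraph preceding the proposition that modularity of $\be_\rho$ automatically entails modularity of $\be_\orho$.

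The main obstacle I expect is the sufficient direction. A single non-zero solution of (\ref{eq:z-rho-al}) is produced by a linear condition, but unitarity of $U_\rho$ is a quadratic constraint in the $z_h$'s, simultaneously coupling $\sum_h z_h^*z_h=1$ with the off-diagonal identities $\sum_h z_h^*z_{hm}=0$ for $m\ne e$. Ensuring all of these hold for an equivariant frame requires careful book-keeping of the $Z(\tN)$-valued inner product together with the symmetric consideration of $\rho$ and $\orho$ that the text flags immediately before the statement.
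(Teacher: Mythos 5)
Your necessity direction matches the paper exactly: a unitary $U_\rho$ implementing $\tbe_\rho$ is forced into $(\tN'\cap\tM)\oti B(H_\rho)$, the coefficients $z_h\in Z(\tN)\oti B(H_\rho)$ satisfy (\ref{eq:z-rho-al}), and a nonzero $z$ gives a copy of $\orho$ in $Z(\tN)\oti\el^\infty(\La)\cong\el^\infty(\Ga/K_\al\times\La)$ (up to multiplicity); the paper then repeats the argument with $\be_\orho$ to exchange $\orho$ for $\rho$, while you invoke self-conjugacy of permutation modules---interchangeable.

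The sufficiency direction is a genuine gap, and while you are right to flag it, your sketch does not close it and is partly off-target. A nonzero equivariant $z$ yields a nonzero $a=\sum_h z_h(W_h\oti1)$ with $a(x\oti1)=\tbe_\rho(x)a$, but $a^*a=\sum_m\bigl(\sum_h z_h^*z_{hm}\bigr)(W_m\oti1)\in Z(\tM)\oti B(H_\rho)$ can have a nontrivial kernel; an ``orthonormal frame for the $Z(\tN)^\Ga$-valued inner product'' constrains $\sum_h z_h^*z_h$ but says nothing about the off-diagonal sums $\sum_h z_h^*z_{hm}$, $m\neq e$, which are exactly what determine unitarity. (Concretely, with $\La$ cyclic an equivariant $z$ is typically forced to vanish at the identity, making $a^*a$ vanish on the $W$-fixed spectral sector, so ``unitarity pointwise on the spectrum'' fails on the nose.) The closing observation you attach---that modularity of $\be_\rho$ forces modularity of $\be_\orho$---is used by the paper only to swap $\orho$ for $\rho$ in the necessity direction, not for sufficiency. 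What actually saves the sufficiency claim, and what the paper's exposition leaves entirely implicit, is that factoriality of $M$ (ergodicity of $\th$ on $Z(\tM)$) constrains the modular invariant $c_t$ and the embedding $Z(\tM)\subs\tN'\cap\tM$ so tightly that the bad configurations cannot arise; a complete argument has to invoke this global input rather than a fibrewise check.
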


When $\Ga$ is abelian, $\el^\infty(\La)$ is a trivial $\Ga$-module.
Thus $\be_\rho$ is modular if and only if $\rho|_{K_\al}$ is trivial,
that is, the modular part of $\be$ is the dual group of $\Ga/K_\al$.

\subsection{Actions of $\wdh{S_d}$}\label{sec:dualSd}
We continue to use the same notation in the previous subsection.
Let us consider the case that
$\Ga$ is the symmetric group of degree $d$ which we denote by $S_d$.
If $d\geq5$,
then a normal subgroup of $\Ga$ must be one of $\{e\}$, $A_d$ and $S_d$,
where $A_d$ denotes the alternative subgroup.
Since the modular part $\La$ of a minimal action $\al$ is abelian,
$\La$ exactly equals to $\{e\}$,
that is, $\al$ is centrally free.
The definition of the centrally freeness is the following:

\begin{defn}
Let $\Ga$ be a discrete group and $\al$ an action on a factor $M$.
Then we will say that $\al$ is \emph{centrally free}
\index{action!centrally free--}
if $\al$ is point-wise centrally non-trivial,
that is, $\al_t^\om$ is non-trivial on $M_\om$ whenever $t\neq e$.
\end{defn}

Note that the central freeness implies that
$\al^\om$ is point-wise outer on $M_\om$
(see \cite[p.38]{Oc}).

Then we have $Z(\tM)=Z(\tN)^\tal$.
This implies $\be=\hal$ has the trivial Connes--Takesaki module.

Next we study the modular part $\IH$ of $\hal$.
The corresponding quotient group $\bH$
must be one of $\Ga$, $\Z_2=\Ga/A_d$ and $\{e\}$.
Suppose that $H=\Z_2$.
Then $\IH=\{\btr,\sgn\}$,
where $\sgn$ is the sign representation.

Then there exists a unitary $U\in \tM$
which implements $\hal_{\sgn}$.
Since $\hal_{\sgn}$ fixes $\tN$,
$U$ must sit in $\tN'\cap\tM=Z(\tN)$.
By the equality $\tbe_{\sgn}(\la^\tal(g))=\sgn(g)\la^\tal(g)$,
we have
\[
\tal_g(U)=\sgn(g)U
\quad
\mbox{for all }g\in\Ga.
\]

We show the fact that $K:=\ker(\mo(\al))=A_d$.
Note that the subgroup $K$ is normal,
and $K=\{e\}, A_d$ or $\Ga$.
Indeed, if $g$ is a transposition,
then $\tal_g(U_\rho)=-U\neq U$.
This shows $K\neq\Ga$.
If $K=\{e\}$, then $\Ga$ faithfully acts on the flow space $Z(\tN)$.
Thanks to \cite[Theorem 5.9]{Iz},
$\hal$ would be modular.
Thus $K=A_d$.

\begin{prop}\label{prop:S5}
Let $\al$ be a minimal action of $S_d$ on an injective factor $N$.
If $d\geq5$,
then the following statements hold:
\begin{enumerate}
\item
$\al$ is centrally free;

\item
The dual action $\hal$ of $\wdh{S_d}$
has the trivial Connes--Takesaki module;

\item
If $\ker(\mo(\al))=\{e\}$, then $\hal$ is modular;
\item
If $\ker(\mo(\al))=A_d$, then the modular part of $\hal$ is $\{\btr,\sgn\}$;
\item
If $\ker(\mo(\al))=S_d$, then $\hal$ is centrally free.
\end{enumerate}
\end{prop}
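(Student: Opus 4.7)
The plan is to reduce everything to Propositions~\ref{prop:CT-cent} and~\ref{prop:be-modular}, combined with the classification of normal subgroups of $S_d$ for $d\geq 5$: the only normal subgroups are $\{e\}$, $A_d$, and $S_d$, and of these only $\{e\}$ is abelian.

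First I would handle (1) and (2) together. The discussion preceding Proposition~\ref{prop:CT-cent} shows that the modular part $\La$ of any minimal action is an abelian normal subgroup of the acting group, hence $\La=\{e\}$ when $d\geq 5$. Since an element $g$ belongs to $\La$ precisely when $\tal_g$ is inner, which for minimal actions coincides with $\al_g$ being centrally trivial (the Connes--Takesaki dichotomy used throughout \S\ref{sect:index}), $\La=\{e\}$ is equivalent to central freeness of $\al$, giving (1). Since $\La=\{e\}$ is trivially contained in $Z(S_d)$, Proposition~\ref{prop:CT-cent} then delivers (2).

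Next I would apply Proposition~\ref{prop:be-modular} with $\La=\{e\}$: the modular part of $\hal$ is the set of $\rho\in\Irr(S_d)$ appearing in the $S_d$-module $\el^\infty(S_d/K_\al)$. The subgroup $K_\al$ is normal, hence equals $\{e\}$, $A_d$, or $S_d$. In the first case $\el^\infty(S_d)$ is the regular representation and contains every irreducible by Peter--Weyl, giving (3). In the second case $\el^\infty(S_d/A_d)\cong\el^\infty(\Z_2)$, which decomposes as $\btr\oplus\sgn$, giving (4). In the third case only the trivial representation appears, so the modular part of $\hal$ reduces to $\{\btr\}$.

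The main obstacle is the last step of (5): upgrading ``trivial modular part of $\hal$'' to ``central freeness of $\hal$''. In the Kac algebra setting these two notions are not tautologically equivalent, so one cannot simply quote the group-theoretic correspondence used in (1). My plan is to argue by contradiction via the endomorphism picture: passing to the system $\{\be_\pi\}$ associated with $\hal$ as in \eqref{eq:be-sect}, central triviality of some $\hal_\pi$ with $\pi\neq\btr$ would force an irreducible sub-endomorphism of $\be_\pi$ to be modular (by the Kac-algebra analogue of the Connes--Takesaki characterization of centrally trivial automorphisms as extended modular ones). This would contradict the computation above showing that the modular part of $\hal$ is $\{\btr\}$, and hence $\hal$ must be centrally free.
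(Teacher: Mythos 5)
Your proof follows the same overall route as the paper (normal subgroups of $S_d$, abelian-ness of the modular part, Propositions~\ref{prop:CT-cent} and~\ref{prop:be-modular}), and parts (1), (3), (4) are handled correctly. There is, however, a genuine gap in (2): as stated, Proposition~\ref{prop:CT-cent} only asserts that $\hal$ \emph{has} a Connes--Takesaki module (i.e.\ $\tbe_\pi$ preserves $Z(\tM)$), whereas (2) claims the module is \emph{trivial}. To close this you need to add that $\La=\{e\}$ forces $\tN'\cap\tM=Z(\tN)$, hence $Z(\tM)\subset Z(\tN)\subset\tN$, and since the dual action $\tbe$ fixes $\tN$ pointwise it fixes $Z(\tM)$ pointwise. (Equivalently, the explicit formula $\tbe_\pi(\sum_h z_hW_h)=\sum_h\pi(h)z_hW_h$ in the proof of Proposition~\ref{prop:CT-cent} specializes to the identity on $Z(\tM)$ when $\La=\{e\}$.) This is exactly how the paper argues, bypassing Proposition~\ref{prop:CT-cent} entirely with the line ``$Z(\tM)=Z(\tN)^{\tal}$''.

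For (5), you are right to flag that ``modular part $=\{\btr\}$'' does not tautologically give ``centrally free'' for a Kac-algebra action --- the paper's informal discussion before the proposition also does not spell this out, so you have identified the genuinely delicate step. Your sketch is in the right direction: the precise ingredient is the characterization on injective factors (from the endomorphism-analytics of \cite{MT2} and \cite[Appendix]{MT3}) that if some $\hal_\pi^\om|_{M_\om}$ admits a nonzero intertwiner with the identity, then an irreducible sub-sector of $\hal_\pi$ must be modular; combined with the already-computed triviality of the modular part and the fact that $\hal$ is free (since $\al$ is minimal, hence outer), this yields central freeness. Finally, a small misattribution in (1): the dichotomy ``$\tal_g$ inner iff $\al_g$ centrally trivial'' on injective factors is the Kawahigashi--Sutherland--Takesaki / Connes--Takesaki result, not something from the index-theory appendix.
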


If $d=3,4$, then $S_d$ has a non-trivial abelian normal subgroup.
Since that is not central,
we can construct a minimal dual action $\hal$ which does not have
the Connes--Takesaki module (see Proposition \ref{prop:CT-cent}).
Let us discuss it in a more general setting.

Let $\Gamma_0=\Lambda_0 \rtimes K$ be a finite group such that
$\Lambda_0$ is
abelian. 
We define an action of $K$ on $\Lambda=\widehat{\Lambda_0}$ by 
$\langle kpk^{-1},n\rangle=\langle p,k^{-1}nk\rangle$ for $p\in \Lambda $.

In what follows, we use letters $p,q$ to denote elements in $\Lambda$,
$k,l$ those in $K$, and $m,n$ those in $\Lambda_0$, respectively.

Let $(X,\mathcal{F}_t^X)$ be an ergodic flow, and 
$c(t,x)$ a minimal cocycle to $\Gamma_0$, i.e.,
$c(t, \mathcal{F}_s^Xx)c(s,x)=c(t+s,x)$ holds. 
Let $Z:=X\rtimes_c \Gamma_0$ and 
$\mathcal{F}^Z$ be the skew product flow.
Namely, $X\rtimes_c \Gamma_0=X\times
\Gamma_0$ as a set, and a flow $\mathcal{F}^Z_t$ is given by 
\[
 \mathcal{F}_t^Z(x,g)=(\mathcal{F}_t^Xx, c(t,x)g). 
\]
The minimality of
$c(t,x)$ implies the ergodicity of $(Z,\mathcal{F}_t^Z)$. Let
$\gamma_g$ be an action of $\Gamma_0$ on $Z$ by $\gamma_g(x,h)=(x,hg^{-1})$.

Let us decompose $c(t,x)$ as  
$c(t,x)=c^K(t,x)c^{\La_0}(t,x)$, $c^K(t,x)\in K$,
and $c^{\Lambda_0}(t,x)\in \La_0$.
We have
\begin{align*}
&c^K(t+s,x)c^{\Lambda_0}(t+s,x)
\\
&=c(t+s,x)
= c(t,\mathcal{F}_s^X x)c(s,x) \\
&=
 c^K(t,\mathcal{F}_s^X x) c^{\Lambda_0}(t,\mathcal{F}_s^X x)c^K(s,x)c^{\Lambda_0}(s,x)\\
&=
c^K(t,\mathcal{F}_s^X x)c^K(s,x)
\cdot
c^K(s,x)^{-1} 
c^{\Lambda_0}(t,\mathcal{F}_s^X x)c^K(s,x)c^{\Lambda_0}(s,x).
\end{align*}

Thus $c^K(t,x)$ is a cocycle to $K$, 
and $$c^K(s,x)^{-1}c^{\Lambda_0}(t,\mathcal{F}_s^X x)c^K(s,x)c^{\Lambda_0}(s,x)
=c^{\Lambda_0}(t+s,x)$$ holds.

Let $M$ be an injective factor
with the flow of weights
$(Z,\mathcal{F}_t^Z)$, and $\alpha$ an action of $\Gamma_0$ on $M$
with $\mathrm{mod}(\alpha_g)=\gamma_g$. Note
$\alpha$ is outer, or more precisely, centrally free
since $\alpha$ has the non-trivial Connes--Takesaki module.

Let 
$\mathcal{P}:=M\rtimes_\alpha \Lambda_0$,
and $v_n$ be the
implementing unitary.
We can canonically extend $\alpha_k$, $k\in K$, 
to an action on $\mathcal{P}$ by 
$\alpha_k(v_{k^{-1}nk})=v_n$.
We can easily to see
$\alpha_k\hat{\alpha}_{k^{-1}pk}\alpha_{k^{-1}}=\hat{\alpha}_p$. Thus 
we have an action $\beta$ of $\Gamma=\Lambda\rtimes K$ on ${P}$ by  
$\beta_{pk}=\hat{\alpha}_p\alpha_k$.
It is easy to see the outerness of $\beta$.
Since $Z(\tP)=Z(\tM)^{\Lambda_0}$, the flow of weights of $\mP$ is given by 
skew product flow $(Y,\mathcal{F}^Y_t):=X\rtimes_{c^K}K$ associated with 
$c^K(t,x)$. 

Let $u_p\in L^\infty(Z)=Z(\tM)$ be a unitary given by $u_p(x,kn)={\langle p,n\rangle}$.  
Obviously,
$u_p$ is a unitary representation of $\La=\widehat{\La_0}$. 
Moreover,
we have $\tilde{\beta}_k(u_{k^{-1}pk})=u_p$.

We have $\tilde{\beta_p}=\Ad u_p$.
To see this, we only have to verify 
\[
 \tilde{\beta}_p(x)=x=\Ad u_p(x),\,\, x\in \tM
\]
and 
\[
 \tilde{\beta}_p(v_n)=\overline{\langle p,n \rangle}v_n=u_pv_nu_p^*.
\]
Actually, they are just straightforward calculations.
Note that we have used the canonical
identification 
\[
\widetilde{M\rtimes_\alpha \Lambda_0}=\tM\rtimes_{\tilde{\alpha}}\Lambda_0.
\]
Thus $\beta_p$ is an extended modular automorphism.
Let us compute the 1-cocycle associated with $\beta_p$.
\begin{align*}
\theta_{t}(u_p)(x,km)
&=u_p\left(\mathcal{F}_{-t}^Z(x,km)\right)\\
&=u_p(F_{-t}^Xx, c(-t,x)km) \\
&=u_p(F_{-t}^Xx, c^K(-t,x)kk^{-1}c^{\La_0}(-t,x)km) \\
&={\langle p, k^{-1}c^{\La_0}(-t,x)km\rangle} \\
&=\langle p, k^{-1}c^{\La_0}(-t,x)k\rangle \langle p,m\rangle  \\
&={\langle p, k^{-1}c^{\La_0}(-t,x)k\rangle}u_p(x,km)
\end{align*}

Define a cocycle $d(t,(x,k))\colon Y\rightarrow \Lambda_0$ by 
$d(t,(x,k))=k^{-1}c^{\Lambda_0}(t,x)k$.
For $p\in \Lambda$, define $\hat{d}_p(t)\in Z^1(\mathbb{R},
U(L^\infty(Y)))$ by 
\[
 \hat{d}_p(t)(y)=\langle p, d(-t,y)\rangle.
\]
%
Thus we have
\[
\mathrm{mod}(\beta_k)
=\gamma_k,
\quad
\tilde{\beta}_p=\Ad u_p,
\quad
\theta_t(u_p)=\hat{d}_p(t)u_p,
\]
\[
\tilde{\beta}_k(u_{k^{-1}pk})=u_p,
\quad
u_pu_q=u_{pq}.
\]
This implies that the modular part of $\be$ equals $\Lambda$.
Summarizing the above argument,
we obtain the following result.

\begin{prop}
There exists an outer action $\beta$
of $\Gamma=\widehat{\La_0}\rtimes K$ on an injective factor $P$
such that
$\ker(\mo)=\widehat{\La_0}$, which is equal to the modular part of $\be$,
and
the modular invariant is given by $\hat{d}_p(t)$
and, moreover, the characteristic invariant is trivial.
\end{prop}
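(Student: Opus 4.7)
The plan is to take $P := M \rtimes_\alpha \Lambda_0$ together with $\beta_{pk} := \hat\alpha_p \alpha_k$ as exhibited in the discussion immediately preceding the statement; the proof then reduces to (i) producing the starting data $(M,\alpha)$, (ii) checking that $\beta$ defines an outer action of $\Gamma = \Lambda \rtimes K$ on $P$, and (iii) reading off the four listed conclusions from the formulae already displayed. For step (i), since $c$ is a minimal cocycle, the skew product $(Z,\mathcal{F}^Z)$ is ergodic and hence realised as the flow of weights of some injective factor $M$, and by the existence part of the classification of discrete amenable group actions on injective factors one produces $\alpha\colon \Gamma_0 \to \Aut(M)$ with $\mo(\alpha_g) = \gamma_g$ and trivial characteristic invariant.

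For step (ii) the relation $\alpha_k\hat\alpha_{k^{-1}pk}\alpha_{k^{-1}} = \hat\alpha_p$, verified on the generators $v_n$ in the construction, makes $(p,k)\mapsto \beta_{pk}$ a homomorphism $\Gamma\to\Aut(P)$; outerness of $\beta$ follows from Takesaki duality (which gives outerness of $\hat\alpha$ on $P$) together with the central freeness of $\alpha$ inherited by its canonical extension to $P$, so that no non-trivial composite $\hat\alpha_p\alpha_k$ can be inner. For step (iii), the identity $\tilde\beta_p = \Ad u_p$ simultaneously gives $\mo(\beta_p) = \id$ and $p \in \La(\beta)$, so $\Lambda \subseteq \ker(\mo)\cap \La(\beta)$. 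The computation $\mo(\beta_k) = \gamma_k$, combined with faithfulness of $\gamma|_K$ on $(Y,\mathcal{F}^Y)$ (which follows from minimality of the $K$-part $c^K$ of $c$), then forces $\ker(\mo) = \Lambda$, and since the module of an extended modular automorphism is trivial one also obtains $\La(\beta) = \Lambda$. The modular invariant at $p \in \Lambda$ is the $\theta$-cocycle $u_p^*\theta_t(u_p) = \hat d_p(t)$, read directly from the explicit formula for $\theta_t(u_p)$. Finally, the characteristic invariant $(\lambda,\mu)$ is defined by $\tilde\beta_k(u_{k^{-1}pk}) = \lambda(k,p)u_p$ and $u_pu_q = \mu(p,q)u_{pq}$; the relations $\tilde\beta_k(u_{k^{-1}pk}) = u_p$ and $u_pu_q = u_{pq}$ already exhibited in the construction give $\lambda \equiv 1 \equiv \mu$, i.e.\ triviality of the characteristic invariant.

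The main obstacle is step (i), namely producing $(M,\alpha)$ with the prescribed module $\gamma$ and trivial characteristic invariant; this requires invoking the existence part of the classification of discrete amenable group actions on injective factors with prescribed Connes--Takesaki module. This is standard but non-trivial, and is the one point of the argument that is not a direct transcription of the identities derived just above the statement. Everything else in the conclusion of the proposition is then an immediate consequence of those explicit formulae.
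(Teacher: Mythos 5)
Your proposal is correct and takes essentially the same route as the paper, which simply reads the stated conclusions off the construction developed in the paragraphs immediately preceding the proposition (the action $\beta_{pk}=\hat\alpha_p\alpha_k$ on $P=M\rtimes_\alpha\Lambda_0$ together with the displayed identities for $\tilde\beta_p$, $\mathrm{mod}(\beta_k)$, $\theta_t(u_p)$, and the $u_p$-relations). One small imprecision worth flagging: faithfulness of $\gamma|_K$ on $Y$ is immediate from $\gamma_k$ acting by right translation on the $K$-coordinate of $Y=X\times K$, and does not require minimality of $c^K$; minimality of $c^K$ is instead what ensures $(Y,\mathcal{F}^Y_t)$ is ergodic, so that it is genuinely the flow of weights of $P$.
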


In particular,
if $K$ acts on $\Lambda_0$ non-trivially,
then the dual action $\hat{\beta}$ does not have the Connes--Takesaki module
by Proposition \ref{prop:CT-cent}.

If we put ${M}$ is an injective factor of type III$_0$,
and
$\alpha$ an action of $\Gamma=S_3=\mathbb{Z}_3\rtimes\mathbb{Z}_2$,
or $S_4=(\mathbb{Z}_2\times\mathbb{Z}_2)\rtimes S_3$,
which
faithfully acts on the flow of weights
of $M$ in the above construction,
then by the above procedure, we construct an outer action $\beta$ of $\Gamma$
whose dual action $\hat{\beta}$ of $\hat{\Gamma}$
does not have a Connes--Takesaki module.

\section{Classification of modular kernels}

As is remarked after Corollary \ref{cor:Vps},
any action of compact Kac algebra with normal modular part
consists of compositions of modular maps and automorphisms.
In this section, we study such system of automorphisms called
a modular kernel,
and discuss a classification of modular kernels in this section.

\subsection{Modular kernels}
\label{subsect:modularkernelclass}
Recall the set $\Aut(M)_{\rm m}$
that is the collection of all
extended modular automorphisms on a factor $M$.
In this section,
$\Ga$ denotes a discrete group.

\begin{defn}
Let $M$ be a factor.
Let $\al\col\Ga\ra \Aut(M)$ be a map with $\al_e=\id$.
We will say that
\begin{enumerate}
\item
$\al$ is called a \emph{modular $\Ga$-kernel}
\index{modular kernel}
if $\al_p\al_q\al_{pq}^{-1}\in \Aut(M)_{\rm m}$
for all $p,q\in \Ga$;

\item
$\al$ is said to be \emph{modularly free}
\index{modularly free}
if $\al_p\nin \Aut(M)_{\rm m}$ for all $p\neq e$.
\end{enumerate}
\end{defn}

We introduce two natural perturbations of a modular $\Ga$-kernel
as follows.

\begin{defn}
Let $\al,\be$
be modular $\Ga$-kernels on a factor $M$.
\begin{enumerate}
\item
We will say they are \emph{strongly conjugate}
\index{conjugate!strongly--}
if there exists $\th\in\oInt(M)$
such that $\th\al_p\th^{-1}=\be_p$
for all $p\in\Ga$.

\item
We will say they
are \emph{strongly outer conjugate}
\index{conjugate!strongly outer--}
if there exists $\th\in\oInt(M)$ and a unitary function
$v\col \Ga\ra U(M)$
such that
$\Ad v_p\circ\th\al_p\th^{-1}=\be_p$
for all $p\in\Ga$.
\end{enumerate}
\end{defn}

Let $\al\col\Ga\ra \Aut(M)$ be a modular
$\Ga$-kernel.
Set $\si_{p,q}^\al:=\al_p\al_q\al_{pq}^{-1}\in \Aut(M)_{\rm m}$.
Then we have
\[
\al_p\al_q=\si_{p,q}^\al\al_{pq}\quad \mbox{on }M.
\]
The canonical extension $\wdt{\si_{p,q}^\al}$
is implemented by some unitary
$w_{p,q}^\al\in U(\tM)$,
but a choice of $w_{p,q}^\al$ has ambiguity
in the expression.
When we want to specify $\al$ with $w_{p,q}^\al$,
we use the triple $(\al,\si^\al,w^\al)$.
From the following equality:
\[
\tal_p\tal_q=\Ad w_{p,q}^\al\tal_{pq}\quad \mbox{on }\tM,
\]
we obtain the cohomological invariants
$d_1^\al$ and $d_2^\al$ as follows:
\begin{equation}\label{eq:d1d2}
w_{p,q}^\al w_{pq,r}^\al=d_1^\al(p,q,r)\tal_p(w_{q,r}^\al)w_{p,qr}^\al,
\quad
\th_s(w_{p,q}^\al)=d_2^\al(s;p,q)w_{p,q}^\al.
\end{equation}
The both $d_1^\al(p,q,r)$ and $d_2^\al(s;p,q)$
are unitaries in $Z(\tM)$,
and they are depending on the choice of $w^\al$.
This ambiguity is resolved in a suitable frame work of a cohomology group.
Now we will state the main theorem of this section.

\begin{thm}\label{thm:mod-ker}
Let $\Ga$ be a discrete amenable group
and $M$ a McDuff factor.
Let $(\al,\si^\al,w^\al)$
and $(\be,\si^\be,w^\be)$
be modular $\Ga$-kernels on $M$.
Assume that they satisfy the following conditions:
\begin{itemize}
\item
$\al_p=\be_p\bmod\oInt(M)$ for all $p\in\Ga$;

\item
$\al$ and $\be$ are modularly free;

\item
$d_1^\al(p,q,r)=d_1^\be(p,q,r)$ for all $p,q,r\in\Ga$;

\item
$d_2^\al(s;p,q)=d_2^\be(s;p,q)$
for all $p,q\in\Ga$, $s\in\R$.
\end{itemize}
Then there exist $\th\in\oInt(M)$ and a map
$v\col \Ga\ra U(M)$
such that
\[
\Ad v_p\circ\th\al_p\th^{-1}=\be_p,
\quad
w_{p,q}^\al=v_p\cdot \th\be_p\th^{-1}(v_q)\cdot
\th(w_{p,q}^\be)v_{pq}^*,
\]
for all $p,q\in\Ga$.
In particular,
$\al$ and $\be$ are strongly outer conjugate.
\end{thm}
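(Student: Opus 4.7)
The plan is to follow the Bratteli--Elliott--Evans--Kishimoto (BEEK) intertwining strategy as used in \cite{M}, adapted to the present setting in which $\al$ and $\be$ are $\Ga$-kernels rather than honest actions, and $M$ may be of arbitrary type. The overall shape is: translate the matching of $d_1,d_2$ into an approximate cohomology-vanishing statement on the asymptotic centralizer $M_\om$, use a Rohlin tower there to realize the vanishing, then run an alternating intertwining to produce $\th\in\oInt(M)$ and $v_p\in U(M)$ in the limit.

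First, using the hypothesis $\al_p=\be_p\bmod\oInt(M)$, I would for each finite $F\subs\Ga$, each finite $\Phi\subs M_*$, and each $\vep>0$ produce an approximate intertwining: a $\th\in\oInt(M)$ and unitaries $v_p\in U(M)$, $p\in F$, with $\Ad v_p\circ\th\al_p\th^{-1}$ close to $\be_p$ on $\Phi$ and with the $v_p$'s satisfying the claimed cocycle identity up to $\vep$. The coincidences $d_1^\al=d_1^\be$ and $d_2^\al=d_2^\be$ are used precisely here: they say that the $2$-cochain obstruction $w_{p,q}^\al\th(w_{p,q}^\be)^*$ (in the appropriate $Z(\tM)$-coefficient group cohomology) is a coboundary that is compatible with the flow $\th$, so the intertwining problem is reduced to a $1$-cohomology vanishing for a $(\be,\Ga)$-type cocycle.

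The key technical tool is a Rohlin-type theorem in $M_\om$. Since $M$ is McDuff, $M_\om$ is a finite factor. By Corollary \ref{cor:core-ultra} one has $M_\om\subs\tM_\om$, and any extended modular automorphism, being implemented by a unitary in $\tM$, acts trivially on $M_\om$. Consequently the modular $\Ga$-kernel $\al$ descends to a genuine $\Ga$-action $\al^\om$ on $M_\om$, and the modular freeness of $\al$ forces the outerness of $\al_p^\om$ for $p\ne e$: otherwise $\al_p$ would already be extended modular on $M$. Amenability of $\Ga$ then supplies a $\Ga$-equivariant Rohlin tower in $M_\om$ in the style of Ocneanu \cite{Oc}, which in turn yields the approximate $1$-cohomology vanishing needed to produce the unitaries $v_p$ of the previous step.

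With approximate intertwining in hand, the BEEK argument proceeds as in \cite{M}: one alternately perturbs $\al$ toward $\be$ and $\be$ toward $\al$, building sequences $\th_n\in\oInt(M)$ and $v_p^{(n)}\in U(M)$ converging rapidly enough in the $\|\cdot\|_\vph^\sharp$-topology that $\th:=\lim\th_n\in\oInt(M)$ and $v_p:=\lim v_p^{(n)}$ exist, satisfying both $\Ad v_p\circ\th\al_p\th^{-1}=\be_p$ and the displayed cocycle identity, the latter being the limit of the approximate relations forced by the matching $w^\al\sim w^\be$. The main obstacle is the Rohlin construction at the level of $M_\om$ while keeping compatibility with the flow $\th$ on $Z(\tM)$: in the type III case, the Rohlin projections must be chosen so that their canonical lifts to $\tM_\om$ preserve $d_2^\al=d_2^\be$ in the limit. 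This requires the Ando--Haagerup characterization of $\cM^\om(M)$ via $(\si^\vph,\om)$-equicontinuity in Theorem \ref{thm:AH-equicont}, together with an equivariant Rohlin lemma handling both $\Ga$ and the flow $\th$ simultaneously; this is precisely the point where the McDuff hypothesis on $M$ and the amenability of $\Ga$ both enter crucially.
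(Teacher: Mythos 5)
Your plan coincides in its essentials with the paper's proof: reduce to a cocycle perturbation on $M_\om$, apply an Ocneanu-style near-vanishing lemma, then run the Bratteli--Elliott--Evans--Kishimoto intertwining. Your observation that extended modular automorphisms act trivially on $M_\om$ (so that the modular $\Ga$-kernel descends to a genuine $\Ga$-action on $M_\om$, which by modular freeness and the Connes dichotomy is properly outer) is exactly what makes Lemma~\ref{lem:nearvanish} (from \cite{M}) applicable, and the paper uses it in the same way.

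Where you overstate the difficulty is the role of the flow $\th$. You suggest that the Rohlin construction must be carried out equivariantly with respect to both $\Ga$ and $\th$, and that the Ando--Haagerup characterization of $\cM^\om$ in Theorem~\ref{thm:AH-equicont} is needed to keep the towers compatible with $d_2$. The paper does not do this, and it is not necessary. The hypothesis $d_2^\al=d_2^\be$ is consumed once, at the very start, in the observation that $\chi_{p,q}^{\al,\be}:=w_{p,q}^\al(w_{p,q}^\be)^*$ is fixed by $\th$ and therefore lies in $M$ rather than merely in $\tM$; the hypothesis $d_1^\al=d_1^\be$ together with $\al_{pqr}\be_{pqr}^{-1}\in\oInt(M)$ (hence trivial Connes--Takesaki module) is what makes the cocycle identity for $V_{p,q}:=U_p\be_p^\om(U_q)(\si_{p,q}^\be)^\om(U_{pq}^*)(\chi_{p,q}^{\al,\be})^*$ close. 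After these two purely algebraic uses of the hypotheses, the flow is out of the picture entirely: the perturbation of $(\ga,V)$ to an action via \cite[Lemma 4.3]{MT1}, the Rohlin towers of Lemma~\ref{lem:nearvanish}, and the alternating intertwining all live inside the type II$_1$ factor $M_\om$ with its $\Ga$-action, with no reference to $\th$. So the combined $\Ga\times\R$-equivariant Rohlin lemma you flag as the main obstacle does not arise.
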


Let $\al$ and $\be$ as in the theorem above.
Then we have two maps $w^\al,w^\be\col \Ga\times\Ga\ra U(\tM)$
with
\[
w_{p,q}^\al w_{pq,r}^\al=d_1(p,q,r)\tal_p(w_{q,r}^\al)w_{p,qr}^\al,
\quad
\th_s(w_{p,q}^\al)=d_2(s;p,q)w_{p,q}^\al
\]
and
\[
w_{p,q}^\be w_{pq,r}^\be=d_1(p,q,r)\tbe_p(w_{q,r}^\be)w_{p,qr}^\be,
\quad
\th_s(w_{p,q}^\be)=d_2(s;p,q)w_{p,q}^\be,
\]
where $d_1:=d_1^\al=d_1^\be$ and $d_2:=d_2^\al=d_2^\be$.
Since $\al_p\be_p^{-1}\in\oInt(M)$,
there exists a sequence $(u_p^\nu)_\nu$ in $U(M)$
such that
\[
\al_p=\lim_{\nu\to\infty}\Ad u_p^\nu \be_p
\quad\mbox{for all }p\in\Ga,
\]
where the convergence is taken in the $u$-topology of $\Aut(M)$.
Hence when we consider the unitary $U_p:=\pi_\om((u_p^\nu)_\nu)$
in $M^\om$,
we obtain
$\al_p^\om=\Ad U_p\circ\be_p^\om$ on $M$.
Moreover, the automorphism
$\ga_p:=\Ad U_p\circ\be_p^\om$ on $M^\om$
acts on $M_\om$ globally invariantly.
Let us introduce the unitary
$\chi_{p,q}^{\al,\be}:=w_{p,q}^\al(w_{p,q}^\be)^*$
which connects $\si^\be$ to $\si^\al$ as
$\si_{p,q}^\al=\Ad \chi_{p,q}^{\al,\be}\circ \si_{p,q}^\be$.

\begin{lem}
The unitary $\chi_{p,q}^{\al,\be}$ belongs to $M$.
\end{lem}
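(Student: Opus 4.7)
The strategy is to exploit the characterization $M = \tM^{\th}$ of $M$ as the fixed point algebra of the dual flow on the core. Since $\chi_{p,q}^{\al,\be} := w_{p,q}^\al (w_{p,q}^\be)^*$ is manifestly a unitary in $\tM$, it suffices to verify that $\th_s(\chi_{p,q}^{\al,\be}) = \chi_{p,q}^{\al,\be}$ for every $s \in \R$.

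To carry this out, I would simply apply $\th_s$ to the definition of $\chi_{p,q}^{\al,\be}$ and invoke the two eigenvalue relations from (\ref{eq:d1d2}), which read
\[
\th_s(w_{p,q}^\al) = d_2^\al(s;p,q) w_{p,q}^\al, \qquad \th_s(w_{p,q}^\be) = d_2^\be(s;p,q) w_{p,q}^\be.
\]
By hypothesis, $d_2^\al(s;p,q) = d_2^\be(s;p,q)$, and both are unitaries in $Z(\tM)$. Using the centrality of this common value to move it past $(w_{p,q}^\be)^*$, the two scalar factors cancel, yielding $\th_s(\chi_{p,q}^{\al,\be}) = \chi_{p,q}^{\al,\be}$. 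Hence $\chi_{p,q}^{\al,\be} \in \tM^{\th} = M$.

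There is no real obstacle here; the statement is essentially the raison d'\^{e}tre of the invariant $d_2$: the eigenvalue data of $w_{p,q}^\al$ and $w_{p,q}^\be$ under $\th$ were precisely designed so that two kernels sharing the same $d_2$ produce comparison data living in $M$ rather than merely in $\tM$. The only minor point to flag is the centrality of $d_2(s;p,q)$, which is needed to commute it through $(w_{p,q}^\be)^*$; this is already built into (\ref{eq:d1d2}). (As a sanity check, one may also observe that $\Ad \chi_{p,q}^{\al,\be} \circ \wdt{\si_{p,q}^\be} = \wdt{\si_{p,q}^\al}$ on $\tM$, so that $\Ad \chi_{p,q}^{\al,\be}$ implements $\si_{p,q}^\al (\si_{p,q}^\be)^{-1} \in \Aut(M)_{\rm m}$ upon restriction, consistent with $\chi_{p,q}^{\al,\be} \in M$.)
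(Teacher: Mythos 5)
Your proof is correct and is exactly the argument the paper gives, just spelled out in more detail: the paper's one-line proof also deduces $\th$-invariance of $\chi_{p,q}^{\al,\be}$ from $d_2^\al = d_2^\be$ and concludes via $M = \tM^\th$. The only cosmetic quibble is that you call the $d_2(s;p,q)$ ``scalar factors'' when they are unitaries in $Z(\tM)$, but you already invoke their centrality correctly, so the argument stands.
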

\begin{proof}
Since $d_2^\al=d_2^\be$, $\chi_{p,q}^{\al,\be}$
is fixed by the $\R$-action $\th$.
Thus $\chi_{p,q}^{\al,\be}\in M$.
\end{proof}

\begin{lem}
Set
$V_{p,q}:=U_p\be_p^\om(U_q)(\si_{p,q}^\be)^\om(U_{pq}^*)
(\chi_{p,q}^{\al,\be})^*$
for each $p,q\in\Ga$.
Then the following hold:
\begin{enumerate}
\item
$V_{p,q}$ is an element in $M_\om$;
\item
$(\ga,V)$ is a cocycle action on $M_\om$.
\end{enumerate}
\end{lem}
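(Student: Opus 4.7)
The plan is to unfold the defining relation $\al_p\al_q=\si^\al_{p,q}\al_{pq}$ on $M$ using the factorizations $\al_p=\Ad U_p\circ\be_p^\om|_M$, $\be_p\be_q=\si^\be_{p,q}\be_{pq}$, and $\si^\al_{p,q}=\Ad\chi^{\al,\be}_{p,q}\circ\si^\be_{p,q}$ (the last coming from the preceding lemma, which places $\chi^{\al,\be}_{p,q}$ in $M$ thanks to $d_2^\al=d_2^\be$). Extending to $M^\om$ gives $(\si^\al_{p,q})^\om=\Ad\chi^{\al,\be}_{p,q}\circ(\si^\be_{p,q})^\om$, and direct substitution yields both
\begin{align*}
\ga_p\ga_q
&=\Ad\bigl(U_p\be_p^\om(U_q)\bigr)\circ(\si^\be_{p,q})^\om\circ\be_{pq}^\om,
\\
(\si^\al_{p,q})^\om\circ\ga_{pq}
&=\Ad\bigl(\chi^{\al,\be}_{p,q}(\si^\be_{p,q})^\om(U_{pq})\bigr)\circ(\si^\be_{p,q})^\om\circ\be_{pq}^\om,
\end{align*}
whence $\ga_p\ga_q=\Ad V_{p,q}\circ(\si^\al_{p,q})^\om\circ\ga_{pq}$ as automorphisms of $M^\om$. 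Restricting to $M$ recovers the original relation $\al_p\al_q=\si^\al_{p,q}\al_{pq}$, so $\Ad V_{p,q}|_M=\id$; hence $V_{p,q}\in M'\cap M^\om=M_\om$ by the McDuff assumption, which is (1).

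For (2), I would restrict the identity $\ga_p\ga_q=\Ad V_{p,q}\circ(\si^\al_{p,q})^\om\circ\ga_{pq}$ on $M^\om$ to $M_\om$. The key input is that every $\si\in\Aut(M)_{\rm m}$ acts trivially on $M_\om$: its canonical extension $\tsi=\Ad u$ is inner on $\tM$ for some unitary $u\in\tM$, and every $\om$-central sequence of $\tM$ asymptotically commutes with $u$, so $\tsi|_{\tM_\om}=\id$; combined with $M_\om\subs\tM_\om$ from Corollary \ref{cor:core-ultra}, this yields $(\si^\al_{p,q})^\om|_{M_\om}=\id$. Since $\ga$ preserves $M_\om$ by hypothesis and $V_{p,q}\in M_\om$, we therefore obtain $\ga_p\ga_q|_{M_\om}=\Ad V_{p,q}\circ\ga_{pq}|_{M_\om}$, the implementation half of the cocycle-action relation.

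The 2-cocycle identity $V_{p,q}V_{pq,r}=\ga_p(V_{q,r})V_{p,qr}$ is then forced by associativity $(\ga_p\ga_q)\ga_r=\ga_p(\ga_q\ga_r)$: substituting the implementation relation on each side, and using once more that $(\si^\al_{\cdot,\cdot})^\om$ fixes every element of $M_\om$ (in particular the various $V$'s), produces the identity up to an element of $Z(M_\om)$. This central discrepancy is computed from the defining identities (\ref{eq:d1d2}) for $w^\al$ and $w^\be$, and the hypothesis $d_1^\al=d_1^\be$ makes it equal to $1$. The normalization $V_{e,q}=V_{p,e}=1$ is automatic from $U_e=1$ and $\chi^{\al,\be}_{e,\cdot}=\chi^{\al,\be}_{\cdot,e}=1$.

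The main obstacle is the verification of the 2-cocycle identity on the nose rather than merely modulo $Z(M_\om)$: the automorphism-level associativity is formal, but removing the scalar correction requires the precise matching of the 3-cocycles $d_1^\al$ and $d_1^\be$ attached to the implementing unitaries $w^\al$ and $w^\be$. The conceptual inputs are thus (i) the triviality of $\Aut(M)_{\rm m}$ on $M_\om$ and (ii) the assumption $d_1^\al=d_1^\be$ of the theorem.
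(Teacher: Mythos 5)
Your route is a genuine alternative to the paper's: you prove the automorphism-level identity $\ga_p\ga_q = \Ad V_{p,q}\circ(\si_{p,q}^\al)^\om\circ\ga_{pq}$ on all of $M^\om$, derive (1) by restricting to $M$ and using $\Ad V_{p,q}|_M=\id$, and then derive the implementation relation on $M_\om$ via the triviality of extended modular automorphisms on the asymptotic centralizer. The paper instead proves (1) by a direct asymptotic-centralizing computation in $M_*$, and for the implementation relation first shows that $\si_{p,q}^\be(U_{pq}^*)(\chi_{p,q}^{\al,\be})^*U_{pq}$ lies in $M$ (this uses $d_2^\al=d_2^\be$ and the trivial Connes--Takesaki module of $\be_{pq}\al_{pq}^{-1}$) and then observes that conjugation by an element of $M$ is trivial on $M_\om$. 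Your approach is cleaner, but it has two gaps.

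First, the equality $M'\cap M^\om = M_\om$ is not a consequence of the McDuff property: it is a theorem of Ando--Haagerup valid for any $\sigma$-finite von Neumann algebra. The McDuff hypothesis gives $Z(M_\om)=\C$, which you do use later, but not the relative-commutant description of $M_\om$; these are independent facts and you should cite the correct one.

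Second, and more seriously, you attribute the vanishing of the 2-cocycle discrepancy to $d_1^\al = d_1^\be$ alone. When the computation is carried out explicitly, the cocycle identity for $V$ reduces to
\[
d_1(p,q,r)\cdot\tbe_{pqr}\tal_{pqr}^{-1}(d_1(p,q,r)^*)=1,
\]
where $d_1 := d_1^\al = d_1^\be$ has already been substituted. The crucial additional step, absent from your sketch, is that $\be_{pqr}\al_{pqr}^{-1}\in\oInt(M)$ has trivial Connes--Takesaki module, so its canonical extension fixes $d_1\in Z(\tM)$, whence $d_1 d_1^*=1$. Without invoking the hypothesis $\al_p\equiv\be_p\bmod\oInt(M)$ (in the form of triviality on the flow of weights), the twisted expression does not collapse; $d_1^\al=d_1^\be$ by itself only lets you write a single $d_1$ but does not untwist it. Your list of "conceptual inputs" should therefore include this approximate-innerness step alongside the triviality of $\Aut(M)_{\rm m}$ on $M_\om$ and the equality $d_1^\al=d_1^\be$.
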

\begin{proof}
(1).
Let $\vph\in M_*$.
Then the first statement is verified as follows:
\begin{align*}
\al_p(\al_q(\vph))
&\sim
\Ad u_p^\nu(\be_p(\al_q(\vph)))
\\
&\sim
\Ad u_p^\nu\be_p(u_q^\nu)(\be_p\be_q(\vph))
\\
&\sim
\Ad u_p^\nu\be_p(u_q^\nu)(\si_{p,q}^\be(\be_{pq}(\vph)))
\\
&\sim
\Ad u_p^\nu\be_p(u_q^\nu)\si_{p,q}^\be((u_{pq}^\nu)^*)(\si_{p,q}^\be\al_{pq}(\vph))
\\
&\sim
\Ad
u_p^\nu\be_p(u_q^\nu)\si_{p,q}^\be((u_{pq}^\nu)^*)
(\chi_{p,q}^{\al,\be})^*(\al_{p}(\al_q(\vph))),
\end{align*}
where
for sequences $\chi_\nu^1,\chi_\nu^2$ of normal functionals
on a von Neumann algebra $N$,
the notation $\chi_\nu^1\sim\chi_\nu^2$
means $\dsp\lim_{\nu\to\infty}\|\chi_\nu^1-\chi_\nu^2\|_{N_*}=0$.

(2).
We first show
$\si_{p,q}^\be(U_{pq}^*)(\chi_{p,q}^{\al,\be})^*U_{pq}\in M$.
Indeed, using $\tsi_{p,q}^\be=\Ad w_{p,q}^\be$ on $\tM$,
we have
\[
\si_{p,q}^\be((u_{pq}^\nu)^*)(\chi_{p,q}^{\al,\be})^*
u_{pq}^\nu
=
w_{p,q}^\be (u_{pq}^\nu)^* w_{p,q}^\al u_{pq}^\nu \in M,
\]
which converges to $w_{p,q}^\be
\tbe_{pq}\tal_{pq}^{-1}((w_{p,q}^\al)^*)\in \tM$ in the strong*
topology.
However, the condition that
$\be_{pq}\al_{pq}^{-1}$ has the trivial Connes--Takesaki
module
and $d_2^\al=d_2^\be$
implies $w_{p,q}^\be
\tbe_{pq}\tal_{pq}^{-1}((w_{p,q}^\al)^*)\in M$.
Hence
\begin{equation}\label{eq:siU}
\si_{p,q}^\be(U_{pq}^*)(\chi_{p,q}^{\al,\be})^*U_{pq}
=w_{p,q}^\be \tbe_{pq}\tal_{pq}^{-1}((w_{p,q}^\al)^*)\in M.
\end{equation}

Next we prove $\ga_p\ga_q=\Ad V_{p,q}\circ\ga_{pq}$ on $M_\om$.
Since $\si_{p,q}^\be(U_{pq}^*)(\chi_{p,q}^{\al,\be})^*U_{pq}\in M$
and $\be_{pq}^\om=\be_p^\om\be_q^\om$ on $M_\om$,
we have
\[
\Ad V_{p,q}\circ\ga_{pq}
=
\Ad U_p\be_p^\om(U_q)\circ \be_{p}^\om\be_q^\om
=
\ga_p\ga_q.
\]

Then $V$ satisfies the cocycle identity,
\begin{equation}\label{eq:V}
V_{p,q}V_{pq,r}=\ga_p(V_{q,r})V_{p,qr}.
\end{equation}
Indeed, the left hand side is equal to
\[
U_p\be_p^\om(U_q) (\si_{p,q}^{\be})^\om(U_{pq}^*)(\chi_{p,q}^{\al,\be})^*
\cdot
U_{pq}\be_{pq}^\om(U_r)(\si_{pq,r}^\be)^\om(U_{pqr}^*)(\chi_{pq,r}^{\al,\be})^*,
\]
and the right is equal to
\[
U_p\be_p^\om(U_q\be_q^\om(U_r)\si_{q,r}^\be(U_{qr}^*)
(\chi_{q,r}^{\al,\be})^*)U_p^*
\cdot
U_{p}\be_{p}^\om(U_{qr})(\si_{p,qr}^\be)^\om(U_{pqr}^*)
(\chi_{p,qr}^{\al,\be})^*.
\]
Thus (\ref{eq:V}) holds if and only if
\begin{align*}
&(\si_{p,q}^{\be})^\om(U_{pq}^*)(\chi_{p,q}^{\al,\be})^*
U_{pq}
\cdot
\be_{pq}^\om(U_r)(\si_{pq,r}^\be)^\om(U_{pqr}^*)
(\chi_{pq,r}^{\al,\be})^*
\\
&=
\be_p^\om(\be_q^\om(U_r)\cdot\si_{q,r}^\be(U_{qr}^*)
(\chi_{q,r}^{\al,\be})^*U_{qr})
\cdot
(\si_{p,qr}^\be)^\om(U_{pqr}^*)(\chi_{p,qr}^{\al,\be})^*
\\
\Leftrightarrow
\ &
\be_p^\om(\be_q^\om(U_r^*))\cdot
(\si_{p,q}^{\be})^\om(U_{pq}^*)
(\chi_{p,q}^{\al,\be})^*
U_{pq}
\cdot
\be_{pq}^\om(U_r)
\cdot
(\si_{pq,r}^\be)^\om(U_{pqr}^*)
(\chi_{pq,r}^{\al,\be})^*
U_{pqr}
\\
&=
\be_p^\om(\si_{q,r}^\be(U_{qr}^*)
(\chi_{q,r}^{\al,\be})^*U_{qr})
\cdot
(\si_{p,qr}^\be)^\om(U_{pqr}^*)
(\chi_{p,qr}^{\al,\be})^*U_{pqr}
\\
\Leftrightarrow
\ &
\be_p^\om(\be_q^\om(U_r^*))\cdot
w_{p,q}^\be \be_{pq}\al_{pq}^{-1}((w_{p,q}^\al)^*)
\cdot
\be_{pq}^\om(U_r)
\cdot
w_{pq,r}^\be \be_{pqr}\al_{pqr}^{-1}((w_{pq,r}^\al)^*)
\\
&=
\be_p^\om(
w_{q,r}^\be \be_{qr}\al_{qr}^{-1}((w_{q,r}^\al)^*))
\cdot
w_{p,qr}^\be \be_{pqr}\al_{pqr}^{-1}((w_{p,qr}^\al)^*)
\quad \mbox{by }(\ref{eq:siU})
\\
\Leftrightarrow
\ &
w_{p,q}^\be
\be_{pq}^\om(U_r^*\al_{pq}^{-1}((w_{p,q}^\al)^*)U_r)
\cdot
w_{pq,r}^\be \be_{pqr}\al_{pqr}^{-1}((w_{pq,r}^\al)^*)
\\
&=
\be_p^\om(
w_{q,r}^\be \be_{qr}\al_{qr}^{-1}((w_{q,r}^\al)^*))
\cdot
w_{p,qr}^\be \be_{pqr}\al_{pqr}^{-1}((w_{p,qr}^\al)^*)
\\
\Leftrightarrow
\ &
w_{p,q}^\be
\be_{pq}(\be_{r}\al_r^{-1}\al_{pq}^{-1}((w_{p,q}^\al)^*))
\cdot
w_{pq,r}^\be \be_{pqr}\al_{pqr}^{-1}((w_{pq,r}^\al)^*)
\\
&=
\be_p^\om(
w_{q,r}^\be \be_{qr}\al_{qr}^{-1}((w_{q,r}^\al)^*))
\cdot
w_{p,qr}^\be \be_{pqr}\al_{pqr}^{-1}((w_{p,qr}^\al)^*)
\\
\Leftrightarrow
\ &
w_{p,q}^\be w_{pq,r}^\be 
\be_{pqr}(\al_r^{-1}\al_{pq}^{-1}((w_{p,q}^\al)^*)
\al_{pqr}^{-1}((w_{pq,r}^\al)^*))
\\
&=
\be_p^\om(w_{q,r}^\be)
w_{p,qr}^\be
\be_{pqr}(\al_{qr}^{-1}((w_{q,r}^\al)^*)\al_{pqr}^{-1}((w_{p,qr}^\al)^*))
\\
\Leftrightarrow
\ &
d_1(p,q,r)
\be_{pqr}(\al_r^{-1}\al_{pq}^{-1}((w_{p,q}^\al)^*)
\al_{pqr}^{-1}((w_{pq,r}^\al)^*))
\\
&=
\be_{pqr}(\al_{qr}^{-1}((w_{q,r}^\al)^*)\al_{pqr}^{-1}((w_{p,qr}^\al)^*))
\\
\Leftrightarrow
\ &
d_1(p,q,r)
\be_{pqr}(\al_{pqr}^{-1}((w_{pq,r}^\al)^*(w_{p,q}^\al)^*w_{pq,r}^\al)
\al_{pqr}^{-1}((w_{pq,r}^\al)^*))
\\
&=
\be_{pqr}(\al_{qr}^{-1}\al_p^{-1}(\al_p(w_{q,r}^\al)^*)
\al_{pqr}^{-1}((w_{p,qr}^\al)^*))
\\
\Leftrightarrow
\ &
d_1(p,q,r)
\be_{pqr}(\al_{pqr}^{-1}((w_{pq,r}^\al)^*(w_{p,q}^\al)^*)
\\
&=
\be_{pqr}(\al_{pqr}^{-1}((w_{p,qr}^\al)^*\al_p(w_{q,r}^\al)^*w_{p,qr}^\al)
\al_{pqr}^{-1}((w_{p,qr}^\al)^*))
\\
\Leftrightarrow
\ &
d_1(p,q,r)
\be_{pqr}(\al_{pqr}^{-1}((w_{pq,r}^\al)^*(w_{p,q}^\al)^*)
\\
&=
\be_{pqr}(\al_{pqr}^{-1}((w_{p,qr}^\al)^*\al_p(w_{q,r}^\al)^*)
\\
\Leftrightarrow
\ &
d_1(p,q,r)
\be_{pqr}(\al_{pqr}^{-1}(d_1(p,q,r)^*))=1.
\end{align*}
This is true because $\be_{pqr}\al_{pqr}^{-1}\in\oInt(M)$.
\end{proof}

\begin{rem}
Recall that $M^\om\subs\tM^\om$
(see Corollary \ref{cor:core-ultra}).
Since $\tilde{\alpha}_p=\lim_\nu \Ad u_p^\nu \circ\tbe_p$
holds in $\Aut(\tM)$,
we can regard $U_p=(u_\pi^\nu)\in \tM^\omega$.
Denote $\wdt{\ga}_p=\Ad U_p\circ\tbe_p$.
Let $V'_{p,q}:=U_p\tbe_p(U_q)w^\beta_{p,q}U_{pq}^*$. Since
 $\mathrm{mod}(\alpha_p\beta_p^{-1})=\id $, 
$[U_p,x]=0$ for $x\in Z(\tM)$.
If we note this fact, the following equality:
\[
V'_{p,q}V_{pq,r}'=d_1(p,q,r)\wdt{\ga}_p(V_{q,r}')V_{p,qr}'
\]
can be shown
in the same way as in the argument before Lemma \ref{lem:nearvanish}.
Since
$w_{p,q}^\alpha w_{pq,r}^\alpha
=d_1(p,q,r)\alpha_p(w_{q,r}^\alpha)w_{p,qr}$, 
we can show the 2-cocycle identify for $V_{p,q}$ in a similar way as in
 \cite[Lemma 4.4]{M}.
%
\end{rem}

\begin{lem}\label{lem:perturb}
Let $\al$ and $\be$ be modular $\Ga$-kernels
as in Theorem \ref{thm:mod-ker}.
Then there exists a sequence of unitaries
$(u_p^\nu)_\nu$ in $M$ for each $p\in\Ga$
such that
\begin{itemize}
\item
$\dsp\al_p=\lim_{\nu\to\infty}\Ad u_p^\nu \be_p$ in $\Aut(M)$.

\item
$\dsp\lim_{\nu\to\om}u_p^\nu \be_p(u_q^\nu)\si_{p,q}^\be((u_{pq}^\nu)^*)
(\chi_{p,q}^{\al,\be})^*=1$.
\end{itemize}
\end{lem}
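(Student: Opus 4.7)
The strategy is to realize the second limit as the vanishing of a $2$-cocycle produced by the previous lemma, and then to kill that cocycle by a cohomological perturbation of the initial approximating sequence. Start with any sequence $(u_p^{\nu,0})_\nu\subset U(M)$ with $\al_p=\lim_\nu\Ad u_p^{\nu,0}\,\be_p$ in the $u$-topology of $\Aut(M)$; this exists because $\al_p=\be_p\bmod\oInt(M)$. Setting $U_p^0:=(u_p^{\nu,0})^\om$, $\ga_p^0:=\Ad U_p^0\circ\be_p^\om$, and
\[
V_{p,q}^0:=U_p^0\,\be_p^\om(U_q^0)\,(\si_{p,q}^\be)^\om((U_{pq}^0)^*)\,(\chi_{p,q}^{\al,\be})^*,
\]
the previous lemma shows $V_{p,q}^0\in M_\om$ and makes $(\ga^0,V^0)$ into a cocycle action of $\Ga$ on $M_\om$. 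In these terms, what remains is to perturb $(u_p^{\nu,0})_\nu$ so that the resulting cocycle satisfies $V_{p,q}=1$ in $M_\om$ for every $p,q$.

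To kill $V^0$, observe that $\si_{p,q}^\al$ and $\si_{p,q}^\be$ are extended modular, hence lie in $\oInt(M)$ and act trivially on $M_\om$; in particular $\ga_p^0|_{M_\om}=\al_p^\om|_{M_\om}$ defines an honest action of the discrete amenable group $\Ga$ on the McDuff II$_1$ factor $M_\om$, for which $V^0$ is a unitary $2$-cocycle (its cocycle identity encodes the hypothesis $d_1^\al=d_1^\be$ of Theorem~\ref{thm:mod-ker}, as already exploited in the preceding lemma). Ocneanu-type $2$-cohomology vanishing for actions of discrete amenable groups on McDuff II$_1$ factors then supplies unitaries $w_p\in U(M_\om)$ with $w_e=1$ satisfying
\[
w_p\,\ga_p^0(w_q)\,V_{p,q}^0\,w_{pq}^*=1\qquad(p,q\in\Ga).
\]

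Finally, lift each $w_p$ to a centralizing sequence $(w_p^\nu)_\nu$ in $U(M)$ and define $u_p^\nu:=w_p^\nu u_p^{\nu,0}$. Centrality of $(w_p^\nu)_\nu$ forces $\Ad w_p^\nu\to\id$ in the $u$-topology on $\Aut(M)$, so the first bullet $\al_p=\lim_\nu\Ad u_p^\nu\,\be_p$ is preserved. For the second bullet, the same manipulation as in the proof of the previous lemma, combined with $\si_{p,q}^\be(w_{pq}^*)=w_{pq}^*$, yields
\[
V_{p,q}^{\mathrm{new}}=w_p\,\ga_p^0(w_q)\,V_{p,q}^0\,\si_{p,q}^\be(w_{pq}^*)=w_p\,\ga_p^0(w_q)\,V_{p,q}^0\,w_{pq}^*=1,
\]
as required. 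The main obstacle is the $2$-cohomology vanishing step, which rests on two ingredients: the triviality of extended modular automorphisms on $M_\om$ (reducing the cocycle action to a genuine action with a unitary $2$-cocycle), and Ocneanu's vanishing theorem for amenable group actions on McDuff II$_1$ factors. Once these two inputs are in place, the remaining verification is essentially formal.
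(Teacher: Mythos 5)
Your proposal is correct and follows essentially the same route as the paper: fix an approximating sequence giving the cocycle action $(\ga,V)$ on $M_\om$ from the previous lemma, invoke a $2$-cohomology vanishing theorem for amenable group (cocycle) actions on the McDuff centralizer $M_\om$ to produce unitaries $w_p\in U(M_\om)$ with $w_p\ga_p(w_q)V_{p,q}w_{pq}^*=1$, and then absorb $w_p$ into the approximating sequence, using central triviality of $\si_{p,q}^\be$ (and the fact that $\chi_{p,q}^{\al,\be}\in M$ commutes with $M_\om$) to carry out the final bookkeeping. The paper cites the Kac-algebra version of the vanishing theorem (Lemma 4.3 of \cite{MT1}) where you cite the Ocneanu version, but for a discrete group $\Ga$ these coincide; the step where you claim $\ga^0|_{M_\om}=\al^\om|_{M_\om}$ defines an \emph{honest} action is a side observation not needed for the argument and is slightly overstated (what the preceding lemma supplies, and what the vanishing theorem requires, is simply that $(\ga,V)$ is a cocycle action on $M_\om$), but this does not affect the correctness of your proof.
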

\begin{proof}
Since $\Ga$ is amenable and $M$ is McDuff,
the cocycle action $(\ga,V)$ on $M_\om$ can be perturbed
to an action as proved in \cite[Lemma 4.3]{MT1}.
Thus we have a unitary $a_p\in M_\om$
with $a_p\ga_p(a_q)V_{p,q}a_{pq}^*=1$.
Since $\si_{p,q}^{\al,\be}$ is centrally trivial,
we have
\[
a_pU_p\be_p^\om(a_q U_q)\si_{p,q}^{\be}(U_{pq}^*a_{pq}^*)
(\chi_{p,q}^{\al,\be})^*
=
a_p\ga_p(a_q)V_{p,q}a_{pq}^*
=1.
\]
Thus a unitary representing sequence of
$a_pU_p$ has the desired properties.
\end{proof}

Recall the following result \cite[Lemma 4.9]{M}:
\begin{lem}\label{lem:nearvanish}
Let $\Ga$ be a discrete amenable group.
Assume $F\Subset \Ga $,
$\Psi\Subset M_*$, $\Phi\Subset (M_*)_+$ and
$\varepsilon>0$ are given.  
Let $S$ be an $(F,\varepsilon)$-invariant finite set. 
Let $\{\gamma_g\}_{g\in \Ga} \subset \Aut(M)$ be a set 
such that $\gamma$ induces a free action on $M_\omega$.
Assume that a family of unitaries $\{u_g\}_{g\in \Ga}\subset U(M)$ satisfies 
\[
\|[\psi, u_k]\|<(3|S|)^{-1}\varepsilon,\,\, k\in S,\,\psi\in \Psi,\]
\[
\|\varphi\cdot(u_g\gamma_g(u_h)u_{g h}^*-1)\|<5\varepsilon,
\quad
\|(u_g\gamma_g(u_h)u_{g h}^*-1)\cdot \varphi\|<5\varepsilon,
\quad
\varphi\in \Phi,\, g\in F, \,h\in S.
\]
Then there exists
$w\in U(M)$ such that
\[\|[w,\psi]\|<\varepsilon,\quad \ps\in \Psi,
\]
\[
\|(u_g\gamma_g(w)w^*-1)\cdot \varphi \|<7\sqrt[4]{\varepsilon},\quad
\|\varphi\cdot(u_g\gamma_g(w)w^*-1) \|<7\sqrt[4]{\varepsilon},
\ g\in F,\
\varphi\in \Phi.
\]
\end{lem}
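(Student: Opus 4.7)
The plan is to adapt Ocneanu's Rohlin-tower technique for nearly vanishing an approximate 1-cocycle, which is the standard tool in this setting: the amenability of $\Ga$ and the $(F,\varepsilon)$-invariance of $S$ provide the combinatorial F\o lner estimates, while the central freeness of $\gamma$ on $M_\omega$ supplies the operator-algebraic Rohlin property. The output will be a single unitary $w \in M$ such that $u_g\gamma_g(w)w^*$ is close to $1$ in the $\Phi$-weighted sense.

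First I would produce a Rohlin partition of unity $\{e_h\}_{h\in S}\subset M$: mutually orthogonal projections summing to $1$, approximately central against $\Psi$, $\Phi$ and the finite unitary family $\{u_k\mid k\in F\cup FS\cup S\}$, and satisfying the shift relation $\gamma_g(e_h)\approx e_{gh}$ for $g\in F$ and $h\in S$ with $gh\in S$, with boundary mass $\sum_{h\in S,\,gh\notin S}\varphi(e_h)$ controlled by the $(F,\varepsilon)$-invariance of $S$. Such a tower exists by the standard noncommutative Rohlin theorem for centrally free amenable group actions on a McDuff factor (cf.\ \cite{Oc, KtST, M}). I then set
\[
w:=\sum_{h\in S}e_h u_h,
\]
polarizing if necessary to obtain a genuine unitary. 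Because $\{e_h\}$ is a partition of unity and each $e_h$ approximately commutes with $u_h$, one checks that $w^*w\approx 1\approx ww^*$. The commutator estimate
\[
[w,\psi]=\sum_{h\in S}\bigl([e_h,\psi]u_h+e_h[u_h,\psi]\bigr)
\]
yields $\|[w,\psi]\|<\varepsilon$: the first sum is absorbed into the Rohlin-centralization tolerance, and the second is bounded by $\sum_{h\in S}\|[\psi,u_h]\|<|S|\cdot(3|S|)^{-1}\varepsilon=\varepsilon/3$, which is exactly why the hypothesis carries the factor $(3|S|)^{-1}$.

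The main calculation expands
\[
u_g\gamma_g(w)w^*=\sum_{h,k\in S}u_g\gamma_g(e_h)\gamma_g(u_h)u_k^*e_k,
\]
replaces $\gamma_g(e_h)$ by $e_{gh}$ (Rohlin shift), slides $e_{gh}$ past $u_g$ (Rohlin centralization against $u_g$), invokes the cocycle near-identity $u_g\gamma_g(u_h)\approx u_{gh}$, and collapses the double sum via orthogonality $e_{gh}e_k=\delta_{gh,k}e_{gh}$. The main term reduces to $\sum_{h'\in gS\cap S}e_{h'}$, which equals $1$ up to a projection of small $\varphi$-mass by $(F,\varepsilon)$-invariance.

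The hardest part is budgeting the errors. The cocycle-defect hypothesis is given in the $L^1$-type form $\|\varphi\cdot(u_g\gamma_g(u_h)u_{gh}^*-1)\|<5\varepsilon$, not pointwise, and $|S|^2$ cross-terms appear in the expansion; a naive summation of $|S|^2$ terms would destroy the bound. The standard workaround is to pass through a Powers--St\o rmer-type inequality $\|\varphi(v-1)\|^2\leq 2\|\varphi\|\,\varphi((v-1)(v-1)^*)$ twice, first converting the $L^1$-bound of order $\varepsilon$ into an $L^2$-bound of order $\sqrt{\varepsilon}$ in which the orthogonality of the $e_h$ can absorb the cross terms cleanly, and then unwinding back to a norm bound on $\varphi\cdot(u_g\gamma_g(w)w^*-1)$. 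This double Cauchy--Schwarz explains both the fourth-root rate $\sqrt[4]{\varepsilon}$ and the leading constant $7$. Calibrating the three tolerances (Rohlin, boundary, cocycle) simultaneously to meet this budget with the stated constants is the principal bookkeeping task, closely modelled on the proof in \cite{M}.
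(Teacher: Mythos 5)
The paper does not prove this lemma itself: it is stated as a recall of \cite[Lemma 4.9]{M}, and the present paper takes it as given input. So there is no internal proof to compare against; what I can do is assess whether your outline is a plausible reconstruction of the argument in \cite{M}.

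Your overall architecture is the right one, and is in fact the standard one for this kind of ``near-vanishing of approximate 1-cohomology'' statement: produce a Rohlin tower $\{e_h\}_{h\in S}$ indexed by the F\o lner set, form the Shapiro unitary $w=\sum_{h\in S}e_hu_h$, expand $u_g\gamma_g(w)w^*$ using the shift relation $\gamma_g(e_h)\approx e_{gh}$ and the orthogonality of the tower, and control the $L^1$ versus $L^2$ mismatch via Cauchy--Schwarz. The commutator budget $\|[w,\psi]\|<\varepsilon$ with the $(3|S|)^{-1}$ tolerance on each $[u_k,\psi]$ is correctly accounted for. Two points deserve sharper attention, though.

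First, the inequality you quote as ``Powers--St\o rmer type'' is written in the unhelpful direction. You are given an $L^1$-bound $\|\varphi\cdot(v-1)\|<5\varepsilon$ on the cocycle defect and want an $L^2$-bound $\varphi(|v-1|^2)\lesssim\varepsilon$. That conversion does not come from a Cauchy--Schwarz step (which only gives $L^1\le L^2$); it comes from the algebraic identity for a unitary $v$, namely $\varphi(|v-1|^2)=\varphi(2-v-v^*)=-2\,\mathrm{Re}\,\varphi(v-1)\le 2\|\varphi\cdot(v-1)\|$. Cauchy--Schwarz is then used at the very end in the opposite direction to pass from the $L^2$-estimate on $u_g\gamma_g(w)w^*-1$ back to the $\|\cdot\varphi\|$ conclusion. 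Both conversions together lose roughly one square root, so on the face of it one would expect a $\sqrt{\varepsilon}$ output; the remaining degradation to $\sqrt[4]{\varepsilon}$ comes from the additional $L^1\!\to\! L^2$ interchange needed to push the Rohlin projections through the various unitaries in the absence of a trace. Your sketch attributes the exponent entirely to a ``double Cauchy--Schwarz,'' which is directionally misstated.

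Second, and more substantively, ``approximately central against $\Psi,\Phi$ and the $u_k$'' is not by itself enough in the non-tracial setting. The estimate $\varphi(e_h|v_h-1|^2)\approx\tau_\omega(e_h)\,\varphi(|v_h-1|^2)$, which is what actually lets the sum over the $|S|$ shells collapse without an extra $|S|$ factor, requires a $\tau^\omega$-splitting property (equivalently, the Fast Reindexation construction used elsewhere in this paper and in \cite{MT1}) of the Rohlin projections with respect to the finite data coming from $\{u_h\}$, $\Phi$, $\gamma_F$. Mere approximate commutativity does not give this. This independence requirement is exactly where the absence of a trace makes the argument delicate and is the mechanism that forces the additional loss in exponent. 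Your proposal names the right reference \cite{M} for the bookkeeping, but the precise Rohlin-tower specification should be stated, since without the splitting property the cross-term accounting breaks. With those two adjustments your outline matches what one expects \cite[Lemma 4.9]{M} to do.
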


Together with the two lemmas above,
we can make use of the Bratteli--Elliott--Evans--Kishimoto
intertwining argument as \cite[Theorem 7.1]{MT1},
and then Theorem \ref{thm:mod-ker} will be proved.

Let $\be\col \Ga\ra\Aut(M)$ be a modular $\Ga$-kernel
with $\si_{p,q}^\be\in \Aut(M)_{\rm m}$ as before.
Take a unitary $w_{p,q}^\be\in \tM$ implementing
the canonical extension of $\si_{p,q}^\be$.
When we consider a unitary perturbation of
the pair $(\be,w^\be)$ by $u\in U(M)$,
we always take the following $w^{\Ad u\circ\be}$:
\[
w_{p,q}^{\Ad u\circ\be}:=u_p\be_p(u_q)w_{p,q}^\be u_{pq}^*.
\]
Then the $d_1$ and $d_2$-invariants of
$(\Ad u\circ\be,w_{p,q}^{\Ad u\circ\be})$ are equal to those of
$(\be,w^\be)$.
Indeed, we have
\begin{align*}
w_{p,q}^{\Ad u\circ\be}w_{pq,r}^{\Ad u\circ\be}
&=
u_p\be_p(u_q)w_{p,q}^\be u_{pq}^*
\cdot
u_{pq}\be_{pq}(u_r)w_{pq,r}^\be u_{pqr}^*
\\
&=
u_p\be_p(u_q)w_{p,q}^\be\be_{pq}(u_r)w_{pq,r}^\be u_{pqr}^*
\\
&=
u_p\be_p(u_q)\be_{p}(\be_q(u_r))w_{p,q}^\be w_{pq,r}^\be u_{pqr}^*
\\
&=
u_p\be_p(u_q\be_q(u_r)w_{q,r}^\be u_{qr}^*\cdot u_{qr}(w_{q,r}^\be)^*)
w_{p,q}^\be w_{pq,r}^\be u_{pqr}^*
\\
&=
u_p\be_p(w_{q,r}^{\Ad u\circ\be}\cdot u_{qr}(w_{q,r}^\be)^*)
w_{p,q}^\be w_{pq,r}^\be u_{pqr}^*
\\
&=
\Ad u_p\circ\be_p(w_{q,r}^{\Ad u\circ\be})u_p
\be_p(u_{qr})\cdot
\be_p(w_{q,r}^\be)^*
w_{p,q}^\be w_{pq,r}^\be u_{pqr}^*
\\
&=
\Ad u_p\circ\be_p(w_{q,r}^{\Ad u\circ\be})u_p
\be_p(u_{qr})
\cdot
d_1^\be(p,q,r)w_{p,qr}^\be u_{pqr}^*
\\
&=
d_1^\be(p,q,r)\Ad u_p\circ\be_p(w_{q,r}^{\Ad u\circ \be})
w_{p,qr}^{\Ad u \circ\be},
\\
\th_s(w_{p,q}^{\Ad u\circ\be})
&=
u_p\be_p(u_q)\th_s(w_{p,q}^\be)u_{pq}^*
=
d_2(s;p,q)w_{p,q}^{\Ad u\circ\be}.
\end{align*}

\vspace{10pt}
\noindent{\it Proof of Theorem \ref{thm:mod-ker}.}
Let $n\in\N$. Set $\vep_n:=4^{-n}$,
$(\ga^{(0)},\si^{(0)},w^0):=(\al,\si^{\al},w^\al)$,
$(\ga^{(-1)}, \si^{(-1)},w^{-1}):=(\be,\si^{\be},w^{\be})$
and $\chi^1:=w^{0}(w^{-1})^*=\chi^{\al,\be}$.

Take a finite subset $\Ps_n\subs M_*$ such that
$\emptyset=:\Ps_0\subs \Ps_1\subs
\Ps_2\subs\cdots$ and $\bigcup_{n\geq1}\Ps_n$ is total in $M_*$.
We may and do assume that $\Ps_n^*=\Ps_n$.
Fix a finite subset $F_n \subs \Ga$    
and an $(F_n, \varepsilon_n)$-invariant finite subset
$S_n\subset \Ga$ such that
$F_0=\emptyset$, $e\in F_1$, $F_n\subset F_{n+1}$, 
$\bigcup_{n\geq1} F_n=\Ga$,
$F_n\subset S_n$
and $F_nS_n\subset F_{n+1}$.
Let $\vph_0\in M_*$ be a faithful state.

For each integer $n\geq1$,
we will inductively construct the following ones:
\begin{enumerate}
\renewcommand{\labelenumi}{($n$.\arabic{enumi})}
\item
$\La_n,\Ps_n',\Ph_n'\subs M_*$, finite subsets,
and $\La_n$ is increasing,
\item
$\cu^n\col \Ga\ra U(M)$,
\item
$\ga^{(n)}\col\Ga\ra \Aut(M)$,
\item
$\si^{(n)}\col \Ga^2\ra \Aut(M)_{\rm m}$,
\item
$w^n\col \Ga^2\ra U(\tM)$,
\item
$\chi^n\col\Ga^2\ra U(M)$,
\item
$\mu^n\in U(M)$,
\item
$\de_n>0$,
\end{enumerate}
which satisfy the following recursive formulae:
\begin{enumerate}
\addtocounter{enumi}{+8}
\renewcommand{\labelenumi}{($n$.\arabic{enumi})}
\item
$\dsp
\Ps_n':=\Ps_{n}\cup \Ad (\mu_{n-1}\mu_{n-3}\cdots)(\Ps_{n})
\cup\bigcup_{p\in F_n}\{\cu_p^{n-1}\cdot\vph_0,\vph_0\cdot \cu_p^{n-1}\}$.
\item
$\dsp\Ph_n:=\bigcup_{p\in F_{n+1}}
(\ga_p^{(n-2)})^{-1}(\Ps_n'\cup \La_{n})
\cup
(\ga_p^{(n-1)})^{-1}(\Ps_n'\cup \La_{n})$.

\item
$\ga_{p}^{(n)}:=\Ad \cu_p^n \ga_p^{(n-2)}$,
\item
$\si_{p,q}^{(n)}:=\Ad \cu_p^n\ga_p^{(n-2)}(\cu_q^n)\circ\si_{p,q}^{(n-2)}\circ\Ad (\cu_{pq}^n)^*$,
\item
$w_{p,q}^n:=\cu_p^n\ga_p^{(n)}(\cu_q^n)w_{p,q}^{n-2}(\cu_{pq}^n)^*$,
\item
$\chi_{p,q}^n:= w_{p,q}^{n-1}(w_{p,q}^{n-2})^*$,
\end{enumerate}
and the conditions:
\begin{enumerate}
\addtocounter{enumi}{+14}
\renewcommand{\labelenumi}{($n$.\arabic{enumi})}
\item
If $u\in U(M)$ satisfies
$\|[u,\phi]\|<\de_n$
for all $\ph\in \La_n$,
then
\[
\|(\si_{p,q}^{(n-1)}(u)-u)\cdot\ps\|<\vep_n,\ 
\|\ps\cdot(\si_{p,q}^{(n-1)}(u)-u)\|<\vep_n
\]
for all $(p,q)\in F_n\times S_n, \ps\in\Ps_n'$;

\item
$\dsp\|\ga_p^{(n)}(\ph)-\ga_p^{(n-1)}(\ph)\|
<\frac{\vep_n\de_{n}}{6|S_n|},
\ p\in F_{n+1},
\dsp\ph\in\Ph_n$;\\

\item
$\|(\cu_p^n \ga_p^{(n-2)}(\cu_q^n)
\si_{p,q}^{(n-2)}(\cu_{pq}^n)^*
(\chi_{p,q}^n)^*-1)\cdot\ps\|<\vep_n,\\
\|\ps\cdot(\cu_p^n \ga_p^{(n-2)}(\cu_q^n)
\si_{p,q}^{(n-2)}(\cu_{pq}^n)^*(\chi_{p,q}^n)^*-1)\|<\vep_n,
(p,q)\in F_n\times S_n, \ps\in \Ps_n'$;

\item
$\|(\chi_{p,q}^n-1)\ps\|<\vep_{n-1}$,\\
$\|\ps(\chi_{p,q}^n-1)\|<\vep_{n-1}$,
$(p,q)\in F_{n-1}\times S_{n-1}$, $\ps\in \Ps_{n-1}'$;

\item
$\|(\cu_p^n\ga_p^{(n-2)}(\mu^n)(\mu^n)^*-1)\cdot\ps\|
<7\vep_{n-1}^{1/4}$;
\\
$\|\ps\cdot(\cu_p^n\ga_p^{(n-2)}(\mu^n)(\mu^n)^*-1)\|
<7\vep_{n-1}^{1/4}$,
$\ps\in\Ps_{n-1}'$, $p\in F_{n-1}$;

\item
$\|[\mu^n,\ps]\|<\vep_{n-1}$, $\ps\in \Ps_{n-1}'$.
\end{enumerate}
Hence the only $\de_n$, $\La_n$, $\cu^n$ and $\mu^n$ are indeterminates,
and we will find them in each step
by using central triviality of $\si^{(n-1)}$,
Lemma \ref{lem:perturb} and Lemma \ref{lem:nearvanish}.
The initial data are
\[
(\ga^{(-1)},\si^{(-1)},w^{-1})=(\be,\si^\be,w^\be),
\ (\ga^{(0)},\si^{(0)},w^{0})=(\al,\si^\al,w^\al),
\]
\[
\de_0=1,\ \Ph_0=\Ps_0=\La_0=\emptyset.
\]
\noindent
{\bf Step 1.}
Since $\si_{p,q}^{(0)}$ is an extended modular automorphism,
it is centrally trivial \cite[Theorem 1]{KST}.
Hence we can take $\de_1$ and $\La_1$ as in (1.15).
By Lemma \ref{lem:perturb},
we can take $\cu_p^1\in U(M)$ for $p\in\Ga$
such that
\begin{itemize}
\item
$\dsp\|\Ad \cu_p^1\circ\ga_p^{(-1)}(\ph)-\ga_p^{(0)}(\ph)\|
<\frac{\vep_1\de_1}{6|S_1|}$ for $p\in F_2$,
$\ph\in \Ph_1$,

\item
$\|(\cu_p^1 \ga_p^{(-1)}(\cu_q^1)
\si_{p,q}^{(-1)}(\cu_{pq}^1)^*
(\chi_{p,q}^1)^*-1)\cdot\ps\|<\vep_1,\\
\|\ps\cdot((\cu_p^1 \ga_p^{(-1)}(\cu_q^1))
\si_{p,q}^{(-1)}(\cu_{pq}^1)^*(\chi_{p,q}^1)^*-1)\|<\vep_1$,
$(p,q)\in F_1\times S_1, \ps\in \Ps_1'$.
\end{itemize}
We set $\ga^{(1)}$, $\si^{(1)}$, $w^1$
as (1.11), (1.12) and (1.13).
Then $\ga^{(1)}$ is a modular $\Ga$-kernel on $M$
with
$\ga_p^{(1)}\ga_q^{(1)}=\si_{p,q}^{(1)}\ga_{pq}^{(1)}$
and $\wdt{\si_{p,q}^{(1)}}=\Ad w_{p,q}^1$.
Note that $\chi^1$ is just equal to $\chi^{\al,\be}$.
The conditions (1.16) and (1.17) hold,
and the others (1.18), (1.19) and (1.20) are empty,
but we set $\mu^1:=1$.

\noindent
{\bf Step 2.}
Using central triviality of $\si^1$,
we take $\de_2>0$ and $\La_2\subs M_*$ as in (2.15) and $\La_1\subs \La_2$.
Set $\chi^2$ as (2.14).
We apply Lemma \ref{lem:perturb} to $\ga^{(0)}$ and $\ga^{(1)}$,
and obtain $\cu^2$ such that
\begin{itemize}
\item
$\dsp\|\Ad \cu_p^2\circ\ga_p^{(0)}(\ps)-\ga_p^{(1)}(\ps)\|
<\frac{\vep_2\de_2}{6|S_2|}$
for $p\in F_3$, $\dsp \ps\in\Ph_2$,

\item
$\|((\cu_p^2 \ga_p^{(0)}(\cu_q^2))
\si_{p,q}^{(0)}(\cu_{pq}^2)^*
(\chi_{p,q}^2)^*-1)\cdot\ps\|<\vep_2,\\
\|\ps\cdot((\cu_p^2 \ga_p^{(0)}(\cu_q^2))
\si_{p,q}^{(0)}(\cu_{pq}^2)^*(\chi_{p,q}^2)^*-1)\|<\vep_2$,
$(p,q)\in F_2\times S_2, \ps\in \Ps_2'$.
\end{itemize}
Let $\ga^{(2)}$, $\si^{(2)}$ and $w^n$ as in (2.11), (2.12)
and (2.13).
Then (2.16) and (2.17) hold.
By definition of $\chi^2$,
we have
$\chi_{p,q}^2=
\cu_p^1\ga_p^{(-1)}(\cu_q^1)
\si_{p,q}^{(-1)}((\cu_{pq}^1)^*)(\chi_{p,q}^1)^*$.
Then (1.17) implies (2.18).
By (1.16) and (2.16),
we have
\[
\|\Ad \cu_{p}^2\circ\ga_{p}^{(0)}(\ps)-\ga_{p}^{(0)}(\ps)\|
<\frac{\vep_1\de_1}{3|S_1|},\ p\in F_2,
\ps\in
\bigcup_{r\in F_2}(\ga_r^{(0)})^{-1}(\Ps_1\cup\La_1),
\]
and in particular,
\[
\|[\cu_{p}^2,\ph]\|
<\frac{\vep_1\de_1}{3|S_1|}<\de_1,
\ p\in F_2,
\ph\in \Ps_1\cup \La_1.
\]
From the inequality above and the assumption $F_1S_1\subs F_2$,
it turns out the family $\{\cu_{pq}^2\}_{p\in F_1, q\in S_1}$
satisfies the assumption of (1.15),
and we obtain
\[
\|(\si_{p,q}^{(0)}(\cu_{pq}^2)-\cu_{pq}^2)\cdot\ps\|<\vep_1,
\ 
\|\ps\cdot(\si_{p,q}^{(0)}(\cu_{pq}^2)-\cu_{pq}^2)\|<\vep_1,
\ (p,q)\in F_1\times S_1, \ps\in\Ps_1'.
\]
Thus for $(p,q)\in F_1\times S_1$ and $\ph\in \Ps_1'$, we have
\begin{align*}
&\|
\ph
\cdot
(\cu_p^2 \ga_p^{(0)}(\cu_q^2)
(\cu_{pq}^2)^*-1)
\|
=\|
\ph
\cdot
\cu_p^2 \ga_p^{(0)}(\cu_q^2)
-
\ph\cdot\cu_{pq}^2
\|
\\
&\leq
\|
\ph
\cdot
\cu_p^2 \ga_p^{(0)}(\cu_q^2)
-
\ph\cdot\si_{p,q}^{(0)}(\cu_{pq}^2)
\|
+
\|
\ph\cdot
(\si_{p,q}^{(0)}(\cu_{pq}^2)
-
\cu_{pq}^2)
\|
\\
&<
\|
\ph
\cdot
\cu_p^2 \ga_p^{(0)}(\cu_q^2)
\si_{p,q}^{(0)}(\cu_{pq}^2)^*
-
\ph
\|
+
\vep_1
\\
&\leq
\|
\ph
\cdot
\cu_p^2 \ga_p^{(0)}(\cu_q^2)
\si_{p,q}^{(0)}(\cu_{pq}^2)^*
(\chi_{p,q}^2)^*
-
\ph
\|
+
\|\ph\cdot (\chi_{p,q}^2)^*-\ph\|
+
\vep_1
\\
&<
\vep_2+2\vep_1
<3\vep_1,
\end{align*}
and
\begin{align*}
&\|
(\cu_p^2 \ga_p^{(0)}(\cu_q^2)
(\cu_{pq}^2)^*-1)
\cdot\ph
\|
\\
&\leq
\|
(\cu_p^2 \ga_p^{(0)}(\cu_q^2)
\cdot
((\cu_{pq}^2)^*-\si_{p,q}^{(0)}(\cu_{pq}^2)^*)
\cdot\ph
\|
+
\|
\cu_p^2 \ga_p^{(0)}(\cu_q^2)\si_{p,q}^{(0)}(\cu_{pq}^2)^*
\cdot\ph
-\ph
\|
\\
&\leq
\|
((\cu_{pq}^2)^*-\si_{p,q}^{(0)}(\cu_{pq}^2)^*)
\cdot\ph
\|
+
\|
\cu_p^2 \ga_p^{(0)}(\cu_q^2)\si_{p,q}^{(0)}(\cu_{pq}^2)^*
\cdot
(1-(\chi_{p,q}^2)^*)
\cdot\ph
\|
\\
&+
\|
\cu_p^2 \ga_p^{(0)}(\cu_q^2)\si_{p,q}^{(0)}(\cu_{pq}^2)^*
(\chi_{p,q}^2)^*
\cdot\ph
-\ph
\|
\\
&<
\vep_1+\|(1-(\chi_{p,q}^2)^*)\cdot\ph\|+\vep_2
<2\vep_1+\vep_2<3\vep_1.
\end{align*}
Then by Lemma \ref{lem:nearvanish},
there exists $\mu^2\in U(M)$ with
(2.19) and (2.20).

\noindent{\bf Step {\boldmath$n$}.}
We suppose that the construction up to $(n-1)$ have been done.
Using central triviality of $\si^{(n-1)}$,
we take $\de_n>0$ and $\La_n\subs M_*$ as in ($n$.15)
and $\La_{n-1}\subs \La_n$.
Set $\chi^n$ as ($n$.14).
We apply Lemma \ref{lem:perturb} to $\ga^{(n-2)}$ and $\ga^{(n-1)}$,
and obtain $\cu^n$ such that
\begin{itemize}
\item
$\dsp\|\Ad \cu_p^n\circ\ga_p^{(n-2)}(\ps)-\ga_p^{(n-1)}(\ps)\|
<\frac{\vep_n\de_n}{6|S_n|}$
for $p\in F_{n+1}$, $\dsp \ps\in\Ph_n$,

\item
$\|((\cu_p^n \ga_p^{(n-2)}(\cu_q^n))
\si_{p,q}^{(n-2)}(\cu_{pq}^n)^*
(\chi_{p,q}^n)^*-1)\cdot\ps\|<\vep_n,
\\
\|\ps\cdot((\cu_p^n \ga_p^{(n-2)}(\cu_q^n))
\si_{p,q}^{(n-2)}(\cu_{pq}^n)^*(\chi_{p,q}^n)^*-1)\|<\vep_n$,
\\
$(p,q)\in F_n\times S_n, \ps\in \Ps_n'$.
\end{itemize}
Let $\ga^{(n)}$, $\si^{(n)}$ and $w^n$ as in ($n$.11), ($n$.12)
and ($n$.13).
Then ($n$.16) and ($n$.17) hold.
By definition of $\chi^n$,
we have
$\chi_{p,q}^n=
\cu_p^{n-1}\ga_p^{(n-3)}(\cu_q^1)
\si_{p,q}^{(n-3)}((\cu_{pq}^{n-3})^*)(\chi_{p,q}^{n-1})^*$.
Then ($(n-1)$.17) implies ($n$.18).
By ($(n-1)$.16) and ($n$.16),
we have
\[
\|\Ad \cu_{p}^n\circ\ga_{p}^{(n-2)}(\ps)-\ga_{p}^{(n-2)}(\ps)\|
<\frac{\vep_{n-1}\de_{n-1}}{3|S_{n-1}|}
\]
for
\[
p\in F_n,
\quad
\ps\in
\bigcup_{r\in F_{n}}(\ga_r^{(n-2)})^{-1}(\Ps_{n-1}'\cup\La_{n-1}),
\]
and in particular,
\[
\|[\cu_{p}^n,\ps]\|
<\frac{\vep_{n-1}\de_{n-1}}{3|S_{n-1}|}<\de_{n-1},
p\in F_n,
\ps\in \Ps_{n-1}'\cup \La_{n-1}.
\]
From the inequality above and the assumption $F_{n-1}S_{n-1}\subs F_n$,
it turns out the family $\{\cu_{pq}^2\}_{p\in F_{n-1}, q\in S_{n-1}}$
satisfies the assumption of ($(n-1)$.15),
and we obtain
\[
\|(\si_{p,q}^{(n-2)}(\cu_{pq}^n)-\cu_{pq}^n)\cdot\ps\|<\vep_{n-1},
\ 
\|\ps\cdot(\si_{p,q}^{(n-2)}(\cu_{pq}^n)-\cu_{pq}^n)\|<\vep_{n-1}
\]
for all $(p,q)\in F_{n-1}\times S_{n-1}, \ps\in\Ps_{n-1}'$.
Thus, for $(p,q)\in F_{n-1}\times S_{n-1}$ and $\ps\in \Ps_{n-1}'$,
we have
\begin{align*}
&\|
\ps
\cdot
(\cu_p^n \ga_p^{(n-2)}(\cu_q^n)
(\cu_{pq}^n)^*-1)
\|
=\|
\ps
\cdot
\cu_p^n \ga_p^{(n-2)}(\cu_q^n)
-
\ps\cdot\cu_{pq}^n
\|
\\
&\leq
\|
\ps
\cdot
\cu_p^n \ga_p^{(n-2)}(\cu_q^n)
-
\ps\cdot\si_{p,q}^{(n-2)}(\cu_{pq}^n)
\|
+
\|
\ps\cdot
(\si_{p,q}^{(n-2)}(\cu_{pq}^n)
-
\cu_{pq}^n)
\|
\\
&<
\|
\ps
\cdot
\cu_p^n \ga_p^{(n-2)}(\cu_q^n)
\si_{p,q}^{(n-2)}(\cu_{pq}^n)^*
-
\ps
\|
+
\vep_{n-1}
\\
&\leq
\|
\ps
\cdot
\cu_p^n \ga_p^{(n-2)}(\cu_q^n)
\si_{p,q}^{(n-2)}(\cu_{pq}^n)^*
(\chi_{p,q}^n)^*
-
\ps
\|
+
\|\ps\cdot (\chi_{p,q}^n)^*-\ps\|
+
\vep_{n-1}
\\
&<
\vep_n+2\vep_{n-1}
<3\vep_{n-1},
\end{align*}
and
\begin{align*}
&\|
(\cu_p^n \ga_p^{(n-2)}(\cu_q^n)
(\cu_{pq}^n)^*-1)
\cdot\ps
\|
\\
&\leq
\|
(\cu_p^n \ga_p^{(n-2)}(\cu_q^n)
\cdot
((\cu_{pq}^n)^*-\si_{p,q}^{(n-2)}(\cu_{pq}^n)^*)
\cdot\ps
\|
\\
&\quad
+
\|
\cu_p^n \ga_p^{(n-2)}(\cu_q^n)\si_{p,q}^{(n-2)}(\cu_{pq}^n)^*
\cdot\ps
-\ps
\|
\\
&\leq
\|
((\cu_{pq}^n)^*-\si_{p,q}^{(n-2)}(\cu_{pq}^n)^*)
\cdot\ps
\|
+
\|
\cu_p^n \ga_p^{(n-2)}(\cu_q^n)\si_{p,q}^{(n-2)}(\cu_{pq}^n)^*
\cdot
(1-(\chi_{p,q}^n)^*)
\cdot\ps
\|
\\
&+
\|
\cu_p^n \ga_p^{(n-2)}(\cu_q^n)\si_{p,q}^{(n-2)}(\cu_{pq}^n)^*
(\chi_{p,q}^n)^*
\cdot\ps
-\ps
\|
\\
&<
\vep_{n-1}+\|(1-(\chi_{p,q}^n)^*)\cdot\ps\|+\vep_n
<2\vep_{n-1}+\vep_n<3\vep_{n-1}.
\end{align*}
Then by Lemma \ref{lem:nearvanish},
there exists $\mu^n\in U(M)$ satisfying
($n$.19) and ($n$.20),
and the induction is completed.

We set
$\bu_p^n:=\cu_p^n\ga_p^{(n-2)}(\mu_n)\mu_n^*$
and
$u_p^n:=\bu_p^n\mu_n u_p^{n-2}\mu_n^*$ with $\mu_1=u_p^{-1}=u_p^0=1$.
Then
\[
\ga_p^{(1)}
=
\Ad u_p^1
\circ\Ad \mu_1\circ \be_p \circ\Ad\mu_1^*,
\]
\[
\ga_p^{(2)}
=
\Ad u_p^2
\circ\Ad\mu_2\circ \al_p\circ \Ad\mu_2^*.
\]
We put $\bar{\mu}_{2n}:=\mu_{2n}\mu_{2n-2}\cdots\mu_0$
and $\bar{\mu}_{2n+1}:=\mu_{2n+1}\mu_{2n-1}\cdots\mu_{-1}$.
By induction, we obtain
\begin{align*}
\ga_p^{(2n+1)}
&=
\Ad\cu_p^{2n+1}\circ\ga_p^{(2n-1)}
=
\Ad\bu_p^{2n+1} \circ \Ad\mu_{2n+1}
\circ \ga_p^{(2n-1)}\circ\Ad\mu_{2n+1}^*
\\
&=
\Ad\bu_p^{2n+1} \mu_{2n+1}u_p^{2n-1}\mu_{2n+1}^*
\circ
\Ad\bar{\mu}_{2n+1}
\circ
\be_p\circ
\Ad\bar{\mu}_{2n+1}^*
\\
&=
\Ad u_p^{2n+1}
\circ
\Ad\bar{\mu}_{2n+1}
\circ
\be_p\circ
\Ad\bar{\mu}_{2n+1}^*.
\end{align*}
and
\begin{align*}
\ga_p^{(2n)}
&=
\Ad\cu_p^{2n}\circ\ga_p^{(2n-2)}
=
\Ad\bu_p^{2n} \circ \Ad\mu_{2n}\circ \ga_p^{(2n-2)}\circ\Ad\mu_{2n}^*
\\
&=
\Ad\bu_p^{2n} \mu_{2n}u_p^{2n-2}\mu_{2n}^*
\circ
\Ad\bar{\mu}_{2n}
\circ
\al_p\circ\Ad\bar{\mu}_{2n}^*
\\
&=
\Ad u_p^{2n}
\circ
\Ad\bar{\mu}_{2n}
\circ
\al_p\circ\Ad\bar{\mu}_{2n}^*.
\end{align*}

The condition ($n$.20) implies
the convergences
$\dsp\bar{\th}_1:=\lim_{n\to\infty}\Ad \bar{\mu}_{2n+1}$
and
$\dsp\bar{\th}_0:=\lim_{n\to\infty}\Ad \bar{\mu}_{2n}$
in $\Aut(M)$ with respect to the $u$-topology.
Moreover, 
$\{u_p^{2n}\}_n$ and $\{u_p^{2n+1}\}_n$ 
are Cauchy sequences,
and the limits $\dsp \hat{u}_p^1:=\lim_{n\to\infty}u_p^{2n+1}$
and $\dsp \hat{u}_p^0:=\lim_{n\to\infty}u_p^{2n}$ exist.
Then the condition ($n$.16) yields
\[
\Ad \hat{u}_p^0\circ \bar{\th}_0\al_p\bar{\th}_0^{-1}
=
\Ad \hat{u}_p^1\circ \bar{\th}_1\be_p\bar{\th}_1^{-1}.
\]
Setting
$v_p:=\bar{\th}_0^{-1}\left((\hat{u}_p^0)^*\hat{u}_p^1\right)$
and
$\th:=\bar{\th}_0^{-1}\bar{\th}_1$,
we obtain
\[
\al_p=\Ad v_p\circ \th\circ\be_p\circ\th^{-1}.
\]
About $w^{(2n)},w^{(2n-1)}$, we can prove the following formulae
by induction:
\begin{align}
w_{p,q}^{(2n)}
&=
u_p^{(2n)}\cdot \Ad \bar{\mu}_{2n}
\circ\al_p\circ\Ad \bar{\mu}_{2n}^*
(u_q^{(2n)})
\cdot
\Ad \bar{\mu}_{2n}^*
(w_{p,q}^\al)
\cdot
(u_{pq}^{(2n)})^*,
\label{al:w2n}
\\
w_{p,q}^{(2n+1)}
&=
u_p^{(2n+1)}\cdot \Ad \bar{\mu}_{2n+1}
\circ\be_p\circ\Ad \bar{\mu}_{2n+1}^*
(u_q^{(2n+1)})
\cdot
\Ad \bar{\mu}_{2n+1}^*
(w_{p,q}^\al)
\cdot
(u_{pq}^{(2n+1)})^*
.\label{al:w2n1}
\end{align}
The condition ($n$.18) implies
that $\chi_{p,q}^{n}=w_{p,q}^{n-1}(w_{p,q}^{n-2})^*$ converges to 1
in the strong $*$-topology.
Hence by (\ref{al:w2n}) and (\ref{al:w2n1}),
we obtain
\[
\hat{u}_p^0\cdot
\bar{\th}_0\al_p\bar{\th}_0^{-1}(\hat{u}_q^0)
\cdot
\bar{\th}_0(w_{p,q}^\al)
\cdot
(\hat{u}_{pq}^0)^*
=
\hat{u}_p^1\cdot
\bar{\th}_1\be_p\bar{\th}_1^{-1}(\hat{u}_q^1)
\cdot
\bar{\th}_1(w_{p,q}^\be)
\cdot
(\hat{u}_{pq}^1)^*,
\]
which is rewritten as
\[
w_{p,q}^\al=
v_p\cdot \th\be_p\th^{-1}(v_q)\cdot\th(w_{p,q}^\be)\cdot
v_{pq}^*.
\]
\hfill
$\Box$

Let $d_1^\al$ and $d_2^\al$ be as before.
We define a function $c^\al$ of $\wdt{\Ga}:=\Ga\times\R$
as follows:
\[
c^\al((p,s),(q,t),(r,u)):=d_1^\al(p,q,r)\tal_p(d_2^\al(s;q,r)^*).
\]
Let us write $\meC$ for $Z(\widetilde{M})$ in what follows.
Then $c^\al$ belongs to $Z^3(\wdt{\Ga},U(\meC_M))$,
the set of 3-cocycles,
\index{3-cocycle}
where $(p,s)\in\wdt{\Ga}$ acts on $\meC_M$ as $\tal_p\th_s$.

\begin{lem}
The cohomology class of $c$ is an invariant
of the modular kernel.
\end{lem}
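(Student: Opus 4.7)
The only freedom in forming $c^\al$ comes from the choice of unitary implementer $w^\al_{p,q}\in U(\tM)$ with $\wdt{\si^\al_{p,q}}=\Ad w^\al_{p,q}$; any two such implementers differ by a unitary in $\meC=Z(\tM)$, since conjugation by such a central unitary is trivial. My plan is therefore to fix $\al$, replace $w^\al_{p,q}$ by $\bar w_{p,q}:=z_{p,q}w^\al_{p,q}$ for an arbitrary 2-cochain $z\in C^2(\Ga,U(\meC))$, compute the resulting new $d_1,d_2$-invariants $\bar d_1,\bar d_2$ via (\ref{eq:d1d2}), and recognize the quotient $\bar c/c^\al$ as the coboundary of an explicit 2-cochain on $\wdt\Ga=\Ga\times\R$.

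Substituting $\bar w_{p,q}$ into (\ref{eq:d1d2}), pushing the central unitaries $z_{p,q}$ past elements of $\tM$ and using that $\th_s$ commutes with $\tal_p$ (both on $\tM$ and hence on $\meC$), a short bookkeeping calculation gives
\begin{align*}
\bar d_1(p,q,r)&=z_{p,q}\,z_{pq,r}\,\tal_p(z_{q,r})^{-1}\,z_{p,qr}^{-1}\cdot d_1^\al(p,q,r),\\
\bar d_2(s;p,q)&=\th_s(z_{p,q})\,z_{p,q}^{-1}\cdot d_2^\al(s;p,q).
\end{align*}
Inserting these into $\bar c((p,s),(q,t),(r,u))=\bar d_1(p,q,r)\,\tal_p(\bar d_2(s;q,r)^*)$, and cancelling the two $\tal_p(z_{q,r})$-factors that appear with opposite signs (everything in sight is central in $\tM$, so there is no ordering issue), I will obtain
\[
\bar c((p,s),(q,t),(r,u))=z_{p,q}\,z_{pq,r}\,z_{p,qr}^{-1}\,(\tal_p\th_s)(z_{q,r})^{-1}\cdot c^\al((p,s),(q,t),(r,u)).
\]

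Finally, I define the 2-cochain $b\in C^2(\wdt\Ga,U(\meC))$ by $b((p,s),(q,t)):=z_{p,q}^{-1}$, with $\wdt\Ga$ acting on $\meC$ by $(p,s)\cdot x:=\tal_p\th_s(x)$. Writing out
\[
(\partial b)(g_1,g_2,g_3)=(g_1\cdot b(g_2,g_3))\cdot b(g_1g_2,g_3)^{-1}\cdot b(g_1,g_2g_3)\cdot b(g_1,g_2)^{-1}
\]
for $g_i=(p,s),(q,t),(r,u)$ produces exactly the prefactor displayed above, and hence $[\bar c]=[c^\al]$ in $H^3(\wdt\Ga,U(\meC))$. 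The main difficulty is purely organizational --- keeping the order of central factors straight, noting that $\tal_p$ stabilizes $\meC$ so that $\tal_p(z_{q,r})$ is a legitimate central unitary, and matching sign conventions in the coboundary formula. No deeper analytic input beyond the centrality of $\meC$ in $\tM$ and the commutation $\th\tal=\tal\th$ is needed.
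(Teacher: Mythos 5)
Your computation for the well-definedness part is correct and essentially reproduces the paper's own calculation: replacing $w^\alpha_{p,q}$ by $z_{p,q}w^\alpha_{p,q}$ with $z_{p,q}\in U(\meC_M)$ perturbs $c^\alpha$ by the coboundary of the $2$-cochain $(g_1,g_2)\mapsto z_{p,q}^{-1}$ on $\wdt\Ga=\Ga\times\R$, where $(p,s)$ acts by $\tal_p\th_s$, and the formulas you display for $\bar d_1,\bar d_2$ and the resulting prefactor on $\bar c$ all agree with the paper's (\ref{eq:d1d2}) once you note that everything in sight lives in the abelian group $U(\meC_M)$. So this half is fine.

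However, you have a genuine gap: you open by asserting that ``the only freedom in forming $c^\alpha$ comes from the choice of unitary implementer,'' and accordingly prove only well-definedness, i.e.\ that $[c^\alpha]\in H^3(\wdt\Ga,U(\meC_M))$ does not depend on the choice of $w^\alpha$. But the lemma is asserting that $[c^\alpha]$ is an \emph{invariant of the modular kernel}, which in context means invariance under strong cocycle conjugacy --- this is the sense in which the lemma is subsequently used (it underlies the definition of $\Obm(\al)$ and Theorem~\ref{thm:main-kernel-cohom}). The paper's proof therefore has a second part that you omit: if $\be_p=\Ad v_p\circ\th\al_p\th^{-1}$ with $v\col\Ga\to U(M)$ and $\th\in\oInt(M)$, one must check that $d_1^\be=d_1^\al$ and $d_2^\be=d_2^\al$ for the natural choice $w_{p,q}^\be:=v_p\,\th\al_p\th^{-1}(v_q)\,\th(w_{p,q}^\al)\,v_{pq}^*$. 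This uses both the perturbation identity established in the paragraph preceding the lemma (that $d_1,d_2$ are unchanged by unitary perturbation $\Ad u$ with $u\in U(M)$) and the fact that $\widetilde\theta$ acts trivially on $Z(\tM)$ because $\th\in\oInt(M)$; neither step appears in your write-up. Without it you have shown only that $[c^\alpha]$ is well-defined once $\alpha$ is fixed, not that it descends to conjugacy classes of modular kernels.
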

\begin{proof}
Let $\al$ and $w$ be as before.
If we replace $w_{p,q}^\al$ with $w_{p,q}^\al z_{p,q}$,
where $z_{p,q}\in U(\meC_M)$,
then the $d_1^\al$, $d_2^\al$ and $c^\al$ change as
\[
(d_1^\al)'(p,q,r)=d_1^\al(p,q,r)z_{p,q}z_{pq,r}z_{p,qr}^*\tal_p(z_{q,r}^*),
\ 
(d_2^\al)'(s;q,r)=d_2^\al(s;q,r)z_{q,r}^*\th_s(z_{q,r}),
\]
and
\[
(c^\al)'((p,s),(q,t),(r,u))
=
c^\al((p,s),(q,t),(r,u))
z_{p,q}z_{pq,r}\tal_p(\th_s(z_{q,r}^*))z_{p,qr}^*.
\]
Thus $[c^\al]=[(c^\al)']$ in $H^3(\wdt{\Ga},U(\meC_M))$.

Next we assume that
a modular kernel $\be$ is strongly cocycle conjugate to $\al$.
Then there exists a function $v\col\Ga\ra U(M)$ and $\th\in\oInt(M)$
such that
$\be_p=\Ad v_p\circ\th\al_p\th^{-1}$.
Since the unitary $w_{p,q}^\be:=v_p\th\al_p\th^{-1}(v_q)\th(w_{p,q}^\al)v_{pq}^*$
implements $\be_p\be_q\be_{pq}^{-1}$,
we have
\[
d_1^\be(p,q,r)=d_1^\al(p,q,r),
\quad
d_2^\be(s;q,r)=d_2^\al(s;q,r).
\]
Thus $c^\be=c^\al$.
Therefore, $c^\al$ is a strong cocycle conjugacy invariant.
\end{proof}

\begin{defn}
In the notation above,
the cohomology class $\Obm(\al):=[c^\al]$ in $H^3(\wdt{\Ga},U(\meC_M))$
is called a \emph{modular obstruction}.
\index{modular obstruction}
Denote the pair $(\mo(\al),\Obm(\al))$ by $\Inv(\al)$.
\end{defn}

By definition, $\Inv(\al)$ is an element
in
$\Hom(\Ga,\Aut_\th(\meC_M))\times H^3(\wdt{\Ga},U(\meC_M))$.
Now we will strengthen the statement of Theorem \ref{thm:mod-ker}.

\begin{thm}\label{thm:main-kernel-cohom}
The map $\Inv$ gives a complete invariant.
Namely,
if two free modular $\Ga$-kernels $\al$ and $\be$
satisfy
$\Inv(\al)=\Inv(\be)$,
then they are strongly cocycle conjugate.
\end{thm}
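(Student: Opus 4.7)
The overarching strategy is to reduce this claim to Theorem~\ref{thm:mod-ker}, which already yields a strong (outer) conjugacy of $\al$ and $\be$ provided three conditions hold: $\al_p\equiv\be_p\bmod\oInt(M)$, $d_1^\al=d_1^\be$, and $d_2^\al=d_2^\be$. Our job is therefore to show that the hypothesis $\Inv(\al)=\Inv(\be)$, combined with the freedom to modify the implementing unitaries $w^\al$ within their $U(\meC_M)$-ambiguity (and possibly to conjugate within $\oInt(M)$), already secures all three conditions.

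The first reduction, from $\mo(\al_p)=\mo(\be_p)$ to $\al_p\be_p^{-1}\in\oInt(M)$, follows from the standard identification (in the McDuff setting, in the same spirit as \cite{KtST,M}) of $\Aut(M)/\oInt(M)$ with the group of automorphisms of the flow of weights via the module map on $\meC_M=Z(\tM)$. The second reduction, from $[c^\al]=[c^\be]$ in $H^3(\wdt{\Ga},U(\meC_M))$ to the cocycle-level equalities $d_1^\al=d_1^\be$ and $d_2^\al=d_2^\be$, proceeds as follows. Choose $\zeta\in C^2(\wdt{\Ga},U(\meC_M))$ with $c^\al(c^\be)^{-1}=\partial\zeta$; the transformation rules displayed immediately before the definition of $\Obm(\al)$ show how a replacement $w^\al_{p,q}\mapsto w^\al_{p,q}z_{p,q}$ with $z_{p,q}\in U(\meC_M)$ twists $(d_1^\al,d_2^\al)$, and hence $c^\al$, by the coboundary of $z$ regarded as constant in the $\R$-variable. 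The subtle point is that $z_{p,q}$ is necessarily $\R$-independent, so we must check that the natural map from such $\Ga$-only cochains into $H^3(\wdt{\Ga},U(\meC_M))$ surjects onto the relevant classes. This follows by the Hochschild--Serre spectral sequence for $\wdt{\Ga}=\Ga\times\R$, using the vanishing of continuous cohomology of $\R$ in degrees $\ge 2$ with coefficients in the Polish abelian group $U(\meC_M)$; the remaining $\R$-degree-one piece of $\zeta$, if any, can be absorbed by pre-conjugating one of $\al,\be$ by an element of $\oInt(M)$, an operation which leaves $\Inv$ unchanged.

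With all three hypotheses now arranged, Theorem~\ref{thm:mod-ker} produces $\th\in\oInt(M)$ and $v\colon\Ga\to U(M)$ with $\Ad v_p\circ\th\al_p\th^{-1}=\be_p$, giving strong cocycle conjugacy. The main obstacle is the cohomological step in the second reduction: exhibiting explicitly that the ambiguity in $w^\al$ together with $\oInt(M)$-conjugation exhausts all 2-coboundaries realizing the equality $[c^\al]=[c^\be]$. I expect the double-complex computation for $\Ga\times\R$ to be routine once the coefficient module is unpacked, but care is needed to ensure that the Borel/measurability assumptions on $\zeta$ that come out of the spectral sequence are compatible with the cochain-level modifications of $w^\al$ used elsewhere in the paper.
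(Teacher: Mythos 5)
Your first reduction is fine, and you have correctly identified the crux of the matter: the $2$-cochain $z\in C^2(\wdt\Ga, U(\meC_M))$ with $c^\be=c^\al\cdot\partial z$ must be shown to be an $\R$-independent ($\Ga$-only) cochain before it can be implemented by a re-choice $w^\al_{p,q}\mapsto w^\al_{p,q}z_{p,q}$. But your resolution of this point does not work. Conjugating $\al$ (or $\be$) by $\th\in\oInt(M)$, combined with any compatible re-choice of the implementing unitaries, leaves not only $\Inv(\al)$ but the cochain $c^\al$ itself invariant: this is exactly what the preceding lemma proves (if $\th\in\oInt(M)$ then $\tth|_{Z(\tM)}=\id$, so $d_1,d_2$ do not move). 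You note this yourself in the parenthetical ``an operation which leaves $\Inv$ unchanged,'' but an operation that leaves $c^\al$ unchanged at the cochain level cannot absorb any coboundary piece. So there is no mechanism available to you to get rid of the $z_{(e,s),(q,0)}$ component, and the spectral-sequence bookkeeping does not produce a free parameter where none exists in the problem.

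The paper's proof closes exactly this gap by a direct computation rather than by absorption. Both $c^\al$ and $c^\be$ are genuine $3$-cocycles, and the $3$-cocycle identity is equivalent (by \cite[Lemma 2.5]{KtT-outerI}) to the compatibility relation
\[
\theta_s(d_1(p,q,r))d_1(p,q,r)^*=\tal_p(d_2(s;q,r)^*)d_2(s;p,qr)^*d_2(s;pq,r)d_2(s;p,q).
\]
Specializing the equation $c^\be=c^\al\cdot\partial z$ to $(s,t,u)=(0,0,0)$ and to $(p;t,u)=(e;0,0)$ expresses $d_1^\be$ and $d_2^\be$ through $d_1^\al$, $d_2^\al$ and $z$; feeding these into the compatibility relation for $\be$ and cancelling the relation for $\al$ forces $z_{(e,s),(qr,0)}=z_{(e,s),(q,0)}$ for all $q,r$, hence (taking $q=e$) $z_{(e,s),(r,0)}=1$. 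Thus the $\R$-degree-one piece is automatically trivial; no absorption is needed, nor would any be possible. With that, $z$ restricted to $\Ga\times\Ga$ is a legitimate re-choice of $w^\al$, after which $d_1^\al=d_1^\be$ and $d_2^\al=d_2^\be$, and Theorem~\ref{thm:mod-ker} applies. You should replace your Hochschild--Serre/absorption paragraph with this cocycle-identity computation.
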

\begin{proof}
Let $\al$, $\be$ be as above.
Take $w_{p,q}^\al$ and $w_{p,q}^\be$ as before.
We set $d_i^\al$ and $d_i^\be$ associated with those unitaries.
For $g=(p,s)\in\wdt{\Ga}$, we set $\ps_g:=\tal_p\th_s$.
Then we get a function $z\col \wdt{\Ga}\times\wdt{\Ga}\ra U(\meC_M)$
such that
\[
c^\be(g,h,k)=c^\al(g,h,k)z_{g,h}z_{gh,k}z_{g,hk}^*\ps_g(z_{h,k}^*).
\]

We claim that $z$ satisfies $z_{(e,s),(q,0)}=1$
for all $(q,s)\in\wdt{\Ga}$.

Let $g=(p,s)$, $h=(q,t)$, $k=(r,u)$. If we put $s=t=u=0$, then we have 
\begin{equation}
\label{eq:dbe1}
d^\beta_1(p,q,r)
=d^\alpha_1(p,q,r)\tal_p(z_{q,r}^*)z_{p,qr}^*z_{p,qr}z_{p,q}.
\end{equation}
If we put $p=e$, $t=u=0$, then we have
\begin{equation}
\label{eq:dbe2}
d_2^\beta(s;q,r)
=d^\alpha_2(s;q,r)\theta_s(z_{q,r})z_{s,qr}z_{s,q}^*z_{q,r}^*,
\end{equation}
where we have simply written
$(p,0)$ and $(e,s)$ as $p$ and $s$, respectively.

Here,
$c^\alpha(g,h,k)$
belongs to $Z^3(\wdt{\Ga},U(\meC_M))$
if and only if the following relation
is satisfied \cite[Lemma 2.5]{KtT-outerI};
\[\theta_s(d^\alpha_1(p,q,r))d^\alpha_1(p,q,r)^*
=\alpha_p(d^\alpha_2(s;q,r)^*)
d^\alpha_2(s;p,qr)^*d_2^\alpha(s;pq,r)d^\alpha_2(s;p,q).
\]
We only have to check the 3-cocycle identity for its proof.
The same relation holds for $c^\beta$.
If we substitute (\ref{eq:dbe1}) and (\ref{eq:dbe2})
to the above relation,
then we get $z_{s,qr}=z_{s,q}$.
Thus we have $z_{s,r}=z_{(e,s),(r,0)}=1$.

Then replacing $w_{p,q}^\al$ with $w_{p,q}^\al z_{(p,0),(q,0)}$,
we may and do assume that
$d^\al=d^\be$ (see the proof of the previous lemma).
It follows from Theorem \ref{thm:mod-ker}
that $\al$ and $\be$ are strongly cocycle conjugate.
\end{proof}

\subsection{Models of modular kernels}
We will construct a model of a modular kernel
which attains given invariants $d_1$ and $d_2$.

Let $\Gamma$ be a discrete amenable group,
and suppose that we have a $\Gamma$-modular kernel $\al$
on an injective factor $M$.
Let $\tal $ be the canonical extension,
and fix $w_{p,q}\in U(\tM)$ with $\tal_p\tal_q=\Ad w_{p,q}\circ\tal_{pq}$
as before.
Then we obtain 
$d_1(p,q,r)$, $d_2(t;q,r)\in U(\mathscr{C}_M)$ such that
\[
w_{p,q}w_{pq,r}=d_{1}(p,q,r)\tal_p(w_{q,r})w_{p,qr},
\quad
\theta_t(w_{q,r})=d_2(t;q,r)w_{q,r}.
\]
We set $\wdt{\Ga}:=\Gamma\times \mathbb{R}$,
$\tal_{p,s}:=\tal_p\theta_s$,
$w_{(p,s),(q,t)}:=w_{p,q}$
and
\begin{equation}
\label{eq:cdd}
c((p,s),(q,t),(r,u)):=d_1(p,q,r)\tal_p(d_2(s;q,r)^*).  
\end{equation}
Then $(\tal, w)$ is a $\wdt{\Ga}$-kernel on $\tM$
with a 3-cocycle $c$.
The $c$ has the following important property:
\[
c((p,s),(q,t),(r,u))=c((p,s),(q,0),(r,0)).
\]
Thus we can recover $d_1(p,q,r)$ and $d_2(s;q,r)$ by 
\[
d_1(p,q,r)=c((p,0),(q,0),(r,0)),
\quad
d_2(s;q,r)=c((e,s),(q,0),(r,0))^*.
\]

Conversely for any given action
$\gamma$ of $\Ga$ on $\mathscr{C}_M$, $d_1(p,q,r)$ and $d_2(s;q,r)$,
we will construct a model $\Gamma$-modular kernel with $d_1$ and $d_2$
as below.
Let $c$ be the corresponding 3-cocycle defined in (\ref{eq:cdd}).
By the argument above,
we only have to realize a $\wdt{\Ga}$-kernel on $\tM$
with $c$.

\begin{lem}\label{lem:c-kernel}
Let $c$ be a 3-cocycle defined by (\ref{eq:cdd}).
Then there exists a $\wdt{\Ga}$-kernel $\ga$ on $\tM$
and a function $u\col\wdt{\Ga}\times\wdt{\Ga}\ra U(\tM)$
such that
\begin{itemize}
\item
$\ga_g\ga_h=\Ad u_{g,h}\circ\ga_{hk}$ for all $g,h\in\wdt{\Ga}$;
\item
$\Tr\circ\ga_{(p,s)}=e^{-s}\Tr$ for all $(p,s)\in\wdt{\Ga}$;
\item
$u_{(p,s),(q,t)}=u_{(p,0),(q,0)}$ for all $(p,s),(q,t)\in\wdt{\Ga}$;
\item
$u_{(e,s),(q,0)}=u_{(q,0),(e,s)}=1$ for all $q\in\Ga$, $s\in\R$;
\item
$\ga_{(e,s)}=\th_s$ for $s\in\R$;
\item
$c(g,h,k)=u(g,h)u(gh,k)u(g,hk)^*\ga_g(u(h,k)^*)$
for all $g,h,k\in\wdt{\Ga}$.
\end{itemize}
\end{lem}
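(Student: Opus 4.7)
The strategy is to reduce the construction of a $\widetilde\Ga$-kernel on $\tM$ to that of a $\Ga$-kernel satisfying extra symmetry, and then adjoin the dual flow $\th$. Concretely, I seek automorphisms $\al_p\in\Aut(\tM)$ and unitaries $u_{p,q}\in U(\tM)$ such that
(a) each $\al_p$ commutes with $\th$ and preserves $\Tr$, with $\al_p|_{\meC_M}$ equal to the prescribed $\Ga$-action on the center;
(b) $\al_p\al_q=\Ad u_{p,q}\circ\al_{pq}$ with normalization $u_{e,q}=u_{q,e}=1$;
(c) $\th_s(u_{p,q})=d_2(s;p,q)u_{p,q}$ for all $s\in\R$;
(d) $u_{p,q}u_{pq,r}=d_1(p,q,r)\al_p(u_{q,r})u_{p,qr}$.
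Once such $(\al,u)$ is in hand, set $\ga_{(p,s)}:=\al_p\th_s$ and $u_{(p,s),(q,t)}:=u_{p,q}$. The listed properties in the lemma (trace scaling by $e^{-s}$, the dependence of $u$ on only the $\Ga$-coordinates, normalization, and $\ga_{(e,s)}=\th_s$) are then immediate, and the 3-cocycle assembled from $(\ga,u)$ by the defining formula reduces, via (c) and (d), to exactly the right-hand side of (\ref{eq:cdd}).

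\textbf{Realizing the center action.} First I realize the given $\Ga$-action on $\meC_M=Z(\tM)$ as the restriction of a $\th$-commuting, trace-preserving $\Ga$-action $\al^{0}$ on $\tM$. Since $\tM$ is the core of a McDuff factor, it is of type II$_\infty$ with flow of weights $(\meC_M,\th)$; after tensoring with a copy of $R_{0,1}$ equipped with a model free trace-preserving $\Ga$-action commuting with its own flow (which we can arrange by an AFD construction), the Connes--Takesaki/Sutherland--Takesaki classification machinery produces $\al^{0}$ with the prescribed module. This provides enough outerness for the subsequent perturbation step.

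\textbf{Encoding $d_2$ and matching $d_1$.} The 3-cocycle identity for $c$, evaluated with $p=e$ (and variants), forces each $\R\ni s\mapsto d_2(s;q,r)\in U(\meC_M)$ to be a 1-cocycle for $\th$ on $\meC_M$. By stability of the flow of weights (any $\th$-cocycle in $\meC_M$ is a coboundary in $\tM$, by \cite[Theorem 5.1]{CT}), I choose a Borel family of unitaries $u_{q,r}\in U(\tM)$ with $\th_s(u_{q,r})=d_2(s;q,r)u_{q,r}$, normalized by $u_{e,r}=u_{r,e}=1$. With $\th$-equivariance fixed, the expression $X(p,q,r):=u_{p,q}u_{pq,r}u_{p,qr}^{*}\al^{0}_p(u_{q,r})^{*}$ is a cochain with values in $U(\meC_M)$, and the 3-cocycle identity for $c$ implies $X$ differs from $d_1$ by an $\al^{0}$-coboundary. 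Absorbing that coboundary into $\al^{0}$ as a Sutherland-type inner perturbation yields an action $\al$ and unitaries $u_{p,q}$ satisfying (a)--(d).

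\textbf{Main obstacle.} The principal technical difficulty is the \emph{simultaneous} realization of $d_1$ and $d_2$: the $\th$-equivariance of $u_{p,q}$ prescribed in (c) rigidly constrains the 2-cochain whose coboundary must equal $d_1$ in (d), and coherence of these two demands is exactly the 3-cocycle identity satisfied by $c$. The bulk of the bookkeeping lies in checking that the perturbation in the previous paragraph can be carried out while preserving both (c) and the normalization $u_{e,q}=u_{q,e}=1$; this is where one uses crucially that inner perturbations by unitaries drawn from the cobounding lifts of $\th$-cocycles do not spoil property (c). Once this is done, verifying that $\ga_{(p,s)}:=\al_p\th_s$ with $u_{(p,s),(q,t)}:=u_{p,q}$ realizes all bulleted conditions of the lemma and recovers the prescribed $c$ is a routine unwinding of (\ref{eq:cdd}).
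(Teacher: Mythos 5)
Your high-level reduction to an auxiliary $\Ga$-kernel $(\al,u)$ on $\tM$ with properties (a)--(d) and then setting $\ga_{(p,s)}=\al_p\th_s$, $u_{(p,s),(q,t)}=u_{p,q}$ matches the structure of the paper's proof. But the realization step, which you yourself flag as the main obstacle, is where your argument has a genuine gap.

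First, the claim that $X(p,q,r):=u_{p,q}u_{pq,r}u_{p,qr}^{*}\al_p^{0}(u_{q,r})^{*}$ lies in $U(\meC_M)$ does not follow. Since the $d_2$'s and their $\al^0_p$-images are central in $\tM$, a short computation using the compatibility relation (quoted in the paper from \cite[Lemma 2.5]{KtT-outerI}) gives only $\th_s\bigl(d_1(p,q,r)^{*}X(p,q,r)\bigr)=d_1(p,q,r)^{*}X(p,q,r)$; that is, $d_1(p,q,r)^{*}X(p,q,r)\in \tM^\th=M$. There is no reason it should be central in $M$: the $u_{q,r}\in U(\tM)$ are only cobounding lifts of the $\th$-cocycles $d_2(\cdot;q,r)$ and carry an arbitrary $U(M)$-ambiguity. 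So the ``cochain with values in $U(\meC_M)$'' is not a cochain of the type you want, and the next sentence (``the 3-cocycle identity for $c$ implies $X$ differs from $d_1$ by an $\al^0$-coboundary'') is a non sequitur: the 3-cocycle identity is a compatibility relation between $d_1$ and $d_2$, and gives no information about the cohomology class of $X$ relative to $d_1$.

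Second, and more fundamentally, even granting the centrality and that $d_1^{-1}X$ is a 3-cocycle with values in $U(\meC_M)$, you cannot simply ``absorb'' it by perturbing $\al^0$ by inner automorphisms: inner perturbations of a kernel alter its associated 3-cocycle only within a cohomology class, so killing $d_1^{-1}X$ is exactly the question of whether the 3-cocycle $c$ is realizable by some kernel, which is what the lemma is supposed to establish, not what you may assume. The paper's proof does this constructively: it forms the extension group $H(c)\times\R$ with normal subgroup $N$, writes down a concrete characteristic cocycle $[\la,\mu]$, verifies $\de([\la,\mu])=[c]$ under the Huebschmann--Jones--Ratcliffe map, and then produces a genuine $H(c)\times\R$-action on $\tM$ with that invariant by a cocycle crossed product using extended modular automorphisms $\sigma^{\vph}_c$. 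Without invoking a realization theorem for 3-cocycles by kernels on $\tM$ (or carrying out an equivalent explicit construction, as the paper does), your argument does not close.
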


If this lemma holds,
$\al_p:=\ga_{(p,0)}|_M$ gives a model as explained below.
Letting $g=(p,0)$, $h=(q,0)$ and $k=(r,0)$
in the final equality in the lemma,
we have
\[
d_1(p,q,r)=w_{p,q}w_{pq,r}w_{p,qr}^*\ga_p(w_{q,r}^*),
\quad
w_{p,q}:=u((p,0),(q,0)).
\]
To compute $d_2(s;q,r)$,
we put $g=(e,s)$, $h=(q,0)$ and $k=(r,0)$
in that equality, and we obtain
\[
\th_s(w_{q,r})=d_2(s;q,r)w_{q,r}.
\]
Thus $\si_{q,r}:=\Ad w_{q,r}|_M$
is an extended modular automorphism (see Lemma \ref{lem:modular-can}).
Finally we show $\tal_p=\ga_{(p,0)}$.
From Lemma \ref{lem:beta-can}, there exists $s_p\in\R$ such that
$\ga_{(p,0)}=\tal_p\th_s$.
However, $\ga$ and $\tal$ preserves the trace $\Tr$,
we have $s=0$.
Therefore, the modular $\Ga$-kernel
$\al$ gives the invariant $(d_1,d_2)$.

Before we start to prove the previous lemma,
we explain how such a kernel can be constructed.
Let $\alpha$ be an action of a locally compact group $G$ on a von
Neumann algebra $P$.
Let $N:=\{g\in G\mid \alpha_g\in \Int(P)\}$,
and fix $u_n\in U(P)$ with $\alpha_n=\Ad u_n$.
Then $\lambda(g,n)$, $\mu(m,n)\in U(Z(P))$ are obtained by 
\[
\alpha_g(u_{g^{-1}ng})=\lambda(g,n)u_n,\,\,\, u_mu_n=\mu(m,n)u_{mn}.
\]

Fix a section $Q:=G/N\ni p\mapsto \tilde{p}\in G$. 
Let $\mathfrak{n}(p,q):=\tilde{p}\tilde{q}\widetilde{pq}^{-1}\in N$.
Then we get the $Q$-kernel $\gamma_p:=\alpha_{\tilde{p}}$.
We compute a 3-cocycle $c(p,q,r)$ associated with $\gamma$.
Let $v(p,q):=u_{\mathfrak{n}(p,q)}$.
Then $\gamma_p\gamma_q=\Ad v(p,q)\gamma_{pq}$,
and 
$c(p,q,r)$ is obtained as follows; 
\begin{equation}\label{eq:cv}
c(p,q,r):=v(p,q)v(pq,r)v(p,qr)^*\gamma_{p}(v(q,r))^*.
\end{equation}
Thus we have
\begin{align*}
 c(p,q,r)
&=u_{\mathfrak{n}(p,q)}u_{\mathfrak{n}(pq,r)}
u_{\mathfrak{n}(p,qr)}^*\alpha_{\tilde{p}}(u_{\mathfrak{n}(q,r)}^*) \\
&=
\lambda(\tilde{p},\tilde{p}\mathfrak{n}(q,r)\tilde{p}^{-1})^*
\mu(\mathfrak{n}(p,q),\mathfrak{n}(pq,r))u_{\mathfrak{n}(p,q)\mathfrak{n}(pq,r)}
u_{\mathfrak{n}(p,qr)}^*u_{\tilde{p}\mathfrak{n}(q,r)\tilde{p}^{-1}}^* \\
&=
{\lambda(\tilde{p},\tilde{p}\mathfrak{n}(q,r)\tilde{p}^{-1})}^*\mu(\mathfrak{n}(p,q),
\mathfrak{n}(pq,r))
{\mu(\tilde{p}\mathfrak{n}(q,r)\tilde{p}^{-1},\mathfrak{n}(p,qr))}^*.
\end{align*}

It is easy to see that the cohomology class of $c(p,q,r)$ does not depend
on the choice of $u_n$ (and hence $\lambda(g,n)$ and $\mu(m,n)$).
If we first have a pair $(\la,\mu)$, then by defining the 3-cocycle
$c$ as above,
we obtain the following Huebschmann--Jones--Ratcliffe map (HJR-map):
\index{Huebschmann--Jones--Ratcliffe map}
\[
\delta:\Lambda(G,N,U(Z(P)))\rightarrow H^3(Q,U(Z(P))),
\]
which maps $[\lambda,\mu]$ to $[c(p,q,r)]$.
See \cite{J-act} for a detailed account.

Thus for a given 3-cocycle $c(p,q,r)$,
to realize a $Q$-kernel as (\ref{eq:cv}),
what we have to do is to find the following objects:
\begin{enumerate}
\item
a group $G$ and its normal subgroup $N$ with $G/N=Q$;

\item
a characteristic invariant $[\lambda,\mu]$ such that
$\delta([\lambda,\mu])=[c] $;
\item
an action of $G$
with a given characteristic invariant.
\end{enumerate}


\begin{proof}[Proof of Lemma \ref{lem:c-kernel}]
Following the argument presented in \cite[Lemma 2.14]{KtT-outerI},
we first solve those three problems.
We set $B:=U(\mathscr{C}_M\otimes L^\infty(\wdt{\Ga}))=U(L^\infty(\wdt{\Ga},\mathscr{C}_M))$.
Consider the following exact sequence
\[
1\longrightarrow U(\mathscr{C}_M)\longrightarrow B
\stackrel{\pi}{\longrightarrow} C\longrightarrow 1,
\]
where $C$ is a quotient group. 
We define an action of
$\wdt{\Ga}$ on $\mathscr{C}_M\otimes L^\infty(\wdt{\Ga})$
by 
$\ga_g:=\ps_g\otimes \Ad \rho_g$,
where $\rho_g$ is the right regular representation.
Then $\wdt{\Ga}$ naturally acts on $C$.

Define $u(g,h)\in B$
by $u(g,h)(x):=\ps_x^{-1}(c(x,g,h)^*)$ for $x,g,h\in\wdt{\Ga}$.
Then
\begin{equation}
\label{eq:uuc}
u(g,h)u(gh,k)=c(g,h,k)\gamma_{g}(u(h,k))u(g,hk).  
\end{equation}
Indeed, by the
3-cocycle relation 
\[
c(x,g,h)c(xg,h,k)^*c(x,gh,k)c(x,g,hk)^*\ps_x(c(g,h,k))=1,
\]
we have 
\begin{align*}
\lefteqn{ u(g,h)(x)u(gh,k)(x)u(g,hk)^*(x)\gamma_g(u(h,k))^*(x) } \\
&=
\ps_x^{-1}(c(x,g,h)^*)
\ps_x^{-1}(c(x,gh,k)^*)
\ps_x^{-1}(c(x,g,hk))
\ps_g(\ps_{xg}^{-1}(c(xg,h,k)))\\
&=
\ps_x^{-1}\left(c(x,g,h)^*
c(x,gh,k)^*
c(x,g,hk)
c(xg,h,k)\right)\\
&=c(g,h,k).
\end{align*}

Thus
$\pi(u(g,h))$ is a $C$-valued 2-cocycle for $\wdt{\Ga}$.
The equality $c(x,(p,s),(q,t))=c(x,(p,0),(q,0))$ implies
$u((p,t),(q,s))=u((p,0),(q,0))$.
Hence we have
$\mathfrak{n}(p,q):=\pi((p,0),(q,0))$
is an element of $Z^2(\Gamma,C)$.
Next, we set $g=(e,t)$, $h=(q,0)$ and $k=(r,0)$ in (\ref{eq:uuc}). 
Using $u(g,h)=u((e,t),(q,0))=1$,
we have
\[
\theta_t(u((q,0),(r,0)))=d_2(t;q,r)u((q,0),(r,0)).
\]
Thus
$\theta_t(\mathfrak{n}(q,r))=\mathfrak{n}(q,r)$ in $C$.
Let $N:=\{\ps_{(p,0)}(\mathfrak{n}(q,r))\mid p,q,r\in \Gamma\}$.
Then $N\subs C$ is a countable abelian group fixed by $\th$.
Hence the cocycle twisted product
$N\rtimes_{\mathfrak{n}}\wdt{\Ga}$ is canonically identified with 
$(N\rtimes_{\mathfrak{n}} \Gamma) \times \mathbb{R}$,
where the product of two elements is described as
\[
(m,g)(n,h)=(m\ga_g(n)\mathfrak{n}(g,h),gh)
\quad
\mbox{for }m,n\in N,\ g,h\in\wdt{\Ga}.
\]
Let $H(c):=N\rtimes_{\mathfrak{n}} \Gamma$, which is a discrete amenable
group, and denote the canonical section
$\wdt{\Gamma}\rightarrow H(c)\times\R$
and the quotient map by $\mathfrak{s}_{c}$
and
$q_c$, respectively.
Thus we have the following exact sequence;
\[
1\longrightarrow N
\longrightarrow H(c)\times \mathbb{R}
\stackrel{q_c}{\longrightarrow}
\wdt{\Ga}\longrightarrow 1.
\]
Then every $\wdt{\Ga}$-module
is naturally regarded as an $H(c)\times \mathbb{R}$-module
through $q_c$.
%
%
%

Let $E:=\pi^{-1}(N)\subs B$.
We have the following $H(c)$-equivariant exact sequence. 
\[
1 \longrightarrow  U(\mathscr{C}_M) \longrightarrow E
\stackrel{\pi_E}{\longrightarrow }
N\longrightarrow 1,
\]
where $\pi_E=\pi|_E$.
Let $\mathfrak{s}_N$ be a section of $\pi_E$.
Then we get 
the following characteristic cocycle
$[\lambda,\mu]$ belonging to $\Lambda(H(c),N,U(\mathscr{C}_M))$:
for $g\in H(c),\ m,n\in N$,
\[
\gamma_g(\mathfrak{s}_N(g^{-1}ng))=\lambda(g,n)\mathfrak{s}_N(n),
\quad
\mathfrak{s}_N(m)\mathfrak{s}_N(n)
=\mu(m,n)\mathfrak{s}_N(mn).
\]
We will verify the equality $\delta([\lambda,\mu])=[c]$.
Consider the following exact sequence 
\[
1 \longrightarrow  U(\mathscr{C}_M) \longrightarrow E
\stackrel{\pi_E}{\longrightarrow }
H(c)\times \mathbb{R}
\stackrel{q_c}{\longrightarrow} \wdt{\Ga}\longrightarrow 1.
\]

By the definition of $H(c)$,
we have
$\mathfrak{s}_c(g)\mathfrak{s}_c(h)
=
\mathfrak{n}(g,h)\mathfrak{s}_c(gh)$,
$g,h\in \wdt{\Ga}$.
Set $f(g,h):=\mathfrak{s}_N(\mathfrak{n}(g,h))\in E$.
By $\pi_E(u(g,h))=\mathfrak{n}(g,h)=\pi_E(f(g,h))$, 
$f(g,h)=z(g,h)u(g,h)$ for some $z(g,h)\in U(\mathscr{C}_M)$. 
On one hand, we have
\begin{align*}
\lefteqn{ f(g,h)f(gh,k)f(g,hk)^*\gamma_g(f(h,k))^*}\\
&=z(g,h)z(gh,k)\gamma_g(z(h,k)^*)z(g,hk)^*
u(g,h)u(gh,k)
u(g,hk)^*\gamma_g(u(h,k)^*) \\
&=z(g,h)z(gh,k)\gamma_g(z(h,k)^*)z(g,hk)^*c(g,h,k).
\end{align*}

On the other hand, we have
\begin{align*}
\lefteqn{ f(g,h)f(gh,k)f(g,hk)^*\gamma_g(f(h,k))^* } \\
&=
\mathfrak{s}_N(\mathfrak{n}(g,h))
\mathfrak{s}_N(\mathfrak{n}(gh,k))
\gamma_g(\mathfrak{s}_N(f(h,k))^*)
\mathfrak{s}_N(f(g,hk)^*) \\
&=\lambda(\mathfrak{s}_c(g),\mathfrak{s}_c(g)f(h,k)\mathfrak{s}_c(g)^{-1})^*
\mu(\mathfrak{n}(g,h),\mathfrak{n}(gh,k))
\mu(\mathfrak{s}_c(g)\mathfrak{n}(h,k)\mathfrak{s}_c(g)^{-1},\mathfrak{n}(g,hk))^*.
\end{align*}
This shows that $\delta([\lambda,\mu])=[c]$.

It is known that for a group homomorphism
$\ps\col H(c)\times\R\to \Aut_\theta(\mathscr{C}_M)$
with $\gamma_n=\id$, $n\in N$, 
and
a characteristic element
$[\lambda,\mu]\in \Lambda(H(c)\times \mathbb{R}, N, U(\mathscr{C}_M))$,
there exists an action $\beta$ of $H(c)\times \mathbb{R}$ on $\tM$
with $\mathrm{Inv}(\beta)=(N,\gamma_g,[\lambda,\mu])$.
For readers' convenience, we briefly recall the
construction presented in \cite{M}, which does not involve groupoid theory.

Set $\lambda'(g,n):=\lambda((g,0),n)$ and 
$c_t(n)=\lambda((e,t),n)$, $g\in H(c)$, $n\in N$, $t\in\mathbb{R}$. 
Let $\varphi$ be a dominant weight
on $M$
and $\alpha^{(0)}$ a free action of $H(c)$ on $R_0$,
the injective factor of type II$_1$.
We use the notation in Section \ref{subsect:extend}, and 
we denote the embedding of $M_\varphi$ into $M$ by $\pi^0$.
We identify $\mathscr{C}_M$ with $Z(M_\varphi)$.
Set $\alpha^{(1)}_n:=\sigma^\varphi_{c(n)^*}\otimes \alpha^{(0)}_n$. 
Then $(\alpha^{(1)}_n,\pi^0(\mu(m,n))\otimes 1)$ is a cocycle action of
$N$ on $M\otimes R_0$.
Since each $\alpha_n^{(1)}$, $n\neq 1$
is not modular,
the flow of the weights of
$(M\otimes R_0)\rtimes_{\alpha^{(1)},\pi^0(\mu)\oti1} N$
coincides with that of $M$
(see Section \ref{subsect:cocycle-cross}
for the notion of cocycle crossed product).
Thus those factors are isomorphic since they are injective.

Thanks to \cite[Corollary 1.3]{ST},
we can lift $\ps_g$ to an action of $\wdt{\Ga}$ on $M$,
which is denoted by $\ga_g\in\Aut(M)$.
Then we extend an action
$\beta_g:=\gamma_{q_c(g)}\otimes \alpha^{(0)}_g$ of $H(c)$
on $M\otimes R_0$ to
$(M\otimes R_0)\rtimes_{\alpha^{(1)},\mu\oti1} N$
by
\[
\beta_g(w_{g^{-1}ng})=(\pi^0(\lambda'(g,n))\otimes 1)w_n,
\]
where $w_n$ is the implementing unitary
(for simplicity, we write $\pi^0(\lambda(g,n))$ for $\pi^0(\lambda(g,n))\otimes 1$).
For if we put $G=N$, $\alpha_n=\alpha^{(1)}$, $\theta=\beta_g$, and
$u_n^\theta =\pi^0(\lambda'(g,n))$ in Theorem \ref{thm:ext},
then they satisfy the required condition,
where $G$ acts on $N$ by conjugation.

Let $\beta$ be an action of $H(c)$ constructed as above. We will see
$\beta$ realizes the given invariant. 

\begin{cla}
For $n\in N$,
$\tbe_n= \Ad \left((V_{c(n)}\otimes 1)w_n\right)$
on $(\tM\oti R_0)\rtimes_{\tal,\pi^0(\mu)}N$.
\end{cla}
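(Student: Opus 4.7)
The plan is to identify the continuous core $\wdt{P}$ of $P:=(M\oti R_0)\rti_{\al^{(1)},\pi^0(\mu)\oti 1}N$ with $(\tM\oti R_0)\rti_{\wdt{\al^{(1)}},\pi^0(\mu)\oti 1}N$, sharing the same implementing unitaries $w_n$, and then to verify the claimed formula on a generating set. The identification is legitimate because $R_0$ is of type II$_1$ (so $\wdt{M\oti R_0}=\tM\oti R_0$) and $N$ is discrete (so the $\R$-crossed product defining the core commutes past the $N$-crossed product). Using the factorization $\al^{(1)}_n=(\si^\vph_{c(n)^*}\oti\id)\circ(\id\oti\al^{(0)}_n)$ together with Lemma~\ref{lem:Vc-th}~(1),~(4) and the trace-invariance of $\al^{(0)}_n$, one obtains
\[
\wdt{\al^{(1)}_n}=\Ad(V_{c(n)}^*\oti 1)\circ(\id\oti\al^{(0)}_n)\quad\text{on }\tM\oti R_0,
\]
which in particular tells us that $w_n$ implements exactly this automorphism in $\wdt P$.

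Next I would verify equality on three classes of generators of $\wdt P$. \textbf{(a)} For $x\in M\oti R_0$: since $n\in N=\ker q_c$, we have $\ga_{q_c(n)}=\id$ and so $\be_n|_{M\oti R_0}=\id\oti\al^{(0)}_n$; on the other side, $\Ad((V_{c(n)}\oti 1)w_n)(x)=(V_{c(n)}\oti 1)\wdt{\al^{(1)}_n}(x)(V_{c(n)}^*\oti 1)=(\id\oti\al^{(0)}_n)(x)$ by the displayed formula, so both sides agree. \textbf{(b)} For the $\R$-direction generator $\mu_p\oti 1$ (in the notation of \S\ref{subsect:extend}): $\tbe_n$ fixes it since $\be_n$ preserves the dual weight $\psi$ on $P$ (it commutes with the conditional expectation $E\col P\to M\oti R_0$ and preserves $\vph\oti\tau$); meanwhile $\si^\psi(w_n)=w_n$ forces $w_n$ to commute with $\mu_p\oti 1$, and the calculation in the proof of Lemma~\ref{lem:Vc-th}~(1) shows $\Ad V_{c(n)}(\mu_p)=\mu_p$, so the right-hand side also fixes $\mu_p\oti 1$. \textbf{(c)} For $w_m$ with $m\in N$: the left-hand side equals $\be_n(w_m)=(\pi^0(\la'(n,nmn^{-1}))\oti 1)w_{nmn^{-1}}$ by the defining formula of $\be_n$, and expanding the right-hand side via the cocycle relations $w_nw_m=(\pi^0(\mu(n,m))\oti 1)w_{nm}$ and $w_{nmn^{-1}}w_n=(\pi^0(\mu(nmn^{-1},n))\oti 1)w_{nm}$, together with the rule for moving $V_{c(n)}\oti 1$ through these factors (Lemma~\ref{lem:Vc-th}~(3)), reduces the comparison to an equality of $U(\meC_M)$-valued scalars multiplying $w_{nmn^{-1}}$.

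The main obstacle is step (c): the scalar-matching rests on the characteristic-cocycle relation $\ga_n(\mathfrak{s}_N(n^{-1}mn))=\la(n,m)\mathfrak{s}_N(m)$, together with $\mathfrak{s}_N(k)=z(k)u(k)$ from the section of $\pi_E\col E\to N$, which is exactly what encodes the 3-cocycle $c$ through the HJR identity $\delta([\la,\mu])=[c]$. Unpacking this shows that the $V_{c(n)}$-conjugation of the canonical piece carried by $w_n$ cancels against the $\la'(n,nmn^{-1})$ arising from $\be_n$ on the other side. Once this bookkeeping is complete, $\tbe_n$ and $\Ad((V_{c(n)}\oti 1)w_n)$ coincide on a generating family of $\wdt P$, and equality on all of $\wdt P$ then follows by normality.
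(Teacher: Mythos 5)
Your plan matches the paper's in its essential structure: both check the asserted identity on a generating set of $(\tM\oti R_0)\rtimes_{\tal,\pi^0(\mu)}N$, using the canonical identification of this algebra with the core of $P=(M\oti R_0)\rtimes_{\al^{(1)},\pi^0(\mu)\oti1}N$ and the fact that $\wdt{\al^{(1)}_n}=\Ad(V_{c(n)}^*\oti1)\circ(\id\oti\al^{(0)}_n)$ on $\tM\oti R_0$. Your (a)+(b) is a slightly more explicit decomposition of what the paper does in one stroke by simply taking $x\in\tM\oti R_0$ (the paper never isolates $\mu_p\oti1$, since it already lies in $\tM\oti1$), and your verification in (b) that both sides fix $\mu_p\oti1$ is sound. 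The discrepancy is in (c).

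In step (c) your account of where the scalar comes from is incorrect, and the appeal to the HJR map is a red herring. You describe the $V_{c(n)}$-conjugation as supplying something that "cancels against" the $\la'(n,nmn^{-1})$ produced by the definition of $\be_n$. What actually happens is the opposite split: the $V_{c(n)}$-conjugation of $w_{nmn^{-1}}$ contributes exactly the trivial factor $1$, because moving $V_{c(n)}^*\oti1$ past $w_{nmn^{-1}}$ produces $\wdt{\si^\vph_{c(nmn^{-1})}}(V_{c(n)}^*)\oti1$, which equals $V_{c(n)}^*\oti1$ by Lemma~\ref{lem:Vc-th}(1) together with the commutativity in Lemma~\ref{lem:Vc-th}(3), so it annihilates $V_{c(n)}\oti1$. (Citing (3) alone, as you do, is not enough; you need (1) to recognize that $w_{nmn^{-1}}$ transports $V_{c(n)}$ by $\Ad V_{c(nmn^{-1})}$.) The entire $\la'$-scalar then arises from the $\mu$-bookkeeping of $\Ad w_n(w_m)$, via the elementary characteristic-pair compatibility $\mu(n,m)\mu(nmn^{-1},n)^*=\la(n,nmn^{-1})$, valid whenever the first argument of $\la$ lies in $N$. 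No identity about 3-cocycles or the HJR map $\de([\la,\mu])=[c]$ is used; that relation is invoked later in the lemma for a different purpose, namely to guarantee the existence of the $H(c)$-action, not to verify the displayed conjugation formula. Because your proposed mechanism for (c) is wrong, you would not be able to carry out the bookkeeping you defer to as stated; this is the genuine gap.
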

\begin{proof}[Proof of Claim]
For $x\in \tM\otimes R_0$, we have
\[
\Ad \left((V_{c(n)}\otimes 1)w_n\right) x
=
\Ad (V_{c(n)}\otimes 1)\circ
(\sigma_{(c(n))^*}^\varphi\otimes\alpha_n^{(0)} )(x)
=(\id\otimes \alpha_n^{(0)})(x)=\beta_n(x),
\]
since $q_c(n)=e$ for $n\in N$.
For $w_m$, we have
\begin{align*}
\Ad \left((V_{c(n)}\otimes 1)w_n\right) w_m
&=
\pi^0(\mu(m,n)\mu(n,n^{-1}mn)^*)\Ad \left((V_{c(n)}\otimes 1)\right)w_{nmn^{-1}}
\\
&=\pi^0(\lambda(m,nmn^{-1}))w_{nmn^{-1}}
\left(\wdt{\sigma_{c(nmn^{-1})}^\vph}(V_{c(n)})\otimes 1\right)
(V_{c(n)}^*\oti1)
\\
&= \pi^0(\lambda(m,nmn^{-1}))w_{nmn^{-1}}
\\
&= \beta_n(w_m).
\end{align*}
Thus $\tbe_n=\Ad (V_{c(n)}\otimes 1)w_n$ holds.
\end{proof}

Next,
we show that $g\in N$ and only if
$\tbe_g$ is inner on
$\tM\rtimes_{\alpha^{(1)},\mu} N$.
Assume $g\in H(c)\backslash N$ and 
$a\in \tM\rtimes_{\alpha^{(1)},\mu} N$
satisfies
$ax=\tbe_g(x)a$ for all $x\in \tM\rtimes_{\alpha^{(1)},\mu} N$.
Let $a=\sum_{n\in N}a_nw_n$ be the formal expansion of $a$.
Then for $x\in \tM\oti R_0$, we have
\[
a_n(\sigma^\varphi_{c(n)^*}\otimes \alpha^{0}_n)(x)
=
(\wdt{\ga}_{q_c(g)}\otimes \alpha_g^0)(x)a_n
=
\tbe_g(x)a_n.
\]
Thus we have
\[
a_n x
=
(\wdt{\ga}_g\sigma^\varphi_{c(n)}\otimes \alpha_{gn^{-1}}^0)(x)a_n
\quad\mbox{for all }x\in \tM\oti R_0.
\]
By assumption, $\alpha_{gn^{-1}}^0$ is outer for all $n$.
Hence $a_n=0$, and so is $a=0$.
This means that the modular part of $\be$ equals $N$.

Using the identity
$c_t(m)c_t(n)=c_t(mn)\mu(m,n)^*\theta_t(\mu(m,n))$
and Lemma \ref{lem:Vc-th} (3),
we have the following for $m,n\in N$:
\begin{align*}
(V_{c(m)}\otimes 1)w_m (V_{c(n)}\otimes 1)w_n
&=
(V_{c(m)}V_{c(n)}\otimes 1)w_m w_n \\ 
&=\pi^0(\mu(m,n))(V_{c(m)c(n)}\otimes 1)w_{mn}\\
&=\pi^0(\mu(m,n))(V_{c(mn)\partial(\mu(m,n)^*)}\otimes 1)w_{mn}\\
&=\mu(m,n)(V_{c(mn)}\otimes 1)w_{mn}.
\end{align*}

By 
$\gamma_g(c_t(g^{-1}ng))=c_t(n)\lambda(g,n)^*\theta_t(\lambda(g,n))$ and
Lemma \ref{lem:Vc-th} (3), we get the following for $g\in H(c)$ and $n\in N$,
\begin{align*}
\tbe_g\left((V_{c(g^{-1}ng)}\otimes 1)w_{g^{-1}ng}\right)
&=
\pi^0(\lambda(g,n))\left(V_{\gamma_g(c(g^{-1}ng))}\otimes 1\right)w_{n}
\\
&=\lambda(g,n)\left(V_{c(n)}\otimes 1\right)w_n.
\end{align*}

By Lemma \ref{lem:Vc-th} (3), we obtain
\[
\theta_t\left((V_{c(n)}\otimes 1)w_n\right)=c_t(n)
(V_{c(n)^*}\otimes 1)w_n.
\]

Therefore,
$\beta$ is an action
with the invariant $(N,\ps,[\lambda,\mu])$.
In particular,
$\beta_{\mathfrak{s}_c(p)}$,
$p\in\Ga$ is a desired $\Gamma$-modular kernel.
\end{proof}

From the previous lemma,
we obtain the following.

\begin{thm}\label{thm:model-kernel}
For any pair $(d_1,d_2)$ as before,
there exists a modular $\Ga$-kernel which attains $(d_1,d_2)$.
\end{thm}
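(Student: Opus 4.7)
The plan is to deduce this theorem as a direct consequence of Lemma \ref{lem:c-kernel}, by restricting the model $\wdt{\Ga}$-kernel on $\tM$ constructed there to the fixed point subalgebra $M=\tM^\th$.

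First, I would package the given data $(d_1,d_2)$ (together with a $\Ga$-action $\ps$ on $\mathscr{C}_M$ and the fixed flow $\th$) into the 3-cocycle $c\in Z^3(\wdt{\Ga},U(\mathscr{C}_M))$ defined by formula (\ref{eq:cdd}), and apply Lemma \ref{lem:c-kernel} to produce a $\wdt{\Ga}$-kernel $\ga$ on $\tM$ together with implementing unitaries $u(g,h)\in U(\tM)$ satisfying the six properties listed there. I would then set $\al_p:=\ga_{(p,0)}|_M$. This restriction is well-defined because the normalizations $u_{(p,0),(e,s)}=u_{(e,s),(p,0)}=1$ and $\ga_{(e,s)}=\th_s$ imply $\ga_{(p,0)}\th_s=\ga_{(p,s)}=\th_s\ga_{(p,0)}$, so $\ga_{(p,0)}$ commutes with the dual flow and therefore preserves $M=\tM^\th$.

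The verification that $\al$ is a modular $\Ga$-kernel with the prescribed invariants will proceed by specializing the 3-cocycle identity from Lemma \ref{lem:c-kernel}. Setting $w_{p,q}:=u((p,0),(q,0))\in U(\tM)$ and using $c((p,0),(q,0),(r,0))=d_1(p,q,r)$ yields
\[
w_{p,q}w_{pq,r}=d_1(p,q,r)\,\ga_{(p,0)}(w_{q,r})\,w_{p,qr},
\]
while specializing to $g=(e,s)$, $h=(q,0)$, $k=(r,0)$ together with $u((e,s),(q,0))=1$ gives $\th_s(w_{q,r})=d_2(s;q,r)w_{q,r}$. The latter identity combined with Lemma \ref{lem:modular-can} ensures that $\si_{p,q}:=\Ad w_{p,q}|_M$ is a genuine extended modular automorphism of $M$, so $\al$ is indeed a modular $\Ga$-kernel with $\al_p\al_q=\si_{p,q}\al_{pq}$ and the cohomological data $(d_1,d_2)$ as required.

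The last point to check is that $\ga_{(p,0)}$ coincides with $\widetilde{\al_p}$ on the nose, rather than with some twist $\th_s\widetilde{\al_p}$. For this I would apply Lemma \ref{lem:beta-can} to $\be:=\ga_{(p,0)}$ to get $\ga_{(p,0)}=\th_{s_p}\widetilde{\al_p}$ for some $s_p\in\R$, and then use the trace-scaling property $\Tr\circ\ga_{(p,s)}=e^{-s}\Tr$ from Lemma \ref{lem:c-kernel} at $s=0$, together with the trace-preserving property of any canonical extension, to force $s_p=0$. Since the entire technical construction (building $H(c)$, resolving the HJR map via an appropriate characteristic cocycle, and assembling the action via extended modular automorphisms and crossed products) has already been carried out in the proof of Lemma \ref{lem:c-kernel}, I do not anticipate a serious obstacle here; the main item demanding care is just this trace-scaling bookkeeping ensuring that the restriction $\ga_{(p,0)}|_M$ has canonical extension equal to $\ga_{(p,0)}$ itself.
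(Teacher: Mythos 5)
Your proposal is correct and follows essentially the same route as the paper: the paper's argument for Theorem \ref{thm:model-kernel} is precisely the passage immediately following the statement of Lemma \ref{lem:c-kernel} — restrict $\ga_{(p,0)}$ to $M$, extract $(d_1,d_2)$ by specializing the 3-cocycle identity at $g,h,k\in\Ga\times\{0\}$ and at $g=(e,s)$, invoke Lemma \ref{lem:modular-can} for the modularity of $\si_{p,q}$, and use Lemma \ref{lem:beta-can} with the trace-scaling condition to conclude $\ga_{(p,0)}=\widetilde{\al_p}$. Your observation that the normalizations $u_{(e,s),(q,0)}=u_{(q,0),(e,s)}=1$ force $\ga_{(p,0)}$ to commute with $\th$ is a small point the paper leaves implicit, but it is the correct justification for the restriction being well-defined.
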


\section{Classification of actions with non-trivial modular parts}
Let $\al \col M\ra M\oti\lhG$ be an action of $\bhG$ on a factor $M$.
Recall the modular part $\La(\al)$ that is defined
in Definition \ref{defn:modular-part} as follows:
\[
\La(\al):=\{\rho\in\IG\mid \al_\rho\mbox{ is modular}\}.
\]

It is trivial that $\La(\al)$ generates a fusion algebra,
that is,
if $\pi,\rho$ belongs to $\La(\al)$,
then so do $\opi$
and every irreducible summand of the tensor product
representation $\pi\rho$.
This means $\CG_\pi$ for $\pi\in\La(\al)$
generates a function algebra of a compact Kac algebra $\bH_\al$,
that is,
\[
C(\bH_\al)=\ovl{\spa}^{\|\cdot\|}\{\CG_\pi\mid\pi\in\La(\al)\}.
\]
Then all irreducible representations of $\bH_\al$
are belonging to $\La(\al)$.
Thus $\bH_\al$ is regarded as a quotient of $\bG$.

We will study classification on a case-by-case basis.
In this section, we study the case that $\Irr(\bH_\al)\neq\{\btr\}$,
and in the next, $\Irr(\bH_\al)=\{\btr\}$.
The latter (i.e. the centrally free case)
has been partially proved in \cite{MT2,MT3},
that is,
the authors proved the cocycle conjugacy of
invariantless (i.e. approximately inner and centrally free) actions
on injective factors.
However, we can improve those proofs to be
adapted to actions with non-trivial Connes--Takesaki module.

To study the first case,
we have to set up the following
assumptions on actions at this time:
\begin{ass}\label{ass:modular}
In this section, $\La(\al)$ is normal.
\end{ass}

From Corollary \ref{cor:Vps},
the assumption implies $\al$ has the Connes--Takesaki module.
Also recall that Theorem \ref{thm:connected}
states all connected simple Lie groups
or their $q$-deformations
with $q=-1$ satisfy the assumption above.
In what follows, we simply denote $\bH_\al$ by $\bH$.

%
%
%

\subsection{Discrete group $\Ga$}
We want to describe the ``kernel'' of the quotient map
$\bG\ra\bH$ or equivalently, the ``quotient'' $\IG$ by $\IH$.
For the sake of this,
we introduce a relation $\sim$ on $\IG$,
which is defined as $\pi_1\sim\pi_2$
if each irreducible in $\pi_1\ovl{\pi_2}$
belongs to the modular part $\IH$
of the action $\al$.

\begin{lem}\label{lem:sim}
The relation $\sim$ satisfies the following conditions:
\begin{enumerate}
\item
$\sim$ is an equivalence relation.

\item
$\rho\sim\btr$ if and only if $\rho\in\IH$.

\item
Each irreducible contained in
$\pi_1\pi_2$ for all $\pi_1,\pi_2\in\IG$
is equivalent to each other.
\end{enumerate}
\end{lem}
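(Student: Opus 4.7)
The plan is to derive all three statements directly from the structural properties of $\La(\al)=\IH$: namely that $\La$ is a subcategory (closed under conjugation and irreducible summands of tensor products) and that it is normal in the sense of Definition \ref{defn:normal-subcat}. The first property is automatic since $\IH$ is the irreducible spectrum of the compact Kac subalgebra $\bH_\al$, and normality holds under Assumption \ref{ass:modular}.

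For (1), I will first note the crucial instance of normality obtained by taking $\rho=\btr\in\La$: every irreducible summand of $\pi\opi$ lies in $\La$ for any $\pi\in\IG$. This immediately gives reflexivity $\pi\sim\pi$. Symmetry follows from $\ovl{\pi_1\ovl{\pi_2}}\cong\pi_2\ovl{\pi_1}$ combined with closure of $\La$ under conjugation. For transitivity, given $\pi_1\sim\pi_2$ and $\pi_2\sim\pi_3$, I use the inclusion
\[
\pi_1\ovl{\pi_3}\prec \pi_1\ovl{\pi_2}\,\pi_2\ovl{\pi_3}
\]
coming from $\btr\prec\ovl{\pi_2}\pi_2$; then any irreducible of $\pi_1\ovl{\pi_3}$ appears inside a product of two irreducibles each living in $\La$, and the subcategory property of $\La$ finishes the argument.

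Statement (2) is essentially a tautology once one unpacks the definition: $\rho\ovl{\btr}=\rho$, so $\rho\sim\btr$ is literally the condition $\rho\in\La=\IH$.

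For (3), let $\si_1,\si_2\prec\pi_1\pi_2$. The plan is to observe
\[
\si_1\ovl{\si_2}\prec \pi_1\pi_2\ovl{\pi_2}\,\ovl{\pi_1},
\]
decompose $\pi_2\ovl{\pi_2}=\bigoplus_\tau m_\tau\,\tau$ with each $\tau\in\La$ (by reflexivity already established), and then invoke normality of $\La$ to conclude that every irreducible of $\pi_1\tau\ovl{\pi_1}$ lies in $\La$. This forces every irreducible of $\si_1\ovl{\si_2}$ into $\La$, which is precisely $\si_1\sim\si_2$. There is no genuine obstacle here; the only point that requires care is making sure at each step that the normality condition is applied with $\rho\in\La$ and not merely with $\rho\in\IG$, so the argument for (3) must invoke the reflexivity established in (1) before invoking normality.
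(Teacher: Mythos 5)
Your proof is correct and takes a genuinely different route from the paper's. You derive transitivity in (1) and all of (3) purely at the level of the fusion category: you only use that $\IH$ is a subcategory (closed under conjugates and irreducible summands of tensor products, noted by the paper at the start of Section~6) and that $\IH$ is normal in the sense of Definition~\ref{defn:normal-subcat}, via the containments $\pi_1\opi_3\prec(\pi_1\opi_2)(\pi_2\opi_3)$ and $\si_1\osi_2\prec\pi_1(\pi_2\opi_2)\opi_1$. The paper, by contrast, argues operator-algebraically: it expresses things in terms of the morphisms $\al_{\pi\orho\rho\osi}=\al_{\pi\orho}\al_{\rho\osi}$ being implemented by unitaries in the core and then invokes Izumi's Proposition~3.4(1) to pass to irreducible subrepresentations. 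The two routes are fundamentally equivalent -- the paper's normality Assumption~\ref{ass:modular} and the fusion-ring closure are exactly what your argument consumes -- but your version isolates the combinatorial content and does not need to re-enter the modular-endomorphism machinery; it is arguably cleaner and would transfer verbatim to any normal subcategory of any rigid C$^*$-tensor category. Your (2) and the reflexivity/symmetry steps of (1) coincide with the paper's.
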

\begin{proof}
(1).
Let $\pi,\rho,\si\in\IG$.
By our assumption, $\pi\opi$ decomposes into
the irreducibles in $\IH$.
This means $\pi\sim\pi$.

Suppose that $\pi\sim \rho$, that is,
each irreducible contained in $\pi\orho$
is in $\IH$.
Since the set $\IH$ is closed under conjugation,
the decomposition of $\rho\opi=\ovl{\pi\orho}$
is contained in $\IH$, that is, $\rho\sim\pi$.

When $\pi\sim\rho$ and $\rho\sim\si$,
the morphism $\al_{\pi\orho\rho\osi}=\al_{\pi\orho}\al_{\rho\osi}$
is implemented by unitary,
and so is the morphism $\al_{\orho\rho}$ by our assumption.
Thus $\al_{\pi\osi}$ is implemented by unitary.
Thanks to \cite[Proposition 3.4 (1)]{Iz},
we see $\al_\xi$ is implemented by unitary
for all $\xi\prec\pi\osi$,
and $\xi\in\IH$.
Hence $\pi\sim\si$.

(2).
It is trivial.

(3).
Let $\xi,\eta\prec\pi_1\pi_2$.
Then the morphism $\al_{\xi\ovl{\eta}}$
is contained in
$\al_{\pi_1\pi_2\opi_2\opi_1}
=
\al_{\pi_1}\al_{\pi_2\opi_2}\al_{\opi_1}$
that is implemented by unitary.
Again by \cite[Proposition 3.4 (1)]{Iz},
we see $\xi\sim\eta$ as in the proof of (2).
\end{proof}

Let us introduce
the set of tensor product representations
denoted by $\Rep(\bG)_0$,
that is,
$\Rep(\bG)_0
:=\{\pi_1\pi_2\cdots\pi_n\mid \pi_i\in\IG, 1\leq i\leq n,n\in\N\}$.
Then the equivalence relation $\sim$ naturally extends to
$\Rep(\bG)_0$.
Note that if $\xi,\eta\prec \pi_1\pi_2\cdots\pi_n$
are irreducibles, then $\xi\sim\eta$.

Let us denote by $\Ga$ and $[\pi]$
the quotient set $\IG/\sim=\Rep(\bG)_0/\sim$
and the equivalence class of $\pi\in\Rep(\bG)_0$,
respectively.
When $\IG$ is a discrete group,
then $\Ga$ is nothing but the quotient group $\IG/\IH$.

\begin{lem}
The set $\Ga$ possesses a discrete group structure as follows:
\begin{enumerate}
\item $[\pi_1][\pi_2]:=[\pi_1\pi_2]$.

\item $[\btr]$ is the unit.

\item $[\opi]$ is the invertible element of $[\pi]$.
\end{enumerate}
\end{lem}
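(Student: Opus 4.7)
The plan is to verify the four group axioms on $\Ga=\Rep(\bG)_0/\sim$, of which three are essentially automatic and only the first carries real content. Associativity is immediate because the tensor product in $\Rep(\bG)_0$ is literally associative on words $\pi_1\pi_2\cdots\pi_n$. The identity law $[\btr][\pi]=[\pi]=[\pi][\btr]$ follows from $\btr\pi=\pi=\pi\btr$. For the inverse, $[\pi][\ovl{\pi}]=[\btr]$ amounts to every irreducible of $\pi\ovl{\pi}$ lying in $\La(\al)$, which is precisely the reflexivity $\pi\sim\pi$ established in Lemma \ref{lem:sim}(1); the opposite-side identity uses $\ovl{\pi}\pi$ identically.

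The substantive point is therefore well-definedness of the multiplication: given $\pi_1\sim\pi_1'$ and $\pi_2\sim\pi_2'$ in $\Rep(\bG)_0$, one must show that every irreducible $\xi\prec \pi_1\pi_2\ovl{\pi_2'}\ovl{\pi_1'}$ belongs to $\IH:=\La(\al)$. I would proceed in two stages. First, since $\pi_2\sim\pi_2'$, the representation $\pi_2\ovl{\pi_2'}$ decomposes entirely into irreducibles of $\IH$, so $\xi$ is contained in $\pi_1\rho\ovl{\pi_1'}$ for some $\rho\in\IH$. Second, the coevaluation isometry $T_{\ovl{\pi_1},\pi_1}\col \C\to H_{\ovl{\pi_1}}\oti H_{\pi_1}$ yields an isometric intertwiner
\[
\id_{\pi_1\rho}\oti T_{\ovl{\pi_1},\pi_1}\oti \id_{\ovl{\pi_1'}}\col H_{\pi_1\rho\ovl{\pi_1'}}\hookrightarrow H_{\pi_1\rho\ovl{\pi_1}\pi_1\ovl{\pi_1'}},
\]
realising $\pi_1\rho\ovl{\pi_1'}$ as a subrepresentation of the product $(\pi_1\rho\ovl{\pi_1})(\pi_1\ovl{\pi_1'})$. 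By normality of $\IH$ in the sense of Definition \ref{defn:normal-subcat}(2), guaranteed by Assumption \ref{ass:modular}, the first factor $\pi_1\rho\ovl{\pi_1}$ decomposes into irreducibles of $\IH$; by the hypothesis $\pi_1\sim\pi_1'$, so does the second factor $\pi_1\ovl{\pi_1'}$. Since $\IH$ is a subcategory, it is closed under tensor products, and therefore every irreducible of the product (and in particular $\xi$) lies in $\IH$.

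The main obstacle is precisely this second stage. One cannot short-circuit it by invoking transitivity of $\sim$ applied termwise, because a product operation on equivalence classes is exactly what we are trying to establish. The coevaluation trick converts the verification into a statement about tensor-product closure of the subcategory $\IH$, which genuinely uses both the normality of $\La(\al)$ from Assumption \ref{ass:modular} and the fact that $\La(\al)$ is closed under tensor products and conjugation (noted at the beginning of the section, where $\La(\al)$ is observed to generate a fusion algebra). Once well-definedness is in hand, the remaining axioms are bookkeeping, and the group structure of $\Ga$ follows.
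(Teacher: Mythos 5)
Your argument is correct, but it takes a genuinely different route from the paper's. The paper disposes of well-definedness in one line by referring to the proof of Lemma~6.2: there the key step is operator-algebraic, writing the canonical extension as $\tbe_{\pi_1}\circ\tbe_{\pi_2\ovl{\pi_2'}}\circ\tbe_{\ovl{\pi_1'}}$ and observing that the inner automorphism $\tbe_{\pi_2\ovl{\pi_2'}}=\Ad W$ can be ``pushed through'' $\tbe_{\pi_1}$ as $\Ad\tbe_{\pi_1}(W)$, so the whole composition is inner because $\tbe_{\pi_1\ovl{\pi_1'}}$ is, and then \cite[Proposition 3.4~(1)]{Iz} pins down each irreducible summand. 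Your route instead stays entirely inside the fusion category: you embed $\pi_1\rho\ovl{\pi_1'}$ into $(\pi_1\rho\ovl{\pi_1})(\pi_1\ovl{\pi_1'})$ via the coevaluation isometry, and then invoke normality of $\La(\al)$ for the first factor, the relation $\pi_1\sim\pi_1'$ for the second, and closure of $\IH$ under tensor products to conclude. The two buy different things: the paper's argument is shorter and reuses the operator-algebraic ``push-through'' machinery already in place, needing normality only via reflexivity of $\sim$; yours is self-contained and purely combinatorial, making the role of normality (Assumption~6.2, via Definition of normal subcategory) and of $\IH$ being a subcategory explicit, at the cost of invoking normality with general $\rho\in\IH$ rather than only $\rho=\btr$. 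One small care worth recording: when $\pi_1\in\Rep(\bG)_0$ is a word $\si_1\cdots\si_k$ rather than irreducible, the normality statement (which is phrased for $\pi\in\IG$) must be applied iteratively to $\si_k\rho\ovl{\si_k}$, then $\si_{k-1}(\cdot)\ovl{\si_{k-1}}$, and so on, and the coevaluation for $\pi_1\ovl{\pi_1}$ is built up from the irreducible coevaluations; this goes through but should be spelled out if the claim is to stand at the level of $\Rep(\bG)_0$.
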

\begin{proof}
(1). We show well-definedness.
Let $\pi_1\sim\pi_1'$ and $\pi_2\sim\pi_2'$.
Then $\pi_1\pi_2\sim\pi_1'\pi_2'$
because $\al_{\pi_1\pi_2\ovl{\pi_2'}\ovl{\pi_1'}}$
is implemented by unitary as the proof of the previous lemma.
(2), (3). They are trivial.
\end{proof}

\begin{lem}
If $\bhG$ is amenable,
so is $\Ga$.
\end{lem}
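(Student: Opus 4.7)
The plan is to push forward a Reiter-type sequence witnessing amenability of $\bhG$ along the quotient map $q\colon \IG\to\Ga$, $\pi\mapsto[\pi]$, and show the resulting probability measures on $\Ga$ satisfy Reiter's condition in the classical sense. I will invoke the standard characterization (due to Ruan, Tomatsu, and Neshveyev--Tuset) that a discrete Kac algebra $\bhG$ is amenable if and only if there exists a sequence of finitely supported probability measures $m_n$ on $\IG$ of the form $m_n(\rho)=\xi_n(\rho)d(\rho)^2$, with $\xi_n\geq 0$ and $\sum_\rho \xi_n(\rho)d(\rho)^2=1$, satisfying
\[
\|\pi * m_n - m_n\|_{\ell^1(\IG)}\longrightarrow 0\qquad\text{for every }\pi\in\IG,
\]
where the natural fusion convolution is
\[
(\pi * m)(\si):=\sum_{\rho\in\IG}m(\rho)\,\frac{N_{\pi,\rho}^\si\,d(\si)}{d(\pi)\,d(\rho)},\qquad N_{\pi,\rho}^\si:=\dim(\si,\pi\rho).
\]
The identity $\sum_\si N_{\pi,\rho}^\si d(\si)=d(\pi)d(\rho)$ guarantees that $\pi * m$ is again a probability measure on $\IG$.

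Next I will set $\tilde m_n:=q_*m_n$, a probability measure on $\Ga$ with finite support $q(\supp m_n)$. The key identity, which is where Lemma \ref{lem:sim}(3) does all the work, is that pushing forward converts fusion convolution into classical left translation on $\Ga$:
\[
q_*(\pi * m_n)=L_{[\pi]}\tilde m_n,
\]
where $(L_g\mu)(h):=\mu(g^{-1}h)$. Indeed, setting $g=[\pi]$ and fixing $h\in\Ga$,
\[
(q_*(\pi * m_n))(h)=\sum_{\si:\,[\si]=h}\sum_{\rho}m_n(\rho)\,\frac{N_{\pi,\rho}^\si\,d(\si)}{d(\pi)\,d(\rho)}.
\]
By Lemma \ref{lem:sim}(3), every $\si\prec\pi\rho$ lies in the single class $[\pi][\rho]=g[\rho]$; hence the inner sum over $\si$ vanishes unless $[\rho]=g^{-1}h$, in which case it collapses to $\sum_{\si\prec\pi\rho}N_{\pi,\rho}^\si d(\si)=d(\pi)\,d(\rho)$. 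Therefore
\[
(q_*(\pi * m_n))(h)=\sum_{\rho:\,[\rho]=g^{-1}h}m_n(\rho)=\tilde m_n(g^{-1}h),
\]
as claimed.

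By $\ell^1$-contractivity of pushforward, $\|L_{[\pi]}\tilde m_n-\tilde m_n\|_{\ell^1(\Ga)}\leq\|\pi * m_n - m_n\|_{\ell^1(\IG)}\to 0$, and since every element of $\Ga$ is of the form $[\pi]$ for some $\pi\in\IG$, the sequence $\{\tilde m_n\}$ satisfies Reiter's condition for the discrete group $\Ga$; hence $\Ga$ is amenable. The only delicate point is selecting the right equivalent characterization of $\bhG$-amenability to transport: the dimension-weighted Reiter condition above pairs cleanly with the fusion-rule collapse enforced by Lemma \ref{lem:sim}(3), making the projection to $\Ga$ exact rather than merely asymptotic, and removing all the usual technical friction one encounters when relating dimension-weighted and counting-type Følner conditions.
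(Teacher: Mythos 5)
Your proof is correct and proceeds by a genuinely different, though dual, route from the paper's. The paper defines an embedding $f\col \el^\infty(\Ga)\ra \lhG$, $f(\de_{[\pi]})=\sum_{\pi'\sim\pi}1_{\pi'}$, takes an invariant mean $m$ on $\lhG$, and verifies that $m\circ f$ is an invariant mean on $\el^\infty(\Ga)$ via the identity $(\tr_{\opi_1}\oti\id)(\De(f(\de_{[\pi]})))=f(\de_{[\pi_1][\pi]})$. You instead push a dimension-weighted Reiter sequence forward along the quotient $q\col\IG\ra\Ga$ and show that $q_*$ exactly intertwines fusion convolution with left translation on $\Ga$. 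The two arguments are transposes of one another: the paper works on the function side (pulling back test functions and transporting a mean backward), you work on the measure side (pushing Følner measures forward), and both hinge on exactly the same consequence of Lemma \ref{lem:sim} (3), namely that all irreducibles appearing in $\pi\rho$ lie in a single class $[\pi][\rho]$, which together with $\sum_{\si}N_{\pi,\rho}^\si d(\si)=d(\pi)d(\rho)$ makes the intertwining exact rather than approximate. The paper's mean approach is shorter and avoids invoking the equivalence of amenability with the Reiter condition, while yours is more concrete and aligns better with the Følner-type invariant central projections used throughout Section 7; either would be acceptable here.
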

\begin{proof}
Let $f\col \el^\infty(\Ga)\ra \lhG$ be
the embedding defined by
$f(\de_{[\pi]})=\sum_{\pi'\sim\pi}1_{\pi'}$
for each $\pi\in\IG$.
Let $m$ be an invariant mean on $\lhG$.
We show $m\circ f$ is an invariant mean.

Let $\pi_1\in\IG$.
We claim that
$(\tr_{\opi_1}\oti\id)(\De(f(\de_{[\pi]})))=f(\de_{[\pi_1][\pi]})$,
where $\tr_{\opi_1}$ denotes the normalized trace on $B(H_{\opi_1})$.
Let $\rho\in\IG$.
Then we get
\begin{equation}\label{eq:trf}
(\tr_{\opi_1}\oti\id)(\De(f(\de_{[\pi]})))1_\rho
=
\left(
\sum_{\pi'\sim\pi}
\frac{\dim(\pi',\opi_1\rho)d(\pi')}{d(\pi_1)d(\rho)}
\right)1_\rho.
\end{equation}
Suppose that $\rho$ is contained in the
conjugacy class $[\pi_1\pi]$.
Then we can take an irreducible $\pi'$
such that $\pi'\prec\opi_1\rho$ and $\pi'\sim\pi$.
By Lemma \ref{lem:sim} (3),
each irreducible contained in $\opi_1\rho$
is equivalent to $\pi$.
Hence the summation in (\ref{eq:trf})
is taken for all irreducibles in $\opi_1\rho$,
and that is equal to $1_\rho$.
If $\rho$ is not contained in the conjugacy class
$[\pi_1\pi]$, then it turns out
that the both sides of (\ref{eq:trf}) are equal to 0.
Thus we have proved the claim.
This claim immediately yields the invariance of $m\circ f$.
\end{proof}

Now recall the discussion we had in Section \ref{subsect:GCQS}.
There exists a generalized central quantum subgroup
$\bK$ corresponding to $\IH$
as shown in Theorem \ref{thm:corr-gc-nc}.
For the restriction map $r_\bK\col\CG\ra\CK$,
we have a unitary $g_\pi\in \CK$ such that
\[
(\id\oti r_\bK)(v_\pi)=1\oti g_\pi
\quad
\mbox{for }\pi\in\IG.
\]
Then these $g_\pi$'s are generating the dual of $\bK$,
which we denote by $\wdh{\bK}$.

\begin{prop}
The map $g\col\IG\ra \wdh{\bK}$ factors through
$\Ga$, and this factor map is a group isomorphism.  
\end{prop}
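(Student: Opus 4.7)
The plan is to check four items in order: (i) $g$ is constant on $\sim$-classes, (ii) the induced map $\bar g\col\Ga\to\wdh{\bK}$ is a group homomorphism, (iii) $\bar g$ is injective, and (iv) $\bar g$ is surjective. Throughout I shall use the characterization $\IH=\La_\bK=\{\si\in\IG\mid g_\si=1\}$ from Theorem~\ref{thm:corr-gc-nc} and the identification $\IH=\La(\al)$ established at the start of this section.

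For (i), let $\pi_1\sim\pi_2$. Decompose $\pi_1\ovl{\pi_2}=\bigoplus_\si n_\si\,\si$ into irreducibles. On one hand, $(\id\oti r_\bK)(v_{\pi_1\ovl{\pi_2}})=(\id\oti r_\bK)((v_{\pi_1})_{13}(v_{\ovl{\pi_2}})_{23})=1\oti g_{\pi_1}g_{\pi_2}^*$. On the other hand, applying $(\id\oti r_\bK)$ to the direct-sum decomposition yields the block diagonal with entries $g_\si$. Since $\pi_1\sim\pi_2$ forces every such $\si$ to lie in $\IH$, each $g_\si=1$, and hence $g_{\pi_1}g_{\pi_2}^*=1$. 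For (ii), the tensor-product formula $v_{\pi_1\pi_2}=(v_{\pi_1})_{13}(v_{\pi_2})_{23}$ gives immediately $g_{\pi_1\pi_2}=g_{\pi_1}g_{\pi_2}$, since the $g_\pi$'s are scalar unitaries in the commutative algebra $\CK$.

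For (iii), suppose $g_\pi=g_\rho$, so that $g_\pi g_\rho^*=1$. Decomposing $\pi\ovl{\rho}=\bigoplus_\si n_\si\si$ and restricting to $\bK$ as in (i), the resulting character $g_\pi g_\rho^*=1$ equals the direct sum of $n_\si$ copies of $g_\si$. Since distinct characters are linearly independent (Peter--Weyl for $\bK$), each $g_\si$ occurring must equal $1$, i.e. every irreducible summand $\si\prec\pi\ovl{\rho}$ lies in $\IH$. This is exactly the definition of $\pi\sim\rho$. For (iv), surjectivity of the restriction map $r_\bK\col\CG\to\CK$ means that the matrix entries $r_\bK(v_\pi)_{ij}=\de_{ij}g_\pi$, for $\pi\in\IG$ and $i,j\in I_\pi$, densely span $\CK$. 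Thus $\{g_\pi\mid\pi\in\IG\}$ is a total family of characters of $\bK$, and linear independence of distinct characters forces every element of $\wdh{\bK}$ to appear as some $g_\pi$.

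No step appears to present serious difficulty; the routine is entirely encoded in the interplay between the decomposition of $\pi\ovl{\rho}$ into irreducibles and the fact that $\pi|_\bK$ is one-dimensional for each $\pi$. The only point requiring a sentence of care is the use of linear independence of characters in (iii) and (iv), which is automatic since $\bK$ is a compact Kac algebra whose dual is the discrete group $\wdh{\bK}$.
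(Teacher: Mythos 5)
Your proof is correct and follows essentially the same route as the paper's (which is terse to the point of leaving most of your steps (i)--(iv) implicit). The core computation in both is the same: apply $(\id\oti\id\oti r_\bK)$ to $v_\pi v_\orho$ to obtain $1\oti 1\oti g_\pi g_\rho^*$ on one hand and a direct sum of $1_\si\oti g_\si$ over $\si\prec\pi\orho$ on the other, then compare. Two small remarks. First, in (iii) the appeal to linear independence of distinct characters is heavier than needed: once you know each diagonal block of $(\id\oti r_\bK)(v_{\pi\orho})$ equals both $1_\si\oti g_\si$ and $1_\si\oti g_\pi g_\rho^*$, you get $g_\si=g_\pi g_\rho^*$ directly, without invoking Peter--Weyl. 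Second, your parenthetical in (ii) that $\CK$ is a \emph{commutative} algebra is not justified in the generality of the paper: $\bK$ is only known to be co-commutative (so $\CK\cong C^*_r(\wdh{\bK})$ for a discrete group $\wdh{\bK}$), and nothing in the definition of a generalized central quantum subgroup forces $\wdh{\bK}$, equivalently $\Ga$, to be abelian. Fortunately this does not matter: the identity $g_\si=g_{\pi_1}g_{\pi_2}$ for $\si\prec\pi_1\pi_2$ comes straight from the leg-numbering formula for $v_{\pi_1\pi_2}$, with no commutativity required. For the surjectivity in (iv), note that the paper takes a shortcut by \emph{defining} $\wdh{\bK}$ to be the group generated by the $g_\pi$'s; your argument from surjectivity of $r_\bK$ is exactly what justifies that this coincides with the full dual of $\bK$, so the two treatments agree in substance.
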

\begin{proof}
Let $\pi,\rho\in\IG$ with $g_\pi=g_\rho$.
Then we have
$(\id_\pi\oti\id_\rho\oti r_\bK)(v_\pi v_\orho)=1$.
This means that each irreducible of $\pi\orho$
is contained in $\IH$,
which is equivalent to $[\pi]=[\rho]$ in $\Ga$.
Thus $g$ factors through $\Ga$
and the factor map $g\col\Ga\ra \wdh{\bK}$
is bijective.
Since
\[
(\id_\pi\oti\id_\rho\oti r_\bK)(v_\pi v_\rho)
=
g_\pi g_\rho,
\]
any irreducible $\si$ of $\pi\rho$ satisfies
$g_\si=g_\pi g_\rho$.
Hence $g$ is a group isomorphism.
\end{proof}

Thus we have
\[
C(\bG/\wdh{\Ga})=\CH.
\]

\begin{ex}
Let $\bG=SU(N)$ with $N\geq2$.
\index{$SU(N)$}
Then the center $Z(SU(N))$ coincides with
the finite group $\{\om^k 1_N\mid k=0,1,\dots,N-1\}$
where $\om:=e^{2\pi i/N}$.
If $N=2$, then a non-trivial subgroup
is $Z(SU(2))$ itself.
Hence if an action
$\al\col M\ra M\oti L^\infty(\widehat{SU(2)})$
has non-trivial modular part $\IH$,
then $\IH=\{\pi_n\mid n=0,1,\dots\}$, that is,
$\bH=SO(3)$ and $\Ga=\Z/2\Z$.
\end{ex}

Let us take a unitary $V$ and a map $\ps$ as in Corollary \ref{cor:Vps}.
Then we claim that
$\pi_1\sim\pi_2$
if and only if $\ps_{\pi_1}=\ps_{\pi_2} \bmod \Aut(M)_{\rm m}$.
Indeed, the condition $\pi_1\sim\pi_2$ implies
$\tal_{\pi_1\opi_2}=\Ad V_{\pi_1}\tps_{\pi_1}(V_{\opi_2})
\circ(\tps_{\pi_1}\tps_{\opi_2}\oti1)$
is implemented by unitary,
and so is $\tps_{\pi_1}\tps_{\opi_2}$
by \cite[Proposition 3.4 (1)]{Iz}.
Thus $\ps_{\pi_1}\ps_{\opi_2}\in\Aut(M)_{\rm m}$.
Putting $\pi_2=\pi_1$,
we get $\ps_{\pi_1}\ps_{\opi_1}=\id\bmod \Aut(M)_{\rm m}$.
Hence the claim has been proved.
This fact enables us to take
a map $\ps\col\IG\ra \Aut(M)$ that is constant
in each conjugacy class.
Thus we can regard $\ps$ as a map from $\Ga$ into $\Aut(M)$.
Of course, we may and do assume that $\ps_{[\btr]}=\id$.

\begin{lem}
The map $\ps$ is a modularly free modular $\Ga$-kernel.
\end{lem}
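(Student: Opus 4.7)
The plan is to verify the two defining properties: (i) $\psi_p\psi_q\psi_{pq}^{-1}\in\Aut(M)_{\rm m}$ for all $p,q\in\Ga$, and (ii) $\psi_p\notin\Aut(M)_{\rm m}$ for $p\neq e$. Both will be extracted from the factorization $\tal_\pi=\Ad V_\pi\circ(\tps_\pi\oti 1_\pi)$ of Corollary \ref{cor:Vps} together with the structural results of Section 3.

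For modular freeness, suppose $\psi_p\in\Aut(M)_{\rm m}$ for some $p\neq e$ and let $\pi$ be any representative of $p$. Then $\tps_\pi$ is inner on $\tM$, so $\tps_\pi\oti 1_\pi$ is implemented by a unitary in $\tM\oti B(H_\pi)$, and combining with the factorization above forces $\tal_\pi$ itself to be implemented by a unitary in $\tM\oti B(H_\pi)$. Hence $\al_\pi$ is modular, $\pi\in\La(\al)=\IH$, and consequently $p=[\pi]=e$, a contradiction.

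For (i), we work with the associated endomorphisms $\be_\pi\in\End(M)_0$ on $M$. The normality of the modular part ensures that $\be_\pi\ovl{\be_\pi}$ is modular for every $\pi$, and by closure of $\IH$ under tensor product, conjugation, and summand formation also that $\be_\pi\be_\rho\ovl{\be_\pi\be_\rho}$ is modular. Thus by Theorem \ref{thm:rho-orho}, each $\be_\pi$ admits a factorization $\be_\pi=\si_\pi\psi_\pi$ with $\si_\pi$ a modular endomorphism and $\psi_\pi\in\Aut(M)$, uniquely determined modulo $\Aut(M)_{\rm m}$; this $\psi_\pi$ agrees with the chosen lift in the lemma. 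Fix $p=[\pi]$, $q=[\rho]$ and pick any irreducible $\tau\prec\pi\rho$, so $[\tau]=pq$ by Lemma \ref{lem:sim}(3). The sector identity $[\be_\pi][\be_\rho]=\sum_\eta N_{\pi,\rho}^\eta[\be_\eta]$ combined with the direct computation $\be_\pi\be_\rho=\si_\pi\cdot(\psi_\pi\si_\rho\psi_\pi^{-1})\cdot\psi_\pi\psi_\rho$ and the stability of modular endomorphisms under conjugation by automorphisms (which follows from $\widetilde{\psi\sigma\psi^{-1}}=\tps\tsi\tps^{-1}$ and the $\th$-equivariance of $\tps$) identifies the automorphism part of $\be_\pi\be_\rho$, modulo $\Aut(M)_{\rm m}$, as $\psi_\pi\psi_\rho$. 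Applying Theorem \ref{thm:rho-orho} to $\be_\pi\be_\rho$ (whose hypotheses are satisfied by the closure arguments above) and using the uniqueness of its decomposition in the non-irreducible case treated in the proof of (4)$\Rightarrow$(3), every irreducible summand $\be_\eta$ with $N_{\pi,\rho}^\eta>0$ has automorphism part equal to $\psi_\pi\psi_\rho$ modulo $\Aut(M)_{\rm m}$. Specializing to $\eta=\tau$ gives $\psi_\pi\psi_\rho\psi_\tau^{-1}\in\Aut(M)_{\rm m}$, and since $\psi_{pq}$ and $\psi_\tau$ coincide modulo $\Aut(M)_{\rm m}$ by the well-definedness of the lift, we conclude $\psi_p\psi_q\psi_{pq}^{-1}\in\Aut(M)_{\rm m}$.

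The main step requiring care is the identification of the common automorphism part shared by all irreducible summands of $\be_\pi\be_\rho$; this rests on the uniqueness modulo $\Aut(M)_{\rm m}$ in Theorem \ref{thm:rho-orho} together with the extension of that uniqueness to non-irreducible endomorphisms contained in its proof, which forces all irreducible summands of $\be_\pi\be_\rho$ to share a single automorphism class modulo $\Aut(M)_{\rm m}$.
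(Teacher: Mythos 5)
Your proof is correct, but it takes a noticeably more roundabout route than the paper's. The paper's own proof exploits the factorization $\tal_\si=\Ad V_\si\circ(\tps_{[\si]}\oti1)$ directly: since $\ps$ has already been arranged to be constant on $\sim$-classes and every irreducible $\si\prec\pi_1\pi_2$ satisfies $[\si]=[\pi_1][\pi_2]$ by Lemma \ref{lem:sim}(3), summing over the isotypic pieces shows at once that $\tal_{\pi_1\pi_2}\circ(\tps_{[\pi_1][\pi_2]}^{-1}\oti1)$ is implemented by a unitary; on the other hand $\tal_{\pi_1\pi_2}=\Ad W\circ(\tps_{[\pi_1]}\tps_{[\pi_2]}\oti1)$ for some unitary $W$ by composing the two factorizations, so comparing the two expressions makes $\tps_{[\pi_1]}\tps_{[\pi_2]}\tps_{[\pi_1][\pi_2]}^{-1}$ inner, i.e.\ $\ps_{[\pi_1]}\ps_{[\pi_2]}\ps_{[\pi_1][\pi_2]}^{-1}\in\Aut(M)_{\rm m}$. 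Your argument instead re-enters the endomorphism picture, applies Theorem \ref{thm:rho-orho} to the composite $\be_\pi\be_\rho$, and invokes the uniqueness-of-automorphism-part argument buried in the non-irreducible half of the proof of (4)$\Rightarrow$(3). What the paper's route buys is brevity: it needs only the factorization from Corollary \ref{cor:Vps} together with the multiplicativity of the canonical extension, with no appeal to the closure arguments or to the uniqueness machinery of Theorem \ref{thm:rho-orho}. What your route buys is transparency: the paper's "it is trivial that" compresses the identification of the common automorphism class across irreducible summands, and your explicit invocation of the central-projection/uniqueness step makes it visible. Your treatment of modular freeness coincides with the paper's one-line remark, just spelled out. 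One small point to be careful about: when you assert that $\psi_\pi$ from Theorem \ref{thm:rho-orho} "agrees with the chosen lift in the lemma," this is correct only up to $\Aut(M)_{\rm m}$, which is all you need but is worth stating, and you also need both $\be_\pi\be_\rho\overline{\be_\pi\be_\rho}$ and $\overline{\be_\pi\be_\rho}\,\be_\pi\be_\rho$ modular to invoke condition (1) of the theorem; the latter also follows from normality by the same two-step argument and should be mentioned.
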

\begin{proof}
Let $\pi_1,\pi_2\in\IG$.
By definition of $\ps$,
it is trivial that
$\tal_{\pi_1\pi_2}\tps_{[\pi_1][\pi_2]}^{-1}$
is implemented by unitary,
and
$\ps_{[\pi_1]}\ps_{[\pi_2]}=\ps_{[\pi_1][\pi_2]}\bmod\Aut(M)_{\rm m}$.
Thus $\ps$ is a modular $\Ga$-kernel.
The modular freeness is trivial.
\end{proof}

Summarizing our argument, we have obtained the following.

\begin{prop}
Let $\bG$ be a compact Kac algebra.
If an action $\alpha$ of $\bhG$ on a factor $M$
is satisfying Assumption \ref{ass:modular},
then there exist a map $V\col\IG\ra U(\tM)$
and a modularly free modular $\Ga$-kernel
$\psi\col\Ga\ra\Aut(M)$
such that for all $\pi\in\IG$, $t\in\R$,
\[
\widetilde{\alpha}_\pi=\Ad V_\pi\circ
(\widetilde{\psi}_{[\pi]}\otimes1),
\quad
V_\pi^*(\th_t\oti\id)(V_\pi)\in Z(\tM)\oti B(H_\pi).
\]
\end{prop}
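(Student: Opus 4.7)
The proposition is essentially a summary of the discussion that precedes it, so the plan is to collect the pieces into a clean argument in roughly four steps.

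First, I would invoke Corollary \ref{cor:Vps} directly to obtain an initial choice of unitaries $V_\pi^0 \in \tM \oti B(H_\pi)$ and automorphisms $\psi_\pi^0 \in \Aut(M)$ satisfying $\tal_\pi = \Ad V_\pi^0 \circ (\wdt{\psi_\pi^0} \oti 1)$ together with the $\theta$-cocycle condition $(V_\pi^0)^*(\th_t \oti \id)(V_\pi^0) \in Z(\tM) \oti B(H_\pi)$. This is the only place where Assumption \ref{ass:modular} is used (to ensure normal modular part, hence Corollary \ref{cor:Vps} applies).

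Second, I would prove the claim stated in the text before the proposition: $\pi_1 \sim \pi_2$ if and only if $\psi_{\pi_1}^0 \equiv \psi_{\pi_2}^0 \bmod \Aut(M)_{\rm m}$. The forward direction is already spelled out in the excerpt: from $\tal_{\pi_1 \opi_2} = \Ad V_{\pi_1}^0 \wdt{\psi_{\pi_1}^0}(V_{\opi_2}^0) \circ (\wdt{\psi_{\pi_1}^0} \wdt{\psi_{\opi_2}^0} \oti 1)$ and the fact that $\tal_{\pi_1 \opi_2}$ decomposes into modular $\tal_\xi$ (since $\pi_1 \sim \pi_2$ means all $\xi \prec \pi_1 \opi_2$ lie in $\IH$), one deduces via \cite[Proposition 3.4 (1)]{Iz} that $\wdt{\psi_{\pi_1}^0} \wdt{\psi_{\opi_2}^0}$ is inner, and taking $\pi_1 = \pi_2$ yields $\psi_{\pi_1}^0 \psi_{\opi_1}^0 \in \Aut(M)_{\rm m}$, so that $\psi_{\opi_2}^0 \equiv (\psi_{\pi_2}^0)^{-1}$ modulo $\Aut(M)_{\rm m}$, giving the conclusion. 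The reverse direction is similar: if $\psi_{\pi_1}^0 \equiv \psi_{\pi_2}^0$, then each $\wdt{\psi_\xi^0}$ is inner for $\xi \prec \pi_1 \opi_2$, whence each $\tal_\xi$ is inner.

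Third, I would use this equivalence to choose a single representative $\psi_{[\pi]}$ in each class in $\Ga$ (setting $\psi_{[\btr]} := \id$), and then absorb the discrepancy into $V_\pi$. Concretely, for each $\pi$ write $\psi_\pi^0 = \psi_{[\pi]} \circ \kappa_\pi$ with $\kappa_\pi \in \Aut(M)_{\rm m}$, pick $u_\pi \in U(\tM)$ implementing $\wdt{\kappa_\pi}$, and set $V_\pi := V_\pi^0 \cdot (\wdt{\psi_{[\pi]}}(u_\pi) \oti 1)$. Then $\tal_\pi = \Ad V_\pi \circ (\wdt{\psi_{[\pi]}} \oti 1)$. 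The point to verify here---the main technical step---is that this redefinition preserves the $\theta$-cocycle condition: since $\kappa_\pi$ is extended modular, $u_\pi^* \th_t(u_\pi) \in Z(\tM)$, and because $\wdt{\psi_{[\pi]}}$ commutes with $\theta$ and preserves $Z(\tM)$, we get $\wdt{\psi_{[\pi]}}(u_\pi)^* \th_t(\wdt{\psi_{[\pi]}}(u_\pi)) \in Z(\tM)$, and the required property follows from a short computation combining this with the analogous property for $V_\pi^0$.

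Finally, I would verify that $\psi \col \Ga \to \Aut(M)$ is a modularly free modular $\Ga$-kernel. For the modular kernel property, observe that irreducibles $\xi \prec \pi_1 \pi_2$ all satisfy $[\xi] = [\pi_1][\pi_2]$ by Lemma \ref{lem:sim}(3), so from $(\ref{eq:be-sect})$-type decomposition we see that $\tal_{\pi_1} \tal_{\pi_2}$ splits into modular pieces with class label $[\pi_1][\pi_2]$, and comparing this with $\Ad V_{\pi_1} \wdt{\psi_{[\pi_1]}}(V_{\pi_2}) \circ (\wdt{\psi_{[\pi_1]}} \wdt{\psi_{[\pi_2]}} \oti 1)$ forces $\psi_{[\pi_1]} \psi_{[\pi_2]} \equiv \psi_{[\pi_1][\pi_2]} \bmod \Aut(M)_{\rm m}$ via \cite[Proposition 3.4 (1)]{Iz}. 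For modular freeness, if $\psi_{[\pi]} \in \Aut(M)_{\rm m}$ for some class $[\pi] \neq [\btr]$, then $\wdt{\psi_{[\pi]}}$ is inner, hence $\tal_\pi = \Ad V_\pi \circ (\wdt{\psi_{[\pi]}} \oti 1)$ is implemented by a unitary in $\tM \oti B(H_\pi)$, so $\pi \in \La(\al) = \IH$, contradicting $[\pi] \neq [\btr]$.
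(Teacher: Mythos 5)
Your proposal is correct and follows essentially the same route as the paper: the paper's "proof" is precisely the discussion preceding the proposition (invoke Corollary \ref{cor:Vps}, prove the equivalence $\pi_1\sim\pi_2\Leftrightarrow\ps_{\pi_1}\equiv\ps_{\pi_2}\bmod\Aut(M)_{\rm m}$ via \cite[Proposition 3.4(1)]{Iz}, redefine $\ps$ to be constant on classes, and check the modular-kernel and modularly-free properties). Your step 3 makes explicit a detail the paper leaves implicit --- how to absorb $\kappa_\pi\in\Aut(M)_{\rm m}$ into $V_\pi$ while preserving the $\theta$-cocycle condition --- which is a useful amplification rather than a departure.
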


\subsection{Invariants of actions}
Take $V$ and $\ps$ as in the previous proposition.
Let $\si_{[\pi],[\rho]}:=\ps_{[\pi]}\ps_{[\rho]}\ps_{[\pi][\rho]}^{-1}$
for each $\pi,\rho\in\IG$,
which is an extended modular automorphism on $M$.
We take a unitary $w_{[\pi],[\rho]}\in U(\tM)$
with $\tsi_{[\pi],[\rho]}=\Ad w_{[\pi],[\rho]}$.
Recall the following invariants $d_1,d_2$
introduced in (\ref{eq:d1d2}):
for $p,q,r\in\Ga, s\in\R$,
\[
w_{p,q}w_{pq,r}=d_1(p,q,r)\tps_p(w_{q,r}^\al)w_{p,qr},
\quad
\th_s(w_{p,q})=d_2(s;p,q)w_{p,q}.
\]

\begin{lem}
\label{lem:ad}
For $\pi,\rho\in \IG$,
we set
$a_{\pi,\rho}:=
V_{\pi\rho}^*V_\pi (\tps_{[\pi]}\oti\id_\rho)(V_\rho)w_{[\pi],[\rho]}$.
Then the following hold:
\begin{enumerate}
\item
$a_{\pi,\rho}$ is a unitary
contained in $Z(\tM)\oti B(H_\pi)\oti B(H_\rho)$;

\item
$
a_{\pi,\rho}^*a_{\pi\rho,\si}^*
=
d_1(\pi,\rho,\si)^*
\tps_{[\pi]}(a_{\rho,\si}^*)a_{\pi,\rho\si}^*
$;

\item
$\th_t(a_{\pi,\rho})
=
d_2(t;[\pi],[\si])
c_{\pi\rho}(t)^*
a_{\pi,\rho}
c_{\pi}(t)\tps_{[\pi]}(c_\rho(t))$.
\end{enumerate}
\end{lem}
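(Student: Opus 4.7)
The plan is to read the three claims as coherence consequences of the coaction identity $(\tal\oti\id)\tal=(\id\oti\De)\tal$ applied to the $V$--$\ps$--$w$ decomposition $\tal_\xi=\Ad V_\xi\circ(\tps_{[\xi]}\oti 1)$. The starting point is the observation that for any $\pi,\rho\in\IG$, every irreducible $\si\prec\pi\rho$ lies in the class $[\pi][\rho]\in\Ga$, so $\tps_{[\si]}=\tps_{[\pi][\rho]}$. Consequently the composite $\tal^{\pi\rho}:=(\tal_\pi\oti\id_\rho)\circ\tal_\rho$, which by the coaction identity is the map induced by the tensor product representation $\pi\rho$, admits a global implementation
\[
\tal^{\pi\rho}(x)=V_{\pi\rho}\bigl(\tps_{[\pi][\rho]}(x)\oti 1_\pi\oti 1_\rho\bigr)V_{\pi\rho}^*,
\qquad V_{\pi\rho}\in U(\tM\oti B(H_\pi)\oti B(H_\rho)),
\]
unique up to left multiplication by a unitary in $Z(\tM)\oti B(H_\pi)\oti B(H_\rho)$ (since $\tps_{[\pi][\rho]}$ is an automorphism of $\tM$, so $(\tps_{[\pi][\rho]}(\tM)\oti 1\oti 1)'\cap(\tM\oti B(H_\pi)\oti B(H_\rho))=Z(\tM)\oti B(H_\pi)\oti B(H_\rho)$). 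This is how $V_{\pi\rho}$ is interpreted in the lemma.

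For (1), substitute $\tal_\pi=\Ad V_\pi\circ(\tps_{[\pi]}\oti 1)$ and $\tal_\rho=\Ad V_\rho\circ(\tps_{[\rho]}\oti 1)$ into the composite and use $\tps_{[\pi]}\tps_{[\rho]}=\Ad w_{[\pi],[\rho]}\circ\tps_{[\pi][\rho]}$; a direct rearrangement gives a second implementation by $A^{\pi\rho}:=V_\pi\cdot(\tps_{[\pi]}\oti\id_\rho)(V_\rho)\cdot w_{[\pi],[\rho]}$, with $V_\pi$ and $(\tps_{[\pi]}\oti\id_\rho)(V_\rho)$ placed on the appropriate pairs of legs and $w_{[\pi],[\rho]}$ on the $\tM$-leg. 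Thus $a_{\pi,\rho}=V_{\pi\rho}^*A^{\pi\rho}$ lies in $Z(\tM)\oti B(H_\pi)\oti B(H_\rho)$ by the uniqueness above, proving (1). The centrality of every $a$-factor in its $\tM$-leg will be used freely below to commute it past any $\tM$-valued unitary whose $B(H)$-support is disjoint from that of the $a$-factor.

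For (2), apply the same analysis to $\pi\rho\si$: by coassociativity, $\tal^{(\pi\rho)\si}=\tal^{\pi(\rho\si)}=\tal^{\pi\rho\si}$, implemented up to $Z(\tM)$-centrals by both
\[
A^{(\pi\rho)\si}=V_{\pi\rho}\cdot(\tps_{[\pi][\rho]}\oti\id_\si)(V_\si)\cdot w_{[\pi\rho],[\si]},\qquad
A^{\pi(\rho\si)}=V_\pi\cdot(\tps_{[\pi]}\oti\id_\rho\oti\id_\si)(V_{\rho\si})\cdot w_{[\pi],[\rho\si]}.
\]
In the first expression substitute $V_{\pi\rho}=A^{\pi\rho}a_{\pi,\rho}^*$, commute $a_{\pi,\rho}^*$ past the $\tM$-valued factors to the right, and apply the cocycle identity $w_{[\pi],[\rho]}w_{[\pi\rho],[\si]}=d_1([\pi],[\rho],[\si])\tps_{[\pi]}(w_{[\rho],[\si]})w_{[\pi],[\rho\si]}$ from (\ref{eq:d1d2}); this yields
\[
a_{\pi\rho,\si}\,a_{\pi,\rho}=d_1([\pi],[\rho],[\si])\cdot V_{\pi\rho\si}^*V_\pi\tps_{[\pi]}(V_\rho)\tps_{[\pi]}\tps_{[\rho]}(V_\si)\tps_{[\pi]}(w_{[\rho],[\si]})w_{[\pi],[\rho\si]}.
\]
In the second, substitute $V_{\rho\si}=A^{\rho\si}a_{\rho,\si}^*$ and commute $\tps_{[\pi]}(a_{\rho,\si}^*)$ past $w_{[\pi],[\rho\si]}$; one obtains $a_{\pi,\rho\si}\cdot\tps_{[\pi]}(a_{\rho,\si})$ equal to the same product without the $d_1$-factor. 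Equating and taking adjoints yields (2). For (3), apply $\th_t\oti\id\oti\id$ term by term to $a_{\pi,\rho}$, using $\th_t\tps_{[\pi]}=\tps_{[\pi]}\th_t$ (canonical extensions commute with $\th$), $\th_t(V_\xi)=V_\xi c_\xi(t)$ with $c_\xi(t):=V_\xi^*(\th_t\oti\id)(V_\xi)\in Z(\tM)\oti B(H_\xi)$ by Corollary~\ref{cor:Vps}, and $\th_t(w_{[\pi],[\rho]})=d_2(t;[\pi],[\rho])w_{[\pi],[\rho]}$; centrality of $c_{\pi\rho}(t)$, $c_\pi(t)$, $\tps_{[\pi]}(c_\rho(t))$, and $d_2$ in their $\tM$-legs then allows direct rearrangement into the stated formula.

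The main obstacle is organizational: tracking the three-leg placements in (2) and verifying that each commutation past an $\tM$-valued factor is legitimate. With (1) established, the underlying logic is uniform---(1), (2) and (3) express the coaction identity, its coassociativity, and its $\th$-equivariance, respectively.
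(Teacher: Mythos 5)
Your proof is correct and follows essentially the same path as the paper's: part (1) is the same uniqueness-up-to-$Z(\tM)\oti B(H_\pi)\oti B(H_\rho)$-unitary argument that the paper compresses into the sentence ``This follows from $\al_\pi\al_\rho=\al_{\pi\rho}$,'' part (2) is the same computation the paper carries out by expanding both sides of $V_{\pi\rho}\tps_{[\pi\rho]}(V_\si)w_{[\pi\rho],[\si]}=V_{\pi\rho\si}a_{\pi\rho,\si}$ and invoking the $d_1$ cocycle identity, and part (3) is the same term-by-term application of $\th_t$. Your reorganization of (2) around the two bracketings $A^{(\pi\rho)\si}$ and $A^{\pi(\rho\si)}$ is just a repackaging of the paper's comparison.
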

\begin{proof}
(1). This follows from $\al_\pi\al_\rho=\al_{\pi\rho}$.

(2).
By definition, we have
\[
V_{\pi\rho}\tps_{[\pi\rho]}(V_\si)w_{[\pi\rho],[\si]}=V_{\pi\rho\si}a_{\pi\rho,\si}.
\]
The left hand side is computed as follows:
\begin{align*}
&V_\pi\tps_\pi(V_\rho)w_{[\pi],[\rho]}a_{\pi,\rho}^*
\cdot
\tps_{[\pi\rho]}(V_\si)w_{[\pi\rho],[\si]}
\\
&=
V_\pi\tps_\pi(V_\rho)a_{\pi,\rho}^*
\cdot
\tps_{[\pi]}\tps_{[\rho]}(V_\si)w_{[\pi],[\rho]}w_{[\pi\rho],[\si]}
\\
&=
V_\pi\tps_\pi(V_\rho)
\cdot
\tps_{[\pi]}\tps_{[\rho]}(V_\si)
a_{\pi,\rho}^*w_{[\pi],[\rho]}w_{[\pi\rho],[\si]}
\\
&=
V_\pi\tps_\pi(V_\rho\tps_{[\rho]}(V_\si))
\cdot
a_{\pi,\rho}^*w_{[\pi],[\rho]}w_{[\pi\rho],[\si]}.
\end{align*}
The right hand side is equal to
\[
V_\pi\tps_{[\pi]}(V_{\rho\si})w_{[\pi],[\rho][\si]}a_{\pi,\rho\si}^*a_{\pi\rho,\si}
=
V_\pi\tps_{[\pi]}(V_\rho\tps_{[\rho]}(V_\si)w_{\rho,\si}a_{\rho,\si}^*)
w_{[\pi],[\rho][\si]}a_{\pi,\rho\si}^*a_{\pi\rho,\si}
.
\]
Comparing these, we have
\[
a_{\pi,\rho}^*w_{\pi,\rho}w_{\pi\rho,\si}
=
\tps_{[\pi]}(w_{\rho,\si}a_{\rho,\si}^*)w_{\pi,\rho\si}a_{\pi,\rho\si}^*a_{\pi\rho,\si},
\]
that is,
\[
a_{\pi,\rho}^*a_{\pi\rho,\si}^*
=
d_1(\pi,\rho,\si)^*
\tps_{[\pi]}(a_{\rho,\si}^*)a_{\pi,\rho\si}^*.
\]

(3).
This is computed as follows:
\begin{align*}
\th_t(a_{\pi,\rho})
&=
\th_t(V_{\pi\rho}^* V_\pi\tps_{[\pi]}(V_\rho)w_{[\pi],[\si]})
\\
&=
c_{\pi\rho}(t)^*V_{\pi\rho}^* \cdot
V_\pi c_{\pi}(t)\cdot
\tps_{[\pi]}(V_\rho c_\rho(t))
\cdot
d_2(t;[\pi],[\si])w_{[\pi],[\si]}
\\
&=
c_{\pi\rho}(t)^*V_{\pi\rho}^* \cdot
V_\pi \cdot
\tps_{[\pi]}(V_\rho )w_{[\pi],[\si]}
\cdot
c_{\pi}(t)\tps_{[\pi]}(c_\rho(t))
d_2(t;[\pi],[\si])
\\
&=
c_{\pi\rho}(t)^*
a_{\pi,\rho}
c_{\pi}(t)\tps_{[\pi]}(c_\rho(t))
d_2(t;[\pi],[\si]).
\end{align*}
\end{proof}

\begin{rem}
The element $a\in Z(\tM)\oti \lhG\oti\lhG$ generalizes
the characteristic invariant studied in \cite{ST}.
Indeed, let $\al\col\La\ra\Aut(M)$ be an action
of a discrete group $\La$ on a factor $M$.
Let $\La_0$ be the modular part of $\La$.
%
For $n\in\La_0$, take $v_n\in U(\tM)$ such that $\tal_n=\Ad v_n$.
Let $\Ga:=\Lambda/\Lambda_0$.
Fix a section $p\in \Ga\rightarrow s(p)\in \La$. 
Define $n(p,q):=s(p)s(q)s(pq)^{-1}$.
Set $\psi_p:=\alpha_{s(p)}$,
$w_{p,q}:=v_{n(p,q)}$.
Hence $\tps_p\tps_q=\Ad w_{p,q}\circ\tps_{pq}$.
Then $\alpha_{ns(p)}=\Ad v_n\circ\psi_p$ for $n\in\La_0$ and $p\in\Ga$.
For $g=ms(p)$, $h=ns(q)$,  
$\alpha_{gh}=\Ad v_{ms(p)ns(p)^{-1}n(p,q)}\circ\psi_{pq}$ holds. Thus 
$a_{g,h}$ is given by 
\begin{align*}
 a_{g,h}&=v_{ms(p)ns(p)^{-1}n(p,q)}^* v_m\psi_p(v_n)w_{p,q} \\
&= v_{ms(p)ns(p)^{-1}n(p,q)}^* v_m\alpha_{s(p)}(v_n)v_{n(p,q)} \\
&= \lambda(s(p), s(p)ns(p)^{-1})v_{ms(p)ns(p)^{-1}n(p,q)}^*
v_mv_{s(p)ns(p)^{-1}}v_{n(p,q)} \\
&=\lambda(s(p), s(p)ns(p)^{-1})\mu(m, s(p)ns(p)^{-1}) \mu(ms(p)ns(p)^{-1},n(p,q)).
\end{align*}
If we put $p=q=e$, then we can recover $\mu(m,n)$ from $a_{g,h}$, thus
 so does $\lambda(g,h)$.
\end{rem}

We will state our main result in this subsection as follows.

\begin{thm}\label{thm:al-be-a-c}
Let $\al$ and $\be$ be actions on an injective factor $M$.
Assume the following conditions:
\begin{itemize}
\item
$\al$ and $\be$ have common normal modular part;
\item
$\mo(\al)=\mo(\be)$;
\item
$a^\al=a^\be$;
\item
$c_\pi^\al(t)=c_\pi^\be(t)$ for all $\pi\in\IG$ and $t\in\R$.
\end{itemize}
Then $\al$ and $\be$ are strongly cocycle conjugate.
\end{thm}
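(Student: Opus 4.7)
The plan is to reduce the problem to a comparison of the modular $\Ga$-kernels $\ps^\al$ and $\ps^\be$ supplied by Corollary \ref{cor:Vps}, and then upgrade this to cocycle conjugacy of the full actions. Throughout I use the presentations $\tal_\pi=\Ad V_\pi^\al\circ(\tps^\al_{[\pi]}\oti 1)$ and $\tbe_\pi=\Ad V_\pi^\be\circ(\tps^\be_{[\pi]}\oti 1)$, together with the unitaries $w^\al,w^\be\col\Ga\times\Ga\to U(\tM)$ implementing the extended modular automorphisms $\si^{\ps^\al}_{p,q}$ and $\si^{\ps^\be}_{p,q}$.

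First I check that $\ps^\al$ and $\ps^\be$, viewed as modularly free modular $\Ga$-kernels on $M$, carry the same invariant in the sense of Theorem \ref{thm:main-kernel-cohom}. The hypothesis $\mo(\al)=\mo(\be)$ immediately gives $\mo(\ps^\al_p)=\mo(\ps^\be_p)$ for every $p\in\Ga$. For the modular obstruction, I exploit the two identities established just before the theorem: the identity
\[
a_{\pi,\rho}^*\,a_{\pi\rho,\si}^* = d_1(\pi,\rho,\si)^*\,\tps_{[\pi]}(a_{\rho,\si}^*)\,a_{\pi,\rho\si}^*
\]
combined with $a^\al=a^\be$ and the agreement of $\tps^\al_{[\pi]}$ and $\tps^\be_{[\pi]}$ on $Z(\tM)$ (which follows from the module condition) gives $d_1^\al=d_1^\be$; the $\th$-transformation rule for $a_{\pi,\rho}$ together with $a^\al=a^\be$ and $c^\al=c^\be$ gives $d_2^\al=d_2^\be$. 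Consequently $\Obm(\ps^\al)=\Obm(\ps^\be)$, and Theorem \ref{thm:main-kernel-cohom} yields $\eta\in\oInt(M)$ and a map $v\col\Ga\to U(M)$ with $\ps^\be_p=\Ad v_p\circ\eta\ps^\al_p\eta^{-1}$ and with the prescribed compatibility relating $w^\al$ and $w^\be$.

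Next I replace $\al$ by the strongly cocycle conjugate action obtained by conjugating with $\eta$ and perturbing with the $\al$-cocycle lifted from $v$ through the quotient map $\IG\to\Ga$; a careful verification, parallel to the one performed before Theorem \ref{thm:mod-ker}, shows that this reduction preserves both $a^\al=a^\be$ and $c^\al=c^\be$ and brings the situation to $\ps^\al=\ps^\be=:\ps$ and $w^\al=w^\be=:w$. Set $U_\pi:=V_\pi^\al(V_\pi^\be)^*\in\tM\oti B(H_\pi)$. The equality $c^\al=c^\be$ gives $\th_t(U_\pi)=U_\pi$, so $U_\pi\in M\oti B(H_\pi)$, and the intertwining formulas yield $\tal_\pi=\Ad U_\pi\circ\tbe_\pi$ and hence $\al_\pi=\Ad U_\pi\circ\be_\pi$. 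A direct substitution into the defining formula for $a$ produces
\[
a^\al_{\pi,\rho}(a^\be_{\pi,\rho})^* = (V^\be_{\pi\rho})^*\,U_{\pi\rho}^*\,U_\pi\,\tbe_\pi(U_\rho)\,V^\be_{\pi\rho},
\]
so $a^\al=a^\be$ forces $U_{\pi\rho}=U_\pi\be_\pi(U_\rho)$. This is exactly the $\be$-cocycle identity, and combined with the initial conjugation by $\eta\in\oInt(M)$ it completes the proof of strong cocycle conjugacy.

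The main obstacle is the reduction step: tracking how the conjugation by $\eta$ and the perturbation by the lifted cocycle transform the data $(V_\pi,w_{p,q},a_{\pi,\rho},c_\pi(t))$, and confirming that the various ambiguities (choice of section $\IG\to\Ga$, choice of unitaries $w_{p,q}$, coboundary freedom in $d_1,d_2$) are absorbed consistently on both the $\al$- and $\be$-sides. This is parallel to the bookkeeping carried out in the proof of Theorem \ref{thm:mod-ker} for the modular kernel invariants, but must be carried out one level deeper so that the full action-level invariants $a$ and $c$ survive the reduction intact.
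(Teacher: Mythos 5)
Your proposal follows essentially the same route as the paper: extract the modular $\Ga$-kernels $\ps^\al,\ps^\be$ from Corollary \ref{cor:Vps}, deduce exact equality $d_1^\al=d_1^\be$ and $d_2^\al=d_2^\be$ from $a^\al=a^\be$, $c^\al=c^\be$, and the agreement on $Z(\tM)$ supplied by $\mo(\al)=\mo(\be)$, invoke the kernel classification, and then show the resulting unitary $U_\pi=V^\al_\pi(V^\be_\pi)^*$ lies in $M\oti B(H_\pi)$ and is a cocycle. Two points of slippage are worth flagging. First, having established \emph{exact} equality of $d_1$ and $d_2$, the right citation is Theorem \ref{thm:mod-ker} (as the paper uses), which directly yields both the outer conjugacy $\ps^\al_{[\pi]}=\Ad v_{[\pi]}\circ\bar\th\ps^\be_{[\pi]}\bar\th^{-1}$ \emph{and} the compatibility $w^\al_{p,q}=v_p\,\bar\th\ps^\be_p\bar\th^{-1}(v_q)\,\bar\th(w^\be_{p,q})v_{pq}^*$; Theorem \ref{thm:main-kernel-cohom} in its statement only asserts strong cocycle conjugacy of the kernels and does not announce that compatibility, which you use downstream. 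Second, the phrase ``perturbing with the $\al$-cocycle lifted from $v$ through $\IG\to\Ga$'' is imprecise: $v_{[\pi]}\oti 1_\pi$ is generally not an $\al$-cocycle, and no such perturbation occurs. What the paper does instead is conjugate $\be$ by $\bar\th$ to get $\ga$, and then absorb $v$ into a new choice of implementing unitary, $V^\ga_\pi:=\bar\th(V^\be_\pi)v_{[\pi]}^*$, so that $\tga_\pi=\Ad V^\ga_\pi\circ(\tps^\al_{[\pi]}\oti1)$ with $w^\ga=w^\al$; the verification $a^\ga=a^\al$ and $c^\ga=c^\be$ is then an explicit computation, and the cocycle identity for $v_\pi=V^\al_\pi(V^\ga_\pi)^*$ drops out of $a^\al=a^\ga$ exactly as you indicate. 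With these two adjustments your argument matches the paper's.
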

\begin{proof}
Take $V^\al,V^\be,\ps^\al$ and $\ps^\be$ as follows:
\[
\tal_\pi=\Ad V_\pi^\al\circ(\tps_\pi^\al\oti1),
\quad
\tbe_\pi=\Ad V_\pi^\be\circ(\tps_\pi^\be\oti1),
\]
\[
a_{\pi,\rho}^\al
=
(V_{\pi\rho}^\al)^* V_\pi^\al\tps_{[\pi]}^\al(V_\rho^\al)w_{[\pi],[\rho]}^\al,
\quad
a_{\pi,\rho}^\be
=
(V_{\pi\rho}^\be)^* V_\pi^\be\tps_{[\pi]}^\be(V_\rho^\be)w_{[\pi],[\rho]}^\be,
\]
and
\[
c_\pi^\al(t)=(V_\pi^\al)^* \th_t(V_\pi^\al),
\quad
c_\pi^\be(t)=(V_\pi^\be)^* \th_t(V_\pi^\be).
\]
Then by the previous lemma,
we have $d_1^\al=d_1^\be$ and $d_2^\al=d_2^\be$.
By Theorem \ref{thm:mod-ker},
there exist a map $v\col \IG\ra U(M)$
and an automorphism $\bar{\th}\in\oInt(M)$ such that
\[
\ps_{[\pi]}^\al=\Ad v_{[\pi]}\circ
\bar{\th}\ps_{[\pi]}^\be\bar{\th}^{-1},
\quad
w_{[\pi],[\rho]}^\al=
v_{[\pi]}\cdot
\bar{\th}\ps_{[\pi]}^\be\bar{\th}^{-1}(v_{[\rho]})
\cdot
\bar{\th}(w_{[\pi],[\rho]}^\be)
v_{[\pi\rho]}^*.
\]

The action $\ga_\pi:=(\bar{\th}\oti\id)\circ\be_\pi\circ\bar{\th}^{-1}$
satisfies
\[
\wdt{\ga}_\pi
=\Ad \bar{\th}(V_\pi^\be)\circ
(\bar{\th}\ps_{[\pi]}^\be\bar{\th}^{-1}\oti1)
=
\Ad \bar{\th}(V_\pi^\be)v_{[\pi]}^*
\circ
(\ps_{[\pi]}^\al\oti1).
\]
Let $V_\pi^\ga:=\bar{\th}(V_\pi^\be)v_{[\pi]}^*$,
$\ps^\ga:=\ps^\al$ and
$w_{[\pi],[\rho]}^\ga:=w_{[\pi],[\rho]}^\al$.
Then $a^\ga$ is computed as
\begin{align*}
a_{\pi,\rho}^\ga
&=
(V_{\pi\rho}^\ga)^*
\cdot
V_\pi^\ga
\cdot
\tps_{[\pi]}^\ga(V_\rho^\ga)
\cdot
w_{[\pi],[\rho]}^\ga
\\
&=
(\bar{\th}(V_{\pi\rho}^\be)v_{[\pi][\rho]}^*)^*
\cdot
\bar{\th}(V_\pi^\be)v_{[\pi]}^*
\cdot
\tps_{[\pi]}^\al(\bar{\th}(V_\rho^\be)v_{[\rho]}^*)
\cdot
w_{[\pi],[\rho]}^\ga
\\
&=
v_{[\pi][\rho]}\bar{\th}(V_{\pi\rho}^\be)^*
\cdot
\bar{\th}(V_\pi^\be)
\cdot
v_{[\pi]}^*\tps_{[\pi]}^\al(\bar{\th}(V_\rho^\be)v_{[\rho]}^*)
v_{[\pi]}
\cdot
v_{[\pi]}^*w_{[\pi],[\rho]}^\ga
\\
&=
v_{[\pi][\rho]}\bar{\th}\left((V_{\pi\rho}^\be)^*V_\pi^\be\right)
\cdot
\bar{\th}\ps_{[\pi]}^\be
\bar{\th}^{-1}(\bar{\th}(V_\rho^\be)v_{[\rho]}^*)
\cdot
v_{[\pi]}^*w_{[\pi],[\rho]}^\ga
\\
&=
v_{[\pi][\rho]}
\cdot
\bar{\th}\left(
(V_{\pi\rho}^\be)^*V_\pi^\be
\ps_{[\pi]}^\be(V_\rho^\be)
\cdot
\ps_{[\pi]}^\be\bar{\th}^{-1}(v_{[\rho]}^*)
\right)
\cdot
v_{[\pi]}^*w_{[\pi],[\rho]}^\ga
\\
&=
v_{[\pi][\rho]}
\cdot
\bar{\th}\left(
a_{\pi,\rho}^\be (w_{[\pi],[\rho]}^\be)^*
\cdot
\ps_{[\pi]}^\be\bar{\th}^{-1}(v_{[\rho]}^*)
\right)
\cdot
v_{[\pi]}^*w_{[\pi],[\rho]}^\ga
\\
&=
\bar{\th}(a_{\pi,\rho}^\be)
\cdot
v_{[\pi][\rho]}
\cdot
\bar{\th}
(w_{[\pi],[\rho]}^\be)^*
\cdot
\bar{\th}\ps_{[\pi]}^\be\bar{\th}^{-1}(v_{[\rho]}^*)
\cdot
v_{[\pi]}^*w_{[\pi],[\rho]}^\ga
\\
&=
a_{\pi,\rho}^\be
\cdot
(w_{[\pi],[\rho]}^\al)^*
\cdot
w_{[\pi],[\rho]}^\ga
\\
&=
a_{\pi,\rho}^\al.
\end{align*}
We also have $c^\ga(t)=c^\be(t)$.
We denote by $\ps$ the modular $\Ga$-kernel $\ps^\al=\ps^\ga$.
Set $v_\pi:=V_\pi^\al (V_\pi^\ga)^*$.
It is trivial that $\al=\Ad v\circ \ga$ by definition,
and $v\in M\oti \lhG$ because of $c^\al=c^\ga$.
We check $v$ is a $\ga$-cocycle as follows:
\begin{align*}
v_\pi\ga_\pi(v_\rho)
&=
V_\pi^\al(V_\pi^\ga)^*
\cdot
\tga_\pi(V_\rho^\al(V_\rho^\ga)^*)
\\
&=
V_\pi^\al
\cdot
(V_\pi^\ga)^*
\tga_\pi(V_\rho^\al(V_\rho^\ga)^*)
V_\pi^\ga
\cdot
(V_\pi^\ga)^*
\\
&=
V_\pi^\al
\cdot
\tps_\pi(V_\rho^\al(V_\rho^\ga)^*)
\cdot
(V_\pi^\ga)^*
\\
&=
V_\pi^\al
\tps_{[\pi]}(V_\rho^\al)
\cdot
(V_\pi^\ga\tps_{[\pi]}(V_\rho^\ga))^*
\\
&=
V_{\pi\rho}^\al a_{\pi,\rho}^\al
(w_{[\pi],[\rho]}^\al)^*
\cdot
(V_{\pi\rho}^\ga a_{\pi,\rho}^\ga
(w_{[\pi],[\rho]}^\ga)^*)^*
\\
&=
V_{\pi\rho}^\al a_{\pi,\rho}^\al
(w_{[\pi],[\rho]}^\al)^*
\cdot
(V_{\pi\rho}^\ga a_{[\pi],[\rho]}^\al
(w_{[\pi],[\rho]}^\al)^*)^*
\\
&=
v_{\pi\rho}.
\end{align*}
Therefore $\al$ and $\be$ are strongly cocycle conjugate.
\end{proof}

Let $\ps\col \Ga\ra \Aut(M)$ be a modular kernel as before.
Then we denote by $\mathcal{A}(\ps)$
the set of unitary $a\in Z(\tM)\oti\lhG\oti\lhG$
such that
$a_{\btr,\cdot}=1=a_{\cdot,\btr}$
and
it gives a function $d_1\col \Ga^3\ra U(Z(\tM))$ as
\[
a_{\pi,\rho}^*a_{\pi\rho,\si}^*
=d_1([\pi],[\rho],[\si])^*
\tps_{[\pi]}(a_{\rho,\si}^*)a_{\pi,\rho\si}^*.
\]

Let $Z^1(F^M,\lhG)$ be the set of 1-cocycles,
that is, the collection of a unitary
$c(t)\in \meC_M\oti\lhG$
such that $c(t)_\btr=1$ and $c(s+t)=c(s)(\th_s\oti\id)(c(t))$.

We introduce
the subset $\mX(\ps,F^M)$
of the direct product $\mathcal{A}(\ps)\times Z^1(F^M,\lhG)$.
An element
$(a,c)$ is contained in $\mX(\ps,F^M)$
if and only if there exists a function $d_2\col \R\times\Ga^2\ra U(Z(\tM))$
such that
\[
\th_t(a_{\pi,\rho})
=
d_2(t;[\pi],[\rho])
c_{\pi\rho}(t)^*a_{\pi,\rho}\tps_{[\pi]}(c_\rho(t))c_\pi(t).
\]

Next we introduce the equivalence relation $\sim$
in $\mX(\ps,F^M)$
as
$(a^1,c^1)\sim (a^2,c^2)$ if and only if
there exist
$c^0\in Z^1(F^M,\el^\infty(\Ga))$,
$\la\in U(Z(\tM)\oti\el^\infty(\Ga^2))$
and
$\mu\in U(Z(\tM)\oti \lhG)$
such that
\[
a_{\pi,\rho}^2
=
\la_{[\pi],[\rho]}
\mu_{\pi\rho}a_{\pi,\rho}^1 \tps_{[\pi]}(\mu_\rho^*)\mu_\pi^*,
\quad
c_\pi^2(t)=c_{[\pi]}^0(t)\mu_\pi c_\pi^1(t)\th_t(\mu_\pi^*).
\]
The quotient space is denoted by
$X(\ps, F^M)$.
Then it is not so hard to show the following lemma.

\begin{lem}
The following hold:
\begin{enumerate}
\item
For an action $\al\col M\ra M\oti\lhG$ with $\mo(\al_\pi)=\mo(\ps_{[\pi]})$,
the element $[(a^\al,c^\al)]\in X(\ps,F^M)$
is well-defined.
\item
If two actions $\al,\be\col M\ra M\oti\lhG$
with $\mo(\al_\pi)=\mo(\be_\pi)=\mo(\ps_{[\pi]})$are
strongly cocycle conjugate,
then
$[(c^\al,a^\al)]=[(c^\be,a^\be)]$.
\end{enumerate}
\end{lem}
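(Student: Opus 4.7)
My plan is to prove both parts by direct computation. For part~(1), the pair $(a^\al,c^\al)$ depends on two auxiliary choices: a unitary $V_\pi\in U(\tM\otimes B(H_\pi))$ with $\tal_\pi=\Ad V_\pi\circ(\widetilde{\ps}_{[\pi]}\otimes 1)$ (provided by Corollary~\ref{cor:Vps}), and a unitary $w_{[\pi],[\rho]}\in U(\tM)$ implementing the canonical extension of $\si_{[\pi],[\rho]}$. Since $\widetilde{\ps}_{[\pi]}$ is an automorphism of $\tM$, the relative commutant $((\widetilde{\ps}_{[\pi]}\otimes 1)(\tM))'\cap(\tM\otimes B(H_\pi))$ equals $Z(\tM)\otimes B(H_\pi)$, so $V_\pi$ is determined up to right multiplication by a unitary $\mu_\pi^*\in U(Z(\tM)\otimes B(H_\pi))$; likewise $w_{[\pi],[\rho]}$ is determined up to a central scalar $\la_{[\pi],[\rho]}\in U(Z(\tM))$. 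I would substitute these perturbations into the defining formulas
\[
a_{\pi,\rho}=V_{\pi\rho}^*V_\pi\widetilde{\ps}_{[\pi]}(V_\rho)w_{[\pi],[\rho]},\qquad c_\pi(t)=V_\pi^*\th_t(V_\pi),
\]
and reorder. The essential input is that $\mu_\pi^*\in Z(\tM)\otimes B(H_\pi)$ (sitting on legs $1,2$ of $\tM\otimes B(H_\pi)\otimes B(H_\rho)$, with its $\tM$-leg central) commutes with $\widetilde{\ps}_{[\pi]}(V_\rho)$ (sitting on legs $1,3$), and that $\la$ is fully central; the resulting identities
\[
a'_{\pi,\rho}=\la_{[\pi],[\rho]}\mu_{\pi\rho}a_{\pi,\rho}\widetilde{\ps}_{[\pi]}(\mu_\rho^*)\mu_\pi^*,\qquad c'_\pi(t)=\mu_\pi c_\pi(t)\th_t(\mu_\pi^*)
\]
are precisely the equivalence relation $\sim$ with trivial $c^0$.

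For part~(2), strong cocycle conjugacy furnishes $\th\in\oInt(M)$ and an $\al$-cocycle $v\in U(M\otimes\lhG)$ satisfying $\be_\pi=\Ad v_\pi\circ(\th\otimes\id)\circ\al_\pi\circ\th^{-1}$. I would take, as auxiliary data for $\be$,
\[
\ps^\be_{[\pi]}:=\th\ps^\al_{[\pi]}\th^{-1},\qquad V^\be_\pi:=v_\pi\cdot(\widetilde{\th}\otimes\id)(V^\al_\pi),\qquad w^\be_{[\pi],[\rho]}:=\widetilde{\th}(w^\al_{[\pi],[\rho]}),
\]
which are valid because $\widetilde{\th}$ commutes with the flow $\th_t$ and fixes $Z(\tM)$ pointwise (both consequences of $\th\in\oInt(M)$). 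Substituting into the formulas for $a^\be$ and $c^\be$ and using the $\al$-cocycle identity $v_{\pi\rho}=v_\pi(\tal_\pi\otimes\id)(v_\rho)$ to telescope, I aim to rewrite $(a^\be,c^\be)$ as a perturbation of $(a^\al,c^\al)$ of exactly the type permitted in part~(1), with the $\mu$-datum coming from $v$ after cancellation of non-central components against the $V^\al$-factors.

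The hard part will be twofold. First, $\ps^\be=\th\ps^\al\th^{-1}$ differs from $\ps^\al$, so $(a^\be,c^\be)$ initially lies in $X(\ps^\be,F^M)$ rather than $X(\ps^\al,F^M)$; however, because $\th\in\oInt(M)$, these are strongly conjugate modular $\Ga$-kernels, and Theorem~\ref{thm:main-kernel-cohom} supplies a canonical identification $X(\ps^\al,F^M)\cong X(\ps^\be,F^M)$ under which the classes match. Second, the $\al$-cocycle $v$ takes values in $M$ rather than $Z(\tM)$, so the perturbation built from $v$ is not manifestly of the form permitted by $\sim$; this will be resolved by invoking the a priori centrality of $a^\be\in Z(\tM)\otimes\lhG\otimes\lhG$, which forces the non-central contributions from $v$ to cancel.
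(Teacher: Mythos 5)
Your outline is in spirit the right one --- both parts are direct computations involving the auxiliary choices --- but two things deserve flagging.

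For part~(1), your analysis of the ambiguity in $V$ and $w$ for a \emph{fixed} modular $\Ga$-kernel is correct and gives exactly the displayed perturbation formulas. However, Corollary~\ref{cor:Vps} only furnishes a modular kernel $\ps^\al$ determined up to $\Aut(M)_{\mathrm m}$, and $\ps^\al$ need not coincide with the given $\ps$ (having equal Connes--Takesaki module is strictly weaker than differing by an extended modular automorphism, e.g.\ for an injective type III$_1$ factor $\ker(\mo)=\Aut(M)$ while $\Aut(M)_{\mathrm m}=\Int(M)$). So you cannot in general write $\tal_\pi=\Ad V_\pi\circ(\tps_{[\pi]}\oti 1)$ for the prescribed $\ps$. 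Replacing $\ps^\al_{[\pi]}$ by $\Ad u_{[\pi]}|_M\circ\ps^\al_{[\pi]}$ (with $u_{[\pi]}\in U(\tM)$ implementing the canonical extension) forces $V_\pi\mapsto V_\pi(u_{[\pi]}^*\oti 1)$ and $w_{p,q}\mapsto u_p\tps^\al_p(u_q)w_{p,q}u_{pq}^*$; one then checks that $a$ is unchanged while $c_\pi(t)$ picks up the $Z(\tM)$-valued $\th$-cocycle $c^0_{[\pi]}(t)=u_{[\pi]}\th_t(u_{[\pi]}^*)$. This is precisely the role of the $c^0$-slot in $\sim$; your perturbations only produce $c^0=1$, so this source of ambiguity is left unaddressed.

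For part~(2), the choices $\ps^\be=\th\ps^\al\th^{-1}$, $V^\be_\pi=v_\pi(\tth\oti\id)(V^\al_\pi)$, $w^\be=\tth(w^\al)$ are the right ones, but the two ``hard parts'' you flag resolve more simply than you anticipate. For the first: $\mathcal{A}(\ps)$, $\mX(\ps,F^M)$ and $\sim$ involve $\tps$ only through its restriction to $Z(\tM)$, i.e.\ through $\mo(\ps)$, so $X(\ps^\al,F^M)$ and $X(\ps^\be,F^M)$ coincide as sets once $\mo(\ps^\al)=\mo(\ps^\be)$; invoking Theorem~\ref{thm:main-kernel-cohom} --- a classification theorem for modular kernels --- is not the right tool and does not supply the identification you want. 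For the second: writing $\ga=(\th\oti\id)\circ\al\circ\th^{-1}$, the unitary $v$ is a $\ga$-cocycle (not an $\al$-cocycle, as you write), and the cocycle identity $v_\pi\tga_\pi(v_\rho)=(\id\oti{}_\pi\De_\rho)(v)$ makes the $v$-factors in $a^\be_{\pi,\rho}$ cancel \emph{exactly}, giving $a^\be=a^\al$ and $c^\be=c^\al$ on the nose. No a~priori centrality of $a^\be$ needs to be invoked; the cancellation is explicit, and the conclusion is equality rather than mere $\sim$-equivalence.
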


We will state our main result in this section.

\begin{thm}
\label{thm:non-triv-modular-main}
Let $\al,\be\col M\ra M\oti\lhG$
be actions on a McDuff factor $M$.
Suppose the following conditions:
\begin{itemize}
\item
$\al$ and $\be$ have
common normal modular part $\IH$;
\item
$\mo(\al_\pi)=\mo(\be_\pi)$ for all $\pi\in\IG$;
\item
$[(a^\al,c^\al)]=[(a^\be,c^\be)]$
in $X(\ps,F^M)$.
\end{itemize}
Then $\al$ and $\be$ are strongly cocycle conjugate.
\end{thm}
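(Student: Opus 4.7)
The plan is to reduce Theorem \ref{thm:non-triv-modular-main} to the strict-equality version, Theorem \ref{thm:al-be-a-c}, by interpreting the three types of gauge freedom in the equivalence relation defining $X(\ps,F^M)$ as successive perturbations of the witnessing data. First, apply Corollary \ref{cor:Vps} to each action to obtain $\tal_\pi=\Ad V_\pi^\al\circ(\tps_{[\pi]}^\al\oti1)$ and $\tbe_\pi=\Ad V_\pi^\be\circ(\tps_{[\pi]}^\be\oti1)$ with modularly free modular $\Ga$-kernels $\ps^\al,\ps^\be\col\Ga\to\Aut(M)$. The hypothesis $\mo(\al_\pi)=\mo(\be_\pi)$ implies $\mo(\ps^\al)=\mo(\ps^\be)$, and a direct cocycle computation using (\ref{eq:d1d2}) shows that the modular obstruction $\Obm\in H^3(\wdt{\Ga},U(Z(\tM)))$ is reconstructible from $(a,c)$ and invariant under the equivalence defining $X(\ps,F^M)$; hence $\Obm(\ps^\al)=\Obm(\ps^\be)$. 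Since $M$ is McDuff, Theorem \ref{thm:main-kernel-cohom} furnishes a strong cocycle conjugacy between $\ps^\al$ and $\ps^\be$, which we absorb into $\be$, reducing to the case $\ps^\al=\ps^\be=:\ps$.

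Let $(\mu,\la,c^0)$, with $\mu\in U(Z(\tM)\oti\lhG)$, $\la\in U(Z(\tM)\oti\el^\infty(\Ga^2))$, and $c^0\in Z^1(F^M,\el^\infty(\Ga))$, witness $(a^\al,c^\al)\sim(a^\be,c^\be)$. Since $\mu_\pi$ is central in $\tM\oti B(H_\pi)$, replacing $V_\pi^\be$ by $V_\pi^\be\mu_\pi^*$ leaves $\tbe_\pi$ unchanged but transforms $a^\be_{\pi,\rho}$ into $\mu_{\pi\rho}a^\be_{\pi,\rho}\tps_{[\pi]}(\mu_\rho^*)\mu_\pi^*$ and $c^\be_\pi(t)$ into $\mu_\pi c^\be_\pi(t)\th_t(\mu_\pi^*)$, which accounts exactly for the $\mu$-part of the equivalence. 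Next, since $\la_{[\pi],[\rho]}\in U(Z(\tM))$ is scalar on $B(H_\pi\oti H_\rho)$, replacing $w_{[\pi],[\rho]}^\be$ by $\la_{[\pi],[\rho]}w_{[\pi],[\rho]}^\be$ preserves the extended modular automorphism $\wdt{\si_{[\pi],[\rho]}^\be}$ (central scalars act trivially by conjugation) and multiplies $a^\be$ by $\la_{[\pi],[\rho]}$. After these two central adjustments we arrive at $a^\be=a^\al$ and $c^\be_\pi(t)=c^0_{[\pi]}(t)^{-1}c^\al_\pi(t)$.

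It remains to absorb $c^0$. Normality of $\La(\al)=\IH$ produces the generalized central quantum subgroup $\bK\subs\bG$ of Section \ref{subsect:GCQS} with $\wdh{\bK}\cong\Ga$, and the diagonal embedding $\el^\infty(\Ga)\hookrightarrow Z(\lhG)\subs\lhG$ as functions constant on equivalence classes $[\pi]$ places $c^0$ as a central $1$-cocycle of $\th$ in $Z(\tM\oti\lhG)$. Because $c^0(t)$ is central, its comultiplicative $\be$-cocycle identity collapses to the given $\th$-cocycle identity, and $c^0$ lifts to a $\tbe$-cocycle whose restriction to $M$ produces a strong cocycle conjugacy $\be\to\be'$ shifting $c^\be_\pi(t)$ precisely by $c^0_{[\pi]}(t)$ while leaving $a^\be$ untouched (since central cocycle perturbations act trivially on the $a$-invariant). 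Applying Theorem \ref{thm:al-be-a-c} to $\al$ and $\be'$, which now share the same $\ps$, $a$, and $c$, gives strong cocycle conjugacy and completes the proof. The main technical obstacle is this final $c^0$-absorption: one must verify that the central flow-cocycle with values in $\el^\infty(\Ga)$ genuinely arises from a $\be$-cocycle compatible with the comultiplication of $\bhG$, which requires exploiting the fact that $\tbe_\pi$ acts on the central subalgebra $\el^\infty(\Ga)\subs Z(\lhG)$ precisely via the character $g_\pi$ coming from the $\bK$-restriction, so that no additional obstruction arises.
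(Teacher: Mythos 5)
Your overall plan---reduce the equivalence $[(a^\al,c^\al)]=[(a^\be,c^\be)]$ to the strict equality of Theorem \ref{thm:al-be-a-c} by absorbing the three gauge parameters $\mu,\la,c^0$---is the right one, and it is essentially what the paper does, though the paper performs all three absorptions as a single re-choice of the decomposition data $(V,\ps,w)$ for $\al$ rather than a mix of conjugations of $\be$. Your first step (deriving $\Obm(\ps^\al)=\Obm(\ps^\be)$ from the hypotheses and applying Theorem \ref{thm:main-kernel-cohom} up front) is logically sound but not necessary, since Theorem \ref{thm:al-be-a-c} already invokes Theorem \ref{thm:mod-ker} internally; it is a valid detour, not an error. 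Your absorptions of $\mu$ and $\la$ by re-choosing $V_\pi^\be$ and $w^\be_{[\pi],[\rho]}$ are correct.

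The gap is in your final step, absorbing $c^0$. You assert that $c^0$ ``lifts to a $\tbe$-cocycle whose restriction to $M$ produces a strong cocycle conjugacy $\be\to\be'$,'' appealing to the fact that $\tbe_\pi$ acts on $\el^\infty(\Ga)\subs Z(\lhG)$ by the $\bK$-character $g_\pi$. This does not work as stated: a $\tbe$-cocycle is a single unitary $v\in\tM\oti\lhG$ satisfying $v_{12}\tbe(v)=(\id\oti\De)(v)$, whereas $c^0(\cdot)$ is a flow cocycle, i.e.\ a one-parameter family. Even if you first invoke stability of $\{\tM,\th\}$ to write $c^0(t)_{[\pi]}=u_{[\pi]}\th_t(u_{[\pi]}^*)$ and then set $v_\pi:=u_{[\pi]}\oti 1_\pi$, the resulting unitary $v$ is generally not a $\tbe$-cocycle: the cocycle identity $v_\pi\tbe_\pi(v_\rho)=(\id\oti{}_\pi\De_\rho)(v)$ would require $u_{[\pi]}V_\pi^\be\tps^\be_{[\pi]}(u_{[\rho]})(V_\pi^\be)^*=u_{[\pi\rho]}$, which has no reason to hold and is not implied by the centrality of $c^0$. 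Moreover $v\notin M\oti\lhG$. The correct mechanism---and the one the paper uses---is different in kind: one does not perturb $\be$ at all; one uses stability to get $u_{[\pi]}\in U(\tM)$ with $c^0(t)_{[\pi]}=u_{[\pi]}\th_t(u_{[\pi]}^*)$, and then re-chooses the decomposition of $\al$ by setting $V_\pi':=V_\pi^\al\mu_\pi\,(u_{[\pi]}\oti1)$, $\tps_{[\pi]}':=\Ad u_{[\pi]}\circ\tps^\al_{[\pi]}$, and $w'_{[\pi],[\rho]}:=\la^*_{[\pi],[\rho]}u_{[\pi]}\tps^\al_{[\pi]}(u_{[\rho]})w^\al_{[\pi],[\rho]}u_{[\pi\rho]}^*$. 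Since $u_{[\pi]}^*\th_t(u_{[\pi]})\in Z(\tM)$, $\Ad u_{[\pi]}$ commutes with $\th$ and descends to an extended modular automorphism, so $\ps'$ is still a modularly free modular $\Ga$-kernel with the same Connes--Takesaki module; and the equality $\tal_\pi=\Ad V'_\pi\circ(\tps'_{[\pi]}\oti1)$ is automatic because the $(u_{[\pi]}\oti1)$ factor in $V'_\pi$ cancels the $\Ad u_{[\pi]}$ in $\ps'_{[\pi]}$. No cocycle condition on $u$ or $c^0$ is needed, precisely because one is changing the bookkeeping, not the action. This is the step your argument is missing.
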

\begin{proof}
We decompose $\al$ and $\be$
as $\al_\pi=\Ad V_\pi^\al\circ(\ps_\pi^\al(\cdot)\oti1)$
and $\be_\pi=\Ad V_\pi^\be\circ(\ps_\pi^\be(\cdot)\oti1)$
so that
they have $a_{\pi,\rho}^\al$ and $a_{\pi,\rho}^\be$
as invariants by taking $w^\al$ and $w^\be$ as before.
Take $c^0$, $\la$ and $\mu$ as above.
Then we have
\[
a_{\pi,\rho}^\al
=
\la_{[\pi],[\rho]}
\mu_{\pi\rho}a_{\pi,\rho}^\be \tps_{[\pi]}(\mu_\rho^*)\mu_\pi^*,
\quad
c_\pi^\al(t)=c_{[\pi]}^0(t)\mu_\pi c_\pi^\be(t)\th_t(\mu_\pi^*).
\]
By stability of $\{\tM,\th\}$,
we can take a unitary $u_{[\pi]}\in \tM$
such that $c^0(t)_{[\pi]}=u_{[\pi]}\th_t(u_{[\pi]}^*)$.

Putting
\[
V_\pi':=V_\pi^2 \mu_\pi (u_{[\pi]}^*\oti1),
\quad
\ps_{[\pi]}':=\Ad u_{[\pi]}\circ\ps_{[\pi]}^\al,
\]
\[
w_{[\pi],[\rho]}'
:=\la_{[\pi],[\rho]}^*
u_{[\pi]}\ps_{[\pi]}^\al(u_{[\rho]})w_{[\pi],[\rho]}^\al u_{[\pi\rho]}^*,
\]
\[
a_{\pi,\rho}'
:=V_{\pi\rho}'^* V_\pi'\ps_{[\pi]}^\al(V_\rho')w_{[\pi],[\rho]}'
,
\quad
c'(t):=(V')^*(\th_t\oti\id)(V'),
\]
we have
\[
\al_\pi=\Ad V_\pi'\circ(\ps^\al\oti1),
\quad
a'=a^\be,
\quad
c'(t)_\pi=c^\be(t)_\pi.
\]
Then by Theorem \ref{thm:al-be-a-c},
we are done.
\end{proof}

\subsection{Model action 1}

We shall construct a model action
with a given invariant.

\begin{thm}\label{thm:model1}
Let $\bH$ be a quotient of $\bG$
such that $\pi\opi\in \Rep(\bH)_0$
for any $\pi\in\IG$,
and $\Ga:=\IG/\IH$ the associated discrete group.
Let $M$ be an injective infinite factor
and $\ps\col \Ga\ra \Aut(F^M)$ a group homomorphism.
Then for any $(\al,c)\in \mX(F^M,\ps)$,
there exists an action $\al$ of $\bhG$ on $M$
such that
$(a^\al,c^\al)=(a,c)$.
\end{thm}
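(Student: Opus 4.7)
The plan is to reduce the construction of the action to that of a modular $\Ga$-kernel together with unitary fiber data $\{V_\pi\}_{\pi\in\IG}$, and then invoke the existence result for modular kernels (Theorem \ref{thm:model-kernel}) combined with a tensoring argument to assemble the fiber data.

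First, from $(a,c)\in \mX(F^M,\ps)$ I would extract functions $d_1\col \Ga^3\to U(Z(\tM))$ and $d_2\col \R\times\Ga^2\to U(Z(\tM))$ through the defining relations of $\mathcal{A}(\ps)$ and $\mX(\ps,F^M)$. A routine computation, applying $\th_t$ to the relation defining $d_1$ and using that $c$ is a 1-cocycle for $\th$, shows that $(d_1,d_2)$ satisfies the identity (\ref{eq:cdd}) and assembles into a class in $H^3(\wdt{\Ga}, U(Z(\tM)))$. Theorem \ref{thm:model-kernel} then furnishes a modular $\Ga$-kernel $\vph\col \Ga\to \Aut(M)$ with $\mo(\vph_p)=\ps_p$ realizing $(d_1,d_2)$, together with unitaries $w_{p,q}\in U(\tM)$ implementing $\widetilde{\vph_p\vph_q\vph_{pq}^{-1}}$ so as to recover precisely these invariants.

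The heart of the argument is to construct, for each $\pi\in\IG$, a unitary $V_\pi\in \tM\oti B(H_\pi)$ with $V_\btr=1$ satisfying
\[
V_\pi^*\th_t(V_\pi)=c_\pi(t),\qquad V_{\pi\rho}^*V_\pi\tvph_{[\pi]}(V_\rho)w_{[\pi],[\rho]}=a_{\pi,\rho},
\]
and such that the formula $\tal_\pi:=\Ad V_\pi\circ(\tvph_{[\pi]}\oti 1)$ defines an action of $\bhG$ on $\tM$ commuting with $\th$. Given such a family, $\tal$ restricts to an action $\al$ of $\bhG$ on $M=\tM^\th$, and by construction $(a^\al,c^\al)=(a,c)$, $\mo(\al_\pi)=\ps_{[\pi]}$, as required.

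The principal difficulty is the coherent construction of $\{V_\pi\}_{\pi\in\IG}$. The approach I would take exploits that $M$ is an injective infinite factor, hence $M\cong M\oti R$ for any injective infinite factor $R$: fix such an $R$ carrying an auxiliary model action of $\bhG$ obtained by combining (i) an ergodic $\bG$-action on a finite factor associated via Theorem \ref{thm:erg-coc} to a 2-cocycle class extracted from the diagonal data of $a$, and (ii) a centrally free model action of $\bhH$ supplied by the classification in \cite{MT3}. Tensoring the modular kernel $\vph$ on $M$ with the auxiliary action on $R$ supplies candidate $V_\pi$ on $M\oti R$, which I would then perturb by absorbing the 1-cocycle $c$ through a unitary twist using stability of the dual flow on $\tM$ applied componentwise in a $\pi$-coherent manner. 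The combinatorial matching of the cohomological data between the modular kernel side, the ergodic cocycle side, and the $c$-twist side is precisely what is guaranteed by the hypothesis $(a,c)\in\mX(\ps,F^M)$; verifying that these twists reassemble without disturbing the action axioms, using the compatibility condition built into the definition of $\mX(\ps,F^M)$, is where the main obstacle lies.
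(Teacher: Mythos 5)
The first step of your proposal — extracting $(d_1,d_2)$ from $(a,c)$ and invoking Theorem \ref{thm:model-kernel} to produce a modular $\Gamma$-kernel $(\psi,w)$ realizing those invariants — matches the paper's proof exactly. The paper's route then diverges sharply from yours and is much more direct: having the modular kernel, it observes that $(a,c)\in\mX(\psi,F^M)$ encodes the two identities \eqref{eq:aw} and \eqref{eq:thaw}, and then applies Izumi's result \cite[Theorem 3.3]{Iz} to produce, in one coherent stroke, a single unitary $V\in\widetilde{M}\otimes\lhG$ with $c(t)=V^*\theta_t(V)$ such that $\Ad V|_M$ is a modular map. The identity \eqref{eq:thaw} then forces $u_{\pi,\rho}:=V_\pi\widetilde{\psi}_{[\pi]}(V_\rho)a_{\pi,\rho}^*w_{[\pi],[\rho]}V_{\pi\rho}^*$ to be $\theta$-invariant, hence to live in $M\otimes B(H_\pi)\otimes B(H_\rho)$, and \eqref{eq:aw} then verifies that $(\Ad V\circ\psi,u)$ is a cocycle action of $\bhG$ on $M$; since $M$ is properly infinite the 2-cocycle $u$ is a coboundary. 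No auxiliary factor $R$, no tensoring, and no ergodic actions enter.

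Your proposed construction of the fiber data $\{V_\pi\}$ has a genuine gap. First, you propose applying Theorem \ref{thm:erg-coc} to "a 2-cocycle class extracted from the diagonal data of $a$", but Theorem \ref{thm:erg-coc} concerns scalar-valued cocycles $\omega\in Z^2(\bhG)$, whereas $a$ lives in $U(Z(\widetilde{M})\otimes\lhG\otimes\lhG)$ and is genuinely $Z(\widetilde{M})$-valued in the type III$_0$ case; there is no such scalar class to extract. Second, "stability of the dual flow applied componentwise in a $\pi$-coherent manner" only produces, for each fixed $\pi$, a unitary $V_\pi$ solving $c_\pi(t)=V_\pi^*\theta_t(V_\pi)$; it gives no control over the compatibility relations across different $\pi$, which is precisely the obstruction to getting an action. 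The whole point of Izumi's Theorem 3.3 is that the 1-cocycle $c$, together with the compatibility built into $\mX(\psi,F^M)$, can be solved globally so that $\Ad V|_M$ is a modular $\bhH$-action — that is the key input your sketch is missing, and the stated difficulty ("the combinatorial matching \ldots is where the main obstacle lies") is not resolved by your tensoring scheme.
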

\begin{proof}
Let $d_1,d_2$ be the associated maps with $(a,c)$.
By Theorem \ref{thm:model-kernel},
we can take a free modular $\Ga$-kernel
$(\ps,w)$ on $M$ realizing $\ps$, $d_1$ and $d_2$
(by identifying $\tps|_{Z(\tM)}$ with given $\ps$).
Then we have
\begin{align}
&a_{\pi,\rho}^*w_{[\pi],[\rho]}a_{\pi\rho,\si}^*w_{[\pi\rho],[\si]}
=\tps_{[\pi]}(a_{\rho,\si}^*w_{[\rho],[\si]})a_{\pi,\rho\si}^*w_{[\pi],[\rho\si]},
\label{eq:aw}\\
&
\th_t(a_{\pi,\rho}^*w_{[\pi],[\rho]})
=
c_\pi(t)^*\tps_{[\pi]}(c_\rho(t)^*)a_{\pi,\rho}^*w_{[\pi],[\rho]}c_{\pi\rho}(t).
\label{eq:thaw}
\end{align}

Thanks to \cite[Theorem 3.3]{Iz},
there exists a unitary $V\in \tM\oti\lhG$
such that $c(t)=V^*\th_t(V)$,
and the map $\Ad V|_M$ is modular on $M$.
Then by (\ref{eq:thaw})
and the equality
$\tps_{[\pi]}(c_\rho(t))c_\pi(t)
=\tps_{[\pi]}(V_\rho^*)c_\pi(t)\tps_{[\pi]}(\th_t(V_\rho))$,
we have
\[
\th_t(V_\pi\tps_{[\pi]}(V_\rho)a_{\pi,\rho}^*w_{[\pi],[\rho]}V_{\pi\rho}^*)
=
V_\pi\tps_{[\pi]}(V_\rho)a_{\pi,\rho}^*w_{[\pi],[\rho]}V_{\pi\rho}^*.
\]
Thus the element
$u_{\pi,\rho}:=V_\pi\tps_{[\pi]}(V_\rho)a_{\pi,\rho}^*w_{[\pi],[\rho]}V_{\pi\rho}^*$
is contained in $M\oti B(H_\pi)\oti B(H_\rho)$.

Let $\al^0:=\Ad V\circ \ps$ on $M$.
We show that $(\al^0,u)$ is a cocycle action.
For $x\in M$,
we have
\begin{align*}
\al_\pi^0(\al_\rho^0(x))
&=
V_\pi \tps_{[\pi]}(V_\rho)
\tps_{[\pi]}(\tps_{[\rho]}(x))\tps_{[\pi]}(V_\rho^*)V_\pi^*
\\
&=
V_\pi \tps_{[\pi]}(V_\rho)a_{\pi,\rho}^*
\tps_{[\pi]}(\tps_{[\rho]}(x))a_{\pi,\rho}\tps_{[\pi]}(V_\rho^*)V_\pi^*
\\
&=
u_{\pi,\rho}\al_{\pi\rho}^0(x)u_{\pi,\rho}^*.
\end{align*}
On the 2-cocycle relation, we have
\begin{align*}
u_{\pi,\rho}u_{\pi\rho,\si}
&=
V_\pi\tps_{[\pi]}(V_\rho)a_{\pi,\rho}^*w_{[\pi],[\rho]}V_{\pi\rho}^*
\cdot
V_{\pi\rho}\tps_{[\pi\rho]}(V_\si)a_{\pi\rho,\si}^*w_{[\pi\rho],[\si]}V_{\pi\rho\si}^*
\\
&=
V_\pi\tps_{[\pi]}(V_\rho)\tps_{[\pi]}(\tps_{[\rho]}(V_\si))
\cdot
a_{\pi,\rho}^*w_{[\pi],[\rho]}
a_{\pi\rho,\si}^*w_{[\pi\rho],[\si]}V_{\pi\rho\si}^*
\\
&=
V_\pi\tps_{[\pi]}(V_\rho)\tps_{[\pi]}(\tps_{[\rho]}(V_\si))
\cdot
\tps_{[\pi]}(a_{\rho,\si}^*w_{[\rho],[\si]})
a_{\pi,\rho\si}^*w_{[\pi],[\rho\si]}
V_{\pi\rho\si}^*
\quad(\mbox{by }(\ref{eq:aw}))
\\
&=
V_\pi\tps_{[\pi]}(u_{\rho,\si})V_\pi^*
\cdot
V_\pi\tps_{[\pi]}(V_{\rho\si})a_{\pi,\rho\si}^*w_{[\pi],[\rho\si]}
V_{\pi\rho\si}^*
\\
&=
\al_\pi^0(u_{\rho,\si})u_{\pi,\rho\si}.
\end{align*}
Thus $(\al^0,u)$ is a cocycle action on $M$.
When $M$ is properly infinite,
the 2-cocycle is a coboundary.
Hence we may and do assume that $V\in \tM\oti\lhG$
satisfies $\al:=\Ad V\circ \ps|_M$ is an action,
and
$1=V_\pi\tps_{[\pi]}(V_\rho)a_{\pi,\rho}^*w_{[\pi],[\rho]}V_{\pi\rho}^*$.
It is trivial that $\tal=\Ad V\circ \tps$.
Therefore, the invariant of $\al$
is equal to $(a,c)$.
\end{proof}

Next we study how the freeness of $\al$ is characterized
by using the characteristic invariant.
Let $\al$ be an action of $\bhG$ on a factor $M$.
Suppose that the modular part $\IH$ of $\al$ is normal.
Take $V$ and $\ps$ as before and let $(a,c)$ be the associated
characteristic cocycle.
Let us denote by $K_c$ the minimal subgroup of $(c(t)_\rho)_{\rho\in\IH}$.
Note that each $\rho\in\IH$ induces a unitary representation
$\pi\col K_c\ra B(H_\rho)$ by $\rho(c(x,t))=c(x,t)_\rho$
for all $(x,t)\in X\times\R$.

\begin{prop}\label{prop:al-free}
In the setting above,
consider the following statements:
\begin{enumerate}
\item $\al$ is free;

\item Each representation
$\rho\in\IH\setminus\{\btr\}$ of $K_c$ does not contain $\btr$;

\item Each representation
$\rho\in\IH\setminus\{\btr\}$ of $K_c$ is irreducible.
\end{enumerate}
Then (1) and (2) are equivalent,
and (1) always implies (3).
If $d(\rho)\geq2$ for all $\rho\in\IH\setm\{\btr\}$,
then (1), (2) and (3) are equivalent.
\end{prop}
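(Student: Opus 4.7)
The plan is to analyse the intertwiner space
\[
\Hom(\id,\al_\pi)=\{a\in M\oti B(H_\pi)\mid a(x\oti 1)=\al_\pi(x)a,\ x\in M\}
\]
separately in the two regimes $\pi\in\IH$ and $\pi\notin\IH$, and in the former to translate nonzero intertwiners into a representation-theoretic condition on $K_c$.

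For $\pi\in\IH$ we have $\tps_{[\pi]}=\id$, and hence $\tal_\pi=\Ad V_\pi$ on $\tM\oti B(H_\pi)$. Given $a\in\Hom(\id,\al_\pi)$, set $b:=V_\pi^*a$. The intertwining relation yields $bx=(x\oti 1)b$ for all $x\in M$, so $b$ lies in $(M'\cap\tM)\oti B(H_\pi)=\meC_M\oti B(H_\pi)$, using the standard fact $M'\cap\tM=\meC_M$ for a factor. Moreover $(\th_t\oti\id)(a)=a$ together with $\th_t(V_\pi)=V_\pi c_\pi(t)$ forces
\[
(\th_t\oti\id)(b)=c_\pi(t)^*b,
\]
and conversely every such $b$ gives $a:=V_\pi b\in\Hom(\id,\al_\pi)$. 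Disintegrating $\meC_M\cong L^\infty(X)$ along the flow $\th$, this equation becomes $b(\th_{-t}x)=c_\pi(t,x)^*b(x)$, which says precisely that $b$ is a measurable intertwiner from the trivial representation of $K_c$ to $\pi|_{K_c}$. Thus $\al_\pi$ admits a nonzero intertwiner if and only if $\pi|_{K_c}$ contains the trivial representation of $K_c$.

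For $\pi\notin\IH$, i.e., $[\pi]\neq e$ in $\Ga$, the substitution $b:=V_\pi^*a$ gives matrix entries $b_{ij}\in\tM$ with $b_{ij}x=\tps_{[\pi]}(x)b_{ij}$ for all $x\in M$. If some $b_{ij}\neq 0$, then $b_{ij}^*b_{ij}\in M'\cap\tM=\meC_M$, and polar decomposition combined with the proper infiniteness of $\tM$ produces a unitary $v\in\tM$ satisfying $\Ad v|_M=\ps_{[\pi]}$. Since any automorphism of $\tM$ trivial on $M$ is implemented by an element of $\meC_M=M'\cap\tM$ (and $\tps_{[\pi]}$ commutes with $\th$), the discrepancy $\tps_{[\pi]}\circ(\Ad v)^{-1}$ is inner via some $u\in\meC_M$, whence $\tps_{[\pi]}=\Ad(vu)$ is inner and $\ps_{[\pi]}\in\Aut(M)_{\rm m}$, contradicting the modular freeness of $\ps$. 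Combined with the preceding paragraph, this establishes the equivalence $(1)\Leftrightarrow(2)$.

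For $(1)\Rightarrow(3)$, suppose toward a contradiction that $\pi|_{K_c}$ is reducible for some $\pi\in\IH\setm\{\btr\}$. Then $\dim\End_{K_c}(\pi)\geq 2$, so $(\pi\opi)|_{K_c}$ contains the trivial representation with multiplicity at least two. Decomposing $\pi\opi=\btr\oplus\bigoplus_\si m_\si\si$ over $\bG$, all constituents lie in $\IH$ by normality of $\IH$, and the leading $\btr$-summand contributes only one trivial component upon restriction to $K_c$; hence some nontrivial $\si\in\IH\setm\{\btr\}$ satisfies $\si|_{K_c}\supseteq\btr$, which by $(1)\Leftrightarrow(2)$ violates the freeness of $\al$. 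Finally $(3)\Rightarrow(2)$ under the hypothesis $d(\rho)\geq 2$ is immediate: an irreducible representation of $K_c$ containing the one-dimensional trivial must equal it, forcing any $\rho\in\IH\setm\{\btr\}$ violating $(2)$ to satisfy $d(\rho)=1$, which is excluded. The technical crux is the rigorous identification of solutions of $(\th_t\oti\id)(b)=c_\pi(t)^*b$ with the multiplicity of $\btr$ in $\pi|_{K_c}$, which rests on the precise definition of the minimum subgroup $K_c$ and on the measurable disintegration over the flow space.
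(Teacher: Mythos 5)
Your translation of the problem into the intertwiner spaces $\Hom(\id,\al_\pi)$, the treatment of the $\pi\in\IH$ case via the equation $(\th_t\oti\id)(b)=c_\pi(t)^*b$, and the implication $(3)\Rightarrow(2)$ under $d(\rho)\geq 2$ all match the paper's proof. Two points deserve comment.

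First, your $(1)\Rightarrow(3)$ argument is genuinely different from the paper's. The paper argues directly: a projection $p\in\End_{K_c}(\rho)$ gives rise to $q:=V_\rho(1\oti p)V_\rho^*$, which is $\theta\oti\id$-fixed, hence lies in $M\oti B(H_\rho)$; it commutes with $\al_\rho(M)$, so by freeness $q$ is a scalar and $p\in\{0,1\}$. You instead deduce $(3)$ from $(2)$ using Frobenius reciprocity on $K_c$: if $\pi|_{K_c}$ were reducible, the multiplicity of $\btr$ in $(\pi\opi)|_{K_c}$ would be $\geq 2$, so some nontrivial constituent $\si\prec\pi\opi$ (which lies in $\IH$ since $\IH$ is a fusion subcategory — not by ``normality'', which is a different notion in this paper) would have $\si|_{K_c}\supseteq\btr$, contradicting $(2)$. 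Both are correct; yours trades the direct relative-commutant computation for a purely representation-theoretic reduction to $(2)$, which is conceptually cleaner.

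Second, your treatment of the case $\pi\notin\IH$ — needed for the $(2)\Rightarrow(1)$ direction — has a genuine gap. You assert that ``any automorphism of $\tM$ trivial on $M$ is implemented by an element of $\meC_M$''; this is false, since $\th_s$ is trivial on $M$ but not inner for $s\neq 0$. To repair this, one must additionally exploit that $\tps_{[\pi]}$ and $\Ad v$ both preserve the canonical trace on $\tM$: any $\gamma\in\Aut(\tM)$ that is trivial on $M$ and commutes with $\th$ necessarily has the form $\th_{s}$ for some $s\in\R$ (since $\gamma(\la^\vph(t))\la^\vph(t)^*$ is a $\th$-fixed unitary one-parameter group in $Z(\tM)$, hence in $\T$), and trace-preservation then forces $s=0$. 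There is also a smaller issue: the polar decomposition of $b_{ij}$ yields a partial isometry whose left and right supports are projections in $\meC_M$, not automatically equal to $1$, so the existence of an actual unitary intertwiner requires a separate argument. The paper avoids both of these difficulties by invoking Izumi's Proposition~3.4(1), which encapsulates exactly the passage from a nonzero intertwiner in $\tM$ to modularity of $\ps_{[\pi]}$, then concludes $\pi\in\IH$ and $\ps_{[\pi]}=\id$ — reducing to the already-handled $\pi\in\IH$ case without any explicit unitary construction.
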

\begin{proof}
(1)$\Rightarrow$(2).
Suppose that $\al$ is free.
Let $\rho\in\IH$
and take
a projection $p\in B(H_\rho)$
such that $c(t)_\rho(1\oti p)=(1\oti p)$
for all $t\in\R$.
We will show that $p=0$.
By our assumption, $v:=V_\rho(1\oti p)$
is fixed by $\th\oti\id$, and $v\in M\oti B(H_\rho)$.
Then clearly we have
$v(x\oti1)=\al_\rho(x)v$ for all $x\in M$.
Since $\al$ is free, we have $p=0$.

(2)$\Rightarrow$(1).
Suppose that for some $\pi\in\IG\setminus\{\btr\}$,
we have a non-zero $b\in M\oti B(H_\pi)$ satisfying
$b(x\oti1)=\al_\pi(x)b$ for all $x\in M$,
which is equivalent to say $V_\pi^*b(x\oti1)=(\tps_{[\pi]}(x)\oti1)V_\pi^*b$.
Thus $(\id,\tps_{[\pi]})\neq0$.
However, this implies $\ps_{[\pi]}$ is modular by \cite[Proposition 3.4]{Iz},
and $\al_\pi$ is modular.
By definition of $\bH$, $\pi$ belongs to $\IH$, and $\ps_{[\pi]}=\id$.
Hence $V_\pi^*b\in (M'\cap \tM)\oti B(H_\pi)$,
where the Connes--Takesaki relative commutant theorem
yields $M'\cap\tM=\meC_M$
(see \cite[Theorem II.5.1]{CT} and \cite[Lemma 1.1]{KtST}).

We put $b':=V_\pi^* b$.
Then we have $c(t)_\pi(\th_t\oti\id_\pi)(b')=b'$ for all $t\in\R$.
Thanks to \cite[Theorem 3.14]{Z} (or \cite[Theorem 3.5 (4)]{Iz}),
this means the representation $\pi$ contains the trivial representation $\btr$,
but this is a contradiction.
Hence $\al$ is free.

(1)$\Rightarrow$(3).
Suppose that $\al$ is free.
Let $\rho\in\IH$ and take
a non-zero projection $p\in B(H_\rho)$
such that $(1\oti p)c(t)_\rho=c(t)_\rho(1\oti p)$
for all $t\in\R$.
Since $c(t)_\rho=V_\rho^*(\th_t\oti\id)(V_\rho)$,
it turns out that $q:=V(1\oti p)V^*$ is fixed by $\th\oti\id$.
Hence $q\in M\oti B(H_\rho)$.
It is clear that $q$ is commuting with $\al_\rho(M)$,
and $q$ is a scalar by \cite[Lemma 2.8]{MT1}.
However, it shows $p=0$ or $1$, which means that
$\rho$ is irreducible.

(3)$\Rightarrow$(2).
If a representation $\rho\in\IH$ of $K_c$ contains a trivial representation,
then we have $d(\rho)=1$ and $c(t)_\rho=1$ for all $t\in\R$
because of the irreducibility of $\rho$ on $K_c$.
Thus $\rho$ equals $\btr$ by our assumption.
\end{proof}

\subsection{Model action 2}
When we make a further assumption,
a model action is constructed in a relatively direct way.
On a 2-cocycle deformation of a discrete Kac algebra,
readers are referred to Section \ref{sec:2-cocycle}.

Let $\IH\subs \IG$ be a normal subcategory
and $\Ga:=\IG/\IH$ the quotient group as before.
Let us treat the invariants
$\ps\col\Ga\ra\Aut_\th(\meC_M)$, $a$ and $c$ such that
\begin{itemize}
\item
$\ps_{[\pi]}=\id_{\meC_M}$, $\pi\in\IG$;

\item
$a\in\C\oti\lhG\oti\lhG$;

\item
$c_\pi(t)$ is a non-trivial irreducible cocycle for all $\pi\in\IG$;

\item
$a_{\pi\rho,\si}a_{\pi,\rho}=a_{\pi,\rho\si}a_{\rho,\si}$, $\pi,\rho,\si\in\IG$;
\item
$a_{\pi,\rho}=c_{\pi\rho}(t)^*a_{\pi,\rho}c_\rho(t)c_{\pi}(t)$,
$\pi,\rho,\si\in\IG$.
\end{itemize}
In particular, $d_1$ and $d_2$ are trivial in this case.
Put $\om:=a^*$, which is a 2-cocycle of $\bhG$.
The last equality implies
that the $\th$-cocycle $c$ is evaluated in $\bG_\om$,
the dual cocycle twisting of $\bG$ by the dual cocycle $\om$.
From the third, it turns out that $\bG_\om$ is the minimal
subgroup of $c$.
Thus the twisted coproduct $\De^\om$ is co-commutative.

Let $\ga$ be a free action of $\bhG$
on an injective factor $M^0$ of type II$_1$.
Since
$(\gamma\otimes \id)\circ\gamma
=\Ad a\circ(\id \otimes \De^\om)\circ \gamma$,
we may regard $\gamma$ as a cocycle action of
$(\bhG_\om, \De^\om)$ with the 2-cocycle $a$.
If we collect the ``$\IH$-part'' of $\ga$,
then $\ga$ naturally defines a cocycle action of $\bhH_\om$,
which we also denote by $\ga$.

Let $Q:=M^0\rtimes_{\ga,a}\bhG_\om$,
and $P:=M^0\rtimes_{\ga,a}\bhH_\om$
be the cocycle crossed products,
which are injective factors of type II$_1$.
Then we have a natural inclusion $P\subs Q$.
Let $\lambda_\pi$ be the implementing unitary in $Q\oti B(H_\pi)$.
Then
\begin{equation}\label{eq:la-a}
\lambda_{12}\lambda_{13}
=a(\id\oti\De^\om)(\la)
=(\id\oti\De)(\la)a.
\end{equation}

Employing Theorem \ref{thm:corr-gc-nc},
we see that there exists a closed subgroup
$Z\subset Z(\bG_\om)$ such that
with $\bG_\om/Z=\mathbb{H}_\om$.
Then the restriction of $\pi\in\IG$ on $Z$
is a character.
Thus we have a map from $\IG$ into $\wdh{Z}$.
It is not hard to see that the map induces a group isomorphism
between $\Ga$ and $\wdh{Z}$.

Let $\{X_M,F^M\}$ be the point realization of the flow $\{\meC_M,\th\}$.
For the $\theta$-cocycle
$c(t)\in \meC_M\otimes \lhG$,
we set
$c(x,t):=c(t)_{x}$ for $x\in X_M$ and $t\in\R$.
Then $c$ is a groupoid homomorphism from the transformation groupoid
$X_M\rti\R$ into $\bG_\om$.
Define $\theta^0_{x,t}\in \Aut(Q)$
by $\theta^0_{x,t}(y)=y$ for $y\in M^0$,
and
\[
(\theta^0_{x,t}\oti\id)(\lambda_\pi)=\lambda_{\pi}(1\oti c_\pi(x,t))
\quad
\mbox{for }
(x,t)\in X_M\rti\R, \pi\in\IG.
\]
Set
$\theta_{x,t}^1:=\theta_{x,t}\otimes \theta^0_{x,t}$
on $\tM(x)\otimes Q$,
and we obtain a one-parameter automorphism group
$\theta_t^1\in \Aut(\tM\otimes Q)$
defined by
\[
\th_t^1(b)
=\int_{X_M}^\oplus
\th_{x,t}^1(b(F_{-t}^M x))\,d\mu(x)
\quad\mbox{for }y\in \tM\oti Q.
\]
Note that $\tM\oti P$ is globally invariant under $\th^1$.

Let $\ta$ be the trace on $\tM$
such that $\ta\circ\th_s=e^{-s}\ta$ for $s\in\R$,
and $\ta_1:=\ta\oti\tr$,
where $\tr$ is the normalized trace on $Q$.
Then we have $\ta_1\circ\th_s=e^{-s}\ta_1$.
Thus the covariant systems
$\{\tM\oti Q,\th^1\}$ and $\{\tM\oti P,\th^1\}$
are the cores of their crossed products.
In fact, those crossed products are isomorphic to $M$
since the flow spaces are equal to $\{X_M,F_M\}$.

Let us consider the dual action $\wdh{\ga}$ of $\bG_\om$ on $Q$.
Since $\wdh{\ga}$ is minimal,
so is the restriction $\wdh{\ga}|_Z$.
By direct computation,
we see $\th^1$ and $\id_{\tM}\oti \wdh{\ga}_Z$ are commuting.

\begin{lem}
The action $\id_{\tM}\oti \wdh{\ga}|_Z$ on $(\tM\oti Q)^{\th^1}$
is minimal.
\end{lem}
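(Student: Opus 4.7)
The plan is to verify both clauses in the definition of minimality for $\id_{\tM}\oti\wdh{\ga}|_Z$ acting on $(\tM\oti Q)^{\th^1}$: triviality of the relative commutant of the fixed subalgebra, and faithfulness of the homomorphism $Z\to\Aut((\tM\oti Q)^{\th^1})$. First I would record the commutativity of the two actions. Since $Z\subs Z(\bG_\om)$, each $\pi\in\IG$ restricts to a character on $Z$, so $\wdh{\ga}_z(\la_\pi)=\pi(z)\la_\pi$ with $\pi(z)\in\C$; this scalar commutes with the cocycle $c_\pi(x,t)$ appearing in $\theta^0_{x,t}$, and both actions fix $M^0$. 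Combined with the standard duality identification $Q^{\wdh{\ga}|_Z}=P$ (which is exactly why $Z$ was chosen as the subgroup corresponding to the subcategory $\IH\subs\IG$), I obtain $(\tM\oti Q)^{\wdh{\ga}|_Z}=\tM\oti P$, so the fixed subalgebra of $\id_{\tM}\oti\wdh{\ga}|_Z$ on the factor $(\tM\oti Q)^{\th^1}$ is exactly $(\tM\oti P)^{\th^1}$.

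For the relative commutant, write $K:=(\tM\oti P)^{\th^1}$ and $N:=(\tM\oti Q)^{\th^1}$. At the core level, the minimality of $\wdh{\ga}|_Z$ on $Q$ gives $P'\cap Q=\C$, and together with $\tM'\cap\tM=\meC_M$ (the Connes--Takesaki relative commutant theorem, cf.\ \cite[Lemma 1.1]{KtST}) the standard tensor-product commutation formula for factors yields
\[
(\tM\oti P)'\cap(\tM\oti Q)=\meC_M\oti\C=\meC_M.
\]
The key step is to descend this to the fixed-point level, i.e.\ to show
\[
K'\cap N=\bigl((\tM\oti P)'\cap(\tM\oti Q)\bigr)^{\th^1}.
\]
Given $x\in K'\cap N$, $x$ is $\th^1$-fixed and commutes with $K$; since both $\tM\oti P$ and $\tM\oti Q$ are the cores of $K\subs N$ with common dual flow $\th^1$, Takesaki duality identifies the crossed products $(\tM\oti Q)\rti_{\th^1}\R$ and $(\tM\oti P)\rti_{\th^1}\R$ with the stabilizations $N\oti B(L^2(\R))$ and $K\oti B(L^2(\R))$ respectively, so $K'\cap N$ matches $((\tM\oti P)\rti_{\th^1}\R)'\cap((\tM\oti Q)\rti_{\th^1}\R)$. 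Computing this latter commutant by Fourier decomposition against $\th^1$ and using that commutation with the implementing unitaries corresponds to $\th^1$-invariance, $x$ is forced to commute with all of $\tM\oti P$. Hence $x\in\meC_M$, and since $\{\meC_M,\th\}$ is ergodic (being the flow of weights of the factor $M$), one concludes $x\in\C$.

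For faithfulness, suppose $z\in Z$ satisfies $\id_{\tM}\oti\wdh{\ga}_z=\id$ on $N$. Since this automorphism commutes with $\th^1$ and fixes the fixed subalgebra $N$, extending along the canonical crossed-product description $\tM\oti Q\cong N\rti\R$ of the core (with $\th^1$ as dual action) forces $\id_{\tM}\oti\wdh{\ga}_z=\id$ on $\tM\oti Q$; restricting to $1\oti Q$ gives $\wdh{\ga}_z=\id$ on $Q$, so $z=e$ by the faithfulness part of the minimality of $\wdh{\ga}|_Z$ on $Q$.

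The main technical obstacle is precisely the descent step in the relative-commutant computation: starting from $x\in K'\cap N$, one must upgrade commutation with $K=(\tM\oti P)^{\th^1}$ to commutation with all of $\tM\oti P$. The Takesaki-duality detour above is the cleanest route, but it can equally be handled by a direct spectral-averaging argument, decomposing $\tM\oti P$ into $\th^1$-spectral subspaces and using that for $y\in(\tM\oti P)(p)$ the products $yy^*,\,y^*y,\,yky^*$ (for $k\in K$) all lie in $K$, together with the $\th^1$-invariance of $x$, to conclude that $[x,y]=0$ for every spectral element.
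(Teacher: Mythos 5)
Your strategy (identify the fixed subalgebra, show trivial relative commutant, show faithfulness) is the right one, and your faithfulness argument is essentially identical to the paper's: both exploit that $(\tM\oti Q)^{\th^1}$ together with the trace-scaling one-parameter unitaries $v(\R)\oti 1$ regenerates $\tM\oti Q$, that the $Z$-action fixes $v(p)$, and that $\wdh{\ga}|_Z$ is faithful on $Q$.

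However, your relative-commutant argument has a genuine gap. You compute $(\tM\oti P)'\cap(\tM\oti Q)=\meC_M$ at the level of cores and then try to descend to $K'\cap N\subs\meC_M^{\th^1}$, flagging the descent as the "main technical obstacle." The two routes you propose do not close it. The spectral-averaging sketch does not work as stated: knowing that $x\in N$ commutes with $yy^*$, $y^*y$, and $yky^*$ ($k\in K$) for a spectral element $y\in(\tM\oti P)(p)$ gives you $x(yky^*)=(yky^*)x$, but this does not yield $[x,y]=0$ — e.g.\ it only tells you $v(p)^*xv(p)\in K'\cap N$ again, which is circular. The Takesaki-duality route is also imprecise: matching $K'\cap N$ with the commutant of $(\tM\oti P)\rti_{\th^1}\R$ inside $(\tM\oti Q)\rti_{\th^1}\R$ and then "Fourier decomposing" still leaves the crux — commutation with $\pi(\tM\oti P)$ forces $\th^1_t$-twisted intertwining conditions on the Fourier coefficients, not directly membership in $\meC_M$.

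The paper sidesteps all of this with a much cheaper observation: since $\th^1$ fixes $M\subs\tM$ and $M^0\subs Q$ pointwise, the subalgebra $M\oti M^0$ sits inside $K=(\tM\oti P)^{\th^1}$. Hence
\[
K'\cap N\subs(M\oti M^0)'\cap(\tM\oti Q)^{\th^1}\subs\bigl((M'\cap\tM)\oti((M^0)'\cap Q)\bigr)^{\th^1}=(\meC_M\oti\C)^{\th^1}=\C,
\]
using the Connes--Takesaki relative commutant theorem $M'\cap\tM=\meC_M$, the freeness of $\ga$ on $M^0$ (so $(M^0)'\cap Q=\C$), the tensor commutation theorem, and ergodicity of the flow of weights. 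No descent argument is needed because one passes through a \emph{smaller} subalgebra of $K$ whose relative commutant is already computable. You should replace your core-level detour with this containment chain.
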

\begin{proof}
First observe that the fixed point algebra is exactly equal
to $(\tM\oti P)^{\th^1}$,
which contains $M \oti M^0$.
Since $M'\cap \tM=\meC_M$
by \cite[Theorem II.5.1]{CT},
we have
\[
\big{(}(\tM\oti P)^{\th^1}\big{)}'
\cap
(\tM\oti Q)^{\th^1}
\subs
(M \oti M^0)'
\cap
(\tM\oti Q)^{\th^1}
\subs
(\meC_M\oti \C)^{\th^1}
=\C.
\]

Next we have to show that $\id_{\tM}\oti\wdh{\ga}|_Z$ is faithful.
Take a one-parameter unitary $v(p)$, $p\in\R$ in $\tM$
such that $\th_s(v(p))=e^{-isp}v(p)$ as usual.
It is trivial that $\th_s^1(v(p)\oti1)=e^{-isp}(v(p)\oti1)$,
and $\th^1$ is a dual action.
Then we naturally have
\[
(\tM\oti P)^{\th^1}\vee\{v(\R)\oti1\}''
\cong
(\tM\oti P).
\]
Since $\id_{\tM}\oti\wdh{\ga}|_Z$ fixes $v(p)$
and
the action is faithful
on the right hand side,
the action on $(\tM\oti P)^{\th^1}$ is faithful.
\end{proof}

Since $(\tM\oti Q)^{\th^1}\cong M$ is an infinite factor,
the minimal action $\ga|_Z$ is dual.
Hence there exists a unitary representation
$U$ of $\wdh{Z}$ in $(\tM\oti Q)^{\th^1}$
such that $\wdh{\gamma}_z(U_\chi)=\chi(z)U_{\chi}$
for $z\in Z$, $\chi\in\wdh{Z}$.
Then $\psi_\chi:=\Ad U_\chi|_{\tM\oti P}$
gives an action of $\wdh{Z}$ on $\tM\oti P$
because $P$ is the fixed point algebra by the action $\wdh{\ga}|_Z$.
Note that $\ps$ is commuting with $\th^1$.
Since $Z(\tM\oti P)=Z(\tM)\oti \C$,
we have
\begin{equation}\label{eq:ps-id}
\ps_\chi=\id
\quad
\mbox{on }Z(\tM\oti P)
\mbox{ for }\chi\in \wdh{Z}.
\end{equation}

Set
$V_\pi:=(1\oti \lambda_{\pi})(U_{[\pi]}^*\otimes 1_\pi)$
which belongs to $\tM\oti P\otimes B(H_\pi)$.
Then we have
\begin{equation}\label{eq:th-V-c}
(\th_t^1\oti\id)(V_{23})
=
V_{23} c(t)_{13}.
\end{equation}
Let $m_\pi(x):=\Ad (1\oti \lambda_{\pi})(x\otimes 1_\pi)$
for $\pi\in\IG$
and $x\in \tM\oti P$.
Then $m$ is an action of $\bhG$ on $\tM\oti P$
because of (\ref{eq:la-a}),
and it is commuting with $\th^1$.
Trivially, the following equality holds:
\begin{equation}\label{eq:mVps}
m_\pi(x)=V_\pi (\psi_{[\pi]}(x)\otimes 1)V_\pi^*
\quad
\mbox{for }\pi\in\IG, x\in \tM\oti P.
\end{equation}

We set $M_1:=(\tM\otimes P)\rti_{\th^1}\R$ that is isomorphic to $M$.
Then the actions $m$ and $\ps$ naturally extend to the actions on $M_1$,
which are also denoted by $m$ and $\ps$, respectively.
Then (\ref{eq:mVps}) holds on $M_1$.
Thanks to Takesaki duality,
we obtain an isomorphism of covariant systems
between $\{M_1\rti_{\si^{\wdh{\ta_1}}}\R,\wdh{\si^{\wdh{\ta_1}}}\}$
and $\{B(L^2(\R))\oti \tM\oti P,\Ad\rho(\cdot)\oti \th^1\}$,
where
$\rho$ denotes the regular representation of $\R$.
Then by simple calculation,
it turns out that
the canonical extension $\wdt{m}$ and $\wdt{\ps_\chi}$
are transformed to $\id\oti m$ and $\id\oti \ps_\chi$,
respectively.
From (\ref{eq:ps-id}) and (\ref{eq:th-V-c}),
we see $m$ has the given invariants $\ps$, $a$ and $c$.

\subsection{Modular actions}\label{sec:modular}
Consider the invariant set $\mX$
when the modular part is full,
that is, $\IH=\IG$.
Then $\mX$ consists of all elements
$(\mu^*,c)$
such that $\mu$ is a $Z(\tM)$-valued 2-cocycle of $\bhH$
and $c(t)\in Z(\tM)\oti\lhG$
is
a $Z(\tM)$-valued $\th$-cocycle,
and they satisfy
\[
\th_t(\mu^*)=(\id\oti\De)(c(t)^*)\mu^*c(t)_{12}c(t)_{13}.
\]

We recall the notion of the irreducibility of $c$
which is introduced in \cite[p.17]{Iz}.
Namely, $c_\rho$ is \emph{irreducible}
\index{2-cocycle!irreducible--}
when the minimal subgroup of $c_\rho$
is irreducibly acting on $H_\rho$.
Note that if an irreducible cocycle $c$ is a coboundary,
then the dimension of $H_\rho$ must be equal to $1$
for any $\rho\in\IH$.

\begin{lem}\label{lem:bichara}
Let $(\mu^*,c)\in\mX$ such that $c_\rho$ is irreducible
and not a coboundary for each $\rho\in\IH$.
Then the bicharacter $\beta_{\mu}=\mu^* F(\mu)$
is an element in
$\mathbb{C}\otimes \lhG\otimes \lhG$,
and $\Ad \beta_\mu (1\oti \Delta^\opp(x))=1\oti\De(x)$ holds
for all $x\in\lhG$.
\end{lem}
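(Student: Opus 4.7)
The plan is to first derive how $\be_\mu$ transforms under the modular flow $\theta$, and then use the irreducibility and non-coboundary hypotheses on $c$ to rigidify $\be_\mu$. Starting from the compatibility relation $\theta_t(\mu^*)=(\id\otimes\De)(c(t)^*)\,\mu^*\,c(t)_{12}c(t)_{13}$, I would take adjoints and apply the flip $F$ on the last two tensor slots to also obtain
\[
\theta_t(F(\mu)) = c(t)_{12}^*c(t)_{13}^*\,F(\mu)\,(\id\otimes\De^{\op})(c(t)).
\]
Multiplying $\theta_t(\mu^*)\theta_t(F(\mu))$ and using that $c(t)_{12}$ and $c(t)_{13}$ commute (their first tensor slot lies in the commutative algebra $Z(\tM)$), the four $c(t)$-factors in the middle cancel and we arrive at
\[
\theta_t(\be_\mu) = (\id\otimes\De)(c(t)^*)\,\be_\mu\,(\id\otimes\De^{\op})(c(t)).
\]

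Next, I would reinterpret this identity as saying that $\be_\mu$ conjugates the $\theta$-cocycle $\alpha^{\op}(t):=(\id\otimes\De^{\op})(c(t))$ to the $\theta$-cocycle $\alpha(t):=(\id\otimes\De)(c(t))$ in $\meC_M\otimes\lhG\otimes\lhG$, via $\alpha(t)\,\theta_t(\be_\mu)=\be_\mu\,\alpha^{\op}(t)$. Projecting onto each block $B(H_\pi)\otimes B(H_\rho)$ and decomposing via the tensor-product representation, $\alpha$ and $\alpha^{\op}$ break into components indexed by irreducibles occurring in $\pi\rho$ and $\rho\pi$, respectively; the hypothesis that every $c_\sigma$ is irreducible and not a coboundary means each such component is an irreducible nontrivial $\theta$-cocycle. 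The Izumi classification of $\theta$-cocycles (cf.\ \cite[Theorem 3.3, Theorem 3.5]{Iz}) then implies that the centralizer of each such irreducible block in the appropriate matrix algebra consists only of $\theta$-invariant scalars, so any $\theta$-equivariant intertwiner between $\alpha$ and $\alpha^{\op}$ must itself be $\theta$-invariant. This yields $\theta_t(\be_\mu)=\be_\mu$, and ergodicity of $\theta$ on $Z(\tM)$ gives $\be_\mu\in\C\otimes\lhG\otimes\lhG$.

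With $\theta$-invariance in hand, the transformation rule collapses to $\De(c(t))\,\be_\mu=\be_\mu\,\De^{\op}(c(t))$ for all $t\in\R$, i.e.\ $\Ad\be_\mu$ intertwines $\De^{\op}$ and $\De$ on every $c(t)$. Under our hypotheses the joint minimal subgroup $K_c$ must equal $\bG$: if it were a proper subgroup then some $\rho\in\IG$ would decompose non-trivially upon restriction to $K_c$, contradicting irreducibility of $c_\rho$. Hence $\{c(t):t\in\R\}$ weakly generates $\lhG$, and the intertwining extends by linearity and $\sigma$-weak continuity to give $\Ad\be_\mu(1\otimes\De^{\op}(x))=1\otimes\De(x)$ for every $x\in\lhG$.

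The main obstacle is the rigidification step: the transformation rule only constrains $\be_\mu$ along the one-parameter family $\{c(t)\}_{t\in\R}$, so a priori there could be $Z(\tM)$-valued deformations. A potential alternative, if invoking Izumi's classification is too indirect, is to combine the $\theta$-equivariance with the bicharacter identities for $\be_\mu$ inherited from the 2-cocycle identity for $\mu$ (the analogues of \eqref{eq:beom1} and \eqref{eq:beom2}): these identities should force the $\meC_M$-dependent parts of $\be_\mu$ to satisfy an overdetermined system whose only solution is the constant one, once the non-coboundary condition excludes the degenerate cases.
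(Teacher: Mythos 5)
Your proof is correct and essentially coincides with the paper's first proof: both compute $\th_t(\be_\mu)=(\id\oti\De)(c(t)^*)\,\be_\mu\,(\id\oti\De^{\opp})(c(t))$ and then invoke rigidity of intertwiners between minimal, non-coboundary $\th$-cocycles to conclude that $\be_\mu$ is $\th$-fixed (hence scalar by ergodicity) and lies in $(\De^{\opp},\De)$, the only difference being that the paper cites Zimmer's Theorem 3.14 where you cite Izumi's Theorems 3.3 and 3.5 (references the paper itself treats as interchangeable elsewhere). The paper also presents a second, genuinely different proof you did not consider: realize $(\mu^*,c)$ as the invariant of a free modular action $\al=\Ad V|_M$, show that $\cE:=V_{12}V_{13}V_{12}^*V_{13}^*$ is $\th$-fixed and, by freeness of $\al$, a scalar-valued permutation symmetry in $\C\oti(\De^{\opp},\De)$, and then identify $\cE=\be_\mu$.
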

\begin{proof}
We will present two different proofs.
The first proof comes from \cite{Iz,Z}.
From the following equalities:
\[
\mu_{123}(\id\oti\De\oti\id)(\mu)
=
\mu_{134}(\id\oti\id\oti\De)(\mu),
\quad
\th_t(\mu^*)=(\id\oti\De)(c(t)^*)\mu^*c(t)_{12}c(t)_{13},
\]
we may and do assume that
$c(t)$ is a minimal cocycle \cite[Corollary 3.8(i)]{Z} with
a minimal subgroup $K_c\subs U(\lhH)$.
Then the projection of $\lhH$ onto $B(H_\rho)$
induces an irreducible representation of $K_c$.
Then we have
\begin{align*}
\th_t(\be_\mu)
&=
\th_t(\mu^*)\th_t(\mu_{132}^*)
\\
&=
(\id\oti\De)(c(t)^*)\mu^*c(t)_{12}c(t)_{13}
\cdot
c(t)_{12}^*c(t)_{13}^*\mu_{132}(\id\oti\De^\opp)(c(t))
\\
&=
(\id\oti\De)(c(t)^*)\be_\mu(\id\oti\De^\opp)(c(t)).
\end{align*}
This is rewritten as
\[
\be_\mu(\om)_{\pi,\rho}
c(\om,t)_{\rho\pi}
=
c(\om,t)_{\pi\rho}
\be_\mu(F_{-t}^M\om)_{\pi,\rho}
\quad\mbox{a.e. }\om.
\]
Thanks to \cite[Theorem 3.14]{Z},
the function $X_M\ni\om\to\be_\mu(\om)\in \lhG\oti\lhG$
is constant almost everywhere,
and $\be_\mu\in \C\oti(\De^\opp,\De)$.

For the other proof,
let us take an action $\al$ of $\bhH$ on $M$
such that $(a^\al,c^\al)=(\mu^*,c)$
as Theorem \ref{thm:model1}.
Then there exists a unitary $V\in\tM\oti\lhH$
such that $\al=\Ad V|_M$ and
\[
\mu^*=(\id\oti\De)(V^*)V_{12}V_{13},
\quad
c(t)=V^*\th_t(V).
\]

The irreducibility of $c$ implies that
$\al_\rho$ is irreducible by \cite[Corollary 3.6]{Iz}.
Then the action $\al$ is free.
Indeed, we let $a\in M\oti B(H_\rho)$ satisfy
$a(x\oti1)=\al_\rho(x)a$ for every $x\in M$,
which means $\id_M$ is contained in the corresponding
endomorphism (see Section \ref{subsect:endo-action}).
Thanks to \cite[Proposition 3.4]{Iz},
$\al_\rho$ is modular,
that is, $c_\rho$ is a coboundary.
This is a contradiction.
Hence it follows that
$(\alpha_\pi, \alpha_\rho)=(\pi,\rho)$
for any (reducible)
representations $\pi,\rho$.

Set $\cE:=V_{12}V_{13}V_{12}^*V_{13}^*$,
which is considered as a permutation symmetry
of $\Rep(\bH)$ as \cite[p.28]{Iz}.
Since
\[
\theta_t(\cE)
=V_{12}V_{13}c(t)_{12}c(t)_{13}
c(t)_{12}^{*}c(t)_{13}^{*}V_{12}^*V_{13}^*
=\cE,
\]
it turns out that $\cE\in M\otimes A\otimes A$.
Moreover,
\[
\cE
(\alpha\otimes \id)(\alpha(x))_{132}
=(\alpha\otimes\id)(\alpha(x)) \cE.
\]
Then the freeness of $\al$ implies that
$\cE\in \C\oti(\De^\opp, \Delta)$.

Next we have
$\cE=V_{12}V_{13}V_{12}^*V_{13}^*
=(\id\otimes \Delta)(V)\be_\mu
(\id\otimes \De^\opp)(V^*)$.
Hence $\be_{\mu}=(\id \otimes
\Delta )(U^*)\cE(\id\otimes \De^\opp)(U)
=\cE$.
Therefore, $\be_\mu\in \C\oti(\De^\opp,\De)$.
\end{proof}

Let $M$ be a type III factor, 
and $\alpha$ a free and modular action of $\bhG$ on $M$.
Thus we have $\bG=\bH$ by the notation in the previous section.
This kind of actions are already treated by Yamanouchi in
\cite[Section 6]{Y},
and we will further extend his argument.
Let $(\mu^*,c)\in\mX$ be the invariant
corresponding to a choice of a unitary $V\in \tM\oti\lhH$
such that $\al=\Ad V|_M$,
$\mu=V_{13}^*V_{12}^*(\id\oti\De)(V)$
and $c(t)=V^*\th_t(V)$.
Then they satisfy
\[
\mu_{123}(\id\oti\De\oti\id)(\mu)
=
\mu_{134}(\id\oti\id\oti\De)(\mu),
\ 
\th_t(\mu^*)=(\id\oti\De)(c(t)^*)\mu^*c(t)_{12}c(t)_{13}.
\]

\begin{thm}\label{thm:H-minimal}
Let $(\mu^*,c)\in\mX$
such that $c_\rho$ is irreducible and not a coboundary
for each $\rho\in\IH\setm\{\btr\}$.
Then the following statements hold:
\begin{enumerate}
\item
$\mu$ is equivalent to a scalar valued co-commutative 2-cocycle $\mu_0$;
\item
The minimal subgroup $K_c$ is the dual cocycle twisting of $\bH$ by $\mu_0$.
\end{enumerate}
\end{thm}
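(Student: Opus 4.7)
The plan is to first produce a candidate scalar co-commutative 2-cocycle $\mu_0$ via the bicharacter, then trivialize the difference $\mu\mu_0^*$ by a symmetric-2-cocycle argument, and finally recognize the minimal subgroup $K_c$ inside the dual cocycle twisting.

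First I would apply Lemma \ref{lem:bichara} to conclude that $\beta_\mu = \mu^*F(\mu)$ lies in $\C\oti(\De^\op,\De)$, so in particular it is a scalar normal skew symmetric bicharacter satisfying the normality $\beta_\mu \De(e_\btr)=\De(e_\btr)$. Theorem \ref{thm:bichara} then yields a scalar co-commutative 2-cocycle $\mu_0\in Z^2(\bhH)_c$ with $\beta_{\mu_0}=\beta_\mu$, and after a further normalization I may assume $\mu_0\in Z^2(\bhH)_{c,n}$. This is the candidate in (1). To prove the equivalence, note that since $F(\mu)=\mu\beta_\mu$ and $F(\mu_0)=\mu_0\beta_{\mu_0}$ with $\beta_\mu=\beta_{\mu_0}$, a direct calculation gives $F(\mu\mu_0^*)=\mu\mu_0^*$. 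A routine check using the 2-cocycle identities for $\mu$ (with respect to $\De$) and $\mu_0$ (with respect to $\De$) shows that $\mu\mu_0^*$ is a $Z(\tM)$-valued 2-cocycle of the co-commutative Kac algebra $(\lhH,\De^{\mu_0})$, and by the previous identity it is symmetric there.

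Next I would extend \cite[Lemma 21]{Wass-cptII} from scalar to $Z(\tM)$-valued coefficients by disintegration. Identifying $Z(\tM)\cong L^\infty(X_M,\nu)$, the symmetric 2-cocycle $\mu\mu_0^*$ disintegrates as a measurable field $\{(\mu\mu_0^*)(x)\}_{x\in X_M}$ of scalar symmetric 2-cocycles of $(\lhH,\De^{\mu_0})$. Wassermann's lemma provides, at each point, a unitary $u(x)\in U_0(\bhH)$ with $(\mu\mu_0^*)(x)=(u(x)\oti u(x))\De^{\mu_0}(u(x)^*)$; a measurable selection theorem (as used in the proof of Theorem \ref{thm:decomp}(2)) produces a measurable assembly $u\in U(Z(\tM)\oti\lhH)$ with $\mu\mu_0^* = (u\oti u)\De^{\mu_0}(u^*)$. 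This rewrites as $\mu=(u\oti u)\mu_0\De(u^*)$, establishing part (1).

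For part (2) I would perturb the representing unitary $V$ by $u^*$ so as to reduce to the case $\mu=\mu_0$ is scalar and co-commutative (the $\th$-cocycle $c$ is replaced by an equivalent one with the same minimal subgroup). The remaining identity $\th_t(\mu_0^*)=(\id\oti\De)(c(t)^*)\mu_0^*c(t)_{12}c(t)_{13}$ combined with $\th_t(\mu_0)=\mu_0$ says exactly that each $c(t)$ is a $\mu_0$-representation, i.e.\ $c(t)_{12}c(t)_{13}=\mu_0^*(\id\oti\De)(c(t))\mu_0=(\id\oti\De^{\mu_0})(c(t))$. Hence $c$ takes values in $\meR_{\beta_{\mu_0}}$, which is the function algebra of the dual cocycle twisting $\bH_{\mu_0}$ (as in the discussion following Theorem \ref{thm:non-deg-factor}). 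The irreducibility and non-coboundary hypotheses on each $c_\rho$ (together with the description of $\meR_{\beta_{\mu_0}}$ recalled there, and the minimality of $K_c$) force the inclusion $K_c\subs \bH_{\mu_0}$ to be an equality, finishing (2).

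The main obstacle will be the disintegration step in the second paragraph: arranging the fiberwise trivializations produced by \cite[Lemma 21]{Wass-cptII} to depend measurably on $x\in X_M$ and to assemble into a genuine unitary in $U(Z(\tM)\oti\lhH)$. The algebraic parts (the identity $F(\mu\mu_0^*)=\mu\mu_0^*$, the cocycle relation with respect to $\De^{\mu_0}$, and the identification of $\meR_{\beta_{\mu_0}}$ with the function algebra of $\bH_{\mu_0}$) are straightforward, but the measurable selection must be handled with the same care as in the proof of Theorem \ref{thm:decomp}(2), using that the relevant spaces of unitaries form standard Borel spaces.
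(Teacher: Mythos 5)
Your overall strategy is the right one and lines up with the paper's. For part (1) both you and the paper reduce to a fiberwise/disintegrated application of the bijection in Theorem \ref{thm:bichara} plus a measurable selection; the paper disintegrates $\mu$ over $Z(\tM)$ first and applies Theorem \ref{thm:bichara} to each $\mu(x)$, whereas you apply Theorem \ref{thm:bichara} once globally to $\beta_\mu$ and then disintegrate the symmetric-coboundary argument (Wassermann's Lemma~21). These are equivalent: the injectivity half of Theorem \ref{thm:bichara} is exactly Lemma~21, so you are essentially unpacking the same step. You correctly flag the measurable selection as the main remaining technicality, and the paper handles it in exactly that spirit.

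Part (2) of your argument has one genuine error and one place that needs to be spelled out, although the conclusion you reach is right. The error: you claim $\meR_{\beta_{\mu_0}}$ is the function algebra of the dual cocycle twisting $\bH_{\mu_0}$, citing the discussion after Theorem~\ref{thm:non-deg-factor}. That is only true when $\beta_{\mu_0}$ is non-degenerate; in general $\meR_{\beta_{\mu_0}}\cong L^\infty(\bhK_{\beta_{\mu_0}})$ for a \emph{quantum subgroup} $\bK_{\beta_{\mu_0}}\subset\bH$, and the discussion you invoke identifies $\meR_{\beta_{\mu_0}}$ with a proper closed subgroup of $\bH_{\mu_0}$, not with $\bH_{\mu_0}$. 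Fortunately this detour is unnecessary: the identity you derive, $c(t)_{12}c(t)_{13}=(\id\oti\De^{\mu_0})(c(t))$, says directly that each $c(x,t)$ is group-like for $\De^{\mu_0}$, and the set of group-likes of $(\lhH,\De^{\mu_0})$ is precisely the compact group $\bH_{\mu_0}$. So $K_c\subset\bH_{\mu_0}$ without any reference to $\meR_{\beta_{\mu_0}}$. (A small sign slip: the middle term should be $\mu_0(\id\oti\De)(c(t))\mu_0^*$, not $\mu_0^*(\cdots)\mu_0$, since $\De^{\mu_0}=\Ad\mu_0\circ\De$; the final form is correct.) The step that needs spelling out is the forcing of equality: if $K_c$ were a proper closed subgroup of $\bH_{\mu_0}$, Frobenius reciprocity would produce a nontrivial $\rho\in\Irr(\bH_{\mu_0})=\IH$ whose restriction to $K_c$ contains $\btr$; by the irreducibility hypothesis $\rho|_{K_c}=\btr$, so $c_\rho$ is identically $1$, a coboundary, contradicting the hypothesis. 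The paper instead argues directly that freeness (derived from your hypotheses via Proposition~\ref{prop:al-free}) makes the group algebra of $K_c$ equal to $\lhH$, and then matches $\De^{K_c}$ with $\De^{\mu_0}$; either route works, and they amount to the same Frobenius argument.
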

\begin{proof}
(1).
We represent $Z(\tM)$ as $L^\infty(X, dm)$ for some measure space $(X,dm)$. 
Let $\mu=\int^\oplus_X\mu(x)\,dm(x)$ be the central decomposition.
By Lemma \ref{lem:remark} and Lemma \ref{lem:bichara},
$\mu(x)\in Z^2(\bhH)_c$ for almost every $x$.
Employing Theorem \ref{thm:bichara},
we see that $\mu(x)$ is equivalent
to some $\mu_0\in Z^2(\bhH)_c$ for almost every $x$.
Hence there exists a measurable function
$X\ni x\to z(x)\in U(\lhG)$
with
$\mu_0=(z(x)\otimes z(x))\mu(x)\Delta(z(x)^*)$.
Let $z=\int^\oplus_X z(x)\,dm(x)$.
By replacing $V$ with $Vz$, we are done.

(2).
Since the action $\al$ is free,
each projection $\lhH$ onto $B(H_\pi)$ gives an irreducible
representation of $K_c$, and all the irreducibles arise like this.
Thus the group algebra of $K_c$ is nothing but $\lhH$
with the coproduct $\De^{K_c}(k)=k\oti k$ for $k\in K_c$.
This is rewritten as $(\id\oti\De^{K_c})(c(t))=c(t)_{12}c(t)_{13}$,
which is equal to $(\id\oti\De^{\mu_0})(c(t))$.
Hence $\De^{K_c}=\De^{\mu_0}$.
\end{proof}

\begin{rem}
This result is a special case
studied in \cite{IzKo-coh},
and $\bH_\mu$ is a minimal compact group which 
corresponds to $c(t)$ (see \cite{Iz}).
We will explain a modular action studied by Izumi
in \cite[\S 5.2]{Iz}.
We are firstly given a system $\mD$ of modular endomorphisms
on a type III factor $M$.
Then $\mD$ naturally induces a $\th$-cocycle $c$
with compact minimal subgroup $K_c$,
and we obtain a free modular action of $\wdh{K_c}$ on $M$
whose 2-cocycle $\mu$ is trivial.
\end{rem}

In the following,
it turns out that
the freeness of a modular action forces
both $M$ and $\bH$ to have some properties.

\begin{cor}
Let $\al$ be a free modular action of a discrete Kac algebra
$\bhH$ on a factor $M$.
Then the following statements hold:
\begin{enumerate}
\item
$M$ is of type III;
\item
If $M$ is not of type III$_0$,
then $\bH$ is a compact abelian group;
\item
If $M$ is of type III$_0$,
then $\bH$ is a dual cocycle twisting of a compact group.
In particular, the fusion rule algebra of $\bH$ is commutative.
\end{enumerate}
\end{cor}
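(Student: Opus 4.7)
The plan is to combine Proposition~\ref{prop:al-free}, which characterizes freeness via the minimal subgroup $K_{c^\al}$, with the structural result Theorem~\ref{thm:H-minimal}. Since $\al$ is free, Proposition~\ref{prop:al-free} implies that for every $\rho\in\IH\setm\{\btr\}$ the restriction $\rho|_{K_{c^\al}}$ is irreducible and does not contain the trivial representation; in particular none of the cocycles $c^\al_\rho$ is a coboundary. Hence the hypotheses of Theorem~\ref{thm:H-minimal} are met, and $\mu^\al$ is cohomologous to a scalar co-commutative 2-cocycle $\mu_0$ with $K_{c^\al}=\bH_{\mu_0}$ a dual cocycle twist of $\bH$; in particular $K_{c^\al}$ is a compact group.

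For (1), I will argue by contradiction, assuming $M$ is semifinite. Then $\tM\cong M\bar\otimes L^\infty(\R)$ with the dual flow $\theta$ acting only on the second factor as free translation, so $\meC_M=L^\infty(\R)$ with the translation flow. A short direct computation, starting from the cocycle identity $c(s+t,y)=c(s,y)c(t,y-s)$ specialized at $y=0$, yields $c(t,y)=V(y)^*V(y-t)$ with $V(y):=c(-y,0)$, so $c=V^*\theta_{\cdot}(V)$ is a coboundary. Consequently $K_{c^\al}$ is trivial, which forces every nontrivial $\rho\in\IH$ to restrict trivially to $K_{c^\al}$, contradicting Proposition~\ref{prop:al-free} as soon as $\bH\neq\{e\}$.

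For (2), assume $M$ is of type III$_\la$ with $0<\la\le1$. The remark following Theorem~\ref{thm:rho-orho} shows that in such factors every irreducible modular endomorphism is in fact an automorphism. Let $\be_\pi$ be the endomorphism of $M$ associated with $\al_\pi$ as in~\eqref{eq:be-sect}: since $\al_\pi$ is modular so is $\be_\pi$, and therefore each irreducible summand in a decomposition $\be_\pi\simeq\bigoplus_i n_i\si_i$ (with the $\si_i$ pairwise inequivalent) is an automorphism. I will then compute the multiplicity of $[\id]$ in the sector $[\be_\pi\be_\opi]$ in two ways: on one hand $[\be_\pi][\be_\opi]=\sum_\rho N^\rho_{\pi,\opi}[\be_\rho]$ contains $[\id]$ with multiplicity $N^\btr_{\pi,\opi}=1$ by Frobenius reciprocity; on the other hand $[\be_\pi\be_\opi]=\sum_{i,j}n_in_j[\si_i\osi_j]$, and $[\si_i\osi_j]=[\id]$ iff $i=j$ since the $\si_i$ are pairwise inequivalent automorphisms, so the multiplicity is $\sum_i n_i^2$. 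Hence exactly one $n_i=1$, so $\be_\pi$ is a single irreducible automorphism, giving $d(\pi)=d(\be_\pi)=1$ for every $\pi\in\IH$. Therefore $\bH$ has only one-dimensional irreducibles, so $\bH$ is a compact abelian group.

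For (3), the case $M$ of type III$_0$, the first paragraph already displays $\bH$ as the dual cocycle twist of the compact group $K_{c^\al}$. For commutativity of the fusion rules, I will use that dual cocycle twisting modifies the coproduct only by an inner intertwiner, so the multiplicities $\dim\Hom(\sigma,\pi\otimes\rho)$ are preserved and the fusion ring of $\bH$ coincides with the representation ring of the compact group $K_{c^\al}$; this ring is commutative via the natural flip $\pi\otimes\rho\cong\rho\otimes\pi$ available for any compact group. The main obstacles will be the cocycle coboundary computation in (1), requiring enough care with measurability to place $V$ in $\meC_M\otimes\lhH$, together with the invariance of fusion rules under dual cocycle twisting used in (3); both are standard but should be spelled out carefully.
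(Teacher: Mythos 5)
Your proof is correct, but it takes genuinely different routes in parts (1) and (2) than the paper. For (1), the paper cites \cite{MT2} to conclude that a modular endomorphism of a semifinite factor is automatically an inner automorphism, and this contradicts freeness; you instead observe that for semifinite $M$ the flow of weights is the transitive translation flow on $\R$, so the $\theta$-cocycle $c^\al$ must be a coboundary, forcing $K_{c^\al}$ trivial and contradicting Proposition~\ref{prop:al-free}. This is a valid and arguably more self-contained argument (modulo the measurability of the section $V$, which you acknowledge). For (2), the paper again appeals to \cite{MT2}: each constituent endomorphism is an outer modular automorphism, hence $\lhH$ is commutative. You instead use the remark following Theorem~\ref{thm:rho-orho} (which gives the same conclusion for irreducible modular endomorphisms on non-$\mathrm{III}_0$ factors) plus a nice Frobenius multiplicity count $\sum_i n_i^2 = 1$ showing $\be_\pi$ is a single irreducible automorphism, whence $d(\pi)=1$. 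One small point: the assertion that the multiplicity of $[\id]$ in $\sum_\rho N_{\pi,\opi}^\rho[\be_\rho]$ equals $N^\btr_{\pi,\opi}$ is not Frobenius reciprocity alone; it requires that $[\id]\not\prec[\be_\rho]$ for $\rho\neq\btr$, which is exactly the freeness of $\al$ and should be invoked explicitly. For (3) your approach coincides with the paper's, just spelling out the invariance of fusion multiplicities under dual cocycle twisting.
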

\begin{proof}
(1).
Suppose that $M$ is semifinite.
We may assume that $M$ is infinite by considering $B(\el^2)\oti M$.
By freeness of $\al$,
each map $\al_\pi$ corresponds to an irreducible modular endomorphism.
However, such endomorphism must be an inner automorphism
by \cite[Lemma 4.10, Theorem 4.12]{MT2}.
This contradicts the freeness of $\al$.

(2).
As explained in (1),
each corresponding endomorphism must be an outer modular automorphism.
In particular, $\lhH$ is commutative.
Thus we must have $K_c=\bH$, and $\bH$ must be a compact abelian group.

(3).
This is an immediate consequence of the previous theorem.
\end{proof}

For (3) in the above,
the converse statement holds.
\begin{prop}
Let $\bH$ be a compact Kac algebra
which is a dual cocycle twisting of a compact group.
Then $\bhH$ admits a free modular action
on any type III$_0$ factor.
\end{prop}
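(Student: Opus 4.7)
The plan is to realise a specific element $(\mu^*, c) \in \mX$ of the invariant set of \S\ref{sec:modular} and then invoke the model-action construction of Theorem~\ref{thm:model1}, specialised to $\IH = \IG$. By hypothesis $\bH = \bG^{\om_0}$ for some compact group $\bG$ and some normalised co-commutative dual $2$-cocycle $\om_0 \in Z^2(\bhG)_{c,n}$, so $\lhH = \lhG$ and $\De^H = \Ad \om_0 \circ \De^G$; the remark after Definition~\ref{df:2-cocycle} then gives $\om_0^* \in Z^2(\bhH)_{c,n}$, and I take $\mu := \om_0^*$, so that the target invariant reads $(\mu^*, c) = (\om_0, c)$. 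This choice is forced by Theorem~\ref{thm:H-minimal}, which predicts $K_c = \bH^{\om_0^*} = \bG$.

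For the $\th$-cocycle $c$, I use that $M$ is of type III$_0$, so the flow of weights $\{Z(\tM), \th\} \cong \{L^\infty(X), F^M\}$ is properly ergodic. By standard Mackey--Zimmer cocycle theory for properly ergodic flows, one can build a Borel cocycle $c \col \R \times X \ra \bG$ whose minimal subgroup is all of $\bG$ (equivalently, whose skew-product flow on $X \times \bG$ is ergodic). Regarded as a unitary in $Z(\tM) \oti \bG \subs Z(\tM) \oti \lhH$, the fact that $c$ is $\bG$-valued reads
\[
(\id \oti \De^G)(c(t)) = c(t)_{12}\, c(t)_{13}.
\]
A short calculation using $\De^H = \Ad \om_0 \circ \De^G$ and the scalarity of $\om_0$ (so that $\th_t(\mu^*) = \mu^* = \om_0$) converts this into the defining compatibility
\[
\th_t(\mu^*) = (\id \oti \De^H)(c(t)^*)\, \mu^*\, c(t)_{12}\, c(t)_{13}
\]
of $\mX$, so $(\mu^*, c) \in \mX$.

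Applying Theorem~\ref{thm:model1} in the degenerate case $\IH = \IG$ (so $\Ga$ is trivial and $\ps = \id$) now produces an action $\al \col M \ra M \oti \lhH$ with invariants $(a^\al, c^\al) = (\mu^*, c)$. Modularity is automatic because the construction yields $\al_\pi = \Ad V_\pi$ for unitaries $V_\pi \in \tM \oti B(H_\pi)$. For freeness I invoke Proposition~\ref{prop:al-free}: the minimal subgroup $K_c$ is $\bG$, and each $\rho \in \IH \setm \{\btr\}$ acts on $H_\rho$ through the canonical diagonal embedding $\bG \hookrightarrow U(\lhG)$, $g \mapsto \bigoplus_\pi \pi(g)$, i.e.\ as the original non-trivial irreducible $\bG$-representation, which does not contain the trivial representation; Proposition~\ref{prop:al-free}, part (2)$\Rightarrow$(1), then gives freeness.

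The hard part is the middle step: one needs not merely that the skew product $X \rti_c \bG$ is ergodic (the classical notion of minimality of a cocycle), but also that each isotypic component $c_\rho$ is irreducible and not a coboundary in Izumi's sense, which is what powers Theorem~\ref{thm:H-minimal} and Proposition~\ref{prop:al-free}. Reconciling these three formulations --- ergodicity of the skew product, Zimmer-minimal subgroup of the cocycle, and Izumi-style irreducibility and non-coboundary of each $c_\rho$ --- on a type III$_0$ flow of weights is the delicate point; once $c$ is in hand, the rest is a formal application of the machinery already established in \S\ref{sec:modular}.
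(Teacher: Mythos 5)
Your argument is essentially identical to the paper's own proof: pick $\mu$ with $\bH_\mu$ a compact group $K$, take a minimal $K$-valued $\th$-cocycle $c$ over the flow of weights, verify the compatibility $\th_t(\mu^*)=(\id\oti\De)(c(t)^*)\mu^*c(t)_{12}c(t)_{13}$ (which you correctly reduce to $(\id\oti\De^\mu)(c(t))=c(t)_{12}c(t)_{13}$), apply Theorem~\ref{thm:model1}, and conclude freeness from Proposition~\ref{prop:al-free}.

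The ``delicate point'' you flag at the end, however, is not a genuine gap. The existence of a minimal Borel cocycle $c\col X_M\times\R\ra K$ for a compact group $K$ over a properly ergodic flow is exactly \cite[Proposition A.5]{Iz}, which the paper cites at this step. Moreover, you have conflated what Theorem~\ref{thm:H-minimal} requires with what Proposition~\ref{prop:al-free} requires: Theorem~\ref{thm:H-minimal} plays no role in this direction (it analyses the invariant of a \emph{given} free modular action, the converse problem), and Proposition~\ref{prop:al-free} (2)$\Rightarrow$(1) asks only that each $\rho\in\IH\setminus\{\btr\}$, regarded as a representation of $K_c$, not contain $\btr$. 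Since $c$ is minimal, $K_c=K$, and the block structure of $\lhH=L^\infty(\wdh{K})$ is unchanged by the dual cocycle twist; hence each such $\rho$ \emph{is} a non-trivial irreducible representation of the compact group $K$ and trivially does not contain the trivial representation. The Izumi-style irreducibility and non-coboundary of each $c_\rho$ which you worry about reconciling are, incidentally, also automatic for a minimal $K$-cocycle, but they are simply not needed here.
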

\begin{proof}
Let $\mu\in \lhH\oti\lhH$ be a 2-cocycle
such that $K:=\bH_\mu$ is a compact group,
that is,
the coproduct of $\wdh{K}$ is given by
$\Ad \mu\circ\De$.
Note that each $\rho\in\IH$
gives an irreducible representation of $K$.

Let $M$ be a type III$_0$ factor and $\th$ the dual flow
on $\tM$.
We denote by $X_M$ the flow space of $M$ associated with $\th$.
Thanks to \cite[Proposition A.5]{Iz},
there exists a minimal cocycle $c\col X\times\R\ra K$.
Regarding $c$ as a $\th\oti\id$-cocycle $c(s)$
in $\meC_M\oti\lhH=\meC_M\oti L^\infty(\wdh{K})$,
we have $(\id\oti\De^\mu)(c(s))=c(s)_{12}c(s)_{13}$
since $c(x,s)\in K$ for all $x\in X_M$.
Then it is clear that $(\mu^*,c)$ is
the characteristic cocycle.
By Theorem \ref{thm:model1},
we obtain an action $\al$ of $\bhH$ on $M$
which attains $(\mu^*,c)$ as its invariant.
Employing Proposition \ref{prop:al-free},
we see $\al$ is free.
\end{proof}

\subsection{Examples}
Let us consider a free modular action $\al$ on a type III$_0$ factor $M$
when $\bH=SU(2)$ or $SU_{-1}(2)$.
\index{$SU(2)$}
\index{$SU_{-1}(2)$}

\begin{prop}
If $\al$ is a free modular action
of the dual of $SU(2)$ on a type III$_0$ factor,
then we can take a unitary $V\in\tM\oti\lhH$
such that $\tal=\Ad V$ and $(\id\oti \De)(V)=V_{12}V_{13}$.
\end{prop}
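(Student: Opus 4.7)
The plan is to combine Theorem \ref{thm:H-minimal} with the earlier example that $H^2(\widehat{SU(2)})$ is trivial. Because $\alpha$ is modular, I start by choosing any unitary $V_0\in\tM\otimes\lhH$ with $\tilde{\alpha}=\Ad V_0$, and I record its invariants
\[
\mu_0^*=(\id\otimes\Delta)(V_0^*)(V_0)_{12}(V_0)_{13},\qquad c_0(t)=V_0^*\theta_t(V_0).
\]
To invoke Theorem \ref{thm:H-minimal} I must check that every $c_{0,\rho}$ with $\rho\in\Irr(\bH)\setminus\{\btr\}$ is irreducible and not a coboundary. Irreducibility follows from freeness of $\alpha$ via Proposition \ref{prop:al-free}. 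For the non-coboundary condition, I recall that an irreducible cocycle which is a coboundary forces $\dim H_\rho=1$; but every non-trivial irreducible of $SU(2)$ has dimension at least $2$, so this is automatic.

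Next, Theorem \ref{thm:H-minimal} supplies a unitary $z\in U(Z(\tM)\otimes\lhH)$ such that
\[
\mu_1:=z_{12}z_{13}\,\mu_0\,(\id\otimes\Delta)(z^*)\in\mathbb{C}\otimes\lhH\otimes\lhH
\]
is a co-commutative scalar $2$-cocycle of $\bhH$. Since the first leg of $z$ lies in $Z(\tM)$, it commutes with $M\otimes\mathbb{C}$, so replacing $V_0$ by $V_1:=V_0 z^*$ preserves $\tilde{\alpha}=\Ad V_1$ on $M$; the analogous commutations between $z_{12}$, $z_{13}$ and $(V_0)_{13}$, $(V_0)_{12}$ allow a direct computation showing that the $2$-cocycle attached to $V_1$ is exactly $\mu_1$.

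Finally, the earlier example asserts $H^2(\widehat{SU(2)})=\{1\}$, so the co-commutative scalar cocycle $\mu_1$ is a coboundary: $\mu_1=(u\otimes u)\Delta(u^*)$ for some unitary $u\in\lhH$. Setting $V:=V_1(1\otimes u)$ leaves the action unchanged (since $1\otimes u$ commutes with $M\otimes\mathbb{C}$), and the straightforward computation
\[
V_{13}^*V_{12}^*(\id\otimes\Delta)(V)=(1\otimes u^*\otimes u^*)\mu_1(1\otimes\Delta(u))=1
\]
yields the required identity $(\id\otimes\Delta)(V)=V_{12}V_{13}$. The only real content is the reduction from $\mu_0$ to a scalar via Theorem \ref{thm:H-minimal}; verifying the hypotheses of that theorem is the main conceptual point, and once that is done everything else is either a coboundary calculation or an invocation of the known triviality of $H^2(\widehat{SU(2)})$.
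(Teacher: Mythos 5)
Your proof is correct and takes essentially the same route as the paper: both hinge on Wassermann's theorem that $H^2(\widehat{SU(2)})$ is trivial, combined (implicitly in the paper, explicitly in your write-up) with Theorem \ref{thm:H-minimal} to first reduce the $Z(\tM)$-valued $2$-cocycle to a scalar one. The paper's proof is a one-line appeal to Wassermann, whereas you carefully verify the hypotheses of Theorem \ref{thm:H-minimal} via Proposition \ref{prop:al-free} and the dimension observation, and then carry out the coboundary computation — all of which the paper leaves to the reader.
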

\begin{proof}
It is shown in \cite[Theorem 2]{Wass-cptIII}
that every ergodic action of $SU(2)$ of full multiplicity
is isomorphic to the regular action on $L^\infty(SU(2))$.
Then every 2-cocycle is a coboundary from \cite[Theorem 2]{Wass-cptII}.
\end{proof}

Hence the invariant of $\al$ consists of
the $\th$-cocycle $c\in Z(\tM)\oti\lhH$.
Putting $\mu_0=1$ in Theorem \ref{thm:H-minimal},
we may assume that the target space
of $c$ is $SU(2)$ that is in fact the minimal group of $c$.
By Theorem \ref{thm:al-be-a-c},
two free modular actions are strongly cocycle conjugate
if and only if
those $\th$-cocycles are equal up to equivalence.

Next we study the case of $\bH=SU_{-1}(2)$.
By Theorem \ref{thm:H-minimal},
we know that the minimal group $K_c$
is a dual cocycle twisting of $SU_{-1}(2)$.
However, every 2-cocycle of the dual of $SU_{-1}(2)$
is a coboundary
(cf. \cite[Corollary 5.10]{BRV}).
This is a contradiction.
We can also derive
the contradiction in a somewhat direct way as follows.

Observe that
the fusion rule of $K_c$ coincides with that of $SU_{-1}(2)$.
Let $\{\pi_\nu\}_{\nu}$, $\nu\in (1/2)\Z_{+}$
be the irreducible representations of $K_c$
such that $\pi_{1/2}\pi_{1/2}=\btr+\pi_1$ and so on.
Note that $d(\pi_{1/2})=2$ because of the twisting.
Hence $K_c$ is regarded as a closed subgroup of $U(\C^2)$.
The action of $K_c$ on $\C^2\bigwedge\C^2$
is given by taking the determinant.
However, the decomposition rule $\pi_{1/2}\pi_{1/2}=\btr+\pi_1$
forces $K_c$ to be a closed subgroup of $SU(2)$.
The fusion rule
also implies that
the restriction of every irreducible representation of $SU(2)$
is irreducible.
Thus $K_c=SU(2)$ because each irreducible representation
in the $SU(2)$-algebra $C(SU(2)/K_c)$ must be trivial on $K_c$.
However, we know that the dual of $SU(2)$ has no non-trivial 2-cocycle,
and this is a contradiction.
Hence we have the following result.

\begin{prop}
The Kac algebra $SU_{-1}(2)$ is not a dual cocycle twisting of a compact group.
In particular,
the dual of $SU_{-1}(2)$ does not have a free modular action
on a type III$_0$ factor.
\end{prop}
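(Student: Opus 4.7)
The plan is to follow essentially the sketch given in the paragraph preceding the statement, making each step precise. First, I would argue by contradiction: suppose $SU_{-1}(2)$ is the dual cocycle twisting of some compact group $K$ by a 2-cocycle $\mu\in Z^2(\wdh{K})_c$. Then the fusion ring of $K$ coincides with that of $SU_{-1}(2)$, namely it is generated by an element $\pi_{1/2}$ satisfying $\pi_{1/2}\pi_{1/2}=\btr+\pi_1$, with $\pi_\nu$ indexed by $\nu\in(1/2)\Z_{+}$ and $d(\pi_\nu)=2\nu+1$.

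Next, since $d(\pi_{1/2})=2$, the representation $\pi_{1/2}$ embeds $K$ as a closed subgroup of $U(2)$. The key point is that $\btr$ appears in $\pi_{1/2}\otimes\pi_{1/2}$, which via the standard identification $\C^2\otimes\C^2=\mathrm{Sym}^2(\C^2)\oplus\bigwedge^2\C^2$ corresponds to the one-dimensional determinant representation on $\bigwedge^2\C^2$. Triviality of this one-dimensional summand on $K$ forces $\det|_K\equiv 1$, hence $K\subset SU(2)$.

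Then I would use the fusion rule comparison to show $K=SU(2)$. By the correspondence in Theorem~\ref{thm:corr-gc-nc}, a proper closed subgroup of $SU(2)$ would yield a non-trivial normal subcategory of $\Rep(K)$ corresponding to $SU(2)/K$, which in particular would contain some non-trivial irreducible representation of $SU(2)$ restricting trivially to $K$. But every irreducible $SU(2)$-representation remains irreducible on $K$ because the fusion rule is the same, so no such non-trivial summand exists; thus $K=SU(2)$. Now invoke the known fact (cited via \cite[Corollary 5.10]{BRV}, or alternatively the computation $H^2(\wdh{SU(2)})=\{1\}$ recalled in the excerpt for $SU(2)$) that every 2-cocycle on $\wdh{SU(2)}$ is a coboundary; then the twisting $SU(2)_\mu$ is isomorphic to $SU(2)$, not to $SU_{-1}(2)$, a contradiction.

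For the second assertion, suppose $\al$ were a free modular action of the dual of $\bH:=SU_{-1}(2)$ on a type III$_0$ factor $M$. By Theorem~\ref{thm:H-minimal}, the bicharacter invariant $(\mu^*,c)$ can be chosen so that $\mu$ is equivalent to a scalar co-commutative 2-cocycle $\mu_0\in Z^2(\wdh{\bH})_c$, and moreover the minimal subgroup $K_c$ of $c$ is the dual cocycle twisting $\bH_{\mu_0}$. But by the first part, $\bH=SU_{-1}(2)$ admits no such realization as a dual cocycle twisting of a compact group, contradicting the existence of $K_c$. The main obstacle, and really the only nontrivial step, is the identification $K=SU(2)$ via the fusion-rule/normal-subcategory argument; once that is in place, both conclusions follow immediately.
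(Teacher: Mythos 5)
Your proof is correct in substance and follows the same fusion-rule route the paper sketches just before the statement, simply applied directly to the first claim and then used to derive the second. A few imprecisions should be fixed. You hypothesize $\mu\in Z^2(\wdh{K})_c$, but if $\mu$ were co-commutative then $K_\mu$ would itself be a compact group, not $SU_{-1}(2)$; the correct (and only consistent) hypothesis is simply $\mu\in Z^2(\wdh{K})$, and fortunately none of your subsequent steps actually use co-commutativity of $\mu$. Also, the identification $K=SU(2)$ should not be attributed to Theorem~\ref{thm:corr-gc-nc}: that theorem concerns generalized central (in particular normal) quantum subgroups, and $K$ is not normal in $SU(2)$. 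The correct justification, which is the one the paper uses, is Peter--Weyl on the homogeneous space: since each $SU(2)$-irreducible restricts to a $K$-irreducible of the same dimension, a proper closed $K\subsetneq SU(2)$ would yield, via $L^2(SU(2)/K)$, a non-trivial $SU(2)$-irreducible with a $K$-fixed vector; its restriction to $K$, being irreducible with a fixed vector, would be the trivial representation, forcing dimension $1$, a contradiction. Finally, your deduction of the second claim from the first tacitly uses that $K_c=\bH_{\mu_0}$ (Theorem~\ref{thm:H-minimal}) is equivalent to $\bH=(K_c)_{\mu_0^*}$; this one-line reverse-twist observation should be stated so that the contradiction with the first claim is explicit.
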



\begin{rem}
\label{rem:Net-Tus}
As is remarked in \cite[Remark 4.2]{IKf},
if $\bG$ is a connected compact group,
then any co-commutative cocycle $\om$ of $\bhG$
comes from a closed central subgroup,
and the dual cocycle twisting $\bG_\om$ coincides with $\bG$.
This relates with the following result due to Neshveyev--Tuset.

\begin{thm}[Neshveyev--Tuset]
Let $G$ be a connected compact group.
Then
\[
H_G^2(\widehat{G})=H^2(\widehat{Z(G)}),
\]
where $H_G^2(\bhG)$
denotes the cohomology group of $G$-invariant cocycles,
and $Z(G)$ the center of $G$.
\end{thm}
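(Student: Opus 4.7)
The plan is to reduce the computation of $H_G^2(\bhG)$ to Theorem~\ref{thm:bichara} in two stages: first, replacing $G$-invariant cocycles by cohomologous co-commutative ones; second, exploiting connectedness to force the associated quantum subgroup into the center.

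First, I would establish that the natural map $H_G^2(\bhG) \to H^2(\bhG)_c$ is well-defined and bijective. Every $G$-invariant $\om$ has the skew symmetric part $\om^*F(\om)$ automatically $G$-fixed, and the remark after Theorem~\ref{thm:non-deg-factor} says that co-commutative cocycles come from closed central subgroups, whose bicharacters lie in the pointwise $G$-fixed part of $\lhG\oti\lhG$. The surjectivity from $G$-invariant onto co-commutative classes then requires the construction of a $G$-equivariant coboundary connecting a given $G$-invariant $\om$ to a chosen co-commutative representative; this is done by averaging a unitary perturbation over $G$ while monitoring the 2-cocycle identity.

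Second, apply Theorem~\ref{thm:bichara} to identify $H^2(\bhG)_c$ with normal skew bicharacters $\be\in X^2(\bhG)_n\cap(\De^\opp,\De)$. The Hopf $*$-subalgebra $\meR_\be\subs\lhG$ corresponds, via Tannakian duality for compact groups, to a closed subgroup $\bK_\be\subs\bG$; the bicharacter identities force $\bK_\be$ to be abelian, and the $(\De^\opp,\De)$ condition forces it to be normal in $\bG$. At this point I invoke the classical structure result that any closed normal abelian subgroup of a connected compact group is central: indeed $\bG$ acts continuously on $\bK_\be$ by conjugation, factoring through the discrete group $\Aut(\bK_\be)$, hence acts trivially. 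Therefore $\bK_\be\subs Z(\bG)$ and $\be\in L^\infty(\wdh{Z(\bG)})\oti L^\infty(\wdh{Z(\bG)})$.

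Third, apply Theorem~\ref{thm:bichara} in reverse, now for the compact abelian group $Z(\bG)$, to identify normal skew bicharacters on $\wdh{Z(\bG)}$ with $H^2(\wdh{Z(\bG)})$ (where in the abelian setting the Kleppner--OPT description from the discussion preceding Theorem~\ref{thm:bichara} applies directly). Composing the three identifications yields the desired isomorphism $H_G^2(\bhG)\cong H^2(\wdh{Z(\bG)})$.

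The hard part will be Step~1, namely exhibiting a $G$-invariant coboundary between a $G$-invariant cocycle and its co-commutative normalization: the naive perturbation supplied by the proof of Lemma~\ref{lem:normalize} is not manifestly $G$-equivariant, and one must either average it with respect to the Haar measure of $G$ (exploiting that the averaged unitary still lies in $U_0(\bhG)$) or invoke a Shapiro-type argument for the induced action of $G$ on the space of cochains. Once this equivariant normalization is in place, the remaining steps are a clean composition of Theorem~\ref{thm:bichara} with the centrality observation.
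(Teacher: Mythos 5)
The paper does not actually prove this theorem; it is quoted in Remark \ref{rem:Net-Tus} and attributed directly to Neshveyev--Tuset \cite{NeTu}, so there is no internal argument to measure your sketch against.

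That said, the sketch has a genuine gap precisely where you flag the difficulty, and the proposed repair does not obviously close it. Your steps~2 and~3 --- feeding the bicharacter $\be_\om\in X^2(\bhG)_n\cap(\De^\opp,\De)$ through Theorem~\ref{thm:bichara}, forcing $\bK_\be\subset Z(\bG)$ via the classical fact that a closed normal abelian subgroup of a connected compact group is central, and then reading off $H^2(\widehat{Z(G)})$ from the Kleppner picture --- essentially re-derive the observation the paper already cites from \cite[Remark~4.2]{IKf}, namely that co-commutative cocycles of $\widehat{G}$ for connected $G$ come from central subgroups; this part is sound. What they establish is an isomorphism $H^2(\bhG)_c\cong H^2(\widehat{Z(G)})$. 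The Neshveyev--Tuset theorem is instead about $H^2_G(\bhG)$, where both the cocycles \emph{and the perturbing unitaries} are required to be $G$-fixed, and the identification $H^2_G(\bhG)\cong H^2(\bhG)_c$ that your step~1 posits is exactly the content that distinguishes their theorem from the remark. One needs both directions: that any two $G$-invariant cocycles cohomologous by some $u\in U(\lhG)$ are already cohomologous by a $G$-invariant $u$ (injectivity), and that every co-commutative class admits a $G$-invariant representative through a \emph{$G$-invariant} perturbation (surjectivity). Neither of these is addressed by an ``averaging'' argument: the Haar average of a unitary is merely a contraction, and the coboundary map $u\mapsto (u\oti u)\om\De(u^*)$ is multiplicative in $u$, so it does not commute with the linear averaging you would be performing. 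You also appear to conflate normalization (Lemma~\ref{lem:normalize}, which adjusts $\om$ so that $\om\De(e_\btr)=\De(e_\btr)$) with $G$-equivariance; the unitary $\nu$ produced there is chosen to be a square root of the diagonal $v(\om)^*$ fixed by the antipode, and there is no reason for it to be $G$-invariant, nor does making it so bear on $G$-invariance of the resulting cocycle. To actually carry out step~1 you would need a Shapiro-type vanishing for the relevant $G$-equivariant nonabelian cohomology, or a direct construction exploiting that, once $\be_\om$ is known to live over $Z(G)$, the perturbing unitary can be taken in $U_0(\bhG)\cap Z(\lhG)$; either way it is nontrivial and is the heart of the Neshveyev--Tuset argument rather than a routine preliminary.
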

\end{rem}

\subsection{Crossed products}\label{sec:cross}

We will investigate the flow of weights of
the crossed product $M\rtimes_\alpha \bhH$
of a free modular action $\al$ on $M$.
We may and do assume that an associated 2-cocycle $\mu$ is scalar-valued.
Let $\psi$ be a faithful normal semifinite weight on $M$,
and extend $\psi$
on $M\rtimes_\alpha \bhH$ by $\psi\circ E$. We can identify 
$(M\rtimes_\alpha \bhH)\rtimes_{\sigma^\psi}\mathbb{R}$ with 
$(M\rtimes_{\sigma^\psi}\mathbb{R})\rtimes_{\tal}\bhH$.
We fix $V_\pi$ in the previous section
so that $\mu$ is in $Z^2(\bhH)_{c,n}$.
Let $\lambda_{\pi}$ be the implementing unitary in $M\rtimes_\alpha\bhH$, 
and put $W_{\pi}:=V_\pi^*\lambda_\pi$.
A simple computation shows 
\[
B:=\tM'\cap (\tM\rtimes_{\tal}\bhH)
=
\ovl{\spa}^{\rm w}\left\{zW_{\pi_{i,j}}\mid z\in Z(\tM),
\ \pi\in\IH,\ i,j\in I_\pi\right\}.
\]
In particular, $W$ is an element of $B\oti\lhG$.
The dual action $\theta_t$ on $B$ is described as
$\theta_t(zW_{\pi_{i,j}})
=\sum_{k}\theta_t(z)c(t)_{\pi_{k,i}}^*W_{\pi_{k,j}}$.
The relative commutant $B$
is endowed with the dual action of $\tal$ by $\bH$.
Then for each $j$ and a non-zero $z\in Z(\tM)$,
$\{zW_{\pi_{j,i}}\}_{i\in I_\pi}$ span the copy of the irreducible
$\bH$-module $H_\pi$.
Hence the $\pi$-spectral subspace $B_\pi$ is
$\si$-weakly spanned by them.

The unitary $W\in (\tM\rti_\tal\bhH)\oti\lhH$
is a $\mu$-representation.
Indeed, we have
\begin{align*}
W_{12}W_{13}
&=V_{12}^*\lambda_{12}V_{13}^*\lambda_{13}
=V_{12}^*(\tal\otimes \id)(V)\lambda_{12}\lambda_{13}\\
&=V_{13}^*V_{12}^*\lambda_{12}\lambda_{13}
=\mu(\id\otimes \Delta)(W).
\end{align*}
Thus $\Ad W^*$ gives an inner action on $B$,
and $Z(B)=B^{\Ad W^*}$.
Since $\mu$ is scalar-valued,
\[
L_\mu:=\ovl{\spa}^{\rm w}\{W_{\pi_{i,j}}\mid\pi\in\IH, i,j\in I_\pi\}
\]
is a subalgebra of $B$,
and we have $B\cong Z(\tM)\otimes L_\mu$.
Note that $L_\mu$ is nothing but the \emph{link algebra}
\index{link algebra}
of $\lH$ and $L^\infty(K_c)$
that was introduced in \cite{BRV}.
The link algebra naturally admits two commuting ergodic actions
of $K_c$ and $\bH$ as follows.
For $k\in K_c$, the map $\ga_k$ is defined by
\[
(\gamma_k\otimes \id_\pi)(W_\pi)=(1\oti\pi(k)^* )W_\pi,
\]
where $\pi\col K_c\ra U(H_\pi)$ is an irreducible representation
defined by $\pi(c(t,\om)):=c(t,\om)_\pi$ for $t\in\R$ and $\om\in X_M$.
The action of $\bH$ is introduced by the restriction of the dual
action $\hal$, that is,
\[
(\hal\oti\id)(W)=W_{13}\la_{23}.
\]
Indeed, $\ga$ gives us an action as seen below:
\begin{align*}
(\gamma_k\otimes \id_{\pi\otimes \rho})(W_{\pi,12}W_{\rho,13})
&=
(\gamma_k\otimes \id_{\pi\otimes \rho})
(\mu_{\pi,\rho} (\id\oti{}_\pi\De_\rho)(W))
\\
&=
(\gamma_k\otimes \id_{\pi\otimes \rho})
((\id\oti{}_\pi\De_\rho^\mu)(W)\mu_{\pi,\rho})
\\
&=(1\oti\pi(k)^*\otimes \rho(k)^*)
(\id\oti{}_\pi\De_\rho^\mu)(W)\mu_{\pi,\rho} 
\\
&=(1\oti \pi(k)^*\otimes \rho(k)^*)\mu_{\pi,\rho}
(\id\oti{}_\pi\De_\rho)(W)
\\
&=
(\gamma_k\otimes \id_{\pi\otimes \rho})(W_{\pi,12})
(\gamma_k\otimes \id_{\pi\otimes \rho})(W_{\rho,13}),
\end{align*}
\begin{align*}
(\ga_k\oti\id_\pi)(W_\pi^*)
&=
(\ga_k\oti\id_\pi)((\id\oti\ka)(W_\opi))
\quad(\mbox{since $\mu$ is normalized})
\\
&=
(\id\oti\ka)((1\oti \opi(k)^*)W_\opi)
\\
&=
W_\pi^*(1\oti \pi(k)).
\end{align*}

We next compute the flow of weights of $M\rtimes_\alpha \bhH$.

\begin{thm}\label{thm:center}
$Z(\tM\rtimes_{\tal}\bhH)$ equals $Z(B)$,
which is described as follows: 
\[
Z(\tM\rtimes_{\tal}\bhH)
=
\ovl{\spa}^{\rm w}\left\{
z (W(1\oti p_\mu))_{\pi_{i,j}}
\mid z\in Z(\tM),
\
\pi\in\IH,
\
i,j\in I_\pi
\right\}.
\]
\end{thm}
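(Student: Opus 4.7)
The plan has two main parts: first I would reduce the problem to computing $Z(L_\mu)$, and second I would identify $Z(L_\mu)$ explicitly via the bicharacter $\be_\mu$.

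For the first part, any $z$ in the center of $N := \tM\rti_\tal\bhH$ must commute with $\tM$ and thus sits in $B$. Conversely, for $z\in B$, being central in $N$ amounts to commuting with every implementing unitary $\la_\pi$. Writing $\la_\pi = V_\pi W_\pi$ with $V_\pi\in\tM\oti B(H_\pi)$, and observing that $z$ already commutes with $V_\pi$, this reduces to $[z\oti 1_\pi, W_\pi] = 0$ for all $\pi\in\IH$. Since the matrix entries $\{W_{\pi,ij}\}$ generate $L_\mu$ inside the tensor decomposition $B = Z(\tM)\oti L_\mu$, this is equivalent to $z\in Z(\tM)\oti Z(L_\mu) = Z(B)$, giving the first assertion.

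For the second part, I would exploit co-commutativity of $\mu$ to turn the $\mu$-representation identity $W_{12}W_{13} = \mu_{23}(\id\oti\De)(W)$ into a commutation relation. Using $\De^\opp = \Ad \be_\mu^{-1}\circ\De$ (from Lemma \ref{lem:remark}) and flipping the last two tensor legs, after the cancellation $\mu_{32}\be_{\mu,23}^{-1}\mu_{23}^* = 1$ one obtains
\[
W_{13}W_{12} = W_{12}W_{13}\,\be_{\mu,23}.
\]
Thus the obstruction to matrix entries of $W_\pi$ and $W_\rho$ commuting is precisely the bicharacter. Now decompose $L_\mu$ into $\hal$-isotypical components $L_\mu = \bigoplus_\pi M_\pi$ with $M_\pi = \spa\{W_{\pi,ij}\}$; since $Z(L_\mu)$ is $\hal$-invariant it splits along this decomposition. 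Parameterizing a general element of $M_\pi$ as the matrix entries of $W_\pi(1\oti q)$ for $q\in B(H_\pi)$, the displayed relation shows that all its entries are central iff $\be_\mu(\pi,\rho)(q\oti 1_\rho) = q\oti 1_\rho$ for every $\rho$. By the defining property $p_\mu\lhH = \{x : \be_\mu(x\oti 1) = x\oti 1\}$, this is exactly $q\in (p_\mu)_\pi B(H_\pi)$. Varying $q$ then yields $Z(L_\mu)_\pi = \spa\{(W_\pi(1\oti (p_\mu)_\pi))_{ij}\}$, and tensoring with $Z(\tM)$ produces the description in the statement.

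The main obstacle lies in the derivation of $W_{13}W_{12} = W_{12}W_{13}\be_{\mu,23}$ and in ensuring that the passage from the abstract condition ``commutes with every $W_\rho$'' to ``$q\in p_\mu\lhH$'' uses the \emph{universal} property of $p_\mu$ across all $\rho\in\IH$ simultaneously. A subsidiary issue is checking that the parameterization by $q\in (p_\mu)_\pi B(H_\pi)$ exhausts the $\pi$-isotypical central subspace rather than covering a proper portion of it; this should follow from the unitarity of $W_\pi$ (so that the assignment $q\mapsto W_\pi(1\oti q)$ is injective on matrix-entry level) together with a dimension count against the $\hal$-multiplicity structure. Once those points are secured, the combination with the first part gives the full theorem.
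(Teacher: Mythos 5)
Your overall plan matches the paper's: reduce to $Z(B)$, derive the commutation relation $W_{13}W_{12}=W_{12}W_{13}\beta_{\mu,23}$, and then identify the $\pi$-isotypical part of the center via the projection $p_\mu$. Your derivation of the commutation relation (flip legs $2,3$ in $W_{12}W_{13}=\mu_{23}(\id\oti\De)(W)$, apply $\De^\opp=\Ad\beta_\mu^*\circ\De$, and cancel $\mu_{32}\beta_{\mu,23}^*\mu_{23}^*=1$) is correct and is equivalent to the paper's direct computation of $W_{13}^*W_{12}^*W_{13}W_{12}=\beta_\mu$. The first reduction, that $z\in B$ automatically commutes with $V_\pi\in\tM\oti B(H_\pi)$ so that centrality in the crossed product is precisely $[z\oti 1_\pi,W_\pi]=0$, is also right.

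The genuine gap is the reverse inclusion in your second part, which you flag but do not close, and your proposed fix is not adequate. Parameterizing by whole blocks $W_\pi(1\oti q)$ only tells you that \emph{all} matrix entries of $W_\pi(1\oti q)$ lie in $Z(L_\mu)$ simultaneously iff $q\in(p_\mu)_\pi B(H_\pi)$; a general central element of the $\pi$-isotype is a single combination $x=\sum_{i,j}z_{\pi_{j,i}}W_{\pi_{i,j}}$, and one must show its coefficient matrix lands in $p_\mu\lhH$ (tensored with $Z(\tM)$). Your suggestion of a ``dimension count against the $\hal$-multiplicity structure'' is problematic on two counts: the spanning set $\{(W(1\oti p_\mu))_{\pi_{i,j}}\}$ is not manifestly $\hal$-invariant since $p_\mu$ is not central in $\lhH$, so the multiplicity comparison is awkward to set up; and, more seriously, computing the $\hal$-multiplicity of $\pi$ in $Z(L_\mu)$ amounts to already knowing $Z(L_\mu)\cong L^\infty(\bK_\mu\backslash\bH)$, which is essentially the conclusion, so the count is circular. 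The paper closes the gap by a short coefficient-matching computation: the commutation relation gives $\Ad W_\rho^*(zW_{\pi_{i,j}}\oti 1_\rho)=\sum_k zW_{\pi_{i,k}}\oti(\beta_\mu^*)_{\pi_{k,j},\rho}$; imposing $\Ad W_\rho^*(x\oti 1_\rho)=x\oti 1_\rho$ for $x=\sum_{i,j}z_{\pi_{j,i}}W_{\pi_{i,j}}$ and using the $Z(\tM)$-linear independence of the $W_{\pi_{i,j}}$ forces $z_{\pi_{k,i}}\oti 1_\rho=\sum_j z_{\pi_{j,i}}\oti(\beta_\mu^*)_{\pi_{k,j},\rho}$ for all $\rho$, which is exactly the statement $z:=\sum_{i,j}z_{\pi_{i,j}}\oti e_{\pi_{i,j}}\in Z(\tM)\oti p_\mu\lhH$. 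That direct computation is what you need in place of the dimension count; once you have it, the unitarity of $W_\pi$ is not needed either.
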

\begin{proof}
Note that $Z(\tM)\subs Z(\tM\rti_\tal\bhH)$
because the action $\tal$ is inner.
It is trivial that $\tM\rti_\tal\bhH$
is generated by $\tM$ and $W_{\pi_{i,j}}$'s.
Thus $Z(\tM\rti_\tal\bhH)$ is the fixed point
subalgebra of $B$ by the inner action $\Ad W^*$.
We show the inner action $\Ad W^*$ on $B$ is acting on $B_\pi$
for each $\pi\in\IH$.
Since we have
\begin{align*}
 W_{13}^*W_{12}^*W_{13}W_{12}
&=(\id\otimes \De)(W^*)\mu^*F(\mu)
(\id\otimes \Delta^*))(W)\\
&=(\id \otimes \De^\opp)(W^*)(\id\otimes \De^\opp)(W)\beta_\mu
=\beta_\mu,
\end{align*}
$W_{13}W_{12}=W_{12}W_{13}\beta_\mu$ holds.
Then for $z\in Z(\tM)$,
we have
\[
W_{13}^*W_{12}(z\oti1\oti1)W_{13}
=
W_{13}^*W_{12}W_{13}(z\oti1\oti1)
=
W_{12}(z\oti\be_\mu^*).
\]
This implies
\begin{equation}\label{eq:WzW}
\Ad W_\rho^*(zW_{\pi_{i,j}}\oti1_\rho)=\sum_{k\in I_\pi}zW_{\pi_{i,k}}\oti (\be_\mu^*)_{\pi_{k,j},\rho},  
\end{equation}
and $W^*(B_\pi\oti\C)W\subs B_\pi\oti \lhH$.
Thus the fixed point algebra $B^{\Ad W^*}$
is generated by subspaces $B_\pi^{\Ad W^*}$ for $\pi\in\IG$.
Take any $x\in B_\pi^{\Ad W^*}$.
Then there exists $z_{\pi_{i,j}}\in Z(\tM)$
such that $x=\sum_{i,j}z_{\pi_{j,i}}W_{\pi_{i,j}}$.
By (\ref{eq:WzW}),
we must have
$z_{\pi_{k,i}}\oti1_\rho=\sum_j z_{\pi_{j,i}}\oti(\be_\mu^*)_{\pi_{k,j},\rho}$.
Hence $z:=\sum_{i,j} z_{\pi_{i,j}}\oti e_{\pi_{i,j}}$
is a member of $Z(\tM)\oti p_\mu\lhH$.
\end{proof}

\begin{cor}
The flow space of $M\rti_\al\bhH$ is identified with
$X_M\times K_c/G_{\mu,c}$,
where $G_{\mu,c}$ is a closed subgroup of $K_c$.
The flow is given by $(x,\ovl{k})\cdot t=(x\cdot t,\ovl{c(t,x)k})$,
where $\ovl{k}$ denotes the equivalence class of $k\in K_c$.
\end{cor}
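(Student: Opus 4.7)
The strategy is to use the explicit description of $Z(\tM\rti_\tal\bhH)$ provided by Theorem \ref{thm:center} and match it with the function algebra on $X_M \times K_c/G_{\mu,c}$ for a suitable closed subgroup $G_{\mu,c}$. First I would observe that, since $\al$ is modular, $\tal_\pi=\Ad V_\pi$ with $V_\pi\in\tM\oti B(H_\pi)$, so $V_\pi$ commutes with $Z(\tM)\oti 1$ and $\tal$ fixes $Z(\tM)$ pointwise; hence $Z(\tM)\subs Z(\tM\rti_\tal\bhH)$. Combining this with Theorem \ref{thm:center} gives a factorization $Z(\tM\rti_\tal\bhH)=Z(\tM)\oti\mathcal{Z}_0$, where $\mathcal{Z}_0$ is the weakly closed commutative subalgebra of the link algebra $L_\mu$ spanned by the entries $(W(1\oti p_\mu))_{\pi_{i,j}}$.

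Next I would identify $\mathcal{Z}_0$ with $L^\infty(K_c/G_{\mu,c})$. By the analysis following Theorem \ref{thm:non-deg-factor}, there exist a unitary $u\in\lhH$ and a closed subgroup $G_{\mu,c}\subs K_c=\bH_{\mu_0}$ such that $u\meR_{\be_\mu}u^*=L^\infty(\widehat{G_{\mu,c}})$, and $p_\mu$ corresponds under this identification to the minimal central projection detecting the trivial representation of $G_{\mu,c}$. Since the $K_c$-action $\ga$ on $L_\mu$ is ergodic of full multiplicity, $L_\mu\cong L^\infty(K_c)$ as a $K_c$-space with the left regular action. Formula (\ref{eq:WzW}) shows that the inner coaction $\Ad W^*:B\to B\oti\lhH$ is implemented on $L_\mu$ by $\be_\mu\in\meR_{\be_\mu}\oti\meR_{\be_\mu}$, which under the above identification is right translation by $G_{\mu,c}$. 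The fixed subalgebra $\mathcal{Z}_0=L_\mu\cap Z(B)$ is therefore $L^\infty(K_c)^{G_{\mu,c}}\cong L^\infty(K_c/G_{\mu,c})$.

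Finally, the flow is read off from the formulas $(\th_t\oti\id)(V_\pi)=V_\pi c_\pi(t)$ and $\th_t(\la_\pi)=\la_\pi$: one obtains $\th_t(W_\pi)=c_\pi(t)^* W_\pi$, and since $c(x,t)$ takes values in $K_c$ for almost every $x$, this translates on the spectrum of $Z(\tM)\oti\mathcal{Z}_0$ into the formula $(x,\bar k)\cdot t=(x\cdot t,\ovl{c(t,x)k})$. The delicate step is the second one: one must correctly interpret $p_\mu$, which a priori is defined purely in terms of the 2-cocycle $\be_\mu$, as detecting the trivial $G_{\mu,c}$-representation once the classical compact group $K_c=\bH_{\mu_0}$ is brought into the picture, and then verify that the inner coaction $\Ad W^*$ on $L_\mu\cong L^\infty(K_c)$ restricts precisely to right translation by $G_{\mu,c}$, so that its invariants form $L^\infty(K_c/G_{\mu,c})$.
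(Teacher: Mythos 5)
The crucial gap is in your second step, where you assert ``Since the $K_c$-action $\ga$ on $L_\mu$ is ergodic of full multiplicity, $L_\mu\cong L^\infty(K_c)$ as a $K_c$-space with the left regular action.'' This is false whenever $\mu$ is not a coboundary. The link algebra $L_\mu$ is precisely the algebra $A_{\om}$ of Wassermann's correspondence (Theorem \ref{thm:erg-coc}) for the nontrivial cocycle; it is commutative if and only if the cocycle is trivial, and by Theorem \ref{thm:non-deg-factor} it is even a \emph{factor} when $\beta_\mu$ is non-degenerate. The very content of the definition of $G_{\mu,c}$ in the statement is that the ergodic $K_c$-action on $L_\mu$ is \emph{induced} from an ergodic action of a closed subgroup $G_{\mu,c}$ on a finite factor (cf.\ the two corollaries that follow the statement), so $L_\mu$ looks like sections of a bundle of finite factors over $K_c/G_{\mu,c}$, not like $L^\infty(K_c)$. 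Your argument circularly presupposes this structure to define $G_{\mu,c}$.

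The paper's argument bypasses this issue entirely: from Theorem \ref{thm:center}, the centre $Z(\tM\rti_\tal\bhH)$ is identified with $Z(\tM)\oti L_\mu^{\bK_\mu}$, and one then observes that $L_\mu^{\bK_\mu}$ is abelian (being a tensor factor of a centre) and carries an ergodic $K_c$-action. The standard fact that an ergodic action of a compact group on an abelian von Neumann algebra with separable predual is transitive immediately yields $L_\mu^{\bK_\mu}\cong L^\infty(K_c/G_{\mu,c})$ for some closed subgroup $G_{\mu,c}$, with no need to understand $L_\mu$ itself or to interpret $\Ad W^*$ as right translation. Your step 1 and the identification of the flow from $\th_t(W(1\oti p_\mu))=c(t)^*W(1\oti p_\mu)$ and $(\ga_k\oti\id)(W)=(1\oti k^*)W$ are fine and match the paper; it is only the middle step that needs to be replaced by the ergodic-implies-transitive argument.
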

\begin{proof}
From the previous theorem,
$Z(\tM\rti_\tal\bhH)$ coincides
with $B^{\bK_\mu}$,
the fixed point algebra by the restriction of $\hal$.
Thus $Z(\tM\rti_\tal\bhH)\cong Z(\tM)\oti L_\mu^{\bK_\mu}$.
Since $K_c$ acts on the abelian von Neumann algebra
$L_\mu^{\bK_\mu}$ ergodically,
there exists a closed subgroup $G_{\mu,c}$ in $K_c$
such that $L_\mu^{\bK_\mu}\cong L^\infty(K_c/G_{\mu,c})$.
Hence the flow space of $M\rti_al\bhH$
is identified with $X_M\times K_c/G_{\mu,c}$.
On the flow,
we have $(\th_t\oti\id)(W(1\oti p_\mu))=c(t)^*W(1\oti p_\mu)$.
Since $(\ga_k\oti\id)(W)=(1\oti k^*)W$,
we see the flow is given by the skew product.
\end{proof}

Since we have $Z(L_\mu)\cong L^\infty(K_c/G_{\mu,c})$,
the ergodic action $\ga$ on $L_\mu$ is
induced from an ergodic action of $G_{\mu,c}$ on a finite factor.
Thus there exists a unitary $\nu\in L^\infty(\wdh{K_c})=\lhH$
such that
$\om:=(\nu\oti\nu)\mu^*\De^\mu(\nu^*)$
is contained in $L^\infty(\wdh{G_{\mu,c}})\oti L^\infty(\wdh{G_{\mu,c}})$,
and the corresponding bicharacter $\be:=\om_{21}\om^*$ is non-degenerate
by \cite[Theorem 12]{Wass-cptII},
where $\be$ is the one introduced in \cite[p.1513]{Wass-cptII}.
Again thanks to \cite[Theorem 12]{Wass-cptII},
the von Neumann algebra generated by $(\id\oti\ph)(\be)$,
$\ph\in L^\infty(\wdh{G_{\mu,c}})_*$
coincides with $L^\infty(\wdh{G_{\mu,c}})$.

Let $\be_\mu:=\mu^*F(\mu)$.
Then
the computation $(\id\oti\ph)(\be)=\nu(\id\oti\nu^*\ph\nu)(F(\be_\mu))\nu^*$
implies $L^\infty(\wdh{G_{\mu,c}})=\nu L^\infty(\wdh{\bK_\mu})\nu^*$.
Since $\om$ is a 2-cocycle of $\wdh{G_{\mu,c}}$,
we see that
$(\nu^*\oti\nu^*)\mu\De(\nu)
\in L^\infty(\wdh{\bK_\mu})\oti L^\infty(\wdh{\bK_\mu})$.
Therefore, we obtain the following result.
\begin{cor}
The compact group $G_{\mu,c}$ introduced in the previous corollary
is a dual cocycle twisting of $\wdh{\bK_\mu}$.
\end{cor}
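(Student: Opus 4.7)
The plan is to verify that the already-established identification $L^\infty(\wdh{G_{\mu,c}})=\nu L^\infty(\wdh{\bK_\mu})\nu^*$ (as von Neumann subalgebras of $\lhH$) promotes to an isomorphism of Kac algebras once the coproduct on $L^\infty(\wdh{\bK_\mu})$ is twisted by $\tilde\mu:=(\nu^*\oti\nu^*)\mu\De(\nu)$. Since the paragraph immediately preceding the corollary has already verified that $\tilde\mu\in L^\infty(\wdh{\bK_\mu})\oti L^\infty(\wdh{\bK_\mu})$, essentially all that remains is a short computation of coproducts.

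First I would invoke Theorem~\ref{thm:H-minimal}(2) to identify $K_c=\bH_\mu$, so that $L^\infty(\wdh{K_c})=\lhH$ as a von Neumann algebra and carries the coproduct $\De^\mu=\Ad\mu\circ\De$. Since $G_{\mu,c}$ is a closed subgroup of $K_c$, the inclusion $L^\infty(\wdh{G_{\mu,c}})\subs\lhH$ is a Hopf $*$-subalgebra on which the coproduct is the restriction of $\De^\mu$.

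Next, for any $y\in L^\infty(\wdh{\bK_\mu})$, setting $x:=\nu y\nu^*\in L^\infty(\wdh{G_{\mu,c}})$, I would compute
\begin{align*}
(\nu^*\oti\nu^*)\De^\mu(x)(\nu\oti\nu)
&=(\nu^*\oti\nu^*)\mu\De(\nu)\De(y)\De(\nu^*)\mu^*(\nu\oti\nu)\\
&=\tilde\mu\,\De(y)\,\tilde\mu^*.
\end{align*}
Thus $\Ad\nu$ intertwines the coproduct of $\wdh{G_{\mu,c}}$ with the twisted coproduct $\Ad\tilde\mu\circ\De$ on $L^\infty(\wdh{\bK_\mu})$. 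Because $\tilde\mu$ is obtained from the 2-cocycle $\mu\in Z^2(\bhH)$ by coboundary perturbation by $\nu^*$, it is a 2-cocycle of $\bhH$; and because $L^\infty(\wdh{\bK_\mu})^{\oti 2}$ is closed under $\De\oti\id$ and $\id\oti\De$, the cocycle identity localizes to give $\tilde\mu\in Z^2(\wdh{\bK_\mu})$. Hence $(L^\infty(\wdh{\bK_\mu}),\Ad\tilde\mu\circ\De)$ is the Kac algebra dual to the dual cocycle twisting $(\bK_\mu)_{\tilde\mu}$, and through $\Ad\nu$ we obtain a Kac algebra isomorphism $G_{\mu,c}\cong(\bK_\mu)_{\tilde\mu}$, which is the stated conclusion.

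The main obstacle, such as it is, reduces to bookkeeping: one must confirm that the closed-subgroup inclusion $G_{\mu,c}\subs K_c$ really endows $L^\infty(\wdh{G_{\mu,c}})$ with the restricted coproduct $\De^\mu|$, and that perturbation of a 2-cocycle by a unitary preserves the cocycle condition — both standard. All the nontrivial input (the identification $K_c=\bH_\mu$ via Theorem~\ref{thm:H-minimal}, and the construction of $\nu$ with $\tilde\mu\in L^\infty(\wdh{\bK_\mu})^{\oti 2}$ via Wassermann's theorem) has already been produced in the two paragraphs preceding the corollary.
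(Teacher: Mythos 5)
Your proposal is correct and takes essentially the same route as the paper: the paper states no separate proof, but concludes the corollary from the two facts established in the preceding paragraph, namely $L^\infty(\wdh{G_{\mu,c}})=\nu L^\infty(\wdh{\bK_\mu})\nu^*$ and $(\nu^*\oti\nu^*)\mu\De(\nu)\in L^\infty(\wdh{\bK_\mu})\oti L^\infty(\wdh{\bK_\mu})$. You have merely spelled out the short coproduct computation $(\nu^*\oti\nu^*)\De^\mu(\nu y\nu^*)(\nu\oti\nu)=\tilde\mu\,\De(y)\,\tilde\mu^*$ that the paper leaves implicit, which is exactly what justifies the ``Therefore''.
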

\
We have shown $\wdh{K_c}=\bhH^\mu$.
Since
$(\alpha\otimes \id)\circ\alpha
=(\id\otimes \Delta)\circ \alpha
=\Ad (1\otimes \mu^*)(\id\otimes \Delta^\mu)\circ \alpha $, 
$\alpha$ is a cocycle action of $\wdh{K_c}$ with a 2-cocycle $\mu^*$.
Since $M$ is of type III, we can perturb $\alpha$
by a unitary $v\in M\otimes A$
so that $\Ad v\alpha$ is a genuine action.
Let $P:=M\rtimes_{\Ad v\alpha}\wdh{K_c}$,
which is isomorphic to the cocycle crossed product $M\rtimes_{\alpha,\mu^*}\wdh{K_c}$. 
In fact, $P$ is nothing but a skew product of $\alpha$ by a
cocycle $c_t$ in the sense of \cite[Definition 5.6]{Iz}.
In particular, the inclusion
$M\subset M\rtimes_{\alpha,\mu^*} \wdh{K_c}$
isomorphic to $P^{K_c}\subset (P\otimes L_\mu)^{K_c}$
by \cite[Theorem 5.11]{Iz}.
The isomorphism is checked by direct computation as follows.
Let $\lambda^{\alpha,\mu^*}$
be the implementing unitary in $P$.
Then
\begin{align*}
\int_{K_c}
(\hal_k\otimes \gamma_k)
(\lambda^{\alpha,\mu^*}_{\pi_{ij}}\otimes W_{\rho_{rs}})\,dk
&=
\sum_{m,n}\bigg{(}
\int_{K_c}\pi(k)_{mj}\pi(k^{-1})_{rn}\,dk\bigg{)}
(\lambda^{\alpha,\mu^*}_{\pi_{im}}\oti W_{\pi_{ns}})
\\
&=
\delta_{j,r}
\sum_m
\lambda^{\alpha,\mu^*}_{\pi_{in}}\otimes W_{\pi_{ns}}
\\
&=
\de_{j,r}
(\lambda^{\alpha,\mu^*}_{13}W_{23})_{\pi_{i\el}}.
\end{align*}
Thus an isomorphism between $M\rtimes_{\al,\mu^*} \wdh{K_c}$
and $(P\otimes L_\mu)^{K_c}$
is given by sending $\lambda^{\al,\mu^*}$
to $\lambda^{\alpha,\mu^*}_{13}W_{23}$.
Since the action $\hal$ on $L_\mu$ is commuting with $\ga$,
the isomorphism intertwines $\bH$-actions.

\subsection{Hamachi--Kosaki decomposition}
We will close this section by studying the Hamachi--Kosaki decomposition
\index{Hamachi--Kosaki decomposition}
of $M\subset M\rtimes_\alpha \bhH$.
Let $\mZ:=Z(\tM \rtimes_{\tal}\bhH)$.
By Theorem \ref{thm:center},
we have $\mZ=Z(\tM'\cap(\tM\rtimes_{\tal}\bhH))$.
Set the following:
\[
\widetilde{\mA}
:= \mZ'\cap(\tM \rti_{\tal}\bhH)=\tM\rtimes_{\tal}\bhH,
\quad
\widetilde{\mB}:= \tM \vee \mathcal{Z},
\quad
\mA:=\widetilde{\mA}^\th,
\quad
\mB:=\widetilde{\mB}^\th.
\]
Then the two step inclusion $M\subset\mathcal{B}\subset \mathcal{A}=M\rti_\al\bhH$ is the Hamachi--Kosaki decomposition.
The Galois correspondence proved by Izumi--Longo--Popa
in \cite[Theorem 4.4]{ILP}
asserts that $\mB$ corresponds to a left coideal of $\lH$.
\index{coideal}
Indeed, the left coideal is $L^\infty(\bH/\bK_\mu)$ as computed below.

By Theorem \ref{thm:center}, we have
\[
\wdt{\mB}
=\ovl{\spa}^{\rm w}
\{(\id\oti\vph_\pi)(a_\pi W_\pi(1\oti p_\mu))\mid a\in \tM\oti\lhH,
\pi\in\IG\}.
\]
Note that $\tM$ and $L_\mu$ are independent with respect to
the averaging expectation of $\hal$.
Using $(\th_t\oti\id)(W)=c(t)^*W$ and a formal decomposition,
we obtain
\[
\mB
=\ovl{\spa}^{\rm w}
\{(\id\oti\vph_\pi)(a_\pi\la_\pi(1\oti p_\mu))\mid a\in M\oti\lhH,
\pi\in\IG\}.
\]
Since $p_\mu$ corresponds to the trivial representation of $\bK_\mu$,
$\mB$ is the fixed point algebra by the action of $\bK_\mu$ obtained from
the restriction of $\hal$.
Thus $M\subs (M\rti_\al\bhH)^{\bK_\mu}\subs M\rti_\al\bhH$
is the Hamachi--Kosaki decomposition.


\section{Classification of centrally free actions}
In this section,
we will study centrally free actions
on a McDuff factor.
The strategy to classify them
is same as that of \cite{MT1},
that is, we first construct
a Rohlin tower for a centrally free action,
and next we prove a 2-cohomology vanishing theorem.
However, the proof of \cite{MT1} is based on the existence
of a trace, and we should improve the argument
to be adapted to a general McDuff factor.
Readers are referred to \cite[Section 3]{MT1}
for usage of ultraproduct von Neumann algebras
and actions on them.

\subsection{Settings}
\begin{defn}
Let $\bhG$ be a discrete Kac algebra
and $M$ a von Neumann algebra.
\begin{itemize}
\item
A map $\th\col\IG\ra \Aut(M)$
is said to be a \emph{homomorphism}
when
$\th_\pi\th_\rho=\th_\si$
for all $\pi,\rho,\si\in\IG$ such that $\si\prec\pi\rho$.

\item
Let $(\al,u)$
be a cocycle action of $\bhG$ on $M$.
We will say that $(\al,u)$
is \emph{approximately inner modulo automorphism}
\index{approximately inner}
when
there exists a homomorphism $\th\col\IG\ra\Aut(M)$
such that the map
$\al_\pi\th_\pi^{-1}$ is approximately inner
for all $\pi\in\IG$.
\end{itemize}
\end{defn}

A typical example is the following.

\begin{ex}
\label{ex:modulo-auto}
If $M$ is injective and a cocycle action
$(\al,u)$ has the Connes--Takesaki module,
then it is an approximately inner modulo automorphism.
Indeed, let $s\col\Aut_\th(Z(\tM))\ra \Aut(M)$
be a homomorphic section of the Connes--Takesaki module map
constructed in \cite{ST} (also see \cite[Theorem 4]{Ham}).
If we set $\th_\pi:=s(\mo(\al_\pi))$,
then $\mo(\al_\pi\th_\pi^{-1})=\id$.
Thus $\al_\pi\th_\pi^{-1}$ is approximately inner
from \cite[Theorem A.6 (1)]{MT3}.
\end{ex}

\begin{ass}\label{ass:app-cent}
In this section, we always assume the following:
\begin{itemize}
\item
$M$ is a von Neumann algebra such that $M_\om$ is of type II$_1$;
\item
$(\al,u)$ is approximately inner
modulo automorphism;
\item
$(\al,u)$ is centrally free.
\index{action!centrally free cocycle--}
\end{itemize}
\end{ass}

In the above,
the central freeness of an action $\al$ on $M$
means that
$\al_\pi$ is properly centrally non-trivial
for each $\pi\in\IG\setm\{\btr\}$,
that is, there exists no nonzero element $a\in M\oti B(H_\pi)$
such that $\al_\pi^\om(x)a=(x\oti1)a$ for all $x\in M_\om$
(see \cite[Definition 8.1]{MT1}).

By definition of the approximate innerness \cite[Definition 3.5]{MT1},
we can take and fix
a unitary $U=(U^\nu)^\om\in M^\om\oti\lhG$
and a homomorphism $\th\col\IG\ra\Aut(M)$
such that
\[
\lim_{\nu\to\infty}(\chi\oti\tr_\pi)\circ\Ad (U_\pi^\nu)^*
=
\chi\circ\th_\pi\circ\Ph_\pi^\al
\quad\mbox{for all }\chi\in M_*
\
\pi\in\IG,
\]
where $\Ph_\pi^\al$ denotes the standard left inverse of $\al$,
that is,
\[
\Ph_\pi^\al(x)
=(1\oti T_{\opi,\pi}^*)(\al_{\opi}\oti\id)(x)(1\oti T_{\opi,\pi})
\quad
\mbox{for }x\in M\oti B(H_\pi).
\]

\subsection{Action $\ga$}
We introduce the cocycle action $(\ga,w)$ on $M^\om$
defined by $\ga:=\Ad U^* \circ\al^\om$
and $w:=U_{12}^*\al^\om(U^*)u(\id\oti\De)(U)$.
Then $\ga_\pi\circ (\th_\pi^\om)^{-1}$
preserves $M_\om$ by \cite[Lemma 3.7]{MT1},
and so does $\ga_\pi$ for each $\pi\in\IG$.
\begin{lem}
The restriction of the cocycle action $(\ga,w)$ on $M_\om$
gives a cocycle action,
that is, $w$ is evaluated in $M_\om$.
\end{lem}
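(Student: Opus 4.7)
The plan is to show that $w$ belongs to $M_\om \otimes \lhG \otimes \lhG$ by verifying that each scalar matrix coefficient of $w$, viewed as an element of $M^\om$, actually lies in $M_\om = M' \cap M^\om$. Since $\lhG = \bigoplus_{\pi} B(H_\pi)$, this decomposes the question representation-by-representation, and amounts to showing that $w$ commutes with $M \otimes \C \otimes \C$ inside $M^\om \otimes \lhG \otimes \lhG$.

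First I would pick a representing sequence $(U^\nu)_\nu$ of unitaries in $M \otimes \lhG$ for $U$, so that $w$ is canonically represented by $w^\nu := U_{12}^{\nu*}(\al \otimes \id)(U^{\nu*}) \, u \, (\id \otimes \De)(U^\nu) \in M \otimes \lhG \otimes \lhG$. Thus $w$ is already the $\om$-class of a bounded sequence sitting inside $M \otimes \lhG \otimes \lhG$, and the question reduces to the $\om$-centrality of each of its scalar matrix coefficients. For the structural input I would invoke the cocycle identity $(\ga \otimes \id)(\ga(y)) = w(\id \otimes \De)(\ga(y))w^*$, valid for all $y \in M^\om$: applied to $y \in M_\om$, and using the preservation $\ga(M_\om) \subs M_\om \otimes \lhG$ established just before the lemma, both sides of the identity lie in $M_\om \otimes \lhG \otimes \lhG$. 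This implies that $\Ad w$ normalizes $M_\om \otimes \lhG \otimes \lhG$ as a subalgebra of $M^\om \otimes \lhG \otimes \lhG$.

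The hard part will be upgrading the normalizing property $\Ad w(M_\om \otimes \lhG \otimes \lhG) = M_\om \otimes \lhG \otimes \lhG$ to actual membership $w \in M_\om \otimes \lhG \otimes \lhG$. Passing to the relative commutant, using that $M_\om$ is a type II$_1$ factor with $M_\om' \cap M^\om = M$, we obtain $(M_\om \otimes \lhG \otimes \lhG)' \cap (M^\om \otimes \lhG \otimes \lhG) = M \otimes \C \otimes \C$, so $\Ad w$ also preserves $M \otimes \C \otimes \C$. To show this induced automorphism is in fact trivial, I would expand the commutator $[w^\nu,\, x \otimes 1 \otimes 1]$ for $x \in M$ using the explicit formula for $w^\nu$ and the cocycle identity $(\al \otimes \id)(\al(x)) u = u(\id \otimes \De)(\al(x))$ for $(\al, u)$; the approximate innerness relation $(\chi \otimes \tr_\pi) \circ \Ad(U^\nu)^* \to \chi \circ \th_\pi \circ \Ph_\pi^\al$ then allows one to verify that, modulo $\cI_\om$, the contributions of the $U^\nu$-factors cancel against those of $u$, yielding $\om$-triviality of the commutator. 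This places each matrix coefficient of $w$ in $M' \cap M^\om = M_\om$, giving the desired conclusion.
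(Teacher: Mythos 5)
Your proposed route has two genuine gaps.

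First, the step where you pass from the cocycle identity to the claim that $\Ad w$ normalizes $M_\om\oti\lhG\oti\lhG$ does not follow. The identity $(\ga\oti\id)(\ga(y))=w\,(\id\oti\De)(\ga(y))\,w^*$ for $y\in M_\om$, together with $\ga(M_\om)\subs M_\om\oti\lhG$, only shows that $\Ad w$ maps the particular subalgebra $\{(\id\oti\De)(\ga(y)):y\in M_\om\}$ into $M_\om\oti\lhG\oti\lhG$. That subalgebra is a small corner of $M_\om\oti\lhG\oti\lhG$ (for instance it contains nothing nontrivial of the form $1\oti a$, $a\in\lhG\oti\lhG$), so no normalization of $M_\om\oti\lhG\oti\lhG$ is established, and the relative-commutant argument cannot get started. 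Even if normalization held, it would not imply $w\in M_\om\oti\lhG\oti\lhG$: the normalizer of $M_\om\oti\lhG\oti\lhG$ in $M^\om\oti\lhG\oti\lhG$ is much bigger (e.g. $U(M\oti\lhG\oti\lhG)$ normalizes it trivially).

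Second, and more fundamentally, the identification $M'\cap M^\om=M_\om$ that you use at the very end is false in the generality assumed here. Assumption \ref{ass:app-cent} only requires that $M_\om$ be of type II$_1$; the factor $M$ itself is allowed to be of type III, and for type III factors one has $M_\om\subsetneq M'\cap M^\om$ (by Ando--Haagerup, $M_\om$ is the part of $M'\cap M^\om$ that also sits in the centralizer of $\vph^\om$). So even if you succeed in showing that the matrix coefficients of $w$ commute with $M$, that does not place them in $M_\om$. This is precisely why the paper instead verifies $\om$-centrality directly through the predual criterion of \cite[Lemma 3.6]{MT1}, computing that $(w^\nu)^*(\chi\oti\tr_\pi\oti\tr_\rho)w^\nu\to\chi\oti\tr_\pi\oti\tr_\rho$ in norm using the approximation property of $U^\nu$, the composition rule for the left inverses $\Ph_\pi^\al$, and the homomorphism property $\th_\pi\th_\rho=\th_\si$. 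That is the argument you need; working with commutators against $M$ is not an adequate substitute for working with the predual action on normal functionals.
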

\begin{proof}
Let $\chi\in M_*$ and $\pi,\rho\in\IG$.
Let $w^\nu$ be a unitary representing sequence of $w$.
We will prove
$(w^\nu)^*(\chi\oti\tr_\pi\oti\tr_\rho)w^\nu$
converges to $(\chi\oti\tr_\pi\oti\tr_\rho)$
in the norm topology of $(M\oti B(H_\pi)\oti B(H_\rho))_*$.
Then the matrix entries of $(w^\nu)_\nu$ is centralizing
by \cite[Lemma 3.6]{MT1}.
We set $\Ph_\pi^{\al\th^{-1}}:=\th_\pi\circ\Ph_\pi^\al$,
which is a left inverse of $\al_\pi\th_\pi^{-1}$.
Then we get
\begin{align*}
&(w^\nu)^*(\chi\oti\tr_\pi\oti\tr_\rho)w^\nu
\\
=&
(\id\oti\De)((U^\nu)^*)\al(U^\nu)u^*\cdot U_{12}^\nu
(\chi\oti\tr_\pi\oti\tr_\rho)
(U_{12}^\nu)^*
\cdot\al((U^\nu)^*)u(\id\oti\De)(U^\nu)
\\
\sim&
(\id\oti\De)((U^\nu)^*)u^*\al(U^\nu)\cdot
(\chi\circ\Ph_\pi^{\al\th^{-1}}\oti\tr_\rho)
\cdot\al((U^\nu)^*)u(\id\oti\De)(U^\nu)
\\
=&
(\id\oti\De)((U^\nu)^*)u^*\cdot
\left((\chi\circ\th_\pi\oti\tr_\rho)\circ \Ad (U_\rho^\nu)^*
\circ (\Ph_\pi^\al\oti\id_\rho)
\right)
\cdot u(\id\oti\De)(U^\nu)
\\
\sim&
(\id\oti\De)((U^\nu)^*)u^*\cdot
\left(\chi\circ\th_\pi\circ \Ph_\rho^{\al\th^{-1}}
\circ (\Ph_\pi^\al\oti\id_\rho)
\right)
\cdot u(\id\oti\De)(U^\nu)
\\
=&
(\id\oti\De)((U^\nu)^*)u^*\cdot
\left(\chi\circ\th_\pi\th_\rho\circ \Ph_\rho^{\al}
\circ (\Ph_\pi^\al\oti\id_\rho)
\right)
\cdot u(\id\oti\De)(U^\nu).
\end{align*}
Recall the following composition rule (\cite[Lemma 2.5]{MT1}):
\[
\Ph_\rho^\al\big{(}
(\Ph_\pi^\al\oti\id)(u_{\pi,\rho}xu_{\pi,\rho}^*)\big{)}
=
\sum_{\si\prec\pi\oti \rho}
\sum_{T\in \ONB(\si,\pi\rho)}
\frac{d(\si)}{d(\pi) d(\rho)}
\Ph_\si^\al((1\oti T^*)x(1\oti T)).
\]
Using this and $\th_\pi\th_\rho=\th_\si$ if $\si\prec\pi\rho$,
we have
\begin{align*}
&(w^\nu)^*(\chi\oti\tr_\pi\oti\tr_\rho)w^\nu
\\
\sim&
\sum_{\si\prec\pi\oti\rho}
\sum_{T\in \ONB(\si,\pi\oti\rho)}
\frac{d(\si)}{d(\pi) d(\rho)}
(1\oti T)(U_\si^\nu)^*\cdot
\left(\chi\circ \th_\si\circ\Ph_\si^\al
\right)\cdot U_\si^\nu(1\oti T^*)
\\
=&
\sum_{\si\prec\pi\oti\rho}
\sum_{T\in \ONB(\si,\pi\oti\rho)}
\frac{d(\si)}{d(\pi) d(\rho)}
(1\oti T)(U_\si^\nu)^*\cdot
\left(\chi\circ\Ph_\si^{\al\th^{-1}}
\right)\cdot U_\si^\nu(1\oti T^*)
\\
\sim&
\sum_{\si\prec\pi\oti\rho}
\sum_{T\in \ONB(\si,\pi\oti\rho)}
\frac{d(\si)}{d(\pi) d(\rho)}
(1\oti T)\cdot
\left(\chi\oti\tr_\si
\right)\cdot(1\oti T^*)
\\
=&
\chi\oti\tr_\pi\oti\tr_\rho.
\end{align*}
Thanks to \cite[Lemma 3.6]{MT1},
we see the 2-cocycle $w$ is evaluated in $M_\om$.
\end{proof}

\begin{thm}\label{thm:app-cocycle}
Let $(\al,u)$ be a cocycle action of $\bhG$
on a von Neumann algebra $M$
such that Assumption \ref{ass:app-cent} is fulfilled.
Then there exists a unitary $U\in M^\om\oti\lhG$
such that
\begin{itemize}
\item
$\Ad U^\nu$ approximates $\al\th^{-1}$,
that is, $\dsp\lim_{\nu\to\infty}(\chi\oti\tr_\pi)\circ\Ad
(U^\nu)^*=\chi\circ\Ph_\pi^{\al\th^{-1}}$
for all $\chi\in M_*$ and $\pi\in\IG$;
\item
$U_{12}^*\al^\om(U^*)u(\id\oti\De)(U)=1$.
\end{itemize}
\end{thm}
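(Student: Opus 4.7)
The plan is to reduce the theorem to a 2-cohomology vanishing statement for a free cocycle action of $\bhG$ on the type II$_1$ factor $M_\om$. Start with any unitary $U_0\in M^\om\oti\lhG$ implementing the approximate innerness modulo $\th$, and form the cocycle action $(\ga,w)=(\Ad U_0^*\circ\al^\om,\,U_{0,12}^*\al^\om(U_0^*)u(\id\oti\De)(U_0))$ on $M^\om$. By the preceding lemma, the obstruction $w$ already lives in $M_\om\oti\lhG\oti\lhG$, and $\ga$ globally preserves $M_\om$. Thus the question ``can $w$ be written as a coboundary, i.e.\ $w=(v\oti1)\ga(v)(\id\oti\De)(v^*)$ for some unitary $v$?'' is a cohomology question intrinsic to $M_\om$.

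Before invoking cohomology vanishing I would verify that the restricted cocycle action $(\ga,w)|_{M_\om}$ is free. The central freeness of $(\al,u)$ on $M$ gives freeness of $\al^\om$ on $M_\om$ (for each $\pi\neq\btr$, no nonzero $a\in M_\om\oti B(H_\pi)$ satisfies $\al_\pi^\om(x)a=(x\oti1)a$ for all $x\in M_\om$), and this property is stable under conjugation by the unitary $U_0^*\in M^\om\oti\lhG$; so $\ga|_{M_\om}$ is also properly free.

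Next I would apply the 2-cohomology vanishing theorem for free cocycle actions of the discrete amenable Kac algebra $\bhG$ on the II$_1$ factor $M_\om$ — the analogue in the ultraproduct setting of the vanishing result established in the injective II$_1$ case in \cite{MT1}. The Rohlin-tower/intertwining technology developed there transports to $M_\om$ essentially verbatim, since $M_\om$ is a McDuff II$_1$ factor absorbing any separable subalgebra in its commutant, which is the only structural input actually used. The conclusion is a unitary $v\in M_\om\oti\lhG$ with
\[
(v\oti1)\ga(v)\,w\,(\id\oti\De)(v^*)=1.
\]

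Finally, I would set $U:=U_0 v$ and check two consequences. The first, that the new 2-cocycle $U_{12}^*\al^\om(U^*)u(\id\oti\De)(U)$ equals $w^v=1$, is a direct expansion using $\ga=\Ad U_0^*\circ\al^\om$. The second, that the approximating property of $U^\nu$ for $\al\circ\th^{-1}$ is preserved, follows because representing sequences $v^\nu$ of $v$ are by construction centralizing in $M$: for $\chi\in M_*$ and $\pi\in\IG$, $(\chi\oti\tr_\pi)\circ\Ad(U^\nu v^\nu)^{-1}$ differs from $(\chi\oti\tr_\pi)\circ\Ad(U^\nu)^{-1}$ by a term that tends to zero along $\om$ since each $v^\nu_{ij}\in\mC_\om$. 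The hard part, which I expect to be the genuine technical content, is not the formal manipulation above but the cohomology vanishing on $M_\om$: verifying that the arguments of \cite{MT1} — constructing a Rohlin tower inside $(M_\om)_\om$ and applying an intertwining procedure to perturb $w$ to the identity — go through without the injectivity of the underlying II$_1$ factor, using only the McDuff-type absorption properties that $M_\om$ automatically enjoys under Assumption \ref{ass:app-cent}.
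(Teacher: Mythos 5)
Your proposal is correct and follows the same route as the paper: form the cocycle action $(\ga,w)$ on $M^\om$, note by the preceding lemma that $w$ already lies in $M_\om\oti\lhG\oti\lhG$, and invoke the 2-cohomology vanishing theorem \cite[Lemma 4.3]{MT1} to kill $w$, then set $U:=U_0v$. The paper's own proof is terser --- it simply cites the lemma and concludes --- so your supplementary checks (freeness of $\ga|_{M_\om}$, preservation of the approximation property under $U\mapsto U_0 v$ via centrality of $v^\nu$) are sound but left implicit, and your concern that the cohomology vanishing on $M_\om$ might need re-deriving is unfounded since the cited lemma is already formulated at that level of generality.
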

\begin{proof}
Let $(\ga,w)$ be as in the previous lemma.
Thanks to the 2-cohomology vanishing theorem \cite[Lemma 4.3]{MT1},
the 2-cocycle $w$ is a coboundary.
Thus we may and do assume that
$U_{12}^*\al^\om(U^*)u(\id\oti\De)(U)=1$.
\end{proof}

We always assume that $U$ is taken as above in what follows.
Then $\ga$ is an action.

\begin{lem}
The equality $\Ph_\pi^\ga=\Ph_\pi^{\al^\om}\circ\Ad U_\pi$ holds
for all $\pi\in\IG$.
\end{lem}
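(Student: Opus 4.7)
The proof will be a direct computation. My plan is to unfold the defining formula
\[
\Ph_\pi^\ga(y) = (1\oti T_{\opi,\pi}^*)(\ga_\opi\oti\id_\pi)(y)(1\oti T_{\opi,\pi})
\]
and substitute $\ga_\opi=\Ad U_\opi^*\circ\al^\om_\opi$ to obtain
\[
\Ph_\pi^\ga(y)=(1\oti T_{\opi,\pi}^*)(U_\opi^*\oti 1_\pi)(\al^\om_\opi\oti\id_\pi)(y)(U_\opi\oti 1_\pi)(1\oti T_{\opi,\pi}).
\]
Expanding the right side of the claimed equality gives
\[
\Ph_\pi^{\al^\om}(\Ad U_\pi(y))
=(1\oti T_{\opi,\pi}^*)(\al^\om_\opi\oti\id_\pi)(U_\pi)\,(\al^\om_\opi\oti\id_\pi)(y)\,(\al^\om_\opi\oti\id_\pi)(U_\pi^*)(1\oti T_{\opi,\pi}).
\]
Since $y$ is arbitrary, the lemma reduces to the single identity
\[
(U_\opi\oti 1_\pi)(1\oti T_{\opi,\pi})=(\al^\om_\opi\oti\id_\pi)(U_\pi^*)(1\oti T_{\opi,\pi})
\]
(its adjoint supplies the outer factor on the other side).

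To verify this, I would restrict the cocycle-trivialization condition
\[
U_{12}^*\al^\om(U^*)u(\id\oti\De)(U)=1
\]
of Theorem \ref{thm:app-cocycle} to the $(\opi,\pi)$-component, which rearranges to
\[
(\id\oti{}_\opi\De_\pi)(U)=u_{\opi,\pi}^*\,(\al^\om_\opi\oti\id_\pi)(U_\pi)\,(U_\opi\oti 1_\pi).
\]
Multiplying both sides on the right by $1\oti T_{\opi,\pi}$, the left-hand side collapses to $1\oti T_{\opi,\pi}$ because $(\id\oti{}_\opi\De_\pi)(U)$ acts as $U_\btr=1$ on the trivial subrepresentation of $\opi\pi$ spanned by $T_{\opi,\pi}$. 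The standard normalization $u_{\opi,\pi}(1\oti T_{\opi,\pi})=1\oti T_{\opi,\pi}$, which follows from $\Ph_\pi^\al\circ\al_\pi=\id$ for the original cocycle action, then gives exactly the required identity.

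The calculation is essentially mechanical, and the only delicate point is the normalization of $u$ at the conjugate pair $(\opi,\pi)$; this is the step that, while standard, needs to be spelled out carefully to keep the indexing consistent.
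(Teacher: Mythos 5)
Your proof follows the same computational route as the paper's: both reduce the lemma to the cocycle-trivialization relation $U_{12}^*\al^\om(U^*)u(\id\oti\De)(U)=1$ read off at the conjugate pair $(\opi,\pi)$, together with the fact that $(\id\oti{}_\opi\De_\pi)(U)(1\oti T_{\opi,\pi})=1\oti T_{\opi,\pi}$. The paper threads the computation through $\Ph_\pi^\ga(x)$ directly, inserting $U_\pi^*(\cdot)U_\pi$ inside $\al^\om_\opi$ and then recognizing $U_\opi^*\al^\om_\opi(U_\pi^*)=(\id\oti\De)(U^*)u^*$; you instead isolate the single intertwining identity $(U_\opi\oti1_\pi)(1\oti T_{\opi,\pi})=(\al^\om_\opi\oti\id_\pi)(U_\pi^*)(1\oti T_{\opi,\pi})$ and verify that. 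The algebra is the same, only the packaging differs.

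One caution on the last step. Your claim that $u_{\opi,\pi}(1\oti T_{\opi,\pi})=1\oti T_{\opi,\pi}$ ``follows from $\Ph_\pi^\al\circ\al_\pi=\id$'' over-states what that relation gives. With the formula $\Ph_\pi^\al(x)=(1\oti T_{\opi,\pi}^*)(\al_\opi\oti\id)(x)(1\oti T_{\opi,\pi})$ as written, $u(1\oti T_{\opi,\pi})=(a\oti1)(1\oti T_{\opi,\pi})$ where $a$ is the diagonal of the $2$-cocycle $u$ (in the sense of Section~7), and $a$ is not assumed to be $1$. The left-inverse property at best forces the ``trace'' $(1\oti T_{\opi,\pi}^*)u(1\oti T_{\opi,\pi})$ to act trivially on $M$ by conjugation; it does not force $u$ to fix the vector $T_{\opi,\pi}$. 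The paper's proof does not appeal to such a normalization at all: after eliminating $(\id\oti\De)(U)$ against $T_{\opi,\pi}$, it simply reads the remaining expression $(1\oti T_{\opi,\pi}^*)u^*\al^\om_\opi(\,\cdot\,)u(1\oti T_{\opi,\pi})$ as the standard left inverse of the cocycle action $(\al^\om,u)$, with the $u$-conjugation understood to be part of $\Ph_\pi^{\al^\om}$. You should finish the same way rather than invoking a normalization of $u$ that need not hold; with that adjustment, your argument and the paper's coincide.
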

\begin{proof}
Let $x\in M^\om\oti B(H_\pi)$.
Then
\begin{align*}
\Ph_\pi^\ga(x)
&=
(1\oti T_{\opi,\pi}^*)
\ga_\opi(x)
(1\oti T_{\opi,\pi})
\\
&=
(1\oti T_{\opi,\pi}^*)
U_\opi^*\al_\opi^\om(x)U_\opi
(1\oti T_{\opi,\pi})
\\
&=
(1\oti T_{\opi,\pi}^*)
U_\opi^*\al_\opi^\om(U_\pi^*)\al_\opi^\om(U_\pi xU_\pi^*)
\al_\opi^\om(U_\pi)U_\opi
(1\oti T_{\opi,\pi})
\\
&=
(1\oti T_{\opi,\pi}^*)
(\id\oti\De)(U^*)u^*
\al_\opi^\om(U_\pi xU_\pi^*)
u(\id\oti\De)(U)
(1\oti T_{\opi,\pi})
\\
&=
(1\oti T_{\opi,\pi}^*)
u^*
\al_\opi^\om(U_\pi xU_\pi^*)
u
(1\oti T_{\opi,\pi})
=\Ph_\pi^{\al^\om}(U_\pi x U_\pi^*).
\end{align*}
\end{proof}

\begin{lem}\label{lem:ta-ga}
The equality $(\ta^\om\oti\id)(\ga_\pi(x))=\th_\pi(\ta^\om(x))\oti1$
holds for all $\pi\in\IG, x\in M^\om$.
\end{lem}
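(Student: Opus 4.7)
The strategy is to pass to representing sequences, pair both sides against functionals on $M \oti B(H_\pi)$, and invoke the approximation property of $U$ recorded in Theorem~\ref{thm:app-cocycle}.

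First, I would fix bounded representing sequences $(x^\nu)_\nu \subset M$ for $x \in M^\om$ and $(U_\pi^\nu)_\nu$ for $U_\pi$, so that $\ga_\pi(x) = (z^\nu)^\om$ with $z^\nu := (U_\pi^\nu)^* \al_\pi(x^\nu) U_\pi^\nu \in M \oti B(H_\pi)$. Both sides of the asserted identity lie in $M \oti B(H_\pi)$; interpreting $\ta^\om\colon M^\om \to M$ as the canonical $\sigma$-weak ultralimit expectation, characterised by $\chi \circ \ta^\om((y^\nu)^\om) = \lim_{\nu \to \om} \chi(y^\nu)$ for every $\chi \in M_*$, the identity reduces to verifying
\[
\lim_{\nu \to \om} (\chi \oti \phi)(z^\nu) = \phi(1) \lim_{\nu \to \om} \chi(\th_\pi(x^\nu))
\]
for all $\chi \in M_*$ and $\phi \in B(H_\pi)_*$.

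Second, for the distinguished choice $\phi = \tr_\pi$, I would invoke the approximation property
\[
\lim_{\nu \to \om} (\chi \oti \tr_\pi) \circ \Ad(U_\pi^\nu)^* = \chi \circ \th_\pi \circ \Ph_\pi^\al
\]
from Theorem~\ref{thm:app-cocycle}. Read as a norm convergence of functionals on $M \oti B(H_\pi)$, it remains valid when applied to the bounded sequence $y^\nu = \al_\pi(x^\nu)$ in place of a fixed element. Together with the identity $\Ph_\pi^\al \circ \al_\pi = \id_M$, which follows from the definition of $\Ph_\pi^\al$ in terms of the intertwiner $T_{\opi,\pi}$ and the normalisation $T_{\opi,\pi}^* T_{\opi,\pi} = 1$, this yields the desired equality in the tracial case.

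Third, to extend from $\phi = \tr_\pi$ to general $\phi \in B(H_\pi)_*$, I would upgrade the tracial approximation to a matrix-entry statement. Decomposing $U_\pi^\nu = \sum_{k,l} U_{kl}^\nu \oti e_{kl}$ and expanding $(\chi \oti \phi)(z^\nu)$ as a sum of scalar quantities $\chi((U_{ik}^\nu)^* \al_{\pi_{ij}}(x^\nu) U_{jn}^\nu) \phi(e_{kn})$, one can relate individual entries of the limit back to $\tr_\pi$-averages by contracting against the intertwiner $T_{\opi,\pi}$ and using the cocycle relation $U_{12}^* \al^\om(U^*) u (\id\oti\De)(U) = 1$ supplied by Theorem~\ref{thm:app-cocycle}. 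The main obstacle lies precisely in this upgrade: the raw approximation controls only $\tr_\pi$-averages, whereas the lemma asserts full matrix-level equality; resolving it requires a careful combinatorial unfolding of the coproduct relation together with an intertwiner-averaging argument applied to $U_\pi^\nu$.
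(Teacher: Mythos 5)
Your first two steps agree with the paper's strategy: pass to representing sequences, treat $\ta^\om$ via $\sigma$-weak ultralimits, and observe that the functional-norm convergence $(\chi\oti\tr_\pi)\circ\Ad(U^\nu)^*\to\chi\circ\th_\pi\circ\Ph_\pi^\al$ from Theorem~\ref{thm:app-cocycle} can be applied to the bounded sequence $\al_\pi(x^\nu)$; together with $\Ph_\pi^\al\circ\al_\pi=\id_M$, this settles the case $\phi=\tr_\pi$.

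The gap is in your third step. You correctly identify that the raw approximation property only controls $\tr_\pi$-slices, but the resolution you sketch -- expanding $U_\pi^\nu$ into matrix entries, a ``careful combinatorial unfolding of the coproduct relation together with an intertwiner-averaging argument'' -- is a programme, not a proof, and is not the route the paper follows. The paper avoids any matrix-level upgrade entirely: one slices against $\tr_\pi y$ for $y\in B(H_\pi)$, and then the cyclicity of $\tr_\pi$ combined with the trivial insertion $U^\nu(U^\nu)^*=1$ gives
\[
(\chi\oti\tr_\pi y)\bigl((U^\nu)^*\al_\pi(x^\nu)U^\nu\bigr)
=(\chi\oti\tr_\pi)\bigl((U^\nu)^*\cdot U^\nu(1\oti y)(U^\nu)^*\al_\pi(x^\nu)\cdot U^\nu\bigr),
\]
so the general slice is literally a $\tr_\pi$-slice of $\Ad(U^\nu)^*$ applied to the new bounded sequence $U^\nu(1\oti y)(U^\nu)^*\al_\pi(x^\nu)$, to which the approximation property applies verbatim. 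After that, the left-inverse identity pulls out $x^\nu$, and what remains is to identify $\Ph_\pi^{\al^\om}(U(1\oti y)U^*)$ as $\Ph_\pi^\ga(1\oti y)=\tr_\pi(y)$; this is where the cocycle relation $U_{12}^*\al^\om(U^*)u(\id\oti\De)(U)=1$ actually enters the argument, through the equality $\Ph_\pi^\ga=\Ph_\pi^{\al^\om}\circ\Ad U_\pi$ proved just before the lemma. You gesture at the cocycle relation and the intertwiner $T_{\opi,\pi}$, which is the right raw material, but without the cyclicity-plus-insertion trick the proof is not there, and the entrywise combinatorics you anticipate would, at best, be a much heavier way of reaching the same endpoint.
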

\begin{proof}
Let $\pi\in\IG$, $y\in B(H_\pi)$ and $\chi\in M_*$.
It suffices to show that
$(\chi\circ\ta^\om\oti\tr_\pi y)(\ga(x))=\chi(\th_\pi(\ta^\om(x)))\tr_\pi(y)$.
Using $(\chi\oti\tr_\pi)\circ\Ad (U^\nu)^*\to\chi\circ\Ph_\pi^{\al\th^{-1}}$
in $(M\oti B(H_\pi))_*$ as $\nu\to\infty$,
we have
\begin{align*}
(\chi\circ\ta^\om\oti\tr_\pi y)(\ga_\pi(x))
&=
\lim_{\nu\to\om}
(\chi\oti\tr_\pi y)((U^\nu)^*\al_\pi(x^\nu)U^\nu)
\\
&=
\lim_{\nu\to\om}
(\chi\oti\tr_\pi)((U^\nu)^*\cdot U^\nu(1\oti y)(U^\nu)^*\al_\pi(x^\nu)U^\nu)
\\
&=
\lim_{\nu\to\om}
\chi\circ\th_\pi\circ\Ph_\pi^\al(U^\nu(1\oti y)(U^\nu)^*\al_\pi(x^\nu))
\\
&=
\lim_{\nu\to\om}
\chi\circ\th_\pi(\Ph_\pi^\al(U^\nu(1\oti y)(U^\nu)^*)x^\nu)
\\
&=
\chi(\ta^\om(\th_\pi^\om\circ\Ph_\pi^{\al^\om}(U(1\oti y)U^*)\th_\pi^\om(x)))
\\
&=
\chi(\ta^\om(\th_\pi^\om(\Ph_\pi^{\ga}(1\oti y))\th_\pi^\om(x)))
\\
&=
\chi(\ta^\om(\tr_\pi(y)\th_\pi^\om(x)))
=\chi(\th_\pi(\ta^\om(x)))\tr_\pi(y),
\end{align*}
where we have used the fact that $\ga$ is an action.
\end{proof}

\begin{lem}\label{lem:g-ps-vph}
Let $x\in M_\om \oti\lhG\oti\lhG$, $y\in M^\om\oti\lhG\oti\lhG$
and $g\in M\oti\lhG$ be a unitary.
Let $\ps\in (M^\om)_*$ be a state.
Then the following inequalities hold
\begin{enumerate}
\item
For $\pi,\rho\in\IG$,
one has
\[
|(g(\ps\oti\vph_\pi)g^*\oti\vph_\rho)(xy)|
\leq d(\pi)|x|_{\ps\oti\vph_\pi\oti\vph_\rho}\|y\|.
\]
\item
For central projections $F$ and $K$ in $\lhG$
with finite support, one has
\[
|(g(\ps\oti\vph_F)g^*\oti\vph_K)(xy)|
\leq |F|_\vph|x|_{\ps\oti\vph_F\oti\vph_K}\|y\|.
\]
\end{enumerate}
\end{lem}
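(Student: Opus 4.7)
The plan is to prove (1) first and then deduce (2) from it by a summation argument.

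For (2), expand $\vph_F=\sum_{\pi\in F}\vph_\pi$ and $\vph_K=\sum_{\rho\in K}\vph_\rho$ over the finite supports and apply (1) to each bilinear summand. The triangle inequality gives
\[
|(g(\ps\oti\vph_F)g^*\oti\vph_K)(xy)|\leq\sum_{\pi\in F,\,\rho\in K}d(\pi)\,|x|_{\ps\oti\vph_\pi\oti\vph_\rho}\,\|y\|.
\]
Since $d(\pi)\leq d(\pi)^2\leq\vph(F)=|F|_\vph$ for every $\pi$ in the support of $F$, pulling $|F|_\vph$ out of the sum yields $|F|_\vph\,|x|_{\ps\oti\vph_F\oti\vph_K}\,\|y\|$, which is (2).

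For (1), I would first unfold the conjugated functional,
\[
(g(\ps\oti\vph_\pi)g^*\oti\vph_\rho)(xy)=\chi(g_{12}^*xy\,g_{12}),\qquad \chi:=\ps\oti\vph_\pi\oti\vph_\rho,
\]
where $g_{12}$ denotes $g$ placed in the first two tensor slots of $M^\om\oti\lhG\oti\lhG$. Then take the polar decomposition $x=v|x|$ inside the von Neumann algebra $M_\om\oti\lhG\oti\lhG$, split $xy=v|x|^{1/2}\cdot|x|^{1/2}y$, and apply the Cauchy--Schwarz inequality to the positive normal functional $\chi$:
\[
|\chi(g_{12}^*v|x|^{1/2}\cdot|x|^{1/2}y\,g_{12})|^2\leq \chi(g_{12}^*v|x|v^*g_{12})\cdot\chi(g_{12}^*y^*|x|y\,g_{12}).
\]
The identity $v|x|v^*=|x^*|$ from the polar decomposition reduces the first factor to $\chi(g_{12}^*|x^*|g_{12})$, while a further Schwarz estimate on the second factor replaces $y^*|x|y$ by $\|y\|^2|x|$ after conditioning, yielding $\chi(g_{12}^*y^*|x|yg_{12})\leq\|y\|^2\chi(g_{12}^*|x|g_{12})$.

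The crux is then to bound $\chi(g_{12}^*z g_{12})$ by $d(\pi)^2\chi(z)$ for positive $z\in M_\om\oti\lhG\oti\lhG$. I would decompose $z=\sum_\alpha a_\alpha\oti b_\alpha$ with $a_\alpha\in M_\om$ and $b_\alpha\in\lhG\oti\lhG$. The commutation $M_\om\subset M'\cap M^\om$ (established in the paper) together with the fact that the $\lhG$-component of $g_{12}$ lies in slot $2$, disjoint from slot $1$, implies that each $(a_\alpha\oti1\oti1)$ commutes with $g_{12}$; one can therefore move all $M_\om$-parts outside the conjugation. The residual conjugation of $(1\oti b_\alpha)$ by $g_{12}$ reduces to a conjugation of $b_\alpha^{(2)}\in\lhG$ by $g_\pi\in M\oti B(H_\pi)$ in slot $2$, which is handled using the traciality of $\vph_\pi$ on $B(H_\pi)$ and the unitarity $g_\pi^*g_\pi=g_\pi g_\pi^*=1_M\oti1_\pi$; the resulting constant is $\vph_\pi(1_\pi)=d(\pi)^2$. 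Multiplying the two bounds in the Cauchy--Schwarz estimate and taking the square root gives $|\chi(g_{12}^*xy\,g_{12})|\leq d(\pi)\,\chi(|x|)\,\|y\|$, which is (1).

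The main obstacle will be the delicate bookkeeping in the inequality $\chi(g_{12}^*zg_{12})\leq d(\pi)^2\chi(z)$, since $\chi$ is tracial only on the $\lhG\oti\lhG$-slots and not on the $M^\om$-slot where $\ps$ acts as a general state. The asymmetry between these slots is exactly what produces the factor $d(\pi)$ (rather than $d(\pi)^2$, which a cruder Cauchy--Schwarz on the full functional would yield); handling the $y$-factor cleanly requires coordinating this bookkeeping with a Schwarz estimate, which is the most delicate point of the argument.
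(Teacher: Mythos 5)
Your derivation of (2) from (1) is fine and matches the paper's ``immediate consequence.'' For (1), the Cauchy--Schwarz route is a genuinely different strategy from the paper's, which uses $M_\om\subseteq M'$ to absorb $y$ and the $g$-conjugation into a single right factor $h$ with $\|h\|\leq d(\pi)\|y\|$, writes the quantity as $(\ps\oti\vph_\pi\oti\vph_\rho)(xh)$, and then invokes $|\chi(xh)|\leq|x|_\chi\|h\|$. Both arguments, however, hinge on a structural fact your write-up never names: $M_\om\oti\lhG\oti\lhG$ must lie in the centralizer of $\chi=\ps\oti\vph_\pi\oti\vph_\rho$, i.e.\ $M_\om$ must centralize $\ps$. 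This is automatic on the $\lhG$-slots and holds for the $\ps=\ph\circ\ta^\om$ used in the Rohlin section (there $\ps|_{M_\om}$ is the trace $\ta_\om$), but it is precisely the hypothesis behind every tracial rearrangement you attempt; without it even $|\chi(ab)|\leq\chi(|a|)\|b\|$ fails (take $\chi(a)=a_{11}$ on $M_2(\C)$, $a=e_{12}$, $b=e_{21}$). You notice that the $M^\om$-slot ``is not tracial'' but treat this as a bookkeeping nuisance, when it is the hypothesis that has to be invoked.

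Beyond that, three of your steps do not close as written. (i) The estimate $\chi(g_{12}^*y^*|x|y\,g_{12})\leq\|y\|^2\chi(g_{12}^*|x|g_{12})$ is false in general: $y^*|x|y\not\leq\|y\|^2|x|$ as operators, and the rearrangement that would rescue it needs $g_{12}|x|g_{12}^*$, not $|x|$, to centralize $\chi$, which fails because $g$ has $M$-components. The correct bound, using that $|x|$ centralizes $\chi$, is $\chi((yg_{12})^*|x|(yg_{12}))\leq\|y\|^2\chi(|x|)$, with no $g_{12}$-conjugation on the right. (ii) Your sketch for $\chi(g_{12}^*zg_{12})\leq d(\pi)^2\chi(z)$ does not decouple the slots: after commuting $a_\alpha\in M_\om$ past $g_{12}$, the residual $g_{12}^*(1\oti b_\alpha)g_{12}$ lies in $M\oti B(H_\pi)\oti B(H_\rho)$, having inherited $M$-components from $g$, so you cannot evaluate $\chi$ by applying $\ps$ to $a_\alpha$ and $\vph_\pi\oti\vph_\rho$ to the remainder, and ``traciality of $\vph_\pi$'' alone produces no constant. (iii) The constants do not add up: using your stated second-factor estimate forces the $d(\pi)^2$ bound on both Cauchy--Schwarz factors and gives $d(\pi)^2$ after the square root. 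To reach $d(\pi)$ one must use the sharper second-factor bound $\leq\|y\|^2\chi(|x|)$ from (i), apply the $d(\pi)^2$ bound only to the first factor $\chi(g_{12}^*|x^*|g_{12})$, and use $\chi(|x^*|)=\chi(|x|)$, which again is the tracial property of $\chi$ on $M_\om\oti\lhG\oti\lhG$. With these corrections the Cauchy--Schwarz route does reach the stated bound, but the proposal as written does not.
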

\begin{proof}
(1).
By using the matrix representation of $g_{12}^*xyg_{12}$,
we have
\begin{align*}
(g(\ps\oti\vph_\pi)g^*\oti\vph_\rho)(xy)
&=
\sum_{i,j,k,\el,m,n,p}
d(\pi)d(\rho)
\ps\left(
(g^*)_{i,j}
x_{(j,k),(\el,m)}
y_{(\el,m),(p,k)}
g_{p,i}
\right)
\\
&=
\sum_{i,j,k,\el,m,n,p}
d(\pi)d(\rho)
\ps\left(
x_{(j,k),(\el,m)}
(g^*)_{i,j}
y_{(\el,m),(p,k)}
g_{p,i}
\right).
\end{align*}
The matrix element
$h_{(\el,m),(j,k)}
:=\sum_{i,p}
(g^*)_{i,j}
y_{(\el,m),(p,k)}
g_{p,i}
$
defines the element $h\in M^\om\oti\lhG\oti\lhG$
by
\[
h_{132}=\sum_{j,\el}
(\id\oti\Tr_\pi\oti\id)
((g^*\oti1)(1\oti e_{\pi_{j,\el}}\oti 1)y(g\oti1))
\oti e_{\pi_{\el,j}}.
\]
Then we have
$(g(\ps\oti\vph_\pi)g^*\oti\vph)(xy)
=
(\ps\oti\vph_\pi\oti\vph_\rho)(xh)
$.
Since $(\id\oti\tr_\pi\oti\id_\rho)=d(\pi)^{-1}(\id\oti\Tr_\pi\oti\id_\rho)$
is a contraction
and $\sum_{j,\el}e_{\pi_{j,\el}}\oti e_{\pi_{\el,j}}$ is unitary,
we have
$\|h\|=\|h_{132}\|\leq d(\pi)\|(g^*\oti1)y(g\oti1)\|=d(\pi)\|y\|$.
Thus
$|(g(\ps\oti\vph_\pi)g^*\oti\vph_\rho)(xy)|
\leq
|x|_{\ps\oti\vph_\pi\oti\vph_\rho}\|h\|
\leq
d(\pi)|x|_{\ps\oti\vph_\pi\oti\vph_\rho}\|y\|$.

(2). The second statement is an immediate consequence of (1).
\end{proof}

\subsection{Rohlin tower}
Let $(\al,u)$ be a cocycle action of $\bhG$ on a McDuff factor $M$
as in the previous subsection.
Fix a unitary $v\in M^\om\oti\lhG$ which possesses
the properties of $U$
stated in Theorem \ref{thm:app-cocycle}.
In this subsection,
we will reprove the Rohlin type theorem \cite[Theorem 5.9]{MT1}
for the action $\ga:=\Ad v^*\circ\al$
in a more general fashion.
We keep the following notation throughout this section:
\begin{itemize}
\item
$\ph\in M_*$, a faithful state,
$\ps:=\ph\circ\ta^\om \in (M^\om)_*$;
\item
$\mG\subs U(M\oti\lhG)$, a finite subset such that $1\in \mathcal{G}$;
\item
$S\subs M^\om$, a countable set;
\item
$F$, a central projection with finite support in $\lhG$
such that $F=\ovl{F}$ and $Fe_\btr=e_\btr$.
Let $\mF:=\{\pi\in\IG\mid F 1_\pi\neq0\}$;
\item
$\de$, a positive number with $\de^{1/4}|F|_\vph<1$;
\item
$K$, an $(F,\de)$-invariant central projection
with finite support in $\lhG$ satisfying $Ke_\btr=e_\btr$.
Let $\mK:=\{\pi\in\IG\mid K 1_\pi\neq0\}$.
\end{itemize}

In the above, the $(F,\de)$-invariance of $K$ means
the following inequality holds
(See \cite[Definition 2.1]{MT1}):
\index{$(F,\de)$-invariance}
\[
|(F\oti1)\De(K)-F\oti K|_{\vph\oti\vph}<\de|F|_\vph|K|_\vph.
\]
This implies the following:
\begin{equation}
\label{eq:FDeK}
|(F\oti K)\De(K^\perp)|_\vph<\de|F|_\vph|K|_\vph
\end{equation}

We start to construct a Rohlin tower for the action $\ga$.
The construction presented here is almost same as \cite{MT1},
that is, so-called the \emph{build and destroy} method
\cite[Lemma 6.4, p.48]{Oc}.
However, our construction at this time requires
more technical estimates of inequalities.
Let us recall the \emph{diagonal operator}
\index{diagonal of a 2-cocycle}
$a$ of a cocycle action $(\al,u)$
introduced in \cite[Definition 5.5]{MT1}.
That is the unique operator such that
\[
(a\oti1)(1\oti \De(e_{\btr}))=u(1\oti\De(e_\btr)).
\]
By the norm estimate $\|a_\pi\|\leq d(\pi)$,
the diagonal operator $a$ is affiliated with $M\oti\lhG$,
a priori.
We have the following equality
(see \cite[Lemma 5.6 (2)]{MT1}):
\begin{equation}
\label{eq:aa}
(\id\oti\vph_\rho)(a^*a)=d(\rho)^2
\quad
\mbox{for }\rho\in\IG.
\end{equation}

Let us introduce the set $\meJ$ that is the collection
of a projection $E\in M_\om\oti\lhG$ such that
\begin{enumerate}[(E.1)]
\item
$E=E(1\oti K)$;
\label{item:E-K}

\item
In the decomposition of
\[
E=\sum_{\rho\in\mK}\sum_{i,j\in I_\rho}
d(\rho)^{-1}f_{\rho_{i,j}}\oti e_{\rho_{i,j}},
\]
the family $\{f_{\rho_{i,j}}\}_{i,j\in I_\rho}$ is a system of
matrix units and they are orthogonal with respect to $\rho\in\mK$,
that is, the following holds:
\[
f_{\pi_{i,j}}f_{\rho_{k,\el}}=\de_{\pi,\rho}\de_{j,k}f_{\pi_{i,\el}};
\]
\label{item:ga-E}

\item
In the decomposition of
\[
a^*vEv^*a=\sum_{\rho\in\mK}\sum_{i,j\in I_\rho}
d(\rho)^{-1}f_{\rho_{i,j}}^\al\oti e_{\rho_{i,j}},
\]
the family $\{f_{\rho_{i,j}}^\al\}_{i,j\in I_\rho}$ is a system of
matrix units and they are orthogonal with respect to $\rho\in\mK$;
\label{item:ga-avE}

\item
The initial projection of the partial isometry $a^*vE$ is equal to $E$;
\label{item:EvaavE}

\item
$(\id\oti\vph_\rho)(E)=(\id\oti\vph_\rho)(a^*vEv^*a)
\in S'\cap M_\om$ for all $\rho\in\mK$;
\label{item:vph-E}

\item
$(\ta^\om\oti\id)(E)=(\ta^\om\oti\id)(vEv^*)\in\C K$.
\label{item:ga-taE}
\end{enumerate}

These conditions derive useful properties of $E$.
(E.\ref{item:ga-E}) and (E.\ref{item:ga-avE})
implies the sliced element of $E$,
$(\id\oti\vph_\rho)(E)=(\id\oti\vph_\rho)(a^*vEv^*a)$
is a projection.
If we integrate $a^*vE$ over $K$, we get
\[
\mu_E:=(\id\oti\vph)(a^*vE),
\]
which is a partial isometry satisfying
$\mu_{E}^*\mu_{E}=(\id\oti\vph)(E)=\mu_{E}\mu_{E}^*$.
By (E.\ref{item:ga-E}), we have
\begin{equation}\label{eq:Emu}
(\mu_E\oti1)E=a^*vE.
\end{equation}
Indeed, we have
\begin{align*}
\mu_E E_{\si_{p,q}}
&=
\sum_{\rho\in\mK}\sum_{i\in I_\rho}
d(\rho)d(\si)^{-1}
(a^*vE)_{\rho_{i,i}}f_{\si_{p,q}}
\\
&=
\sum_{\rho\in\mK}\sum_{i,j\in I_\rho}
d(\si)^{-1}
(a^*v)_{\rho_{i,j}}f_{\rho_{j,i}}f_{\si_{p,q}}
\\
&=
\sum_{j\in I_\si}
d(\si)^{-1}
(a^*v)_{\si_{p,j}}f_{\si_{j,q}}
\\
&=
(a^*vE)_{\si_{p,q}}.
\end{align*}

The following equality follows
from (E.\ref{item:ga-taE}):
\begin{equation}
\label{eq:taEK}
(\ta_\om\oti\id)(E)=|K|_\vph^{-1}|E|_{\ps\oti\vph}K.
\end{equation}

For $g\in\mG$,
we introduce the functions $a,b^g,c^g$ on $\meJ$ as follows:
\begin{align*}
a_E
&:=
|F|_\vph^{-1}|\ga_F(E)
-(\id\oti_{F}\De_K)(E)|_{\ps\oti\vph\oti\vph},
\\
b_E^g
&:=|E|_{g(\ps\oti\vph)g^*},
\\
c_E^g
&:=
(\ps\oti\vph\oti\vph)
(g_{12}^*v_{12}
\ga(v)(\ga_F(E)-(\id\oti{}_F\De_K)(E))(\mu_E^*\oti F\oti K)g_{12}).
\end{align*}
In fact, $b_E^g$ does not depend on $g$, that is, $b_E^g=|E|_{\ps\oti\vph}=:b_E$.
Indeed,
\begin{align*}
b_E^g&=
(\ps\oti\vph)(g^*Eg)
=
(\ph\oti\vph)((\ta^\om\oti\id)(g^*Eg))
\\
&=
(\ph\oti\vph)(g^*(\ta_\om\oti\id)(E)g)
\\
&=
(\ph\oti\vph)(g^*(1\oti K)g)|K|_\vph^{-1}b_E
=
b_E
\quad
\mbox{by }
(\ref{eq:taEK}).
\end{align*}

Our task is to prove the following lemma.

\begin{lem}\label{lem:abc}
Suppose that an element $E\in \meJ$ satisfies
$b_E<1-\de^{1/4}$.
Then there exists $E'\in\meJ$ satisfying the following inequalities:
\begin{enumerate}
\item
$a_{E'}-a_E\leq3\de^{1/4}(b_{E'}-b_E)$;
\item
$0<(\de^{1/4}/2)|E'-E|_{\ps\oti\vph}\leq b_{E'}-b_E$;
\item
$|c_{E'}^g|-|c_E^g|\leq3\de^{1/4}|F|_\vph(b_{E'}-b_E)$ for all $g\in\mG$.
\end{enumerate}
\end{lem}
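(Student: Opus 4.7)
The plan is the classical ``build and destroy'' inductive step: I will enlarge $E$ by an orthogonal block $E_0\in M_\om\oti\lhG$ that itself satisfies the defining conditions (E.\ref{item:E-K})--(E.\ref{item:ga-taE}) on its own support, and then set $E':=E+E_0$. Since $E_0\perp E$ in the natural sense and $E_0$ will be nonzero, the growth inequality (2) is immediate from $\de^{1/2}/2<1$ together with $b_{E'}-b_E=|E_0|_{\ps\oti\vph}>0$. The substance of the lemma therefore lies in constructing $E_0$ so that the defects of $\ga_F$-equivariance and of $\mG$-commutation on $E_0$ are controlled by $3\de^{1/2}|E_0|_{\ps\oti\vph\oti\vph}$ and $2\de^{1/2}|F|_\vph|E_0|_{\ps\oti\vph}$ respectively, on top of the existing defects already carried by $E$.

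First I will identify the available ``free space'' in $M_\om$. The slice projection of $E$ is $p_E:=(\id\oti\vph)(E)\in M_\om$; the hypothesis $b_E<1-\de^{1/2}$ together with (E.\ref{item:ga-taE}) ensures that $1-p_E$ has strictly positive trace under $\ta^\om$. Inside the corner $(1-p_E)M_\om(1-p_E)$, which is again a type II$_1$ factor, I will use central freeness of $\ga$ on $M_\om$, Lemma \ref{lem:ta-ga} (which pins down the way $\ga$ scales $\ta^\om$), and the matrix-unit technology of a type II$_1$ factor to produce a new system of matrix units $\{f_{\rho_{i,j}}^{(0)}\}_{\rho\in\mK,\,i,j\in I_\rho}$ obeying (E.\ref{item:ga-E})--(E.\ref{item:EvaavE}) and (E.\ref{item:vph-E}) on its own support, and almost commuting with every element of $S$ and $\mG$ up to the tolerance required by the three estimates. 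The block is then $E_0:=\sum_\rho d(\rho)^{-1}f_{\rho_{i,j}}^{(0)}\oti e_{\rho_{i,j}}$, built below $1\oti K$ and orthogonally to $E$; property (E.\ref{item:ga-taE}) for $E_0$ is forced by the same centralizer computation that encodes it for $E$, using Lemma \ref{lem:ta-ga}.

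For the bounds (1) and (3), I exploit the orthogonality of the supports of $E$ and $E_0$: both $\ga_F(E')-(\id\oti{}_F\De_K)(E')$ and the integrand defining $c_{E'}^g$ split as a direct sum of the $E$-term and the $E_0$-term. The $E$-contributions reproduce $a_E$ and $c_E^g$ exactly, while the $E_0$-contributions are estimated using the $(F,\de)$-invariance of $K$ combined with the near-$\ga_F$-equivariance built into the $f_{\rho_{i,j}}^{(0)}$; Lemma \ref{lem:g-ps-vph}(2) then absorbs the conjugation by $g\in\mG$ into the desired factor of $|F|_\vph$ in (3), with the numerical constants $3$ and $2$ arising from the usual triangle-inequality bookkeeping that separates the $\ga$-shift, the cocycle $v$, and the diagonal operator $a$.

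The main obstacle is the simultaneous construction of the matrix units $\{f_{\rho_{i,j}}^{(0)}\}$: they must lie in the free corner $(1-p_E)M_\om(1-p_E)$, form a matrix unit system compatible with the cocycle $v$ and the diagonal operator $a$ as demanded by (E.\ref{item:ga-avE})--(E.\ref{item:EvaavE}), almost commute with the prescribed $S$ and $\mG$, and almost intertwine $\ga_F$ with $(\id\oti{}_F\De_K)$. This is where the central freeness of $\ga$ on the type II$_1$ factor $M_\om$, combined with the $(F,\de)$-invariance of $K$ and a F\o lner-type averaging over $K$, does the heavy lifting; it is the ultraproduct-level Rohlin lemma for the discrete Kac algebra action and is the technical heart of the argument.
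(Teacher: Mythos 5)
Your plan replaces the ``build and destroy'' step by a pure ``build'' step: you want $E'=E+E_0$ with $E_0$ a Rohlin block constructed entirely inside the corner $(1-p_E)M_\om(1-p_E)$, disjoint from $E$. That is not the route the paper takes, and more importantly it cannot be made to work as stated. The corner $(1-p_E)M_\om(1-p_E)$ is not invariant under the action $\ga$: for the matrix units $f^{(0)}_{\rho_{i,j}}$ to satisfy (E.3)--(E.6) and, implicitly, to form a genuine Rohlin tower, you need to apply $\ga_\rho$ for all $\rho\in\mK$ (and indeed for $\rho\in\ovl{\mS}\cdot\mS$ if you want the orthogonality relation $(e\oti1_\rho)\ga_\rho(e)=0$ from \cite[Lemma 5.3]{MT1}), and those operators do not preserve the cut-down $1-p_E$. ``Central freeness of $\ga$ on $M_\om$'' gives you tower bases in $M_\om$, not in an arbitrary corner, and there is no useful restriction of $\ga$ to the free corner. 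Projecting a tower built in $M_\om$ down into the corner destroys the matrix-unit relations and the $v$-/$a$-compatibility required by (E.3)--(E.5).

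What the paper actually does is take a Rohlin base $e\in S'\cap M_\om$ (from \cite[Lemma 5.3]{MT1}), fast-reindex it to $f:=\Ps(e)$ so that $f$ is $\ta^\om$-independent of the relevant data, set $f':=(\id\oti\vph)(\ga_K(f))$, and define
\[
E':=E(f'^\perp\oti1)+\ga_K(f).
\]
Here the new tower $\ga_K(f)$ overlaps the slice of $E$: by design, $f'$ splits the trace $\ta^\om$ as an independent factor, so $\ta_\om(f'p_E)=\ta_\om(f')\ta_\om(p_E)\neq 0$ in general, and one must remove the overlapping portion $E(f'\oti1)$ of the old tower. The trace gain is $b_{E'}-b_E=\ta_\om(f')(1-b_E)\geq\ta_\om(f')\de^{1/2}$, which is precisely where the hypothesis $b_E<1-\de^{1/2}$ is used. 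Your pure-build scheme never uses that hypothesis — the gain would just be $|E_0|_{\ps\oti\vph}$ — which is a signal that the intended construction must involve destruction. The subsequent estimates (1) and (3) are then obtained by splitting the defect terms according to the decomposition $E'=E(f'^\perp\oti1)+\ga_K(f)$ and using the $\ta^\om$-splitting of $f$ and the $(F,\de)$-invariance of $K$; they do not reduce to an orthogonal sum of the $E$-contribution plus an $E_0$-contribution, because the cross terms from the $f'$-destruction enter nontrivially (see the six-term decomposition of $c_{E'}^g$ in the paper's proof).
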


To prove this result,
we need the following lemma.
We let $\mL$ be the subset of $\mK$
such that
$\pi\in \mL$ if and only if
$\pi\prec \rho\si$
for some $\rho\in\mF$ and $\si\nin \mK$.
We claim that the size of $\mL$ is small relative to $\mK$.

\begin{lem}\label{lem:L}
The inequality $|1_\mL|_\vph<\de^{1/2}|F|_\vph|K|_\vph$ holds.
\end{lem}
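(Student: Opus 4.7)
The plan is to identify $\mL$ with the $\mF$-boundary of $\mK$ in the fusion ring of $\bhG$, and then to control its $\vph$-mass through the $(F, \de)$-invariance of $K$. First I would use Frobenius reciprocity, $N_{\rho,\si}^\pi = N_{\orho,\pi}^\si$, together with the self-conjugacy $F = \ovl{F}$ to recast the condition $\pi \in \mL$: it is equivalent to the existence of $\rho \in \mF$ and $\si \nin \mK$ with $\si \prec \rho\pi$. Thus $\mL$ consists precisely of those $\pi \in \mK$ whose $\mF$-translate meets the complement of $\mK$, i.e. $\mL$ is the $\mF$-boundary of $\mK$ in the fusion-ring sense.

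Next, I would encode this boundary as a projection in $\lhG \oti \lhG$, namely
\[
P := (F \oti (1 - K))\,\De(1_\mK),
\]
and compute its $\vph \oti \vph$-mass. Using bi-invariance of $\vph$ together with the standard formula
\[
(\vph \oti \vph)\bigl((1_\rho \oti 1_\si)\De(1_\pi)\bigr) = d(\rho)\,d(\si)\,N_{\rho,\si}^\pi,
\]
one obtains
\[
(\vph \oti \vph)(P) = \sum_{\pi \in \mK}\sum_{\rho \in \mF}\sum_{\si \nin \mK} d(\rho)\,d(\si)\,N_{\rho,\si}^\pi.
\]
Restricting this sum to $\pi \in \mL$ and to a witnessing pair $(\rho,\si)$ for each such $\pi$ (and using $d(\rho)d(\si) \geq 1$), one extracts a lower bound of the form $|1_\mL|_\vph \leq C \cdot (\vph \oti \vph)(P)$ for a harmless combinatorial constant $C$ that will be absorbed in the final estimate.

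Finally, I would invoke the $(F,\de)$-invariance of $K$, which in the discrete Kac algebra formalism asserts that $\De(1_\mK)(F \oti 1)$ is $L^2(\vph\oti\vph)$-close to $F \oti K$; a single Cauchy--Schwarz estimate against $(F \oti 1)$ then yields $(\vph \oti \vph)(P) < \de^{1/2}\,|F|_\vph\,|K|_\vph$, and combining with the previous paragraph gives the claim. The main obstacle is constant tracking: the exponent $\de^{1/2}$ (rather than $\de$) and the factor $|F|_\vph$ both originate in this Cauchy--Schwarz step, and one must carefully reconcile the normalization $\vph(e_{\pi_{i,j}}) = d(\pi)\de_{i,j}$ on matrix blocks with the Kac algebra form of $(F,\de)$-invariance inherited from \cite{MT1}. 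Once the correct form of invariance is inserted, the inequality reduces to a direct computation.
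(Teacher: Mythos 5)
Your high-level strategy is genuinely different from the paper's, and actually cleaner once corrected: the paper introduces an intermediate ``good'' set $\mT$, bounds its complement by a Markov/Chebyshev argument, and then proves $\mL\subs\mK\setm\mT$ by showing that the local invariance estimate forces $\sum_{\rho\in\mF,\si\nin\mK}(N_\si^{\rho,\pi})^2<1$ and hence $=0$ by integrality. Your direct double-counting bypasses all of this, and if carried through would even prove the stronger bound $|1_\mL|_\vph<\de|F|_\vph|K|_\vph$: the $(F,\de)$-invariance used in the paper is already an $L^1$ statement, namely $|(F\oti K)\De(K)-F\oti K|_{\vph\oti\vph}<\de|F|_\vph|K|_\vph$, so no Cauchy--Schwarz is needed; the $\de^{1/2}$ in the paper's lemma is an artifact of the Markov step, not of the invariance condition.

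However, as written, the crucial lower bound $|1_\mL|_\vph\leq C\cdot(\vph\oti\vph)(P)$ has a genuine gap. Two issues: first, your quoted identity is missing a factor of $d(\pi)$; the correct formula is $(\vph\oti\vph)\bigl((1_\rho\oti1_\si)\De(1_\pi)\bigr)=N_{\rho,\si}^\pi\,d(\rho)d(\si)d(\pi)$. Second, and more seriously, the inequality $d(\rho)d(\si)\geq1$ you invoke is far too weak: it would only give a lower bound involving $\sum_{\pi\in\mL}d(\pi)$ (or $|\mL|$ with your miscounted formula), while $|1_\mL|_\vph=\sum_{\pi\in\mL}d(\pi)^2$, and there is no ``harmless combinatorial constant'' bridging these since the ratio depends on the dimensions in $\mK$. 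The inequality you actually need is the fusion-ring estimate $d(\rho)d(\si)\geq N_{\rho,\si}^\pi\,d(\pi)$ (which the paper itself invokes midway through its proof). Applied to a witnessing triple $(\pi,\rho_0,\si_0)$ with $N_{\rho_0,\si_0}^\pi\geq1$, it gives each term $N_{\rho_0,\si_0}^\pi d(\rho_0)d(\si_0)d(\pi)\geq\bigl(N_{\rho_0,\si_0}^\pi\bigr)^2 d(\pi)^2\geq d(\pi)^2$, whence $(\vph\oti\vph)(P)\geq|1_\mL|_\vph$ with constant $1$. With that replacement your argument closes, and in fact improves the stated constant.
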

\begin{proof}
Let us introduce the following set:
\[
\mT:=\{\pi\in\mK
\mid
|(F\oti1_\pi)\De(K)-F\oti 1_\pi|_{\vph\oti\vph}
<\de^{1/2}|F|_\vph|1_\pi|_\vph\}.
\]
Set $K_0:=1_{\mT}$ that is a central projection
supported by $K$.
Then
\begin{align*}
|1_{\mK\setminus\mT}|_\vph
&=
|K-K_0|_\vph
=
\sum_{\pi\in\mK\setminus\mT}
|1_\pi|_\vph
\\
&\leq
\sum_{\pi\in\mK\setminus\mT}
\de^{-1/2}
|F|_\vph^{-1}
|(F\oti1_\pi)\De(K)-F\oti 1_\pi|_{\vph\oti\vph}
\\
&\leq
\sum_{\pi\in\mK}
\de^{-1/2}
|F|_\vph^{-1}
|(F\oti1_\pi)\De(K)-F\oti 1_\pi|_{\vph\oti\vph}
\\
&=
\de^{-1/2}
|F|_\vph^{-1}
|(F\oti K)\De(K)-F\oti K|_{\vph\oti\vph}
\\
&<
\de^{1/2}
|K|_\vph.
\end{align*}
We show $\mL\subset \mK\setminus \mT$.
Now let $\pi\in\mL$.
If $\pi$ would be contained in $\mT$,
then we see that $\De(F)(1\oti 1_{\opi})\leq K\oti1_\opi$
by \cite[Lemma 3.5]{T}.
We give a proof of this fact for readers' convenience.
First, we prove
\begin{equation}\label{eq:FK}
(\vph\oti\vph)(\De(F)(K^\perp\oti1_\opi))
<\de^{1/2}|F|_\vph|1_\opi|_\vph.
\end{equation}
Indeed, by the Frobenius reciprocity, we have
$N_{\si,\opi}^\rho=N_{\rho,\pi}^\si$, and
\begin{align}
(\vph\oti\vph)(\De(F)(K^\perp\oti1_\opi))
&=
\sum_{\rho\in\mF,\si\nin\mK}
(\vph\oti\vph)(\De(1_\rho)(1_\si\oti1_\opi))
\notag\\
&=
\sum_{\rho\in\mF,\si\nin\mK}
N_{\si,\opi}^\rho
d(\si)d(\opi)d(\rho)
\notag\\
&=
\sum_{\rho\in\mF,\si\nin\mK}
N_{\rho,\pi}^\si
d(\si)d(\pi)d(\rho)
\label{eq:Nsirhopi}\\
&=
(\vph\oti\vph)(\De(K^\perp)(F\oti1_\pi))
\notag\\
&=
|(F\oti1_\pi)-(F\oti1_\pi)\De(K)|_{\vph\oti\vph}
\notag\\
&<\de^{1/2}|F|_\vph|1_\pi|_\vph
\quad\mbox{since } \pi\in\mT.
\notag
\end{align}
Second, we show $\De(F)(K^\perp\oti1_\opi)=0$.
By (\ref{eq:FK})
and applying the inequality
$d(\rho)d(\si)\geq N_{\osi,\rho}^\opi d(\opi)=N_{\si,\opi}^\rho d(\pi)$
to (\ref{eq:Nsirhopi}),
we obtain
\[
\de^{1/2}|F|_\vph|1_\pi|_\vph
>
(\vph\oti\vph)(\De(F)(K^\perp\oti1_\opi))
\geq
\sum_{\rho\in\mF,\si\nin\mK}
(N_{\rho,\pi}^\si)^2d(\pi)^2.
\]
This yields
$\sum_{\rho\in\mF,\si\nin\mK}(N_{\rho,\pi}^\si)^2
<\de^{1/2}|F|_\vph<1$,
and $N_{\rho,\pi}^\si=0$ for all $\rho\in\mF,\si\nin\mK$.
Thus $\De(F)(K^\perp\oti1_\opi)=0$.
This implies that $\mF\pi\subs\mK$.
However, since $\pi\in\mL$, we see $\pi\in\mF\si$
for some $\si\nin\mK$.
Thus $\si\in \ovl{\mF}\pi=\mF\pi$,
and this is a contradiction.
Hence $\pi\nin \mT$
and $|1_\mL|_\vph\leq |1_{\mK\setminus\mT}|_\vph<\de^{1/2}|K|_\vph$.
\end{proof}

\begin{proof}[Proof of Lemma \ref{lem:abc}]
By enlarging $S$ if necessary,
we may assume that the entries of the matrix elements of $E$,
$v$ and $u$ are all in $S$, and that $S$ is $\al^\om$-invariant.
We set the finite set
$\mS:=\mF\cdot\mK$.
Thanks to \cite[Lemma 5.3]{MT1},
we can take a non-zero projection $e$ from $S'\cap M_\om$
such that
\begin{equation}\label{eq:ga-e}
(e\oti1_\rho)\ga_\rho(e)=0
\mbox{ for all }
\rho\in
\ovl{\mS}\cdot\mS\setm\{\btr\}.
\end{equation}
Since $e$ commutes with $v$,
$(e\oti1_\rho)\al_\rho^\om(e)=0$ also holds.
By \cite[Lemma 5.8]{MT1},
we have the equality of projections,
\[
(\id\oti\vph)(\ga_\rho(e))=(\id\oti\vph)(a_\rho^*\al_\rho^\om(e)a_\rho)
\in S'\cap M_\om.
\]
Let $N$ be a von Neumann subalgebra in $M^\om$ that is generated
by $M$ and the matrix entries of $\{\al_\rho^\om(e)\}_{\rho\in\IG}$.
Applying the Fast Reindexation Trick (\cite[Lemma 3.10]{MT1})
for $N$ and $S$,
we obtain a map
$\Ps\in\Mor(\wdt{N}, M^\om\oti \lhG)$
as in \cite[Lemma 3.10]{MT1}.
Set $f:=\Ps(e)$ and then $f\in S'\cap M_\om$.
Since
$(f\oti1_\rho)\al_\rho^\om(f)
=(\Ps\oti\id)((e\oti1_\rho)\al_\rho^\om(e))
=0$ for $\rho\in\ovl{\mS}\cdot\mS\setm\{\btr\}$,
the equality $(f\oti1_\rho)\ga_\rho(f)=0$ holds.
Then by \cite[Lemma 5.8]{MT1}, we have
\[
(\id\oti\vph)(\ga_\rho(f))
=
(\id\oti\vph)(a_\rho^* \al_\rho^\om(f)a_\rho)
\\
=
\Ps\big{(}
(\id\oti\vph)(a_\rho^* \al_\rho^\om(e)a_\rho)\big{)}.
\]
Set a projection $f':=(\id\oti\vph)(\ga_K(f))$
that is contained in $S'\cap M_\om$.
Then $f'$ has the following $\ta^\om$-splitting property
for $x\in S\oti \lhG$:
\begin{align*}
&(\ta^\om\oti\id)
\big{(}x(f'\oti1)\big{)}
\\
&=
\sum_{\rho\in\mK}
(\ta^\om\oti\id)
\big{(}x\big{(}\Ps\big{(}
(\id\oti\vph)(a_\rho^* \al_\rho^\om(e)a_\rho)\big{)}\oti1\big{)}\big{)}
\\
&=
\sum_{\rho\in\mK}
(\ta^\om\oti\id)(x)
\cdot
(\ta^\om\oti\id)
\big{(}
\Ps\big{(}(\id\oti\vph)(a_\rho^* \al_\rho^\om(e)a_\rho)\big{)}\oti1
\big{)}
\\
&=
(\ta^\om\oti\id)(x)
\cdot
\big{(}
\ta_\om(f')\oti1\big{)}.
\end{align*}
By Lemma \ref{lem:ta-ga},
we have
$\ta_\om((\id\oti\vph)(\ga_\rho(f)))=\ta_\om(f)d(\rho)^2$
for $\rho\in\mK$.
This implies
\begin{equation}\label{eq:f}
|f'|_\ps=|f'|_{\ta_\om}=\ta_\om(f)|K|_\vph.
\end{equation}

Now we set a projection $E'\in M_\om\oti \lhG$ defined by
\[
E':=E(f'^\per\oti1)+\ga_K(f).
\]
Then all the conditions from (E.1) to (E.6)
are checked as the proof of \cite[Lemma 5.11]{MT1}.
Thus $\meJ$ contains $E'$.
By the $\ta^\om$-splitting property,
we obtain $b_E'=b_E \ps(f'^\perp)+\ps(f')$.
By our assumption of Lemma \ref{lem:abc},
we obtain
\begin{equation}\label{eq:fb}
\de^{1/4}|f'|_\ps< b_{E'}-b_E.
\end{equation}

Now we prove that $E'$ satisfies the statements (1), (2) and (3)
of this lemma.
The first two are nothing
but \cite[Lemma 5.11 (1),(2)]{MT1}.
Thus we will show the remaining (3) as follows.

Recall the projections $e$, $f$
and the condition (\ref{eq:ga-e}).
The support projection of $\mu_{\ga_K(f)}$ is nothing but
$f'=(\id\oti\vph)(\ga_K(f))$, which commutes with $\mu_E$,
and
\[
\mu_{E'}=\mu_E f'^\perp+\mu_{\ga_K(f)}.
\]
Since $E'(f'^\perp\oti1)=E(f'^\perp\oti1)$
and $f'^\perp\mu_{\ga_K(f)}=0$,
we have
\begin{align}
&
(\ga_F(E')-(\id\oti{}_F\De_K)(E'))(\mu_{E'}^*\oti F\oti K)
\notag\\
&=
(\ga_F(E')-(\id\oti{}_F\De_K)(E'))(\mu_{E'}^*f'^\perp\oti F\oti K)
\notag\\
&\quad
+
(\ga_F(E')-(\id\oti{}_F\De_K)(E'))(\mu_{\ga_K(f)}\oti F\oti K)
\notag\\
&=
(\ga_F(E')-(\id\oti{}_F\De_K)(E))(\mu_{E'}^*f'^\perp\oti F\oti K)
\notag\\
&\quad
+
(\ga_F(E')-(\id\oti{}_F\De_K)(\ga_K(f)))(\mu_{\ga_K(f)}^*\oti F\oti K)
\notag\\
&=
\ga_F(E)(\ga_F(f'^\perp)\oti K-f'^\perp\oti F\oti K)
(f'^\perp \mu_E^*\oti F\oti K)
\notag\\
&\quad+
(\ga_F(E)-(\id\oti{}_F\De_K)(E))
(f'^\perp \mu_E^*\oti F\oti K)
\notag\\
&\quad
+
\ga_F(\ga_K(f))(f'^\perp\mu_E^*\oti F\oti K)
\notag\\
&\quad+
\ga_F(E(f'^\perp\oti1))(\mu_{\ga_K(f)}^*\oti F\oti K)
\notag\\
&\quad+
(\ga_F(\ga_K(f))-(\id\oti{}_F\De_K)(\ga_K(f)))(\mu_{\ga_K(f)}^*\oti
F\oti K).
\label{eq:ga-E-f}
\end{align}

To express these terms in a more simple form, we need the following
claim.
\begin{claim}\label{claim:ga-f-F}
One has the following:
\begin{enumerate}
\item 
$\ga_F(f')(f'^\perp\oti F)
=
(\id\oti\id\oti\vph)((\id\oti{}_F\De_K)(\ga_{K^\perp}(f)))$;
\label{eq:ga-f-f}

\item
$(\ga_F(\ga_K(f))-(\id\oti{}_F\De_K)(\ga_K(f)))
(\mu_{\ga_K(f)}^*\oti
F\oti K)=0$;

\item
$(\ta^\om\oti\id)(\al_F^\om(f')(f'^\perp\oti F))
=\ta_\om(f)\oti(\id\oti\vph)({}_F\De_K(K^\perp))$,
which is contained in $\C\oti Z(\lhG)$.
\label{item:ta-alf}
\end{enumerate}
\end{claim}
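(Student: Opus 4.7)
All three equalities in Claim \ref{claim:ga-f-F} follow from the coaction identity $(\ga\oti\id)\ga=(\id\oti\De)\ga$ together with the Rohlin-type orthogonality $(f\oti 1_\rho)\ga_\rho(f)=0$ for $\rho\in\ovl{\mS}\cdot\mS\setm\{\btr\}$ built into $f$ earlier in the construction. My plan is to establish (2) first via a direct coaction computation, deduce (1) by integrating the same identity against $\vph$ in the third tensor slot, and finally obtain (3) by taking $\ta^\om$-slices and invoking Lemma \ref{lem:ta-ga}.

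For (2), the coaction identity yields $\ga_F(\ga_K(f))=(\id\oti{}_F\De_K)\ga(f)$, so the bracketed difference equals $(\id\oti{}_F\De_K)\ga_{K^\perp}(f)$. Multiplying on the right by $\mu_{\ga_K(f)}^*\oti F\oti K$ and absorbing $\mu_{\ga_K(f)}^*=f'\mu_{\ga_K(f)}^*$ into the first tensor slot reduces the vanishing to the identity $\ga_\si(f)(f'\oti 1_\si)=0$ for $\si\in\mS\setm\mK$. Expanding $f'=(\id\oti\vph_K)\ga(f)$ and invoking the coaction identity once more identifies each matrix block of this product with a component of $\ga_\rho(f)(f\oti 1_\rho)$ for some $\rho=\si\ovl{\tau}\in\ovl{\mS}\cdot\mS\setm\{\btr\}$, which vanishes by Rohlin. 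For (1), the desired equality is equivalent to $\ga_F(f')(f'\oti F)=(\id\oti\id\oti\vph)((\id\oti{}_F\De_K)\ga_K(f))$ and is established by the same mechanism after slicing the third factor against $\vph$.

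For (3), apply $\ta^\om\oti\id$ to (1) with $\ga_F$ replaced by $\al_F^\om$. Since $\al^\om=\Ad v\circ\ga$, the element $\ga_F(f')(f'^\perp\oti F)$ lies in $M_\om\oti\lhG$, and $\ta^\om$ is tracial on $M_\om$, so the conjugation by $v$ drops out after slicing. Lemma \ref{lem:ta-ga} applied irreducible by irreducible to $\ga_{K^\perp}(f)$ then gives $(\ta^\om\oti\id)\ga_{K^\perp}(f)=\ta^\om(f)\oti K^\perp$, because $f$ is a projection in the type II$_1$ factor $M_\om$ so that $\ta^\om(f)$ is scalar-valued and fixed by every $\th_\si$. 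Applying $(\id\oti\vph)(\id\oti{}_F\De_K)$ then produces $\ta^\om(f)\oti(\id\oti\vph)({}_F\De_K(K^\perp))$, and the bi-invariance of $\vph$ together with centrality of $F$ and $K$ forces the result into $\C\oti Z(\lhG)$.

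The principal obstacle is the block-level reduction underlying (1) and (2): one must track the matrix entries of $\ga_\si(f)$ across the irreducible decomposition of representations in $\mS=\mF\cdot\mK$ and verify that every contribution surviving the projection onto $F\oti K$ arises from a Rohlin-orthogonal pair. Keeping the argument at the operator level, rather than in explicit matrix coordinates, should be feasible by iterated use of $(\ga\oti\id)\ga=(\id\oti\De)\ga$, though the absence of a trace on the ambient $M$ forces a somewhat delicate combination with the norm estimates of Lemma \ref{lem:g-ps-vph}.
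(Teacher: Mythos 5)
Your arguments for parts (1) and (2) are essentially correct and track the paper's proof: both rest on the coaction identity for the action $\ga$, the absorption $\mu_{\ga_K(f)}^*=f'\mu_{\ga_K(f)}^*$, and the Rohlin orthogonality encoded in $\ga(f)(f'^\perp\oti 1)=\ga_{K^\perp}(f)$.

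Part (3) has a genuine gap, and it is exactly where the paper has to work hardest. You write that since $\al^\om=\Ad v\circ\ga$ and ``$\ta^\om$ is tracial on $M_\om$, the conjugation by $v$ drops out after slicing.'' But $v$ lives in $M^\om\oti\lhG$, not in $M_\om\oti\lhG$, and $\ta^\om\col M^\om\ra M$ is a conditional expectation, not a trace on $M^\om$. The relevant centralization fact --- that $M_\om$ sits inside the $\chi^\om$-centralizer --- gives $\chi^\om(Ya)=\chi^\om(aY)$ for $Y\in M_\om$, $a\in M^\om$, but this does \emph{not} yield $\chi^\om(vYv^*)=\chi^\om(Y)$: one finds $\chi^\om(vYv^*)-\chi^\om(Y)=\chi^\om\bigl(v[Y,v^*]\bigr)$, and $[Y,v^*]$ need not vanish. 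It is true that $[v,f'\oti 1]=0$ because $f'\in S'\cap M_\om$ and $v$ has entries in $S$, but $\ga_F(f')$ is obtained by applying $\ga_F$ to $f'$, which does not preserve the commutant of $S$, so $\ga_F(f')(f'^\perp\oti F)$ has no reason to commute with $v_F$. Moreover, you cannot simply ``apply (1) with $\ga_F$ replaced by $\al_F^\om$'': $\al^\om$ is a \emph{cocycle} action, so the coaction identity for $\al^\om$ involves $u_{F,K}$, and the correct slice identity is $f'=(\id\oti\vph)(a^*\al_K^\om(f)a)$ with the diagonal $a$ rather than $f'=(\id\oti\vph)(\al_K^\om(f))$.

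The paper's actual route for (3) is: expand $f'=(\id\oti\vph)(a_K^*\al_K^\om(f)a_K)$, apply $\al_F^\om$ using $\al_F^\om(\al_K^\om(f))=u_{F,K}(\id\oti{}_F\De_K)(\al^\om(f))u_{F,K}^*$, use the $\al^\om$-analogue of (1), then apply $\ta^\om$-splitting to extract $\ta_\om(f)$ --- at which point the result is $\ta_\om(f)(\id\oti\id\oti\vph)\bigl(\al_F(a_K^*)u_{F,K}(1\oti{}_F\De_K(K^\perp))u_{F,K}^*\al_F(a_K)\bigr)$, which still carries the $a$'s and $u$'s. It is only after a further computation invoking $\vph(\cdot)=\sum_\rho d(\rho)^2\om_\rho(\cdot\oti 1_\orho)$, the intertwiner relation $(a\oti 1)T_{\rho,\orho}=uT_{\rho,\orho}$, and the $2$-cocycle identity that everything cancels down to $\ta_\om(f)\oti(\id\oti\vph)({}_F\De_K(K^\perp))$. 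That cancellation is the crux of (3), and your proposal asserts it for free.
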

\begin{proof}[Proof of Claim \ref{claim:ga-f-F}]
(1). Since $\{\ga_{\rho}(f)\}_{\rho\in\mS}$
is a base of a Rohlin tower along with $\mS$,
and $\mS$ contains $\mF\cdot\mK$,
we have
\begin{align*}
(\id\oti{}_F\De_{K})(\ga(f))(f'^\perp\oti F\oti K)
&=(\id\oti{}_F\De_{K})(\ga(f)(f'^\perp\oti1))
\\
&=(\id\oti{}_F\De_{K})(\ga_{K^\perp}(f)).
\end{align*}

(2).
This is because $f'\mu_{\ga_K(f)}=\mu_{\ga_K(f)}$
and $(\id\oti{}_F\De_K)(\ga_{K^\perp}(f)(f'\oti1))=0$.

(3).
By the proof of (1) and the equalities
$\al^\om=\Ad v\circ\ga$ and $[v,f'\oti1]=0$,
we have
\[
(\id\oti{}_F\De_{K})(\al^\om(f))(f'^\perp\oti F\oti K)
=
(\id\oti{}_F\De_{K})(\al^\om_{K^\perp}(f)).
\]
This implies, with $f'=(\id\oti\vph)(a^*\al_K^\om(f)a)$,
the following:
\begin{align*}
&\al_F^\om(f')(f'^\perp\oti F)
\\
&=
(\id\oti\id\oti\vph)
\left(
\al_F(a_K^*)\al_F^\om(\al_K^\om(f))\al_F(a_K)
\right)(f'^\perp\oti F)
\\
&=
(\id\oti\id\oti\vph)
\left(
\al_F(a_K^*)u_{F,K}(\id\oti{}_F\De_K)(\al^\om(f))u_{F,K}^*\al_F(a_K)
\right)(f'^\perp\oti F)
\\
&=
(\id\oti\id\oti\vph)
\left(
\al_F(a_K^*)u_{F,K}(\id\oti{}_F\De_K)(\al_{K^\perp}^\om(f))u_{F,K}^*
\al_F(a_K)
\right).
\end{align*}
Thus we obtain
\begin{align*}
&(\ta^\om\oti\id)(\al_F^\om(f')(f'^\perp\oti F))
\\
&=
\ta_\om(f)
(\id\oti\id\oti\vph)
\left(
\al_F(a_K^*)u_{F,K}(1\oti{}_F\De_K(K^\perp))u_{F,K}^*\al_F(a_K)
\right).
\end{align*}

Using $\vph(\cdot)=\sum_{\rho}d(\rho)^2\om_\rho(\cdot\oti1_\orho)$
and $(a\oti1)T_{\rho,\orho}=uT_{\rho,\orho}$,
where $\om_\rho(x):=T_{\rho,\orho}^* x T_{\rho,\orho}$
for $x\in B(H_\rho)\oti B(H_\orho)$,
we have
\begin{align*}
&(\ta^\om\oti\id)(\al_F^\om(f')(f'^\perp\oti F))
\\
&=
\ta_\om(f)
\sum_{\rho\in\mK}
d(\rho)^2
(\id\oti\id\oti\om_\rho)
\left(
\al_F(a_\rho^*)u_{F,\rho}
(1\oti{}_F\De_\rho(K^\perp))
u_{F,\rho}^*
\al_F(a_\rho)
\right)
\\
&=
\ta_\om(f)
\sum_{\rho\in\mK}
d(\rho)^2
(\id\oti\id\oti\om_\rho)
\left(
\al_F(u_{\rho,\rho}^*)u_{F,\rho}
(1\oti{}_F\De_\rho(K^\perp))
u_{F,\rho}^*
\al_F(u_{\rho,\rho})
\right),
\end{align*}
and from the equalities below
\begin{align*}
&u(\id\oti\De\oti\id)(u)=\al(u)(\id\oti\id\oti\De)(u),
\\
&
(\id\oti\id_F\oti{}_\rho\De_\orho)(u^*)(1\oti F\oti T_{\rho,\orho})
=(1\oti F\oti T_{\rho,\orho}),
\\
&
[(\id\oti{}_F\De_\rho\oti\id_\orho)(u),1\oti{}_F\De_\rho(K^\perp)\oti1_\orho]
=0,
\end{align*}
we get
\begin{align*}
&(\ta^\om\oti\id)(\al_F^\om(f')(f'^\perp\oti F))
\\
&=
\ta_\om(f)
\sum_{\rho\in\mK}
d(\rho)^2
(\id\oti\id\oti\om_\rho)
\left(
1\oti{}_F\De_\rho(K^\perp)
\right)
\\
&=
\ta_\om(f)\oti(\id\oti\vph)({}_F\De_K(K^\perp)).
\end{align*}
\end{proof}

Hence (\ref{eq:ga-E-f}) is simply rewritten as follows
\begin{align*}
&(\ga_F(E')-(\id\oti{}_F\De_K)(E'))(\mu_{E'}^*\oti F\oti K)
\\
&=
\ga_F(E)(\ga_F(f'^\perp)-f'^\perp\oti F\oti K)
(f^\perp\mu_E^*\oti F\oti K)
\\
&\quad+
(\ga_F(E)-(\id\oti{}_F\De_K)(E))
(f'^\perp \mu_E^*\oti F\oti K)
\\
&\quad
+
\ga_F(\ga_K(f))(f'^\perp\mu_E^*\oti F\oti K)
\\
&\quad+
\ga_F(E(f'^\perp\oti1))(\mu_{\ga_K(f)}^*\oti F\oti K).
\end{align*}
If we put
\begin{align*}
A&:=
g_F^*v_F\ga_F(vE)
(\ga_F(f'^\perp)\oti K- f'^\perp\oti F\oti K)
(f'^\perp\mu_E^*\oti F\oti K)g_F,
\\
B&:=
g_F^*v_F\ga_F(v)(\ga_F(E)-(\id\oti{}_F\De_K)(E))
(f'^\perp \mu_E^*\oti F\oti K)
g_F,
\\
C&:=
g_F^*v_F\ga_F(v)
\ga_F(\ga_K(f))(f'^\perp\mu_E^*\oti F\oti K)
g_F,
\\
D&:=
g_F^*v_F\ga_F(vE(f'^\perp\oti1))(\mu_{\ga_K(f)}^*\oti F\oti K)g_F,
\end{align*}
then
\begin{equation}\label{eq:c-E}
c_{E'}^g
=(\ps\oti\vph\oti\vph)(A+B+C+D).
\end{equation}

Using $[E,f'\oti1]=0=[v,f'\oti1]$,
we have
\begin{align*}
A
&=g_F^*v_F
(\ga_F(f'^\perp)\oti K-f'^\perp\oti F\oti K)
(f'^\perp\oti F\oti K)\ga_F(vE)(\mu_E^*\oti F\oti K)g_F
\\
&=
g_F^*
(\al_F^\om(f'^\perp)\oti K-f'^\perp\oti F\oti K)
(f'^\perp\oti F\oti K)v_F\ga_F(vE)(\mu_E^*\oti F\oti K)g_F
\\
&=
g_F^*
(\al_F^\om(f')\oti K-f'\oti F\oti K)
(f'^\perp\oti F\oti K)v_F\ga_F(vE)(\mu_E^*\oti F\oti K)g_F
\\
&=
-g_F^*
(\al_F^\om(f')\oti K)
(f'^\perp\oti F\oti K)v_F\ga_F(vE)(\mu_E^*\oti F\oti K)g_F.
\end{align*}
Put $h_F:=(\id\oti\vph)({}_F\De_K(K^\perp))$
that is central in $\lhG$.
The $\ta^\om$-splitting property of $f$
and Claim \ref{claim:ga-f-F} (\ref{item:ta-alf}) yield the following:
\begin{align*}
&-(\ta^\om\oti\id\oti\id)(A)
\\
&=
\ta_\om(f)
g_F^*
(1\oti h_F\oti K)
\cdot(\ta^\om\oti\id\oti\id)(v_F\ga_F(vE)( \mu_E^*\oti F\oti K))g_F
\\
&=
\ta_\om(f)
(\ta^\om\oti\id\oti\id)\left(
(1\oti h_F^{1/2}\oti K)
g_F^*
v_F\ga_F(vE)\cdot\ga_F(E)( \mu_E^*\oti h_F^{1/2}\oti K)g_F
\right).
\end{align*}
Then we can estimate the first term of (\ref{eq:c-E})
by using the Cauchy--Schwarz inequality as follows:
\begin{align*}
&|(\ps\oti\vph\oti\vph)(A)|
\notag\\
&\leq
\ta_\om(f)
(\ps\oti\vph\oti\vph)
\left(
(1\oti h_F\oti K)
g_F^*v_F\ga_F(vEv^*)v_F^*g_F
\right)^{1/2}
\notag
\\
&\quad
\cdot
(\ps\oti\vph\oti\vph)
\left(g_F^*(\mu_E\oti h_F\oti K)\ga_F(E)(\mu_E^*\oti F\oti K)g_F
\right)^{1/2}
\notag\\
&\leq
\ta_\om(f)
(\ps\oti\vph\oti\vph)
\left(
(1\oti h_F\oti K)
g_F^*v_F\ga_F(vEv^*)v_F^*g_F
\right)^{1/2}
\\
&\quad
\cdot
(\ps\oti\vph)
\left(g_F^*(\mu_E\oti h_F)\ga_F((\id\oti\vph)(E))
(\mu_E^*\oti F)g_F
\right)^{1/2}
\notag
\\
&=
\ta_\om(f)
(\ps\oti\vph\oti\vph)
\left(
(1\oti h_F\oti K)
g_F^*\al_F^\om(vEv^*)g_F
\right)^{1/2}
\\
&\quad
\cdot
(\ps\oti\vph)
\left(g_F^*(\mu_E\oti h_F)\ga_F((\id\oti\vph)(E))
(\mu_E^*\oti F)g_F
\right)^{1/2}
\end{align*}
Using (E.6) and (\ref{eq:taEK}),
we have
$(\ta^\om\oti\id)(\al_F^\om(vEv^*))=|K|_\vph^{-1}b_E (F\oti K)$.
Note that $(\id\oti\vph)(E)$ and $\mu_E\mu_E^*$ are projections.
Then
\begin{align*}
&|(\ps\oti\vph\oti\vph)(A)|
\\
&\leq
\ta_\om(f)
\|K\|_\vph^{-1}b_E^{1/2}
(\ps\oti\vph\oti\vph)(1\oti h_F\oti K)^{1/2}
\cdot
(\ps\oti\vph)
\left(g_F^*(\mu_E\mu_E^*\oti h_F)g_F
\right)^{1/2}
\\
&\leq
\ta_\om(f)
\|K\|_\vph^{-1}b_E^{1/2}
\vph(h_F)^{1/2}\|K\|_\vph
\cdot
\vph(h_F)^{1/2}
\\
&=
\ta_\om(f)b_E^{1/2}\vph(h_F).
\end{align*}
Using (\ref{eq:FDeK})
and (\ref{eq:f}),
we have $\vph(h_F)<\de|F|_\vph|K|_\vph$
and
\begin{equation}
\label{eq:A0}
|(\ps\oti\vph\oti\vph)(A)|
\leq
\ta_\om(f)\de|F|_\vph|K|_\vph
=\de|F|_\vph\ta_\om(f').
\end{equation}

In the second term of (\ref{eq:c-E}),
by the $\ta^\om$-splitting property of $f'^\perp$,
we have
\begin{equation}\label{eq:B-c-E}
(\ps\oti\vph\oti\vph)(B)
=
\ta_\om(f'^\perp)c_E^g.
\end{equation}

In the third term of (\ref{eq:c-E}),
we have
\begin{align*}
&(\ps\oti\vph\oti\vph)(C)
\\
&=
(\ps\oti\vph\oti\vph)
(g_F^*v_F\ga_F(v)\ga_F(\ga_K(f))(\mu_E^*f'^\perp\oti F\oti K)g_F)
\\
&=
(\ps\oti\vph\oti\vph)(g_F^*\al_F^\om(\al_K^\om(f))v_F\ga_F(v)
(\mu_E^*f'^\perp\oti F\oti K)g_F)
\\
&=
(\ps\oti\vph\oti\vph)
(g_F^*\al_F^\om(\al_K^\om(f))
(f'^\perp\oti F\oti K)
\cdot
v_F\ga_F(v)(\mu_E^*\oti F\oti K)g_F).
\end{align*}
By the $\ta^\om$-splitting property
of $\al_F^\om(\al_K^\om(f))(f'^\perp\oti F\oti K)$,
we have
\begin{align}
&(\ps\oti\vph\oti\vph)(C)
\notag\\
&=
(\phi\oti\vph\oti\vph)
\big{(}
g_F^*(\ta^\om\oti\id\oti\id)(\al_F^\om(\al_K^\om(f))(f'^\perp\oti F\oti K))
\notag\\
&\qquad\cdot
(\ta^\om\oti\id\oti\id)(v_F\ga_F(v)(\mu_E^*\oti F\oti K)g_F)\big{)}.
\label{eq:Cmu}
\end{align}

Since
\begin{align*}
&(\ta^\om\oti\id\oti\id)(\al_F^\om(\al_K^\om(f))(f'^\perp\oti F\oti K))
\\
&=
(\ta^\om\oti\id\oti\id)(u_{F,K}(\id\oti{}_F\De_K)(\al^\om(f))u_{F,K}^*
(f'^\perp\oti F\oti K))
\\
&=
(\ta^\om\oti\id\oti\id)
(u_{F,K}(\id\oti{}_F\De_K)(\al_{K^\perp}^\om(f))u_{F,K}^*)
\\
&=
\ta_\om(f)u_{F,K}(1\oti{}_F\De_K(K^\perp))u_{F,K}^*,
\end{align*}
we have
\begin{align*}
(\ref{eq:Cmu})
&=
\ta_\om(f)
(\phi\oti\vph\oti\vph)
\big{(}
g_F^*u_{F,K}(1\oti{}_F\De_K(K^\perp))u_{F,K}^*
\notag\\
&\hspace{50pt}\cdot
(\ta^\om\oti\id\oti\id)(v_F\ga_F(v)(\mu_E^*\oti F\oti K)g_F)\big{)}
\\
&=
\ta_\om(f)
(\ps\oti\vph\oti\vph)
\big{(}
g_F^*u_{F,K}(1\oti{}_F\De_K(K^\perp))
\notag\\
&\hspace{50pt}\cdot
u_{F,K}^*v_F\ga_F(v)(\mu_E^*\oti F\oti K)g_F
\big{)}
\\
&=
\ta_\om(f)
(\ps\oti\vph\oti\vph)
\big{(}
g_F^*u_{F,K}(1\oti{}_F\De_K(K^\perp))
\notag\\
&\hspace{50pt}\cdot
(\id\oti{}_F\De_K)(v)(\mu_E^*\oti F\oti K)g_F\big{)}
\\
&=
\ta_\om(f)
(\ps\oti\vph\oti\vph)
\big{(}
g_F^*u_{F,K}\cdot
(\id\oti{}_F\De_K)(v_{K^\perp})(\mu_E^*\oti F\oti K)g_F\big{)}.
\end{align*}
Thus
\begin{align*}
&|(\ps\oti\vph\oti\vph)(C)|
\\
&\leq
\ta_\om(f)
(\ps\oti\vph\oti\vph)
\big{(}g_F^*u_{F,K}u_{F,K}^*g_F\big{)}^{1/2}
\\
&\hspace{30pt}
\cdot
(\ps\oti\vph\oti\vph)
\big{(}
g_F^*(\mu_E^*\oti F\oti K)
(\id\oti{}_F\De_K)(v_{K^\perp}^*v_{K^\perp})
(\mu_E^*\oti F\oti K)g_F\big{)}^{1/2}
\\
&=
\ta_\om(f)
(\ps\oti\vph\oti\vph)(1\oti F\oti K)^{1/2}
\cdot
(\ps\oti\vph\oti\vph)
(
g_F^*
(\mu_E^*\mu_E\oti{}_F\De_K(K^\perp))g_F)^{1/2}
\\
&\leq
\ta_\om(f)
\|F\|_\vph\|K\|_\vph
\cdot
(\ps\oti\vph\oti\vph)
(
g_F^*
(1\oti{}_F\De_K(K^\perp))g_F)^{1/2}
\\
&=
\ta_\om(f)
\|F\|_\vph\|K\|_\vph
(\ps\oti\vph)(g_F^* (1\oti h_F) g_F)^{1/2}
\\
&=
\ta_\om(f)
\|F\|_\vph\|K\|_\vph \vph(h_F)^{1/2}
\\
&<
\ta_\om(f)
\de^{1/2}|F|_\vph|K|_\vph,
\end{align*}
and
\begin{equation}
\label{eq:psCF}
|(\ps\oti\vph\oti\vph)(C)|
\leq
\de^{1/2}|F|_\vph\ta_\om(f').
\end{equation}

Next we estimate the last term of (\ref{eq:c-E}).
We set
\begin{align*}
D_1&:=
\ga_F(E(f'^\perp\oti1)v^*)(f'\oti F\oti K)v_F^*g_F,
\\
D_2&:=
\ga_F(E(f'^\perp\oti1))
(\mu_{\ga_K(f)}^*\oti F\oti K)g_F.
\end{align*}
By Claim \ref{claim:ga-f-F} (1),
we see that
$\ga_F(f'^\perp)$ and $f'\oti F$
are commuting.
Hence we have $D=D_1^*D_2$,
and by applying the Cauchy--Schwarz inequality,
we get
\begin{equation}\label{eq:D12}
|(\ps\oti\vph\oti\vph)(D)|
\leq
(\ps\oti\vph\oti\vph)(D_1^*D_1)^{1/2}
(\ps\oti\vph\oti\vph)(D_2^*D_2)^{1/2}.
\end{equation}
Then we have
\begin{align}
&(\ps\oti\vph\oti\vph)(D_1^*D_1)
\notag\\
&=
(\ps\oti\vph\oti\vph)(
g_F^*v_F\ga_F(vE(f'^\perp\oti1)v^*)(f'\oti F\oti K)v_F^*g_F)
\notag\\
&=
(\ps\oti\vph\oti\vph)(
g_F^*v_F\ga_F(vEv^*)v_F^*
(\al_F^\om(f'^\perp)(f'\oti F)\oti K)g_F)
\notag\\
&=
(\ps\oti\vph\oti\vph)(
g_F^*\al_F^{\om}(vEv^*)
(\al_F^\om(f'^\perp)(f'\oti F)\oti K)g_F)
\notag\\
&=
(\ph\oti\vph\oti\vph)
\big{(}
g_F^*
(\ta^\om\oti\id\oti\id)(\al_F^{\om}(vEv^*))
\notag\\
&\hspace{90pt}\cdot
(\ta^\om\oti\id\oti\id)(\al_F^\om(f'^\perp)
(f'\oti F\oti K)g_F)
\big{)}
\quad \mbox{by $\ta^\om$-splitting}
\notag
\\
&=
b_{E}|K|_\vph^{-1}
(\ph\oti\vph\oti\vph)
\left(
g_F^*
(\ta^\om\oti\id\oti\id)(\al_F^\om(f'^\perp)(f'\oti F\oti K)
g_F)
\right)
\notag\\
&=
b_{E}
(\ph\oti\vph)
\left(
g_F^*
(\ta^\om\oti\id)(\al_F^\om(f'^\perp)(f'\oti F))
g_F
\right).
\label{eq:D1-g-a}
\end{align}

To continue our estimate further,
we need the following claim.
\begin{claim}\label{claim:alf}
One has the following:
%
\[
(\ta^\om\oti\id)(\al_F^\om(f'^\perp)(f'\oti F))
=
\ta_\om(f)\oti h_F.
\]
\end{claim}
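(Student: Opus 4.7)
The plan is to derive Claim \ref{claim:alf} by running the argument of Claim \ref{claim:ga-f-F}(3) on the complementary piece and then taking a difference. First, since $\ga(f)=\ga_K(f)+\ga_{K^\perp}(f)$, the identity from Claim \ref{claim:ga-f-F}(1), namely $(\id\oti{}_F\De_K)(\ga(f))(f'^\perp\oti F\oti K)=(\id\oti{}_F\De_K)(\ga_{K^\perp}(f))$, yields by subtraction the complementary relation
\[
(\id\oti{}_F\De_K)(\ga(f))(f'\oti F\oti K)=(\id\oti{}_F\De_K)(\ga_K(f)).
\]
Transporting this through $\al^\om=\Ad v\circ\ga$ and the cocycle formula $\al_F^\om\al_K^\om=\Ad u_{F,K}\circ(\id\oti{}_F\De_K)\al^\om$ exactly as done in the proof of (3) gives
\[
\al_F^\om(f')(f'\oti F)=(\id\oti\id\oti\vph)\bigl[\al_F(a_K^*)u_{F,K}(\id\oti{}_F\De_K)(\al_K^\om(f))u_{F,K}^*\al_F(a_K)\bigr].
\]

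Applying $(\ta^\om\oti\id)$ and invoking the trace input tacitly used in (3), namely $(\ta^\om\oti\id)(\al_\rho^\om(f))=\ta_\om(f)\oti 1_\rho$ for each $\rho\in\IG$, the final simplification of (3) carries over verbatim with $K^\perp$ replaced by $K$ and produces
\[
(\ta^\om\oti\id)(\al_F^\om(f')(f'\oti F))=\ta_\om(f)\oti(\id\oti\vph)({}_F\De_K(K)).
\]
I would then combine this with $\al_F^\om(f'^\perp)=(1\oti F)-\al_F^\om(f')$ and $(\ta^\om\oti\id)(f'\oti F)=\ta_\om(f')\oti F$ to obtain
\[
(\ta^\om\oti\id)(\al_F^\om(f'^\perp)(f'\oti F))=\ta_\om(f')\oti F-\ta_\om(f)\oti(\id\oti\vph)({}_F\De_K(K)).
\]
Finally, the relation $\ta_\om(f')=|K|_\vph\ta_\om(f)$ recorded in (\ref{eq:f}), together with the direct computation $(\id\oti\vph)({}_F\De_K(1))=|K|_\vph F$, collapses the right-hand side to $\ta_\om(f)\oti(\id\oti\vph)({}_F\De_K(K^\perp))$, which is the claim.

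The real work is verifying that the argument of (3) is symmetric under the swap $(f'^\perp,K^\perp)\leftrightarrow(f',K)$, so that no new ingredients are needed. The hardest ingredient to take for granted is the trace identity $(\ta^\om\oti\id)(\al_\rho^\om(f))=\ta_\om(f)\oti 1_\rho$ that underpins both computations; since it was already invoked in (3), however, it is equally available to us here, and the remaining steps are arithmetic.
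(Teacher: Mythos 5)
Your proof is correct, and it genuinely differs from the paper's. The paper gets there in three lines by writing $\al_F^\om(f'^\perp)(f'\oti F)=f'\oti F+\al_F^\om(f')(f'^\perp\oti F)-\al_F^\om(f')$, applying $\ta^\om\oti\id$ to each of the three terms, letting the two copies of $\ta_\om(f')\oti F$ cancel, and citing Claim \ref{claim:ga-f-F}(3) for the surviving middle term. You instead re-run the entire computation of Claim \ref{claim:ga-f-F}(3) on the complementary corner, i.e.\ you first prove the parallel identity $(\ta^\om\oti\id)(\al_F^\om(f')(f'\oti F))=\ta_\om(f)\oti(\id\oti\vph)({}_F\De_K(K))$ --- which requires the subtracted relation $\al^\om(f)(f'\oti1)=\al_K^\om(f)$, the cocycle formula $\al_F^\om\al_K^\om=\Ad u_{F,K}\circ(\id\oti{}_F\De_K)\al^\om$, the $\ta^\om$-splitting of $f$, and the $u$-simplification with the intertwiner $T_{\rho,\orho}$ exactly as in (3); all of these do transfer verbatim with $K^\perp\to K$ since the commutation $[(\id\oti{}_F\De_\rho\oti\id_\orho)(u),1\oti{}_F\De_\rho(\,\cdot\,)\oti1_\orho]=0$ is indifferent to which projection is plugged in --- and then subtract from $f'\oti F$ using (\ref{eq:f}) and $(\id\oti\vph)({}_F\De_K(1))=|K|_\vph F$. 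What the paper's route buys is brevity: one cites (3) and the terms cancel without any arithmetic. What your route buys is that it avoids the implicit fact $(\ta^\om\oti\id)(\al_F^\om(f'))=\ta_\om(f')\oti F$ that the paper's cancellation tacitly relies on --- you only need $(\ta^\om\oti\id)(f'\oti F)=\ta_\om(f')\oti F$, which is immediate --- at the price of re-doing a fairly long cocycle computation and a final bookkeeping step.
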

\begin{proof}[Proof of Claim \ref{claim:alf}]
Note that
\[
\al_F^\om(f'^\perp)(f'\oti F)
=
f'\oti F+\al_F^\om(f')(f'^\perp\oti F)-\al_F^\om(f').
\]
By Claim \ref{claim:ga-f-F} (\ref{item:ta-alf}),
we have
\begin{align*}
&(\ta^\om\oti\id)(\al_F^\om(f'^\perp)(f'\oti F))
\\
&=
\ta_\om(f')\oti F
+(\ta^\om\oti\id)(\al_F^\om(f')(f'^\perp\oti F))
-\ta_\om(f')\oti F
\\
&=
\ta_\om(f)\oti (\id\oti\vph)({}_F\De_K(K^\perp)).
\end{align*}
\end{proof}
Then by (\ref{eq:D1-g-a}) and the previous claim,
the following holds:
\begin{align}
(\ps\oti\vph\oti\vph)(D_1^*D_1)
&=
b_{E}\ta_\om(f)
(\ph\oti\vph)(g_F^* (1\oti h_F)g_F)
\notag\\
&=
b_{E}\ta_\om(f)
(\ph\oti\vph)(1\oti h_F)
\quad\mbox{by } h_F\in Z(L^\infty(\bhG))
\notag\\
&<
b_{E}\ta_\om(f)\de|F|_\vph|K|_\vph
\notag\\
&\leq
\de|F|_\vph\ta_\om(f')
\quad\mbox{by }(\ref{eq:f}).
\label{eq:D1D1}
\end{align}

Next we estimate
$(\ps\oti\vph\oti\vph)(D_2^*D_2)$ as follows.
\begin{align*}
&(\ps\oti\vph\oti\vph)(D_2^*D_2)\\
&=
(\ps\oti\vph\oti\vph)
(
g_F^*(\mu_{\ga_K(f)}\oti F\oti K)
\ga_F(E(f'^\perp\oti1))
(\mu_{\ga_K(f)}^*\oti F\oti K)g_F)
\\
&=
(\ps\oti\vph)
(
g_F^*(\mu_{\ga_K(f)}\oti F)
\ga_F((\id\oti\vph)(E)f'^\perp)
(\mu_{\ga_K(f)}^*\oti F)g_F)
\\
&\leq
(\ps\oti\vph)
(
g_F^*(\mu_{\ga_K(f)}\oti F)
\ga_F(f'^\perp)
(\mu_{\ga_K(f)}^*\oti F)g_F)
\\
&
=(\ps\oti\vph)
(\mu_{\ga_K(f)}\mu_{\ga_K(f)}^*\oti F)
-
(\ps\oti\vph)
(
g_F^*(\mu_{\ga_K(f)}\oti F)
\ga_F(f')
(\mu_{\ga_K(f)}^*\oti F)g_F).
\end{align*}
By $\mu_{\ga_K(f)}\mu_{\ga_K(f)}^*=(\id\oti\vph)(\ga_K(f))=f'$,
we have
\[
(\ps\oti\vph)
(\mu_{\ga_K(f)}\mu_{\ga_K(f)}^*\oti F)
=|F|_\vph\ta_\om(f')
=|F|_\vph|K|_\vph\ta_\om(f).
\]
We compute the last term as follows:
\begin{align*}
&(\mu_{\ga_K(f)}\oti F)\ga_F(f')
(\mu_{\ga_K(f)}^*\oti F)
\\
&=
(\id\oti\id\oti\vph)
\left(
(\id\oti{}_F\De_K)\big{(}
(\mu_{\ga_K(f)}^*\oti1)\ga(f)(\mu_{\ga_K(f)}^*\oti1)
\big{)}
\right)
\\
&=
(\id\oti\id\oti\vph)
\left(
(\id\oti{}_F\De_K)\big{(}
(\mu_{\ga_K(f)}^*\oti1)\ga_K(f)(\mu_{\ga_K(f)}^*\oti1)
\big{)}
\right)
\\
&=
(\id\oti\id\oti\vph)
\left(
(\id\oti{}_F\De_K)
(a^*v\ga_K(f)v^*a)
\right)
\quad\mbox{by }(\ref{eq:Emu})
\\
&=(\id\oti\id\oti\vph)
\left(
(\id\oti{}_F\De_K)(a^*\al_K^{\om}(f)a)
\right),
\end{align*}
and the following holds:
\[
(\ps\oti\vph)
(g_F^*(\mu_{\ga_K(f)}\oti F)
\ga_F(f')
(\mu_{\ga_K(f)}^*\oti F)g_F)
=
\ta_\om(f)(\vph\oti\vph)({}_F\De_K(a^*a_K)).
\]
Then we have
\begin{align}
(\ps\oti\vph\oti\vph)(D_2^*D_2)
&\leq
\ta_\om(f)|F|_\vph|K|_\vph
-
\ta_\om(f)(\vph\oti\vph)({}_F\De_K(a^*a_K))
\notag\\
&=
\ta_\om(f)(\vph\oti\vph)({}_F\De_{K^\perp}(a^*a_K))
\quad\mbox{by }(\ref{eq:aa})
\notag\\
&=
\ta_\om(f)(\vph\oti\vph)({}_F\De_{K^\perp}(a^*a(1\oti 1_{\mL})))
\notag\\
&\leq
\ta_\om(f)(\vph\oti\vph)({}_F\De(a^*a(1\oti 1_{\mL})))
\notag\\
&=
\ta_\om(f)\vph(1_{\mL})|F|_\vph
\quad\mbox{by }(\ref{eq:aa})
\notag\\
&<
\de^{1/2}
|F|_\vph\ta_\om(f')
\quad\mbox{by Lemma \ref{lem:L}}.
\label{eq:D2D2}
\end{align}

Hence by (\ref{eq:D12}),
(\ref{eq:D1D1}) and (\ref{eq:D2D2}),
we have
\begin{equation}\label{eq:D0}
|(\ps\oti\vph\oti\vph)(D)|<\de^{3/4}|F|_\vph\ta_\om(f').
\end{equation}

Finally from (\ref{eq:c-E}), (\ref{eq:A0}),
(\ref{eq:B-c-E}), (\ref{eq:psCF})
and (\ref{eq:D0}),
we obtain
\begin{align*}
|c_{E'}^g|-|c_E^g|
&\leq
|(\ps\oti\vph\oti\vph)(A)|
+
|(\ps\oti\vph\oti\vph)(B)|
+
|(\ps\oti\vph\oti\vph)(C)|
\\
&\quad
+
|(\ps\oti\vph\oti\vph)(D)|
-|c_E^g|
\\
&<
\de|F|_\vph\ta_\om(f')
-\ta_\om(f')|c_E^g|
+
\de^{1/2}|F|_\vph\ta_\om(f')
+
\de^{3/4}|F|_\vph\ta_\om(f')
\\
&<3\de^{1/2}|F|_\vph\ta_\om(f')
\\
&<
3\de^{1/4}|F|_\vph(b_{E'}-b_E)
\quad\mbox{by }(\ref{eq:fb}).
\end{align*}
\end{proof}

\begin{thm}[Rohlin type theorem]\label{thm:Rohlin}
\index{Rohlin type theorem}
There exist a projection $\ovl{E}\in M_\om\oti\lhG$
with (E.\ref{item:E-K}), (E.\ref{item:ga-E}),
(E.\ref{item:ga-avE}), (E.\ref{item:EvaavE}),
(E.\ref{item:vph-E})
and (E.\ref{item:ga-taE}),
and a projection
$p\in S'\cap M_\om$
such that the projection $E:=\ovl{E}+p\oti e_\btr$ satisfies
the following:
\begin{itemize}
\item
$a_E\leq 5\de^{1/4}$;
\item
$b_E=1$ and $\ta_\om(p)<\de^{1/4}$;
\item
$|c_E^g|\leq9\de^{1/8}|F|_\vph$ for all $g\in\mG$.
\end{itemize}
\end{thm}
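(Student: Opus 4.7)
The plan is a build-and-destroy maximality argument based on Lemma \ref{lem:abc}, followed by a patching step that fills in the missing trace by a centralizing projection orthogonal to the base of the tower. This parallels the classical Rohlin tower construction (cf.\ \cite[Lemma~6.4]{Oc} and \cite[Theorem~5.9]{MT1}), but the asymmetry between $3\delta^{1/2}$ and $5\delta^{1/2}$ in the target bound for $a_E$ (and between $2\delta^{1/4}|F|_\vph$ and $8\delta^{1/4}|F|_\vph$ for $|c_E^g|$) is precisely designed to absorb the defect introduced when the patching projection is adjoined.

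Starting from $E_0=0\in\meJ$, which trivially satisfies $a_{E_0}=b_{E_0}=c_{E_0}^g=0$, I would apply Lemma \ref{lem:abc} iteratively as long as the current $E$ satisfies $b_E<1-\delta^{1/2}$. Item~(2) of the lemma forces $b_{E_{n+1}}-b_{E_n}\geq(\delta^{1/2}/2)|E_{n+1}-E_n|_{\ps\oti\vph}$, and since $b$ is bounded above by $|K|_\vph$, the partial sums $\sum|E_{n+1}-E_n|_{\ps\oti\vph}$ converge. A Zorn-style argument, exploiting separability of $M_*$, then produces a maximal $\bar E\in\meJ$; telescoping items~(1) and~(3) yields
\[
a_{\bar E}\leq 3\delta^{1/2}\,b_{\bar E},\qquad |c_{\bar E}^g|\leq 2\delta^{1/4}|F|_\vph\,b_{\bar E}\quad\mbox{for all }g\in\mG,
\]
while maximality forces $b_{\bar E}\geq 1-\delta^{1/2}$. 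By the trace condition~(E.6) one has $(\ta^\om\oti\id)(\bar E)=\la K$ with $\la|K|_\vph=b_{\bar E}$; the base projection $f_{\btr}=(\id\oti\vph_{\btr})(\bar E)$ belongs to $S'\cap M_\om$ by the centrality condition~(E.5) and has $\ta_\om(f_{\btr})=b_{\bar E}$. Since $M_\om$ is of type II$_1$ and $S$ is countable, I choose a projection $p\in S'\cap M_\om$ with $p\leq 1-f_{\btr}$ and $\ta_\om(p)=1-b_{\bar E}<\delta^{1/2}$, and set $E:=\bar E+p\oti e_{\btr}$; by construction $E$ is a projection with $b_E=1$.

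The remaining and principal technical task is to verify $a_E\leq 5\delta^{1/2}$ and $|c_E^g|\leq 8\delta^{1/4}|F|_\vph$. Writing
\[
\ga_F(E)-(\id\oti{}_F\De_K)(E)=\big(\ga_F(\bar E)-(\id\oti{}_F\De_K)(\bar E)\big)+\big(\ga_F(p\oti e_{\btr})-(\id\oti{}_F\De_K)(p\oti e_{\btr})\big),
\]
the first summand contributes at most $3\delta^{1/2}|F|_\vph$, while the defect term is bounded by slicing with $\ta^\om$ via Lemma \ref{lem:ta-ga} (which yields $(\ta^\om\oti\id)(\ga_F(p))=\ta_\om(p)\,F$) and then combining with the $(F,\delta)$-invariance of $K$ to control the off-diagonal part of ${}_F\De_K(e_{\btr})$; together with $\ta_\om(p)<\delta^{1/2}$ this yields an additional $O(\delta^{1/2})|F|_\vph$ contribution. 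An analogous decomposition applies to $c_E^g$, after noting that $\mu_E=\mu_{\bar E}+p$ (which follows from $a_{\btr}=v_{\btr}=1$), and the defect contribution is dominated by Lemma \ref{lem:g-ps-vph}. The main obstacle is tracking these constants uniformly in $g\in\mG$ while exploiting that $p\in S'\cap M_\om$ is orthogonal to $f_{\btr}$; these two properties are what guarantee that the $\om$-centralization of $p$ against the matrix entries of $v$, $u$ and $\bar E$ is preserved, as arranged by enlarging $S$ at the outset of the proof of Lemma \ref{lem:abc}.
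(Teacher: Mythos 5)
Your build-and-destroy maximality argument and the identification of the right partial order on $\meJ$ from Lemma \ref{lem:abc} match the paper's proof, and so does the strategy of bounding $a_E$ and $c_E^g$ after adjoining a $\btr$-component $p\oti e_\btr$. The gap is in the choice of $p$. You take $p\leq 1-f_\btr$ with $f_\btr:=(\id\oti\vph_\btr)(\ovl E)$ and $\ta_\om(p)=1-b_{\ovl E}$, but this is too weak. The paper sets $p:=1-(\id\oti\vph)(\ovl E)$; note that $q:=(\id\oti\vph)(\ovl E)=\sum_{\rho\in\mK}(\id\oti\vph_\rho)(\ovl E)=\sum_{\rho,i}f_{\rho_{ii}}$ is the \emph{full} Shapiro range projection $\mu_{\ovl E}\mu_{\ovl E}^*$, of which $f_\btr$ is only the $\btr$-summand. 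With your weaker requirement, $p$ need not be orthogonal to $q$, and this breaks the argument at two places: first, the decomposition of $c_E^g$ uses $(\id\oti{}_F\De_K)(\ovl E)(p\oti F\oti K)=0$, which requires $f_{\rho_{ij}}p=0$ for all $\rho,i,j$ (so $p\perp q$, since $f_{\rho_{ij}}p=f_{\rho_{ij}}f_{\rho_{jj}}p$ and $f_{\rho_{jj}}\leq q$); second, the element $\mu_E=\mu_{\ovl E}+p$ (used as the \emph{Shapiro unitary} in Theorem \ref{thm:almvan}) has $\mu_E\mu_E^*=q+\mu_{\ovl E}p+p\mu_{\ovl E}^*+p$, which is $1$ only when $p=1-q$. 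Taking $p:=1-q$ gives $p\perp q$ automatically, $\mu_E$ unitary, and $\ta_\om(p)=1-b_{\ovl E}<\de^{1/2}$ by maximality, which is exactly what you want.

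You also have a computational slip: by condition (E.\ref{item:ga-taE}), $(\ta^\om\oti\id)(\ovl E)=b_{\ovl E}|K|_\vph^{-1}K$, so $\ta_\om(f_\btr)=b_{\ovl E}|K|_\vph^{-1}$, not $b_{\ovl E}$; it is $\ta_\om(q)=(\ta^\om\oti\vph)(\ovl E)=b_{\ovl E}$ that has the value you quote. Once $p:=1-q$ is used, your outlined estimates for $a_E$ and $c_E^g$ via the decomposition and Lemma \ref{lem:g-ps-vph} follow the paper's proof.
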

\begin{proof}
We introduce the subset $\meS$ in $\meJ$
which is the collection of $E\in\meJ$ with
$a_E\leq3\de^{1/4}b_E$ and $\max_{g\in\mG}|c_E^g|\leq3\de^{1/4}|F|_\vph b_E$.
The order on $\meS$ is given by
$E\prec E'$ if and only if $E=E'$ or
all the inequalities of Lemma \ref{lem:abc} hold.
Then $\meS$ is proved to be inductive as \cite[Theorem 5.9]{MT1},
and we can take a maximal element $\ovl{E}$
in the ordered set $\meS$.
We set $p:=1-(\id\oti\vph)(E)$
and $E=\ovl{E}+p\oti e_\btr$.
Then it is trivial that $b_E=1$.
If $\ta_\om(p)\geq \de^{1/4}$,
then we can take $E'\in\meJ$ as in Lemma \ref{lem:abc}.
Then it is easy to see that $E'$ is an element of $\meS$
with $E\prec E'$,
and this is a contradiction.
Thus $\ta_\om(p)<\de^{1/4}$.
The inequality on $a_E$ is shown
in the same way as \cite[Theorem 5.9]{MT1}.
We will estimate $c_E^g$.
Since $\mu_E=\mu_{\ovl{E}}+p$, we have
\begin{align*}
&
(\ga_F(E)-(\id\oti{}_F\De_K)(E))(\mu_E^*\oti F\oti K)
\\
&=
(\ga_F(\ovl{E})-(\id\oti{}_F\De_K)(\ovl{E}))(\mu_{\ovl{E}}^*\oti F\oti K)
\\
&\quad+
(\ga_F(\ovl{E})-(\id\oti{}_F\De_K)(\ovl{E}))(p\oti F\oti K)
\\
&\quad+
(\ga_F(p)\oti e_\btr-p\oti{}_F\De_K(e_\btr))((\mu_{\ovl{E}}^*+p)\oti F\oti K)
\\
&=
(\ga_F(\ovl{E})-(\id\oti{}_F\De_K)(\ovl{E}))(\mu_{\ovl{E}}^*\oti F\oti K)
\\
&\quad+
\ga_F(\ovl{E})(p\oti F\oti K)
\\
&\quad+
(\ga_F(p)(\mu_{\ovl{E}}^*\oti 1))\oti e_\btr
+
(\ga_F(p)(p\oti 1))\oti e_{\btr}
-
p\oti{}_F\De_K(e_\btr).
\end{align*}
Thus we have
\begin{align*}
c_E^g
&=
c_{\ovl{E}}^g
+
(\ps\oti\vph\oti\vph)(g_F^*v_F\ga_F(v\ovl{E})(p\oti F\oti K)g_F)
\\
&\quad+
(\ps\oti\vph\oti\vph)(g_F^*v_F\ga_F(v(p\oti1))(\mu_{\ovl{E}}^*\oti F\oti e_\btr)g_F)
\\
&\quad+
(\ps\oti\vph\oti\vph)(g_F^*v_F\ga_F(v(p\oti1))(p\oti F\oti e_\btr)g_F)
\\
&\quad-
(\ps\oti\vph\oti\vph)(g_F^*v_F\ga_F(v)(p\oti {}_F\De_K(e_\btr))g_F).
\end{align*}
We estimate these five terms.
Since $\ovl{E}\in\meJ$,
we have $c_{\ovl{E}}^g\leq 3\de^{1/4}|F|_\vph b_{\ovl{E}}$.
Next, we have the following four estimates:
\begin{align*}
&|(\ps\oti\vph\oti\vph)(g_F^*v_F\ga_F(v\ovl{E})(p\oti F\oti K)g_F)|
\\
&\leq
|F|_\vph
\|v_F\ga_F(v)\|
|\ga_F(v\ovl{E})(p\oti F\oti K)|_{\ps\oti\vph\oti\vph}
\quad\mbox{by Lemma }\ref{lem:g-ps-vph}
\\
&=
|F|_\vph
|(\ga_F(v\ovl{E})-(\id\oti{}_F\De_K)(\ovl{E}))
(p\oti F\oti K)|_{\ps\oti\vph\oti\vph}
\\
&\leq
|F|_\vph
|\ga_F(v\ovl{E})-(\id\oti{}_F\De_K)(\ovl{E})|_{\ps\oti\vph\oti\vph}
\\
&<
3\de^{1/2}|F|_\vph b_{\ovl{E}},
\end{align*}
\begin{align*}
&
|(\ps\oti\vph\oti\vph)(g_F^*v_F\ga_F(v(p\oti1))
(\mu_{\ovl{E}}^*\oti F\oti e_\btr)g_F)|
\\
&=
|(\ps\oti\vph\oti\vph)(g_F^*v_F(\ga_F(p)\oti e_\btr)
(\mu_{\ovl{E}}^*\oti F\oti e_\btr)g_F)|
\\
&=
|(\ps\oti\vph)(g_F^*v_F\ga_F(p)
(\mu_{\ovl{E}}^*\oti F)g_F)|
\\
&\leq
(\ps\oti\vph)(g_F^*v_F\ga_F(p)v_F^*g_F)^{1/2}
(\ps\oti\vph)(g_F^*
(\mu_{\ovl{E}}\mu_{\ovl{E}}^*\oti F)g_F)^{1/2}
\\
&=
(\ps\oti\vph)(g_F^*\al^{\om}_F(p)g_F)^{1/2}
(\ps\oti\vph)(g_F^*((1-p)\oti F)g_F)^{1/2}
\\
&=
\ta_\om(p)^{1/2}|F|_\vph^{1/2}
\cdot \ta_\om(1-p)^{1/2}|F|_\vph^{1/2}
\\
&\leq
\ta_\om(p)^{1/2}|F|_\vph,
\end{align*}
\begin{align*}
&|(\ps\oti\vph\oti\vph)(g_F^*v_F\ga_F(v(p\oti1))
(p\oti F\oti e_\btr)g_F)|
\\
&=
|(\ps\oti\vph)(g_F^*v_F\ga_F(p)(p\oti F)g_F)|
\\
&\leq
(\ps\oti\vph)(g_F^*v_F\ga_F(p)v_F^*g_F)^{1/2}
(\ps\oti\vph)(g_F^*(p\oti F)g_F)^{1/2}
\\
&\leq
(\ps\oti\vph)(g_F^*\al_F^{\om}(p)g_F)^{1/2}
\ta_\om(p)^{1/2}|F|_\vph^{1/2}
\\
&=
\ta_\om(p)|F|_\vph
\end{align*}
and
\begin{align*}
&|(\ps\oti\vph\oti\vph)(g_F^*v_F\ga_F(v)
(p\oti {}_F\De_K(e_\btr))g_F)|
\\
&=
\ta_\om(p)
|(\ps\oti\vph\oti\vph)(g_F^*v_F\ga_F(v)
(1\oti {}_F\De_K(e_\btr))g_F)|
\\
&\leq
\ta_\om(p)
\|v_F\ga_F(v)\|
|1\oti {}_F\De_K(e_\btr)|_{\ps\oti\vph\oti\vph}
\\
&\leq
\ta_\om(p)|F|_\vph^2.
\end{align*}
Since $\de^{1/4}|F|_\vph<1$,
we obtain
\[
|c_E^g|<|F|_\vph
(
3\de^{1/4}b_{\ovl{E}}+3\de^{1/2}b_{\ovl{E}}
+\ta_\om(p)^{1/2}+2\ta_\om(p)|F|_\vph)
<9\de^{1/8}|F|_\vph.
\]
\end{proof}

\subsection{Almost vanishing theorem of approximate 1-cocycles}
Recall that we have treated a cocycle action $(\al,u)$
and an action $\ga=\Ad v\circ\al^\om$.
Let us introduce the following functions:
for $\rho\in\IG$ and $g\in\mG$,
\begin{align*}
&f_1(\al,u;\rho,g)
:=
|K|_\vph
|(\ph\oti\vph_\rho)(g^* (\Ph_\rho^{\al\th^{-1}}(a_\rho-1)\oti1_\rho)g)|,
\\
&f_2(\al,u;\rho,g)
:=
|K|_\vph|(\ph\oti\vph)(g_F^* (\Ph_\rho^{\al\th^{-1}}(a_\rho-1)\oti F)g_F)|
\\
&f_3(\al,u;\rho,g)
\\
&
:=
|K|_\vph
\sum_{S,T}
|(\ps\oti\vph\oti\vph)
(g_{12}^*(1\oti S) \Ph_\rho^\al((1\oti S^*)|u-1|^2(1\oti T))
(1\oti T^*)g_{12})|,
\\
&f_4(\al,u;g)
:=
|K|_\vph\|(u-1)(1\oti (F\vee K)\oti \ovl{K})\|_{g(\ps\oti\vph)g^*\oti\vph}^2,
\\
&f_5(\al,u;g)
:=
|(\ps\oti\vph\oti\vph)
(g_F^*(1-u(\id\oti{}_F\De_K)(a_K))g_F)|^2,
\\
&f_6(\al,u;g)
:=
|(\ps\oti\vph\oti\vph)
(g_F^*(a_F-1)(1\oti{}_F\De_{K}(e_\btr))g_F)|^2,
\\
&f_7(\al,u;g)
:=
(\ph\oti\vph\oti\vph)(g_F^*\al_F(|a_K-1\oti K|^2)g_F),
\end{align*}
where $S,T$ run through $\ONB(\rho,\pi\si)$
for $\pi\in\mF,\si\in\mK$.
Now we denote by $\ka(\al,u)$ the maximum
of those seven functions $f_k$
for $\pi\in\mF\cup\mK,\rho\in \mK\cup\ovl{\mK},g\in\mG$.
When we want to specify $F$ and $K$,
we write $\ka(\al,u;F,K,\mG)$ for $\ka(\al,u)$.

We will prove some inequalities which will be used
in the proof of almost vanishing of the approximate
1-cocycle $v$.
\begin{lem}\label{lem:Eva1}
The following inequality holds:
\[
|(\ps\oti\vph)(g^*Ev^*(a-1)vEg)|
\leq
\ka(\al,u)|K|_\vph^{-1}.
\]
\end{lem}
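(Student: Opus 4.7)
The plan is to reduce the left-hand side, via the matrix-unit decomposition of $E$ given by (E.\ref{item:ga-E}) together with the $\ta^\om$-splitting (E.\ref{item:ga-taE}), to a sum over irreducibles $\rho\in\mK$ of expressions that are controlled by the slice $(\id\oti\vph_\rho)(v^*(a_\rho-1)v)$. The essential mechanism is the approximate innerness of $v$: since $(\chi\oti\tr_\rho)\circ\Ad(v^\nu)^*\to\chi\circ\Ph_\rho^{\al\th^{-1}}$ for every $\chi\in M_*$, integrating $v^*(a_\rho-1)v$ against a normal state on $M$ yields (in the $\omega$-limit) the element $\Ph_\rho^{\al\th^{-1}}(a_\rho-1)$, and the resulting estimate is exactly $f_1(\al,u;\rho,g)$ when a single matrix block is isolated, or $f_2(\al,u;\rho,g)$ when $F$ is summed in.

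Concretely, I would first write $v^*(a-1)v=v^*av-1$ and then expand $E=\sum_{\rho\in\mK}d(\rho)^{-1}\sum_{i,j}f_{\rho_{i,j}}\oti e_{\rho_{i,j}}$. The bilinear form $(\ps\oti\vph)(g^*E(\cdot)Eg)$ then splits as a sum over $\rho\in\mK$ of terms of shape
\[
d(\rho)^{-2}\sum_{i,j,k,\ell}(\ps\oti\vph_\rho)\bigl(g^*(f_{\rho_{i,j}}\oti e_{\rho_{i,j}})\,v^*(a-1)v\,(f_{\rho_{k,\ell}}\oti e_{\rho_{k,\ell}})g\bigr).
\]
Applying the $\ta^\om$-splitting property to the matrix-unit factors and using $(\ta^\om\oti\id)(E)=|K|_\vph^{-1}|E|_{\ps\oti\vph}K\leq|K|_\vph^{-1}K$ collects an overall factor of $|K|_\vph^{-1}$, converting the $\ps$-evaluation into a $\ph$-evaluation. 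What remains is a sum over $\rho\in\mK$ (whose total $\vph$-mass is $|K|_\vph$) of expressions of the form $|K|_\vph^{-1}(\ph\oti\vph_\rho)(g^*\Ph_\rho^{\al\th^{-1}}(a_\rho-1)g)$, each of which is bounded by $\ka(\al,u)|K|_\vph^{-1}$ by definition of $f_1$ (resp. $f_2$ after summing $\rho$ along $F$ through $\De$).

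The main obstacle will be handling the error incurred when replacing $v^*(a_\rho-1)v$ by its asymptotic value $\Ph_\rho^{\al\th^{-1}}(a_\rho-1)$: this error is driven by the 2-cocycle defect of $v$, which through the identity $v_{12}^*\al^\om(v^*)u(\id\oti\De)(v)=1$ translates directly into factors of $u-1$ acting on the $\mF\cdot\mK$-spectrum. Expanding those cross-terms via $\ONB(\rho,\pi\si)$ for $\pi\in\mF$, $\si\in\mK$ and applying the Cauchy--Schwarz inequality against states of the form $g(\ps\oti\vph)g^*$ brings in precisely the quantities controlled by $f_3$, $f_4$, $f_5$, $f_6$, $f_7$. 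Using Lemma~\ref{lem:g-ps-vph} to dominate each cross-term by $|F|_\vph$ (or $|K|_\vph$) times the corresponding $|\cdot|_{\ps\oti\vph\oti\vph}$-norm, and invoking $(F,\de)$-invariance of $K$ to absorb the residual $\de$-errors into $\ka(\al,u)$, one gets the composite estimate within the single bound $\ka(\al,u)|K|_\vph^{-1}$, where the structural factor $|K|_\vph^{-1}$ reflects the thin spread of the Rohlin projection $E$ over its support $K$.
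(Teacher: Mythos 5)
Your first two paragraphs capture the paper's actual mechanism: expand $E$ in matrix units, use (E.\ref{item:ga-E}) to collapse $E(\cdot)E$ to $(\cdot)E$, rewrite $\ps=\ph\circ\ta^\om$, and use the approximate innerness of $v$ to convert the $\ta^\om$-slice into $\Ph_\rho^{\al\th^{-1}}$, finally bounding by $f_1$. So far so good, and this is essentially what the paper does.

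The third paragraph, however, introduces a gap. You propose to control an ``error incurred when replacing $v^*(a_\rho-1)v$ by its asymptotic value'' via Cauchy--Schwarz, $f_3$--$f_7$, Lemma~\ref{lem:g-ps-vph}, and the $(F,\de)$-invariance of $K$. None of this is needed, and it reflects a misreading of the setup. The unitary $v$ is not merely an approximately inner sequence with some residual cocycle defect; by Theorem~\ref{thm:app-cocycle} it is chosen to satisfy $v_{12}^*\al^\om(v^*)u(\id\oti\De)(v)=1$ \emph{exactly} in $M^\om\oti\lhG\oti\lhG$, and $(\chi\oti\tr_\rho)\circ\Ad(v^\nu)^*\to\chi\circ\Ph_\rho^{\al\th^{-1}}$ \emph{in norm}. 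Consequently the identity
\[
(\ta^\om\oti\tr_\rho)(v^*avE)=\Ph_\rho^{\al\th^{-1}}\bigl((\ta^\om\oti\id)(avEv^*)\bigr)
\]
holds on the nose, and (E.\ref{item:ga-taE}) evaluates $(\ta^\om\oti\id)(vEv^*)$ cleanly as a multiple of $K$ plus $\ta_\om(p)e_\btr$. After subtracting $(\ps\oti\vph)(g^*Eg)=b_{\ovl{E}}+\ta_\om(p)$ the remainder is a single sum over $\rho\in\mK$ of terms $|K|_\vph^{-1}(\ph\oti\vph_\rho)(g^*(\Ph_\rho^{\al\th^{-1}}(a-1)\oti1_\rho)g)$, each $\le|K|_\vph^{-2}f_1(\al,u;\rho,g)$, and the $|\mK|\le|K|_\vph$ count finishes. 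The quantities $f_3$--$f_7$, the $(F,\de)$-invariance of $K$, and $F$ itself play no role here --- $F$ does not even appear in the statement --- and they enter only in the subsequent Lemmas~\ref{lem:Eva2}, \ref{lem:a1vE}, \ref{lem:u1} and Theorem~\ref{thm:almvan}. If you try to run your error analysis you will not obtain the clean factor $\ka(\al,u)|K|_\vph^{-1}$ but an inflated constant, and you would be doing work the hypotheses already dispense with.
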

\begin{proof}
The property (E.\ref{item:ga-E}) yields
$E_{\rho_{\el,m}}E_{\rho_{j,k}}=d(\rho)^{-1}\de_{m,j}E_{\rho_{\el,k}}$,
and we have
\begin{align*}
(\ps\oti\vph)(g^*Ev^*avEg)
&=
\sum_{\rho\in\mK}
\sum_{i,j,k,\el,m}
d(\rho)\ps((g^*)_{\rho_{i,j}}E_{\rho_{j,k}}(v^*av)_{\rho_{k,\el}}E_{\rho_{\el,m}} g_{\rho_{m,i}})
\\
&=
\sum_{\rho\in\mK}
\sum_{i,j,k,\el,m}
d(\rho)\ps((g^*)_{\rho_{i,j}}(v^*av)_{\rho_{k,\el}}E_{\rho_{\el,m}}E_{\rho_{j,k}} g_{\rho_{m,i}})
\\
&=
\sum_{\rho\in\mK}
\sum_{i,j,k,\el,m}
d(\rho)d(\rho)^{-1}\ps((g^*)_{\rho_{i,j}}(v^*av)_{\rho_{k,\el}}E_{\rho_{\el,k}} g_{\rho_{j,i}})
\\
&=
\sum_{\rho\in\mK}
\sum_{i,j,k,\el,m}
\ps((g^*)_{\rho_{i,j}}(\id\oti\Tr_\rho)(v^*avE) g_{\rho_{j,i}})
\\
&=
\sum_{\rho\in\mK}
\sum_{i,j,k,\el,m}
d(\rho)\ph((g^*)_{\rho_{i,j}}(\ta^\om\oti\tr_\rho)(v^*avE)
g_{\rho_{j,i}}).
\end{align*}
By Theorem \ref{thm:app-cocycle},
we have
\[
(\ta^\om\oti\tr_\rho)(v^*avE)=\Ph_\rho^{\al\th^{-1}}((\ta^\om\oti\id)(avEv^*)).
\]
This implies the following:
\[
(\ps\oti\vph)(g^*Ev^*avEg)
=
\sum_{\rho\in\mK}
\sum_{i,j}
d(\rho)\ph((g^*)_{\rho_{i,j}}\Ph_\rho^{\al\th^{-1}}((\ta^\om\oti\id)(avEv^*))g_{\rho_{j,i}}).
\]
From the property (E.\ref{item:ga-taE}) for $\ovl{E}$,
we have
\begin{align*}
(\ps\oti\vph)(g^*Ev^*avEg)
&=
\sum_{\rho\in\mK}
\sum_{i,j}
d(\rho)\ph((g^*)_{\rho_{i,j}}\Ph_\rho^{\al\th^{-1}}(a(\ta^\om\oti\id)(v\ovl{E}v^*)))g_{\rho_{j,i}})
\\
&\quad
+
\sum_{\rho\in\mK}
\sum_{i,j}
d(\rho)\ph((g^*)_{\rho_{i,j}}
\th_\rho(\Ph_\rho^{\al\th^{-1}}(av(\ta_\om(p)\oti e_\btr)v^*))g_{\rho_{j,i}})
\\
&=
\sum_{\rho\in\mK}
\Big{(}
\sum_{i,j}
d(\rho)b_{\ovl{E}}|K|_\vph^{-1}
\ph((g^*)_{\rho_{i,j}}
\Ph_\rho^{\al\th^{-1}}(a)g_{\rho_{j,i}})
+
\ta_\om(p)\de_{\rho,\btr}\Big{)}
\\
&=
\sum_{\rho\in\mK}
b_{\ovl{E}}|K|_\vph^{-1}
(\ph\oti\vph_\rho)(g^* (\Ph_\rho^{\al\th^{-1}}(a)\oti1_\rho)g)
+
\ta_\om(p).
\end{align*}
Since $(\ps\oti\vph)(g^*Eg)=b_{\ovl{E}}+\ta_\om(p)$,
we have
\begin{align*}
|(\ps\oti\vph)(g^*Ev^*(a-1)vEg)|
&\leq
\sum_{\rho\in\mK}
|K|_\vph^{-1}
|(\ph\oti\vph_\rho)
(g^*(\Ph_\rho^{\al\th^{-1}}(a-1)\oti1_\rho)g)|
\\
&\leq
\sum_{\rho\in\mK}
|K|_\vph^{-2}
f_1(\al,u;\rho,g)
\leq
\ka(\al,u)|K|_\vph^{-1}.
\end{align*}
\end{proof}

\begin{lem}\label{lem:Eva2}
One has the following inequality:
\[
|(\ps\oti\vph_F)(g^*((\id\oti\vph)(Ev^*(1-a)vE)\oti F)g)|
\leq
\ka(\al,u)|K|_\vph^{-1}.
\]
\end{lem}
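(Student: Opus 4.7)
The plan is to follow the proof of Lemma \ref{lem:Eva1} line by line, keeping the extra outer factor $F$ throughout; since $F$ is precisely what distinguishes $f_2$ from $f_1$ in the definition of $\ka(\al,u)$, the resulting bound will have the same shape $\ka(\al,u)|K|_\vph^{-1}$. First I observe, using centrality of $F$ in $\lhG$, that the quantity of interest equals
\[
(\ps\oti\vph)\bigl(g_F^*(Y\oti F)g_F\bigr),\qquad Y:=(\id\oti\vph)(Ev^*(1-a)vE)\in M^\om,
\]
so the problem reduces to computing $\ta^\om(Y)$ and then pairing the resulting element of $M\oti\lhG$ with $(\ph\oti\vph)$ conjugated by $g_F$. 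The computation of $\ta^\om(Y)=(\ta^\om\oti\vph)(Ev^*(1-a)vE)$ follows the identical matrix-unit reindexing used at the beginning of the proof of Lemma \ref{lem:Eva1}: the matrix-unit relations from (E.\ref{item:ga-E}), the identity $E^2=E$, and cyclicity of $\ps$ reduce the four-term product $Ev^*(1-a)vE$ (under $\ta^\om\oti\vph$) to the form $v^*(1-a)vE$ up to a reindexing of the matrix labels.

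The second step is to invoke Theorem \ref{thm:app-cocycle}, which supplies the key identity
\[
(\ta^\om\oti\tr_\rho)(v^*Zv)=\Ph_\rho^{\al\th^{-1}}\bigl((\ta^\om\oti\id)(Z)\bigr),\qquad Z\in M^\om\oti B(H_\rho),
\]
and then to evaluate $(\ta^\om\oti\id)(vEv^*)$ by property (E.\ref{item:ga-taE}), which reads $(\ta^\om\oti\id)(vEv^*)=b_{\ovl{E}}|K|_\vph^{-1}K+\ta_\om(p)e_\btr$. Because $a_\btr=1$ the $e_\btr$ piece is annihilated by $1-a$ (a simplification relative to Lemma \ref{lem:Eva1}, where the $\ta_\om(p)$ summand survived), while for each $\rho\in\mK$ the central coefficient $b_{\ovl{E}}|K|_\vph^{-1}$ multiplies $\Ph_\rho^{\al\th^{-1}}(a_\rho-1)$. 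Combining, I arrive at
\[
(\ps\oti\vph_F)(g^*(Y\oti F)g)=-b_{\ovl{E}}|K|_\vph^{-1}\sum_{\rho\in\mK}(\ph\oti\vph)\bigl(g_F^*(\Ph_\rho^{\al\th^{-1}}(a_\rho-1)\oti F)g_F\bigr),
\]
in which each summand is precisely the functional appearing inside $f_2(\al,u;\rho,g)$, whereas Lemma \ref{lem:Eva1} produced $f_1$.

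Taking absolute values and using $|(\ph\oti\vph)(g_F^*(\Ph_\rho^{\al\th^{-1}}(a_\rho-1)\oti F)g_F)|=|K|_\vph^{-1}f_2(\al,u;\rho,g)$ together with $|b_{\ovl{E}}|\leq 1$ and the summation estimate $\sum_{\rho\in\mK}|K|_\vph^{-2}f_2(\al,u;\rho,g)\leq\ka(\al,u)|K|_\vph^{-1}$, which is exactly the summation used at the very end of the proof of Lemma \ref{lem:Eva1}, yields the claimed inequality. The main obstacle is the handling of the inner $(\id\oti\vph)$-slice: one cannot simply invoke cyclicity on $M^\om\oti\lhG$, and the matrix-unit bookkeeping from Lemma \ref{lem:Eva1} has to be repeated carefully, now tracking the additional tensor slot occupied by $F$; once that template is set up, replacing $1_\rho$ by $F$ at the appropriate stage is routine and $f_2$ enters exactly where $f_1$ entered before.
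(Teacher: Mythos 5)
Your proposal is correct and takes essentially the same route as the paper: convert the $F$-slice into a three-tensor-slot expression, drop one $E$ by centrality, apply the $\ta^\om$-splitting identity from Theorem~\ref{thm:app-cocycle} together with property (E.\ref{item:ga-taE}) and $a_\btr=1\oti e_\btr$, and then bound by $f_2$ and $\ka(\al,u)$. The only difference is cosmetic: you propose to redo the explicit matrix-unit bookkeeping of Lemma~\ref{lem:Eva1}, whereas the paper replaces that step by the more compact observation that $E_{13}$ commutes with $g_F$ and centralizes $\ps\oti\vph\oti\vph$, so one $E$ can be absorbed directly; both reach the same display $b_{\ovl{E}}|K|_\vph^{-1}\sum_{\rho\in\mK}(\ph\oti\vph_F)(g^*(\Ph_\rho^{\al\th^{-1}}(1-a_\rho)\oti F)g)$.
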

\begin{proof}
Employing Theorem \ref{thm:app-cocycle}, we have
\begin{align*}
&(\ps\oti\vph_F)(g^*((\id\oti\vph)(Ev^*(1-a)vE)\oti F)g)
\\
&=
(\ps\oti\vph\oti\vph)(g_F^*(Ev^*(1-a)vE)_{13}g_F)
\\
&=
(\ps\oti\vph\oti\vph)(g_F^*(v^*(1-a)vE)_{13}g_F)
\\
&=
\sum_{\rho\in\mK}
(\ps\oti\vph_F)(g^*
(\Ph_\rho^{\al\th^{-1}}((1-a)(\ta^\om\oti\id)(vEv^*))
\oti F)g)
\\
&=
\sum_{\rho\in\mK}
(\ps\oti\vph_F)(g^*
(\Ph_\rho^{\al\th^{-1}}((1-a)(\ta^\om\oti\id)(v\ovl{E}v^*))
\oti F)g)
\\
&\quad+
\sum_{\rho\in\mK}
(\ps\oti\vph_F)(g^*
(\Ph_\rho^{\al\th^{-1}}((1-a)(\ta^\om\oti\id)(v(p\oti
e_\btr)v^*))
\oti F)g)
\\
&=
b_{\ovl{E}}|K|_\vph^{-1}
\sum_{\rho\in\mK}
(\ph\oti\vph_F)(g^*
(\Ph_\rho^{\al\th^{-1}}(1-a_\rho)
\oti F)g)
\quad\mbox{by }a_\btr=1\oti e_\btr.
\end{align*}
Thus
\begin{align*}
&|(\ps\oti\vph_F)(g^*((\id\oti\vph)(Ev^*(1-a)vE)\oti F)g)|
\\
&\leq
b_{\ovl{E}}|K|_\vph^{-1}
\sum_{\rho\in\mK}
f_2(\al,u;\rho,g)|K|_\vph^{-1}
\leq
\ka(\al,u)|K|_\vph^{-1}.
\end{align*}
This implies the desired inequality.
\end{proof}

\begin{lem}\label{lem:a1vE}
The following inequalities hold:
\begin{enumerate}
\item
$\|(a^*-1\oti K)vE\|_{g(\ps\oti\vph)g^*}
\leq\sqrt{2}\ka(\al,u)^{1/2}\|K\|_\vph^{-1/2}$.
\item
$\|\ga_F((a^*-1\oti K)vE)\|_{g(\ps\oti\vph)g^*\oti\vph}
\leq\sqrt{2}\ka(\al,u)^{1/2}\|K\|_\vph^{-1/2}$.
\end{enumerate}
\end{lem}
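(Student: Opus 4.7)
The key tool is the identity $(\mu_E\oti 1)E=a^*vE$ from (\ref{eq:Emu}), together with the matrix-unit structure (E.\ref{item:ga-E}) which forces $E((\id\oti\vph)(E)\oti 1)=E$ (each block is a full matrix unit supported on the projection $(\id\oti\vph)(E)$), combined with $\mu_E^*\mu_E=(\id\oti\vph)(E)$.

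For (1), I write $(a^*-1\oti K)vE=(\mu_E\oti 1)E-vE$ and expand the squared norm in the positive functional $g(\ps\oti\vph)g^*$:
\[
\|(a^*-1\oti K)vE\|_{g(\ps\oti\vph)g^*}^2
=\|(\mu_E\oti 1)E\|^2-2\re\langle (\mu_E\oti 1)E,vE\rangle+\|vE\|^2.
\]
The first term equals $(g(\ps\oti\vph)g^*)(E(\mu_E^*\mu_E\oti 1)E)=(g(\ps\oti\vph)g^*)(E)=b_E$ by the matrix-unit identity, while the third equals $(g(\ps\oti\vph)g^*)(Ev^*vE)=b_E$ since $v$ is unitary. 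Taking adjoints in (\ref{eq:Emu}) gives $Ev^*a=E(\mu_E^*\oti 1)$, so the cross term is $(g(\ps\oti\vph)g^*)(Ev^*avE)$. Therefore the squared norm collapses to
\[
\|(a^*-1\oti K)vE\|_{g(\ps\oti\vph)g^*}^2
=2\re(g(\ps\oti\vph)g^*)\bigl(Ev^*(1-a)vE\bigr),
\]
and Lemma \ref{lem:Eva1} bounds the absolute value of the right-hand side by $2\ka(\al,u)|K|_\vph^{-1}$. Taking square roots yields (1).

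For (2), I apply the identical computation after pushing through $\ga_F$: since $\ga_F$ is a $*$-homomorphism on $M^\om$ (extended trivially in the $\lhG$-slot) and preserves the relevant algebraic identities ($\ga_F(v^*v)=1$, and $\ga_F((\mu_E^*\mu_E\oti 1)E)=\ga_F(E)$), the same expansion gives
\[
\|\ga_F((a^*-1\oti K)vE)\|_{g(\ps\oti\vph)g^*\oti\vph}^2
=2\re(\ps\oti\vph\oti\vph)\bigl(g_{12}^*\ga_F(Ev^*(1-a)vE)g_{12}\bigr).
\]
To bound the right-hand side, I integrate out the third tensor slot using the fact that $(\id\oti\id\oti\vph)$ commutes with $\ga_F$, rewriting the expression as $(\ps\oti\vph_F)(g^*\ga_F((\id\oti\vph)(Ev^*(1-a)vE))g)$, where $(\id\oti\vph)(Ev^*(1-a)vE)\in M_\om$ by the same argument as in Lemma \ref{lem:Eva2}'s proof. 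Then Lemma \ref{lem:ta-ga} and the scalar-valued structure of $\ta^\om$ on the blocks of $E$ (ensured by (E.\ref{item:ga-taE})) replace $\ga_F$ by the trivial $\oti F$ extension up to the action of $\th_F$, which is then absorbed into the $\ph\circ\th_F^{-1}$-type computation used in defining $\Ph^{\al\th^{-1}}$. This matches exactly the form estimated in Lemma \ref{lem:Eva2} with bound $\ka(\al,u)|K|_\vph^{-1}$.

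The main obstacle will be in (2): tracking the interplay between $\ga_F$ and the $\oti F$ extension after the $\ta^\om$-reduction. This is where the combination of Lemma \ref{lem:ta-ga}, the scalar value $\ta^\om(f_{\rho_{ij}})=\de_{ij}|K|_\vph^{-1}b_E$ forced by (E.\ref{item:ga-taE}), and the fact that $\th_\pi$ fixes scalars in $M$ must be carefully combined to recover exactly the expression $(\ph\oti\vph_F)(g^*(\Ph_\rho^{\al\th^{-1}}(1-a_\rho)\oti F)g)$ appearing in $f_2(\al,u;\rho,g)$. Once this identification is made, the summation over $\rho\in\mK$ bounds the quantity by $\ka(\al,u)|K|_\vph^{-1}$, and taking square roots concludes (2).
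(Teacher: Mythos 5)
Part (1) is correct and is essentially the paper's own argument. You expand the squared norm, identify the first and third terms with $b_E$, and reduce the cross term to Lemma \ref{lem:Eva1}. The only cosmetic difference is that you re-derive $(a^*vE)^*(a^*vE)=E$ from $\mu_E^*\mu_E=(\id\oti\vph)(E)$ and the matrix-unit identity rather than just citing (E.\ref{item:EvaavE}), which states exactly this; both are fine.

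Part (2) follows the right plan (expand the norm, reduce to Lemma \ref{lem:Eva2}) but has a genuine gap in the step where $\ga_F$ is removed. You first integrate out the third slot, arriving at $(\ps\oti\vph_F)(g^*\ga_F((\id\oti\vph)(Ev^*(1-a)vE))g)$, and then want to replace $\ga_F$ by the trivial $\oti F$ extension. After applying $\ps=\ph\circ\ta^\om$ and Lemma \ref{lem:ta-ga}, this produces a term $\th_\pi(\ta^\om(\cdot))$ for each $\pi\in\mF$; to drop the $\th_\pi$ you need $\ta^\om((\id\oti\vph)(Ev^*(1-a)vE))$ to be scalar. You assert $(\id\oti\vph)(Ev^*(1-a)vE)\in M_\om$ ``by the same argument as in Lemma \ref{lem:Eva2}'s proof'', but no such claim appears there, and (E.\ref{item:ga-taE}) only controls $\ta^\om$ of $E$ and $vEv^*$, not of $Ev^*(1-a)vE$. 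Moreover, the suggestion that the leftover $\th_F$ is ``absorbed into the $\ph\circ\th_F^{-1}$-type computation used in defining $\Ph^{\al\th^{-1}}$'' is a red herring: the $\th_\rho$'s that occur inside $f_2(\al,u;\rho,g)$ (and hence in Lemma \ref{lem:Eva2}'s bound) are indexed by $\rho\in\mK$, whereas the $\th_\pi$'s you must eliminate come from $\pi\in\mF$; there is no absorption mechanism. The paper proceeds in the opposite order: it applies $\ps=\ph\circ\ta^\om$ to the full squared norm first, invokes Lemma \ref{lem:ta-ga} to split $\ta^\om\circ\ga_F$, collapses to $(\ps\oti\vph_F)(g((\id\oti\vph)(Ev^*(1-a)vE)\oti F)g)$, and then cites Lemma \ref{lem:Eva2} as a black box. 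Reordering your steps in this way, and explicitly addressing the scalar-valuedness of $\ta^\om$ on the relevant expression, would close the gap.
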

\begin{proof}
(1).
By (E.\ref{item:EvaavE}) and Lemma \ref{lem:Eva1},
we have
\begin{align*}
&\|(a^*-1\oti K)vE\|_{g(\ps\oti\vph)g^*}^2
\\
&=
\|(a^*-1\oti K)vE\|_{g(\ps\oti\vph)g^*}^2
\\
&=
(\ps\oti\vph)(g^*(Ev^*aa^*vE+E)g)
-
2\Re(\ps\oti\vph)(g^*Ev^*avEg)
\\
&=
2(\ps\oti\vph)(g^*Eg)
-
2\Re(\ps\oti\vph)(g^*Ev^*avEg)
\\
&=
2\Re(\ps\oti\vph)(g^*Ev^*(1-a)vEg)
\\
&\leq
2\ka(\al,u)|K|_\vph^{-1}.
\end{align*}

(2).
By (E.\ref{item:EvaavE}) and Lemma \ref{lem:ta-ga},
we have
\begin{align*}
&\|\ga_F((a^*-1\oti K)vE)\|_{g(\ps\oti\vph)g^*\oti\vph}^2
\\
&=
(\ps\oti\vph\oti\vph)
(g_{12}^*\ga_F(|(a^*-1\oti K)vE|^2)g_{12})
\\
&=
(\ph\oti\vph\oti\vph)
(g_{12}^*(\ta^\om\oti\id)(\ga_F(|(a^*-1\oti K)vE|^2))g_{12})
\\
&=
(\ph\oti\vph_F\oti\vph)
(g_{12}^*(\ta^\om\oti\id)(|(a^*-1\oti K)vE|^2)_{13}g_{12})
\\
&=
(\ps\oti\vph_F)
\left(g((\id\oti\vph)(|(a^*-1\oti K)vE|^2)\oti F)g\right)
\\
&=
2\Re(\ps\oti\vph_F)
(g((\id\oti\vph)(Ev^*(1-a)vE)\oti F)g).
\end{align*}
Then by the previous lemma,
we have the desired inequality.
\end{proof}

\begin{lem}\label{lem:u1}
One has
\[
\|(u-1\oti1\oti1)
(\id\oti\De)(vE)(1\oti F\oti K)\|_{g(\ps\oti\vph)g^*\oti\vph}
\leq\sqrt{2}\ka(\al,u)^{1/2}\|K\|_\vph^{-1}.
\]
\end{lem}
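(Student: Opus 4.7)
The plan is to exploit the 2-cocycle trivialization supplied by Theorem \ref{thm:app-cocycle}, namely $U_{12}^*\al^\om(U^*)u(\id\oti\De)(U)=1$, which I rewrite (with $v=U$) as $u(\id\oti\De)(v)=\al^\om(v)v_{12}$. Substituting this into the left factor of $(u-1)(\id\oti\De)(vE)$ and noting $(\id\oti\De)(vE)=(\id\oti\De)(v)(\id\oti\De)(E)$, I obtain the identity
\[
(u-1)(\id\oti\De)(vE)=\bigl[\al^\om(v)v_{12}-(\id\oti\De)(v)\bigr](\id\oti\De)(E).
\]

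Next I would expand the squared norm under the state $g(\ps\oti\vph)g^*\oti\vph$ into four inner-product terms. The two diagonal terms involve $\|\al^\om(v)v_{12}(\id\oti\De)(E)(1\oti F\oti K)\|^2$ and $\|(\id\oti\De)(vE)(1\oti F\oti K)\|^2$; each is a slice of a projection (since $\al^\om(v)v_{12}$ and $(\id\oti\De)(v)$ are unitary), so after using the $\ta^\om$-splitting (E.\ref{item:ga-taE}) and property (E.\ref{item:EvaavE}) they evaluate to the same scalar $b_{\ovl E}|K|_\vph^{-1}(\vph\oti\vph)({}_F\De_K(1))$, so they combine to give twice that quantity. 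The real work is the cross term $2\Re(\ps\oti\vph\oti\vph)\bigl(g_{12}^*(\id\oti\De)(E)(\id\oti\De)(v^*)\cdot\al^\om(v)v_{12}(\id\oti\De)(E)g_{12}\bigr)(1\oti F\oti K)$, which I simplify by reapplying the cocycle identity in reverse to convert $v_{12}^*\al^\om(v)^*(\id\oti\De)(v)=u^*$, producing the insertion of $u^*$ between two copies of $(\id\oti\De)(vE)$.

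From there, the condition (E.\ref{item:EvaavE}) (that $a^*vE$ is a partial isometry with initial projection $E$) together with the defining property $(a\oti1)\De(e_\btr)=u\De(e_\btr)$ of the diagonal allows me to absorb $u$ against $a$, at the cost of a term $(a^*-1\oti K)vE$; this is precisely the object controlled by Lemma \ref{lem:a1vE}(2). A final application of that lemma, combined with the already-computed diagonal term, yields
\[
\|(u-1\oti1\oti1)(\id\oti\De)(vE)(1\oti F\oti K)\|_{g(\ps\oti\vph)g^*\oti\vph}^2\leq 2\ka(\al,u)|K|_\vph^{-2},
\]
which, on taking square roots, gives the claimed bound.

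The main obstacle will be the combinatorial bookkeeping in the cross-term calculation: tracking which legs carry $F$, $K$, and $e_\btr$, sliding $E$ past $(\id\oti\De)(v)$ using (E.\ref{item:ga-E}), and repeatedly invoking Lemma \ref{lem:g-ps-vph} to discard norm-bounded factors coming from $g$. The $|K|_\vph^{-1}$ (rather than $|K|_\vph^{-1/2}$) in the final bound reflects the fact that, after squaring, two independent averages over $K$ must be absorbed into $\ka(\al,u)$ using $f_3$ and $f_7$, whereas Lemma \ref{lem:a1vE} already absorbs one.
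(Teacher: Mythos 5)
Your opening rewrite
\[
(u-1)(\id\oti\De)(vE)=\bigl[\al^\om(v)v_{12}-(\id\oti\De)(v)\bigr](\id\oti\De)(E),
\]
together with the observation that the two diagonal terms of $\|A-B\|^2$ coincide (both equal $|(\id\oti\De)(E)(1\oti F\oti K)|_{g(\ps\oti\vph)g^*\oti\vph}$ because the prefactors are unitary), is correct and, after unwinding, is actually \emph{equivalent} to the paper's starting point: the cross term reduces, via the cocycle identity, to $(\id\oti\De)(Ev^*)\,u\,(\id\oti\De)(vE)$, so you are back to the expression $(\id\oti\De)(Ev^*)X(\id\oti\De)(vE)$ with $X=|u-1|^2(1\oti F\oti K)$ that the paper works with directly. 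Up to this point the two proofs agree in substance.

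The gap is in the next step. You propose to convert the cross term into $(a^*-1\oti K)vE$ via the diagonal identity $(a\oti1)(1\oti\De(e_\btr))=u(1\oti\De(e_\btr))$ and then invoke Lemma \ref{lem:a1vE}(2). But that identity is only valid after multiplying by $1\oti\De(e_\btr)$, and no such factor appears in $(\id\oti\De)(Ev^*)\,u\,(\id\oti\De)(vE)(1\oti F\oti K)$ — the coproduct $(\id\oti\De)(vE)$ has support spread over all of $1\oti{}_F\De_K(1)$, not its $\De(e_\btr)$-corner, so $u$ cannot be traded for $a$ there without a substantial argument you have not given. Moreover, Lemma \ref{lem:a1vE}(2) is built from $f_1,f_2$, which control $\Ph_\rho^{\al\th^{-1}}(a_\rho-1)$; this estimate is genuinely insufficient here, which is why the paper introduces the separate quantities $f_3$ (controlling $\Ph_\rho^\al\bigl((1\oti S^*)|u-1|^2(1\oti T)\bigr)$ over intertwiners $S,T$) and $f_4$, and proves the lemma by applying $\ta^\om$ directly to the three-tensor expression and matching against $f_3,f_4$. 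One would also want to recheck your arithmetic: the paper's squared bound is $2\ka(\al,u)|K|_\vph^{-1}$ (i.e.\ $2\ka\|K\|_\vph^{-2}$), whereas you wrote $2\ka(\al,u)|K|_\vph^{-2}$, which is off by a power of $|K|_\vph$.
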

\begin{proof}
Set the element $X:=|u-1\oti1\oti1|^2(1\oti F\oti K)$
that is contained in $M\oti \lhG\oti\lhG$.
Using the equality
$E(E_{\rho_{i,j}}\oti1)=d(\rho)^{-1}E(1\oti e_{\rho_{j,i}})$,
we have
\begin{align*}
&\|(u-1\oti1\oti1)
(\id\oti\De)(vE)(1\oti F\oti K)\|_{g(\ps\oti\vph)g^*\oti\vph}^2
\\
&=
(\ps\oti\vph\oti\vph)
\left(
g_{12}^*(\id\oti\De)(Ev^*)X(\id\oti\De)(vE)g_{12}
\right)
\\
&=
\sum_{\rho\in\mK}
\sum_{i,j}
d(\rho)^{-1}
(\ps\oti\vph\oti\vph)
\left(
g_{12}^*
(\id\oti\De)(1\oti e_{\rho_{i,j}}v^*)X(\id\oti\De)(vE(E_{\rho_{i,j}}\oti1))
g_{12}
\right)
\\
&=
\sum_{\rho\in\mK}
\sum_{i,j}
d(\rho)^{-1}
(\ps\oti\vph\oti\vph)
\left(
g_{12}^*
(\id\oti\De)(1\oti e_{\rho_{i,j}}v^*)X(\id\oti\De)(vE(1\oti e_{\rho_{j,i}}))
g_{12}
\right).
\end{align*}
Let us introduce the following:
\[
Y:=
\sum_{i,j}(\id\oti\De)(1\oti e_{\rho_{i,j}}v^*)X
(\id\oti\De)(vE(1\oti e_{\rho_{j,i}})).
\]
We decompose the coproducts in terms of intertwiners as follows:
\begin{align*}
Y&=
\sum_{S,T}
\sum_{i,j}(1\oti S)(1\oti e_{\rho_{i,j}})v^*
(1\oti S^*)X
(1\oti T)
vE(1\oti e_{\rho_{j,i}})(1\oti T^*)
\\
&=
\sum_{S,T}
(1\oti S)
\left(
(\id\oti \Tr_\rho)
(v_\rho^*(1\oti S^*)X
(1\oti T)v_\rho E_\rho)
\oti1_\rho\right)(1\oti T^*).
\end{align*}
By Theorem \ref{thm:app-cocycle}, we have
\begin{align*}
&(\ta^\om\oti\id\oti\id)(Y)
\\
&=
d(\rho)
\sum_{S,T}
(1\oti S)
\ta^\om\circ\Ph_\rho^{\al\th^{-1}}
\left(
(1\oti S^*)X
(1\oti T)(\ta^\om\oti\id)(vEv^*)
\right)(1\oti T^*)
\\
&=
d(\rho)
\sum_{S,T}
(1\oti S)
\Ph_\rho^{\al\th^{-1}}
\left(
(1\oti S^*)X
(1\oti T)(\ta^\om\oti\id)(v\ovl{E}v^*)
\right)(1\oti T^*)
\\
&\quad+
d(\rho)
\sum_{S,T}
(1\oti S)
\Ph_\rho^{\al\th^{-1}}
\left(
(1\oti S^*)X
(1\oti T)(\ta^\om\oti\id)(p\oti e_\btr)
\right)(1\oti T^*)
\\
&=
d(\rho)b_{\ovl{E}}|K|_\vph^{-1}
\sum_{S,T}
(1\oti S)
\Ph_\rho^{\al\th^{-1}}
\left(
(1\oti S^*)X
(1\oti T)
\right)(1\oti T^*)
+
\de_{\rho,\btr}\ta_\om(p)X.
\end{align*}
Thus we have
\begin{align*}
&\|(u_{F,K}-1\oti F\oti
K)(\id\oti\De)(vE)\|_{g(\ps\oti\vph)g^*\oti\vph}^2
\\
&=
\sum_{\rho\in\mK}
|(\ph\oti\vph_F\oti\vph)(g_F^*(\ta^\om\oti\id\oti\id)(Y)g_F)|
\\
&\leq
\sum_{\rho\in\mK}
\left(
d(\rho)|K|_\vph^{-2}f_3(\al,u;\rho,g)d(\rho)^{-1}
\right)
+
\ta_\om(p)
|(\ph\oti\vph_F\oti\vph)(g_F^*Xg_F)|d(\rho)^{-1}
\\
&\leq
\ka(\al,u)|K|_\vph^{-1}
+\ta_\om(p)f_4(\al,u;g)|K|_\vph^{-1}
\\
&\leq
2\ka(\al,u)|K|_\vph^{-1}.
\end{align*}
\end{proof}

Recall the Shapiro unitary $\mu:=(\id\oti\vph)(a^*vE)$.

\begin{thm}\label{thm:almvan}
Let $E$ be a projection as in Theorem \ref{thm:Rohlin}.
Then the unitary $v$ is close to
the $\ga$-coboundary $(\mu\oti1)\ga(\mu^*)$ in the following
sense:
\begin{equation}
\|v_F\ga_F(\mu)-\mu\oti F\|_{g(\ps\oti\vph)g^*}^\sharp
<(5\de^{1/16}+3\ka(\al,u)^{1/4})\|F\|_\vph
\quad\mbox{for all }g\in\mG.
\label{eq:ga-v-mu}
\end{equation}
\end{thm}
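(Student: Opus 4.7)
The plan is to decompose $v_F\ga_F(\mu)-\mu\oti F$ into several pieces, each controlled either by the Rohlin property of $E$ (via $a_E\le5\de^{1/2}$) or by one of Lemmas~\ref{lem:Eva1}--\ref{lem:u1} (via $\ka(\al,u)$), and then to apply Cauchy--Schwarz together with the sharp-norm identity $(\|x\|_\ps^\sharp)^2=(\ps(x^*x)+\ps(xx^*))/2$ to produce the exponents $\de^{1/8}$ and $\ka^{1/4}$.

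First I would note that $v_F\ga_F(\mu)=\al_F^\om(\mu)v_F$, since $\ga=\Ad v^*\circ\al^\om$. Substituting $\mu=(\id\oti\vph)(a^*vE)$ and exploiting the bi-invariance $(\id\oti\vph)\circ\De=\vph(\cdot)\cdot1$, which yields the auxiliary identity $\mu\oti F=(\id\oti\id\oti\vph)((\id\oti{}_F\De)(a^*vE))$, the target difference becomes a single partial integral
\[
v_F\ga_F(\mu)-\mu\oti F=(\id\oti\id\oti\vph)\bigl(\al_F^\om(a^*vE)(v_F\oti1)-(\id\oti{}_F\De)(a^*vE)\bigr).
\]
Then I would bridge through the intermediate element $Z:=u_{F,K}(\id\oti{}_F\De_K)(a^*vE)$ and split the inner integrand as $(\al_F^\om(a^*vE)(v_F\oti1)-Z)+(Z-(\id\oti{}_F\De)(a^*vE))$. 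For the first summand, the cocycle identity $\al^\om(v)(v\oti1)=u(\id\oti\De)(v)$, cut down by $1\oti F\oti K$, provides $\al_F^\om(v)(v_F\oti1)=u_{F,K}(\id\oti{}_F\De_K)(v)$ on the relevant support; combined with the Rohlin replacement $\al_F^\om(E)\approx(\id\oti{}_F\De_K)(E)$, whose $\ps\oti\vph\oti\vph$-seminorm error is $a_E|F|_\vph\le5\de^{1/2}|F|_\vph$, and with the approximation $\al_F^\om(a^*)u_{F,K}\approx(\id\oti{}_F\De_K)(a^*)$, which is the sliced form of the 2-cocycle identity and is controlled by the $f_1,f_2$-parts of $\ka(\al,u)$ together with Lemma~\ref{lem:a1vE}, this first summand contributes $O(\de^{1/4}+\ka^{1/2})|F|_\vph$ to the squared seminorm. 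For the second summand, the difference between ${}_F\De_K$ and ${}_F\De$ is localized outside $F\oti K$ and is controlled by the $(F,\de)$-invariance of $K$ (of order $\de|F|_\vph|K|_\vph$), while the replacement $u_{F,K}\mapsto1$ is controlled by Lemma~\ref{lem:u1}; both contribute $O(\de^{1/2}+\ka)|F|_\vph$ to the squared seminorm.

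To quantify all these estimates, I would apply Cauchy--Schwarz in the $g(\ps\oti\vph)g^*$-seminorm, invoking Lemma~\ref{lem:g-ps-vph} to absorb the conjugation by $g\in\mG$ without blowing up the constants. Assembling the contributions yields a bound $\|v_F\ga_F(\mu)-\mu\oti F\|_{g(\ps\oti\vph)g^*}^2\le C(\de^{1/4}+\ka^{1/2})|F|_\vph$ on the ordinary $L^2$-seminorm, and the same chain of arguments applied to the adjoint $(v_F\ga_F(\mu)-\mu\oti F)^*$, together with the conjugated form of Lemma~\ref{lem:a1vE}(1), gives the matching bound on $\ps(xx^*)$. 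Combining via $(\|x\|^\sharp)^2=(\|x\|^2+\|x^*\|^2)/2$ then produces the claimed inequality with explicit constants $5\de^{1/8}+5\ka^{1/4}$.

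The main obstacle will be the correct book-keeping of tensor positions when combining the cocycle identity with the Rohlin approximation: the unitary $v_F$ has to be threaded between $\al_F^\om(v)$ and $\al_F^\om(E)$ without incurring uncontrolled commutators, which requires exploiting the $\om$-centrality of the matrix entries of $E$ together with Lemma~\ref{lem:a1vE} to absorb the residual $a^*-1\oti K$ terms into a single $\ka^{1/2}$-scale error. A secondary subtlety is that each Cauchy--Schwarz step must be applied to the correct factorization so that the resulting square-root bounds land as $\de^{1/4}\to\de^{1/8}$ and $\ka^{1/2}\to\ka^{1/4}$ after passing to the sharp norm, rather than aggregating into a larger exponent.
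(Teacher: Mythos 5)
Your plan for bounding $\ps\bigl((v_F\ga_F(\mu)-\mu\oti F)^*(v_F\ga_F(\mu)-\mu\oti F)\bigr)$ is close in spirit to the paper's Claim~1 (the paper uses a four-term decomposition $A+B+C+D$ built from $(a^*-1\oti K)vE$, ${}_F\De_{K^\perp}$, $u-1$, and the Rohlin error, whereas you bridge through $Z:=u_{F,K}(\id\oti{}_F\De_K)(a^*vE)$, but the tools invoked — Lemmas~\ref{lem:g-ps-vph}, \ref{lem:a1vE}, \ref{lem:u1} and the Rohlin bound on $a_E$ — are the right ones, and the resulting estimate is of the same quality).

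The genuine gap is in your treatment of the adjoint side. You claim that "the same chain of arguments applied to the adjoint $(v_F\ga_F(\mu)-\mu\oti F)^*$, together with the conjugated form of Lemma~\ref{lem:a1vE}(1), gives the matching bound on $\ps(xx^*)$." This is exactly what fails in the non-tracial setting, and controlling $\ps(xx^*)$ is the whole technical difficulty that separates this from the type~II$_1$ argument of \cite{MT1}. The paper's Claim~2 does \emph{not} transpose Claim~1; instead, expanding $2|F|_\vph-2\Re(\ps\oti\vph)(g_F^*v_F\ga_F(\mu)(\mu^*\oti F)g_F)$ forces the quantity $c_E^g$ to appear, which is exactly why Theorem~\ref{thm:Rohlin} was formulated so as to also control $c_E^g$ (with $|c_E^g|\le8\de^{1/4}|F|_\vph$). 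The exponent $\de^{1/8}$ in the final statement comes from $\sqrt{|c_E^g|}$; without $c_E^g$ you have no route to it. Moreover the remaining error terms in Claim~2 are bounded by the auxiliary functions $f_5$, $f_6$, $f_7$ of $\ka(\al,u)$, none of which appear in the direct-side estimate. Your proposal never mentions $c_E^g$ or $f_5,f_6,f_7$, so the key mechanism that makes the sharp-norm bound work in the absence of a trace is missing.
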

\begin{proof}
The statement follows from the two claims below.
However, Claim 2 is not used in this paper,
and readers can skip it.
\addtocounter{claim}{-2}
\begin{claim}
\[
\|v_F\ga_F(\mu)-\mu\oti F\|_{g(\ps\oti\vph)g^*}^2
<
(14\de^{1/4}+9\ka(\al,u)^{1/2})\|F\|_\vph
\quad\mbox{for all }g\in\mG.
\]
\end{claim}
\begin{proof}
We set $A$, $B$, $C$ and $D$ defined by
\begin{align*}
A&:=
(\id\oti\id\oti\vph)((\id\oti{}_F\De)((a^*-1\oti K)vE))
\\
B&:=
(\id\oti\id\oti\vph)((\id\oti{}_F\De_{K^\perp})(vE))
\\
C&:=
(\id\oti\id\oti\vph)((\id\oti{}_F\De_K)(vE)-v_F\ga_F(vE))
\\
D&:=
-(\id\oti\id\oti\vph)(v_F\ga_F((a^*-1\oti K)vE)).
\end{align*}
Then we have
\[
\mu\oti F-v_F\ga_F(\mu)
=
A+B+C+D.
\]
Thus we obtain
\begin{align}
&\|v_F\ga_F(\mu)-\mu\oti F\|_{g(\ps\oti\vph)g^*}^2
\notag\\
&=
2|F|_\vph-2\Re(\ps\oti\vph)(g^*(\mu^*\oti F)v_F\ga_F(\mu)g)
\notag\\
&=
2\Re(\ps\oti\vph)(g^*(\mu^*\oti F)(\mu\oti F-v_F\ga_F(\mu))g)
\notag\\
&=
2\Re (\ps\oti\vph)(g^*(\mu^*\oti F)Ag)
+2\Re (\ps\oti\vph)(g^*(\mu^*\oti F)Bg)
\notag\\
&\quad+
2\Re (\ps\oti\vph)(g^*(\mu^*\oti F)Cg)
+2\Re (\ps\oti\vph)(g^*(\mu^*\oti F)Dg).
\label{v-ga-mu-F}
\end{align}
We will estimate these four terms.
Since $E$ centralizes $\ps\oti\vph$,
the first term is estimated as follows:
\begin{align}
&
|(\ps\oti\vph)(g^*(\mu^*\oti F)Ag)|
\notag\\
&=
|(\ps\oti\vph)
\left(
g^*(\mu^*\oti F) \cdot((\id\oti\vph)((a^*-1\oti K)vE)\oti F)g
\right)|
\notag\\
&=
|(\ps\oti\vph\oti\vph)
\left(
g_F^*(\mu^*\oti F\oti K)(a^*-1\oti K)vE)_{13}
g_F\right)|
\notag\\
&=
|(\ps\oti\vph\oti\vph)
\left(
g_F^*E_{13}(\mu^*\oti F\oti K)(a^*-1\oti K)vE)_{13}
g_F\right)|
\notag\\
&\leq
\|(\mu\oti F\oti K)E_{13}\|_{g(\ps\oti\vph)g^*\oti\vph}
\|(a^*-1\oti K)vE\|_{g(\ps\oti\vph)g^*\oti\vph}
\notag\\
&\leq
\sqrt{2}\ka(\al,u)^{1/2}\|F\|_\vph\|K\|_\vph^{-1}
\quad\mbox{by Lemma }\ref{lem:a1vE}\ (1).
\label{eq:A}
\end{align}

On the second,
by
$(\ta^\om\oti\id)(E)=b_{\ovl{E}}|K|_\vph^{-1}K+\ta_\om(p)e_\btr$,
we have
\begin{align}
&|(\ps\oti\vph)(g^*(\mu^*\oti F)Bg)|
\notag\\
&=
|(\ps\oti\vph\oti\vph)(g_F^*(\mu^*\oti F\oti K^\perp)
(\id\oti{}_F\De{}_{K^\perp})(vE)g_F)|
\notag\\
&\leq
\|(\mu^*\oti F\oti K^\perp)
(\id\oti{}_F\De{}_{K^\perp})(v)\|
\cdot
|F|_\vph
|(\id\oti{}_F\De{}_{K^\perp})(E)|_{\ps\oti\vph\oti\vph}
\quad\mbox{by Lemma }\ref{lem:g-ps-vph}
\notag\\
&\leq
b_{\ovl{E}}|K|_\vph^{-1}|F|_\vph
(\ps\oti\vph)(g_F^*(1\oti(\id\oti\vph)({}_F\De_{K^\perp}(K)))g_F)
\notag\\
&\quad+
\ta_\om(p)|F|_\vph
(\ps\oti\vph)(g_F^*(1\oti(\id\oti\vph)({}_F\De_{K^\perp}(e_\btr)))g_F)
\notag\\
&<
b_{\ovl{E}}|K|_\vph^{-1}\cdot\de|F|_\vph^2|K|_\vph
+
\ta_\om(p)|F|_\vph^2
\notag\\
&<2\de^{1/2}|F|_\vph^2.
\label{eq:B}
\end{align}

The third term is estimated as follows:
\begin{align}
&|(\ps\oti\vph)(g^*(\mu^*\oti F)Cg)|
\notag\\
&=
|(\ps\oti\vph\oti\vph)
\left(g_F^*(\mu^*\oti F\oti1)
((\id\oti{}_F\De_K)(vE)-v_F\ga_F(vE))g_F
\right)|
\notag\\
&\leq
|(\ps\oti\vph\oti\vph)(g_F^*(\mu^*\oti F\oti K)
(1-u)(\id\oti{}_F\De_K)(vE)g_F)|
\notag\\
&\quad
+
|(\ps\oti\vph\oti\vph)
\left(g_F^*(\mu^*\oti F\oti K)
u(\id\oti{}_F\De_K)(v)
\cdot((\id\oti{}_F\De_K)(E)-\ga_F(E))
g_F\right)|
\notag\\
&\leq
\|\mu\oti F\oti K\|_{g(\ps\oti\vph)g^*\oti\vph}
\cdot
\|(u-1\oti F\oti K)(\id\oti{}_F\De_K)(vE)\|_{g(\ps\oti\vph)g^*\oti\vph}
\notag\\
&\quad+
\|(\mu^*\oti F\oti1)
u(\id\oti{}_F\De)(v)\|
\\
&\quad
\cdot
|F|_\vph
|(\id\oti{}_F\De)(E)-\ga_F(E)|_{\ps\oti\vph\oti\vph}
\quad\mbox{by Lemma \ref{lem:g-ps-vph}}
\notag\\
&\leq
\sqrt{2}\ka(\al,u)^{1/2}\|F\|_\vph+5\de^{1/2}|F|_\vph^2
\quad\mbox{by Theorem }\ref{thm:Rohlin},\ \mbox{Lemma }\ref{lem:u1}.
\label{eq:C}
\end{align}

On the last term, we have
\begin{align}
&|(\ps\oti\vph)(g_F^*(\mu^*\oti F)Dg_F)|
\notag\\
&=
|(\ps\oti\vph\oti\vph)(g_F^*(\mu^*\oti F\oti K)
v_F\ga_F((a^*-1\oti K)vE)g_F)|
\notag\\
&\leq
\|v_F^*(\mu\oti F\oti K)\|_{g(\ps\oti\vph)g^*\oti\vph}
\cdot
\|\ga_F((a^*-1\oti K)vE)\|_{g(\ps\oti\vph)g^*\oti\vph}
\notag\\
&<
\sqrt{2}\ka(\al,u)^{1/2}\|F\|_\vph.
\quad\mbox{by Lemma }\ref{lem:a1vE}\ (2)
\label{eq:D}
\end{align}

Thus by (\ref{v-ga-mu-F}),
(\ref{eq:A}), (\ref{eq:B}), (\ref{eq:C}) and (\ref{eq:D}),
we obtain
\begin{align*}
\|v_F\ga_F(\mu)-\mu\oti F\|_{g(\ps\oti\vph)g^*\oti\vph}^2
&<2(\sqrt{2}(1+2\|F\|_\vph)\ka(\al,u)^{1/2}
+7\de^{1/2}|F|_\vph^2)
\\
&<
(9\ka(\al,u)^{1/2}+14\de^{1/4})|F|_\vph.
\end{align*}
\end{proof}

Next we show the following:
\begin{claim}
\[
\|(v_F\ga_F(\mu))^*-\mu^*\oti F\|_{g(\ps\oti\vph)g^*}^2
<
(20\de^{1/8}+6\ka(\al,u)^{1/2})\|F\|_\vph
\quad\mbox{for all }g\in\mG.
\]
\end{claim}
\begin{proof}
By simple computation, we have
\begin{align*}
&\|(v_F\ga_F(\mu))^*-\mu^*\oti F\|_{g(\ps\oti\vph)g^*}^2
\\
&=
2|F|_\vph
-2\Re
(\ps\oti\vph)(g_F^*v_F\ga_F(\mu)(\mu^*\oti F)g_F)
\\
&=
2|F|_\vph
-2\Re
(\ps\oti\vph\oti\vph)
(g_F^*v_F\ga_F(a^*)\ga_F(vE)(\mu^*\oti F\oti K)g_F)
\\
&=
2|F|_\vph
-2\Re
(\ps\oti\vph\oti\vph)
(g_F^*v_F\ga_F(a^*-1)\ga_F(vE)(\mu^*\oti F\oti K)g_F)
\\
&\quad
-2\Re
(\ps\oti\vph\oti\vph)
(g_F^*v_F\ga_F(vE)(\mu^*\oti F\oti K)g_F)
\\
&=
2|F|_\vph
-2\Re
(\ps\oti\vph\oti\vph)
(g_F^*v_F\ga_F(a^*-1)\ga_F(vE)(\mu^*\oti F\oti K)g_F)
\\
&\quad
-2\Re c_E^g
-2\Re
(\ps\oti\vph\oti\vph)
(g_F^*v_F\ga_F(v)(\id\oti{}_F\De_K)(E)(\mu^*\oti F\oti K)g_F)
\\
&=
2|F|_\vph
-2\Re
(\ps\oti\vph\oti\vph)
(g_F^*v_F\ga_F(a^*-1)\ga_F(vE)(\mu^*\oti F\oti K)g_F)
\\
&\quad
-2\Re c_E^g
-2\Re
(\ps\oti\vph\oti\vph)
(g_F^*u(\id\oti{}_F\De_K)(vEv^*a)g_F)
\quad\mbox{by (\ref{eq:Emu})}.
\end{align*}
Since
\begin{align*}
&|F|_\vph-(\ps\oti\vph\oti\vph)
(g_F^*u(\id\oti{}_F\De_K)(vEv^*a)g_F)
\\
&=
b_{\ovl{E}}|K|_\vph^{-1}
(\ps\oti\vph\oti\vph)
(g_F^*(1\oti F\oti K-u(\id\oti{}_F\De_K)(a_K))g_F)
\\
&\quad+
(\ps\oti\vph\oti\vph)
(g_F^*u(p\oti{}_F\De_{K}(e_\btr))g_F)
\\
&=
b_{\ovl{E}}|K|_\vph^{-1}
(\ps\oti\vph\oti\vph)
(g_F^*(1\oti F\oti K-u(\id\oti{}_F\De_K)(a_K))g_F)
\\
&\quad+
\ta_\om(p)(\ps\oti\vph\oti\vph)
(g_F^*a_F(1\oti{}_F\De_{K}(e_\btr))g_F),
\end{align*}
we have
\begin{align*}
&\|(v_F\ga_F(\mu))^*-\mu^*\oti F\|_{g(\ps\oti\vph)g^*}^2
\\
&\leq
2b_{\ovl{E}}|K|_\vph^{-1}
|(\ps\oti\vph\oti\vph)
(g_F^*(1-u(\id\oti{}_F\De_K)(a_K))g_F)|
\\
&\quad+
2\ta_\om(p)|(\ps\oti\vph\oti\vph)
(g_F^*a_F(1\oti{}_F\De_{K}(e_\btr))g_F)|
\\
&\quad+
2|
(\ps\oti\vph\oti\vph)
(g_F^*v_F\ga_F(a^*-1)\ga_F(vE)(\mu^*\oti F\oti K)g_F)|
\\
&\quad
+2|c_E^g|
\\
&\leq
2b_{\ovl{E}}|K|_\vph^{-1}f_5(\al,u)^{1/2}
\\
&\quad+
2\de^{1/2}|(\ps\oti\vph\oti\vph)
(g_F^*a_F(1\oti{}_F\De_{K}(e_\btr))g_F)|
\\
&\quad+
2|
(\ps\oti\vph\oti\vph)
(g_F^*v_F\ga_F(a^*-1)\ga_F(vE)(\mu^*\oti F\oti K)g_F)|
\\
&\quad
+18\de^{1/8}|F|_\vph
\\
&\leq
2b_{\ovl{E}}|K|_\vph^{-1}\ka(\al,u)^{1/2}+18\de^{1/8}
\\
&\quad+
2\de^{1/2}|(\ps\oti\vph\oti\vph)
(g_F^*(1\oti{}_F\De_K(e_\btr))g_F)|
\\
&\quad+
2\de^{1/2}|(\ps\oti\vph\oti\vph)
(g_F^*(a_F-1)(1\oti{}_F\De_{K}(e_\btr))g_F)|
\\
&\quad+
2|
(\ps\oti\vph\oti\vph)
(g_F^*v_F\ga_F(a^*-1)\ga_F(vE)(\mu^*\oti F\oti K)g_F)|
\\
&\leq
2|K|_\vph^{-1}\ka(\al,u)^{1/2}+18\de^{1/8}|F|_\vph+
2\de^{1/2}|F|_\vph
+
2\de^{1/2}f_6(\al,u)^{1/2}
\\
&\quad+
2|
(\ps\oti\vph\oti\vph)
(g_F^*v_F\ga_F(a^*-1)\ga_F(vE)(\mu^*\oti F\oti K)g_F)|.
\end{align*}
Hence we have to prove the last term is small.
Indeed, this is verified as
\begin{align*}
&|(\ps\oti\vph\oti\vph)
(g_F^*v_F\ga_F(a_K^*-1\oti K)\ga_F(vE)(\mu^*\oti F\oti K)g_F)|
\\
&\leq
(\ps\oti\vph\oti\vph)
(g_F^*v_F\ga_F(|a_K-1\oti K|^2)v_F^*g_F)^{1/2}
\\
&\quad\cdot
(\ps\oti\vph\oti\vph)
(g_F^*(\mu^*\oti F\oti K)\ga_F(E)(\mu^*\oti F\oti K)g_F)^{1/2}
\\
&=(\ps\oti\vph\oti\vph)
(g_F^*\al_F(|a_K-1\oti K|^2)g_F)^{1/2}
\\
&\quad\cdot
(\ps\oti\vph)
(g_F^*(\mu^*\oti F)\ga_F((\id\oti\vph)(E))(\mu^*\oti F)g_F)^{1/2}
\\
&<
f_7(\al,u;g)^{1/2}|F|_\vph^{1/2}.
\end{align*}
Therefore we have
\begin{align*}
&\|(v_F\ga_F(\mu))^*-\mu^*\oti F\|_{g(\ps\oti\vph)g^*}^2
\\
&<
2|K|_\vph^{-1}\ka(\al,u)^{1/2}+18\de^{1/8}+
2\de^{1/2}|F|_\vph
+
2\de^{1/2}\ka(\al,u)^{1/2}
+2\ka(\al,u)^{1/2}|F|_\vph^{1/2}
\\
&=
(20\de^{1/4}+6\ka(\al,u)^{1/2})|F|_\vph.
\end{align*}
Thus we have proved Claim 2.
\end{proof}
By Claim 1 and 2, we obtain (\ref{eq:ga-v-mu}).
\end{proof}

Let $(\al^\nu,u^\nu)$ be the perturbation of $(\al,u)$
by $(v^\nu)^*$, where $(v^\nu)_\nu$ is a representing
unitary sequence of $v$,
that is,
\[
\al^\nu=\Ad (v^\nu)^*\circ\al,
\quad
u^\nu=(v^\nu)_{12}^*\al((v^\nu)^*)u(\id\oti\De)(v^\nu).
\]
Then $u^\nu$ and the diagonal $a_\pi^\nu$
converges to $1$
as $\nu\to\om$ in the strong* topology.

\begin{lem}\label{lem:kaalu}
For a fixed $F,K,\mG$,
one has $\ka(\al^\nu,u^\nu;F,K,\mG)\to0$ as $\nu\to\om$.
\end{lem}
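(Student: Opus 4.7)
The plan is to reduce to showing that each of the seven component functions $f_k(\alpha^\nu, u^\nu; \cdot)$ vanishes along $\omega$; this suffices since $\mathcal{G}$, $\mathcal{F}$, $\mathcal{K}$ and the relevant index ranges are all finite, so $\kappa$ is a maximum over a finite parameter set. The key input is the statement immediately preceding the lemma: as $\nu \to \omega$, $u^\nu \to 1$ and the diagonal $a^\nu \to 1$ in the $\sigma$-strong-$*$ topology, the second assertion following from the first because the diagonal map $u \mapsto a$ is a $\sigma$-weakly continuous linear slicing on norm-bounded sets.

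For the four \emph{direct} terms $f_4, f_5, f_6, f_7$, the integrand is a bounded bilinear expression in $(u^\nu - 1)$ or $(a^\nu_\bullet - 1)$ paired against fixed bounded data (namely $g_F$, the central projections $F, K$, and the normal state $\psi \otimes \varphi \otimes \varphi$). Standard Cauchy--Schwarz inequalities $|\chi(xy)| \le \|x\|_\chi \|y\|$ together with $\|u^\nu - 1\|_\chi^\sharp \to 0$ and $\|a^\nu_\bullet - 1\|_\chi^\sharp \to 0$ give $f_k(\alpha^\nu, u^\nu) \to 0$ for $k = 4, 5, 6, 7$, the quadratic expressions being handled via $\|\,|u^\nu - 1|^2\,\|_\chi \le \|u^\nu - 1\| \cdot \|u^\nu - 1\|_\chi$. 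The three remaining cases $f_1, f_2, f_3$ are more delicate because they contain $\Phi^{\alpha^\nu \theta^{-1}}_\rho$ or $\Phi^{\alpha^\nu}_\rho$, whose definition depends on $\alpha^\nu = \Ad(v^\nu)^* \circ \alpha$, and the sequence $v^\nu$ itself is not strong-$*$ convergent. The remedy is to unfold $\Phi^{\alpha^\nu}_\rho(y) = (1 \otimes T_{\bar\rho, \rho}^*)(\alpha^\nu_{\bar\rho} \otimes \id_\rho)(y)(1 \otimes T_{\bar\rho, \rho})$, substitute $\alpha^\nu_{\bar\rho} = \Ad(v^\nu_{\bar\rho})^* \circ \alpha_{\bar\rho}$, and invoke the norm convergence from Theorem~\ref{thm:app-cocycle}: $(\chi \otimes \tr_\pi) \circ \Ad(v^\nu)^* \to \chi \circ \Phi^{\alpha \theta^{-1}}_\pi$ in $M_*$-norm for each fixed $\chi$ and $\pi$. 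Expanding $T_{\bar\rho, \rho}$ in its matrix-unit decomposition, each $f_k$ ($k = 1, 2, 3$) becomes a finite linear combination of pairings $\Omega^\nu(y^\nu)$, where $\Omega^\nu$ is a uniformly bounded sequence of normal functionals built from $\Ad(v^\nu)^*$ and $y^\nu$ is one of the bounded strong-$*$ null sequences $(\alpha_{\bar\rho} \otimes \id)(a^\nu_\rho - 1)$ or $(1 \otimes S^*)|u^\nu - 1|^2(1 \otimes T)$.

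The main obstacle is controlling these mixed pairings $\Omega^\nu(y^\nu)$: a bounded sequence of normal functionals paired against a bounded strong-$*$ null sequence need not converge to zero in general. For the diagonal matrix contributions the issue is resolved by the norm convergence $\Omega^\nu \to \Omega^\infty$ supplied by Theorem~\ref{thm:app-cocycle}, since then $|\Omega^\nu(y^\nu)| \le |\Omega^\infty(y^\nu)| + \|\Omega^\nu - \Omega^\infty\| \sup_\nu \|y^\nu\|$, and both terms tend to zero (the first by normality of $\Omega^\infty$, the second by the hypothesis $\sup_\nu \|y^\nu\| < \infty$). The off-diagonal contributions, where the approximation property of $v^\nu$ does not directly provide norm convergence, are handled by a further Cauchy--Schwarz estimate of the form $|\chi((v^\nu_{\bar\rho})^* B^\nu v^\nu_{\bar\rho})|^2 \le \|v^\nu_{\bar\rho}\|^2 \cdot \chi((v^\nu_{\bar\rho})^*(\alpha_{\bar\rho} \otimes \id)(|a^\nu_\rho - 1|^2) v^\nu_{\bar\rho})$: the right-hand side is itself a diagonal-type expression (positive, with $\sum_i \omega_{ii}$-averaging) whose convergence to zero again follows by Theorem~\ref{thm:app-cocycle} applied to the state $\chi \circ \Phi^{\alpha\theta^{-1}}_{\bar\rho}$ and the strong-null element $|a^\nu_\rho - 1|^2 \to 0$.
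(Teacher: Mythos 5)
Your reduction to the seven component functions and your remark that $f_1,f_2,f_3$ are the delicate ones (because they contain $\Phi^{\alpha^\nu}_\rho$, hence the unitaries $v^\nu$) are both correct, and you have correctly identified the potential danger: a bounded sequence of normal functionals paired against a bounded strong$*$-null sequence need not tend to zero. But the machinery you then mobilise — norm convergence of the pushed-forward functionals via Theorem~\ref{thm:app-cocycle}, the diagonal/off-diagonal split in $B(H_\rho)$, and the Cauchy--Schwarz reduction for the off-diagonal part — is a detour around a difficulty that disappears if you instead recall what $M^\omega$ \emph{is}: by definition $M^\omega=\cM^\omega/\cI_\omega$, where $\cM^\omega$ is the \emph{normalizer} of $\cI_\omega$ in $\ell^\infty(M)$. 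Since the matrix entries of the unitary $v$ live in $M^\omega$, the representing sequence $(v^\nu)_\nu$ has its entries in $\cM^\omega$, and conjugation by $v^\nu$ therefore maps $\cI_\omega$ (the strong$*$-null sequences) into itself. This is the content of the phrase ``by definition of $M^\omega$'' in the paper.

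Once this is in hand, the proof collapses. From $u^\nu-1\to 0$ and $a^\nu_\rho-1\to 0$ in the strong$*$ topology one gets immediately
\[
v^\nu_\rho\,(a^\nu_\rho-1)\,(v^\nu_\rho)^*\longrightarrow 0,
\qquad
v^\nu\,(1\otimes S^*)(u^\nu-1)(1\otimes T)\,(v^\nu)^*\longrightarrow 0
\]
strong$*$. Feeding these through the identity $\Phi^{\alpha^\nu}_\rho=\Phi^\alpha_\rho\circ\Ad v^\nu$ (which you also noted), every one of $f_1,\dots,f_7$ becomes a \emph{fixed} normal functional (or a fixed normal map followed by a fixed normal functional) evaluated against a bounded strong$*$-null sequence, which tends to zero trivially. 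There is then no need to invoke the norm convergence from Theorem~\ref{thm:app-cocycle}, no need to distinguish diagonal from off-diagonal matrix contributions, and no need for the Cauchy--Schwarz reduction — none of those tools is wrong, but they are compensating for having placed the conjugation on the functional side instead of on the argument side. So treat this as a gap in the sense that you missed the intended (and much shorter) mechanism; the alternative route you sketch would require you to verify carefully, for each $f_k$, that the functional sitting against $\Ad(v^\nu)^*$ genuinely has the $(\chi\otimes\mathrm{tr}_\pi)$-shape needed for Theorem~\ref{thm:app-cocycle}, and that the positive expression produced by Cauchy--Schwarz is itself of that shape — neither of which is automatic once $\Phi^\alpha_\rho$ is also present in the expression.
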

\begin{proof}
It is easy to see that the values of the functions
$f_1$, $f_5$, $f_6$ and $f_7$ converge to 0 as $\nu\to\om$.
Note that $\Ph_\rho^{\al^\nu}=\Ph_\rho^\al\circ\Ad v^\nu$.
Since $u^\nu-1\to0$ as $\nu\to\om$,
so does $v^\nu(1\oti S^*)(u^\nu-1)(1\oti T)(v^\nu)^*$
by definition of $M^\om$.
Hence $f_2$ converges to 0.
Likewise, we can show the remaining
$f_3$ and $f_4$ are also converging to 0.
Therefore, we have $\ka(\al^\nu,u^\nu)\to0$
as $\nu\to\om$.
\end{proof}

\subsection{2-cohomology vanishing}
We prove the 2-cohomology vanishing theorem.
Let $F_n,K_n$ be given as \cite[p. 537]{MT1} but with
$(5\de_n^{1/16}+3\de_{n+1}^{1/4})\|F_n\|_\vph<\de_{n-1}/2$.

\begin{thm}\label{thm:2cohvan}
\index{2-cohomology vanishing theorem}
Let $(\al,u)$
be a centrally free cocycle action of $\bhG$ on a McDuff factor
$M$ such that Assumption \ref{ass:app-cent} is fulfilled.
Then the 2-cocycle $u$ is a coboundary.
Moreover, assume that $\ka(\al,u;F_{n+1},K_{n+1},\mG)<\de_{n+1}$
for some $n\geq2$.
Then there exists a unitary $w\in M\oti\lhG$ such that
\begin{enumerate}
\item 
$(w\oti1)\al(w)u(\id\oti\De)(w^*)=1$,

\item
$\|w_{F_n}-1\oti F_n\|_{g(\ph\oti\vph)g^*}<\de_{n-2}$ for all $g\in\mG$.
\end{enumerate}
\end{thm}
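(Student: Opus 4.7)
The plan is to run a Bratteli--Elliott--Evans--Kishimoto style intertwining argument with Theorem \ref{thm:almvan} as the one-step approximation engine. I address the ``moreover'' part first; the bare coboundary statement then follows because, by Lemma \ref{lem:kaalu}, after a preliminary perturbation of $(\al,u)$ by a unitary representative $v^\nu \in M\oti\lhG$ of the $v$ from Theorem \ref{thm:app-cocycle} (choosing $\nu$ close enough to $\om$), the quantity $\ka(\al^\nu,u^\nu;F_{n+1},K_{n+1},\mG)$ drops below $\de_{n+1}$, at which point the ``moreover'' conclusion applies.

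To prove the ``moreover'' part I would inductively construct cocycle actions $(\al^{(k)}, u^{(k)})$ and unitaries $\tilde\mu_k \in M\oti\lhG$ with $(\al^{(0)},u^{(0)}) = (\al,u)$, such that $(\al^{(k+1)}, u^{(k+1)})$ is the $\tilde\mu_k^*$-perturbation of $(\al^{(k)}, u^{(k)})$ and $\ka(\al^{(k)}, u^{(k)}; F_{n+k+1}, K_{n+k+1}, \mG) < \de_{n+k+1}$. At step $k$, apply Theorem \ref{thm:app-cocycle} to obtain a cocycle killer $v_k \in M^\om\oti\lhG$ and form $\ga_k = \Ad v_k^*\circ (\al^{(k)})^\om$. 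Theorem \ref{thm:Rohlin} applied with parameters $(F_{n+k+1}, K_{n+k+1}, \mG, \de_{n+k+1})$ yields a Rohlin projection $E_k$, from which I define the Shapiro unitary $\mu_k = (\id\oti\vph)(a_k^* v_k E_k)$. Theorem \ref{thm:almvan} then gives
\[
\|(v_k)_{F_{n+k+1}}\,\ga_{k,F_{n+k+1}}(\mu_k) - \mu_k\oti F_{n+k+1}\|_{g(\ps\oti\vph)g^*}^\sharp < (5\de_{n+k+1}^{1/8} + 5\de_{n+k+1}^{1/4})\|F_{n+k+1}\|_\vph,
\]
exhibiting $v_k$ as approximately a $\ga_k$-coboundary of $\mu_k^*$. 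Choosing a unitary representative $\tilde\mu_k \in M\oti\lhG$ of $\mu_k$ along a subsequence $\nu_k \to \om$ (via the fast reindexation of Lemma 3.10 in \cite{MT1}), the perturbation by $\tilde\mu_k^*$ gives a cocycle action whose new 2-cocycle is controlled by the right-hand side, and Lemma \ref{lem:kaalu} applied to the perturbed action secures the hypothesis $\ka < \de_{n+k+2}$ at the next scale, closing the induction.

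The telescoping inequalities $(5\de_k^{1/8} + 5\de_{k+1}^{1/4})\|F_k\|_\vph < \de_{k-1}/2$ fixed at the outset then force the (infinite) product $w = \tilde\mu_0\tilde\mu_1\cdots$, ordered suitably, to converge strongly$^*$ to a unitary in $M\oti\lhG$, and the limit of the partial-product cocycle identities yields $(w\oti1)\al(w)u(\id\oti\De)(w^*)=1$; the summable telescope also delivers estimate (2) with constant $\de_{n-2}$. The main obstacle is the simultaneous bookkeeping required at each inductive step: the representative $\tilde\mu_k$ must be chosen along $\nu_k \to \om$ so that three things hold at once --- $\tilde\mu_k$ is close to $1$ on $F_{n+k}$ in the $g(\ph\oti\vph)g^*$-norm uniformly over $g \in \mG$; the perturbed cocycle $u^{(k+1)}$ is smaller than $u^{(k)}$ on the next scale $F_{n+k+1}$; and the $\ka$-invariant of $(\al^{(k+1)}, u^{(k+1)})$ at level $(F_{n+k+2}, K_{n+k+2})$ falls below $\de_{n+k+2}$. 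The feasibility of this joint choice rests on the convergence $u^\nu - 1, a^\nu - 1, v^\nu - 1 \to 0$ in the appropriate strong$^*$ senses together with Lemma \ref{lem:kaalu}, and the delicate matching of the rapidly decreasing sequences $\{\de_k\}$, $\{F_k\}$, $\{K_k\}$ selected at the start of the subsection.
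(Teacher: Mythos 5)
Your overall strategy matches the paper's: iterate Theorem~\ref{thm:almvan} at successively finer scales $(F_m, K_m, \de_m)$, telescope the approximate coboundaries, and use Lemma~\ref{lem:kaalu} to feed the $\ka$-hypothesis at the next scale. But there is a genuine gap in the step you wave at most quickly. You propose perturbing $(\al^{(k)},u^{(k)})$ by $\tilde\mu_k^*$ where $\tilde\mu_k$ is a representative of the Shapiro unitary $\mu_k = (\id\oti\vph)(a_k^*v_kE_k)$. But $\mu_k$ lives in $M^\om$ (it is a \emph{scalar} in the Kac-algebra direction), and perturbing a cocycle action by a unitary in $M\oti\C\subset M\oti\lhG$ merely conjugates the $*$-homomorphism and does \emph{not} shrink the $2$-cocycle. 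What does kill the $2$-cocycle --- exactly, in $M^\om\oti\lhG$ --- is the combined element $\wdt w_k := (\mu_k\oti1)(v_k)^*(\al^{(k)})^\om(\mu_k^*) = (\mu_k\oti1)\ga_{k}(\mu_k^*)(v_k)^*$, which is a bona fide unitary of $M^\om\oti\lhG$ satisfying $\wdt w_{k,12}(\al^{(k)})^\om(\wdt w_k)u^{(k)}(\id\oti\De)(\wdt w_k^*)=1$ (because $v_k$ trivializes $u^{(k)}$ after an $\al^{(k)}$-cocycle, and $(\mu_k\oti1)\ga_k(\mu_k^*)$ is an honest $\ga_k$-coboundary), and whose $F_m$-slice is close to $1\oti F_m$ precisely because of the Theorem~\ref{thm:almvan} estimate. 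It is a representative of $\wdt w_k$ --- not of $\mu_k$ --- that you must use as the step-$k$ unitary $w^k\in M\oti\lhG$; your ``exhibiting $v_k$ as approximately a $\ga_k$-coboundary of $\mu_k^*$'' shows you see the right relation, but the subsequent ``perturbation by $\tilde\mu_k^*$'' applies the wrong object.

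A second omission: to get estimate (2) out of the telescope you must compare $\ovl w^{m+1}$ and $\ovl w^{m}$ in the original $g(\ph\oti\vph)g^*$-norms, which after unwinding amounts to bounding $\|w^{m+1}_{F_n}-1\oti F_n\|$ in norms twisted by the previous partial products $\ovl w^m_\pi g$. This forces a growing finite set $\mG_m$ (the paper takes $\mG_m=\{1\}\cup\mG\cup\bigcup_{g\in\mG}\{w^{m-1}\cdots g\}$), against which \emph{all} the $\ka$-hypotheses and Theorem~\ref{thm:almvan} estimates at step $m$ must be posed; controlling the step only ``uniformly over $g\in\mG$'' is not enough for the telescope to close. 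You gesture at the ``simultaneous bookkeeping required at each inductive step'' but do not record this mechanism, and without it the Cauchy and final $\de_{n-2}$ estimates do not follow.
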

\begin{proof}
We show it is possible to construct a family of
finite subsets
$\mG_m\subs M\oti \lhG$,
cocycle actions $\{(\al^{m},u^{m})\}_{m\geq n}$
and unitaries $\{w^{m}\}_{m\geq n}$
such that
\begin{enumerate}[(1,$m$)]
\item
$\displaystyle\mG_m
=\{1\}\cup \bigcup_{g\in\mG}\{w^{m-1}w^{m-2}\cdots w^1 g\}\cup\mG$;

\item
$\ka(\al^m,u^m;F_m,K_m,\mG_m)<\de_{m+1}$;

\item
$\|w_{F_m}^m-1\oti F_m\|_{g(\ph\oti\vph)g^*}^\sharp<\de_{m-1}$
for $g\in\mG_m$;

\item
$\al^{m+1}:=\Ad w^{m}\circ\al^{m}$,
$u^{m+1}=(w^{m}\oti1)\al^{m}(w^m)u^m (\id\oti\De)(w^{m*})$.
\end{enumerate}

We suppose that the construction
of $(\al^{m},u^{m})$ and $w^{m-1}$
has been done.
Take $v^m\in M^\om\oti\lhG$
associated with $(\al^{m},u^{m})$ as before.
Then by Theorem \ref{thm:almvan} for $(\al^{m},u^{m})$,
we obtain a unitary $\mu_m\in M^\om$ with
\[
\|\ga_{F_{m}}^m(\mu_m^*)(v_{F_m}^m)^*
-\mu_m^*\oti F_{m}\|_{g(\ps\oti\vph)g^*}
<
(5\de_m^{1/16}+3\ka(\al^m,u^m;F_m,K_m,\mG_m)^{1/4})\|F_m\|_\vph
\]
for all $g\in\mG_m$,
where $\ga^m=\Ad (v^m)^*\circ (\al^{m})^\om$.
We set
$\wdt{w}:=(\mu_m\oti1)(v^m)^*(\al^\nu)^\om(\mu_m^*)$,
and then we have
\begin{align*}
\|\wdt{w}_{F_{m}}-1\oti F_{m}\|_{g(\ps\oti\vph)g^*}
&=
\|(\mu_{m}\oti1)\ga_{F_{m}}(\mu_{m}^*)(v_{F_{m}}^m)^*
-1\oti F_{m}\|_{g(\ps\oti\vph)g^*}
\\
&<\de_{m-1},
\end{align*}
and
\[
1=
\wdt{w}_{12}(\al^{m})^\om(\wdt{w})u^m(\id\oti\De)(\wdt{w}^*).
\]
Then in a representing unitary sequence of $\wdt{w}$,
we can find $w^m$ with the desired properties
employing Lemma \ref{lem:kaalu},
and the induction is done.

We let $\ovl{w}^m:=w^mw^{m-1}\cdots w^n$.
Then it is trivial that $(\al^m,u^m)$ is the perturbation
of $(\al,u)$ by $\ovl{w}^m$.
Furthermore, $\{\ovl{w}^m\}_{m\geq n}$
is a strong*-Cauchy sequence and converges to the limit $w$,
which is a solution of $w_{12}\al(w)u(\id\oti\De)(w^*)=1$.
Indeed, if we take $m_0\in\N$ with $m_0\geq n$
and $\pi\in\mF_{m_0}$ for fixed $\pi\in\IG$,
then for $m\geq m_0$ and $g\in\mG$,
\begin{align*}
\|\ovl{w}_{F_{m_0}}^{m+1}-\ovl{w}_{F_{m_0}}^{m}\|_{g(\ps\oti \vph_\pi)g^*}
&=
\|w_{F_{m_0}}^{m+1}-1\oti F_{m_0}\|_{\ovl{w}_\pi^{m}g(\ps\oti \vph_\pi)(\ovl{w}_\pi^{m}g)^*}
\\
&<\de_m,
\end{align*}
and
\[
\|(\ovl{w}_{F_{m_0}}^{m+1})^*-(\ovl{w}_{F_{m_0}}^{m})^*\|_{g(\ps\oti \vph_\pi)g^*}
=
\|(w_{F_{m_0}}^{m+1})^*-1\oti F_{m_0}\|_{g(\ps\oti \vph_\pi)g^*}
<\de_m.
\]
Hence
$\|\ovl{w}_{F_{m_0}}^{m+1}-\ovl{w}_{F_{m_0}}^{m}\|_{g(\ps\oti\vph)g^*}^\sharp<\de_m$,
and $\{\ovl{w}_\pi^{m+1}\}$ is a Cauchy sequence for each $\pi\in\IG$.
Moreover we have
\begin{align*}
\|\ovl{w}_{F_n}^{m}-1\oti F_n\|_{g(\ps\oti\vph)g^*}^\sharp
&<
\|\ovl{w}_{F_n}^{m}-\ovl{w}_{F_n}^{m-1}\|_{g(\ps\oti\vph)g^*}^\sharp
+
\|\ovl{w}_{F_n}^{m-1}-\ovl{w}_{F_n}^{m-2}\|_{g(\ps\oti\vph)g^*}^\sharp
\\
&\quad
+\cdots+
\|\ovl{w}_{F_n}^{n+1}-w_{F_n}^{n}\|_{g(\ps\oti\vph)g^*}^\sharp
+
\|w_{F_n}^{n}-1\oti F_n\|_{g(\ps\oti\vph)g^*}^\sharp
\\
&<\de_{m-1}+\de_{m-2}+\cdots+\de_{n-1}
\\
&<\de_{n-1}(1+1/2+\cdots)=2\de_{n-1}<\de_{n-2}.
\end{align*}
\end{proof}

\begin{lem}\label{lem:CTinj}
Let $(\al,u)$ be
a centrally free cocycle action of $\bhG$ on an injective factor $M$.
Then the following statements hold:
\begin{enumerate}
\item
$(\al,u)$ has the Connes--Takesaki module;
\item
$(\al,u)$ satisfies Assumption \ref{ass:app-cent}.
\end{enumerate}
\end{lem}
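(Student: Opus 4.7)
The plan is to prove (1) first and then deduce (2) directly from Example \ref{ex:modulo-auto}. Since $M$ is an injective factor admitting a non-trivial centrally free action of $\bhG$, it is not of type I and is hence McDuff, so $M_\om$ is of type II$_1$. Together with the standing hypothesis of central freeness, this covers two of the three clauses of Assumption \ref{ass:app-cent}, so (2) will follow at once once (1) is established: set $\th_\pi := s(\mo(\al_\pi))$ where $s \col \Aut_\th(\meC_M) \to \Aut(M)$ is the homomorphic section of \cite{ST}; then $\mo(\al_\pi\th_\pi^{-1}) = \id$ and \cite[Theorem A.6 (1)]{MT3} supplies approximate innerness of $\al_\pi\th_\pi^{-1}$.

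For (1), I would fix $\pi \in \IG$ and work with the endomorphism $\be_\pi$ associated to $\al_\pi$ by choosing a Hilbert space $K_\pi \subs M$ of dimension $d(\pi)$. By the Connes--Takesaki relative commutant theorem for injective factors, $M' \cap \tM = \meC_M = Z(\tM)$, so the question reduces to showing that for every irreducible summand $\rho$ of $\be_\pi$ one has $\trho(Z(\tM)) \subset Z(\tM)$. I would invoke the structural dichotomy of Theorem \ref{thm:rho-orho}: it suffices to verify that $\rho\orho$ and $\orho\rho$ are modular, which by Theorem \ref{thm:rho-orho} (3) then forces $\rho$ to factor as a modular endomorphism composed with an automorphism and yields the Connes--Takesaki module.

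The key step is to use the central freeness hypothesis to rule out any non-trivial modular piece that could obstruct this. Specifically, central freeness of $\al$ means $\al_\pi^\om$ has no non-zero intertwiner to $\id \oti 1_\pi$ on $M_\om$ for $\pi \neq \btr$. Since a modular endomorphism comes from an extended modular automorphism and hence acts trivially on $M_\om$ (because $\oInt(M)$ collapses on the asymptotic centralizer), any modular summand $\sigma$ of $\be_\pi\be_\opi$ would, via Frobenius reciprocity, produce a non-zero intertwiner between $\sigma\be_\pi$ and $\be_\pi$ which, after passing to $M_\om$ where $\sigma^\om = \id$, gives a central witness to the triviality of $\al_\pi^\om$ on $M_\om$ of the kind prohibited by proper central non-triviality. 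The same mechanism prevents $\be_\pi$ itself from containing a modular summand when $\pi \neq \btr$. Combining these rules out the obstructions in Theorem \ref{thm:rho-orho}, so every irreducible in $\be_\pi$ has the Connes--Takesaki module, and hence so does $\al_\pi$.

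The hard part will be making the centrally-free-to-sector-theoretic translation fully rigorous: passing from a non-zero intertwiner realizing a modular summand in $M$ to a non-vanishing element of $M_\om$ that contradicts proper central non-triviality requires careful use of the fact that modular automorphisms are centrally trivial on $M_\om$ (cf.\ \cite[Theorem 1]{KST}) together with a Fast Reindexation Trick analogous to \cite[Lemma 3.10]{MT1} so that the intertwiner can be arranged to commute with a prescribed separable subalgebra. Everything else in the proof is formal: Theorem \ref{thm:rho-orho} handles the structural decomposition, Example \ref{ex:modulo-auto} handles approximate innerness modulo automorphism, and the injectivity-plus-McDuff observation handles the $M_\om$ clause.
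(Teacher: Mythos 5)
Your treatment of part (2) matches the paper: once (1) is in hand, set $\th_\pi := s(\mo(\al_\pi))$ with $s$ the homomorphic section from \cite{ST} as in Example \ref{ex:modulo-auto}, and then $\al_\pi\th_\pi^{-1}$ is approximately inner by the cited results of \cite{MT3}. Your observation that a non-type-I injective factor is McDuff and hence $M_\om$ is type II$_1$ is also correct. So (2) is fine.

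Your proposal for part (1), however, contains a genuine error, and the error is internal to the argument itself, not just a matter of rigor. You claim that ``it suffices to verify that $\rho\orho$ and $\orho\rho$ are modular'' in order to invoke Theorem \ref{thm:rho-orho}. But for a centrally free action this is simply false: for $\pi\neq\btr$ and $\rho\prec\be_\pi$ irreducible, $\rho\orho$ is not modular (for instance, when $M$ is the injective type II$_1$ factor and $\al$ is a free action, $\be_\pi\be_\opi$ contains non-inner irreducibles, and in the II$_1$ case modular $=$ inner automorphism). Theorem \ref{thm:rho-orho} characterizes exactly those $\rho$ that factor as a modular endomorphism composed with an automorphism; that is a far stronger conclusion than possessing the Connes--Takesaki module, and it does not hold for general summands of a centrally free action. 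Worse, the ``key step'' you then describe goes the opposite direction: you try to \emph{rule out} modular summands of $\be_\pi\be_\opi$. If you succeeded, you would be showing $\rho\orho$ is \emph{not} modular, which destroys rather than establishes the hypothesis you said you needed. (It also cannot succeed as stated, because $\id$ is always a modular summand of $\be_\pi\be_\opi$.) The two halves of the proposal are mutually inconsistent.

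The paper's actual argument for (1) is a direct two-step citation that does not pass through Theorem \ref{thm:rho-orho} at all: by \cite[Theorem A.6 (2)]{MT3}, proper central non-triviality of each $\al_\pi$, $\pi\neq\btr$, on an injective factor is equivalent to $\al_\pi$ having no modular summand, which means the canonical extension $\tal$ is a free cocycle action on the core $\tM$; then \cite[Lemma 2.9]{MT1} says a free cocycle action of $\bhG$ on a von Neumann algebra preserves its center. That second lemma is the correct tool here. The structural content you were reaching for (that central freeness excludes modular summands) is essentially the first citation, but it is used to conclude freeness of $\tal$ and then $\tal(Z(\tM))\subseteq Z(\tM)$, not to feed the hypotheses of Theorem \ref{thm:rho-orho}.
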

\begin{proof}
(1).
Thanks to \cite[Theorem A.6 (2)]{MT3}, $\al_\pi$ is non-modular for all $\pi\neq\btr$,
that is, the canonical extension $\tal$ on $\tM$
is a free action.
Thus $\tal$ preserves the center $Z(\tM)$ by \cite[Lemma 2.9]{MT1}.

(2).
Recall a homomorphic section $s\col \Aut_\th(Z(\tM))\to \Aut(M)$
for the Connes--Takesaki module map stated in Example \ref{ex:modulo-auto}.
Then the composed map $\th_\pi:=s(\mo(\al_\pi))$
gives $\mo(\al_\pi\th_\pi^{-1})=\id$.
Thus, from \cite[Theorem A.6 (1), Proposition A.10]{MT3},
it turns out that
$\al_\pi\th_\pi^{-1}$ is approximately inner.
\end{proof}

\begin{cor}
Let $(\al,u)$ be
a centrally free cocycle action of $\bhG$ on an injective factor $M$.
Then the 2-cocycle $u$ is a coboundary.
Moreover,
assume for a fixed $n\geq2$,
the inequality
$\ka(\al,u;F_{n+1},K_{n+1},\mG)<\de_{n+1}$ holds.
Then there exists a unitary $w\in M\oti\lhG$ such that
\begin{enumerate}
\item 
$(w\oti1)\al(w)u(\id\oti\De)(w^*)=1$,

\item
$\|w_{F_n}-1\oti F_n\|_{g(\ph\oti\vph)g^*}^\sharp<\de_{n-2}$
for all $g\in\mG$.
\end{enumerate}
\end{cor}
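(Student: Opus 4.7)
The plan is essentially to observe that this corollary is a direct consequence of Theorem~\ref{thm:2cohvan} once one verifies that the hypothesis of that theorem, namely Assumption~\ref{ass:app-cent}, is automatically satisfied for a centrally free cocycle action on an injective factor. Lemma~\ref{lem:CTinj} has already established (i) that such an action has the Connes--Takesaki module, and (ii) that it is approximately inner modulo an automorphism: the homomorphic section $s\col\Aut_\th(Z(\tM))\to\Aut(M)$ of the module map (due to Sutherland--Takesaki, cited in Example~\ref{ex:modulo-auto}) produces a homomorphism $\th_\pi:=s(\mo(\al_\pi))$ such that $\al_\pi\th_\pi^{-1}$ is approximately inner by \cite[Theorem A.6 (1), Proposition A.10]{MT3}.

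The remaining ingredient needed to invoke Theorem~\ref{thm:2cohvan} is that $M_\om$ be of type~II$_1$. For an injective factor $M$ that is not of type~I this is standard: by the classification of injective factors (Connes, Haagerup), $M$ is McDuff, i.e.\ $M\cong M\oti R$ with $R$ the injective type~II$_1$ factor, and hence $M_\om$ contains $R_\om$ and is of type~II$_1$. If $M$ were of type~I there would be no non-trivial centrally free action of $\bhG\neq\{\btr\}$, so this case is vacuous. With all three bullet points of Assumption~\ref{ass:app-cent} in hand, the first assertion (that $u$ is a coboundary) follows immediately from the first statement of Theorem~\ref{thm:2cohvan}.

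For the ``moreover'' clause, the quantitative conclusion is read off verbatim from Theorem~\ref{thm:2cohvan}: once $\ka(\al,u;F_{n+1},K_{n+1},\mG)<\de_{n+1}$, the inductive construction in that proof produces a unitary $w\in M\oti\lhG$ with $w_{12}\al(w)u(\id\oti\De)(w^*)=1$ and $\|w_{F_n}-1\oti F_n\|_{g(\ph\oti\vph)g^*}^\sharp<\de_{n-2}$ for all $g\in\mG$. No new analytical work is required beyond the verification above.

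Since the argument is a near-trivial combination of earlier results, there is no real obstacle; the only point worth spelling out carefully in the write-up is the reduction of the McDuff property for $M$ to the known structure of injective factors, to justify that Assumption~\ref{ass:app-cent} applies without any extra hypothesis on $M$.
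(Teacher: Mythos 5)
Your proposal is correct and matches the paper's (implicit) argument: the corollary is indeed an immediate combination of Lemma~\ref{lem:CTinj}, which verifies Assumption~\ref{ass:app-cent} for a centrally free cocycle action on an injective factor, with Theorem~\ref{thm:2cohvan}. Your explicit justification that $M_\om$ is of type~II$_1$ — noting that a centrally free action forces $M_\om\neq\C$, hence $M$ McDuff — is a worthwhile clarification that the paper leaves unstated, but it does not change the route taken.
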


\subsection{Intertwining cocycles}

Next we study an intertwining cocycle.
Let $(\al,u)$ be a cocycle action of $\bhG$ on $M$.
For $\pi\in\IG$, we define the map
$\al_\pi\in B(M_*, (M\oti B(H_\pi))_*)$
by $\al_\pi(\chi)=\chi\circ\Ph_\pi^\al$.
Then a perturbed cocycle action $(\al^v,u')$ by
$v$ yields
$\al_\pi^v(\chi)=v_\pi\cdot \al_\pi(\chi)\cdot v_\pi^*$.

\begin{lem}\label{lem:coc-per}
Let $M$ be a von Neumann algebra such that $M_\om$ is of type II$_1$.
Let $\al,\be$ be actions of $\bhG$ on $M$.
Suppose that there exists a homomorphism
$\th\col\IG\ra\Aut(M)$ such that $\al_\pi\th_\pi^{-1}$ and
$\be_\pi\th_\pi^{-1}$ are approximately inner for each $\pi\in\IG$.
Then there exists an $\al^\om$-cocycle $W\in M^\om\oti\lhG$
whose unitary representing sequence $(W^\nu)_\nu$
satisfies the following norm convergence:
\[
\lim_{\nu\to\om}\Ad W^\nu(\al_\pi(\chi))
=\be_\pi(\chi)
\quad\mbox{for all }\chi\in M_*,\pi\in\IG.
\]
In particular, we have $\be=\Ad W\circ\al$ on $M$.
\end{lem}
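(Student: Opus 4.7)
My approach is to invoke Theorem \ref{thm:app-cocycle} twice and multiply the resulting unitaries. Since $\al\th^{-1}$ is approximately inner by hypothesis, the pair $(\al,1)$ satisfies Assumption \ref{ass:app-cent} (with trivial 2-cocycle), so Theorem \ref{thm:app-cocycle} produces a unitary $U^\al\in M^\om\oti\lhG$ with
\[
(\id\oti\De)(U^\al)=\al^\om(U^\al)\,U^\al_{12},
\qquad
\lim_{\nu\to\om}(\chi\oti\tr_\pi)\circ\Ad(U^{\al\nu})^*=\chi\circ\Ph_\pi^{\al\th^{-1}}
\]
in norm for all $\chi\in M_*$ and $\pi\in\IG$. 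Identical considerations yield a unitary $U^\be\in M^\om\oti\lhG$ with the analogous properties for $\be$. Note that both reductions use the \emph{same} homomorphism $\th$.

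Next, I set $W:=U^\be(U^\al)^*\in M^\om\oti\lhG$. The intuition is that $\Ad(U^\al)^*$ carries $\al$ asymptotically to $\th$ (this is essentially the content of Lemma \ref{lem:ta-ga}) while $\Ad U^\be$ carries $\th$ asymptotically to $\be$, so $\Ad W$ should carry $\al$ to $\be$. To verify that $W$ is an $\al^\om$-cocycle, I expand both sides of the required identity $(\id\oti\De)(W)=W_{12}\,\al^\om(W)$ using the cocycle-type relations for $U^\al$ and $U^\be$; after cancellation the condition reduces to
\[
\be^\om(U^\be)=W_{12}\,\al^\om(U^\be)\,W_{12}^*,
\]
which is precisely the statement that the intertwining $\be^\om=\Ad W\circ\al^\om$ holds on $U^\be$. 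If necessary, any residual $\al^\om$-2-cocycle obstruction that arises can be absorbed by a coboundary in $M_\om\oti\lhG$ using Theorem \ref{thm:2cohvan}, applied to the 2-cocycle measuring the failure.

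For the main assertion—the norm convergence $\Ad W^\nu(\al_\pi(\chi))\to\be_\pi(\chi)$ in $(M\oti B(H_\pi))_*$—I write
\[
\Ad W^\nu(\al_\pi(\chi))(y)
=\chi\bigl(\Ph_\pi^\al\bigl(U^{\al\nu}(U^{\be\nu})^*\,y\,U^{\be\nu}(U^{\al\nu})^*\bigr)\bigr),
\qquad y\in M\oti B(H_\pi).
\]
Using the definition $\Ph_\pi^\al(\cdot)=(1\oti T_{\opi,\pi}^*)(\al_\opi\oti\id)(\cdot)(1\oti T_{\opi,\pi})$ and the slicing identity
\[
(1\oti T_{\opi,\pi}^*)(\al_\opi\oti\id)(U^\al_\pi)(U^\al_\opi\oti 1_\pi)(1\oti T_{\opi,\pi})=1
\]
extracted from the cocycle-type relation for $U^\al$, the two $U^\al$-factors can be absorbed into $\Ph_\pi^\al$, transforming the expression into one depending only on $U^\be$ (with trace-level error going to zero). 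A second application of the same slicing identity—now for $U^\be$—combined with the Theorem \ref{thm:app-cocycle} convergence for $U^\be$ yields the limit $\chi\circ\Ph_\pi^\be(y)=\be_\pi(\chi)(y)$, uniformly in $\|y\|\le 1$.

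The hardest step will be the slicing/absorption argument in the last paragraph: keeping track of which tensor leg each $U^\al$- or $U^\be$-factor lives in, and ensuring that the trace-level convergence of Theorem \ref{thm:app-cocycle} suffices to control $\Ph_\pi^\al$ (rather than merely $\id\oti\tr_\pi$) after the rearrangement. The ``in particular'' conclusion $\be=\Ad W\circ\al$ on $M$ then follows immediately, because the established functional convergence applied to $y=\al_\pi(x)$ and $y=\be_\pi(x)$ for $x\in M$ forces the identity $\be(x)=W\al(x)W^*$ in $M^\om\oti\lhG$.
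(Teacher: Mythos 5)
Your proposed $W := U^\be(U^\al)^*$ correctly intertwines $\al$ and $\be$ on $M$ (both $\Ad (U^\al)^*$ and $\Ad (U^\be)^*$ carry the respective actions to $\th\oti 1$ pointwise on $M$), but the verification that $W$ is an $\al^\om$-cocycle has a genuine gap, and the suggested repair does not close it. As you observe, the cocycle identity reduces to
\[
\be^\om(U^\be)=W_{12}\,\al^\om(U^\be)\,W_{12}^*,
\]
i.e., to the statement that $\Ad W^*\circ\be^\om$ and $\al^\om$ agree on the matrix entries of $U^\be$. These two actions on $M^\om$ do agree on $M$, but $U^\be$ has entries in $M^\om\setminus M$, and $\al^\om,\be^\om$ are unrelated there: the representing sequences $(U^{\al\nu})_\nu$ and $(U^{\be\nu})_\nu$ come from two independent applications of Theorem \ref{thm:app-cocycle} and have no joint control whatsoever; they can ``oscillate against each other.'' The residual 2-cocycle
$w=W_{12}\,\al^\om(U^\be)\,W_{12}^*\,\bigl(\be^\om(U^\be)\bigr)^*$
is not a priori evaluated in $M_\om\oti\lhG\oti\lhG$, which is what one must show before any 2-cohomology vanishing on $M_\om$ (such as \cite[Lemma 4.3]{MT1}) can be invoked. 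Theorem \ref{thm:2cohvan}, which you cite, is a statement about cocycle actions on $M$ itself and moreover assumes central freeness plus Assumption \ref{ass:app-cent}; it is not the right tool here and would not apply to a 2-cocycle living in $M^\om$.

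The ingredient you are missing is the reindexation technique. The paper's proof first produces the $\al^\om$-cocycle $U^*$ so that $\Ad U^*\circ\al^\om=\th\oti 1$ exactly on $M$, then separately produces a unitary $V$ with $\Ad V_\pi^\nu\to\be_\pi\th_\pi^{-1}$ and upgrades $V$ to a $\th$-cocycle via the 2-cohomology vanishing in $M_\om$. The crucial step is then the application of the Fast/Slow Reindexation map $\Ps$ (as in \cite[Lemma 3.10]{MT1}) associated with the semiliftable action $\Ad U^*\circ\al^\om$: this choices a ``slow'' representing sequence for $V$ so that, relative to $U$, the conjugated action $\Ad U^*\circ\al^\om$ behaves exactly like $\th$ on $(\Ps\oti\id)(V)$. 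With $W:=(\Ps\oti\id)(V)\,U^*$, the cocycle identity then follows by a one-line computation using
$U_\pi^*\,\al_\pi^\om\bigl((\Ps\oti\id)(V_\rho)\bigr)U_\pi=(\Ps\oti\id)\bigl((\th_\pi\oti\id)(V_\rho)\bigr)$,
which is precisely what reindexation buys. Without some such device, neither the cocycle identity nor the centrality-in-$M_\om$ of the obstruction can be established, so your outline would not compile into a correct proof.

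Separately, the final norm-convergence step in your last paragraph would also need to be reworked: the ``slicing identity'' you state follows from the cocycle relation only after conjugation by $T_{\opi,\pi}$ and using $(U^\al)_\btr=1\oti e_\btr$, and even then the absorption of $U^\al$-factors into $\Ph_\pi^\al$ requires the identity $\Ph_\pi^\al\circ\Ad U^\al_\pi=\th_\pi\circ\Ph_\pi^\al$ exactly on $M^\om$, which again holds pointwise on $M$ but not globally; the paper sidesteps this by working with $\Ph_\pi^\al\circ(\Ad W^\nu)^*$ directly and feeding in the reindexation-controlled structure of $W$.
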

\begin{proof}
Thanks to Theorem \ref{thm:app-cocycle},
we can take an $\al^\om$-cocycle $U^*\in M^\om\oti\lhG$
such that $U^*\al_\pi(x)U=\th_\pi(x)\oti1$ for $x\in M$.
Take a unitary $V\in M^\om\oti\lhG$ such that $\Ad V_\pi^\nu$ converges
to $\be_\pi\th_\pi^{-1}$ for $\pi\in\IG$.
Then the map $M_\om\ni x\mapsto
\th_\opi(V_\pi^*)\th_\opi(x)\th_\opi(V_\pi)$
is a cocycle action of $\bhG^\opp$ on $M_\om$.
By the 2-cohomology vanishing \cite[Lemma 4.3]{MT1},
we may and do assume that
$\th_\opi(V_\pi^*)$ is $\th_\opi$-cocycle,
and $V_\pi$ is a $\th$-cocycle.

Let $N$ be a von Neumann algebra
generated by $M$ and the matrix elements of $V$.
Take a slow indexation map $\Ps\col N\ra M^\om$
associated with the semiliftable action $\Ad U^*\circ \al^\om$
which converges to $\th$ on $M$ in the pointwise strong* topology,
and set $W:=(\Ps\oti\id)(V) U^*$.
We check that $\Ph_\pi^\al\circ (\Ad W^\nu)^*$ converges
to $\Ph_\pi^\be$ for each $\pi\in\IG$ as $\nu\to\om$
in the point-wise norm topology of $B(M_*,(M\oti B(H_\pi))_*)$.
Let $\chi\in M_*$.
Then we have
\begin{align*}
\chi\circ\Ph_\pi^\al\circ (\Ad W^\nu)^*
&=
\chi\circ\Ph_\pi^\al\circ \Ad U^\nu (V^{k(\nu)})^*
\\
&\sim
(\chi\th_\pi^{-1}\oti\tr_\pi)\circ \Ad (V^{k(\nu)})^*,
\end{align*}
where $k(\nu)$ is the number defined in \cite[p. 36]{Oc}.
It is easy to see that if $\nu\to\om$, then $k(\nu)\to\infty$.
Therefore, the final term above converges to $\chi\circ\Ph_\pi^\be$.
Also we can verify the cocycle identity,
\begin{align*}
W_{\pi}\al_\pi^\om(W_\rho)
&=(\Ps\oti\id)(V_\pi)
U_\pi^*\al_\pi^\om((\Ps\oti\id)(V_\rho))U_\pi
\cdot
U_\pi^*\al_\pi^\om(U_\rho^*)
\\
&=
(\Ps\oti\id)(V_\pi)
(\Ps\oti\id)((\th_\pi\oti\id)(V_\rho))
(\id\oti{}_\pi\De_\rho)(U^*)
\\
&=
(\Ps\oti\id)((\id\oti{}_\pi\De_\rho)(V))
(\id\oti{}_\pi\De_\rho)(U^*)
\\
&=
(\id\oti{}_\pi\De_\rho)(W).
\end{align*}
\end{proof}

\begin{cor}\label{cor:coc-per}
Let $M$ be a McDuff factor.
Let $\al,\be$ be centrally free actions of $\bhG$ on $M$.
Suppose that there exists a homomorphism
$\th\col\IG\ra\Aut(M)$ such that the maps
$\al_\pi\circ\th_\pi^{-1}$
and $\be_\pi\circ\th_\pi^{-1}$ are approximately inner for all
$\pi\in\IG$.
Then for any $\vep>0$, finite sets $\mF\subs\IG$ and $\Ps\subs M_*$,
there exists an $\al$-cocycle $v$ such that
\[
\|\be_\pi(\chi)-\Ad v_\pi(\al_\pi(\chi))\|<\vep
\quad\mbox{for all }\pi\in\mF,\chi\in \Ps.
\]
\end{cor}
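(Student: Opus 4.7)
The plan is to combine the asymptotic intertwiner furnished by Lemma \ref{lem:coc-per} with the 2-cohomology vanishing of Theorem \ref{thm:2cohvan}, thereby lifting an $\al^\om$-cocycle from $M^\om \oti \lhG$ to a genuine $\al$-cocycle $v \in M \oti \lhG$ that still intertwines $\al$ to $\be$ up to the prescribed accuracy. First I would invoke Lemma \ref{lem:coc-per}, whose hypotheses are met because $M$ is McDuff (so $M_\om$ is of type II$_1$) and $\al, \be$ are both approximately inner modulo the common $\th$. This yields an $\al^\om$-cocycle $W \in M^\om \oti \lhG$ with a unitary representing sequence $(W^\nu)_\nu \subs U(M \oti \lhG)$ along which $\Ad W^\nu_\pi(\al_\pi(\chi)) \to \be_\pi(\chi)$ in the predual norm, uniformly on each finite collection of pairs $(\pi, \chi)$.

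Each $W^\nu$ is however not an honest $\al$-cocycle in $M \oti \lhG$; rather, the 2-cocycle $u^\nu := (W^\nu \oti 1)\al(W^\nu)(\id \oti \De)((W^\nu)^*)$ only tends to $1$ in the strong$^*$ topology as $\nu \to \om$, together with the corresponding diagonal $a^\nu$. I would therefore pass to the centrally free cocycle action $(\al^\nu, u^\nu) := (\Ad W^\nu \circ \al, u^\nu)$, which remains approximately inner modulo $\th$ because inner perturbations preserve both properties, and then correct $W^\nu$ by Theorem \ref{thm:2cohvan}. Concretely, given $\vep > 0$, $\mF \Subs \IG$ and $\Ps \Subs M_*$, I fix $n \geq 2$ so that $F_n \supseteq \mF$ and $\de_{n-2}$ is sufficiently small, and enlarge the finite set $\mG \subs U(M \oti \lhG)$ so that the weighted norms $\|\cdot\|_{g(\ph \oti \vph)g^*}^\sharp$ for $g \in \mG$ control the predual norms needed for the functionals in $\{\al_\pi(\chi) : \pi \in \mF,\, \chi \in \Ps\}$. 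By Lemma \ref{lem:kaalu} combined with $u^\nu \to 1$ and $a^\nu \to 1$, we have $\ka(\al^\nu, u^\nu; F_{n+1}, K_{n+1}, \mG) \to 0$, so for $\nu$ large enough Theorem \ref{thm:2cohvan} supplies a unitary $w^\nu \in U(M \oti \lhG)$ satisfying $(w^\nu \oti 1)\al^\nu(w^\nu) u^\nu (\id \oti \De)((w^\nu)^*) = 1$ and $\|w^\nu_{F_n} - 1 \oti F_n\|_{g(\ph \oti \vph)g^*} < \de_{n-2}$ for all $g \in \mG$. Setting $v := w^\nu W^\nu$, iteration of the perturbation-of-cocycle-action formula shows $v$ is a genuine $\al$-cocycle in $M \oti \lhG$.

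For $\pi \in \mF$ and $\chi \in \Ps$ the triangle inequality then reduces the required bound to
\[
\|\be_\pi(\chi) - \Ad v_\pi(\al_\pi(\chi))\| \leq \|\be_\pi(\chi) - \Ad W^\nu_\pi(\al_\pi(\chi))\| + \|(\Ad w^\nu_\pi - \id)(\Ad W^\nu_\pi(\al_\pi(\chi)))\|,
\]
where the first summand is controlled by Lemma \ref{lem:coc-per} by choosing $\nu$ large, and the second by the closeness of $w^\nu$ to $1 \oti F_n$ on the states $g(\ph \oti \vph)g^*$. The main obstacle is precisely this last conversion: Theorem \ref{thm:2cohvan} delivers only a weighted $L^2$-type estimate for $w^\nu$ with respect to the fixed reference state $\ph$ and weight $\vph$, whereas the desired conclusion is a uniform predual-norm bound on the adjoint action of $w^\nu_\pi$ against the essentially arbitrary functionals $\eta := \Ad W^\nu_\pi(\al_\pi(\chi)) \in (M \oti B(H_\pi))_*$. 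Overcoming this requires choosing $\mG$ to contain polar-decomposition data associated with the finite family $\{\al_\pi(\chi)\}$ and invoking a Powers--St{\o}rmer type inequality of the form $\|u\om u^* - \om\|_{M_*} \leq 2\|u-1\|_\om^\sharp$ applied to positive functionals $\om$ drawn from $\mG \cdot (\ph \oti \vph) \cdot \mG^*$ which dominate $|\eta|$ up to a controlled constant; this converts the $L^2$-smallness of $w^\nu - 1 \oti F_n$ into the sought predual estimate and completes the proof.
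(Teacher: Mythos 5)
Your argument follows the paper's proof essentially step for step: invoke Lemma \ref{lem:coc-per} to produce the $\al^\om$-cocycle $W$ with representing sequence $(W^\nu)_\nu$, observe that the 2-cocycle $u^\nu := W^\nu_{12}\al(W^\nu)(\id\oti\De)((W^\nu)^*)$ tends to $1$ strongly$^*$, invoke Lemma \ref{lem:kaalu} to show $\ka(\al^\nu,u^\nu;F,K,\mG)\to 0$, and then apply Theorem \ref{thm:2cohvan} to the perturbed cocycle action $(\Ad W^\nu\circ\al, u^\nu)$ so that $v:= w^\nu W^\nu$ is a genuine $\al$-cocycle. That is precisely what the paper does.

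The only divergence is in how you close the last gap, and there you are over-engineering. Theorem \ref{thm:2cohvan}\,(2) gives $\|w^\nu_{F_n}-1\oti F_n\|^\sharp_{g(\ph\oti\vph)g^*}<\de_{n-2}$; taking $g=1$, the $\|\cdot\|^\sharp_{\ph\oti\vph}$-metric controls the $\sigma$-strong$^*$ topology on bounded sets, so the correcting unitaries $w^\nu_\pi$ converge to $1$ in the strong$^*$ topology for each fixed $\pi$. For a unitary $u$ tending to $1$ strongly$^*$ and a fixed normal functional $\zeta$, one has directly $\|u\zeta u^*-\zeta\|\le\|(u-1)\zeta\|+\|\zeta(u^*-1)\|\to 0$, since writing $\zeta=\omega_{\xi,\eta}$ gives $\|(u-1)\zeta\|\le\|(u-1)\xi\|\,\|\eta\|$ and $\|\zeta(u^*-1)\|\le\|\xi\|\,\|(u^*-1)\eta\|$. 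Applying this with $\zeta=\be_\pi(\chi)$ (to which $\Ad W^\nu_\pi(\al_\pi(\chi))$ converges in norm) finishes the estimate without any Powers--St{\o}rmer inequality, polar decompositions of the $\al_\pi(\chi)$, or enlargement of $\mG$. Your route would also work, but it inserts machinery the problem does not need; the paper's conclusion ``$v^\nu$ converges to $1$ in the strong$^*$ topology, so take $v:=\hat v^\nu$ for large $\nu$'' is the clean way to finish.
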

\begin{proof}
By the previous lemma,
we obtain an $\al^\om$-cocycle $W$ whose
unitary representing sequence
$W^\nu$ satisfies
$\dsp\lim_{\nu\to\om}W^\nu\al_\pi(\chi)(W^\nu)^*=\be_\pi(\chi)$
for all $\pi\in\mF$ and $\chi\in\Ps$.
The cocycle identity $W$ implies
$u^\nu:=W^\nu\al(W^\nu)(\id\oti\De)((W^\nu)^*)$
converges to 1 in the strong* topology
as $\nu\to\om$.
Applying Theorem \ref{thm:2cohvan}
to the cocycle action $(\Ad W^\nu\circ\al,u^\nu)$,
we have a unitary sequence $v^\nu$ such that
$\hat{v}^\nu:=v^\nu W^\nu$ is an $\al$-cocycle
and $v^\nu$ converges to 1 in the strong* topology
as $\nu\to\om$.
Then the required inequality holds if we set
$v:=\hat{v}^\nu$ to an enough large $\nu$.
\end{proof}

Recall the state $\om_\rho\col B(H_\rho)\oti B(H_\orho)\ra \C$
that is defined by $\om_\rho(x)=T_{\rho,\orho}^* xT_{\rho,\orho}$.
\begin{lem}\label{lem:omrho}
Let $\rho\in\IG$.
For $X_\orho\in M\oti B(H_\orho)$ and $Y_\rho\in M\oti B(H_\rho)$,
one has the following equality:
\[
(\id\oti\om_\rho)(\al_\rho(X_\orho)Y_\rho)
=(\Ph_\orho^\al\oti\vph_\rho)
(X_\orho(1\oti{}_\orho\De_\rho(e_\btr))\al_\orho(Y_\rho)).
\]
\end{lem}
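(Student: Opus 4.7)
My plan is to reduce the claim to a basic Frobenius-reciprocity identity by expanding both sides in matrix units, so that the verification becomes an application of the coaction axiom together with the orthogonality of intertwiners of different irreducibles.

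First I would fix notation. Write $T_{\rho,\orho} = d(\rho)^{-1/2}\sum_i e_i\oti\bar e_i \in H_\rho\oti H_\orho$ in an ONB $\{e_i\}$ of $H_\rho$ and the dual basis $\{\bar e_i\}$ of $H_\orho$; the same for $T_{\orho,\rho}$. A short computation gives
\[
T_{\orho,\rho}T_{\orho,\rho}^* = d(\rho)^{-1}\sum_{m,n}\bar e_{mn}\oti e_{mn},
\]
which, because $\btr$ sits in $\orho\rho$ with multiplicity one with intertwiner $T_{\orho,\rho}$, is precisely ${}_\orho\De_\rho(e_\btr)$; similarly $\om_\rho(\cdot) = T_{\rho,\orho}^*(\cdot)T_{\rho,\orho}$, and by the definition of the standard left inverse $\Ph_\orho^\al(z\oti\bar e_{pq})=d(\rho)^{-1}\al(z)_{\rho_{pq}}$. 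Throughout I use the Kac normalization $\vph_\rho(e_{ij})=d(\rho)\de_{ij}$.

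Next, by bilinearity it suffices to check the identity on elementary tensors $X_\orho = x\oti\bar e_{ab}$ and $Y_\rho = y\oti e_{cd}$ with $x,y\in M$. A direct matrix-unit expansion of the left-hand side, using $\om_\rho(e_{pd}\oti\bar e_{ab})=d(\rho)^{-1}\de_{pa}\de_{db}$, yields
\[
\mathrm{LHS} = d(\rho)^{-1}\de_{bd}\,\al(x)_{\rho_{ac}}\,y.
\]
A parallel expansion of the right-hand side, multiplying out matrix units, then applying $(\id\oti\vph_\rho)$, then $\Ph_\orho^\al$, and finally using that $\al$ is multiplicative, yields
\[
\mathrm{RHS} = d(\rho)^{-1}\de_{bd}\sum_{k,s}\al(x)_{\rho_{ak}}\,\al\!\bigl((\al(y))_{\orho_{cs}}\bigr)_{\rho_{ks}}.
\]

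The equality of LHS and RHS therefore reduces to the key identity
\[
\sum_{s}\al\!\bigl((\al(y))_{\orho_{cs}}\bigr)_{\rho_{ks}} = \de_{ck}\,y\qquad(y\in M,\ c,k\in I_\rho).
\]
To prove this I would apply the coaction axiom $(\al\oti\id)\circ\al = (\id\oti\De)\circ\al$ to $y$ and read off the matrix entry at $(\rho_{ks},\orho_{cs})$ in the last two legs, then sum over $s$. The left side of the coaction identity produces exactly the expression above. On the right side I decompose $\De(e_{\nu_{ab}})|_{(\rho,\orho)} = \sum_{T\in\ONB(\nu,\rho\orho)}Te_{\nu_{ab}}T^*$ for each $\nu\prec\rho\orho$. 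The $\nu=\btr$ summand equals $T_{\rho,\orho}T_{\rho,\orho}^*$, and its diagonal sum over $s$ contributes exactly $\de_{ck}y$. For $\nu\neq\btr$, orthogonality $T^*T_{\rho,\orho}\in\Hom(\btr,\nu)=0$ translates, in the ONB, to $\sum_{s}T_{(ss),b}=0$ for every column index $b$; this zeroes out the diagonal sum over $s$ of any matrix element of $Te_{\nu_{ab}}T^*$ of the form $(\rho_{ks},\orho_{cs})$. Hence only $\nu=\btr$ contributes, and the key identity is established.

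The main obstacle is purely notational: tracking which tensor leg each factor lives on as $\al_\rho$ and $\al_\orho$ act on the $M$-entries of $X_\orho$ and $Y_\rho$ respectively, and making sure the matrix-unit multiplications are performed in the correct order. Once the key identity is isolated, its derivation from the coaction axiom and orthogonality of Hom-spaces is routine.
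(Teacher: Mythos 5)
Your proof is correct, but it takes a genuinely different route from the paper's. The paper never decomposes $\rho\orho$ into irreducibles; it works entirely basis-free, feeding the isometry definition $\Ph_\orho^\al(\cdot)=(1\oti T_{\rho,\orho}^*)(\al_\rho\oti\id)(\cdot)(1\oti T_{\rho,\orho})$ into the left-hand side, using the fact that $(\al_\rho\oti\id)(\al_\orho(Y_\rho))(1\oti T_{\rho,\orho}\oti 1)=(1\oti T_{\rho,\orho}\oti1)Y_\rho$ (coaction axiom plus $T_{\rho,\orho}\in(\btr,\rho\orho)$), and then letting the conjugate equation $(1\oti T_{\orho,\rho}^*)(T_{\rho,\orho}\oti1)=d(\rho)^{-1}$ cancel the remaining isometries against ${}_\orho\De_\rho(e_\btr)=T_{\orho,\rho}T_{\orho,\rho}^*$. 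Your version instead expands everything in matrix units, isolates the scalar identity $\sum_s\al(\al(y)_{\orho_{cs}})_{\rho_{ks}}=\de_{ck}y$, and verifies it by decomposing ${}_\rho\De_\orho$ over $\nu\prec\rho\orho$ and invoking the orthogonality $T^*T_{\rho,\orho}\in\Hom(\btr,\nu)=0$. I checked the delicate index bookkeeping: the coefficient of $e_{ks}\oti\bar e_{cs}$ in $Te_{\nu_{ab}}T^*$ is $T_{(kc),a}\ovl{T_{(ss),b}}$, so the sum over $s$ does pull out $\sum_sT_{(ss),b}=0$ as you assert — this is exactly the diagonal index that Frobenius orthogonality kills. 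Your approach is more elementary and explicit, and it exposes the Frobenius-reciprocity mechanism that the paper's argument keeps implicit in the conjugate equations; the paper's is shorter because it never needs the $\nu\ne\btr$ terms to appear at all. You could also shortcut your key identity by observing that it is precisely the statement $\Ph_\orho^\al\bigl((1\oti\bar e_{kc})\al_\orho(y)\bigr)=\Ph_\orho^\al(1\oti\bar e_{kc})\,y=d(\rho)^{-1}\de_{kc}\,y$, which follows in one line from the $\al_\orho(M)$-bimodule property of the standard left inverse.
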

\begin{proof}
Indeed, by definition of $\Ph_\orho^\al$,
we have
\begin{align*}
&(\Ph_\orho^\al\oti\id_\rho)(X_\orho(1\oti\De(e_\btr))\al_\orho(Y_\rho))
\\
&=
(1\oti T_{\rho,\orho}^*\oti1_\rho)
(\al_\rho(X_\orho)\oti1_\rho)
(1\oti1_\rho\oti{}_{\orho}\De_\rho(e_\btr))
(1\oti T_{\rho,\orho}\oti1_\rho)
Y_\rho
\\
&=
d(\rho)^{-1}
(1\oti T_{\rho,\orho}^*\oti1_\rho)
(\al_\rho(X_\orho)\oti1_\rho)
(1\oti 1_\rho\oti T_{\orho,\rho})
Y_\rho,
\end{align*}
and
\begin{align*}
&(\Ph_\orho^\al\oti\vph_\rho)
(X_\orho(1\oti{}_\orho\De_\rho(e_\btr))\al_\orho(Y_\rho))
\\
&=
d(\rho)^2
(1\oti T_{\rho,\orho}^*)
(\Ph_\orho^\al\oti\id_\rho)(X_\orho(1\oti\De(e_\btr))\al_\orho(Y_\rho))
(1\oti T_{\rho,\orho})
\\
&=
(1\oti T_{\rho,\orho}^*)\al_\rho(X_\orho)Y_\rho(1\oti T_{\rho,\orho})
\\
&=
(\id\oti\om_\rho)(\al_\rho(X_\orho)Y_\rho).
\end{align*}
\end{proof}

We take an $\al^\om$-cocycle $v^*$ such that $\ga:=\Ad v^* \circ\al$
gives an action on $M^\om$ preserving $M_\om$ as before.
Let us introduce the set $\meJ$ that is a collection
of a projection $E\in M^\om\oti\lhG$ such that
\begin{enumerate}[(E.1)]
\item
$E=E(1\oti K)$;
\item
$v^*Ev\in M_\om\oti\lhG$;

\item
In the decomposition of
\[
E=\sum_{\rho\in\mK}d(\rho)^{-1}f_{\rho_{i,j}}^\al\oti e_{\rho_{i,j}},
\]
the family $\{f_{\rho_{i,j}}^\al\}_{i,j\in I_\rho}$ is a system of
matrix units and they are orthogonal with respect to $\rho\in\mK$;

\item
In the decomposition of
\[
vEv^*=\sum_{\rho\in\mK}d(\rho)^{-1}f_{\rho_{i,j}}^\be\oti e_{\rho_{i,j}},
\]
the family $\{f_{\rho_{i,j}}^\be\}_{i,j\in I_\rho}$ is a system of
matrix units and they are orthogonal with respect to $\rho\in\mK$;

\item
$(\id\oti\vph_\rho)(E)=(\id\oti\vph_\rho)(vEv^*)$;

\item
$(\ta^\om\oti\id)(E)\in\C K$;
\label{item:taE}

\item
Let $\iota_\orho\col M\oti B(H_\orho)\ra M\oti B(H_\orho)\oti B(H_\rho)$
be the map sending $x$ to $x\oti1$.
Then
\[
\lim_{\nu\to\om}
\|[(\chi\circ\Ph_\orho^\al\oti\vph_\rho),
(1\oti{}_\orho\De_\rho(e_\btr))\al_\orho(E_\rho^\nu)]\circ\iota_\orho\|
=0
\quad
\mbox{for all }\chi\in\Ps,
\]
where the norm is taken in the predual of $M\oti B(H_\orho)$.
\label{item:limom}
\end{enumerate}

We note that the last condition does not depend on
a choice of a representing sequence of $E$.
Define the functions $b,c,d$ on $\meJ$ as follows:
\begin{align}
b_E^g
&=|E|_{g(\ps\oti\vph)g^*}=|E|_{\ps\oti\vph}=b_E,
\notag
\\
c_E^g
&=
(\ps\oti\vph\oti\vph)(g_{12}^*
(\mu_E^*\oti1\oti1)(\id\oti{}_F\De)(v)(\al_F^\om(E)-(\id\oti{}_F\De)(E))
g_{12}),
\label{eq:cg}\\
d_E^g
&=
(\ps\oti\vph\oti\vph)
\left(g_{12}^*
(\id\oti{}_F\De)(v)(\al_F^\om(E)-(\id\oti{}_F\De)(E))(\mu_E^*\oti1\oti1)
g_{12}\right),
\notag
\end{align}
where $\mu_E$ denotes
the partial isometry $(\id\oti\vph)(vE)$.
Note that $\mu_{E}^*\mu_{E}=(\id\oti\vph)(E)=\mu_{E}\mu_{E}^*$.

\begin{lem}
Assume that $E\in \meJ$ satisfies $b_E<1-\de^{1/2}$.
Then there exists $E'\in\meJ$ satisfying the following
for all $g\in\mG$:
\begin{enumerate}
\item
$0<(\de^{1/2}/2)|v^*E'v-v^*Ev|_{\ps\oti\vph}<b_{E'}-b_E$,
\item
$|c_{E'}^g|-|c_E^g|\leq5\de^{1/2}|F|_\vph(b_{E'}-b_E)$,
\item
$|d_{E'}^g|0-|d_E^g|\leq5\de^{1/2}|F|_\vph(b_{E'}-b_E)$.
\end{enumerate}
\end{lem}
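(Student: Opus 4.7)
The plan is to adapt the build-and-destroy argument of Lemma \ref{lem:abc} to this new setting. The essential geometry is the same, but the functions $c_E^g$ and $d_E^g$ involve the approximating cocycle $v^*$ (through $\al^\om$ rather than $\ga$), and the new condition (E.\ref{item:limom}) is an equicontinuity-type requirement not present in the earlier $\mathcal{J}$. I would therefore proceed as follows.

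First, enlarge $S$ so that the matrix entries of $E$, $v$, and $u$ all lie in $S$, that $S$ is $\al^\om$-invariant, and that $S$ contains representing sequences for $\mathcal{G}$ and (countable weakly dense subsets relevant to the states in) $\Psi$. Put $\mathcal{S}:=\mathcal{F}\cdot\mathcal{K}$ and apply \cite[Lemma 5.3]{MT1} to the action $\al^\om$ on $M_\om$ (which is point-wise properly centrally non-trivial on $\overline{\mathcal{S}}\cdot\mathcal{S}\setminus\{\btr\}$ by Lemma \ref{lem:CTinj}) to obtain a nonzero projection $e\in S'\cap M_\om$ with $(e\oti 1_\rho)\al_\rho^\om(e)=0$ for $\rho\in\overline{\mathcal{S}}\mathcal{S}\setminus\{\btr\}$. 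Then use the Fast Reindexation Trick \cite[Lemma 3.10]{MT1}, applied to the von Neumann algebra generated by $M$ and the entries of $\{\al_\rho^\om(e)\}_{\rho\in\IG}$, to produce $f\in S'\cap M_\om$ enjoying the same freeness relation together with the $\tau^\om$-splitting property with respect to $S$. Define the projection $f':=(\id\oti\vph)(\al_K^\om(f))\in S'\cap M_\om$ and set
\[
E':=E(f'^\perp\oti 1)+\al_K^\om(f).
\]

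Next I would verify that $E'\in\meJ$. Conditions (E.\ref{item:E-K})--(E.\ref{item:ga-taE}) are checked by the same computations as in Lemma \ref{lem:abc} (with $a^*v$ replaced by $v$ throughout), exploiting that $f'$ commutes with $E$ and with $v$. The new condition (E.\ref{item:limom}) is the main technical point: because $f$ was produced by the Fast Reindexation Trick applied to a sufficiently rich subalgebra containing $\Phi_{\bar\rho}^\al$-images of the entries of $\al_{\bar\rho}(E_\rho)$ and the matrix entries of $v$, the sequence representing $E'$ will inherit asymptotic commutation of the form required. Concretely, by the trick one may arrange that $\al_{\bar\rho}^\om(f)$ acts as a scalar on the $\tau^\om$-conditional expectations appearing in (E.\ref{item:limom}), which together with Lemma \ref{lem:omrho} reduces the required commutator norm to the commutator already controlled for $E$.

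The quantitative estimates then follow the pattern of Lemma \ref{lem:abc}. Using $\mu_{E'}=\mu_E f'^\perp+\mu_{\al_K^\om(f)}$ and $f'^\perp\mu_{\al_K^\om(f)}=0$, decompose
\[
(\al_F^\om(E')-(\id\oti{}_F\De_K)(E'))(\mu_{E'}\oti F\oti K)
\]
into a piece supported on $f'^\perp$ (which contributes $\tau_\om(f'^\perp)c_E^g$ plus error) and a piece supported on the new Rohlin block (which, thanks to the Rohlin relations $(f\oti 1_\rho)\al_\rho^\om(f)=0$ for $\rho\in\mathcal{S}$, yields only a term controlled by $|(F\oti 1_\mathcal{L})\De(K^\perp)|_{\vph\oti\vph}$ via Lemma \ref{lem:L}); the analogous computation with $(\mu_{E'}\oti F\oti K)$ on the left handles $d_{E'}^g$. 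For (1), the inequalities $(\delta^{1/2}/2)|f'|_\psi<b_{E'}-b_E$ and $|v^*E'v-v^*Ev|_{\psi\oti\vph}\le 2|f'|_\psi+O(\delta^{1/2})$ are obtained exactly as in (\ref{eq:fb}).

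The principal obstacle will be controlling the $d_{E'}^g$ term: unlike $c_E^g$, the partial isometry $\mu_E^*$ now sits on the right of $(\id\otimes{}_F\De)(v)(\al_F^\om(E)-(\id\otimes{}_F\De)(E))$, so after the decomposition one must estimate a term of the form $(\ps\oti\vph\oti\vph)(g_{12}^*(\id\oti{}_F\De)(v)\ga_F(E(f'^\perp\oti 1))\mu_{\al_K^\om(f)}^* g_{12})$ in which the Cauchy--Schwarz split does not directly align with the $\tau^\om$-splitting of $f'$. This is precisely where condition (E.\ref{item:limom}) is used: it guarantees that one may interchange $\mu_{\al_K^\om(f)}^*$ with the slice of $\al_F^\om$ acting on the $\chi\circ\Phi_{\bar\rho}^\al$ functionals up to a small error, after which the bound $|d_{E'}^g|-|d_E^g|\le 5\delta^{1/2}|F|_\vph(b_{E'}-b_E)$ falls out by the same $\delta^{1/2}|F|_\vph|K|_\vph$-bound on $|1_\mathcal{L}|_\vph$ used in the proof of Lemma \ref{lem:abc}.
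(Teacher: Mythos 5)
Your construction of $E'$ — same reindexed $e$, same $f=\Ps(e)$, $f'=(\id\oti\vph)(\al_K^\om(f))$, and $E'=E(f'^\perp\oti1)+\al_K^\om(f)$ — coincides with the paper's, and the top-level shape of the estimate (isolate a $\tau_\om(f'^\perp)c_E^g$ contribution plus error terms controlled through the Rohlin relations and Lemma~\ref{lem:L}) is right. But there are two places where your reasoning would not survive being written out in full.

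First, your verification that $E'$ satisfies the new condition (E.\ref{item:limom}) is not an argument. You assert that the Fast Reindexation Trick lets one ``arrange that $\al_{\bar\rho}^\om(f)$ acts as a scalar'' on the slices $\chi\circ\Ph_{\orho}^\al$, but the trick only delivers the $\tau^\om$-splitting and freeness properties; it says nothing about such slices becoming scalar. What actually makes (E.\ref{item:limom}) pass to $E'$ is much more elementary: after expanding $(1\oti{}_\orho\De_\rho(e_\btr))\al_\orho((E'_\rho)^\nu)$ using $E'=E(f'^\perp\oti1)+\al_K^\om(f)$, the commutator with $\chi\circ\Ph_{\orho}^\al\oti\vph_\rho$ splits into the corresponding commutator for $E$ (already vanishing by hypothesis), a term controlled by $\|[\chi, f'^\nu]\|$, and a term controlled by $\|[\chi\circ\Ph_{\orho}^\al, f^\nu\oti 1_\orho]\|$ via Lemma~\ref{lem:omrho}; these last two vanish precisely because $f,f'\in M_\om$. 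This uses only centrality of $f,f'$, not any special structure imposed by reindexation.

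Second, and more substantively, your claim that condition (E.\ref{item:limom}) is what rescues the $d_{E'}^g$ estimate is wrong. In the paper's proof, (E.\ref{item:limom}) plays no role at all in bounding either $c_{E'}^g$ or $d_{E'}^g$; it is used only to show $E'\in\meJ$ and then later, in Theorem~\ref{thm:coc-van}, to obtain $\lim_\nu\|[\chi,\mu^\nu]\|<\vep$. The estimate for $c_{E'}^g$ proceeds by decomposing the expression into six terms $X_1,\dots,X_6$ and controlling each using the $\tau^\om$-splitting of $f'$, Claim~\ref{claim:alf}, Lemma~\ref{lem:L}, Lemma~\ref{lem:g-ps-vph}, and the centrality in $\lhG$ of $(\id\oti\vph)({}_F\De_K(K^\perp))$. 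The $d_{E'}^g$ bound is then obtained by the exactly symmetric computation, with $\mu_E^*$ on the other side; there is no Cauchy--Schwarz misalignment that needs an extra ingredient. If your write-up invoked (E.\ref{item:limom}) to ``interchange $\mu_{\al_K^\om(f)}^*$ with the slice,'' that step would not follow from the stated content of (E.\ref{item:limom}), which is an asymptotic commutator-norm estimate, not an intertwining relation. You should replace that appeal with the six-term decomposition and its direct estimates, mirroring them for $d$.
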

\begin{proof}
We let $\mS:=\mF\cdot\mK$.
We take a non-zero projection
$e\in M_\om$ commuting with $E$, $v$, $U$
and $\al_\rho(e)$
for all $\rho\in \ovl{\mS}\cdot\mS\setminus\{\btr\}$.
Then by the Fast Reindexation Trick (\cite[Lemma 3.10]{MT1}),
we set $f:=\Ps(e)$
and $f':=(\id\oti\vph)(\al_K^\om(f))$.
\[
E':=E(f'^\perp\oti1)+\al_K^\om(f).
\]
The inequality $\de^{1/2}|f'|_\ps< b_{E'}-b_E$
is proved as \cite[Lemma 5.11]{MT1}.
In particular, $E'\neq E$.

We show that $E'$ is in fact a member of $\meJ$.
We can prove that $E'$ satisfies the conditions from (E.1) to
(E.\ref{item:taE}) as \cite[Lemma 5.11]{MT1}.
We have to show the remaining (E.\ref{item:limom}), that is,
\begin{equation}
\lim_{\nu\to\om}
\|[(\chi\circ\Ph_\orho^\al\oti\vph_\rho),
(1\oti{}_\orho\De_\rho(e_\btr))\al_\orho((E_\rho')^\nu)]\circ\iota_\orho\|=0
\label{eq:limch}
\end{equation}
for all $\chi\in\Ps$ and $\rho\in\mK$.
Take representing sequences $E^\nu$, $f'^\nu$ and $f^\nu$ for $E$,
$f'$ and $f$ so that they are projections, respectively.
Set $E'^\nu:=E^\nu((1-f'^\nu))+\al_K(f^\nu)$.
Since
\[
(1\oti{}_\orho\De_\rho(e_\btr))\al_\orho((E_\rho')^\nu)
=
(1\oti{}_\orho\De_\rho(e_\btr))\al_\orho(E_\rho^\nu)\al_\orho(1-f'^\nu)
+
f^\nu\oti{}_\orho\De_\rho(e_\btr),
\]
we have the following for $x_\orho\in M\oti B(H_\orho)$:
\begin{align*}
&
\left|
[(\chi\circ\Ph_\orho^\al\oti\vph_\rho),
(1\oti{}_\orho\De_\rho(e_\btr))\al_\orho((E_\rho')^\nu)](x_\orho)
\right|
\\
&\leq
\left|
[(\chi\circ\Ph_\orho^\al\oti\vph_\rho),
(1\oti{}_\orho\De_\rho(e_\btr))\al_\orho(E_\rho^\nu)]\cdot
(\al_\orho(1-f'^\nu)x_\orho)
\right|
\\
&\quad+
\left|
[(\chi\circ\Ph_\orho^\al\oti\vph_\rho),
\al_\orho(1-f'^\nu)]
(x_\orho(1\oti{}_\orho\De_\rho(e_\btr))\al_\orho(E_\rho^\nu))
\right|
\\
&\quad+
\left|
[(\chi\circ\Ph_\orho^\al\oti\vph_\rho),
f^\nu\oti{}_\orho\De_\rho(e_\btr)](x_\orho)
\right|
\\
&\leq
\left\|
[(\chi\circ\Ph_\orho^\al\oti\vph_\rho),
(1\oti{}_\orho\De_\rho(e_\btr))\al_\orho(E_\rho^\nu)]
\circ\iota_\orho\right\|
\|x_\orho\|
\\
&\quad+
\left|([\chi,f'^\nu]\oti\om_\rho)(\al_\rho(x_\orho)E_\rho^\nu)
\right|
\quad(\mbox{by Lemma } \ref{lem:omrho})
\\
&\quad+
\left|
[\chi\circ\Ph_\orho^\al,f^\nu\oti1_\orho] (x_\orho)
\right|
\\
&\leq
\left\|
[(\chi\circ\Ph_\orho^\al\oti\vph_\rho),
(1\oti{}_\orho\De_\rho(e_\btr))\al_\orho(E_\rho^\nu)]\circ\iota_\orho\right\|
\|x_\orho\|
\\
&\quad
+
\left\|
[\chi,f'^\nu]
\right\|\|x_\orho\|
+
\left\|
[\chi\circ\Ph_\orho^\al,f^\nu\oti1_\orho]
\right\|\|x_\orho\|.
\end{align*}
This implies (\ref{eq:limch}),
and hence $E'\in\meJ$.

We show the inequality in (1).
Since $U^*(E'-E)U=-U^*EU(f'\oti1)+\ga_K(f)$,
we have
\begin{align*}
|U^*(E'-E)U|_{\ps\oti\vph}
&\leq
|U^*EU(f'\oti1)|_{\ps\oti\vph}+
|\ga_K(f)|_{\ps\oti\vph}
\\
&=
\ta_\om(f')(\ps\oti\vph)(U^*EU)
+
\ta_\om(f)|K|_\vph
\\
&=
\ta_\om(f')((\ps\oti\vph)(E)+1)
\\
&\leq
2|f'|_\ps.
\end{align*}
Thus $0<(\de^{1/2}/2)|U^*(E'-E)U|_{\ps\oti\vph}\leq\de^{1/2}|f'|_\ps<b_{E'}-b_E$.

We will check the remaining (2) and (3).
Let us introduce the following:
\begin{align*}
X_1&:=
(\mu_{E}^*\oti F\oti1)(\id\oti{}_F\De)(v)
\cdot
(\al_F^\om(E)-(\id\oti{}_F\De)(E))((f'^\perp\oti1)\al_F^\om(f'^\perp)\oti1),
\\
X_2&:=
(\mu_{\al_K^\om(f)}^*\oti F\oti1)(\id\oti{}_F\De)(v)
\cdot
(\al_F^\om(E)-(\id\oti{}_F\De)(E))(\al_F^\om(f'^\perp)\oti1),
\\
X_3&:=
(\mu_{E}^*\oti F\oti1)(\id\oti{}_F\De)(v)
\cdot
(\id\oti{}_F\De)(E)
((f'^\perp\oti1)\al_F^\om(f'^\perp)\oti1-f'^\perp\oti1\oti1),
\\
X_4&:=
(\mu_{\al_K^\om(f)}^*\oti F\oti1)(\id\oti{}_F\De)(v)
\cdot
(\id\oti{}_F\De)(E)(\al_F^\om(f'^\perp)\oti1),
\\
X_5&:=
(\mu_{E}^*\oti F\oti1)(\id\oti{}_F\De)(v)
\cdot
(\id\oti{}_F\De_K)(\al_{K^\perp}^\om(f)),
\\
X_6&:=
(\mu_{\al_K^\om(f)}^*\oti F\oti1)(\id\oti{}_F\De)(v)
\cdot
((\id\oti{}_F\De_K)(\al_K^\om(f))-(\id\oti{}_F\De)(\al_K^\om(f))).
\end{align*}
Note that
we have $\al^\om(f)(f'\oti 1_\mS)=\al_K^\om(f)$ by definition of $f'$,
and
\[
(\id\oti{}_F\De_K)(\al^\om(f))(f'^\perp\oti F\oti K)
=(\id\oti{}_F\De_K)(\al_{K^\perp}^\om(f)).
\]
Using $\mu_{E'}=\mu_E f'^\perp+\mu_{\al_K(f)}$
and $(f'\oti1)\al_{K}(f)=\al_K(f)$,
we have
\[
(\mu_{E'}^*\oti F\oti1)(\id\oti{}_F\De)(v)
(\al_F^\om(E')-(\id\oti{}_F\De)(E'))
=X_1+X_2+X_3+X_4+X_5+X_6.
\]
Then by (\ref{eq:cg}), we get
\begin{equation}
\label{eq:cg'}
c_{E'}^g
=
\sum_{k=1}^6(\ps\oti\vph\oti\vph)(g_{12}^*X_kg_{12}).
\end{equation}

We obtain
\begin{align*}
X_{24}&:=X_2+X_4=
(\mu_{\al_K^\om(f)}^*\oti F\oti1)(\id\oti{}_F\De)(v)
\cdot
\al_F^\om(E)((f'\oti1)\al_F^\om(f'^\perp)\oti1)
\\
&=
(\mu_{\al_K^\om(f)}^*\oti F\oti1)v_F
\cdot
\al_F^\om(vE)((f'\oti1)\al_F^\om(f'^\perp)\oti1),
\\
X_6&=
-
(\mu_{\al_K^\om(f)}^*\oti F\oti1)(\id\oti{}_F\De)(v)
\cdot
(\id\oti{}_F\De_{K^\perp})(\al_K^\om(f)).
\end{align*}

By Claim \ref{claim:alf} in the proof of Lemma \ref{lem:abc},
we have
\begin{align*}
(\ta^\om\oti\id)((f'^\perp\oti1)\al_F(f'^\perp))
&=
(\ta^\om\oti\id)(\al_F(f'^\perp))
-
(\ta^\om\oti\id)((f'\oti1)\al_F(f'^\perp))
\\
&=
\ta_\om(f'^\perp)\oti F
-
\ta_\om(f)\oti
(\id\oti\vph)({}_F\De_K(K^\perp)).
\end{align*}
By the equality above,
the $\ta^\om$-splitting of $f'$ and the centrality
of $(\id\oti\vph)({}_F\De_K(K^\perp))$,
we have
\begin{align}
&|(\ps\oti\vph\oti\vph)(g_{12}^*X_1g_{12})|
\notag\\
&\leq
\ta_\om(f'^\perp)|c_E^g|
\notag\\
&\quad+\ta_\om(f)
\|g_{12}^*
(\mu_{E}^*\oti 1_\pi\oti1)(\id\oti{}_\pi\De)(v)
\cdot
(\al_\pi^\om(E)-(\id\oti{}_\pi\De)(E))g_{12})
\|
\notag\\
&\quad\quad\cdot
(\vph\oti\vph)({}_F\De_K(K^\perp))
\notag\\
&<
\ta_\om(f'^\perp)|c_{E}|
+
\ta_\om(f)\cdot\de|F|_\vph|K|_\vph
\notag\\
&=
\ta_\om(f'^\perp)|c_{E}|
+
\de\ta_\om(f')|F|_\vph.
\label{eq:X1}
\end{align}


Next we estimate $(\ps\oti\vph\oti\vph)(g_{12}^*X_{24}g_{12})$
as follows.
By Claim \ref{claim:alf},
\begin{align*}
(\mu_{\al_K^\om(f)}^*\oti F)\be_F^\om(f'^\perp)
&=
(\mu_{\al_K^\om(f)}^*\oti F)(f'\oti 1)\be_F^\om(f'^\perp)
\\
&=
(\mu_{\al_K^\om(f)}^*\oti F)
(\id\oti\id\oti\vph)((\id\oti{}_F\De_{K^\perp})(\be_K^\om(f)))
\\
&=
(\id\oti\id\oti\vph)((\id\oti{}_F\De_{K^\perp})
((\mu_{\al_K^\om(f)}^*\oti1)\be_K^\om(f)))
\\
&=
(\id\oti\id\oti\vph)((\id\oti{}_F\De_{K^\perp})
(v_K^*\be_K^\om(f))).
\quad\mbox{by }(\ref{eq:Emu})
\end{align*}
By this computation and $[\mu_E, f'^\perp]=0$, we have
\begin{align}
&|(\ps\oti\vph\oti\vph)(g_{12}^*X_{24}g_{12})|
\notag\\
&=
|(\ps\oti\vph)(g^*
(\mu_{\al_K^\om(f)}^*\oti F)v_F\al_F^\om(\mu_E)
(f'\oti1)\al_F^\om(f'^\perp)
g)|
\notag\\
&=
|(\ps\oti\vph)(g^*
(\mu_{\al_K^\om(f)}^*\oti F)
\be_F^\om(f'^\perp)v_F\al_F^\om(\mu_E)
g)|
\notag\\
&=
|(\ps\oti\vph\oti\vph)
(g_{12}^*
(\id\oti{}_F\De_{K^\perp})(v_K^*\be_K^\om(f))
v_F\al_F^\om(\mu_E)
g_{12})|
\notag\\
&=
\ta_\om(f)
|(\ps\oti\vph\oti\vph)
(g_{12}^*
(\id\oti{}_F\De_{K^\perp})(v_K^*)
v_F\al_F^\om(\mu_E)
g_{12})|
\quad
\mbox{by $\ta^\om$-splitting}
\notag\\
&\leq
\ta_\om(f)
\|(\id\oti{}_F\De_{K^\perp})(v_K)g_{12}\|_{\ps\oti\vph\oti\vph}
\cdot
\|(1\oti{}_F\De_{K^\perp}(K))v_F\al_F^\om(\mu_E)g_{12}\|_{\ps\oti\vph\oti\vph}
\notag\\
&\leq
\ta_\om(f)
(\ps\oti\vph\oti\vph)(1\oti {}_F\De_{K^\perp}(K))^{1/2}
\cdot
(\ps\oti\vph\oti\vph)(1\oti {}_F\De_{K^\perp}(K))^{1/2}
\notag\\
&<
\de\ta_\om(f')|F|_\vph,
\label{eq:X24}
\end{align}
where we have used the fact that $(\id\oti\vph)({}_F\De_{K^\perp}(K))$
is central in $\lhG$.

For $X_3$, we have
\begin{align*}
&(\ta^\om\oti\id)((f'^\perp\oti1)\al_F^\om(f'^\perp)-f'^\perp\oti1)
\\
&=
\ta_\om(f'^\perp)\oti F
-(\ta^\om\oti\id)((f'\oti1)\al_F^\om(f'^\perp))
-\ta_\om(f'^\perp)\oti F
\\
&=
-(\ta^\om\oti\id)((f'\oti1)\al_F^\om(f'^\perp)),
\end{align*}
and again by Claim \ref{claim:alf},
\begin{align}
&|(\ps\oti\vph\oti\vph)(g_{12}^*X_3g_{12})|
\notag\\
&=
\left|
(\ps\oti\vph\oti\vph)(g_{12}^*
(\mu_E^*\oti F)
\cdot
(\mu_E\oti F)
\cdot
(\ta^\om\oti\id)((f'\oti1)\al_F^\om(f'^\perp))
g_{12})
\right|
\notag\\
&=
\left|
(\ps\oti\vph\oti\vph)(
(\mu_E^*\mu_E\oti F)
\cdot
(\ta^\om\oti\id)((f'\oti1)\al_F^\om(f'^\perp)))
\right|
\notag\\
&=
b_E\cdot\ta_\om(f)(\vph_F\oti\vph_{K^\perp})(\De(K))
\notag\\
&<
\de \ta_\om(f') b_E|F|_\vph.
\label{eq:X3}
\end{align}
An estimate of $X_5$ is given as
\begin{align}
&|(\ps\oti\vph\oti\vph)(g_{12}^*X_5g_{12})|
\notag\\
&=
\ta_\om(f)
|(\ps\oti\vph\oti\vph)(g_{12}^*
(\ta^\om\oti\id)
\left((\mu_E^*\oti F\oti1)
(\id\oti{}_F\De_{K^\perp})(v_K)\right)
g_{12})|
\notag\\
&=
\ta_\om(f)
|(\ps\oti\vph\oti\vph)(g_{12}^*
(\ta^\om(\mu_E^*)\oti F\oti1)
(\id\oti{}_F\De_{K^\perp})(v_K)
g_{12})|
\notag\\
&\leq
\ta_\om(f)
\|(1\oti{}_F\De_{K^\perp}(K))
(\ta^\om(\mu_E)\oti F\oti1)g_{12}\|_{\ps\oti\vph\oti\vph}
\notag\\
&\quad\cdot
\|(\id\oti{}_F\De_{K^\perp})(v_K)g_{12}\|_{\ps\oti\vph\oti\vph}
\notag\\
&\leq
\ta_\om(f)
(\ps\oti\vph\oti\vph)(1\oti{}_F\De_{K^\perp}(K))^{1/2}
\cdot
(\ps\oti\vph\oti\vph)(1\oti{}_F\De_{K^\perp}(K))^{1/2}
\notag\\
&<
\de\ta_\om(f')|F|_\vph.
\label{eq:X5}
\end{align}

Finally for $X_6$, we have
\begin{align}
&|(\ps\oti\vph\oti\vph)(g_{12}^*X_6g_{12})|
\notag\\
&=|(\ps\oti\vph\oti\vph)(g_{12}^*
(\mu_{\al_K^\om(f)}^*\oti F\oti1)
(\id\oti{}_F\De_{K^\perp})(v\al_K^\om(f))
g_{12})|
\notag\\
&=
\ta_\om(f)
|(\ps\oti\vph\oti\vph)(g_{12}^*
(\mu_{\al_K^\om(f)}^*\oti F\oti1)
(\id\oti{}_F\De_{K^\perp})(v_K)
g_{12})|
\notag\\
&\leq
\ta_\om(f)
\|(1\oti{}_F\De_{K^\perp}(K))
(\mu_{\al_K^\om(f)}\oti F\oti1)
g_{12}
\|_{\ps\oti\vph\oti\vph}
\notag\\
&\quad\cdot
\|(\id\oti{}_F\De_{K^\perp})(v_K)g_{12}\|_{\ps\oti\vph\oti\vph}
\notag\\
&\leq
\ta_\om(f)
(\ps\oti\vph\oti\vph)(1\oti{}_F\De_{K^\perp}(K))^{1/2}
\cdot
(\ps\oti\vph\oti\vph)(1\oti{}_F\De_{K^\perp}(K))^{1/2}
\notag\\
&<
\de\ta_\om(f')|F|_\vph.
\label{eq:X6}
\end{align}

From (\ref{eq:cg'}),
(\ref{eq:X1}), (\ref{eq:X24}), (\ref{eq:X3}),
(\ref{eq:X5}) and (\ref{eq:X6}),
we have obtained
\[
|c_{E'}^g|-|c_E^g|
=
-\ta_\om(f')|c_E^g|
+
5\de\ta_\om(f')|F|_\vph
<
5\de^{1/2}|F|_\vph(b_{E'}-b_E).
\]
Likewise,
we can prove that $|d_{E'}^g|-|d_E^g|<5\de^{1/2}|F|_\vph(b_{E'}-b_E)$.
\end{proof}

\begin{thm}\label{thm:coc-van}
Let $\al,\be,\th$ be as in Corollary \ref{cor:coc-per}.
Let $F\in \lhG$ be a central projection
and $K$ an $(F,\de)$-invariant central projection.
Let $\mG$ be a finite subset of $U(M\oti \lhG)$.
If $\al$ and $\be$
satisfy the following inequality
for a positive number $\vep$ and a finite subset $\Ps\subs M_*$,
\[
\|\be_\orho(\chi)-\al_\orho(\chi)\|<\vep/|K|_\vph
\quad\mbox{for all }\rho\in\mK,\chi\in \Ps,
\]
then there exists a unitary $w\in M$ such that
\[
\|v_F\al_F^\om(w)-w\oti F\|_{g(\ps\oti\vph)g^*}^\sharp
<5\de^{1/8}\|F\|_\vph,
\quad
\|[w,\chi]\|<\vep
\quad\mbox{for all }g\in\mG,\chi\in \Ps.
\]
\end{thm}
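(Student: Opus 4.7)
The plan is to mirror the proof of Theorem \ref{thm:almvan}, using the modified set $\meJ$ of projections defined just before the statement. The essential new ingredient is condition (E.\ref{item:limom}), which is designed precisely to encode an approximate commutativity between the sliced projection data and the fixed functionals in $\Ps$; without it the resulting Shapiro-type unitary would have no control on $[w,\chi]$. Throughout, $v$ denotes the $\al^\om$-cocycle with $\be=\Ad v\circ\al$ supplied by Lemma \ref{lem:coc-per}.

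First I would establish a Rohlin-type packing theorem for the new $\meJ$, completely parallel to Lemma \ref{lem:abc} and Theorem \ref{thm:Rohlin}: using a relative Dye-type argument together with the Fast Reindexation Trick one constructs, for a given $E\in\meJ$ with $b_E<1-\de^{1/2}$, a non-zero projection $f\in M_\om$ orthogonal to $\al_\rho(f)$ for $\rho\in\ovl{\mS}\cdot\mS\setm\{\btr\}$ and commuting with $v,U,E$; setting $f':=(\id\oti\vph)(\al_K^\om(f))$ and $E':=E(f'^\per\oti 1)+\al_K^\om(f)$ gives the next step of the build-and-destroy iteration. The new condition (E.\ref{item:limom}) is automatically preserved for $E'$ because the reindexed representative $f^\nu$ asymptotically commutes with every fixed functional in $\Ps$ along $\om$. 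The six-term expansion of $c_{E'}^g$ and its analogue for $d_{E'}^g$ (cf.\ the computations of $X_1,X_{24},X_3,X_5,X_6$ in the proof of the preceding lemma) yield $|c_{E'}^g|-|c_E^g|\leq 5\de^{1/2}|F|_\vph(b_{E'}-b_E)$ and similarly for $d$. Zorn's lemma on the associated inductive ordered set then produces $\bar E\in\meJ$ with maximal properties and a defect $p\in S'\cap M_\om$ such that $E:=\bar E+p\oti e_\btr$ satisfies $b_E=1$, $\ta_\om(p)<\de^{1/2}$, and $|c_E^g|,|d_E^g|\leq 8\de^{1/4}|F|_\vph$ for all $g\in\mG$.

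Next I would set $\mu:=\mu_{\bar E}+p\in U(M^\om)$ and estimate $\|v_F\al_F^\om(\mu)-\mu\oti F\|_{g(\ps\oti\vph)g^*}^\sharp$. The decomposition
\[
\mu\oti F-v_F\al_F^\om(\mu)
=B+C+D,
\]
with $B:=(\id\oti\id\oti\vph)((\id\oti{}_F\De_{K^\perp})(vE))$, $C:=(\id\oti\id\oti\vph)((\id\oti{}_F\De_K)(vE)-v_F\al_F^\om(vE))$, $D:=0$ (since $\al$ is an action the terms involving the diagonal $a-1$ from Theorem \ref{thm:almvan} simply disappear), is controlled via $(F,\de)$-invariance of $K$ exactly as in (\ref{eq:B}) and the bound on $|c_E^g|$. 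The adjoint estimate uses $|d_E^g|$ and the defect $p$ in the same fashion as Claim 2 there. Together these give the target bound $5\de^{1/8}\|F\|_\vph$.

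The hard part is the commutator bound $\|[w,\chi]\|<\vep$. Here I would use Lemma \ref{lem:omrho}: for $x\in M$ with $\|x\|\leq 1$ and $\chi\in\Ps$,
\[
(\chi\mu-\mu\chi)(x)
=(\chi\oti\vph)([x\oti 1,vE])
=\sum_{\rho\in\mK}d(\rho)\bigl[(\chi\circ\Ph_\orho^\al\oti\vph_\rho)-(\chi\circ\Ph_\orho^\be\oti\vph_\rho)\bigr]\bigl(E_\rho(1\oti{}_\orho\De_\rho(e_\btr))\al_\orho(x\oti 1_\orho)\bigr),
\]
after expanding $vE$ in spectral components and applying Lemma \ref{lem:omrho} to rewrite each term as a pairing involving $\al_\orho$ and $\Ph_\orho^\al$ versus $\Ph_\orho^\be=\Ph_\orho^\al\circ\Ad v_\orho^*$. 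Condition (E.\ref{item:limom}) guarantees that, modulo $\om$, each summand splits so that the $\al$-side is asymptotically central on the representative $E^\nu$, while the hypothesis $\|\be_\orho(\chi)-\al_\orho(\chi)\|<\vep/|K|_\vph$ controls the discrepancy between the $\al$- and $\be$-pairings; summing over the $|\mK|\leq|K|_\vph$ terms produces a total error below $\vep$. Passing to a representing unitary sequence $(w^\nu)\subset M$ of $\mu$ (available because $\mu$ is unitary and $M$ embeds into $M^\om$ normally) and choosing $\nu$ close enough to $\om$ simultaneously preserves the norm estimate up to a harmless perturbation and realises the commutator bound strictly, completing the proof. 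The main subtlety is precisely in this last step: one must keep the index-theoretic constants from Lemma \ref{lem:omrho} aligned with the weighting $1/|K|_\vph$ in the hypothesis so that the bound does not blow up when $\mK$ is large.
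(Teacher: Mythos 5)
Your plan follows the paper's proof quite closely in structure: construct a maximal element of the modified $\meJ$ by a Zorn argument (using the packing lemma proved just before the theorem), pass to the Shapiro unitary $\mu=(\id\oti\vph)(vE)$ with $E=\ovl E+p\oti e_\btr$, read the $\sharp$-norm estimate off $c_E$ and $d_E$, and use Lemma~\ref{lem:omrho} together with condition~(E.7) and the hypothesis for the commutator bound. That is exactly the skeleton of the paper's argument, so the approach is the right one. (Your decomposition $B+C+D$ is a slightly less direct route to the norm estimate; the paper's proof shows that $\|v_F\al_F^\om(\mu)-\mu\oti F\|^2_{g(\ps\oti\vph)g^*}=-2\Re c_E$ and the adjoint gives $-2\Re d_E$, so once $|c_E|,|d_E|$ are bounded, the norm estimate is immediate. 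Your observation that the diagonal terms $a-1$ of Theorem~\ref{thm:almvan} disappear here because $\al$ is a genuine action rather than a cocycle action is correct.)

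The one part I would push back on concretely is the commutator step. The displayed identity
\[
(\chi\mu-\mu\chi)(x)=\sum_{\rho\in\mK}d(\rho)\bigl[(\chi\circ\Ph_\orho^\al\oti\vph_\rho)-(\chi\circ\Ph_\orho^\be\oti\vph_\rho)\bigr]\bigl(\cdots\bigr)
\]
is not an exact equality. When you rewrite $(\id\oti\vph)(vE^\nu)$ via Lemma~\ref{lem:omrho} and commute it past $\chi$, two distinct error terms appear: one governed by $[\chi\circ\Ph_\orho^\al,v_\orho^*]$ (which equals $\|\be_\orho(\chi)-\al_\orho(\chi)\|$ in norm and is controlled by the hypothesis), and a genuinely separate one $[(\chi\circ\Ph_\orho^\al\oti\vph_\rho),(1\oti{}_\orho\De_\rho(e_\btr))\al_\orho(E_\rho^\nu)]\circ\iota_\orho$, which is precisely what (E.7) drives to zero as $\nu\to\om$. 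You do acknowledge (E.7)'s role informally, but your formula conflates the two effects, and the expression as written is also ill-typed (the argument mixes $M\oti B(H_\rho)$ and $M\oti B(H_\orho)\oti B(H_\rho)$). The other imprecision is the weighting in the final sum: the correct prefactor is $d(\rho)^2$, and the bound closes because $\sum_{\rho\in\mK}d(\rho)^2=|K|_\vph$, so that $\sum_{\rho\in\mK}d(\rho)^2\cdot\vep/|K|_\vph=\vep$. Your ``$|\mK|\le|K|_\vph$ terms'' phrasing underweights the large-dimensional $\rho$ and, as stated, does not give the tight $\vep$ bound.
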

\begin{proof}
Let $\meS$ be the set of families $E\in\meJ$
satisfying $|c_E^g|\leq 5\de^{1/2}|F|_\vph b_E$
and $|d_E^g|\leq 5\de^{1/2}|F|_\vph b_E$
for all $g\in\mG$.
In $\meS$, the order $E\leq E'$ is given
so that $E'$ satisfies (1), (2) and (3) in the previous lemma.
Then we see that $\meS$ is inductive,
and we can take a maximal element $\ovl{E}$ thanks to Zorn's lemma.
Let $p:=1-(\id\oti\vph)(\ovl{E})$ and $E:=E+p\oti e_\btr$.
We estimate $c_E$ and $d_E$ as follows.
Let
\begin{align*}
Z_1&:=
(\mu_{\ovl{E}}^*\oti1\oti1)
(\id\oti{}_F\De)(v)(\al^\om(\ovl{E})-(\id\oti{}_F\De)(\ovl{E})),
\\
Z_2&:=
(p\oti1\oti1)
(\id\oti{}_F\De)(v)(\al^\om(\ovl{E})-(\id\oti{}_F\De)(\ovl{E})),
\\
Z_3&:=
(\mu_{\ovl{E}}^*\oti1\oti1)
(\id\oti{}_F\De)(v)(\al^\om(p)\oti e_\btr-p\oti{}_F\De(e_\btr)),
\\
Z_4&:=
(p\oti1\oti1)
(\id\oti{}_F\De)(v)(\al^\om(p)\oti e_\btr-p\oti{}_F\De(e_\btr)),
\end{align*}
and then
\[
c_{E}=(\ps\oti\vph\oti\vph)(g_{12}^*(Z_1+Z_2+Z_3+Z_4)g_{12}).
\]

For $Z_1$, we have the following
\begin{equation}
|(\ps\oti\vph\oti\vph)(g_{12}^*Z_1g_{12})|
=|c_{\ovl{E}}|
<6\de^{1/2}|F|_\vph.
\label{eq:Z1}
\end{equation}

Next, for $Z_2$,
\begin{align*}
Z_2
&=
(p\oti1\oti1)
(\id\oti{}_F\De)(v)(\al^\om(\ovl{E})-(\id\oti{}_F\De)(\ovl{E}))
\\
&=
(p\oti1\oti1)v_F\al_F^\om(v\ovl{E})
-
(p\oti1\oti1)(\id\oti{}_F\De)(v\ovl{E}))
\\
&=
(p\oti1\oti1)v_F\al_F^\om(v\ovl{E}),
\end{align*}
and we get
\begin{align}
|(\ps\oti\vph\oti\vph)(g_{12}^*Z_2g_{12})|
&\leq
|(\ps\oti\vph\oti\vph)(g_{12}^*(p\oti1\oti1)v_F\al_F^\om(v\ovl{E})g_{12})|
\notag\\
&=
|(\ps\oti\vph)(g(p\oti1)v_F\al_F^\om(\mu_{\ovl{E}})g)|
\notag\\
&\leq
\ta_\om(p)\|gv_F\al_F^\om(\mu_{\ovl{E}})g\||F|_\vph
\notag\\
&\leq
\ta_\om(p)|F|_\vph
\notag\\
&<
\de^{1/2}|F|_\vph,
\label{eq:Z2}
\end{align}
where we have used the fact
that $p\oti1$ is centralizing $g(\ps\oti\vph)g^*$.

Since
$Z_3=
(\mu_{\ovl{E}}^*\oti1\oti1)
(v_F\al_F^\om(p)\oti e_\btr-p\oti{}_F\De(e_\btr))$
and $\mu_{\ovl{E}}^*p=0$,
we have
\begin{align}
|(\ps\oti\vph\oti\vph)(g_{12}^*Z_3g_{12})|
&=
|(\ps\oti\vph)
(g^*(\mu_{\ovl{E}}^*\oti1)
(v_F\al^\om(p)-p\oti F)g)|
\notag\\
&=
|(\ps\oti\vph)
(g^*(\mu_{\ovl{E}}^*\oti1)
v_F\al^\om(p)g)|
\notag\\
&\leq
\|v_F^*(\mu_{\ovl{E}}\oti1)g\|_{\ps\oti\vph}
\cdot
\|\al_F^\om(p)g)\|_{\ps\oti\vph}
\notag\\
&\leq
\|F\|_\vph\cdot\ta_\om(p)^{1/2}\|F\|_\vph
\notag\\
&<
\de^{1/4}|F|_\vph.
\label{eq:Z3}
\end{align}

For the last $Z_4$, we have the following computation:
\begin{align}
|(\ps\oti\vph\oti\vph)(g_{12}^*Z_4g_{12})|
&=
|(\ps\oti\vph)(g^*
(p\oti1\oti1)(v_F\al_F^\om(p)\oti e_\btr-p\oti{}_F\De(e_\btr))
g)|
\notag\\
&\leq
|(\ps\oti\vph)(g^*(p\oti1)v_F\al_F^\om(p)g)|
+
|(\ps\oti\vph)(g^*(p\oti F)g)|
\notag\\
&\leq
\|v_F^*(p\oti1)g\|_{\ps\oti\vph}
\|\al_F^\om(p)g\|_{\ps\oti\vph}
+
\ta_\om(p)|F|_\vph
\notag\\
&\leq
2\ta_\om(p)|F|_\vph
\notag\\
&<2\de^{1/2}|F|_\vph.
\label{eq:Z4}
\end{align}

Therefore, from (\ref{eq:Z1}), (\ref{eq:Z2}), (\ref{eq:Z3})
and (\ref{eq:Z4}),
we have
\[
|c_E|<10\de^{1/4}|F|_\vph.
\]
Likewise, we get
\[
|d_E|<10\de^{1/4}|F|_\vph.
\]

Now we set the Shapiro unitary $\mu:=\mu_E=(\id\oti\vph)(vE)$.
Then we have
\begin{align*}
&\|v_F\al_F^\om(\mu)-\mu\oti F\|_{g(\ps\oti\vph)g^*}^2
\\
&=
2|F|_\vph
-2\Re (\ps\oti\vph)(g^*(\mu^*\oti F)v_F\al_F^\om(\mu)g)
\\
&=
2|F|_\vph
-2\Re (\ps\oti\vph\oti\vph)
(g_{12}^*(\mu^*\oti F\oti1)
(\id\oti{}_F\De)(v)\al^\om(E)g_{12})
\\
&=
-2\Re c_E
\\
&\leq
2|c_E|<20\de^{1/4}|F|_\vph,
\end{align*}
and
\begin{align*}
&\|(v_F\al_F^\om(\mu))^*-\mu^*\oti F\|_{g(\ps\oti\vph)g^*}^2
\\
&=
2|F|_\vph
-2\Re (\ps\oti\vph)(g^*v_F\al_F^\om(\mu)(\mu^*\oti F)g)
\\
&=
2|F|_\vph
-2\Re (\ps\oti\vph\oti\vph)
(g_{12}^*
(\id\oti{}_F\De)(v)\al^\om(E)
(\mu^*\oti F\oti1)
g_{12})
\\
&=
-2\Re d_E
\\
&\leq
2|d_E|<20\de^{1/4}|F|_\vph.
\end{align*}
Thus we obtain
\begin{equation}
\|v_F\al_F^\om(\mu)-\mu\oti F\|_{g(\ps\oti\vph)g^*}^\sharp
\leq
\sqrt{20}\de^{1/8}\|F\|_\vph.
\label{eq:musharp}
\end{equation}

Next we prove the latter statement about commutativity.
Take representing sequences $E^\nu$ of $E$,
so that they are projections.
Then for $\nu\in\N$,
we have
\begin{align*}
(\id\oti\vph)(vE^\nu)
&=
(\id\oti\vph)(vE^\nu)
\\
&=
\sum_{\rho\in\mK}d(\rho)^2
(\id\oti\om_\rho)(v_\rho E_\rho^\nu)
\\
&=
\sum_{\rho\in\mK}d(\rho)^2
(\id\oti\om_\rho)(\al_\rho(v_\orho^*) E_\rho^\nu)
\\
&=
\sum_{\rho\in\mK}
(\Ph_\orho^\al\oti\vph_\rho)(v_\orho^*(1\oti {}_\orho\De_\rho(e_\btr))
\al_\orho(E_\rho^\nu))
\quad\mbox{by Lemma } \ref{lem:omrho}.
\end{align*}
Thus for $x\in M$ and $\chi\in\Ps$,
we obtain
\begin{align*}
[\chi,(\id\oti\vph)(vE^\nu)](x)
&=
\sum_{\rho\in\mK}
(\chi\circ\Ph_\orho^\al\oti\vph_\rho)
(v_\orho^*(1\oti {}_\orho\De_\rho(e_\btr))
\al_\orho(E_\rho^\nu)\al_\orho(x))
\\
&\quad
-
\sum_{\rho\in\mK}
(\chi\circ\Ph_\orho^\al\oti\vph_\rho)
(\al_\orho(x)v_\orho^*(1\oti {}_\orho\De_\rho(e_\btr))
\al_\orho(E_\rho^\nu))
\\
&=
\sum_{\rho\in\mK}
([\chi\circ\Ph_\orho^\al,v_\orho^*]\oti\vph_\rho)
((1\oti {}_\orho\De_\rho(e_\btr))
\al_\orho(E_\rho^\nu)\al_\orho(x))
\\
&\quad
+
\sum_{\rho\in\mK}
[(\chi\circ\Ph_\orho^\al\oti\vph_\rho),
((1\oti {}_\orho\De_\rho(e_\btr))
\al_\orho(E_\rho^\nu))]
(\al_\orho(x)v_\orho^*).
\end{align*}
Hence we have
\begin{align*}
&\|[\chi,(\id\oti\vph)(vE^\nu)]\|
\\
&\leq
\sum_{\rho\in\mK}
d(\rho)^2\|[\chi\circ\Ph_\orho^\al,v_\orho^*]\|
+
\sum_{\rho\in\mK}
\|
[(\chi\circ\Ph_\orho^\al\oti\vph_\rho),
(1\oti {}_\orho\De_\rho(e_\btr))
\al_\orho(E_\rho^\nu)\circ\iota_{\orho}]
\|
\\
&\leq
\sum_{\rho\in\mK}
d(\rho)^2\|[\be_\orho(\chi)-\al_\orho(\chi)]\|
+
\sum_{\rho\in\mK}
\|
[(\chi\circ\Ph_\orho^\al\oti\vph_\rho),
(1\oti {}_\orho\De_\rho(e_\btr))
\al_\orho(E_\rho^\nu)]\circ\iota_{\orho}
\|.
\end{align*}
By the property (E.\ref{item:limom}), we get
\[
\lim_{\nu\to\om}
\|[\chi,(\id\oti\vph)(vE^\nu)]\|
\leq
\sum_{\rho\in\mK}
d(\rho)^2\|[\be_\orho(\chi)-\al_\orho(\chi)]\|
<\vep.
\]

Let $\mu^\nu$ be a unitary representing sequence of $\mu$.
Then $\mu^\nu-(\id\oti\vph)(vE^\nu)\to0$
in the strong* topology as $\nu\to\om$.
Therefore the inequality above implies
\begin{equation}
\lim_{\nu\to\om}
\|[\chi,\mu^\nu]\|
=
\lim_{\nu\to\om}
\|[\chi,(\id\oti\vph)(vE^\nu)]\|
<\vep.
\label{eq:chimu}
\end{equation}
By (\ref{eq:musharp}) and (\ref{eq:chimu}),
we can take some $\nu\in\N$ so that
$w:=\mu^\nu$ has the desired properties.
\end{proof}

\subsection{Intertwining argument}
We will prove the cocycle conjugacy
of two actions
using Bratteli--Elliott--Evans--Kishimoto intertwining argument
\index{intertwining argument}
by repeated use of Corollary \ref{cor:coc-per} and Theorem \ref{thm:coc-van}.

\begin{thm}
\label{thm:free-class}
Let $\al,\be$ be centrally free actions of $\bhG$ on a McDuff factor $M$.
Suppose that there exists a homomorphism
$\th\col\IG\ra \Aut(M)$ such that
the maps $\al_\pi\circ\th_\pi^{-1}$ and $\be_\pi\circ\th_\pi^{-1}$
are approximately inner for all $\pi\in\IG$.
Then they are strongly cocycle conjugate, that is,
there exist an automorphism $\si\in\oInt(M)$ and an $\al$-cocycle
$v$ such that
\[
\Ad v\circ\al=(\si^{-1}\oti\id)\circ\be\circ\si.
\]
\end{thm}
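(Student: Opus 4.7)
The plan is to run a Bratteli--Elliott--Evans--Kishimoto type back-and-forth intertwining argument, alternating between Corollary \ref{cor:coc-per} (which manufactures an $\al$-cocycle perturbation making two actions close on prescribed data) and Theorem \ref{thm:coc-van} (which, when two such actions are already close and linked by an intertwining cocycle, manufactures a unitary $w \in U(M)$ that approximately conjugates $\be$ onto $\al$, with $w$ nearly central against prescribed normal functionals). Both input hypotheses---central freeness and approximate innerness modulo the common homomorphism $\th$---are stable under cocycle perturbation of $\al$ and under inner conjugation of $\be$, so the two lemmas can be iterated.

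More precisely, fix an increasing exhausting system $(\mF_n, \Ps_n, \mG_n, \vep_n)$ of finite subsets $\mF_n \Subset \IG$, $\Ps_n \Subset M_*$, $\mG_n \Subset U(M \oti \lhG)$, with $\vep_n$ summable and chosen small enough so that the error controlled at stage $n$ dominates all subsequent corrections on $(\mF_n, \Ps_n)$. Starting from $\al^{(0)} := \al$, $\be^{(0)} := \be$, $v^{(0)} := 1 \in M \oti \lhG$, and $u_0 := 1 \in U(M)$, I construct inductively an $\al$-cocycle $v^{(n)}$ and a unitary $u_n \in U(M)$, setting
\[
\al^{(n)} := \Ad v^{(n)} \circ \al, \qquad \be^{(n)} := \Ad u_n^* \circ \be \circ \Ad u_n,
\]
in such a way that $\al^{(n)}$ and $\be^{(n)}$ agree to within $\vep_n$ on the test data $(\mF_n, \Ps_n)$. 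At odd stages I compare $\al^{(2k)}$ and $\be^{(2k)}$ via Corollary \ref{cor:coc-per} to obtain an $\al^{(2k)}$-cocycle $z$ of the form $z = v^{(2k+1)} (v^{(2k)})^{-1}$ with $\Ad z \circ \al^{(2k)} \approx \be^{(2k)}$ on $(\mF_{2k+1}, \Ps_{2k+1})$, keeping $u_{2k+1} = u_{2k}$. At even stages, now $\al^{(2k+1)}$ and $\be^{(2k)}$ being close, Theorem \ref{thm:coc-van} furnishes $w_{k+1} \in U(M)$ with $\Ad w_{k+1}^* \circ \be^{(2k)} \circ \Ad w_{k+1} \approx \al^{(2k+1)}$ on $(\mF_{2k+2}, \Ps_{2k+2})$ and $\|[w_{k+1}, \chi]\| < \vep_{2k+2}$ for $\chi \in \Ps_{2k+1}$; set $u_{2k+2} := u_{2k+1} w_{k+1}$ and $v^{(2k+2)} := v^{(2k+1)}$.

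Provided the test data $\Ps_n$ is enlarged at each stage to absorb the images under previous perturbations (so that smallness with respect to the new data implies smallness with respect to the old), the small-perturbation estimates in Corollary \ref{cor:coc-per} and in conclusion (2) of Theorem \ref{thm:coc-van} ensure that $v^{(n+1)} (v^{(n)})^{-1}$ and $u_{n+1} u_n^*$ are both close to $1$ on the earlier test data $\Ps_n$. Consequently $\{v^{(n)}\}$ is strong-$*$ Cauchy in $M \oti \lhG$ with limit an $\al$-cocycle $v$, and $\{\Ad u_n\}$ is Cauchy in $\Aut(M)$ with respect to the $u$-topology, with limit $\si \in \oInt(M)$ since each $\Ad u_n$ is inner. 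Passing to the limit in $\be^{(n)} \approx \al^{(n)}$ yields $\si^{-1} \circ \be \circ \si = \Ad v \circ \al$.

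The main obstacle will be the bookkeeping required to arrange that the cocycle $z$ produced by Corollary \ref{cor:coc-per} at stage $2k+1$ can be chosen so that $z$ itself, not merely $\Ad z \circ \al^{(2k)}$, is close to $1$ on the preceding data $(\mF_{2k}, \Ps_{2k})$ where $\al^{(2k)}$ and $\be^{(2k)}$ already agree. This requires re-examining the construction in Lemma \ref{lem:coc-per}: the cocycle $W$ is manufactured from a unitary representing sequence of a slow reindexation of $V$, and when the two actions already agree up to $\vep_{2k}$ on the relevant data, the intertwining unitary $V$ can be chosen $\vep_{2k}$-close to $1$ there, so that $W$ inherits the same smallness; the 2-cohomology vanishing in Theorem \ref{thm:2cohvan} then produces the final $\al$-cocycle $z$ with $\|z_{\mF_{2k}} - 1 \oti \mF_{2k}\|^\sharp$ controlled by the $\de_n$'s of that theorem, and by selecting the parameters $F_n, K_n$ one step ahead one gets the required uniform smallness on all previously fixed test data. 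Once this compatibility is set up, the convergence in (i) and (ii) above is routine and (iii) follows from passing to the limit.
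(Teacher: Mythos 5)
Your overall strategy (a BEEK intertwining built from Corollary \ref{cor:coc-per} and Theorem \ref{thm:coc-van}) is the right one, but the bookkeeping you propose has a genuine gap that your suggested fix does not close.

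The sticking point is your assertion that the $\al^{(2k)}$-cocycle $z$ produced by Corollary \ref{cor:coc-per} ``can be chosen $\vep_{2k}$-close to $1$'' once $\al^{(2k)}$ and $\be^{(2k)}$ are close. This is false. In the proof of Corollary \ref{cor:coc-per} the output cocycle is $v=\hat v^\nu=v^\nu W^\nu$, where $v^\nu\to 1$ but $W^\nu$ does \emph{not} converge to $1$; and in Lemma \ref{lem:coc-per} the unitary $V$ is constructed to intertwine $\th$ with $\be\circ\th^{-1}$, not $\al$ with $\be$, so closeness of $\al$ to $\be$ gives no control on $V$. What is true --- and this is exactly what Theorem \ref{thm:coc-van} provides --- is that $z$ is close to a \emph{coboundary} $(w\oti 1)\al^{(2k)}(w^*)$ with $w$ approximately central, not close to $1$. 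Consequently $v^{(2k+1)}=z\,v^{(2k)}$ differs from $v^{(2k)}$ by something that is large, and your sequence $\{v^{(n)}\}$ is not Cauchy; splitting the construction into ``cocycle-only'' odd stages and ``conjugation-only'' even stages does not repair this, because the cocycle step alone already injects the non-small coboundary factor.

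The resolution in the paper is to perform both operations \emph{at the same step}: given $\ga^{(n-1)}$ and $\ga^{(n-2)}$, Corollary \ref{cor:coc-per} yields a $\ga^{(n-2)}$-cocycle $v^n$, Theorem \ref{thm:coc-van} then yields $w_n$ almost central with $v^n$ close to the coboundary $(w_n\oti1)\ga^{(n-2)}(w_n^*)$, and one sets $u^n:=v^n\ga^{(n-2)}(w_n)(w_n^*\oti1)$ (which \emph{is} small, condition ($n$.3)) and $\ga^{(n)}=\Ad u^n\circ\Ad(w_n\oti1)\circ\ga^{(n-2)}\circ\Ad w_n^*$. The accumulating unitaries are tracked as $\th_n=\Ad w_n\circ\th_{n-2}$, and the accumulating cocycles as $\bar u^n=u^n(w_n\oti1)\bar u^{n-2}(w_n^*\oti1)$. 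The Cauchy estimate for $\{\bar u^n\}$ then uses \emph{both} the smallness of $u^n$ and the near-centrality of $w_n$ (condition ($n$.4)) to control the conjugation by $w_n\oti1$; neither ingredient alone suffices. This simultaneous tracking, with the coboundary part absorbed into $\th_n$ rather than forced to be trivial, is what your proposal is missing.
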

\begin{proof}
Let $\vep_n:=1/4^n$.
We fix a faithful normal state $\ph$ as before.
Let $\{\Ps_n\}_{n=1}^\infty$ be an increasing sequence of
finite subsets of $M_*$ that is total in $M_*$.
Let $F_n,K_n$ and $\de_n$ be as before,
where we may take them as $F_0=e_\btr=K_0$ and $\de_0=1$.

Set $\Ph_0=\emptyset$,
$\mG_{-1}:=\{1\}=:\mG_0$,
$\ga^{(-1)}:=\be$, $\ga^{(0)}:=\al$, $u^0:=1=:u^{-1}$
and $\th_0:=\id_M=:\th_{-1}$.
We will inductively construct the following for $n\geq1$:
\begin{itemize}
\item
$\Ph_n$, a finite subset of $M_*$,
\item
$\mG_n$, a finite subset of $U(M)$,
\item
$\ga^{(n)}$, an action of $\bhG$ on $M$,
\item
$w_n\in U(M)$,
\item
$\th_n\in\Int(M)$,
\item
$u^n$, an $\Ad (w_n\oti1)\circ\ga^{(n-2)}\circ\Ad w_n^*$-cocycle,
\item
$\bar{u}^n$, a $(\th_n\oti\id)\circ\ga^{(\epsilon(n))}\circ\th_n^{-1}$-
cocycle, where $\epsilon$ is the quotient map from $\Z$ onto $\Z/2\Z$
that is identified with $\{0,-1\}$.
\end{itemize}
such that
\begin{enumerate}[($n$.1)]
\item
$\Ph_n=\Ps_n\cup \th_{n-2}(\Ps_n)\cup
\bigcup_{\pi\in\mF_n,i,j\in I_\pi}
\{\ph\bar{u}_{\pi_{i,j}}^{n-1},\bar{u}_{\pi_{i,j}}^{n-1}\ph\}
\cup\Ph_{n-1}$,
\label{en:Ph}

\item
$\|\ga_\rho^{(n)}(\chi)-\ga_\rho^{(n-1)}(\chi)\|<\vep_n/(2|K_{n-1}|_\vph|K_n|_\vph)$
for all $\rho\in\mK_n\cup\ovl{\mK_n}$ and $\chi\in\Ph_n$,
\label{en:ga-rho}

\item
$\|u_{F_n}^n-1\oti F_n\|_{g(\ph\oti\vph)g^*}<8\de_n^{1/8}\|F_n\|_\vph$
for all $g\in\mG_{n-2}$,
\label{en:u}

\item
$\|[w_n,\chi]\|<\vep_{n-1}$ for all $\chi\in\Ph_{n-1}$,
\label{en:w}

\item
$\bar{u}^n=u^n(w_n\oti1)\bar{u}^{n-2}(w_n^*\oti1)$,
\label{en:bu}

\item
$\ga^{(n)}=\Ad u^n\circ \Ad (w_n\oti1)\circ\ga^{(n-2)}\circ\Ad w_n^*$,
\label{en:ga}

\item
$\th_n=\Ad w_n\circ\th_{n-2}$,
\label{en:th}

\item
$\mG_n=\{1\}\cup u^n\mG_{n-2}\cup\{\bar{u}^n\}$.
\label{en:G}
\end{enumerate}

\noindent{\bf Step 1.}
Let $\Ph_1$ be as (1.\ref{en:Ph}).
By Corollary \ref{cor:coc-per},
we can take a $\ga^{(-1)}$-cocycle $v^1$ satisfying
$\|\Ad v_\rho^1(\ga_\rho^{(-1)}(\chi))-\ga_\rho^{(0)}(\chi)\|
<\vep_1/2|K_1|_\vph$
for all $\rho\in \mK_1\cup\ovl{\mK_1}$
and $\chi\in\Ph_1$.
Then by Theorem \ref{thm:coc-van},
we obtain a unitary $w_1\in U(M)$ with
$\|v_{F_1}^1\ga_{F_1}^{(-1)}(w_1)-w_1\oti F_1
\|_{\ph\oti\vph}^\sharp<5\de_1^{1/8}\|F_1\|_\vph$.
We put $u^1:=v^1\ga^{(-1)}(w_1)(w_1^*\oti 1)$
that is an
$\Ad(w_1\oti1)\circ\ga^{(-1)}\circ\Ad w_1^*$-cocycle.
Then we set $\bar{u}^1$ as (1.\ref{en:bu}),
and (1.\ref{en:u}) holds by definition of
the norm $\|\cdot\|_{\ph\oti\vph}^{\sharp}$.
We set $\ga^{(1)}$, $\th_1$ and $\mG_1$
as (1.\ref{en:ga}), (1.\ref{en:th}) and (1.\ref{en:G}).
Then (1.\ref{en:ga-rho}) holds and so does (1.\ref{en:w})
because $\Ph_0$ is empty.

\noindent{\bf Step 2.}
Let $\Ph_2$ be as (2.\ref{en:Ph}).
By Corollary \ref{cor:coc-per},
we can take a $\ga^{(0)}$-cocycle $v^2$ satisfying
$\|\Ad v_\rho^2(\ga_\rho^{(0)}(\chi))-\ga_\rho^{(1)}(\chi)\|
<\vep_2/(2|K_1|_\vph|K_2|_\vph)$
for all $\rho\in \mK_2\cup\ovl{\mK_2}$
and $\chi\in\Ph_2$.
The condition (1.\ref{en:ga-rho}) implies
$\|\Ad v_\rho^2(\ga_\rho^{(0)}(\chi))
-\ga_\rho^{(0)}(\chi)\|
<\vep_1/|K_1|_\vph$
for all $\rho\in \mK_1\cup\ovl{\mK_1}$
and $\chi\in\Ph_1$.
Then by Theorem \ref{thm:coc-van},
we get a unitary $w_2\in U(M)$ with
$\|v_{F_2}^2\ga_{F_2}^{(0)}(w_2)-w_2\oti F_2
\|_{\ph\oti\vph}^\sharp<5\de_2^{1/8}\|F_2\|_\vph$
and (2.\ref{en:w}).
We put $u^2:=v^2\ga^{(0)}(w_2)(w_2^*\oti 1)$
that is an
$\Ad(w_2\oti1)\circ\ga^{(0)}\circ\Ad w_2^*$-cocycle.
We set $\bar{u}^1$ as (2.\ref{en:bu}).
Then (2.\ref{en:u}) holds,
and we set $\ga^{(2)}$, $\th_2$ and $\mG_2$
as (2.\ref{en:ga}), (2.\ref{en:th}) and (2.\ref{en:G}).
Then (2.\ref{en:ga-rho}) holds.

\noindent{\bf Step $\boldsymbol{n}$.}
Suppose that we have done the Step $(n-1)$.
We set $\Ph_n$ as ($n$.\ref{en:Ph}).
By Corollary \ref{cor:coc-per},
we obtain a $\ga^{(n-2)}$-cocycle $v^n$ satisfying
$\|\Ad v_\rho^n(\ga_\rho^{(n-2)}(\chi))-\ga_\rho^{(n-1)}(\chi)\|
<\vep_n/2|K_n|_\vph$
for all $\rho\in \mK_n\cup\ovl{\mK_n}$
and $\chi\in\Ps_n$.
The condition ($n-1$.\ref{en:ga-rho}) implies
$\|\Ad v_\rho^n(\ga_\rho^{(n-2)}(\chi))
-\ga_\rho^{(n-2)}(\chi)\|
<\vep_{n-1}/|K_{n-1}|_\vph$
for all $\rho\in \mK_{n-1}\cup\ovl{\mK_{n-1}}$
and $\chi\in\Ps_{n-1}$.
Then by Theorem \ref{thm:coc-van},
we get a unitary $w_n\in U(M)$ with
$\|v_{F_n}^n\ga_{F_n}^{(n-2)}(w_n)-w_n\oti F_n
\|_{g(\ph\oti\vph)g^*}^\sharp
<5\de_n^{1/8}\|F_n\|_\vph$
for all $g\in\mG_{n-2}$
and ($n$.\ref{en:w}).
We put $u^n:=v^n\ga^{(n-2)}(w_n)(w_n^*\oti 1)$
that is an
$\Ad(w_n\oti1)\circ\ga^{(n-2)}\circ\Ad w_n^*$-cocycle.
Then ($n$.\ref{en:u}) holds,
and we set $\ga^{(n)}$, $\th_n$ and $\mG_n$
as ($n$.\ref{en:ga}), ($n$.\ref{en:th}) and ($n$.\ref{en:G}).
Then ($n$.\ref{en:ga-rho}) holds.
Thus we have finished the construction by induction.

We show the convergence of
$\dsp\lim_{m\to\infty}\th_{2m-1}$ and $\dsp\lim_{m\to\infty}\th_{2m}$
in $\Aut(M)$.
For fixed $n$, $\chi\in \Ph_n$ and $2m-1\geq n+1$,
we have
\[
\|\chi\circ\th_{2m+1}-\chi\circ\th_{2m-1}\|
=
\|\chi\circ\Ad w_{2m+1}-\chi\|
=
\|[\chi,w_{2m+1}]\|
<\vep_{2m},
\]
and
\[
\|\chi\circ\th_{2m+1}^{-1}-\chi\circ\th_{2m-1}^{-1}\|
=
\|
[\th_{2m-1}(\chi),w_{2m+1}]
\|
<\vep_{2m}.
\]
Thus $\{\th_{2m+1}\}_{m}$ is a Cauchy sequence
in the $u$-topology.
The convergence of $\{\th_{2m}\}_{m}$ is also shown
in a similar way.
Let $\bar{\th}_1:=\dsp\lim_{m\to\infty}\th_{2m+1}$
and $\bar{\th}_0:=\dsp\lim_{m\to\infty}\th_{2m}$.

It is easy to see that
$\bar{u}^{2m+1}$ and $\bar{u}^{2m}$ are
cocycles of
$(\th_{2m+1}\oti\id)\circ\ga^{(-1)}\circ\th_{2m+1}^{-1}$
and
$(\th_{2m}\oti\id)\circ\ga^{(0)}\circ\th_{2m}^{-1}$,
respectively.
We show that $\{\bar{u}^{2m+1}\}_m$ and $\{\bar{u}^{2m}\}$
are Cauchy sequences in the strong* topology.
Let $n$ be fixed and take $\chi\in \Ps_n$.
Suppose that $m$ is enough large to satisfy $2m+1\geq n+1$.
Note that we have the following inequalities:
\begin{align}
\|[w_{2m+1}\oti1,(\ph\oti\vph_{F_n})\bar{u}^{2m-1}]\|
&\leq
\sum_{\pi\in\mF_n}\sum_{i,j\in I_n}
\|[w_{2m+1}\oti1,\ph \bar{u}_{\pi_{i,j}}^{2m-1}\oti\vph_\pi e_{\pi_{i,j}}]\|
\notag\\
&\leq
\sum_{\pi\in\mF_n}\sum_{i,j\in I_n}
\|[w_{2m+1},\ph \bar{u}_{\pi_{i,j}}^{2m-1}]\|\|\vph_\pi e_{\pi_{i,j}}\|
\notag\\
&<
\vep_{2m}|F_n|_\vph,
\label{eq:w-ph}
\end{align}
and
\begin{align}
\|[w_{2m+1}\oti1,\bar{u}^{2m-1}(\ph\oti\vph_{F_n})]\|
&\leq
\sum_{\pi\in\mF_n}\sum_{i,j\in I_n}
\|[w_{2m+1}\oti1, \bar{u}_{\pi_{i,j}}^{2m-1}\ph\oti e_{\pi_{i,j}}\vph_\pi ]\|
\notag\\
&\leq
\sum_{\pi\in\mF_n}\sum_{i,j\in I_n}
\|[w_{2m+1}, \bar{u}_{\pi_{i,j}}^{2m-1}\ph]\|\|e_{\pi_{i,j}}\vph_\pi \|
\notag\\
&<
\vep_{2m}|F_n|_\vph.
\label{eq:w-ph2}
\end{align}
Then we have
\begin{align*}
&\|(\ph\oti\vph_{F_n})\bar{u}^{2m+1}
-(\ph\oti\vph_{F_n})\bar{u}^{2m-1}\|
\\
&\leq
\|(\ph\oti\vph_{F_n})(u^{2m+1}-1)
(w_{2m+1}\oti1)\bar{u}^{2m-1}(w_{2m+1}^*\oti1)\|
\\
&\quad+
\|(\ph\oti\vph_{F_n})
(w_{2m+1}\oti1)\bar{u}^{2m-1}(w_{2m+1}^*\oti1)
-(\ph\oti\vph_\pi)\bar{u}^{2m-1}\|
\\
&=
\|(\ph\oti\vph_{F_n})(u^{2m+1}-1)\|
\\
&\quad+
\|(\ph\oti\vph_{F_n})
(w_{2m+1}\oti1)\bar{u}^{2m-1}
-(\ph\oti\vph_\pi)\bar{u}^{2m-1}(w_{2m+1}\oti1)\|
\\
&<
\|(u_{F_n}^{2m+1})^*-1\oti F_n\|_{\ph\oti\vph}
+
\|[(\ph\oti\vph_{F_n}),
w_{2m+1}\oti1]\bar{u}^{2m-1}\|
\\
&\quad+
\|[w_{2m+1}\oti1,(\ph\oti\vph_{F_n})\bar{u}^{2m-1}]\|
\\
&<
8\de_{2m+1}^{1/8}\|F_{2m+1}\|_{\vph}
+
\|[w_{2m+1},\ph]\|\|\vph_{F_n}\|
+\vep_{2m}|F_n|_\vph
\quad\mbox{by }(\ref{eq:w-ph})\\
&<
8\de_{2m+1}^{1/8}\|F_{2m+1}\|_{\vph}
+2\vep_{2m}|F_n|_\vph,
\end{align*}
and
\begin{align*}
&\|\bar{u}^{2m+1}(\ph\oti\vph_{F_n})
-\bar{u}^{2m-1}(\ph\oti\vph_{F_n})\|
\\
&\leq
\|u^{2m+1}(w_{2m+1}\oti1)\bar{u}^{n-2}[w_n^*\oti1,\ph\oti\vph_{F_n}]\|
\\
&\quad+
\|u^{2m+1}
(w_{2m+1}\oti1)\bar{u}^{2m-1}\cdot(\ph\oti\vph_{F_n})\cdot(w_{2m+1}^*\oti1)
-u^{2m+1}\bar{u}^{2m-1}(\ph\oti\vph_{F_n})\|
\\
&\quad+
\|(u^{2m+1}-1)\bar{u}^{2m-1}(\ph\oti\vph_{F_n})\|
\\
&\leq
\|[w_{2m+1}^*,\ph]\||F_n|_\vph
+
\|[w_{2m+1}\oti1,\bar{u}^{2m-1}(\ph\oti\vph_{F_n})]\|
\\
&\quad+
\|u_{F_n}^{2m+1}
-1\oti F_n\|_{\bar{u}^{2m-1}\cdot(\ph\oti\vph_{F_n})\cdot(\bar{u}^{2m-1})^*}
\\
&<
\vep_{2m}|F_n|_\vph
+\vep_{2m}|F_n|_\vph
+8\de_{2m+1}^{1/8}\|F_{2m+1}\|_{\vph}
\quad\mbox{by }(\ref{eq:w-ph2})
\\
&<
8\de_{2m+1}^{1/8}\|F_{2m+1}\|_{\vph}
+2\vep_{2m}|F_n|_\vph.
\end{align*}
Therefore, $\{\bar{u}^{2m+1}\}_m$ is a Cauchy sequence
in the strong* topology.
Similarly we can show $\{\bar{u}^{2m}\}_m$ is also a Cauchy sequence.
Let $\hat{u}^1:=\dsp\lim_{m\to\infty}\bar{u}^{2m+1}$
and
$\hat{u}^0:=\dsp\lim_{m\to\infty}\bar{u}^{2m}$.
Then they are cocycles of
the actions
$(\bar{\th}_1\oti\id)\circ\be\circ\bar{\th}_1^{-1}$
and
$(\bar{\th}_0\oti\id)\circ\al\circ\bar{\th}_0^{-1}$,
respectively.

Since
we have $(n.\ref{en:ga})$,
$\ga^{(2m+1)}=\Ad \bar{u}^{2m+1}
\circ(\th_{2m+1}\oti\id)\circ\be\circ\th_{2m+1}^{-1}$
and
$\ga^{(2m)}=\Ad \bar{u}^{2m}
\circ(\th_{2m}\oti\id)\circ\be\circ\th_{2m}^{-1}$,
we get
\[
\Ad \hat{u}^0\circ(\bar{\th}_0\oti\id)\circ\al\circ\bar{\th}_0^{-1}
=
\Ad \hat{u}^1\circ(\bar{\th}_1\oti\id)\circ\be\circ\bar{\th}_1^{-1}.
\]
This proves the strong cocycle conjugacy of $\al$ and $\be$.
\end{proof}

\begin{cor}
\label{cor:cent-free-inj}
Let $\al$ and $\be$ be centrally free actions of $\bhG$
on an injective factor $M$.
If $\mo(\al_\pi)=\mo(\be_\pi)$ for all $\pi\in\IG$,
then $\al$ and $\be$ are cocycle conjugate.
\end{cor}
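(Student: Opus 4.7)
The plan is to derive this corollary directly from the preceding theorem (the classification of centrally free actions on McDuff factors modulo a common approximately-inner-modulo-automorphism reduction) together with the preparatory results for injective factors. What must be checked is that the hypotheses of that theorem are satisfied, namely: (i) $M$ is McDuff, (ii) both $\al$ and $\be$ are centrally free, and (iii) there exists a single homomorphism $\th\col \IG\to \Aut(M)$ with $\al_\pi\th_\pi^{-1}$ and $\be_\pi\th_\pi^{-1}$ approximately inner for every $\pi$.

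First I would dispose of (i) and (ii). Condition (ii) is the hypothesis. For (i), note that an injective factor with separable predual on which a centrally free action of a non-trivial object exists cannot be of type I, so $M$ is an injective factor of type II$_1$, II$_\infty$, or III; in all these cases $M$ is McDuff and $M_\om$ is of type II$_1$. Next, Lemma \ref{lem:CTinj} applied to $\al$ (resp.\ $\be$) gives that each has the Connes--Takesaki module and satisfies Assumption \ref{ass:app-cent}. The crucial point for (iii) is that the Connes--Takesaki modules of $\al$ and $\be$ coincide by hypothesis, so a single choice of homomorphic section will work for both actions simultaneously.

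To construct $\th$, I would invoke Example \ref{ex:modulo-auto}: let $s\col \Aut_\th(Z(\tM))\to \Aut(M)$ be the homomorphic section of the Connes--Takesaki module map from \cite{ST}. Because $\mo(\al_\pi\al_\rho) = \mo(\al_\si)$ for $\si\prec\pi\rho$ (the Connes--Takesaki module is a fusion-algebra homomorphism), setting $\th_\pi := s(\mo(\al_\pi)) = s(\mo(\be_\pi))$ produces a genuine homomorphism $\th\col\IG\to\Aut(M)$. By construction $\mo(\al_\pi\th_\pi^{-1}) = \id = \mo(\be_\pi\th_\pi^{-1})$, so \cite[Theorem A.6(1)]{MT3} applies and tells us that both $\al_\pi\th_\pi^{-1}$ and $\be_\pi\th_\pi^{-1}$ lie in $\oInt(M)$ for every $\pi\in\IG$. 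At this stage all hypotheses of the preceding theorem are in place, and we conclude that $\al$ and $\be$ are strongly cocycle conjugate, hence in particular cocycle conjugate.

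There is essentially no obstacle here: the genuine work is the preceding intertwining argument (Rohlin tower construction, 2-cohomology vanishing, intertwining cocycle vanishing, and the Bratteli--Elliott--Evans--Kishimoto argument), and all that remains is the bookkeeping above. The only subtlety worth explicit comment is the choice of the common $\th$: one must verify that the section $s$ can be chosen independent of the particular action, which is exactly the content of the construction in \cite{ST} (and of \cite[Theorem 4]{Ham}), since $s$ depends only on the abstract flow $(Z(\tM),\th)$ and not on $\al$ or $\be$.
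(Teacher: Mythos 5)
Your proposal is correct and follows exactly the route the paper takes: it simply cites Lemma \ref{lem:CTinj} together with the preceding intertwining theorem, and the bookkeeping you spell out (McDuffness of injective non-type-I factors, the common homomorphic section $s\circ\mo$ from Example \ref{ex:modulo-auto} working for both actions because $\mo(\al_\pi)=\mo(\be_\pi)$) is precisely the content of ``immediately follows.''
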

\begin{proof}
This immediately follows from Lemma \ref{lem:CTinj}
and the previous theorem.
\end{proof}

\begin{cor}[Model action]
Let $\al$ be a centrally free action of $\bhG$
on an injective factor $M$.
Let $\th\col\IG\ra \Aut(M)$ be a homomorphism
with $\mo(\th_\pi)=\mo(\al_\pi)$
and $\be$ a free action of $\bhG$ on the injective type II$_1$ factor
$\meR_0$.
Then $\al$ and $\th\oti \be$ are cocycle conjugate.
\end{cor}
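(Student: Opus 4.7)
The plan is to deduce this from Corollary \ref{cor:cent-free-inj}. All I need to do is verify that $\th\oti\be$, as an action of $\bhG$ on $M\oti\meR_0$, lives on an injective factor isomorphic to $M$, is centrally free, and has the same Connes--Takesaki module as $\al$. Since $\al$ is centrally free on $M$, the asymptotic centralizer $M_\om$ must be nontrivial, so $M$ cannot be of type $\mathrm{I}$. An injective factor of type $\mathrm{II}$ or $\mathrm{III}$ is McDuff, so $M\oti\meR_0\cong M$, and injectivity is preserved under tensoring by $\meR_0$.

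For the Connes--Takesaki module, I would use the fact that $\meR_0$ is semifinite with a unique trace $\tau$, so that for a faithful normal state $\vph$ on $M$ one has $\sigma_t^{\vph\oti\tau}=\sigma_t^{\vph}\oti\id$; hence $\widetilde{M\oti\meR_0}=\tM\oti\meR_0$ with dual flow $\th_s\oti\id$. The center of the core is therefore $Z(\tM)\oti\C\cong Z(\tM)$, and the flow of weights coincides with that of $M$. Since every automorphism of $\meR_0$ preserves the (normalized) trace, its canonical extension acts trivially on $Z(\widetilde{\meR_0})$, so $\widetilde{\th_\pi\oti\be_\pi}$ restricted to $Z(\widetilde{M\oti\meR_0})=Z(\tM)$ is just $\widetilde{\th}_\pi|_{Z(\tM)}$. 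Thus $\mo((\th\oti\be)_\pi)=\mo(\th_\pi)=\mo(\al_\pi)$.

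Central freeness of $\th\oti\be$ follows from the central freeness of $\be$ by a slice argument. Note that $\be$, being a free action of $\bhG$ on the $\mathrm{II}_1$ factor $\meR_0$, is automatically centrally free, as recalled in the introduction. Suppose $a\in (M\oti\meR_0)\oti B(H_\pi)$ satisfies $(\th_\pi\oti\be_\pi)^\om(z)a=(z\oti 1)a$ for all $z\in(M\oti\meR_0)_\om$. Specializing to $z=1\oti y$ with $y\in(\meR_0)_\om\subset(M\oti\meR_0)_\om$ gives $(1\oti\be_\pi^\om(y))a=(1\oti y\oti 1)a$, and then for each $\chi\in M_*$ the slice $b_\chi:=(\chi\oti\id\oti\id)(a)\in\meR_0\oti B(H_\pi)$ satisfies $\be_\pi^\om(y)b_\chi=(y\oti 1)b_\chi$ for all $y\in(\meR_0)_\om$; central freeness of $\be$ forces $b_\chi=0$ for every $\chi$, hence $a=0$.

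Once these three points are in hand, Corollary \ref{cor:cent-free-inj} immediately yields cocycle conjugacy of $\al$ and $\th\oti\be$. The only step that requires a little care is verifying that $\pi\mapsto\th_\pi\oti\be_\pi$ is actually an action (not merely a cocycle action) on $M\oti\meR_0$; this reduces to the homomorphism property $\th_\pi\th_\rho=\th_\si$ for every irreducible $\si\prec\pi\rho$ in Definition preceding Example \ref{ex:modulo-auto}, which plugs into the tensor decomposition of $\be_\pi\be_\rho$ in $\Sect(\meR_0)$ componentwise. I expect this bookkeeping, together with the tensor-product identification $\widetilde{M\oti\meR_0}\cong\tM\oti\meR_0$ and the flow computation, to be the only nontrivial parts; the remainder of the argument is a direct appeal to the main theorem of the section.
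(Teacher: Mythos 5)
Your proposal is correct and follows the approach the paper intends for this corollary, which is stated without proof because it is a direct consequence of Corollary \ref{cor:cent-free-inj}: identify $M\oti\meR_0\cong M$ by McDuffness, compute $\mo((\th\oti\be)_\pi)=\mo(\th_\pi)=\mo(\al_\pi)$ via $\widetilde{M\oti\meR_0}\cong\tM\oti\meR_0$ and $\sigma^{\vph\oti\tau}=\sigma^\vph\oti\id$, and verify central freeness of $\th\oti\be$ by slicing against $M_*$. One cosmetic remark: in your module computation it is cleaner to work with $Z(\widetilde{M\oti\meR_0})=Z(\tM)\oti\C$ throughout and note that the canonical extension of $\th_\pi\oti\be_\pi$ acts there as $\widetilde{\th}_\pi\oti\id$ (because $\be_\pi$ preserves $\tau$), rather than routing the argument through $Z(\widetilde{\meR_0})$, which is a different algebra; the conclusion is unaffected.
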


\section{Related problems}

\subsection{1-cohomology of an ergodic flow}
Let $(F_t)_{t\in\R}$ be an ergodic flow
on a measure space $X$.
Let $c\col X\times\R\ra K$ be a cocycle whose target space
is a compact group $K$.
Then $c(t)$ is regarded as an element of $L^\infty(X)\oti L(K)$
satisfying
\[
c(s)(\th_s\oti\id)(c(t))=c(s+t),
\quad
(\id\oti\De)(c)=c_{12}c_{13},
\]
where $\th_s\in\Aut(L^\infty(X))$
is defined by $\th_s(f)(x)=f(F_{-s} x)$ for $f\in L^\infty(X)$ and $x\in X$.

\begin{prob}
Let $c,c'\col X\times\R\ra K$ be cocycles.
Suppose that they are conjugate as $\th\oti\id$-cocycles.
Does there exist a Borel function $a\col X\ra K$ such that
\[
c'(x,t)=a(x)c(x,t)a(F_{-t} x)^*,
\quad \mbox{a.e. }x\in X?
\]
\end{prob}

\subsection{Invariants for general actions}
We have classified actions with normal modular part.
To a classification of general actions,
we present the following conjecture.
\begin{conj}
Let $\al$ be an action of $\bhG$ on an injective factor $M$.
We introduce a $\bhG^\opp$-action $\ga^\al$
on $R_\al:=\tal(\tM)'\cap(\tM\rti_\tal\bhG)$
that is defined by
\[
\ga_\pi^\al(x)=(\la_\pi^\tal)^*(x\oti1)\la_\pi^\tal
\quad\mbox{for }x\in R_\al.
\]
Let $\th$ be the dual $\R$-action on $\tM$.
If there exists an isomorphism $\Ps\col R_\al\ra R_\be$
satisfying the following conditions:
\begin{itemize}
\item
$\Ps|_{Z(\tM)}=\id$;
\item
$\Ps\circ\th_t=\th_t\circ\Ps$;
\item
$(\Ps\oti\id)\circ\ga^\al=\ga^\be\circ\Ps$;
\item
$(\Ps\oti\id)\circ\wdh{\tal}=\wdh{\tbe}\circ\Ps$,
\end{itemize}
then $\al$ and $\be$ are strongly cocycle conjugate.
\end{conj}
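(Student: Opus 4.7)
The plan is to show that the quadruple $(R_\al,\th,\ga^\al,\wdh{\tal}|_{R_\al})$, together with its embedding of $Z(\tM)$, is rich enough to recover all the classifying invariants used in Theorem~\ref{thm:non-triv-modular-main} and Corollary~\ref{cor:cent-free-inj}. The strategy is to decompose $\al$ along its modular part, transport each piece of the invariant through $\Ps$, and reassemble.

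First I would read off the Connes--Takesaki module. Because $Z(\tM)$ commutes with $\tal(\tM)\subset \tM$, we have $Z(\tM)\subset R_\al$, and the defining crossed-product relation gives $\ga_\pi^\al(z)=\tal_\pi(z)=\mo(\al_\pi)(z)$ for $z\in Z(\tM)$. The hypotheses $\Ps|_{Z(\tM)}=\id$ and $(\Ps\oti\id)\ga^\al=\ga^\be\Ps$ then force $\mo(\al)=\mo(\be)$. Next I would detect the modular part $\La(\al)$ as follows: $\pi\in\La(\al)$ precisely when the $\pi$-spectral subspace of $\wdh{\tal}$ in $R_\al$ contains an isometric family $\{W_{\pi_{ij}}\}$ normalizing $Z(\tM)$; these elements are $W_\pi=V_\pi^*\la_\pi^\tal$ where $\tal_\pi=\Ad V_\pi$, so the $\th$-cocycle $c_\pi^\al(t)=V_\pi^*\th_t(V_\pi)$ and the 2-cocycle $a_{\pi,\rho}^\al$ of Section 6 are recovered from $\th_t(W_\pi)$ and from $W_\pi W_\rho W_{\pi\rho}^{-1}$. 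Because $\Ps$ intertwines all four structures, it identifies these invariants up to the equivalence defining $X(\ps,F^M)$, and Theorem~\ref{thm:non-triv-modular-main} yields strong cocycle conjugacy of the modular parts.

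For the residual centrally free piece, I would pass to the quotient Kac algebra $\bhH_\al$ corresponding to $\La(\al)$ and its residual action $\bar\al$, which is centrally free by the results of Section~4. The restriction of $\Ps$ to the $\widehat{\bH_\al}^\mathrm{opp}$-fixed summand of $R_\al$ produces an intertwiner of the corresponding residual data, to which Corollary~\ref{cor:cent-free-inj} applies (the Connes--Takesaki modules already agree by step one). Combining the two cocycle conjugacies, using Theorem~\ref{thm:model1} to realize the composite invariant by a single model action, gives strong cocycle conjugacy of $\al$ and $\be$.

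The main obstacle will be twofold. First, the conjecture imposes no normality hypothesis on $\La(\al)$, but Theorem~\ref{thm:non-triv-modular-main} and the machinery of Section~6 rest on the subcategory $\La(\al)$ being normal (see Theorem~\ref{thm:connected}); one must either extract normality from the existence of $\Ps$ or, more likely, develop a hypergroup-valued generalization of the modular $\Ga$-kernel theory of Section~5 in which $\Ga=\IG/\!\sim$ is only a quantum homogeneous quotient. Second, the reassembly of the modular and centrally free pieces is governed by an extension/obstruction problem relating $(a^\al,c^\al)$ to the Rohlin data of $\bar\al$; verifying that $\Ps$ kills the obstruction should reduce to an analogue of the Galois/Hamachi--Kosaki decomposition of \S\ref{sec:cross}, but carrying this out in the presence of a non-trivial flow of weights and a non-normal modular part appears to be the genuinely new technical input required.
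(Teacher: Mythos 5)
This statement is a \emph{conjecture} in the paper (it appears in Section~8, ``Related problems''), and the paper supplies no proof of it — only a heuristic discussion showing how, for discrete groups, the four conditions on $\Ps$ encode the Connes--Takesaki module, the characteristic invariant and the modular cocycle, so that the conjecture there reduces to the Katayama--Sutherland--Takesaki classification. Your ``proof proposal'' is therefore not being compared against a proof in the paper that does not exist; it is competing with the paper's explicit acknowledgment that the statement is open.

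That said, your reading of the structure is accurate and closely parallels the authors' own motivation. Your step of reading off $\mo(\al_\pi)$ from the action of $\ga_\pi^\al$ on $Z(\tM)$, and recovering $c_\pi^\al$, $a_{\pi,\rho}^\al$ from the elements $W_\pi = V_\pi^*\la_\pi^{\tal}$ and their behaviour under $\th$, is exactly the computation the paper carries out after the conjecture. More importantly, you are candid about the two obstructions — that no normality hypothesis is imposed on $\La(\al)$ (so that Theorem~\ref{thm:non-triv-modular-main} and the whole modular-$\Ga$-kernel machinery of Section~5 are not directly applicable), and that the reassembly of the modular and centrally free pieces involves an extension/obstruction problem that is not solved anywhere in the paper. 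Those obstructions are precisely why the statement is a conjecture and not a theorem; your ``proposal'' is a program, not a proof, and the paper contains nothing that would close the gaps you yourself identify. (Note also that the paper immediately after the conjecture isolates Problem~8.3 on left inverses of endomorphisms as an independent unresolved ingredient, which your outline does not address.)
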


Put $W_\rho:=U_\si^*\la_\si^\tal$
for $\rho$ belonging to the modular part of $\al$.
Then we can describe how $\ga_\pi$ acts on $zW_{\rho_{mn}}$
for $z\in Z(\tM)$ and $m,n\in I_\rho$
as follows:
\begin{align}
&\ga_{\pi_{ij}}(zW_{\rho_{mn}})
\notag\\
&=
\sum_{k,\el,p,q,r,s,\si,S}
S_{\opi_\el \rho_p \pi_k}^{\si_q}
\ovl{
S_{\opi_i \rho_n \pi_j}^{\si_r}
}
\tal_{\opi_{k\el}}(zU_{\rho_{pm}}^*)U_{\si_{qs}}W_{\si_{sr}}
\notag\\
&=
\sum_{\si,S}
\vep_{\rho_m}^*
d(\pi)^{1/2}
T_{\opi,\pi}^*
\tal_\opi((z\oti1_\rho)U_\rho^*)
S_{\opi\rho\pi}^\si U_\si W_{\si}(S_{\opi\rho\pi}^\si)^*
(\vep_{\opi_i}\oti\vep_{\rho_n}\oti\vep_{\pi_j}),
\label{eq:gazW}
\end{align}
where $\si\prec\opi\rho\pi$ belongs to the modular part of $\al$,
and $S$ is an element of $\ONB(\si,\opi\rho\pi)$.
For a discrete group,
the above is written like
\begin{align*}
\ga_g(zW_{m})
&=\tal_{g^{-1}}(zU_m^*)U_{g^{-1}mg}W_{g^{-1}mg}
\\
&=
\mo(\al_g)^{-1}(z\la(m,g))W_{g^{-1}mg}.
\end{align*}
Hence
the isomorphism $\Ps$ provides us with equal invariants,
that is, the Connes--Takesaki module,
the characteristic invariant and a modular cocycle,
and those actions are strongly cocycle conjugate
from the main result of \cite{KtST}.

When we put $\rho=\btr$ at the formula above,
the coefficient of $\ga_{\pi_{ij}}(zW_{\btr})$ at $W_\btr$
is equal to $\de_{i,j}\Ph_\pi^\al(z\oti1)$.
Therefore,
if the covariant systems $R_\al$ and $R_\be$ are isomorphic,
then we obtain $\Ph_\pi^\al(z\oti1)=\Ph_\pi^\be(z\oti1)$.
This observation leads us to the following problem.

\begin{prob}
Let $\rho,\si$ be endomorphisms with finite index
on an injective factor $M$.
Assume that $\ph_{\trho}=\ph_{\tsi}$ on $Z(\tM)$ and $d(\rho)=d(\si)$.
Does there exist a sequence of unitaries $\{u^\nu\}_\nu$ in $M$
such that $\dsp\ph_\rho=\lim_{\nu\to\infty}\ph_\si\circ\Ad u^\nu$?
\end{prob}

Let us consider
when the modular part of $\al$ generates a normal subcategory
of $\IG$.
In (\ref{eq:gazW}),
we have
\[
T_{\opi,\pi}^*\tal_\opi(U_\rho^*)S_{\opi\rho\pi}^\si U_\si
=
T_{\opi,\pi}^* a_{\opi,\pi}^\al (a_{\opi,\rho}^\al)^*
(a_{\opi\rho,\pi}^\al)^*
S_{\opi\rho\pi}^\si,
\]
where $a^\al$ denotes the invariant introduced
in Lemma \ref{lem:ad}.
Since $\Ps$ maps $Z(\tM)$ identically and
$\sum_\si S_{\opi\rho\pi}^\si(S_{\opi\rho\pi}^\si)^*=1$,
we have
\[
T_{\opi,\pi}^* a_{\opi,\pi}^\al (a_{\opi,\rho}^\al)^*
(a_{\opi\rho,\pi}^\al)^*
=
T_{\opi,\pi}^* a_{\opi,\pi}^\be (a_{\opi,\rho}^\be)^*
(a_{\opi\rho,\pi}^\be)^*.
\]
This equality looks weaker than $a^\al=a^\be$.
Therefore, we might say that
the invariant proposed above is more implicit than
the pair $(a^\al,c^\al)$.

\section{Appendix}

In this section,
we collect some basic facts about index theory
for inclusions of von Neumann algebras.
All von Neumann algebras treated in this section
have separable preduals,
and a conditional expectation is
assumed to be faithful and normal.

\subsection{Index theory for von Neumann algebras}
\label{sect:index}
Let $N\subs M$ be an inclusion of
von Neumann algebras.
We say that a conditional expectation $E\col M\ra N$
has \emph{finite probability index}
\index{index!finite probability --}
if there exists a positive constant $c\in\R$,
such that
the map $E-c\id_M$ on $M$ is completely positive.
We let $\Ind_p(E)$ be the minimum
of invertible $\mu\in Z(M)_+$ such that $E-\mu^{-1}\id_M$
is completely positive.
It is trivial that $1\leq \Ind_p(E)$.
The following result is elementary.

\begin{lem}\label{lem:Ap}
Let $\{p_i\}_{i\in I}$ be a finite partition of unity in
a von Neumann algebra $A$.
If $p_i Ap_i$ is finite and of type I for each $i\in I$,
then so is $A$.
\end{lem}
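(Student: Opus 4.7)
The plan is to handle the two conclusions separately: first show that $A$ is type I by a local-to-global argument on abelian projections, and then show that $A$ is finite by using the $I_\infty$-part of its type I decomposition together with the finiteness of each corner.

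For the type I part, I would argue as follows. Let $z \in Z(A)$ be any non-zero central projection. Since $\sum_i p_i = 1$, there exists $i$ with $z p_i \neq 0$. Note that $z \in Z(A) \subset p_i A p_i{}'$, so $z \in Z(p_i A p_i)$, and hence $(z p_i) A (z p_i) = z(p_i A p_i)$ is a direct summand of the type I algebra $p_i A p_i$, so it is itself type I. Pick a non-zero abelian projection $q \leq z p_i$ in this corner. Then $q A q = q (p_i A p_i) q$ is abelian, so $q$ is abelian in $A$ as well, with $q \leq z$. Since $z \in Z(A)$ was arbitrary, $A$ has an abelian projection of central support $1$, proving $A$ is type I.

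For finiteness, by the type I structure theorem one may decompose $A = A_{\mathrm{fin}} \oplus A_\infty$, where $A_\infty$ denotes the type $I_\infty$ direct summand with central support $z_\infty \in Z(A)$; the goal is to prove $z_\infty = 0$. Since $z_\infty \in Z(A)$ commutes with each $p_i$, the projections $\{p_i z_\infty\}_{i \in I}$ form a finite partition of unity in $A_\infty$, and each corner $(p_i z_\infty) A_\infty (p_i z_\infty) = z_\infty (p_i A p_i)$ is a direct summand of the finite algebra $p_i A p_i$, hence finite. Therefore each $p_i z_\infty$ is a finite projection in $A_\infty$. Since a finite orthogonal sum of finite projections is finite (e.g. Takesaki, Theory of Operator Algebras I, V.1.38), the identity $z_\infty = \sum_i p_i z_\infty$ of $A_\infty$ would be a finite projection. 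But $A_\infty$ is properly infinite by construction, so $z_\infty = 0$, and hence $A = A_{\mathrm{fin}}$ is finite.

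The type I step is routine; the real content lies in the finiteness step, and the main obstacle is the passage from ``each $p_i A p_i$ is finite'' to ``the identity of $A_\infty$ is finite.'' I would resolve this by invoking the standard fact above on finite sums of finite projections. If one preferred to avoid that citation, one can alternatively use the homogeneous decomposition $A_\infty \cong L^\infty(X)\mathbin{\bar\otimes} B(\ell^2)$ valid under the separable predual assumption: then each $p_i z_\infty$ corresponds to a measurable field of finite-rank projections in $B(\ell^2)$, and realizing $1_{B(\ell^2)}$ almost everywhere as a finite orthogonal sum of finite-rank projections is a direct contradiction unless $z_\infty = 0$.
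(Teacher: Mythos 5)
The paper offers no proof of this lemma; it simply remarks that ``the following result is elementary'' and moves on, so there is no argument of the authors' to compare against. Your write-up supplies a complete and correct proof. The type~I half uses the standard criterion (every non-zero central projection majorizes a non-zero abelian one), and the key observation that $z p_i$ is central in $p_i A p_i$ is exactly right. The finiteness half correctly reduces to showing the identity of the properly infinite summand is finite, and the crucial input — that a finite orthogonal sum of finite projections is finite, which follows from finiteness of $e\vee f$ via the Kaplansky formula — is indeed the right lemma to invoke (Takesaki I, around Theorem~V.1.37, or Kadison--Ringrose~6.3.8). Your fallback via the homogeneous decomposition $L^\infty(X)\mathbin{\bar\otimes}B(\ell^2)$ also works, relying on the fact that a projection there is finite iff the associated measurable field of projections is finite-rank a.e. This is a clean filling-in of what the authors left as an exercise.
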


\begin{lem}\label{lem:Bab}
Let $B\subs A$ be an inclusion of von Neumann algebras
with a conditional expectation $E$ of finite probability index.
If $B\subs Z(A)$, then $A$ is finite and of type I.
\end{lem}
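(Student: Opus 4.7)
The plan is to reduce to a fiberwise statement via a direct integral decomposition of $A$ over $B$, and then to exploit the Pimsner--Popa inequality to force each fiber to be finite dimensional with uniformly bounded dimension.

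First I would observe that since $\Ind_p(E)$ is an invertible element of $Z(A)_+$, there is a scalar $c_0>0$ with $E(x)\geq c_0 x$ for every $x\in A_+$. Writing $B=L^\infty(X,\mu)$ and exploiting $B\subs Z(A)$, disintegrate
\[
A=\int_X^\oplus A_x\,d\mu(x),
\]
so that $E$ fibers as a measurable field of normal states $E_x$ on $A_x$ satisfying $E_x(a)\cdot 1_{A_x}\geq c_0 a$ for every $a\in (A_x)_+$, almost everywhere in $x$. In particular $E_x(p)\geq c_0$ for every nonzero projection $p\in A_x$.

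The crucial pointwise step is to establish: any von Neumann algebra $M$ carrying a normal state $\phi$ with $\phi(p)\geq c_0$ on every nonzero projection $p$ is finite dimensional, with dimension bounded solely in terms of $c_0$. I would argue this in three layers. First, $M$ admits no continuous (non-atomic) summand, for in such a summand every nonzero projection halves into two equivalent subprojections, and iterating produces projections of arbitrarily small $\phi$-value. Writing the resulting type I decomposition $M=\bigoplus_i M_{n_i}(C_i)$ with $C_i$ abelian, each $C_i$ must itself be atomic---otherwise subdividing projections inside $C_i$ leads to the same contradiction. Finally, in each matrix block $M_n$ so obtained, the restriction of $\phi$ has density $D\geq c_0\cdot 1_n$ (because rank-one projections have $\phi$-value $\geq c_0$), whence $\mathrm{Tr}(D)\geq c_0 n$, while $\mathrm{Tr}(D)\leq\phi(1)\leq 1$; this forces $n\leq c_0^{-1}$, and summing masses over distinct blocks shows there are at most $c_0^{-1}$ nonzero blocks in total.

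Applied fiberwise, this forces $A_x$ to be finite dimensional with uniformly bounded dimension. Since the isomorphism class of $A_x$ takes only finitely many values, a standard measurable selection argument produces a finite partition of unity $\{p_1,\dots,p_k\}\subs Z(A)$ for which each $p_iAp_i$ is of the form $M_{n_i}\otimes L^\infty(Y_i)$, hence finite and of type I. Lemma~\ref{lem:Ap} then concludes that $A$ itself is finite and of type I. The main obstacle I anticipate is the pointwise claim in the middle paragraph---in particular, making precise that within any continuous summand the halving process yields measurable sequences of projections with $\phi$-values decaying geometrically---together with the measurable bookkeeping needed to package the fiberwise bound into a global central partition of $A$.
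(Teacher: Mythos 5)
Your proposal is correct and follows the paper's own route: both disintegrate $A$ over $B=L^\infty(X,\mu)$ so that $E$ fibers into states $E_x\colon A_x\to\mathbb{C}$ satisfying the Pimsner--Popa inequality, and conclude that a.e.\ fiber $A_x$ is finite dimensional, whence $A$ is finite and of type I. You spell out in detail the pointwise step the paper dispatches as ``easy to see'' (that a state obeying $\phi(p)\geq c_0$ on all projections forces bounded finite dimension), and your closing measurable-selection step is superfluous---once a.e.\ $A_x$ is finite dimensional with uniformly bounded dimension, the conclusion follows directly from reduction theory---but the substance of the argument matches the paper's.
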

\begin{proof}
Regarding $B$ as the function algebra $L^\infty(X,\mu)$,
we obtain the disintegration of the inclusion $B\subs A$
as follows:
\[
\int_{X}^\oplus B_x\,d\mu(x)\subs \int_X^\oplus A_x\,d\mu(x).
\]
Since $E=\id$ on $Z(B)$,
we have a measurable field $\{E_x\}_x$ such that $E_x\col A_x\ra B_x=\C$
is a conditional expectation for each $x$.
It is easy to see that
$E_x$ inherits the Pimsner--Popa inequality of $E$.
Thus $A_x$ must be finite dimensional,
and $A$ is finite and of type I.
\end{proof}

\begin{prop}\label{prop:typeI}
Let $B\subs A$ be an inclusion of von Neumann algebras
with a conditional expectation $E$ of finite probability index.
If $B$ is finite and of type I,
then so is $A$.
\end{prop}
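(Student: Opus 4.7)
The plan is to reduce, via structural steps, to the hypothesis of Lemma~\ref{lem:Bab}, and then to reassemble the conclusion using Lemma~\ref{lem:Ap}. First I would decompose $B = \bigoplus_n B_n$ according to its type I$_n$ summands and let $z_n \in Z(B)$ be the corresponding central projections. Each corner $z_n A z_n$ contains $B_n$, and the restriction of $E$ is a conditional expectation $z_n A z_n \to B_n$ of the same probability index constant (the Pimsner--Popa inequality is preserved under cutting by projections in $B$), so---combining an extension of Lemma~\ref{lem:Ap} to countable partitions (adequate for detecting type I-ness via abelian projections in corners) with the standard inheritance of finiteness from $B$ via finite index---it suffices to treat the case where $B$ is of type I$_n$ for a fixed finite $n$.

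Assuming $B$ is of type I$_n$, I would then choose matrix units $\{e_{ij}\}_{i,j=1}^n$ for the $M_n(\C)$-factor of $B$. Since $M_n(\C) \subseteq B \subseteq A$ unitally, the standard identification yields $A \cong M_n(\C) \otimes A_0$ with $A_0 := M_n(\C)' \cap A$, under which $B$ becomes $M_n(\C) \otimes Z(B)$ and $E$ becomes $\id_{M_n} \otimes E_0$ for a conditional expectation $E_0 \colon A_0 \to Z(B)$ that inherits the finite probability index with the same constant. Since tensoring with $M_n(\C)$ preserves finite type I-ness, the problem reduces to showing that $A_0$ is finite type I, the distinguished abelian subalgebra being $B_0 := Z(B) \subseteq A_0$.

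To reach the setting of Lemma~\ref{lem:Bab}, I would pass to the relative commutant $C_0 := B_0' \cap A_0$. Since $B_0$ is abelian we have $B_0 \subseteq C_0$, and by construction $B_0 \subseteq Z(C_0)$. The restriction of $E_0$ to $C_0$ takes values in $B_0' \cap B_0 = B_0$ (using $B_0$-bimodularity of $E_0$, which forces $E_0(c)$ to commute with $B_0$ for $c \in C_0$) and inherits finite probability index, so Lemma~\ref{lem:Bab} applies to $B_0 \subseteq C_0$ and gives that $C_0$ is finite type I.

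The main obstacle is the final step: promoting finite type I-ness from $C_0 = B_0' \cap A_0$ to $A_0$ itself. My plan is to exploit the finite probability index inequality $E_0(a) \geq c^{-1} a$ as a uniform local-finiteness bound: after disintegrating $A_0$ over a suitable central subalgebra (containing the central part of $B_0$), the Pimsner--Popa inequality forces any family of mutually orthogonal non-zero projections in each fiber to have cardinality at most $c$, so each fiber is a finite-dimensional algebra of bounded dimension and $A_0$ is a direct integral of such, hence finite type I. The technical heart of the argument---the principal difficulty---lies in carrying out the disintegration carefully when $B_0$ is abelian but not necessarily central in $A_0$, and in verifying that the Pimsner--Popa inequality descends fiber-wise in a meaningful way.
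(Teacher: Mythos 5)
Your overall strategy---reduce to homogeneous type I$_n$, peel off the matrix factor, apply Lemma~\ref{lem:Bab} to the relative commutant $B_0'\cap A_0$---matches the paper's proof, and the tensor-product decomposition $A\cong M_n(\C)\otimes A_0$ with $E=\id\otimes E_0$ is a clean reformulation of the paper's matrix-unit cut-downs. The application of Lemma~\ref{lem:Bab} to $B_0\subseteq C_0=B_0'\cap A_0$ is correct. However, there are two genuine gaps.

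First, the opening reduction to $B$ of type I$_n$. You appeal to ``an extension of Lemma~\ref{lem:Ap} to countable partitions'' plus a ``standard inheritance of finiteness from $B$ via finite index.'' Lemma~\ref{lem:Ap} literally fails for countable partitions: if $A=B(\ell^2)$ and $\{z_n\}$ are rank-one projections, each corner $z_nAz_n=\C$ is finite type I while $A$ is not finite. Type-I-ness alone does pass to $A$ from the corners, but the finiteness claim needs a separate argument, and in this generality (no factoriality, no $Z(A)\subseteq Z(B)$) the inheritance of finiteness from finite probability index is not an off-the-shelf fact. The paper instead proves that only finitely many of the $z_m$ can satisfy $z_mAz_{n_0}\neq 0$: it forms the positive operator $q=|I|^{-1}\sum_{m,n\in I}e_{m,n}\oti v_mv_n^*$ from partial isometries between the $z_n$'s and applies the Pimsner--Popa inequality to conclude $|I|\leq c^{-1}$. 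This explicit degree bound is what makes the finite version of Lemma~\ref{lem:Ap} applicable; you should not skip it.

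Second, and more seriously, the final step---getting from $C_0=B_0'\cap A_0$ finite type I to $A_0$ finite type I---is the crux, and your plan of ``disintegrating $A_0$ over a suitable central subalgebra'' does not address the key obstruction. To disintegrate usefully, you need $Z(A_0)\subseteq B_0$; otherwise $E_0$ does not commute with the disintegration and the fibers of $B_0$ are not controlled. The paper does real work here: cut by an abelian projection in $C_0$ so that $C=B'\cap A$ becomes abelian; disintegrate $B\subseteq C$ over $B$ so that fibers $C_x$ are $\cong\C^{n(x)}$ with $n(x)\leq\Ind_p(E)$; decompose along these dimensions and reduce to $B=C$, at which point $Z(A)\subseteq B'\cap A=B$ automatically; only then disintegrate over $Z(A)$ to reach the factor case, where one shows $B$ is finite dimensional via the same partial-isometry trick and applies Lemma~\ref{lem:Ap} once more. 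The reduction to $B=C$ is the missing idea: without it, $Z(A_0)$ need not sit inside $B_0$ and your fiberwise argument has nothing to grab onto.
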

\begin{proof}
We may and do assume that $B$ is of type I$_n$ with $n\in\N$.
Indeed,
let $z_n\in B$ be the central projection
such that $Bz_n$ is of type I$_n$.
Then $\{z_n\}_n$ is a partition of unity.

For a fixed $n_0$, we will show
there exist finite $m$'s such that $z_m A z_{n_0}\neq\{0\}$.
Let $J_{n_0}$ be the set of such $m$
and $I$ a finite subset of $J_{n_0}$.
Take a partial isometry $v_m\in A$ such that
$v_m^* v_m\leq z_{n_0}$ and $v_m v_m^*\leq z_m$.
Then we put
$q:=|I|^{-1}\sum_{m,n\in I}e_{m,n}\oti v_m v_n^*$
that is a positive operator in $M_{|I|}(\C)\oti A$.

By the complete positivity of $E-c\id_M$,
we have
\[
\frac{1}{|I|}
\sum_{m,n\in I}e_{m,n}\oti E(v_m v_n^*)
=
(\id\oti E)(q)
\geq
c q.
\]
Multiplying $1\oti z_m$ from the both side in the above,
we obtain the inequality
$|I|^{-1}
e_{m,m}\oti E(v_m v_m^*)\geq c e_{m,m}\oti v_m v_m^*$.
This implies that $c^{-1}\geq|I|$.
Thus $|J_{n_0}|<c^{-1}$,
and by Lemma \ref{lem:Ap},
we can assume that $B$ is of type I$_n$.

Now let us suppose that $B$ is of type I$_n$.
Again by Lemma \ref{lem:Ap},
we may and do assume that $B$ is abelian.
Applying Lemma \ref{lem:Bab} to the inclusion $B\subs B'\cap A$,
we see $B'\cap A$ is finite and of type I.
We further reduce the inclusion $B\subs A$
by an abelian projection in $B'\cap A$,
we may and do assume $C:=B'\cap A$ is abelian.
Again by Lemma \ref{lem:Bab},
we have the disintegration over the measure space $\{X,\mu\}$
\[
\int_{X}^\oplus B_x\,d\mu(x)\subs \int_X^\oplus C_x\,d\mu(x)
\]
with a measurable field of conditional expectations $\{E_x\}_x$.
Since $B_x=\C$ and $C_x$ is commutative,
$C_x$ is isomorphic to $\C^n$ with $n\leq \Ind_p(E_x)\leq\Ind_p(E)$.
Then we can decompose the inclusion
$B\subs C$ to the finite direct sum of $Bz_n\subs Cz_n$
such that $z_n$ is a central projection in $B$ satisfying $\dim Cz_n=n$.
Thus by reducing,
we may and do assume that $B=C$ from Lemma \ref{lem:Ap}.
In particular, $Z(A)$ is contained in $B$.

Let us take a measure space $\{Y,\nu\}$ so that $Z(A)=L^\infty(Y,\nu)$.
Then by disintegration, we have
\[
\int_{Y}^\oplus B_y\,d\nu(y)\subs \int_Y^\oplus A_y\,d\nu(y).
\]
with a measurable field of conditional expectations $\{E_y\}_y$.
Since each $A_y$ is a factor,
it suffices to show the statement when $A$ is a factor.

Let $\{p_i\}_{i=1}^n$ be a partition of unity in $B$.
Since $A$ is a factor, there exists a family of
non-zero partial isometries
$v_i\in A$ such that $p_1\geq v_i^* v_i$ and $p_i\geq v_iv_i^*$.
We let $q:=n^{-1}\sum_{i,j}e_{ij}\oti v_iv_j^*$, where
$e_{ij}$ is a matrix unit of $M_n(\C)$.
Using $E(v_iv_j^*)=0$ for $i\neq j$, and
the complete positivity of $E-c\id_M$,
we have
\[
\frac{1}{n}\sum_{i=1}^n e_{ij}\oti E(v_iv_i^*)
=(\id\oti E)(q)
\geq c q.
\]
Since $E(v_iv_i^*)\leq p_i$, we obtain $n\leq c^{-1}$.
Thus $B$ is finite dimensional,
and we may and do assume that those $p_i$'s are minimal.
The conditional expectation $E\col p_i Ap_i\ra \C p_i$
has finite probability index, and $p_i Ap_i$ is a finite
factor of type I.
Then Lemma \ref{lem:Ap} implies $A$ is finite and of type I.
\end{proof}

Let $N\subs M$ be an inclusion of separable von Neumann algebras
with a conditional expectation $E$ of finite probability index.
Suppose that $Z(N)$ contains $Z(M)$.
If we realize $Z(M)$ as $L^\infty(X,\mu)$,
then we have the disintegration over $X$ as follows:
\[
\int_X^\oplus N_x\, d\mu(x)\subs \int_X^\oplus M_x\, d\mu(x).
\]
Since the restriction of $E$ on $Z(M)$ is trivial,
we have a measurable field $\{E_x\}_x$ such that $E_x\col M_x\ra N_x$
is a conditional expectation.
Note that each $M_x$ is a factor, and $\Ind_p(E_{x})\in [1,\infty]$.

\begin{lem}
The function $X\ni x\mapsto \Ind_p(E_x)$ is measurable.
\end{lem}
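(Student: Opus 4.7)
The strategy is to show that for every $c>0$ the set
\[
A_c:=\{x\in X\mid \Ind_p(E_x)\le c\}
\]
is measurable; once this is established, measurability of $x\mapsto \Ind_p(E_x)$ follows since $\{x\mid \Ind_p(E_x)<c\}=\bigcup_{q\in\Q_{>0},\,q<c}A_q$ is then measurable for every $c$. By the definition of $\Ind_p$ recalled above, $x\in A_c$ if and only if $E_x-c^{-1}\id_{M_x}$ is completely positive, i.e.\ for every $n\ge 1$ and every $a\in M_n(M_x)_+$ one has $(\id_n\otimes E_x)(a)\ge c^{-1}a$. Hence it suffices to show that for each fixed $n$ the set
\[
A_c^{(n)}:=\{x\in X\mid (\id_n\otimes E_x)-c^{-1}\id_{M_n(M_x)}\ \text{is positive}\}
\]
is measurable, and then take the countable intersection over $n$.

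First, I would invoke the standard direct integral theory (as in Takesaki, Vol.~I, Chapter IV, or Dixmier) to fix a countable family of measurable sections $\{m_k\}_{k\in\N}$ of $x\mapsto M_x$ such that $\{m_k(x)\}_k$ is $\|\cdot\|$-dense in the unit ball of $M_x$ for almost every $x$; this is possible because all algebras in sight have separable preduals. Forming rational self-adjoint polynomial combinations of $m_k$'s and applying $y\mapsto y^*y$ componentwise yields a countable family $\{a_\alpha\}_\alpha$ of measurable sections of $x\mapsto M_n(M_x)_+$ such that $\{a_\alpha(x)\}_\alpha$ is $\|\cdot\|$-dense in the positive part of the unit ball of $M_n(M_x)$ for almost every $x$. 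Since the disintegration was chosen so that the field $x\mapsto E_x$ is measurable, each section $x\mapsto b_\alpha(x):=(\id_n\otimes E_x)(a_\alpha(x))-c^{-1}a_\alpha(x)$ is a measurable section of $x\mapsto M_n(M_x)$. Positivity of a measurable section in a direct integral of von Neumann algebras is a measurable condition: fix a measurable field of vectors $\{\xi_j(x)\}_{j\in\N}$ dense in the standard Hilbert space fibre, then $\{x\mid b_\alpha(x)\ge 0\}=\bigcap_j\{x\mid \langle b_\alpha(x)\xi_j(x),\xi_j(x)\rangle\ge 0\}$, which is measurable. By norm continuity of the condition $(\id_n\otimes E_x)(a)\ge c^{-1}a$ in $a$, we conclude
\[
A_c^{(n)}=\bigcap_\alpha\{x\in X\mid b_\alpha(x)\ge 0\},
\]
a countable intersection of measurable sets.

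The main obstacle, technically the only real content of the argument, is the construction of a countable $\|\cdot\|$-dense measurable family of sections in each fibre $M_n(M_x)$ simultaneously compatible with the measurable structure of the disintegration of $E$. This is, however, a standard output of Takesaki's measurable selection theorem for direct integrals of separable von Neumann algebras, applied to the tensored field $x\mapsto M_n(M_x)$; no new ideas are needed beyond invoking it. Once this is in hand the rest of the proof is essentially bookkeeping about countable intersections and norm continuity.
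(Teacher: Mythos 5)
Your plan matches the paper's proof almost exactly: both fix $c$ (the paper writes $a$), reduce to measurability of the set where $\id_n\otimes E_x - c^{-1}\id$ is positive on $M_n(M_x)$, intersect over $n$, and detect positivity with a measurable field of vectors $\{\xi_j\}$. However, there is a genuine error in the density step. You assert the existence of a countable family of measurable sections $\{m_k\}$ with $\{m_k(x)\}_k$ \emph{norm}-dense in the unit ball of $M_x$, and later invoke norm continuity. The unit ball of an infinite-dimensional von Neumann algebra is not norm-separable (already false for $\ell^\infty$ or $B(H)$), so no such family exists when the fibres $M_x$ are infinite-dimensional, which is the generic situation here. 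The paper sidesteps this by taking a countable family $\{b_m\}$ that is merely \emph{weakly} dense in $M_n(M_x)_+$, which is what direct integral theory actually provides; the positivity check then passes to the weak closure because $\id_n\otimes E_x - c^{-1}\id$ is normal and the positive cone is $\sigma$-weakly closed, so on bounded sets weak limits of positives remain positive. Replacing ``$\|\cdot\|$-dense'' by ``$\sigma$-weakly dense'' and ``norm continuity'' by ``normality of $E_x$'' repairs the argument and brings it exactly in line with the paper's proof, but as written the step fails.
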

\begin{proof}
Let $a\geq1$ and $A:=\{x\in X\mid \Ind_p(E_x)\leq a\}$.
For $x\in X$,
we set $T_x^n:=\id\oti E_x-a^{-1}\id_{M_n(M_x)}$.
If we set $A_n:=\{x\in X\mid T_x^n
\mbox{ is positive}\}$,
then $A=\bigcap_n A_n$.
Thus it suffices to prove that $A_n$ is measurable for each $n$.
Let $\{b_m\}_{m=1}^\infty\in M_n(M)_+$ be a measurable field of operators
such that $\{b_m(x)\}_m$ are weakly dense in $M_n(M_x)_+$ for almost every $x$.
Let $\{\xi_k\}_{k=1}^\infty$ be a measurable field of vectors in $H$
such that $\{\xi_k(x)\}_{k=1}^\infty$ is dense in $\C^n\oti H_x$
for almost every $x$.
Then $x\in A_n$ if and only if
$\langle T_x^n (b_m)\xi_k(x), \xi_k(x)\rangle \geq0$ for all $k,m\in\N$.
Since the function $x\mapsto \langle T_x^n(b_m)\xi_k(x), \xi_k(x)\rangle$
is measurable, we see $A_n$ is measurable.
\end{proof}

\begin{thm}\label{thm:Indp}
The probability index $\Ind_p(E)$ has the following decomposition:
\[
\Ind_p(E)=\int_X^\oplus \Ind_p(E_x)\,d\mu(x).
\]
\end{thm}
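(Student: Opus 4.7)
The plan is to show both inequalities $\Ind_p(E) \geq \int_X^\oplus \Ind_p(E_x)\,d\mu(x)$ and the reverse, exploiting the fact that disintegration intertwines the defining Pimsner--Popa-type inequality $E - c^{-1}\id_M \geq 0$ (completely).

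First I will establish the key disintegration lemma for completely positive maps: if $c\in Z(M)_+$ is invertible with disintegration $c = \int_X^\oplus c(x)\,d\mu(x)$, where $c(x)\in Z(M_x)_+ = \C_+$, then the map $T := E - c^{-1}\id_M$ is decomposable with fibers $T_x = E_x - c(x)^{-1}\id_{M_x}$. I would then verify that for any $n\in\N$, the amplification $T\otimes\id_{M_n(\C)}$ disintegrates fiberwise as $T_x\otimes\id_{M_n(\C)}$, and use the standard fact that a decomposable self-adjoint operator is positive iff its fibers are positive a.e. Intersecting over $n\in\N$ (with a common null set), $T$ is completely positive iff $T_x$ is completely positive for almost every $x$.

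Given this lemma, the forward inequality is immediate: if $\mu\in Z(M)_+$ is invertible with $E - \mu^{-1}\id_M$ completely positive, then $E_x - \mu(x)^{-1}\id_{M_x}$ is completely positive a.e., so $\mu(x)\geq \Ind_p(E_x)$ a.e., whence $\mu\geq \int_X^\oplus \Ind_p(E_x)\,d\mu(x)$ in $Z(M)_+$. Taking the infimum over such $\mu$ yields $\Ind_p(E)\geq \int_X^\oplus \Ind_p(E_x)\,d\mu(x)$.

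For the reverse direction, set $\mu_0 := \int_X^\oplus \Ind_p(E_x)\,d\mu(x)$; by the preceding lemma (measurability of $x\mapsto \Ind_p(E_x)$, which has already been proved in the excerpt) this defines an element of $Z(M)_+$, and it is bounded since $\Ind_p(E_x)\leq \Ind_p(E)(x)$ a.e. from the forward inequality applied to arbitrary admissible $\mu$. By definition of $\Ind_p(E_x)$, the map $E_x - \Ind_p(E_x)^{-1}\id_{M_x}$ is completely positive for almost every $x$, so by the disintegration lemma $E - \mu_0^{-1}\id_M$ is completely positive, giving $\mu_0\geq \Ind_p(E)$. Combining the two inequalities finishes the proof.

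The main obstacle I anticipate is the careful handling of the disintegration of completely positive maps, in particular ensuring that the null sets for $n$-positivity at each $n$ can be combined into a single null set (which is standard once one works with a weakly dense countable $*$-subalgebra of $M_n(M)$ and a countable dense set of vectors, as in the measurability argument in the excerpt), and ensuring that $\Ind_p(E_x) < \infty$ a.e.\ so that $\mu_0$ is a genuine bounded invertible element of $Z(M)_+$ rather than merely an affiliated operator.
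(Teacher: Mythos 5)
Your proof is correct and takes essentially the same approach as the paper: both directions hinge on the fact that complete positivity of a decomposable map $E - c^{-1}\id_M$ is equivalent to fiberwise complete positivity of $E_x - c(x)^{-1}\id_{M_x}$ for almost every $x$, which you isolate as an explicit lemma and the paper uses implicitly. The only organizational difference is in the reverse inequality: you integrate the fiberwise minimizers directly, whereas the paper argues by contradiction on the sets $A_n=\{x : \Ind_p(E_x)<\la(x)-1/n\}$; your version is a little cleaner since the disintegration lemma is invoked symmetrically for both directions, but the mathematical content is identical.
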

\begin{proof}
We let $\la:=\Ind_p(E)$, which belongs to the extended positive part
of $Z(M)$.
Let $\{b_m\}_{m=1}^\infty\in M_n(M)_+$ be a measurable field of operators
such that $\{b_m(x)\}_m$ are weakly dense in $M_n(M_x)_+$ for almost every $x$.
The inequality $(\id\oti E)(b_m)\geq \la^{-1} b_m$ yields
$(\id\oti E_x)(b_m(x))\geq \la(x)^{-1} b_m(x)$ for almost every $x$.
Thus $E_x-\la(x)^{-1}\id_{M_x}$ is completely positive,
and $\Ind_p(E_x)\leq \la(x)$ for almost every $x$.

To prove the converse inequality,
we let $\la_n:=\la-1/n$ and $z_n:=1_{A_n}\in Z(M)$.
By the definition of $\la$,
$E-\la_n^{-1}\id_M$ is not completely positive on $Mz$.
Let $A_n:=\{x\in X\mid \la(x)-1/n>\Ind_p(E_x)\}$.
Then we see that $A_n$ is measurable employing the previous lemma.
However, when $x\in A_n$, we see $E_x-\la_n(x)^{-1}\id_{M_x}$
is completely positive.
Thus $E-\la_n^{-1}\id$ is completely positive on $Mz_n$,
which is a contradiction unless $z_n=0$, that is, $\mu(A_n)=0$.
Since the set $\{x\in X\mid \Ind_p(E_x)<\la(x)\}$ is the union
of $A_n$'s, we are done.
\end{proof}



Let us recall the index of a conditional expectation
defined by Kosaki in \cite{K}.
Let $E\col M\ra N$ be a conditional expectation,
and $E^{-1}\col N'\ra M'$ the dual operator valued weight
introduced in \cite{H1,H2}.
Then the Kosaki index $\Ind(E)$ is given by $E^{-1}(1)$,
which may be an unbounded operator affiliated with $Z(M)_+$.
Readers are referred to \cite{K,K2} for detail.

\begin{lem}\label{lem:finite-factor}
Let $N\subs M$ be an inclusion of von Neumann algebras
such that $M$ is a factor.
If $E$ is a conditional expectation from $M$ onto $N$
with finite probability index,
then $\Ind(E)$ is finite.
\end{lem}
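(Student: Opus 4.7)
Since $M$ is a factor, $Z(M)=\mathbb{C}$, so $\Ind(E)=E^{-1}(1)$ is either a nonnegative scalar or $+\infty$, and the task is to rule out $+\infty$. The plan is to establish the scalar bound $\Ind(E)\leq \Ind_p(E)$ directly from the Pimsner--Popa inequality.

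By hypothesis there exists $c\geq 1$ with $\Ind_p(E)\leq c$, so $E(x)\geq c^{-1}x$ for all $x\in M_+$; equivalently, $E\geq c^{-1}\cdot\id_M$ in the natural order on normal faithful semifinite operator-valued weights $M\to N$ (with $\id_M$ viewed as an o.v.w.\ into the subalgebra $N\subset M$). Under the Haagerup--Kosaki duality $F\mapsto F^{-1}$, which is order-reversing on n.s.f.\ operator-valued weights, this dominance transfers to $E^{-1}\leq c\cdot\id_{M'}$ between o.v.w.'s $N'\to M'$. Evaluating at $1\in N'_+$ yields $\Ind(E)=E^{-1}(1)\leq c\cdot 1$, which, being a nonnegative element of the extended positive part of $Z(M)=\mathbb{C}$, is a finite scalar.

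The main obstacle is justifying the duality step. I would verify it concretely via Connes cocycle derivatives: fix a faithful normal semifinite weight $\phi$ on $N$, set $\psi=\phi\circ E$ on $M$, and realize $E^{-1}(1)$ as the spatial derivative comparing $\psi$ with a normal semifinite weight on $M'$. The Pimsner--Popa bound $E(x)\geq c^{-1}x$ controls $\psi$ from above in terms of an extension of $\phi$ to $M$, and the Connes--Haagerup spatial-derivative formalism converts this pointwise inequality on $M$ into the dual pointwise inequality on $M'$, producing $E^{-1}(1)\leq c$.

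An alternative, self-contained route avoids the abstract duality by using Watatani's quasi-basis formulation. The Pimsner--Popa inequality on the factor inclusion $N\subset M$ yields a finite family $\{u_i\}_i\subset M$ with $\sum_i u_i\, E(u_i^*\,\cdot\,)=\id_M$, and Watatani's formula then gives $\Ind(E)=\sum_i u_i u_i^*<\infty$. This shifts the technical burden from the abstract duality of operator-valued weights to the explicit construction of the quasi-basis, which is in either approach the crux of the equivalence between finite probability index and finite Kosaki index.
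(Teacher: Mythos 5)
Your proposal takes a genuinely different route from the paper's, and the central step has a gap you do not close. You want to establish the scalar bound $\Ind(E)\leq\Ind_p(E)$ by treating $c^{-1}\id_M$ as an operator-valued weight from $M$ into $N$, and then dualizing $E\geq c^{-1}\id_M$ through the order-reversing map $T\mapsto T^{-1}$. But $\id_M$ takes values in $M_+$, not in the extended positive cone $\widehat{N}_+$, so it is not an operator-valued weight $M\to N$ unless $N=M$, and the Haagerup duality cannot be applied to the pair $(E,\,c^{-1}\id_M)$ as you write it. The Pimsner--Popa inequality $E(x)\geq c^{-1}x$ is a pointwise inequality in $\widehat{M}_+$, not a comparison in the lattice of operator-valued weights from $M$ to $N$; the order-reversal of the duality has no purchase on it. You flag this difficulty yourself and gesture at two substitutes (spatial derivatives, and a Watatani quasi-basis produced ``from'' the Pimsner--Popa inequality), but you also acknowledge that constructing the quasi-basis is exactly the crux — so neither substitute is more than a sketch of the thing that needs proving.

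The paper circumvents the problem entirely by a structural reduction that your proposal skips. The counting argument already carried out in the proof of Proposition \ref{prop:typeI} shows that finite probability index of $E\colon M\to N$ with $M$ a factor forces $Z(N)$ to be finite dimensional. Cutting by the minimal central projections $z_1,\dots,z_n$ of $N$, each reduction $Nz_i\subset z_iMz_i$ is an inclusion of \emph{factors}, where the passage from finite probability index to finite Kosaki index is available by citation (Kosaki's Proposition 4.2); the lemma then follows from the additivity $\Ind(E)=\sum_i\Ind(E_{z_i})$. In other words, the paper never needs the direct scalar bound $\Ind(E)\leq\Ind_p(E)$ that you pursue. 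That bound is plausible when $M$ is a factor (and would even sharpen the inequality $\Ind(E)\leq\Ind_p(E)^2$ the paper proves in greater generality just afterward), but your sketch does not establish it, and a correct proof would have to resolve precisely the duality or quasi-basis issue you leave open.
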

\begin{proof}
By the similar technique in the proof of Proposition \ref{prop:typeI},
we see $Z(N)$ is finite dimensional.
Let $\{z_i\}_{i=1}^n$ be the partition of unity in $Z(N)$
such that they are minimal.
Then the reduced expectation $E_{z_i}\col z_iMz_i\ra Nz_i$
has finite probability index,
and $\Ind(E_{z_i})$ is finite because $Nz_i$ is a subfactor.
Although $N$ is not assumed to be a factor,
the proof of \cite[Proposition 4.2]{K} is applicable if we set $E(z_i)^{-1}=z_i$.
Then $\Ind(E_{z_i})=E^{-1}(z_i)$,
and we have
$\Ind(E)=\sum_i \Ind(E_{z_i})$.
Thus $\Ind(E)$ is finite.
\end{proof}

Let $N\subs M$ be an inclusion of von Neumann algebras
such that $Z(M)\subs Z(N)$.
Suppose that $E\col M\ra N$ is a conditional expectation.
Let $X\ni x\mapsto E_x$ be a measurable field of conditional
expectations as before.
Then we have the operator valued weight
$(E_x)^{-1}\col N_x'\ra M_x'$
associated with $E_x$.
Then we have a measurable field $X\ni x\mapsto E_x^{-1}$,
which means that the function
$x\mapsto \langle E_x^{-1}(a(x))\xi(x),\xi(x)\rangle$
is measurable for all $a\in N'$ and $\xi\in H$.
The field naturally makes an operator valued weight
\[
\int_X^\oplus (E_x)^{-1}\,d\mu(x).
\]
Let us present our proof of the following result due to Isola
\cite[Theorem 3.4]{Is} for readers' convenience.

\begin{thm}[Isola]
The following equality holds:
\begin{equation}
\label{eq:opv}
E^{-1}=\int_X^\oplus (E_x)^{-1}\,d\mu(x).
\end{equation}
\end{thm}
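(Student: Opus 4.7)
The plan is to realize $F := \int_X^\oplus (E_x)^{-1}\,d\mu(x)$ as a well-defined normal semifinite operator-valued weight from $N'_+$ into the extended positive part of $M'$, and then to identify $F$ with $E^{-1}$ by appealing to Haagerup's uniqueness theorem for dual operator-valued weights.

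First I would verify that $F$ is well-defined. The hypothesis $Z(M)\subs Z(N)$ guarantees direct integral decompositions $M'=\int_X^\oplus M_x'\,d\mu(x)$ and $N'=\int_X^\oplus N_x'\,d\mu(x)$, with each $M_x$ a factor. Each $E_x$ is a faithful normal conditional expectation on $M_x$, so by Haagerup's theorem each $(E_x)^{-1}$ is a faithful normal semifinite operator-valued weight from $(N_x')_+$ into the extended positive part of $M_x'$. Since the field $x\mapsto (E_x)^{-1}$ is measurable (as noted before the theorem statement), fiberwise integration produces a normal, faithful, semifinite operator-valued weight $F\colon N'_+\to \widehat{M'}_+$.

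Second, to show $F=E^{-1}$, I would invoke Haagerup's characterization of the dual operator-valued weight through spatial derivatives: $E^{-1}$ is the unique normal semifinite operator-valued weight $T\colon N'_+\to \widehat{M'}_+$ such that for every faithful normal semifinite weight $\phi$ on $N$ and $\phi'$ on $M'$,
\[
\frac{d(\phi\circ E)}{d\phi'}\cdot\frac{d(\phi'\circ T)}{d\phi}=1
\]
as positive self-adjoint operators on the standard form Hilbert space $L^2(M)$. I would then verify this identity for $T=F$ fiberwise. Disintegrating $\phi=\int_X^\oplus \phi_x\,d\mu(x)$ and $\phi'=\int_X^\oplus \phi'_x\,d\mu(x)$, one has $\phi\circ E=\int_X^\oplus \phi_x\circ E_x\,d\mu(x)$ and $\phi'\circ F=\int_X^\oplus \phi'_x\circ (E_x)^{-1}\,d\mu(x)$ by the definition of $F$. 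Combined with the fact that spatial derivatives decompose as direct integrals of the fiberwise spatial derivatives on $L^2(M)=\int_X^\oplus L^2(M_x)\,d\mu(x)$, this reduces the required identity to the fiberwise statement that $(E_x)^{-1}$ is the Haagerup dual of $E_x$, which holds by construction in each factor $M_x$.

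The main obstacle is justifying rigorously that spatial derivatives and dual operator-valued weights commute with the direct integral decomposition along $X$, particularly when $(E_x)^{-1}$ takes unbounded (extended-positive) values on a set of positive $\mu$-measure. This would be handled by an approximation argument: choose an increasing family of central projections $\{z_n\}\subs Z(N)$ with $z_n\nearrow 1$ such that $E^{-1}(z_n)$ is bounded in $M'$; on each slice $N'z_n$ the direct integral involves only bounded fields, for which the disintegration of spatial derivatives is classical, and the identity $Fz_n=E^{-1}z_n$ then extends to $F=E^{-1}$ by normality of both sides. A secondary technical point is confirming that $L^2(M)$ admits the expected decomposition $\int_X^\oplus L^2(M_x)\,d\mu(x)$ with modular data compatible with the inclusion $N\subs M$ under the hypothesis $Z(M)\subs Z(N)$, which is what legitimizes the fiberwise interpretation of the spatial derivatives appearing in Haagerup's identity.
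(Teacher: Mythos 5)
Your approach is correct in outline but genuinely different from the paper's. You propose to verify Haagerup's spatial-derivative characterization of the dual operator-valued weight directly for $F=\int_X^\oplus (E_x)^{-1}\,d\mu(x)$, reducing it fiberwise and handling the unbounded fields by a central cut-down $\{z_n\}$. The paper instead avoids spatial derivatives entirely: it uses the Jones projection $e_N$, which is diagonal since $e_N\in N'$, so $e_N=\int_X^\oplus e_N(x)\,d\mu(x)$, and the facts $E^{-1}(e_N)=1$ and $(E_x)^{-1}(e_N(x))=1$. Because both $E^{-1}$ and $F$ are $M'$-bimodule maps, they already agree on the weakly dense $*$-subalgebra $M'e_NM'$ of $N'$. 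The paper then fixes a faithful normal state $\vph$ on $M'$, disintegrates it, observes that the modular groups $\si^{\vph\circ E^{-1}}$ and $\si^{\vph\circ F}$ both fix $e_N$ and thus agree on $M'e_NM'$, and concludes $\vph\circ E^{-1}=\vph\circ F$ (hence $E^{-1}=F$) by the Pedersen--Takesaki-type uniqueness theorem (\cite[Prop. VIII.3.16]{Ta}). Your route is closer to the definition of the dual weight but has to carry the burden, which you correctly flag as the main obstacle, of proving that spatial derivatives commute with direct integrals in the presence of unboundedness; the paper's route trades that for the modular-invariance fact about $e_N$ plus a cited uniqueness theorem, and is arguably shorter once those ingredients are in hand. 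Both arguments are sound; yours would need the disintegration lemma for spatial derivatives to be written out carefully, which is not entirely routine.
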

\begin{proof}
Let $e_N\col L^2(M)\ra L^2(N)$ be the Jones projection.
Then $N'$ is the weak closure of the $*$-subalgebra
$M'e_NM'$.
Since $e_N$ is commuting with $N$,
it is a diagonal operator.
Thus we have
\[
e_N=
\int_X^\oplus e_N(x)\,d\mu(x).
\]
The projection $e_N(x)$ is naturally regarded
as the Jones projection from $ L^2(M_x)$ onto $L^2(N_x)$.
Let $T\col N'\ra M'$ be the operator valued weight
in the right hand side of (\ref{eq:opv}).
Since $E^{-1}(e_N)=1$ and $(E_x)^{-1}(e_N(x))=1$ by definition,
we see $E^{-1}$ coincides with $T$ on $M'e_NM'$.

Let $\vph$ be a faithful normal state on $M'$.
By \cite[Theorem VIII.4.8]{Ta},
we obtain a measurable field of states $X\ni x\mapsto \vph_x\in
(M_x)_*$.
The map $X\ni x\mapsto \vph_x\circ (E_x)^{-1}$ gives
a measurable field of weights,
and
\[
\si_t^{\vph\circ T}=\int_X^\oplus \si_t^{\vph_x\circ (E_x)^{-1}}
\,d\mu(x).
\]
Since $\si_t^{\vph\circ E^{-1}}(e_N)=e_N$ and
$\si_t^{\vph_x\circ(E_x)^{-1}}(e_N(x))=e_N(x)$,
we see $\si_t^{\vph\circ E^{-1}}=\si_t^{\vph_x\circ(E_x)^{-1}}$ on
$M'e_NM'$.
Thanks to \cite[Proposition VIII.3.16]{Ta},
we have $\vph\circ E^{-1}=\vph\circ T$, and $E^{-1}=T$.
\end{proof}

\begin{cor}[Isola]\label{cor:exp-int}
Let $N\subs M$ be an inclusion of von Neumann algebras
such that $Z(M)\subs Z(N)$.
For a conditional expectation $E\col M\ra N$,
we have
\[
\Ind(E)=\int_X^\oplus \Ind(E_x)\,d\mu(x).
\]  
\end{cor}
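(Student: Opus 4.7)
The plan is to deduce this corollary directly from the preceding theorem (Isola's decomposition of the dual operator-valued weight $E^{-1}$) by evaluating both sides on the identity.

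Recall that by definition $\Ind(E) = E^{-1}(1)$, where $1 \in N'_+$ is the identity (and $\Ind(E)$ may be an unbounded element of the extended positive part of $Z(M)$). My first step would be to note that the identity of $N'$ decomposes trivially as the constant measurable field
\[
1 = \int_X^\oplus 1_x \, d\mu(x),
\]
where $1_x \in (N_x)'_+$ is the identity on each fiber. Then I would simply apply the formula
\[
E^{-1} = \int_X^\oplus (E_x)^{-1} \, d\mu(x)
\]
established in the previous theorem, evaluated at $1$, to obtain
\[
\Ind(E) = E^{-1}(1) = \int_X^\oplus (E_x)^{-1}(1_x) \, d\mu(x) = \int_X^\oplus \Ind(E_x) \, d\mu(x),
\]
which is precisely the desired identity in the extended positive part of $Z(M)_+ = \int_X^\oplus Z(M_x)_+ \, d\mu(x)$.

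The only point requiring care, and the mildly technical step, is to justify that one may evaluate a direct integral of operator-valued weights on the constant field $1$ and obtain the direct integral of the pointwise values; this requires checking measurability of $x \mapsto \Ind(E_x)$ as an element of the extended positive part, but this is essentially built into the measurable-field structure used in the statement of the previous theorem (the field $x \mapsto (E_x)^{-1}$ is measurable in the sense that $x \mapsto \langle (E_x)^{-1}(a(x))\xi(x), \xi(x)\rangle$ is measurable for $a \in N'$ and $\xi \in H$, and one applies this with $a = 1$ while approximating by bounded elements via spectral truncation if needed). There is no serious obstacle: the corollary is essentially a one-line consequence of the theorem once one recognizes that $\Ind(\cdot)$ is just the value of the dual operator-valued weight at the identity.
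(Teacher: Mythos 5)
Your argument is correct and is exactly the intended one: the paper gives no separate proof of this corollary because it follows immediately by evaluating the direct-integral decomposition of $E^{-1}$ from the preceding theorem at the identity, using $\Ind(E)=E^{-1}(1)$ and $\Ind(E_x)=(E_x)^{-1}(1_x)$, with the required measurability already built into the measurable field $x\mapsto (E_x)^{-1}$.
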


\begin{lem}\label{lem: P-infinite}
Let $N,M,E$ be as before.
If $\Ind E=\infty$,
one has $\Ind_p(E)=\infty$.
\end{lem}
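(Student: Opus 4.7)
The plan is to argue by contrapositive: assuming $\Ind_p(E)$ is a bounded element of $Z(M)_+$, I will show that $\Ind(E)$ is also bounded. The setting (inherited from the paragraph before Corollary \ref{cor:exp-int}) already gives $Z(M)\subseteq Z(N)$, so realizing $Z(M)=L^\infty(X,\mu)$ I can disintegrate
\[
N=\int_X^\oplus N_x\,d\mu(x)\subseteq \int_X^\oplus M_x\,d\mu(x)=M,\qquad E=\int_X^\oplus E_x\,d\mu(x),
\]
with each $M_x$ a factor and each $E_x\col M_x\to N_x$ a conditional expectation.

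Next I would feed the hypothesis into Theorem \ref{thm:Indp} to get $\Ind_p(E)=\int_X^\oplus \Ind_p(E_x)\,d\mu(x)$; since $\Ind_p(E)\in Z(M)_+$ is bounded, there is a constant $c>0$ with $\Ind_p(E_x)\leq c$ for almost every $x$. Because each $M_x$ is a factor, Lemma \ref{lem:finite-factor} gives $\Ind(E_x)<\infty$ for a.e.\ $x$ as a qualitative statement, and I would finish by applying Corollary \ref{cor:exp-int}: $\Ind(E)=\int_X^\oplus \Ind(E_x)\,d\mu(x)$ is bounded provided $x\mapsto \Ind(E_x)$ is essentially bounded.

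The main obstacle is precisely that last essential-boundedness claim: I need a \emph{quantitative} estimate $\Ind(E_x)\leq f(\Ind_p(E_x))$ with $f$ depending only on the scalar $\Ind_p(E_x)$ and not on $x$. I plan to extract such an $f$ from the proof of Lemma \ref{lem:finite-factor}. Running that proof fiberwise: the number $n(x)$ of minimal central projections of $Z(N_x)$ is bounded by $\Ind_p(E_x)\leq c$ (this is the $|J_{n_0}|\leq c^{-1}$ argument, applied with Pimsner--Popa constant $c^{-1}$); for each minimal central projection $z^x_i\in Z(N_x)$ the reduced expectation $(E_x)_{z^x_i}\col z^x_i M_x z^x_i\to N_x z^x_i$ is a subfactor expectation inheriting the Pimsner--Popa constant, so Kosaki's Proposition~4.2 (used in Lemma \ref{lem:finite-factor}) yields $\Ind((E_x)_{z^x_i})\leq g(c)$ for some $g$ depending only on $c$. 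Summing gives
\[
\Ind(E_x)=\sum_{i=1}^{n(x)}\Ind((E_x)_{z^x_i})\leq n(x)\,g(c)\leq c\,g(c)=:f(c),
\]
uniformly in $x$.

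A secondary point to justify, in the style of the measurability lemma preceding Theorem \ref{thm:Indp}, is that $x\mapsto \Ind(E_x)$ is itself measurable so that $\int_X^\oplus \Ind(E_x)\,d\mu(x)$ makes sense as an element of $Z(M)_+$; this is implicit in Corollary \ref{cor:exp-int} but I would check it directly using a measurable field of cyclic-separating vectors and the Jones projection, parallel to the proof of the Isola-type Theorem preceding \ref{cor:exp-int}. Once the uniform bound $\Ind(E_x)\leq f(c)$ is in hand, Corollary \ref{cor:exp-int} gives $\Ind(E)\leq f(c)\cdot 1\in Z(M)_+$, contradicting $\Ind(E)=\infty$ and completing the proof.
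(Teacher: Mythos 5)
Your proof is correct, but it takes a genuinely different route from the paper's.  The paper argues directly:  since $\Ind E=\infty$, Corollary~\ref{cor:exp-int} forces $\Ind(E_x)=\infty$ for a.e.\ $x$, then the contrapositive of Lemma~\ref{lem:finite-factor} (applied fiberwise, where $M_x$ is a factor) gives $\Ind_p(E_x)=\infty$ a.e., and Theorem~\ref{thm:Indp} recombines this to $\Ind_p(E)=\infty$.  That chain only needs the \emph{qualitative} content of Lemma~\ref{lem:finite-factor} ("finite probability index $\Rightarrow$ finite Kosaki index" for a factor $M_x$); no uniform estimate is required, precisely because the constant function $\infty$ is what passes through the direct integral.

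Your contrapositive version works too, but it is more demanding: the step you flag as the main obstacle — getting an essential bound $\Ind(E_x)\le f(c)$ — really is the crux, and carrying it out requires not just Kosaki's Proposition~4.2 (which, as the paper uses it, only gives the decomposition $\Ind(E_x)=\sum_i\Ind((E_x)_{z_i^x})$) but also the Pimsner--Popa/Kosaki identity $\Ind=\Ind_p$ for a factor--subfactor inclusion, which is what bounds each summand by $c$.  Combined with the bound $n(x)\le c$ on the number of minimal central projections, one gets $f(c)=c^2$; in effect you are rederiving, fiber by fiber, the estimate $\Ind(E)\le\Ind_p(E)^2$ that the paper establishes as a separate theorem immediately \emph{after} this lemma.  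That is not circular — the quantitative subfactor input is external to the lemma — but it is more machinery than the paper's three-line argument needs, and it shifts the burden from "pass a qualitative dichotomy through the disintegration" to "prove a uniform quantitative estimate and pass that through."  The measurability of $x\mapsto\Ind(E_x)$ that you propose to check separately is already part of the content of Corollary~\ref{cor:exp-int}, so you can simply cite it rather than redo the argument from the Isola-type theorem.
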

\begin{proof}
The previous result
implies
$\Ind(E_x)=\infty$ for almost every $x$.
Lemma \ref{lem:finite-factor} implies
that $\Ind_p(E_x)=\infty$ for almost every $x$.
Thus we obtain $\Ind_p(E)=\infty$ by Theorem \ref{thm:Indp}.
\end{proof}

\begin{thm}
Let $N\subs M$ be an inclusion of von Neumann algebras
with
a conditional expectation $E\col M\ra N$.
If $Z(M)\subs Z(N)$,
then the following holds:
\[
1\leq \Ind_p(E)\leq \Ind(E)\leq\Ind_p(E)^2.
\]
\end{thm}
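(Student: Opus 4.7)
My plan is to prove the three inequalities in turn, with the first two following directly from the definitions together with Kosaki's Pimsner--Popa inequality, and the third requiring disintegration followed by reduction to the subfactor case.

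The inequality $1 \leq \Ind_p(E)$ results from evaluating the completely positive map $E - \Ind_p(E)^{-1}\id_M$ at the unit, giving $1 - \Ind_p(E)^{-1} \geq 0$. For $\Ind_p(E) \leq \Ind(E)$, Kosaki's Pimsner--Popa inequality $E(x^*x) \geq \Ind(E)^{-1} x^*x$ (valid for all $x \in M$) persists under matrix amplification, so $E - \Ind(E)^{-1}\id_M$ is completely positive.

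For $\Ind(E) \leq \Ind_p(E)^2$ I would use the hypothesis $Z(M) \subs Z(N)$ to disintegrate. Writing $Z(M) = L^\infty(X,\mu)$, decompose $N \subs M$ as $\int_X^\oplus N_x\,d\mu \subs \int_X^\oplus M_x\,d\mu$ with each $M_x$ a factor and $E = \int_X^\oplus E_x\,d\mu$. Theorem \ref{thm:Indp} and Corollary \ref{cor:exp-int} express $\Ind_p(E)$ and $\Ind(E)$ as direct integrals of their pointwise counterparts (for $\Ind_p$, one uses the equivalence between complete positivity of $E - \mu^{-1}\id$ and pointwise complete positivity of $E_x - \mu(x)^{-1}\id$, verified by disintegrating matrix amplifications). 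The inequality thus reduces to the factor case.

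Assume now $M$ is a factor and $\Ind_p(E) < \infty$; Lemma \ref{lem:finite-factor} then provides $\Ind(E) < \infty$ and $Z(N)$ finite-dimensional, with minimal central projections $z_1,\ldots,z_n$. From the proof of Lemma \ref{lem:finite-factor}, $\Ind(E) = \sum_i \Ind(E_{z_i})$ where $E_{z_i}\colon z_iMz_i \to Nz_i$ is a conditional expectation between two factors, and restricting complete positivity of $E - \Ind_p(E)^{-1}\id$ to $z_iMz_i$ gives $\Ind_p(E_{z_i}) \leq \Ind_p(E)$. The matrix-amplified argument from the proof of Proposition \ref{prop:typeI}, applied to partial isometries $v_m \in M$ with $v_m^*v_m \leq z_{n_0}$ and $v_mv_m^* \leq z_m$ and the positive element $q = |I|^{-1}\sum_{m,n}e_{m,n}\otimes v_mv_n^*$, bounds $n \leq \Ind_p(E)$. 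The main obstacle is then the subfactor case: one needs $\Ind(E_0) \leq \Ind_p(E_0)$ for a conditional expectation $E_0$ between two factors. I would establish this by transferring complete positivity of $E_0 - \Ind_p(E_0)^{-1}\id$ to the Jones basic construction $\langle M_0, e_{N_0}\rangle$, using the identity $\hat E_0(e_{N_0}) = \Ind(E_0)^{-1}$ for the dual expectation $\hat E_0$ to extract the index bound. Granting this, one concludes $\Ind(E) = \sum_i \Ind(E_{z_i}) \leq n \cdot \max_i \Ind_p(E_{z_i}) \leq \Ind_p(E)^2$.
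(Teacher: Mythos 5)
Your proposal follows essentially the same route as the paper's proof. Both reduce to the factor case via Theorem \ref{thm:Indp} and Corollary \ref{cor:exp-int}; both then exploit the finite-dimensionality of $Z(N)$ (with $n := \dim Z(N) \leq \Ind_p(E)$ coming from the matrix-amplified partial-isometry argument used in the proof of Proposition \ref{prop:typeI}); both reduce to the subfactor equality $\Ind(E_i)=\Ind_p(E_i)$ for the minimal corners $Nz_i\subset z_iMz_i$; and both conclude via $\Ind(E)=\sum_i\Ind_p(E_i)\leq n\,\Ind_p(E)\leq\Ind_p(E)^2$. The only cosmetic differences are that the paper establishes $\Ind(E)=\sum_i\Ind(E_i)$ by factoring $E = E|_P\circ F$ and showing $F^{-1}(z_i)=1$ (whereas you invoke the identity already recorded in Lemma \ref{lem:finite-factor}'s proof), and that the paper simply cites \cite{K2} for the subfactor case where you sketch the basic-construction argument — your sketch is the standard Pimsner--Popa/Kosaki computation and is the right idea, so this is not a gap.
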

\begin{proof}
Thanks to Theorem \ref{thm:Indp} and Corollary \ref{cor:exp-int},
we may and do assume that $M$ is a factor.
Then $\Ind(E)=\infty$ implies $\Ind_p(E)=\infty$
from Lemma \ref{lem: P-infinite}.
When $\Ind(E)<\infty$, then $\Ind_p(E)\leq\Ind(E)$
by the same computation in \cite{K2}.

We show the inequality $\Ind(E)\leq\Ind_p(E)^2$.
It turns out that $\dim Z(N)\leq\Ind_p(E)$ by the proof
of Proposition \ref{prop:typeI}.
Let $\{z_i\}_{i=1}^n$ be the partition of unity in $Z(N)$
such that $z_i$'s are minimal,
and $E_i\col z_iMz_i\ra Nz_i$ the conditional expectation
associated with $E$.
Then $\Ind(E_i)=\Ind_p(E_i)$ because $Nz_i$ is a subfactor.

We let $F(x):=\sum_{i}z_ixz_i$ that is a conditional
expectation from $M$ onto $P:=\sum_i z_i Mz_i$.
Then $E=E|_P\circ F$,
and we have $\Ind(E)=F^{-1}(E|_P^{-1}(1))$.

Note that the inclusion $N\stackrel{E|_P}{\subs} P$
has common center,
and that it is isomorphic to the direct sum of
$Nz_i\stackrel{E_i}{\subs}z_i Mz_i$.
Hence $E|_P^{-1}(1)=\sum_i \Ind(E_i)z_i$,
where $\Ind(E_i)\in\R$.
This implies $\Ind(E)=\sum_i \Ind(E_i)F^{-1}(z_i)$.

We will show that $F^{-1}(z_i)=1$.
The Jones projection $e_P$ for the inclusion
$P\subs M$ is given by $e_P=\sum_i z_i Jz_iJ$,
where $J$ is the modular conjugation on the left $M$-module $L^2(M)$.
Since $(\id_{B(\el_2)}\oti F)^{-1}=\id_{B(\el_2)}\oti F^{-1}$,
we may and do assume that $M$ is properly infinite,
and $z_i$ are infinite.
Then there exist partial isometries $\{v_i\}_{i=1}^n$ in $M$
such that $v_i^*v_i=z_1$ and $v_iv_i^*=z_i$.
It is easy to see that $Mp_iM=M$, and $Me_P M=M_1$.
Hence $F^{-1}$ is given by $F^{-1}(x)z_1=\sum_{i}v_i^*xv_i$.
This implies $F^{-1}(z_i)=1$.
Therefore, we have $\Ind(E)=\sum_i\Ind(E_i)=\sum_i\Ind_p(E_i)$.

By definition of $\Ind_p(E)$,
we have $E_i-\Ind_p(E)^{-1}\id_{z_i Mz_i}$
is completely positive on $z_iMz_i$.
Hence $\Ind_p(E_i)\leq \Ind_p(E)$.
Since $n\leq\Ind_p(E)$,
we have
\[
\Ind(E)=\sum_i\Ind_p(E_i)\leq n\Ind_p(E)\leq \Ind(E_p)^2.
\]
\end{proof}

The following result is an immediate consequence from
the previous theorem.
\begin{cor}\label{cor:dualexp}
Let $N\subs M$ be an inclusion of von Neumann algebras
such that $Z(M)\subs Z(N)$,
and $E\col M\ra N$ a conditional expectation
with finite probability index.
Then $\Ind(E)$ is a positive invertible element in $Z(M)$.
In particular,
the map $E^{-1}(1)^{-1} E^{-1}(\cdot)$
is a conditional expectation from $N'$ onto $M'$.
\end{cor}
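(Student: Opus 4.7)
My plan is that this corollary will follow essentially mechanically from the sandwich inequality $1\leq \Ind_p(E)\leq \Ind(E)\leq \Ind_p(E)^2$ established in the preceding theorem, combined with the standard $M'$-bimodularity of the dual operator valued weight $E^{-1}$.

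First I would argue that $\Ind(E)\in Z(M)_+$ is bounded and invertible. By definition $\Ind_p(E)$ is an invertible positive element of $Z(M)$, so $\Ind_p(E)^2$ is a bounded operator in $Z(M)_+$; the inequality $\Ind(E)\leq \Ind_p(E)^2$ then upgrades the a priori affiliation $\Ind(E)=E^{-1}(1)\,\eta\, Z(M)_+$ to actual membership in $Z(M)$. For invertibility, the inequality $\Ind(E)\geq \Ind_p(E)\geq 1$ gives $\Ind(E)\geq 1$ in the positive sense, so $\Ind(E)^{-1}$ exists in $Z(M)_+$.

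Next I would set $F(x):=\Ind(E)^{-1}E^{-1}(x)$ for $x\in N'$ and verify that $F$ is a conditional expectation from $N'$ onto $M'$. The key ingredient is $M'$-bimodularity of the operator valued weight $E^{-1}\col N'\to M'$: for $a,a'\in M'$ and $x\in N'$, one has $E^{-1}(axa')=aE^{-1}(x)a'$. Since $\Ind(E)\in Z(M)=M\cap M'$, it commutes with every $a\in M'$, so $F$ indeed takes values in $M'$, is completely positive, and satisfies $F(1)=1$. Applying bimodularity with $x=1$ gives $E^{-1}(a)=a\,\Ind(E)=\Ind(E)\,a$ for $a\in M'$, hence $F(a)=\Ind(E)^{-1}\Ind(E)\,a=a$, showing that $F$ restricts to the identity on $M'$. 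Together with $F(1)=1$ and complete positivity, this identifies $F$ as a genuine conditional expectation onto $M'$.

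The only real subtlety I anticipate is making sure $\Ind(E)$ really sits inside $Z(M)$ rather than merely being affiliated with it; but this is forced by the upper bound $\Ind(E)\leq \Ind_p(E)^2\in Z(M)_+$ from the previous theorem, so no additional work is needed. The rest is a routine verification that a unital completely positive idempotent onto a von Neumann subalgebra is a conditional expectation.
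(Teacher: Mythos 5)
Your argument is correct and is essentially the argument the paper has in mind: the paper's own "proof" consists of the single phrase "The following result is an immediate consequence from the previous theorem," and the natural way to unwind that is exactly what you did — use $\Ind(E)\le\Ind_p(E)^2$ to see that $\Ind(E)$ lands in $Z(M)_+$ (not merely affiliated), use $\Ind(E)\ge\Ind_p(E)\ge 1$ for invertibility, and then normalize $E^{-1}$ by its total mass, using $M'$-bimodularity of the dual operator valued weight and the fact that $\Ind(E)\in Z(M)=Z(M')$ to check the conditional-expectation properties. One small imprecision: invertibility of $\Ind_p(E)$ is not quite "by definition" — the infimum in the definition is over invertible elements but need not itself be invertible — it follows from the bound $\Ind_p(E)\ge 1$ in the theorem; this does not affect the argument.
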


\subsection{Index of an endomorphism}
\label{subsect:index}

\begin{defn}\label{defn:irr-co}
Let $\rho$ be an endomorphism on $M$.
We will say that
\begin{itemize}
\item
$\rho$ has
\emph{finite probability index}
\index{endomorphism!of finite probability index}
if the inclusion $\rho(M)\subs M$ does;
\item
$\rho$ is \emph{irreducible}
\index{endomorphism!irreducible--}
if
$\rho(M)'\cap M=\rho(Z(M))$;
\item
$\rho$ is \emph{co-irreducible}
\index{endomorphism!co-irreducible--}
if
$\rho(M)'\cap M=Z(M)$;
\item
$\rho$ \emph{preserves}
\index{endomorphism!preserving center}
the center $Z(M)$
if
$\rho(Z(M))=Z(M)$.
\end{itemize}
\end{defn}

Note that these are the properties as a sector.
The conjugate endomorphism $\orho$ is defined
by the following bimodules:
\[
{}_{M\,\rho}L^2(M)_M\cong {}_M L^2(M)_{\orho\,M},
\]
where ${}_M L^2(M)$ denotes the standard form of $M$.
Of course, $\orho$ is determined up to unitary perturbations
in $\End(M)$.
For an inclusion $N\subs M\subs B(L^2(M))$ such that $N$ is properly infinite,
we define the canonical endomorphism
\index{endomorphism!canonical--}
$\ga\col M\ra N$
by $\ga(x)=J_NJ_M xJ_M J_N$.
Then the unitary $J_NJ_M$ yields the following isomorphism:
\[
{}_{M}L^2(M)_N\cong {}_{M\,\ga} L^2(N)_{N}.
\]
If $N=\rho(M)$, then
\[
{}_{M}L^2(M)_{\rho\,M}\cong {}_{M\,\ga} L^2(\rho(M))_{\rho\,M}.
\]
Using the isomorphism $\rho^{-1}\col \rho(M)\ra M$,
we have
\[
{}_{M}L^2(M)_{\rho\,M}\cong {}_{M\,\rho^{-1}\ga} L^2(M)_{M}.
\]
This means $\orho=\rho^{-1}\ga$ in $\Sect(M)$.
Note that $\rho$ is irreducible if and
only if $\orho$ is co-irreducible.

For an inclusion $N\subs M$,
let us denote by $\cE(M,N)$
the set of all conditional expectations
from $M$ onto $N$.
The following result has already appeared in a situation
of a factor in \cite[Proposition 5.1]{L1}.

\begin{lem}\label{lem:E-isom}
Let $N\subs M$ be an inclusion of von Neumann algebras
such that $\cE(M,N)$ is a non-empty set.
Then there exists an injective map $w$ from $\cE(M,N)$
into $(\id_N,\ga|_N)_{\rm is}$ such that
$E(x)=w^*\ga(x)w$ for $x\in M$,
where $(\id_N,\ga|_N)_{\rm is}$ denotes
the set of isometries
in $(\id_N,\ga|_N)$,
and $\ga$ the canonical endomorphism from $M$ into $N$.
In particular, $(\id_N,\ga|_N)_{\rm is}$ is non-empty.
\end{lem}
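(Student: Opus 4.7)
The plan is to construct, for each expectation $E\in\cE(M,N)$, a canonical isometry $w_E\in N$ satisfying the intertwining property $w_E n=\gamma(n)w_E$ for all $n\in N$, together with the reconstruction formula $E(x)=w_E^{*}\gamma(x)w_E$ for all $x\in M$. Because $\gamma$ is fixed once $M$ and $N$ are realized in standard form, this reconstruction formula visibly determines $E$ from $w_E$, so the map $E\mapsto w_E$ is automatically injective; the existence of at least one $E$ then shows that $(\id_N,\gamma|_N)_{\rm is}$ is non-empty.

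I would first fix the setting so that $\gamma$ is literally $\Ad(J_NJ_M)$, i.e.\ realize $N\subs M\subs B(L^2(M))$ with $N$ properly infinite so that $L^2(M)$ is identified with $L^2(N)$ as Hilbert space and both $J_M$ and $J_N$ act on it; then $\gamma(M)\subs N$ because $J_MMJ_M=M'\subs N'=J_NNJ_N$, and $\gamma|_N$ is an endomorphism of $N$. Given $E\in\cE(M,N)$, I would next attach to it a Jones projection $e_E\in B(L^2(M))$ by choosing a faithful normal state $\varphi$ on $N$, setting $\psi=\varphi\circ E$ on $M$, and letting $e_E$ be the projection of $L^2(M,\psi)\cong L^2(M)$ onto the canonical copy of $L^2(N,\varphi)$. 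Standard Jones-type arguments give $e_Exe_E=E(x)e_E$ for $x\in M$, $[e_E,N]=0$, and, crucially, the balance $J_Me_EJ_M=e_E$ (which is nothing but the modular compatibility of $\psi$ with $\varphi$ through $E$).

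I would then define $w_E$ as the operator on $L^2(N)\subs L^2(M)$ prescribed on the cyclic vector by $w_E\Lambda_\varphi(n)=J_NJ_M\Lambda_\psi(n)$ for $n\in N$. That this prescription extends to a bounded operator, and that this operator lies in $N$, is the content of the bimodule isomorphism $_{N}L^2(M)_M\cong{}_{N,\gamma|_N}L^2(N)_M$ implicit in the very definition of $\gamma$: the $M$-right action commutes with the prescription, and conjugation by $J_NJ_M$ turns the $M'$-action into the $\gamma$-twisted $N$-action. Once $w_E\in N$ is established, isometry of $w_E$ follows from $J_M^{*}=J_M$, $J_N^{*}=J_N$ and the fact that $e_E$ is a projection; the intertwining $\gamma(n)w_E=w_En$ for $n\in N$ follows by unwinding $\gamma(n)=J_NJ_MnJ_MJ_N$ and using $[e_E,N]=0$; and the reconstruction formula $w_E^{*}\gamma(x)w_E=E(x)$ is verified by computing both sides on $\Lambda_\varphi(n)$ using $e_Exe_E=E(x)e_E$.

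The main obstacle is the passage from the abstract bimodule isomorphism to the concrete statement that the densely-defined operator $w_E$ actually lies in $N$. In the pure factor setting this is Longo's Proposition 5.1 in \cite{L1} and is essentially a routine computation. In the present generality one must additionally respect the inclusion of centers (which is what makes the $E^{-1}$-theory of Section 9.1 needed), and the cleanest way to handle this is to disintegrate over $Z(M)$, apply the factor case fibrewise, and reassemble — the measurability needed for this assembly being supplied by the arguments in \S\ref{sect:index}. All the remaining verifications are direct manipulations with $J_N$, $J_M$, and $e_E$.
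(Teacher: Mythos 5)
Your construction is essentially the one the paper uses: realize the inclusion in standard form on $L^2(M)$, choose a state on $N$ and extend via $E$ (the paper uses a bicyclic separating vector $\Omega$ and takes $\xi_E$ in the natural cone with $\omega_{\xi_E}=\omega_\Omega\circ E$; you choose a faithful state $\varphi$ and set $\psi=\varphi\circ E$ — the same thing), build the Jones projection, and compose with $\Gamma=J_NJ_M$ to land in $(\id_N,\gamma|_N)_{\rm is}$. The injectivity also follows in both cases from the reconstruction formula $E(x)=w_E^*\gamma(x)w_E$.

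Where you diverge from the paper is at the very end: you imagine the passage to non-factor $N$ and $M$ requires disintegration over $Z(M)$ and the $\Ind(E)$-theory of \S\ref{sect:index}, and you present this as the main obstacle. That is not so, and the paper does not do this. The operator $w_E=\Gamma\nu_E$ is defined as an isometry of $L^2(N)$ (one has $L^2(N)=L^2(M)$ as Hilbert spaces since $N$ is taken properly infinite), and $\nu_E$ is by construction an $N$-$N$ bimodule map. Since $\Gamma$ intertwines the right $N$-action on ${}_NL^2(M)_N$ with the right $N$-action on $L^2(N)$, the composite $w_E$ commutes with the right $N$-action $J_NNJ_N=N'$; by the commutation theorem in the standard form this immediately gives $w_E\in N$, with no assumption that $N$ or $M$ be factors and no fibrewise argument. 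The left-module intertwining $\Gamma a=\gamma(a)\Gamma$ for $a\in N$ then gives $w_E\in(\id_N,\gamma|_N)$ directly. So your answer is correct but flags a nonexistent difficulty; the clean argument you already outlined via the bimodule isomorphism suffices verbatim in the general von Neumann algebra setting. The only point you should verify explicitly but do not is that $\nu_E$ is a \emph{right} $N$-module map — this is where the choice of $\xi_E$ in the natural cone with $E$-invariant vector state $\omega_{\xi_E}=\omega_\Omega\circ E$ enters, and it is the one genuinely non-tautological step.
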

\begin{proof}
We represent $N\subs M$ as the standard form on $L^2(M)$
with a bi-cyclic and separating vector $\Omega\in L^2(M)$.
The modular conjugations for $N$ and $M$
are denoted by $J_N$ and $J_M$, respectively.
By $\mathcal{P}_M$, we denote the natural cone of $M$.

Let $E\in \cE(M,N)$ and take $\xi_E\in\mP$
such that $\om_{\xi_E}=\om_\Om\circ E$,
where $\om_\eta$ denotes the vector functional
$\langle\cdot\eta,\eta\rangle$ for $\eta\in L^2(M)$.
Let $e_N$ be the Jones projection
associated with $E$ and $\xi$,
that is, it projects $L^2(M)$ onto $\ovl{N\xi}$.
Then the map $N\Om\ni x\Om\mapsto x\xi\in e_N L^2(M)$
defines the isometry of $N$-$N$ bimodules
as
$\nu_E\col {}_N L^2(N)_N\ra {}_N e_N L^2(M)_N$.
The unitary $\Ga:=J_N J_M$ yields the isomorphism
from ${}_N L^2(M)_N$ onto ${}_N {}_\ga L^2(N)_N$.
Thus the isometry $w_E:=\Ga \nu_E$ is contained in $(\id_N,\ga|_N)$.
Then the map $w\col \cE(M,N)\ra (\id_N,\ga|_N)$ is clearly injective.
\end{proof}

\begin{prob}
Is the map $w$ always bijective?
\end{prob}

\begin{lem}
Let $N\subs M$ be an inclusion of von Neumann algebras
such that $\cE(M,N)$ is non-empty.
Then the following statements are equivalent:
\begin{enumerate}
\item
The inclusion $N\subs M$ is irreducible;
\item
$\cE(M,N)$ is a singleton;
\item
The $U(Z(N))$-set $(\id_N,\ga|_N)_{\rm is}$
is generated by a single element $R$, that is,
$(\id_N,\ga|_N)_{\rm is}=U(Z(N)) R$.
\end{enumerate}
\end{lem}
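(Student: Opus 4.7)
The plan is to establish the cyclic implications $(3)\Rightarrow(2)\Rightarrow(1)\Rightarrow(3)$, using Lemma~\ref{lem:E-isom} throughout: the injection $w\col \cE(M,N)\hookrightarrow (\id_N,\ga|_N)_{\rm is}$ satisfying $E(x)=w_E^*\ga(x)w_E$ will be the common thread.

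The implication $(3)\Rightarrow(2)$ is immediate: any $w_E$ may be written as $u_E R$ with $u_E\in U(Z(N))$, and since $u_E\in Z(N)$ commutes with $\ga(x)\in N$, we obtain $E(x)=R^*u_E^*\ga(x)u_ER=R^*\ga(x)R$, a formula independent of $E$; hence $\cE(M,N)$ is a singleton.

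For $(2)\Rightarrow(1)$, I would use a first-order perturbation argument. Given a self-adjoint $b\in N'\cap M$ with $E(b)=0$, the element $a:=1+\vep b$ is positive for small $\vep>0$ and satisfies $E(a)=1$, so $E_a(x):=E(a^{1/2}xa^{1/2})$ defines a conditional expectation in $\cE(M,N)$ (the $N$-bimodule property uses $a\in N'$). By (2), $E_a=E$, and expanding to first order in $\vep$ yields $E(bx+xb)=0$ for all $x\in M$; setting $x=b$ gives $E(b^2)=0$, forcing $b=0$ by faithfulness. Writing any self-adjoint $c\in N'\cap M$ as $E(c)+(c-E(c))$ with $c-E(c)$ self-adjoint and $E$-null, we conclude $c=E(c)\in Z(N)$, i.e., $N'\cap M=Z(N)$.

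The step $(1)\Rightarrow(3)$ is the subtlest. Fix $R:=w_E$ and let $V\in(\id_N,\ga|_N)_{\rm is}$ be arbitrary. The intertwining relations immediately give $u:=R^*V\in(\id_N,\id_N)=Z(N)$ and $s:=VR^*\in\ga(N)'\cap N$, with $ss^*=VV^*$ and $s^*s=RR^*$. The plan is to upgrade $s$ from $\ga(N)'\cap N$ to $Z(N)$ under hypothesis (1), via Longo's duality identifying $\ga(M)'\cap N$ with $N'\cap M=Z(N)$ combined with a ``multiplicity one'' statement for the inclusion of $L^2(N)$ into $L^2(M)$ as $N$-$N$-bimodules. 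Once $s\in Z(N)$, centrality gives $ss^*=s^*s$, so $VV^*=RR^*=:p$; then $uu^*=R^*VV^*R=R^*RR^*R=1$, similarly $u^*u=1$, so $u\in U(Z(N))$, and $V=pV=RR^*V=Ru$ as required. The main obstacle is precisely this multiplicity-one identification, which requires a careful analysis of how $(\id_N,\ga|_N)$ corresponds to $N$-$N$-bimodule morphisms $L^2(N)\to L^2(M)$ via $\Ga=J_NJ_M$, together with the Longo-style computation of $\End_{N\text{-}N}(L^2(M))$ in terms of $N'\cap M$.
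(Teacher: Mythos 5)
Your implications $(3)\Rightarrow(2)$ and $(2)\Rightarrow(1)$ are correct. The $(3)\Rightarrow(2)$ step coincides with what the paper does. Your $(2)\Rightarrow(1)$, however, is genuinely different from the paper and arguably more elementary: the paper simply invokes the Anantharaman-Delaroche--Combes theorem (\cite[Th\'{e}or\`{e}mes 5.3 and 5.6]{CD}) that $E\mapsto E|_{N'\cap M}$ is a bijection from $\cE(M,N)$ onto $\cE(N'\cap M,Z(N))$, whereas your perturbation argument — build $E_a(x):=E(a^{1/2}xa^{1/2})$ from $a=1+\vep b$ with $b=b^*\in N'\cap M$, $E(b)=0$, use uniqueness for each small $\vep$, and extract $E(bx+xb)=0$ at first order, then set $x=b$ — is self-contained and only uses faithfulness of $E$ together with the fact that $a^{1/2}\in N'\cap M$ gives the $N$-bimodule property and $E(c)\in N'\cap N=Z(N)$ for $c\in N'\cap M$. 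That is a nice byproduct of your approach.

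The implication $(1)\Rightarrow(3)$ is, as you say yourself, not a proof but a plan, and there is a genuine gap. Your element $s:=VR^*$ lives in $\ga(N)'\cap N$, not in $\ga(M)'\cap N$, and this distinction is the crux. In the factorial picture $\ga(N)'\cap N$ is the relative commutant $N_1'\cap N$ for a downward basic construction $N_1\subset N\subset M$, which is typically strictly larger than $Z(N)$ even when $N\subset M$ is irreducible (think of the growing Temperley--Lieb relative commutants). So ``upgrading'' $s$ to $Z(N)$ is a real multiplicity-one statement about $\Hom_{N\text{-}N}(L^2(N),L^2(M))$ as a $Z(N)$-module, and your sketch defers exactly that. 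Moreover $\End_{N\text{-}N}(L^2(M))$ is governed by $N'\cap M_1$ (the basic construction), not by $N'\cap M$ directly, so even the Longo identification you gesture at would need a shift in the tower to land you where you want. The paper avoids this by instead proving $(2)\Rightarrow(3)$: given an arbitrary $w\in(\id_N,\ga|_N)_{\rm is}$, one shows $F(x):=w^*\ga(x)w$ is a conditional expectation, uses $N'\cap M=Z(N)$ (which it has already identified with $(2)$) to get faithfulness, deduces $F=E$, and then the equality of the two expectations lets one define an honest $L^2$-level partial isometry $V$ with $V(\ga(x)R\Om)=\ga(x)w\Om$ for \emph{all} $x\in M$, which is why $V$ lands in $\ga(M)'\cap N=Z(N)$ rather than merely $\ga(N)'\cap N$. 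To complete your cycle in the same spirit, you should either reproduce the paper's $(2)\Rightarrow(3)$ (your $(2)\Rightarrow(1)$ already gives you the faithfulness input) and then check the cycle $(3)\Rightarrow(2)\Rightarrow(3)$ together with $(2)\Rightarrow(1)$ and $(1)\Rightarrow(2)$ closes — but for $(1)\Rightarrow(2)$ you would still need \cite{CD} or an analogue — or carry out the bimodule multiplicity-one computation in full. As written, the equivalence is not established.
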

\begin{proof}
(1)$\Rightarrow$(2).
Thanks to \cite[Th\'{e}or\`{e}me 5.3]{CD},
the map
$\cE(M,N)\ni E\ra E|_{N'\cap M}\in \cE(N'\cap M,Z(N))$
is bijective.
Since $N'\cap M=Z(N)$, the statement (2) follows.

(2)$\Rightarrow$(1).
The bijection stated above says $\cE(N'\cap M,Z(N))$ is a singleton.
Thus $N'\cap M$ must coincide with $Z(N)$
by \cite[Th\'{e}or\`{e}me 5.6]{CD}.

(2)$\Rightarrow$(3).
Let $E\col M\ra N$ be the unique expectation.
We set $R:=w_E\in (\id_N,\ga|_N)_{\rm is}$ as in the previous lemma.
Let $w\in (\id_N,\ga|_N)_{\rm is}$.
Then the map $F(x):=w^* \ga(x)w$ defines a projection from $M$
onto $N$.
We show the faithfulness of $F$.
Let $p$ be a projection in $M$ such that
$Mp=\{x\in M\mid F(x^*x)=0\}$.
The bimodule map property of $F$ implies $p\in N'\cap M$,
and $p\in Z(N)$ since $N\subs M$ is irreducible.
Then $0=F(p)=p$, and $F$ is faithful.
Thus $E=F$, and we can construct a partial isometry $V$ on $L^2(M)$
such that $V(\ga(x)R\Om)=\ga(x)w\Om$ for all $x\in M$.
Since $w,R\in (\id_N,\ga|_N)$,
we see $V\in \ga(M)'\cap N=Z(N)$.
Hence $w=VR$, and $V$ is unitary.

(3)$\Rightarrow$(2).
Let $E_1,E_2\in \cE(M,N)$.
The map $w$ constructed in Lemma \ref{lem:E-isom} sends them
to $(\id_N,\ga|_N)$.
Hence there exists a unitary $u\in Z(N)$
such that $w_{E_1}=u w_{E_2}$.
Then it is trivial that $E_1=E_2$.
\end{proof}

The following result is a dual version of the previous.

\begin{lem}\label{lem:co-irr-single}
Let $N\subs M$ be an inclusion of von Neumann algebras
such that $\cE(N,\ga(N))$ is non-empty.
Then the following statements are equivalent:
\begin{enumerate}
\item
The inclusion $N\subs M$ is co-irreducible;
\item
$\cE(N,\ga(N))$ is a singleton;
\item
The $U(Z(M))$-set $(\id_M,\ga)_{\rm is}$
is generated by a single element $S$, that is,
$(\id_M,\ga)_{\rm is}=U(Z(M)) S$.
\end{enumerate}
\end{lem}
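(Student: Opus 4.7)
My plan is to dualize the proof of the previous lemma by applying it to the ``downward'' inclusion $\ga(N)\subs N$. The starting observation is that $\ga(N)\subs N$ plays the role of $N\subs M$ in the previous lemma, with canonical endomorphism implemented by $J_{\ga(N)}J_N$; after natural identifications, this canonical endomorphism is essentially the restriction $\ga|_N\col N\to\ga(N)$. The intertwiner space $(\id_M,\ga)_{\rm is}$ appearing in the statement must then be matched with the intertwiner space for this dual inclusion.

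For (1) $\Leftrightarrow$ (2), I would argue in two steps. First, by the Combes--Delaroche correspondence applied to $\ga(N)\subs N$, the set $\cE(N,\ga(N))$ is a singleton if and only if $\ga(N)'\cap N=Z(\ga(N))$, i.e., the inclusion $\ga(N)\subs N$ is irreducible. Second, I would prove that co-irreducibility of $N\subs M$ is equivalent to irreducibility of $\ga(N)\subs N$: the map $\ga$ gives an isomorphism $N'\cap M\to\ga(N)'\cap\ga(M)$ (since $\ga$ is injective on $M$ and preserves commutation), and one shows $\ga(N)'\cap\ga(M)=\ga(N)'\cap N$ because $\ga(M)\subs N$ is itself a downward basic construction, so any element of $\ga(N)'\cap N$ must already lie in $\ga(M)$. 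Combined with $\ga(Z(M))=Z(\ga(M))\subs Z(\ga(N))$, this transports co-irreducibility of $N\subs M$ to irreducibility of the dual inclusion.

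For (2) $\Leftrightarrow$ (3), I would mirror the construction in Lemma \ref{lem:E-isom}. Given $E\in\cE(N,\ga(N))$, realize $N$ in standard form, choose $\xi_E$ in the natural cone of $N$ with $\om_{\xi_E}=\om_\Om\circ E$, form the Jones projection $e_{\ga(N)}$, and compose with the unitary $J_{\ga(N)}J_N$ to produce an isometry $w_E\in N$ that intertwines $\id_N$ with $\ga|_N$. The essential new step is to promote this $N$-intertwiner to an $M$-intertwiner: the relation $w_Ex=\ga(x)w_E$ for $x\in N$ actually extends to all $x\in M$ because $\ga(M)\subs N$ and because the construction is compatible with the right $M$-module structure on $L^2(M)$. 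The remaining equivalences --- injectivity of $E\mapsto w_E$, faithfulness of the recovered expectation $F(x)=w^*\ga(x)w$ using co-irreducibility of $N\subs M$ (so that the support projection of the kernel of $F$ lies in $N'\cap M=Z(M)$), and $U(Z(M))$-equivariance --- proceed verbatim as in the previous lemma, with $Z(N)$ replaced by $Z(M)$ at the appropriate place.

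The main obstacle is the equality $\ga(N)'\cap\ga(M)=\ga(N)'\cap N$ in the non-factor setting. In the factor case this is automatic, but in the presence of non-trivial centers it requires a careful argument exploiting the downward-basic-construction structure of $\ga(M)\subs N$: any element of $\ga(N)'\cap N$ commutes with $\ga(N)$, and the bimodule structure inherited from the basic construction forces it to lie in $\ga(M)$. Getting this identification clean, and correctly matching the centers $Z(M)\leftrightarrow Z(\ga(N))$ under the isomorphism induced by $\ga$, is the delicate point that makes the dual statement genuinely different from the previous lemma rather than a formal restatement.
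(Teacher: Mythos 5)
Your argument breaks at the key step. You claim $\ga(N)'\cap\ga(M)=\ga(N)'\cap N$ and deduce from it that co-irreducibility of $N\subs M$ is equivalent to irreducibility of $\ga(N)\subs N$. Both statements are false, including in the factor case you describe as ``automatic.'' The tower $\ga(N)\subs\ga(M)\subs N$ is a basic construction: $N$ contains the Jones projection $e_{\ga(N)}$ of the inclusion $\ga(N)\subs\ga(M)$, and $e_{\ga(N)}\in\ga(N)'\cap N$ while $e_{\ga(N)}\notin\ga(M)$ unless the inclusion is trivial. Concretely, take $N\subs M$ an irreducible subfactor of finite index $>1$. Then $N\subs M$ is co-irreducible since $N'\cap M=\C=Z(M)$, yet $\ga(N)'\cap N$, which under $\ga^{-1}$ identifies with $N'\cap M_1$, contains $e_N$ and hence strictly contains $Z(N)=\C$; so $\ga(N)\subs N$ is not irreducible. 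Your equivalence therefore fails.

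The inclusion to which the previous lemma should be applied is $\ga(M)\subs N$, not $\ga(N)\subs N$. Under $\ga^{-1}\col N\to M_1$ this is $M\subs M_1$, and co-irreducibility of $N\subs M$ transports precisely: $N'\cap M=Z(M)$ iff $J_M(N'\cap M)J_M=J_M Z(M)J_M$, and $J_M(N'\cap M)J_M=M'\cap M_1$, $J_M Z(M)J_M=Z(M')$, so this is exactly irreducibility of $M\subs M_1$, i.e.\ of $\ga(M)\subs N$. The relevant expectation set is thus $\cE(N,\ga(M))$, which under conjugation by $\rho$ is $\cE(M,\orho(M))$ --- compare the hypothesis of Lemma~\ref{lem:vw-isom}~(2), which is where this lemma is actually used. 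The ``$\ga(N)$'' in the printed statement is almost certainly a slip for ``$\ga(M)$''; by taking it at face value and forcing the dualization through $\ga(N)\subs N$, you are led to the false commutant identity. The same misidentification propagates into your (2)$\Leftrightarrow$(3) step: Lemma~\ref{lem:E-isom} applied to $\ga(N)\subs N$ produces isometries in $(\id_{\ga(N)},\ga''|_{\ga(N)})$ where $\ga''$ is the canonical endomorphism of $\ga(N)\subs N$ (at the sector level $(\orho\rho)^2$), which is not $\ga|_N$ and not $\ga$ on $M$; applied to $\ga(M)\subs N$ it gives the correct space once the standard identification with $(\id_M,\ga)_{\rm is}$ and $U(Z(\ga(M)))\cong U(Z(M))$ is made.
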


Applying the previous two results,
we obtain the following result.

\begin{lem}\label{lem:vw-isom}
Let $\rho$ be an endomorphism on $M$.
\begin{enumerate}
\item
If $\rho$ is irreducible and $\cE(M,\rho(M))\neq\emptyset$,
then the $U(Z(M))$-set $(\id_M,\orho\rho)$ is transitive.
\item
If $\rho$ is co-irreducible and $\cE(M,\orho(M))\neq\emptyset$,
then the $U(Z(M))$-set $(\id_M,\rho\orho)$ is transitive.
\end{enumerate}
\end{lem}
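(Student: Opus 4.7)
The plan is to deduce both parts from Lemma~\ref{lem:E-isom} and the two lemmas immediately following it (the unnamed equivalence for irreducible inclusions and Lemma~\ref{lem:co-irr-single}), by taking $N:=\rho(M)$ and transferring between $(\id_N,\gamma|_N)_{\rm is}$ and the desired spaces on $M$. In particular (2) will be obtained from (1) by the duality $\overline{\orho}=\rho$.

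For (1), let $\gamma\col M\to N$ be the canonical endomorphism of $N=\rho(M)\subs M$. Since $\gamma=\rho\orho$ in $\Sect(M)$ and $\orho$ is determined only up to unitary perturbation, I fix a representative $\orho$ for which $\gamma=\rho\circ\orho$ holds as honest endomorphisms. Irreducibility of $\rho$ reads $\rho(M)'\cap M=\rho(Z(M))=Z(N)$, so $N\subs M$ is an irreducible inclusion; together with $\cE(M,N)\neq\emptyset$, the unnamed lemma preceding Lemma~\ref{lem:co-irr-single} gives that the $U(Z(N))$-set $(\id_N,\gamma|_N)_{\rm is}$ is generated by a single isometry, hence is transitive. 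I then exhibit a $U(Z(M))$-equivariant bijection
\[
(\id_M,\orho\rho)_{\rm is}\;\longrightarrow\;(\id_N,\gamma|_N)_{\rm is},\qquad w\mapsto \rho(w),
\]
by noting that $v\in\rho(M)$ satisfies $v\rho(y)=\gamma|_N(\rho(y))v=\rho(\orho\rho(y))v$ if and only if, writing $v=\rho(w)$ via the injectivity of $\rho$, one has $wy=\orho\rho(y)w$. Equivariance follows from $\rho(U(Z(M)))=U(Z(\rho(M)))=U(Z(N))$, and the transitivity on the right therefore transfers to the left.

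For (2), the observation from \S\ref{subsect:index} that $\rho$ is co-irreducible if and only if $\orho$ is irreducible, combined with $\overline{\orho}=\rho$, lets me apply (1) with $\orho$ in place of $\rho$: the needed hypotheses ($\orho$ irreducible and $\cE(M,\orho(M))\neq\emptyset$) are precisely what is given, and the conclusion reads that $(\id_M,\overline{\orho}\,\orho)_{\rm is}=(\id_M,\rho\orho)_{\rm is}$ is a transitive $U(Z(M))$-set. I prefer this route over a direct application of Lemma~\ref{lem:co-irr-single} to $N=\rho(M)$, because the latter would require $\cE(\rho(M),\gamma(\rho(M)))\neq\emptyset$, which via the isomorphism $\rho^{-1}\col\rho(M)\to M$ translates into $\cE(M,\orho\rho(M))\neq\emptyset$ --- a strictly stronger condition than the one we have. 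The only technical point of the whole argument is the equivariance check in (1); everything else is purely formal manipulation of conjugate sectors and canonical endomorphisms.
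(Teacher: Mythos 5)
Your proof is correct, and part (1) matches the route the paper's terse ``Applying the previous two results'' suggests: apply the unnamed irreducible-inclusion lemma to $N=\rho(M)\subs M$ to get transitivity of $(\id_N,\ga|_N)_{\rm is}$ under $U(Z(N))$, then transport along the bijection $w\mapsto\rho(w)$, using $\rho(Z(M))=Z(N)$ for equivariance. The computation $\rho(w)\rho(y)=\ga(\rho(y))\rho(w)\Leftrightarrow wy=\orho\rho(y)w$ is exactly right.

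For part (2) you diverge from the literal reading of the paper's hint (which would invoke Lemma~\ref{lem:co-irr-single} directly for $N=\rho(M)$) and instead apply your part (1) to $\orho$, using the duality $\rho$ co-irreducible $\Leftrightarrow$ $\orho$ irreducible together with $\overline{\orho}=\rho$ in $\Sect(M)$. This is a genuinely better route, and your diagnosis of why is essentially correct: a direct application of Lemma~\ref{lem:co-irr-single} to $N=\rho(M)$ requires $\cE(\rho(M),\ga(\rho(M)))\neq\emptyset$, which transported via $\rho^{-1}$ is $\cE(M,\orho\rho(M))\neq\emptyset$, not the stated hypothesis $\cE(M,\orho(M))\neq\emptyset$. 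So the duality reduction is actually the honest way to get (2) from the hypothesis as written, and the paper's phrase ``the previous two results'' is best read as ``the duality $\rho\leftrightarrow\orho$ encoded in the pair of lemmas.'' The only inaccuracy is the word ``strictly stronger'': $\cE(M,\orho\rho(M))\neq\emptyset$ and $\cE(M,\orho(M))\neq\emptyset$ are in general incomparable (an expectation onto the smaller algebra $\orho\rho(M)$ does not factor through $\orho(M)$ without an additional intermediate expectation, and conversely), though in the finite-index situations where this lemma is actually invoked both conditions hold. One more small point worth stating explicitly, which you left implicit: ``transitive'' is preserved under replacing $\overline{\orho}$ by a unitary perturbation $\Ad u\circ\overline{\orho}$, since $v\mapsto u^*v$ is a $U(Z(M))$-equivariant bijection of the corresponding isometry spaces; this is what lets you identify $(\id_M,\overline{\orho}\,\orho)_{\rm is}$ with $(\id_M,\rho\orho)_{\rm is}$ in the final step.
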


It is not hard to prove the following result.

\begin{lem}
If $\rho\in\End(M)$ preserves $Z(M)$,
then so does $\orho$.
Moreover, if $\cE(M,\rho(M))\neq\emptyset$,
then $\rho\orho=\id=\orho\rho$ on $Z(M)$.
\end{lem}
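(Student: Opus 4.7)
The approach is to reduce both parts to the single calculation $\gamma|_{Z(M)} = \id$, where $\gamma \colon M \to \rho(M)$ is the canonical endomorphism $\gamma(x) = J_{\rho(M)} J_M x J_M J_{\rho(M)}$ recorded just before Lemma \ref{lem:E-isom}; combined with the sector identity $\orho = \rho^{-1}\gamma$ established there, both claims drop out.

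First I would observe that the hypothesis $\rho(Z(M)) = Z(M)$ forces $Z(M) \subseteq Z(\rho(M))$: from $Z(M) = \rho(Z(M)) \subseteq \rho(M)$ and the trivial inclusion $Z(M) \subseteq \rho(M)'$ (central elements commute with all of $M$), intersecting yields $Z(M) \subseteq \rho(M) \cap \rho(M)' = Z(\rho(M))$. Using the standard fact that $J_N w J_N = w^*$ for every $w \in Z(N)$ in standard form, applied to both $N = M$ and $N = \rho(M)$, I compute for each $z \in Z(M)$:
$$
\gamma(z) = J_{\rho(M)} J_M z J_M J_{\rho(M)} = J_{\rho(M)} z^* J_{\rho(M)} = z.
$$
For the first claim, since inner automorphisms act trivially on $Z(M)$, the restriction $\rho\orho|_{Z(M)}$ depends only on the sector $[\rho\orho] = [\gamma]$, and hence equals $\gamma|_{Z(M)} = \id$. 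This gives $\orho(z) = \rho^{-1}(z) \in \rho^{-1}(Z(M)) = Z(M)$ for each $z \in Z(M)$ and identifies $\orho|_{Z(M)} = (\rho|_{Z(M)})^{-1}$ as an automorphism of $Z(M)$, so $\orho$ preserves $Z(M)$.

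For the moreover part, the existence of $E \in \cE(M, \rho(M))$ permits choosing the representative of $\orho$ so that the identity $\rho\orho = \gamma$ holds strictly in $\End(M)$ (not merely in $\Sect(M)$), whence $\rho\orho = \id$ on $Z(M)$ on the nose. The complementary identity $\orho\rho = \id$ on $Z(M)$ then follows by substituting $\rho(z) \in Z(M)$ (using the hypothesis on $\rho$) into $\rho\orho|_{Z(M)} = \id$ and cancelling $\rho$ by injectivity. The main obstacle I expect is justifying the upgrade from the sector identity $[\rho\orho]=[\gamma]$ to strict equality once $E$ is fixed; this is standard via a normalized isometry in $(\id,\rho\orho)$ built from $E$ along the lines of Lemma \ref{lem:E-isom} and Lemma \ref{lem:co-irr-single}, but deserves care because $M$ is only assumed to be a general von Neumann algebra and the canonical endomorphism formula presupposes $\rho(M)$ is properly infinite in the chosen standard form.
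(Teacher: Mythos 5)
Your proof is correct, and given the canonical-endomorphism formula $\gamma(x)=J_{\rho(M)}J_M xJ_M J_{\rho(M)}$ and the identity $\orho=\rho^{-1}\gamma$ recorded just before Lemma~\ref{lem:E-isom}, it is almost certainly the argument the authors had in mind (the paper offers no written proof, only the remark that the result is not hard). The core computation is exactly right: $\rho(Z(M))=Z(M)$ gives $Z(M)\subseteq\rho(M)\cap\rho(M)'=Z(\rho(M))$, whence $\gamma(z)=J_{\rho(M)}z^*J_{\rho(M)}=z$ for $z\in Z(M)$.

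One point can be streamlined. The paragraph in which you try to upgrade the sector identity $[\rho\orho]=[\gamma]$ to a strict equality of endomorphisms using $E\in\cE(M,\rho(M))$ is not needed. Once you know $\gamma(z)=z\in Z(M)$, writing $\rho\orho=\Ad u\circ\gamma$ for some $u\in U(M)$ gives $\rho\orho(z)=u\gamma(z)u^*=uzu^*=z$ for \emph{any} representative of the conjugate sector, and $\orho\rho|_{Z(M)}=\id$ then follows from injectivity of $\rho$ exactly as you say. Hence your argument establishes both conclusions of the lemma without ever invoking $\cE(M,\rho(M))\neq\emptyset$; this hypothesis is presumably present only because the lemma is applied downstream in a finite-index context where it holds for free, and you should simply delete the passage that tries (and does not really need) to use it. Your caveat about proper infiniteness is legitimate but is already built into the paper's framing of this subsection (cf.\ Theorem~\ref{thm:isom-conj}): since $\rho(M)\cong M$, properly infinite $M$ forces $\rho(M)$ to be properly infinite, so $L^2(M)$ is standard for $\rho(M)$ and $J_{\rho(M)}$ is well defined.
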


\begin{lem}
If an irreducible endomorphism $\rho$
has finite probability index,
then $\cE(M,\orho(M))\neq\emptyset$.
In particular, $(\id,\rho\orho)$ contains an isometry.
\end{lem}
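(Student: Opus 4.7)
The plan is to construct the expectation onto $\orho(M)$ by dualizing the given expectation through the standard-form $J$-conjugation, and then invoke Lemma~\ref{lem:vw-isom}(2) to extract the isometry. Throughout I work under the implicit factor assumption of this subsection, so that $Z(M)=\mathbb{C}$ and the irreducibility hypothesis reads $\rho(M)'\cap M=\mathbb{C}$; in particular $\rho$ is simultaneously irreducible and co-irreducible, and $\cE(M,\rho(M))$ is automatically a singleton.

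First I would fix the unique conditional expectation $E\colon M\to N:=\rho(M)$, which by hypothesis has finite probability index. Since $M$ is a factor, Lemma~\ref{lem:finite-factor} upgrades this to finiteness of the Kosaki index $\Ind(E)\in Z(M)_+=\mathbb{C}_+$. Realize $M\subset B(L^2(M))$ in standard form with modular conjugations $J_M$ and $J_N$. The dual operator-valued weight $E^{-1}\colon N'\to M'$ is then bounded, and by Corollary~\ref{cor:dualexp} (applicable since $Z(M)\subset Z(N)$ is trivial) the normalization $\Ind(E)^{-1}E^{-1}(\cdot)$ is a faithful normal conditional expectation from $N'$ onto $M'$.

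Next I would transport this expectation back inside $M$ via the unitary $J_N$. Conjugation by $J_N$ sends $N'$ onto $N$ and sends $M'$ onto $J_NJ_M\, M\, J_MJ_N=\gamma(M)$, where $\gamma$ is the canonical endomorphism of the inclusion $N\subset M$. Since by definition $\orho=\rho^{-1}\gamma$ in $\Sect(M)$, we have $\gamma=\rho\orho$, so the pushed-forward expectation is a conditional expectation $N=\rho(M)\to \rho\orho(M)$. Composing on both sides with the isomorphism $\rho^{-1}\colon\rho(M)\to M$ then yields a faithful normal conditional expectation $M\to\orho(M)$, establishing $\cE(M,\orho(M))\neq\emptyset$. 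Finally, since $\rho$ is co-irreducible and $\cE(M,\orho(M))\neq\emptyset$, Lemma~\ref{lem:vw-isom}(2) asserts that the $U(Z(M))$-set $(\id_M,\rho\orho)_{\rm is}$ is transitive, hence in particular nonempty, which gives the desired isometry.

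The one delicate step is the middle paragraph: verifying that after normalizing by the scalar $\Ind(E)$ and conjugating by $J_N$ one really lands in a bona fide conditional expectation onto $\gamma(M)=\rho\orho(M)$, rather than merely a unital completely positive projection, and identifying its pullback by $\rho^{-1}$ cleanly with a projection onto $\orho(M)$. This is essentially Longo's dual-expectation construction for the inclusion $\rho\orho(M)\subset\rho(M)\subset M$, and all the pieces needed are already collected in the appendix, so the argument should reduce to carefully matching the $J$-conjugation calculation with the definition of $\gamma$ and of $\orho$.
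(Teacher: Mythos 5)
Your overall construction---dualize the given expectation by Haagerup's $E^{-1}$, normalize, conjugate by $J_N$ to land in $\gamma(M)=\rho\orho(M)$, and pull back via $\rho^{-1}$---is exactly the route the paper intends (the published proof simply cites Corollary~\ref{cor:dualexp} and Lemma~\ref{lem:E-isom} and leaves the $J$-conjugation implicit). But the opening move, assuming $Z(M)=\C$, is not available here, and the last step leans on that assumption in a way that does not survive its removal.

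There is no implicit factor assumption in this subsection. The definitions there deliberately distinguish \emph{irreducible} ($\rho(M)'\cap M=\rho(Z(M))$) from \emph{co-irreducible} ($\rho(M)'\cap M=Z(M)$), a distinction that is vacuous for factors; and every surrounding lemma is phrased with $U(Z(M))$- and $U(Z(N))$-orbits. This is because the lemma is invoked in the body of the paper for endomorphisms of the core $\tM$, whose center is the flow of weights. Dropping the factor assumption forces two changes. First, the hypothesis $Z(M)\subset Z(N)$ in Corollary~\ref{cor:dualexp} is no longer ``trivial'': it must be checked, and it does follow from irreducibility since $Z(M)\subset\rho(M)'\cap M=\rho(Z(M))=Z(\rho(M))$. (The detour through Lemma~\ref{lem:finite-factor} is both unavailable---it assumes $M$ a factor---and unnecessary, since Corollary~\ref{cor:dualexp} already delivers invertibility of $\Ind(E)$ in $Z(M)$ directly from finite probability index.) Second, and more seriously, your appeal to Lemma~\ref{lem:vw-isom}(2) requires $\rho$ to be \emph{co}-irreducible, which you obtained only from $Z(M)=\C$; for a general center, an irreducible $\rho$ need not be co-irreducible, so that lemma does not apply. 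The paper instead cites Lemma~\ref{lem:E-isom}: applied to the inclusion $\orho(M)\subset M$ (whose canonical endomorphism is $\orho\rho$, since $\ovl{\orho}=\rho$), the existence of an expectation in $\cE(M,\orho(M))$ yields an isometry in $(\id_{\orho(M)},(\orho\rho)|_{\orho(M)})$, which under $\orho^{-1}$ becomes an isometry in $(\id_M,\rho\orho)$; no irreducibility of any kind is needed for that step. With these two repairs your argument goes through in full generality.
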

\begin{proof}
Thanks to Corollary \ref{cor:dualexp},
$\cE(M,\orho(M))$ is non-empty.
Then $(\id,\rho\orho)$ contains an isometry
from Lemma \ref{lem:E-isom} (1),
\end{proof}

Let $\rho$ be as above.
Take two isometries
$R_\rho\in (\id,\rho\orho)$ and $R_\orho\in (\id,\orho\rho)$.
Then $R_\rho^*\rho(R_\orho)\in (\rho,\rho)=\rho(Z(M))$
and $R_\orho^*\orho(R_\rho)\in(\orho,\orho)=Z(M)$.

\begin{prop}
Let $\rho$ be an irreducible endomorphism on $M$
with finite probability index.
Suppose that $\rho$ is preserving the center $Z(M)$.
Then the following hold:
\begin{enumerate}
\item $\orho$ has finite probability index;
\item $R_\rho^*\rho(R_\orho)$ and $R_\orho\orho(R_\rho)$
are invertible elements in $Z(M)$.
\end{enumerate}
\end{prop}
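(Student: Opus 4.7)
I begin with (1). Since $\rho$ is irreducible and preserves $Z(M)$, the inclusion $\rho(M)\subs M$ has common center $Z(\rho(M))=\rho(Z(M))=Z(M)$, and the theorem above comparing $\Ind_p$ and $\Ind$ then shows that finite probability index is equivalent to finite Kosaki index. Using the identification $\orho\sim\rho^{-1}\ga$ in $\Sect(M)$, where $\ga$ is the canonical endomorphism $M\to\rho(M)$, the inclusion $\orho(M)\subs M$ is isomorphic via $\rho^{-1}$ to $\ga(M)\subs\rho(M)$, which appears in the Jones tower for $\rho(M)\subs M$ with the same finite Kosaki index. Since the preceding lemma ensures $Z(\orho(M))=\orho(Z(M))=Z(M)$, the new inclusion again has common center, so a second application of the theorem yields finite probability index for $\orho$.

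For (2), part (1) combined with the lemma immediately preceding the proposition (applied to $\orho$, which inherits irreducibility from $\rho$ via Frobenius reciprocity and inherits center-preservation via the preceding lemma) guarantees the existence of an isometry $R_\orho\in(\id,\orho\rho)$. Direct computations from the intertwining relations $R_\rho x=\rho\orho(x)R_\rho$ and $R_\orho x=\orho\rho(x)R_\orho$ place $R_\rho^*\rho(R_\orho)\in(\rho,\rho)$ and $R_\orho^*\orho(R_\rho)\in(\orho,\orho)$. The irreducibility of $\rho$ combined with $\rho(Z(M))=Z(M)$ gives $(\rho,\rho)=\rho(Z(M))=Z(M)$, and the parallel statement for $\orho$ gives $(\orho,\orho)=Z(M)$, so both elements belong to $Z(M)$.

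To prove invertibility of $d:=R_\orho^*\orho(R_\rho)\in Z(M)$, set $\ph_\rho(x):=R_\orho^*\orho(x)R_\orho$. A direct check shows $\ph_\rho$ is a left inverse of $\rho$, so $E_\rho:=\rho\circ\ph_\rho\col M\to\rho(M)$ is a conditional expectation. Because $\rho(M)\subs M$ is an irreducible inclusion, the lemma characterizing uniqueness of expectations via irreducibility forces $\cE(M,\rho(M))$ to be a singleton; hence $E_\rho$ coincides with the given expectation and inherits a Pimsner--Popa inequality $E_\rho(x)\geq\lambda x$ for some $\lambda>0$. A direct computation yields $\ph_\rho(R_\rho R_\rho^*)=dd^*$, whence $E_\rho(R_\rho R_\rho^*)=\rho(dd^*)\geq\lambda R_\rho R_\rho^*$. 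The crucial geometric observation is that $R_\rho R_\rho^*$ has central support $1$ in $M$: any $z\in Z(M)$ with $R_\rho=zR_\rho$ satisfies $z=R_\rho^*zR_\rho=R_\rho^*R_\rho=1$. Combined with the centrality of $\rho(dd^*)$, this forces $\rho(dd^*)\geq\lambda$ as central elements, and since $\rho|_{Z(M)}$ is an automorphism, $dd^*\geq\lambda>0$, so $d$ is invertible in $Z(M)$. Invertibility of $R_\rho^*\rho(R_\orho)$ follows by the symmetric argument using the unique expectation $M\to\orho(M)$ furnished by (1).

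The main obstacle is (1): transferring finiteness of index from $\rho$ to $\orho$ in this non-factor setting hinges on the common-center hypothesis, without which the theorem $\Ind_p\leq\Ind\leq\Ind_p^2$ cannot be applied in both directions, and requires carefully identifying the relevant inclusions in the Jones tower via $\orho\sim\rho^{-1}\ga$. Once (1) is secured, (2) becomes a clean combination of the Pimsner--Popa estimate with the central-support observation and the fact that $\rho|_{Z(M)}$ is an automorphism.
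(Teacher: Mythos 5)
Your argument for part (2) is correct and follows essentially the same route as the paper: apply the Pimsner--Popa inequality for the unique expectation onto $\rho(M)$ to the projection $R_\rho R_\rho^*$, identify $\ph_\rho(R_\rho R_\rho^*)=dd^*$, and use that $\rho|_{Z(M)}$ is an automorphism. The only cosmetic difference is that the paper gets $\rho(dd^*)\geq\la$ directly by conjugating the inequality by $R_\rho^*(\,\cdot\,)R_\rho$, whereas you route it through a central-support argument for the projection $R_\rho R_\rho^*$; both are fine, and for part (1) you supply an explicit Jones-tower/Kosaki-index argument that the paper leaves to its preceding lemmas and Corollary on the dual expectation.
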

\begin{proof}
By our assumption, 
the (unique) conditional expectation
$M\ni x\mapsto\rho(R_\orho^* \orho(x)R_\orho)\in \rho(M)$
has finite probability index.
Thus there exists a constant $\la>0$
such that $\rho(R_\orho^* \orho(x)R_\orho)\geq \la x$
for all $x\in M$.
Then we have
$\rho(R_\orho^* \orho(R_\rho R_\rho^*)R_\orho)\geq \la R_\rho R_\rho^*$.
Since $R_\orho^* \orho(R_\rho)\in Z(M)$,
$\rho(R_\orho^* \orho(R_\rho))\in Z(M)$.
Hence
\[
\rho(R_\orho^* \orho(R_\rho R_\rho^*)R_\orho)
=
R_\rho^*\rho(R_\orho^* \orho(R_\rho R_\rho^*)R_\orho)R_\rho
\geq
\la.
\]
Hence $|R_\orho^* \orho(R_\rho)|$ is invertible,
and so is $R_\orho^* \orho(R_\rho)$ because of centrality.
Also, the conjugate endomorphism $\orho$
is preserving $Z(M)$,
we can show $R_\rho^* \rho(R_\orho)$ is invertible.
\end{proof}

\begin{thm}\label{thm:isom-conj}
Let $M$ be a properly infinite von Neumann algebra.
Let $\rho$ and $\si$ be an irreducible and a co-irreducible
endomorphism on $M$ with finite index,
respectively.
Then the following statements are equivalent:
\begin{enumerate}
\item
$\orho=\si$ in $\Sect(M)$;
\item
$(\id,\si\rho)$ contains an isometry.
\end{enumerate}
Moreover, the following (3) or (4) also imply (1).
\begin{enumerate}
\addtocounter{enumi}{2}
\item
$\rho$ preserves $Z(M)$ and $(\id,\rho\si)$ contains an isometry;
\item
There exist isometries $R_\rho\in(\id,\orho\rho)$,
$\ovl{R}_\rho\in (\id,\rho\orho)$ and $w\in(\id,\rho\si)$
such that
$\orho(\ovl{R}_\rho^*)R_\rho$ is invertible element in $Z(M)$,
and the conditional expectation from $M$ onto $N$
defined by $E_\si(x)=\si(w^*\rho(x)w)$ is faithful.
\end{enumerate}
\end{thm}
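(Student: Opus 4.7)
The implication $(1)\Rightarrow(2)$ is immediate: given a unitary $u\in M$ with $\si=\Ad u\circ\orho$, Lemma \ref{lem:vw-isom}(1) applied to the irreducible $\rho$ furnishes an isometry $R_\rho\in(\id,\orho\rho)$, and $uR_\rho\in(\id,\si\rho)$ is then an isometry.

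For $(2)\Rightarrow(1)$ my plan is to invoke Frobenius reciprocity in the endomorphism category. Given an isometry $w\in(\id,\si\rho)$, a direct computation using $R_\si\in(\id,\osi\si)$ together with $\si\rho(x)w=wx$ shows that $\tilde v:=R_\si^*\,\osi(w)$ satisfies $\rho(x)\tilde v=\tilde v\,\osi(x)$ for all $x\in M$, so $\tilde v\in(\osi,\rho)$. Since $\rho$ is irreducible and $\osi$ is irreducible (as the conjugate of the co-irreducible $\si$), the range and source projections $\tilde v\tilde v^*$ and $\tilde v^*\tilde v$ live in $\rho(Z(M))$ and $\osi(Z(M))$ respectively. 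The task is then to promote $\tilde v$ to a unitary implementing $\osi\simeq\rho$, equivalently $\orho=\si$, in $\Sect(M)$. Here I would use that $ww^*\in(\si\rho,\si\rho)\subs\si(M)'\cap M=Z(M)$ (by co-irreducibility of $\si$) controls the supports of $\tilde v$, and combine this with properly infiniteness of $M$ and the finite-index hypothesis to lift this partial isometry to an honest unitary.

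For $(3)\Rightarrow(1)$ I would run the analogous Frobenius argument starting from an isometry in $(\id,\rho\si)$ to produce an intertwiner in $(\osi,\rho)$. The assumption that $\rho$ preserves $Z(M)$ makes $\rho$ both irreducible and co-irreducible, so $(\rho,\rho)=\rho(Z(M))=Z(M)$; this identification simplifies the center analysis and lets one conclude as in $(2)\Rightarrow(1)$. For $(4)\Rightarrow(1)$ the unitary is built explicitly from the data: a natural candidate is
\[
 u := \orho(w)\,[\orho(\ovl{R}_\rho^*)R_\rho]^{-1}
\]
(or a close cousin built from $R_\rho,\ovl{R}_\rho,w$). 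The invertibility of $\orho(\ovl{R}_\rho^*)R_\rho$ in $Z(M)$ is precisely what makes $u$ a well-defined unitary, while the faithfulness of $E_\si(x)=\si(w^*\rho(x)w)$ is used to verify that $\Ad u\circ\orho$ agrees with $\si$ on all of $M$, not merely on a corner.

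The principal obstacle is the non-factor setting. In the factor case $(2)\Rightarrow(1)$ is essentially automatic from Frobenius, since $(\rho,\rho)=\C=(\osi,\osi)$ forces any nonzero intertwiner between irreducibles to be a scalar multiple of a unitary. Once $Z(M)$ is nontrivial, Frobenius only delivers a partial isometry in a $Z(M)$-valued Hilbert module, and one must use properly infiniteness of $M$ together with finite index to extend it to a genuine unitary; items $(3)$ and $(4)$ supply precisely the extra hypotheses that make this extension concrete and verifiable in the general setting.
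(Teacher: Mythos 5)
Your argument for $(2)\Rightarrow(1)$ contains a genuine error at the step that is meant to do the real work. You claim
\[
ww^*\in(\si\rho,\si\rho)\subs\si(M)'\cap M=Z(M),
\]
but this inclusion runs the wrong way: since $\si\rho(M)\subs\si(M)$, one has $\si\rho(M)'\supseteq\si(M)'$, hence $(\si\rho,\si\rho)=\si\rho(M)'\cap M\supseteq\si(M)'\cap M=Z(M)$. So $ww^*$ need not be central, and the support-control you intend to extract from it never gets started. The closing appeal to ``properly infiniteness together with finite index to lift the partial isometry to an honest unitary'' is also aimed at the wrong obstacle: the difficulty is not to enlarge a partial isometry with small source and range to a unitary (any such enlargement would generically destroy the intertwining property), but to prove that the intertwiner $\tilde v\in(\osi,\rho)$ you build already has full source and range, i.e.\ that $\tilde v^*\tilde v=1=\tilde v\tilde v^*$. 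In the non-factor setting this is precisely the nontrivial content, and you leave it unaddressed.

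The paper's actual argument for $(2)\Rightarrow(1)$ takes a different and more direct route that handles this point cleanly. From the given isometry $v\in(\id,\si\rho)$ and a chosen $R_\rho\in(\id,\orho\rho)$, one observes that $x\mapsto\rho(v^*\si(x)v)$ and $x\mapsto\rho(R_\rho^*\orho(x)R_\rho)$ are both conditional expectations from $M$ onto $\rho(M)$, hence coincide because irreducibility of $\rho$ makes $\cE(M,\rho(M))$ a singleton. This equality makes the assignment $\si(x)v\Om\mapsto\orho(x)R_\rho\Om$ isometric, yielding a partial isometry $V$ which is shown to lie in $M$ via a modular-conjugation computation. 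The support is then pinned down: $V^*V\in\si(M)'\cap M=Z(M)$ by \emph{co}-irreducibility of $\si$ (this is exactly where your incorrect centrality claim would have to be replaced), and $vV^*V=v$ together with $v$ an isometry forces $V^*V=1$; similarly $VV^*=1$. The paper's $(3)\Rightarrow(1)$ proof likewise runs through the unique conditional expectation $E_\rho$ and a Pimsner--Popa estimate rather than the Frobenius dualization you propose. Your explicit candidate for $(4)\Rightarrow(1)$ is a plausible guess, but like $(2)\Rightarrow(1)$ it is asserted rather than verified.
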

\begin{proof}
(1)$\Rightarrow$(2).
It is clear from Lemma \ref{lem:vw-isom} (1).

(2)$\Rightarrow$(1).
Let $v\in(\id,\si\rho)$ be an isometry.
Take an isometry $R_\rho\in(\id,\orho\rho)$.
Then the maps $M\ni x\mapsto \rho(v^*\si(x)v)\in\rho(M)$
and $M\ni x\mapsto \rho(R_\rho^*\si(x)R_\rho)\in\rho(M)$
are both conditional expectations,
and they are equal because $\cE(M,\rho(M))$ is a singleton.
Thus we can construct a partial isometry $V$ on $L^2(M)$ as
$V(\si(x)v\Om)=\orho(x)R_\rho\Om$ for $x\in M$.
Note that $V\in M$.
Indeed, for $x$ being analytic with respect to the modular group
$\si^\Om$, we have
\[
V(J_M \si_{i/2}^\Om(x)^*J_M\si(y)v\Om)
=
V(\si(y)vx\Om)
=
V(\si(y\rho(x)\Om)
=
\orho(y\rho(x))R_\rho\Om,
\]
which is equal to $J_M \si_{i/2}^\Om(x)^*J_M V(\si(y)v\Om)$.
Thus $V\in (J_M MJ_M)'=M$.

The support projection $V^*V$ is contained in $\si(M)'\cap M=Z(M)$.
However, the computation
$v V^*V\Om=V^*V v\Om=v\Om$
shows $vV^*V=v$, and $V^*V=1$.
Similarly, we get $VV^*=1$, that is, $V$ is unitary.
It is trivial that $V$ intertwines $\si$ and $\orho$.

Next we assume (3).
The ``if '' part is clear.
We show the ``only if'' part.
Assume $v\in (\id,\rho\si)$ is an isometry.
We set $u:=R_\orho^*\orho(v)$ that is in $(\orho,\si)$.
Then $u^*u,uu^*\in Z(M)$.
Let $E_\rho\col M\ra \rho(M)$ be the unique conditional expectation
with probability index $\la\in Z(M)_+$,
which is given by $E_\rho(x)=\rho(R_\rho^*\orho(x)R_\rho)$.
Note that $\orho$ is co-irreducible, and $\orho(Z(M))\subs Z(M)$.
Since $E_\rho(uu^*)\geq\la^{-1}vv^*$ and $\orho(uu^*)\in Z(M)$,
we have $\rho(uu^*)=v^*\rho(uu^*)v\geq \la$.
Hence $uu^*\geq\la$.
Since $uu^*$ and $u^*u$ are central,
the element $|u^*|^{-1}u\in(\orho,\si)$ is a unitary.
\end{proof}

\subsection{Cocycle crossed product}\label{subsect:cocycle-cross}
We summarize some well-known facts on cocycle crossed product.
Let $M$ be a von Neumann algebra,
and $L^2(M)$ the standard Hilbert space. 
Let $(\alpha_g,v(g,h))$ be a cocycle crossed action
\index{cocycle action}
\index{action!cocycle--}
of $G$ on $M$. 
Define
$\pi(x),\lambda_g\in B(L^2(M)\otimes \ell^2(G))$, $x\in M$, $g\in G$,
as follows
\[
\left(\pi(x)\xi\right)(g)
=\alpha_{g^{-1}}(x)\xi(g),
\quad
\left(\lambda_g\xi\right)(h)
=v(h^{-1},g)\xi(g^{-1}h),
\]
where we should not confuse $\alpha_{g^{-1}}$ and $\alpha_g^{-1}$.
Then
\[
\lambda_g\pi(x)\lambda_g^*=\pi(\alpha_g(x)),
\quad
\lambda_g\lambda_h=\pi(v(g,h))\lambda_{gh}.
\] 
By definition, the twisted crossed
product von Neumann algebra
\index{twisted crossed product von Neumann algebra}
is
\[
M\rtimes_{\alpha,v}G:=\pi(M)\vee\{\lambda_g\}_{g\in G}''.
\]

\begin{thm}\label{thm:ext}
Let $(\al,v)$ and $M$ as above.
Then the following holds:
\begin{enumerate}
\item
Suppose that an automorphism $\theta$ on $M$
induces an automorphism on $G$
(denoted by the same symbol $\theta$)
such that 
\[
\theta\circ\alpha_{\theta^{-1}(g)}\circ \theta^{-1}=\Ad
 u_g^\theta\circ \alpha_g(x),
\]
\[
u_g^\theta\alpha_g(u_h^\theta)v(g,h)u_{gh}^{\theta*}=\theta(v(\theta^{-1}(g),\theta^{-1}(h)))
\]
for some unitary $u_g^\theta\in M$.
Then $\th$ uniquely extends to $M\rtimes_{\alpha, v}G$
as $\tilde{\th}$ 
such that
$\tilde{\theta}(\lambda_{\theta^{-1}(g)})=\pi(u_g^\theta)\lambda_g$. \\
\item
If we have
$u_g^{\theta\sigma}=\theta(u_{\theta^{-1}(g)}^\sigma)u_g^\theta$
for  $\theta,\sigma\in \Aut(M)$,  then
$\tilde{\theta}\circ \tilde{\sigma}=\widetilde{\theta\circ \sigma}$
holds.
\end{enumerate}
\end{thm}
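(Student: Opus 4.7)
The plan is to define $\tilde{\theta}$ on the generators $\pi(M)$ and $\{\lambda_g\}_{g\in G}$, to check that the two defining relations of the cocycle crossed product are preserved (which will reduce exactly to the two hypotheses), and then to promote the resulting algebraic $\ast$-homomorphism on the generating $\ast$-algebra to a normal automorphism of the weak closure by producing a unitary implementation on $L^2(M)\otimes \ell^2(G)$. Uniqueness in part~(1) is immediate: any extension is forced on $\pi(M)$ by $\tilde{\theta}\circ\pi=\pi\circ\theta$ and on $\lambda_g$ by the prescribed formula, and these elements generate $M\rtimes_{\alpha,v}G$ as a von Neumann algebra.

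For part~(1), after setting $\tilde{\theta}(\pi(x)):=\pi(\theta(x))$ and $\tilde{\theta}(\lambda_{\theta^{-1}(g)}):=\pi(u_g^\theta)\lambda_g$, I would verify the two defining relations in turn. For the covariance relation, one computes
\[
(\pi(u_g^\theta)\lambda_g)\pi(\theta(x))(\pi(u_g^\theta)\lambda_g)^*
=\pi(u_g^\theta\alpha_g(\theta(x))u_g^{\theta*}),
\]
and the right-hand side equals $\pi(\theta(\alpha_{\theta^{-1}(g)}(x)))=\tilde{\theta}(\pi(\alpha_{\theta^{-1}(g)}(x)))$ by the first hypothesis. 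For the $2$-cocycle relation, a direct expansion gives
\[
(\pi(u_g^\theta)\lambda_g)(\pi(u_h^\theta)\lambda_h)
=\pi\bigl(u_g^\theta\alpha_g(u_h^\theta)v(g,h)\bigr)\lambda_{gh},
\]
while the desired image of $\pi(v(\theta^{-1}(g),\theta^{-1}(h)))\lambda_{\theta^{-1}(gh)}$ is $\pi\bigl(\theta(v(\theta^{-1}(g),\theta^{-1}(h)))u_{gh}^\theta\bigr)\lambda_{gh}$; equality of the two is precisely the second hypothesis. Thus $\tilde{\theta}$ extends to a $\ast$-homomorphism of the $\ast$-algebra generated by $\pi(M)\cup\{\lambda_g\}$. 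To extend normally to the weak closure and to see that it is an automorphism, I would construct a unitary $W_\theta$ on $L^2(M)\otimes\ell^2(G)$ of the form $(W_\theta\xi)(g)=\pi_0(u_g^\theta)\,J_\theta\,\xi(\theta^{-1}(g))$, where $J_\theta$ is the canonical unitary on $L^2(M)$ implementing $\theta$ and $\pi_0$ is the left multiplication, and check $\Ad W_\theta\circ\pi=\pi\circ\theta$ and $\Ad W_\theta(\lambda_{\theta^{-1}(g)})=\pi(u_g^\theta)\lambda_g$; that $W_\theta$ is indeed unitary will follow from the fact that the two hypotheses also hold for $\theta^{-1}$ with $u_g^{\theta^{-1}}:=\theta^{-1}(u_{\theta(g)}^\theta)^*$.

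For part~(2), I would verify $\tilde{\theta}\circ\tilde{\sigma}=\widetilde{\theta\sigma}$ on generators. On $\pi(x)$ both sides give $\pi(\theta\sigma(x))$. On $\lambda_{(\theta\sigma)^{-1}(g)}=\lambda_{\sigma^{-1}\theta^{-1}(g)}$,
\[
\tilde{\theta}\tilde{\sigma}(\lambda_{\sigma^{-1}\theta^{-1}(g)})
=\tilde{\theta}(\pi(u_{\theta^{-1}(g)}^\sigma)\lambda_{\theta^{-1}(g)})
=\pi\bigl(\theta(u_{\theta^{-1}(g)}^\sigma)u_g^\theta\bigr)\lambda_g,
\]
which equals $\widetilde{\theta\sigma}(\lambda_{(\theta\sigma)^{-1}(g)})=\pi(u_g^{\theta\sigma})\lambda_g$ exactly under the stated assumption $u_g^{\theta\sigma}=\theta(u_{\theta^{-1}(g)}^\sigma)u_g^\theta$. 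Since both $\tilde{\theta}\tilde{\sigma}$ and $\widetilde{\theta\sigma}$ are normal and agree on a weakly dense $\ast$-subalgebra, they coincide. The principal technical point throughout is the construction of the unitary implementation $W_\theta$, which upgrades the algebraic identities to an automorphism of the von Neumann algebra; once that is in place, every other step is a routine verification that the two hypotheses in~(1) and the composition formula in~(2) are tautologically the consistency conditions demanded by the cocycle crossed product presentation.
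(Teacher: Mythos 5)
Your overall strategy is the same as the paper's: verify that $\tilde\theta$ is consistent with the two defining relations of the cocycle crossed product, and then produce a unitary $W_\theta$ on $L^2(M)\otimes\ell^2(G)$ implementing it so that the algebraic identity upgrades to a normal automorphism. The algebraic checks you carry out (covariance via the first hypothesis, the $2$-cocycle relation via the second, and the composition formula in part~(2)) are correct and match the paper. The uniqueness argument is also fine.

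However, the explicit unitary you write down is wrong, and this is the one nontrivial point of the proof. You propose
\[
(W_\theta\xi)(g)=u_g^\theta\,J_\theta\,\xi(\theta^{-1}(g)),
\]
but with the paper's convention $(\pi(x)\xi)(g)=\alpha_{g^{-1}}(x)\xi(g)$ the coefficient at slot $g$ must be $(u_{g^{-1}}^\theta)^*$, not $u_g^\theta$. Indeed, computing $\Ad W_\theta(\pi(a))$ at slot $g$ with your $W_\theta$ and invoking the first hypothesis (with $g$ replaced by $g^{-1}$), one obtains
\[
u_g^\theta\,u_{g^{-1}}^\theta\,\alpha_{g^{-1}}(\theta(a))\,(u_{g^{-1}}^\theta)^*\,(u_g^\theta)^*,
\]
and there is no cancellation because $u_g^\theta$ and $(u_{g^{-1}}^\theta)^*$ are unrelated in general; the same obstruction appears when you try to verify $\Ad W_\theta(\lambda_{\theta^{-1}(g)})=\pi(u_g^\theta)\lambda_g$. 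With the correct choice $(W_\theta\xi)(g)=(u_{g^{-1}}^\theta)^*\,U_\theta\,\xi(\theta^{-1}(g))$, the unitary $u_{g^{-1}}^\theta$ produced by the hypothesis cancels exactly against this coefficient and its adjoint, which is what makes the verification close. Your aside that unitarity of $W_\theta$ requires the inverse-cocycle identity for $\theta^{-1}$ is also beside the point: any operator of the stated form (a slot permutation composed with fiberwise unitaries) is manifestly unitary; what needs the hypotheses is that $\Ad W_\theta$ acts the right way on $\pi(M)$ and $\lambda_g$, and your coefficient does not deliver this.
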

\begin{proof}
(1) Denote $u_g^\theta$ by $u_g$ for simplicity.
Let $U_\theta\in B(L^2(M))$ be the standard implementing
unitary of $\theta$.
Define a unitary $W_\theta\in B(L^2(M)\otimes \ell^2(G))$ by 
\[
\left(W_\theta \xi\right)
(g)=u_{g^{-1}}^*
U_\theta \xi(\theta^{-1}(g)),\,\,\, \xi(g)\in L^2(M)\otimes \ell^2(G).
\]
Then $W_\theta^*$ is given by 
\[
\left(W_\theta \xi\right)
(\theta^{-1}(g))=U_\theta^*u_{g^{-1}}\xi(g).
\]
We show $\Ad W_\theta$ gives a desired action.  
At first, we verify $\Ad W_\theta (\pi(a))=\pi(\theta(a))$ as follows.
\begin{align*}
 \left(W_\theta\pi(a)W_\theta^*\xi\right)(g)
&=
u_{g^{-1}}^*
U_\theta\left(\pi(a)W_\theta^*\xi\right)(\theta^{-1}(g)) \\
&=
u_{g^{-1}}^*
U_\theta\alpha_{\theta^{-1}(g^{-1})}(a)\left(W_\theta^*\xi\right)(\theta^{-1}(g)) \\
&=
u_{g^{-1}}^*
U_\theta\alpha_{\theta^{-1}(g^{-1})}(a)
U_\theta^*u_{g^{-1}}\xi(g)  \\
&=
u_{g^{-1}}^*
\theta\alpha_{\theta^{-1}(g^{-1})}(a)u_{g^{-1}}\xi(g) \\ 
&=
\alpha_{g^{-1}}(\theta(a))
\xi(g)  \\
&=
\left(\pi(\theta(a))\xi\right)(g).
\end{align*}

Next we show $\Ad W_\theta(\lambda_{\theta^{-1}(g)})=\pi(u_g)\lambda_g$ as follows. 
\begin{align*}
\left( W_\theta \lambda_{\theta^{-1}(g)} W_\theta^*\xi \right)(h)
&=u_{h^{-1}}^*
U_\theta\left(\lambda_g W_\theta^*\xi
			  \right)(\theta^{-1}(h))\\
&=
u_{h^{-1}}^*U_\theta\alpha_{\theta^{-1}(h^{-1})}\left(v(\theta^{-1}(h^{-1}),
\theta^{-1}(g))\right)
\left(W_\theta^*\xi \right)(\theta^{-1}(g^{-1}h))\\
&=
u_{h^{-1}}^*U_\theta\alpha_{\theta^{-1}(h^{-1})}\left(v(\theta^{-1}(h^{-1}),
\theta^{-1}(g))\right)
U_\theta^*u_{h^{-1}g}
\xi(g^{-1}h)\\
&=
u_{h^{-1}}^*\theta\alpha_{\theta^{-1}(h^{-1})}\left(v(\theta^{-1}(h^{-1}),
\theta^{-1}(g))\right)u_{h^{-1}g}
\xi(g^{-1}h)\\
&=
u_{h^{-1}}^*u_{h^{-1}}\alpha_{h^{-1}}(u_g)
v(h^{-1},g)
\xi(g^{-1}h)\\
&=
\alpha_{h^{-1}}(u_g)
\left(\lambda_g\xi\right)(h)\\
&=
\left(\pi(u_g)\lambda_g\xi\right)(h).
\end{align*}
This shows that $\tilde{\theta}=\Ad W_\theta$  preserves $M\rtimes_{\alpha, v}G$, and 
is a desired extension. The uniqueness of $\tilde{\theta}$ is obvious.\\
(2) By the assumption, $W_\theta W_\sigma=W_{\theta\sigma}$ holds.
\end{proof}

\subsection{Dual cocycle twisting of a Kac algebra}\label{subsect:cocycle-Kac}
Let $\om\in Z^2(\bhG)$.
Then the map $\De^\om:=\Ad \om\circ \De$ gives a new coproduct on $\lhG$.
We will show the trace $\varphi$ is still bi-invariant
with respect to $\Delta^\om$.
This fact is well-known for experts, but we give a proof
for readers' convenience.
Let  $v\in U(\lhG)$ be as in the proof of Lemma \ref{lem:normalize}. 
Since
$\om_{\pi,\opi}T_{\pi,\ovl{\pi}}=
(v_\pi\otimes 1)T_{\pi,\ovl{\pi}}$, 
and
$(\Tr_{\pi}\oti\id)(T_{\pi,\ovl{\pi}}T_{\pi,\ovl{\pi}}^*)=d(\pi)^{-1}$,
we have
\[
(\Tr_\pi\otimes \id_{\ovl{\pi}})
(\om_{\pi,\ovl{\pi}}^*
T_{\pi,\ovl{\pi}}T_{\pi,\ovl{\pi}}^*\om_{\pi,\ovl{\pi}})
=
(\Tr_\pi\otimes\id_{\ovl{\pi}})
((v_\pi\oti1)T_{\pi,\ovl{\pi}}T_{\pi,\ovl{\pi}}^*(v_\pi^*\oti1))
=\frac{1}{d(\pi)}.
\]
Note that the following map from $(\pi,\si\orho)$ into $(\si,\pi\rho)$
defined by
\[
S\mapsto
\sqrt{d(\si)d(\rho)d(\pi)^{-1}}(S^*\oti1_\rho)(1_\si\oti T_{\orho,\rho})
\]
is a conjugate unitary.
Thus for $x_\si\in B(H_\si)$, we obtain
\[
{}_\pi\De_\rho(x_\si)
=
\sum_{S\in\ONB(\pi,\si\orho)}
d(\si)d(\rho)d(\pi)^{-1}
(S^*\oti1_\rho)(x_\si\oti T_{\orho,\rho}T_{\orho,\rho}^*)(S\oti1_\rho).
\]
Hence for any $x\in\lhG$,
\begin{align*}
&d(\pi)(\Tr_\pi \otimes \id_\rho)({}_\pi\Delta_{\rho}^\om(x))
\\
&=
\sum_{\si\prec\pi\rho}
\sum_{S\in\ONB(\pi,\si\orho)}
d(\si)d(\rho)
(\Tr_\pi \otimes \id_\rho)
\big{(}
\om_{\pi,\rho}
(S^*\oti1_\rho)(x_\si\oti T_{\orho,\rho}T_{\orho,\rho}^*)(S\oti1_\rho)
\om_{\pi,\rho}^*
\big{)}
\\
&=
\sum_{\si\prec\pi\rho}
\sum_{S\in\ONB(\pi,\si\orho)}
d(\si)d(\rho)
(\Tr_\pi \otimes \id_\rho)
\big{(}
(S^*\oti1_\rho)
\om_{\si\orho,\rho}(x_\si\oti T_{\orho,\rho}T_{\orho,\rho}^*)
\om_{\si\orho,\rho}^*
(S\oti1_\rho)
\big{)}
\\
&=
\sum_{\si\prec\pi\rho}
\sum_{S\in\ONB(\pi,\si\orho)}
d(\si)d(\rho)
(\Tr_{\si\orho} \otimes \id_\rho)
\big{(}
\om_{\si\orho,\rho}(x_\si\oti T_{\orho,\rho}T_{\orho,\rho}^*)
\om_{\si\orho,\rho}^*
(SS^*\oti1_\rho)
\big{)}.
\end{align*}
Then summing up with $\pi\in\IG$
and changing the summation to $\sum_{\si\in\IG}\sum_{\pi\prec\si\orho}$,
we obtain
\begin{align*}
&\sum_{\pi\in\IG}
d(\pi)(\Tr_\pi \otimes \id_\rho)(\Delta^\om_{\pi,\rho}(x))
\\
&=
\sum_{\si\in\IG}
d(\si)d(\rho)
(\Tr_{\si\orho} \otimes \id_\rho)
\big{(}
\om_{\si\orho,\rho}\om_{\si,\orho\rho}^*
(x_\si\oti T_{\orho,\rho}T_{\orho,\rho}^*)
\om_{\si,\orho\rho}
\om_{\si\orho,\rho}^*
\big{)}
\\
&=
\sum_{\si\in\IG}
d(\si)d(\rho)
(\Tr_{\si\orho} \otimes \id_\rho)
\big{(}
\om_{\si,\orho}^*\om_{\orho,\rho}
(x_\si\oti T_{\orho,\rho}T_{\orho,\rho}^*)
\om_{\orho,\rho}^*\om_{\si,\orho}
\big{)}
\\
&=
\sum_{\si\in\IG}
d(\si)d(\rho)
(\Tr_{\si\orho} \otimes \id_\rho)
\big{(}
\om_{\si,\orho}^*
(x_\si\oti T_{\orho,\rho}T_{\orho,\rho}^*)
\om_{\si\orho,\rho}
\big{)}
\quad\mbox{(since $\om$ is normalized)}
\\
&=
\sum_{\si\in\IG}
d(\si)d(\rho)
(\Tr_{\si\orho} \otimes \id_\rho)
(x_\si\oti T_{\orho,\rho}T_{\orho,\rho}^*)
\\
&=
\sum_{\si\in\IG}
d(\si)
\Tr_\si(x_\si).
\end{align*}
Namely, we have $(\vph\oti\id)(\De^\om(x))=\vph(x)$.
We can prove the right invariance of $\vph$ in a similar way.
We call $\bhG_\om:=(\lhG,\De^\om,\vph)$
an \emph{$\om$-twisted Kac algebra}.
\index{Kac algebra!$\om$-twisted--}
The dual is a compact Kac algebra,
and we denote it by $\bG_\om$.

The following result is essentially same as \cite[Lemma 26]{Wass-cptII}.

\begin{lem}\label{lem:skew-symm}
Let $\om$ be a co-commutative cocycle.
Then $\beta=\om^*F(\om)$ is a skew symmetric bicharacter,
\index{skew symmetric bicharacter}
where $F$ denotes the flip of the tensors.
Namely,
we have
\[
(\Delta\otimes \id)(\beta)=\beta_{23}\beta_{13},
\quad
(\id\otimes \Delta)(\beta)=\beta_{12}\beta_{13},
\]
\[
F(\be)=\be^*,
\quad
(\id\oti\ka)(\be)=(\ka\oti\id)(\be)=\be^*.
\]
\end{lem}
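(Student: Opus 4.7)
Every one of the four assertions is obtained by direct computation from the two hypotheses on $\om$: the 2-cocycle identity $\om_{12}(\De\oti\id)(\om)=\om_{23}(\id\oti\De)(\om)$ and co-commutativity, the latter unpacking to $\om\De(x)\om^*=F(\om)\De^\op(x)F(\om)^*$ for all $x\in\lhG$. As a first step I would rearrange this last equality to read $F(\om)^*\om\,\De(x)=\De^\op(x)\,F(\om)^*\om$, that is $\be^*\De(x)=\De^\op(x)\be^*$, or equivalently $\De(x)\be=\be\De^\op(x)$. This places $\be$ in $(\De^\op,\De)$ (the content of Lemma \ref{lem:remark}) and will be the mechanism by which co-commutativity enters all subsequent calculations. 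The identity $F(\be)=\be^*$ is immediate and costs nothing: since $F$ is a $*$-automorphism of $\lhG\otimes\lhG$ with $F^2=\id$, $F(\be)=F(\om^*)F(F(\om))=F(\om)^*\om=(\om^*F(\om))^*=\be^*$.

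The two bicharacter identities are the main content. For the first, I would compute $(\De\oti\id)(\be)=(\De\oti\id)(\om^*)\cdot(\De\oti\id)(F(\om))$, invoking the 2-cocycle relation to replace $(\De\oti\id)(\om)$ by $\om_{12}^*\om_{23}(\id\oti\De)(\om)$, and a companion ``flipped'' cocycle identity (obtained by applying $F\oti\id$ to the original and then using $F\De=\De^\op$, together with the intertwining $\be^*\De(\cdot)=\De^\op(\cdot)\be^*$ to rewrite all $\De^\op$'s as conjugations of $\De$ by $\be$) to handle $(\De\oti\id)(F(\om))$. The various appearances of $(\id\oti\De)(\om)$ on both sides of the product then cancel, leaving exactly $\be_{23}\be_{13}$. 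The second identity $(\id\oti\De)(\be)=\be_{12}\be_{13}$ can be derived analogously using $\id\oti\De$ in place of $\De\oti\id$ in the cocycle relation, or more economically by applying $F\oti F\oti F$ to the first identity and using $F(\be)=\be^*$ along with $(F\oti F)\De=\De$ after the flip.

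Finally, for the antipode identities, I would apply $\id\oti m\circ(\ka\oti\id)$ to $(\id\oti\De)(\be)=\be_{12}\be_{13}$. The left-hand side collapses via the counit axiom $m\circ(\ka\oti\id)\circ\De=\vep(\cdot)\,1$ to $(\id\oti\vep)(\be)\oti1$, while the right-hand side becomes $\be\cdot(\id\oti\ka)(\be)$. Combined with the normalization $(\id\oti\vep)(\be)=1$—which holds after reducing to a normalized representative of $\om$ via Lemma \ref{lem:normalize} (cohomologous cocycles yield the same $\be_\om$ up to the same manipulations)—this gives $(\id\oti\ka)(\be)=\be^*$. The identity $(\ka\oti\id)(\be)=\be^*$ then follows either by the symmetric argument starting from $(\De\oti\id)(\be)=\be_{23}\be_{13}$, or by applying $F$ and using $F(\be)=\be^*$ together with $F\circ(\id\oti\ka)=(\ka\oti\id)\circ F$.

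The main obstacle is the bookkeeping in the middle paragraph: tracking which tensor legs each of $\om,\om^*,F(\om),F(\om)^*$ occupies after coproducts and flips are applied, and recognizing precisely where to insert the intertwining relation $\De(x)\be=\be\De^\op(x)$ to convert $\De^\op$-terms produced by the flip back into $\De$-terms that can be cancelled against the cocycle identity. Once this bookkeeping is executed consistently, no analytic content is required—everything is algebra in $\lhG^{\otimes 3}$.
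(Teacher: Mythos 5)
Your proposal is correct in outline and takes a genuinely different route from the paper's. The paper observes that $\om^*$ is a $2$-cocycle for the twisted coproduct $\De^\om$ and exploits co-commutativity in the form $F\circ\De^\om=\De^\om$: starting from $(\De^\om\oti\id)(\om)\om_{12}=(\id\oti\De^\om)(\om)\om_{23}$, it flips legs~$1,2$ (the left side simplifies immediately because $F\De^\om=\De^\om$, no $\De^\op$ appears), re-applies the cocycle identity, then flips legs~$2,3$, and re-applies once more, arriving at $(\De\oti\id)(\be)=\be_{23}\be_{13}$ after matching up the surviving $\om$-factors. You instead stay with the undeformed $\De$, use the cocycle relation for $\om$ together with the intertwining $\be\,\De^\op(\cdot)=\De(\cdot)\,\be$ to convert the $\De^\op$-terms that the flip produces back into $\De$-terms. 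Both routes use the same two ingredients; the paper's is tidier because phrasing things via $\De^\om$ makes the flip \emph{vanish} rather than produce a $\De^\op$ that must then be conjugated away, so there are fewer insertions of the intertwining relation to track. Your route should close, but the middle paragraph of your write-up is a plan rather than a verified computation, and the bookkeeping you flag as the obstacle is exactly where the paper's reformulation saves work.

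Two smaller remarks. First, in the antipode step the result of applying $\id\oti m\circ(\ka\oti\id)$ to $\be_{12}\be_{13}$ is $(\id\oti\ka)(\be)\cdot\be$, with $(\id\oti\ka)(\be)$ on the \emph{left}; since $\be$ is unitary this does not affect the conclusion $(\id\oti\ka)(\be)=\be^{-1}=\be^*$, but it is worth getting the order right if $\be$ were not a priori invertible. Second, the reduction to a normalized representative of $\om$ via Lemma~\ref{lem:normalize} is unnecessary here: the condition $\om_{\cdot,\btr}=\om_{\btr,\cdot}=1$ is built into Definition~\ref{df:2-cocycle}, which gives $(\id\oti\vep)(\om^*)=1$ and $(\id\oti\vep)(F(\om))=(\vep\oti\id)(\om)=1$, hence $(\id\oti\vep)(\be)=1$ for every $\om\in Z^2(\bhG)$ without any cohomologous replacement. (Alternatively, $(\id\oti\vep)(\be)=1$ can be read off the already-established bicharacter identity by slicing with $\id\oti\id\oti\vep$.)
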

\begin{proof}
If we show the first and second equality,
then the remaining assertions are an immediate consequence.
Since $\om^*$  is a 2-cocycle for $\Delta^\om$,
we have
$(\De^\om\oti\id)(\om)\om_{12}=(\id\oti\De^\om)(\om)\om_{23}$.
Flipping the first and second tensors,
we obtain the following since $F\circ \De^\om=\De^\om$:
\[
(\De^\om\oti\id)(\om)\om_{21}=(\id\oti\De^\om)(\om)_{213}\om_{13}.
\]
From the cocycle identity,
the left hand side is equal to
$(\id\oti\De^\om)(\om)\om_{23}\om_{12}^*\om_{21}$.
The right equals
$(\id\oti\De^\om)(F(\om))_{132}\om_{13}$, respectively.
We further flip the second and third tensors.
Then
\[
(\id\oti\De^\om)(\om)\om_{32}\om_{13}^*\om_{31}
=
(\De^\om\oti\id)(F(\om))\om_{12}.
\]
We again use the cocycle identity in the left.
Then
\[
(\De^\om\oti\id)(\om)\om_{12}\om_{23}^*\om_{32}\om_{13}^*\om_{31}
=
(\De^\om\oti\id)(F(\om))\om_{12}.
\]
This implies $(\De\oti\id)(\be)=\be_{23}\be_{13}$.
Similarly, we have $(\id\oti\De)(\be)=\be_{12}\be_{13}$.
\end{proof}

Let $\om\in Z_c^2(\bhG)$ and $\be_\om:=\om^*\om_{21}$ as before.
Suppose now that $\be$ is non-degenerate.
Thus the linear span of $(\id\oti\ph)(\be_\om)$, $\ph\in\lhG_*$
is dense in $\lhG$.
We put a $*$-algebra structure on $\lhG_*$ as follows:
\[
\ph\cdot\ps:=(\ph\oti\ps)\circ\De,
\quad
\ph^*:=\ovl{\ph}\circ\ka.
\]
The map $\ph\mapsto (\id\oti\ph)(\be_\om)$
gives an injective $*$-homomorphism
from $\lhG_*$ into $\lhG$.
Thus we have
an injective $*$-homomorphism
$(\id\oti\ph)(W_\bG)\mapsto (\id\oti\ph)(\be_\om)$
from $A:=\{(\id\oti\ph)(W_\bG)\mid\ph\in\lhG_*\}$
into $\lhG$,
where
$W_\bG$ denotes the left regular representation of $\bG$.
If $\bG$ is amenable,
then the map extends to $\Ph\col\CG\ra \lhG$
with dense range.
It is straightforward to see that
$\Ph$ is a Hopf $*$-algebra homomorphism,
but $\Ph$ does not extend to $\lG$ as a normal map
unless $\bG$ is finite
since $\lG$ has a normal invariant state.

\begin{lem}
Let $\bhG$ be a finite dimensional Kac algebra, and $\varphi$
be the normalized Haar state on $\bhG$. 
Let $\om\in Z^2_c(\bhG)$,
and $\beta=\om^*F(\om)$ be the bicharacter of $\om$.
Then $\beta$ is non-degenerate
if and only if $(\id\otimes \varphi)(\beta) =e_{\mathbf{1}}$. 
\end{lem}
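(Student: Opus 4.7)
The plan is to show directly that $P := (\id\otimes \varphi)(\beta)$ equals the projection $p_\beta$, from which the claimed equivalence is immediate. By Lemma \ref{lem:normalize} and the observation in the proof of Theorem \ref{thm:bichara} that normalization of $\omega$ leaves $\beta$ unchanged, I may assume $\omega$ is normalized, i.e.\ $(\varepsilon\otimes\id)(\omega)=1=(\id\otimes\varepsilon)(\omega)$, where $\varepsilon$ denotes the counit of $\lhG$.

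First I would verify that $P$ is a self-adjoint projection. Self-adjointness uses $\beta^*=(\id\otimes\kappa)(\beta)$ from (\ref{eq:beom2}) together with the traciality identity $\varphi\circ\kappa=\varphi$: writing $\beta=\sum a_i\otimes b_i$ gives $P^*=\sum a_i\,\overline{\varphi(b_i)}=\sum a_i\,\varphi(\kappa(b_i))=P$. Idempotency follows from applying $\id\otimes\varphi\otimes\varphi$ to the bicharacter identity $(\id\otimes\Delta)(\beta)=\beta_{12}\beta_{13}$: the right-hand side yields $P^2$, and the left-hand side yields $(\id\otimes(\varphi\otimes\varphi)\Delta)(\beta)=(\id\otimes\varphi)(\beta)=P$, using $(\varphi\otimes\varphi)\Delta=\varphi$ which comes from $(\id\otimes\varphi)\Delta(\cdot)=\varphi(\cdot)1$ for the normalized Haar state.

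Next, applying $\id\otimes\id\otimes\varphi$ to $(\id\otimes\Delta)(\beta)=\beta_{12}\beta_{13}$ gives $P\otimes 1=\beta(P\otimes 1)$. Since $P$ lies in $\meR_\beta$ (as $\beta\in\meR_\beta\otimes\meR_\beta$) and the right ideal $p_\beta\lhG$ coincides with $\{x:\beta(x\otimes 1)=x\otimes 1\}$ by the lemma preceding Theorem \ref{thm:non-deg-factor}, we conclude $P=Pp_\beta$ and hence $P\le p_\beta$. For the reverse inequality I would compute $\varepsilon(P)=(\varepsilon\otimes\varphi)(\beta)=\varphi((\varepsilon\otimes\id)(\beta))$; since $(\varepsilon\otimes\id)$ is multiplicative and $(\varepsilon\otimes\id)(\omega)=1=(\id\otimes\varepsilon)(\omega)$ by normalization, we have $(\varepsilon\otimes\id)(\beta)=1$, whence $\varepsilon(P)=\varphi(1)=1$.

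Finally, $p_\beta$ is by construction the minimal central projection of the Kac subalgebra $\meR_\beta$ corresponding to the trivial representation of $\bK_\beta$, and it satisfies the defining property $xp_\beta=\varepsilon(x)p_\beta$ for every $x\in\meR_\beta$. Applying this to $P\in\meR_\beta$ gives $Pp_\beta=\varepsilon(P)p_\beta=p_\beta$, so $p_\beta\le P$. Combined with $P\le p_\beta$ we obtain $P=p_\beta$, and the lemma follows since $\beta$ is by definition non-degenerate precisely when $p_\beta=e_\btr$. The main obstacle, and really the only non-trivial calculation, is verifying that $P$ is a projection; once that is in place, compatibility with $p_\beta$ reduces to tracking the counit and normalization identities.
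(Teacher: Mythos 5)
Your proof is correct, but it follows a genuinely different and considerably longer route than the paper's. You show the stronger statement $P := (\id\otimes\varphi)(\beta)$ equals $p_\beta$ on the nose, by proving $P$ is a self-adjoint idempotent and then sandwiching $P\le p_\beta\le P$; the lemma then drops out since non-degeneracy is by definition $p_\beta=e_\btr$. The paper instead argues the two implications directly in a few lines: for the forward direction it takes any $x$ with $(x\otimes1)\beta=x\otimes1$, slices by $\id\otimes\varphi$ to get $xe_\btr=x$, and concludes $p_\beta=e_\btr$; for the converse it slices the identity $(\id\otimes\Delta)(\beta)=\beta_{12}\beta_{13}$ by $\id\otimes\varphi\otimes\id$ to obtain $(P\otimes1)\beta=P\otimes1$ and invokes non-degeneracy. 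Your slicing step (applying $\id\otimes\id\otimes\varphi$ to get $\beta(P\otimes1)=P\otimes1$) is a mirror image of the paper's, so the backward direction is essentially shared; what you add is the forward direction via $\varepsilon(P)=1$ and the projection property, which gives $P=p_\beta$ unconditionally. This buys you a slightly more informative conclusion at the cost of the idempotency computation, which is the only non-trivial extra. Two small remarks: the reduction to normalized $\omega$ is superfluous, since the paper's 2-cocycles already satisfy $\om_{\btr,\cdot}=\om_{\cdot,\btr}=1$ (i.e.\ counitality) by definition — this is all you use, and it is distinct from the paper's notion of ``normalized,'' which concerns $\om\Delta(e_\btr)=\Delta(e_\btr)$; and in the self-adjointness computation the displayed $P^*=\sum a_i\overline{\varphi(b_i)}$ should read $\sum a_i^*\overline{\varphi(b_i)}$, after which the chain $P^*=(\id\otimes\varphi)(\beta^*)=(\id\otimes\varphi\kappa)(\beta)=P$ is the clean way to say it.
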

\begin{proof}
First assume $(\id\otimes
\varphi)(\beta)=e_{\mathbf{1}}$.
Take $x\in \lhG$ such that $(x\otimes1)\beta=x\otimes 1$.
Then we obtain $x(\id\otimes \varphi)(\beta)=x$, and
$x=xe_{\mathbf{1}}$ by assumption. Hence $\beta$ is non-degenerate.
Conversely assume $\beta$ is non-degenerate.
Then
\begin{align*}
 \left((\id\otimes \varphi)(\beta)\otimes 1\right)\beta
&=
(\id\otimes \varphi \otimes \id)(\beta_{12}\beta_{13})\\
&=
(\id\otimes \vph \otimes \id)((\id\otimes \Delta)(\beta))\\
&=
(\id\otimes \varphi)(\beta)\otimes 1.
\end{align*}
Since $\beta$ is non-degenerate,
we have $(\id\otimes\varphi)(\beta)=e_{\mathbf{1}}$.
\end{proof}

\end{document}